\DeclareMathOperator{\spn}{span}
\newtheorem{thm}{Theorem}[section]
\newtheorem{cor}[thm]{Corollary}
\newtheorem{lem}[thm]{Lemma} 
\newtheorem{prop}[thm]{Proposition}
\theoremstyle{definition} 
\newtheorem{defn}[thm]{Definition}
\theoremstyle{remark}
\newtheorem{rem}[thm]{Remark}
\newtheorem{ex}[thm]{Example}
\numberwithin{equation}{section}
\DeclareRobustCommand\cyr{%
  \renewcommand\rmdefault{wncyr}%
  \renewcommand\sfdefault{wncyss}%
  \renewcommand\encodingdefault{OT2}%
  \normalfont
  \selectfont}
\DeclareTextFontCommand{\textcyr}{\cyr}
\newcommand{\pa}{\partial}
\begin{document}

\keywords{Holomorphic Retracts, Balanced pseudoconvex domains, Schwarz lemma, Biholomorphic inequivalences, Union Problem, Kobayashi hyperbolic, Caratheodory geodesics}
\subjclass{Primary: 32F45; Secondary: 32H15, 32H20}
\author{G. P. Balakumar, Jiju Mammen}

\address{G. P. Balakumar: Department of Mathematics, Indian Institute of Technology Palakkad, 678557, India}
\email{gpbalakumar@iitpkd.ac.in}
\address{Jiju Mammen: Department of Mathematics, Indian Institute of Technology Palakkad, 678557, India}
\email{211914001@smail.iitpkd.ac.in}

\title{Exploring Holomorphic Retracts}

\begin{abstract}
The purpose of this article is towards {\it systematically} characterizing holomorphic retracts of 
domains of holomorphy; to begin with,  
bounded balanced pseudoconvex domains $B \subset \mathbb{C}^N$. Specifically, 
we show that every (holomorphic) retract of $B$ passing through 
its center (i.e., the origin), is the 
graph of a holomorphic map over a linear subspace of $B$. If $B$ 
is further assumed to be convex with $\mathbb{C}$-extremal boundary, then every 
such retract is linear -- this is a well-known result due to Vesentini but we give here an 
elementary proof without making use of complex geodesics which works when
convexity is dropped and $\mathbb{C}$-extremality is replaced
by holomorphic extremality. The converse 
of Vesentini's theorem is also established and
general results characterizing the {\it non-linear}
retracts through the origin are attained (for fairly general domains in this context) 
when the boundary fails to have extreme points.
As for retracts 
not passing through the origin, we 
obtain the following result: if $B$ is a strictly convex ball with respect to some norm on $\mathbb{C}^N$,
and $\rho$ any holomorphic
retraction map on $B$ whose derivative at the center is a norm-one projection, then $Z=\rho(B)$ is the graph of a 
holomorphic map over a linear subspace of 
$B$. To deal with a case when $\partial B$ may fail to have sufficiently many extreme points,
we consider products of bounded balanced domains of holomorphy with holomorphic extreme boundaries and obtain a complete description 
of retracts passing through the origin. 
The conditions of this result can indeed be verified to be applied to solve a special 
case of the union problem with a degeneracy, namely:
to characterize those Kobayashi corank one complex manifolds $M$ which can be expressed as 
an increasing union of
submanifolds which are biholomorphic to a prescribed homogeneous 
bounded balanced domain in $\mathbb{C}^N$ (a description of all of its retracts being attained by the aforementioned result). Along the 
way, we discuss examples of non-convex but pseudoconvex bounded balanced domains in 
which most one-dimensional linear subspaces fail to be holomorphic retracts, in good 
contrast to the convex case. Indeed, for the simplest possible case here
namely the `$\ell^q$-ball' for $0<q<1$ as well as its `anisotropic' analogues, we obtain a complete list of 
all retracts through the origin of each possible dimension; whether $q$ is less than $1$ or not, retracts through the origin are shown to be
biholomorphic copies of lower dimensional $\ell^q$-balls. The same is done also for a class of domains lying at the other 
extreme in their Levi geometry but nevertheless share with $\ell^q$-balls the feature of their boundaries being devoid of complex line segments; namely, balanced $\mathbb{C}$-extremal analytic
polyhedra. In the course of achieving this, we prove a generalization of Mazet's Schwarz lemma for pseudoconvex balanced domains.
The above mentioned determination of retracts enables an illustration of applying 
retracts to establishing biholomorphic inequivalences.
 To go beyond balanced domains, we then first obtain a complete characterization of retracts
of the Hartogs triangle and `analytic complements' thereof. Thereafter, 
we drop the assumption of boundedness as well, to attain
similar characterization results for domains which are neither 
bounded nor topologically trivial. We conclude 
with remarks about retracts of $\mathbb{C}^2$ including an alternative proof about the characterization of its polynomial
retracts.
\end{abstract}

\maketitle
\section{Introduction}
\noindent We begin 
with remarks for motivation and background, possibly  somewhat elaborate; this owes to the
unavailability of a survey or expository article on {\it holomorphic} retracts to refer the reader to. 
This will also simultaneously help us set up notations and fix terminologies. While the main purpose of this article is to contribute original results, we wish to put our results systematically in connection with what is known for reasons already indicated. In this regard, we emphasize that this article is a survey in parts as well,
striving to give references for the known
results (to the
best of our knowledge).
By a {\it retraction} we shall mean 
an idempotent {\it map}, whereas by a {\it retract}, we mean the image of such a map. More precisely, 
we shall be interested in 
{\it holomorphic} retracts i.e., the image of a holomorphic retraction which by definition, is a holomorphic 
self-map $\rho$ of a domain 
$D \subset \mathbb{C}^N$ (or more generally, a complex manifold) satisfying the idempotent condition 
$\rho^{\circ 2} := \rho \circ \rho = \rho$. This means that $\rho$ acts as 
the identity map on $Z=\rho(D)$ which shall be our notation for a retract 
throughout this article; 
a trivia worth noting is that $Z$ is precisely the set of points in $D$ fixed 
by $\rho$. A pair of extreme or
trivial examples of retractions (resp. retracts) of any complex manifold are the identity map 
and constant maps (resp. $D$ and its points); obviously, we would not like to 
talk of them any further 
and thereby shall be referred to 
as {\it trivial retracts} henceforth. For instance, with this terminology and understanding, we shall
say that in dimension $N=1$, there are no non-trivial retracts. This means that for
any domain $D$ in $\mathbb{C}$ or any Riemann surface, any holomorphic self-map
$\rho$ satisfying $\rho^{\circ 2} = \rho$ is either a constant map or else, 
the identity map on $D$. Indeed, this follows from the open mapping theorem 
together with the 
idempotency condition which implies that $\rho(D)$ is a closed subset 
of $D$. In other words, holomorphic retracts
are interesting only when the dimension $N$ gets bigger than one. Now in 
multidimensional complex analysis, a lot 
of effort has gone into 
the study of biholomorphic and more generally, proper 
(holomorphic\footnote{Unless otherwise mentioned, all 
maps in this article are to be understood
to be holomorphic.}) maps, as is natural. To 
mention just 
one or two of the first basic facts about proper maps between a pair of domains in $\mathbb{C}^N$, they 
are known to be always surjective and for many a class of domains $D$
with `nice' boundaries, proper self-maps 
turn out to be automorphisms of $D$. Though absolutely trivial to check, let us mention 
for contrast with the just mentioned 
facts about proper maps, that: non-trivial retractions are never surjective 
(and never are they, automorphisms!); as one another sharp contrast,
proper maps never decrease dimension, whereas non-trivial retractions always 
decrease dimension. In other words, retractions
could be viewed as being, loosely speaking, at the `opposite end of the spectrum' when compared to proper maps.\\ 

\noindent While much work has been carried out on proper holomorphic maps and their images, far less 
investigation has been done about retracts. Infact, in addition to (the foregoing remarks and) their role 
in certain fundamental mapping problems indicated above,
retracts are of significance in complex dynamics as well. To just be specific by providing a standard 
result in this regard, we mention the following theorem which shows how retracts arise in the 
study of iterates of a holomorphic self-map of taut manifolds.
Needless to say, this theorem has been applied to the study of the 
special case of {\it proper} holomorphic self-maps. In light of the the stark contrasts 
between proper maps and retraction maps mentioned above, the connection amongst these maps, is thereby all the more
interesting.
	
\begin{thm}[Bedford, \cite{Bdfrd}; Abate, \cite{Abate_Horosphre}]\label{Iterat-Dynamics}
Let $X$ be a taut manifold, $f$ a holomorphic self-map of $X$. Suppose that the 
sequence $\left\{f^{\circ k}\right\}$ of iterates of $f$ is not compactly divergent. Then 
there exist a submanifold $M$ of $X$ and a holomorphic 
retraction $\rho: X \rightarrow M$ such that every limit point $h \in \operatorname{Hol}(X, X)$ 
of $\left\{f^{\circ k}\right\}$ is of the form
\[
h=\gamma \circ \rho
\]
where $\gamma$ is an automorphism of $M$. Infact, $\rho$ is itself a limit point of 
the sequence $\left\{f^{\circ k}\right\}$ and $f$ acts as an automorphism of $M$.
\end{thm}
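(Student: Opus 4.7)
The plan is to construct the retraction $\rho$ as an accumulation point of the iterate sequence, exploiting the tautness of $X$ to produce convergent subsequences, and then to view the closure of the orbit as a compact topological semigroup so as to secure an idempotent.

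First, by tautness and the non-compact-divergence hypothesis, there is a subsequence $f^{\circ k_j}$ converging uniformly on compacts to some $h \in \mathrm{Hol}(X,X)$. Setting $p_j := k_{j+1} - k_j$, we may assume $p_j \to \infty$. From the identity $f^{\circ p_j} \circ f^{\circ k_j} = f^{\circ k_{j+1}}$, the sequence $\{f^{\circ p_j}\}$ cannot be compactly divergent either (it keeps any compact $h(K)$ close to itself), so tautness yields a further subsequence $f^{\circ p_j} \to \rho \in \mathrm{Hol}(X,X)$. Passing to the limit in the above identity and in $f^{\circ k_j} \circ f^{\circ p_j} = f^{\circ k_{j+1}}$ gives $\rho \circ h = h = h \circ \rho$. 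To upgrade $\rho$ to a genuine idempotent, i.e.\ $\rho \circ \rho = \rho$, I would consider the orbit closure
\[
\Gamma := \overline{\{f^{\circ k} : k \geq 1\}} \subset \mathrm{Hol}(X,X),
\]
which is closed under composition and, by the non-divergence argument above propagated to every subsequence, compact. Since $X$ is locally compact, composition on $\mathrm{Hol}(X,X)$ is jointly continuous, making $\Gamma$ a compact topological semigroup; Ellis's theorem then supplies an idempotent, which we rename $\rho$.

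Setting $M := \rho(X)$, a classical theorem of H.\ Cartan asserts that the image of a holomorphic retraction is a closed complex submanifold; equivalently, $M$ coincides with the fixed-point set $\{x \in X : \rho(x) = x\}$, and the constant-rank theorem applies since $\rho \circ \rho = \rho$ forces the rank to be locally constant. For the factorization claim, given any $h \in \Gamma$, both $h \circ \rho$ and $\rho \circ h$ again lie in $\Gamma$; arguing that $\rho \circ h = h$ shows $h(X) \subset M$, whence $h = (h|_M) \circ \rho$ is immediate with $\gamma := h|_M$. To conclude $\gamma \in \mathrm{Aut}(M)$, observe that the restriction map $g \mapsto g|_M$ sends $\Gamma$ into a compact subsemigroup of $\mathrm{Hol}(M,M)$ whose idempotent is $\mathrm{id}_M$; by the standard group-in-semigroup argument (the minimal ideal of a compact topological semigroup containing a two-sided identity idempotent is a group), every element of this restricted semigroup is invertible, hence an automorphism of $M$. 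Finally, the iterates $f^{\circ k}|_M$ accumulate at $\mathrm{id}_M = \rho|_M$, so $f$ itself restricts to a biholomorphism of $M$.

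The principal obstacle I foresee is the compactness of $\Gamma$ and the verification that Ellis's theorem applies: one must propagate non-compact-divergence along \emph{every} subsequence of iterates, which is where joint continuity of composition in the compact-open topology and tautness of $X$ must be invoked carefully. Once this is set up, the remaining assertions (submanifold structure, factorization, automorphism property) follow formally from the algebra of compact topological semigroups with idempotents, bypassing the otherwise delicate hand-extraction of the right subsequence.
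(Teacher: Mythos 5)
The paper itself does not prove this theorem --- it is quoted from Bedford and Abate --- so the relevant comparison is with their standard argument, whose architecture (extraction of $h=\lim f^{\circ k_j}$, the gap limit $\rho=\lim f^{\circ p_j}$, the compact abelian semigroup of limit points, an idempotent, Cartan's theorem on images of holomorphic retractions) your proposal reproduces correctly in outline. The difficulty is that the two steps you defer as ``set-up to be done carefully'' or as ``formal semigroup algebra'' are exactly the substantive content. The first is the compactness of $\Gamma=\overline{\{f^{\circ k}\}}$. Your verification that $\{f^{\circ p_j}\}$ is not compactly divergent rests on the identity $f^{\circ p_j}\circ f^{\circ k_j}=f^{\circ k_{j+1}}$, in which both outer factors converge to $h$; this is special to the gap subsequence and does not ``propagate'' to an arbitrary subsequence $\{f^{\circ m_i}\}$, for which no such identity is available. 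What is actually needed is that the forward orbit of a point $z_0\in h(X)$ is relatively compact in $X$ (then $K=\{z_0\}$ and $K'=\overline{\{f^{\circ m}(z_0)\}}$ witness non-divergence of \emph{every} subsequence, and tautness converts this into relative compactness of $\{f^{\circ k}\}$ in $\operatorname{Hol}(X,X)$). Deducing this from the bare hypothesis of non-compact divergence is the heart of the Bedford--Abate proof, using that $z_0$ is fixed by $\rho$, the $k_X$-nonexpansiveness of $f$, and completeness of taut manifolds; it is absent from your sketch, and without it Ellis/Numakura cannot be invoked.

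The second gap is in the factorization. For an \emph{arbitrary} limit point $h\in\Gamma$ and the idempotent $\rho$ you assert $\rho\circ h=h$ without argument; this is precisely the claim that $h$ lies in the minimal ideal of $\Gamma$ (of which $\rho$ is the identity), and it can fail for a general element of a compact abelian semigroup. Similarly, ``the minimal ideal is a group'' does not make every element of the restricted semigroup $\{g|_M\}$ invertible: a compact semigroup of holomorphic self-maps containing $\operatorname{id}_M$ may contain non-injective elements. The standard repairs are either the structure theorem for compact monothetic semigroups (the set of limits of subsequences with exponents tending to infinity \emph{equals} the minimal ideal and is a compact group), or a direct extraction: from $\rho=\lim f^{\circ m_\mu}$ and $h=\lim f^{\circ k_\nu}$ produce $g=\lim f^{\circ(m_\mu-k_\nu)}$ with $g\circ h=h\circ g=\rho$, which gives $h(X)\subset M$, $h|_M\in{\rm Aut}(M)$ and $h=(h|_M)\circ\rho$. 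Finally, a small but pointed remark: your parenthetical that ``the constant-rank theorem applies since $\rho\circ\rho=\rho$ forces the rank to be locally constant'' is exactly the flawed deduction this paper dissects at length in its preliminaries (idempotency controls the rank of $D\rho$ only \emph{along} the retract, not on a neighbourhood of it); the correct justification is Cartan's argument, which you also cite, so you should drop the rank remark rather than rely on it.
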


\noindent The manifold $M$ occurring in the above 
theorem -- referred to as the {\it limit manifold} -- is, as is evident, a holomorphic 
retract of $X$. Determining all holomorphic retracts of taut manifolds in general 
is way too difficult. We shall therefore be taking up first, the question of describing the holomorphic 
retracts in what may seem 
to be a rather small class, namely that of bounded balanced pseudoconvex 
domains $D$ in $\mathbb{C}^N$; not all such domains
need be taut, but all bounded balanced domains which are taut, are contained within this class
(this owes to results of Barth \cite{Barth}). Whether or not $D$ is taut, a first general fact 
to be noted is that every retract of $D$ is a Stein manifold. This follows from 
the basic fact that  retracts of (connected) complex manifolds
are closed {\it connected} complex submanifolds and if the manifold is a Stein manifold, then so is
every of its retracts; as our class of domains of interest is contained within that comprising 
domains of holomorphy in $\mathbb{C}^N$ which are the simplest instances of Stein manifolds,
the retracts in this article, all have this property. Perhaps before the just mentioned inheritance
of the property of being a Stein manifold, we must recall in a sentence here the
elementary fact that various topological properties such as simply connectedness or more
generally the triviality of any of the homotopy or homology groups (of the ambient space)
is inherited under passage 
to a retract. Indeed in general, if $R \xhookrightarrow{\iota} X $ denotes a (continuous) retraction
map on a topological space $X$,
then the homomorphism induced by the inclusion $\iota$ at the level of each homotopy or homology group, is injective
since $r_* \circ \iota_* \equiv {\rm identity}$, in standard notations; this may be paraphrased as the 
general observation that topological complexity cannot increase 
for a retract (and our results will
establish certain finer analogues of this in several special cases in the 
complex analytic category). While this means that all such groups must 
be trivial for every retract of a contractible space $X$, we cannot deduce as readily out of this, the fact that 
contractibility is inherited as well. Indeed, this inheritance follows directly from the 
definition of contractibility 
and the only reason we are spending words here to spell this out is because we shall be dealing with 
balanced domains for the most part, and all such domains are evidently contractible. It may 
be worth remarking once here that as soon as we restrict our considerations to the category of smooth
manifolds, we gain easily defined properties that can be used to distinguish a pair of retracts, namely for instance, the dimension 
of a retract i.e., as soon as we know that a given pair of retracts of a contractible manifold are 
of different dimensions, we are ascertained that they are diffeomorphically inequivalent. This is just 
to {\it indicate} that classifying retracts of a manifold which is topologically 
trivial (here, contractible) is not an intractable or
meaningless exercise. We shall in-fact be focussing our attention in 
good measure on bounded 
balanced domains of holomorphy.
Let us only remark here as to why we do not replace boundedness by the apparently more
general assumption of Kobayashi hyperbolicity: a {\it balanced} 
domain is hyperbolic if and only if it is bounded (\cite{Kodama}) whereby in particular, the class of hyperbolic {\it balanced} convex
domains (in $\mathbb{C}^N$) is precisely the class of bounded balanced convex domains. Now, boundedness implies
hyperbolicity and 
it is known 
that a convex domain is taut if and only if it is hyperbolic \footnote{In such a case the convex domain is biholomorphic to a 
bounded domain but not necessarily to a bounded {\it convex} domain, as shown by Zimmer \cite{Zim}.} (\cite{Bracci-Saracco})). It follows that
the foregoing theorem in conjunction with our results would be useful towards obtaining a description 
of the limit manifolds of (proper) holomorphic self-maps of
those bounded balanced convex domains that we consider in this article, in particular.
Even within this class 
this question of determining {\it all} retracts has indeed been of interest but has been 
rather a difficult one and a complete description of all retracts is 
available only for the Euclidean ball $\mathbb{B}$, the polydisc $\Delta^{N}$ and 
perhaps their Cartesian products (herein and henceforth, all balls are centered at the origin and of unit radius with
respect to some norm on $\mathbb{C}^N$; when the norm is the $l^2$ norm, the corresponding ball 
referred to as the Euclidean ball is denoted $\mathbb{B}^N$ or $\mathbb{B}$ for short;
$\Delta$ denotes the standard unit disc in $\mathbb{C}$). \\

\noindent Let us also mention here for clarity that our focus will be to determine 
the form of the retracts $Z=F(X)$ rather 
than retractions mappings $F$. This owes to the fact that for a given retract $Z$, there 
are in general plenty of retractions all of whose images are the same $Z$. This 
is manifestly brought out already in the simplest of cases for $X$ 
namely, when $X$ is the bidisc $\Delta^{2}$, a description of whose retracts was attained (indeed
for $\Delta^N$)
in 1981 by Heath and Suffridge in \cite{Heath_Suff}. The case of the Euclidean ball dealt 
with in 1974 by Suffridge in \cite{Suffridge-Ball} is exposited 
well in Rudin's book \cite{Rud_book_Fn_theory_unitball}; we shall not detail 
the timeline of the results, beyond
the most basic cases of the ball and polydisc. As highlighted in the introduction 
of this article by Heath -- Suffridge,
there is a wide range of holomorphic retractions of the bidisc with the same image, 
which we recall here for 
some later significance for our article. Let $0 < t < 1$ and pick an arbitrary 
holomorphic function $f: \Delta^2 \to \mathbb{C}$ which is bounded above in modulus by $t(1-t)/2$. 
Then the mapping on $\Delta^2$, defined by 
\[
 (z,w) \mapsto \left[(1-t) z+t e^{i \theta} w +
 \left(z-e^{i \theta} w\right)^{2} f(z, w)\right]\left(1, e^{-i \theta}\right), 
\]
for any fixed $\theta \in \mathbb{R}$, is a holomorphic retraction of $\Delta^{2}$ 
onto $\left\{z\left(1, e^{-i \theta}\right):|z|<1\right\}$, as is checked by a 
straightforward computation which we skip here, to get ahead with building a
perspective in this introductory section. Note that 
though such a retraction may well be highly non-linear and have a complicated 
expression as just exemplified, its image (regardless of what $f$ is), is 
just a linear copy of the disc. This finishes the demonstration that for a 
given retract $Z$, there may very well be plenty of 
(as remarked in \cite{Heath_Suff} `quite complicated'!) retractions 
all of whose images are the same $Z$. At this juncture, it may be tempting to ask as to 
whether this phenomenon of the existence of rather complicated holomorphic retractions with the same image, 
owes to the polydisc being a domain that reduces to a product of one-dimensional discs, thereby 
in-particular, giving rise to a plethora of analytic discs in the boundary. That this is not 
the case is borne out by the fact that the same is true (in-fact, in some sense all the more! -- see below)
for Euclidean balls, as 
is recorded in chapter-8
of Rudin's book \cite{Rud_book_Fn_theory_unitball} wherein complicated retraction maps 
of the Euclidean ball $\mathbb{B}^2 \subset \mathbb{C}^2$, all with the same image, 
are exemplified. Indeed to recall this briefly here,
pick any $f \in H^\infty(\mathbb{B}^2)$ of the form
\[
f(z,w) = z + w^2 g_1(z,w) + w^4 g_2(z,w) + w^6 g_3(z,w) + \ldots
\]
where the $g_j$'s are {\it arbitrary} elements of $H^\infty(\mathbb{B}^2)$; then the 
associated self-map $F$ of $\mathbb{B}^2$ given by $F(z,w)=\left(f(z,w),0 \right)$ is a
retraction map, as simply follows from the fact that $f(z,0) \equiv z$.
Its image is $\pi_1(\mathbb{B}^2) = \Delta$,
where $\pi_1$ is the standard linear projection onto the first factor.
Hence again, while $f$ and thereby $F$ may well be 
be very complicated, the retract here is just a disc. We are therefore motivated to focus on retracts,
first more than retraction {\it mappings}.\\

\noindent Let us then begin with the obvious observation that retracts are always fixed point sets 
of some holomorphic map; the converse however is not true in general. But then 
again, an easy but noteworthy consequence of theorem \ref{Iterat-Dynamics} is that
the fixed point set of every holomorphic self-map on any taut manifold $X$, is {\it contained}
within a non-trivial retract{\footnote{these little claims will be justified in the preliminaries and subsequent sections.}. Furthermore, there are results for an exact equality
rather than just containment:
as shown in \cite{Mazet_Vigue_fxdpnt}, \cite{Thai_Fxdpnt_cnvx} among others, the 
fixed point set of any holomorphic self map of any hyperbolic convex 
domain in $\mathbb{C}^N$ -- whenever it is non-empty -- is also a retract of the 
domain. Whether or not fixed point sets of a holomorphic self map is the retract of 
possibly some other holomorphic self map of 
$X$, a definitive basic feature that {\it all} retracts have in common is that every 
retract $Z$ has the extension property in the sense that every holomorphic function 
on $Z$ admits (just by composition with the retraction mapping $X$ onto $Z$) a 
holomorphic extension to $X$. When the given function on $Z$ is bounded, we also 
have (by the same means of composition with the retraction map), a holomorphic 
extension on $X$ with the same sup-norm and thus retracts form the simplest class 
of subvarieties of $X$ with the norm-preserving extension property to $X$. Whether 
retracts are the only subvarieties with this property was examined and shown to be so, in 
many special cases by Kosiński and McCarthy 
in the relatively recent work \cite{Kosnski} wherein counterexamples have also been given. This is the only sentence that we spend in recalling the recent history about the works on retracts in general, all other citations being actually used in one way or the other in the sequel.
All this is only towards indicating the significance and 
motivation for the study of holomorphic retracts, {\it in general}. Our 
first goal for this article is rather specific and it is: to obtain a 
description of the holomorphic retracts of pseudoconvex bounded balanced domains passing through 
its center, namely the origin in $\mathbb{C}^N$. This itself is noteworthy; if not 
for anything else, this suffices for an important application to attain (employing the above 
theorem \ref{Iterat-Dynamics}), the fairly recent result of \cite{JJ} together with \cite{Pranav_JJ}
that: every proper holomorphic self-map of any smoothly
bounded balanced pseudoconvex domain of finite type is an automorphism. We shall not however elaborate upon this here.\\
	
\noindent As mentioned in the abstract, we intend a systematic presentation; so,
to not be amiss about addressing the simplest of questions in the aforementioned first 
goal then, we recall that
in \cite{Vesntn}, E. Vesentini already attained a fundamental result on the form of 
retracts through the center of certain convex balls. As pointed out
by J. P. Vigue in \cite{Vigue_fxdpnt_cnvx}, Vesentini's approach makes `heavy' use of complex geodesics.
We shall to begin with, give a relatively elementary proof of a generalization of this result 
without using complex geodesics. We however
we emphasize that complex geodesics are conceptually significant 
here; they are particularly advantageous
when the boundary satisfies extremality conditions, but
in the presence of analytic varieties in the boundary,  
it seems fruitful to take a more direct approach. 
For instance, this approach led us among other things, to attain theorem \ref{Charac-retract-1diml} giving
a fairly precise description of the retracts when extremality conditions are completely dropped in the setting of Vesentini's theorem. To see what we mean by this further, we request the reader to see its proof 
in section \ref{Vesentini}
as well as that of
proposition \ref{JJ-improved} below (theorem \ref{MainThm} is also based on this, though the
details are longer). 
 Perhaps more importantly, we first note that all  Caratheodory geodesics (abbreviated complex $C$-geodesic) are retracts but not conversely \footnote{To mention a simple example consider the copy of $\Delta^*$ in $\Delta \times \Delta^*$, which is evidently a retract but not a complex $C$-geodesic}. It may be noted in Vesentini's theorem that the boundary  is assumed 
to be $\mathbb{C}$-extremal which is weaker than the condition of being $\mathbb{R}$-extremal;
thereby, this theorem not only applies to 
$\mathbb{R}$-extremal domains such as $l^p$-balls for $1<p<\infty$, but also more domains 
such as the standard $l^1$-ball in $\mathbb{C}^N$ which is 
{\it not} $\mathbb{R}$-extremal but indeed $\mathbb{C}$-extremal. The notions
of $\mathbb{R}$-extremality, $\mathbb{C}$-extremality and the relationship between them 
is recalled in the following section \ref{preliminaries} about preliminaries. The necessity of such extremality 
hypotheses is borne out by the fact that the assertion fails even for the polydisc {\it and}
for certain simple irreducible domains as well.

\begin{thm}[Vesentini] \label{Vesntn}
Let $D$ be a bounded balanced convex domain in $\mathbb{C}^N$ with $\mathbb{C}$-extremal boundary. Then 
every retract of $D$ passing through origin is a linear subspace of $D$. 
\end{thm}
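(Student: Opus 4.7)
The plan is to bypass complex geodesics by chaining together the Schwarz lemma for balanced convex domains, a rigidity argument in which $\mathbb{C}$-extremality is the only essential ingredient, and a short Taylor-series argument showing that a complex submanifold through the origin that is a cone must be linear.

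\emph{Setup and Schwarz.} Since $D$ is bounded, balanced and convex, its Minkowski functional $\|\cdot\|$ is a norm on $\mathbb{C}^N$ with $D=\{\|z\|<1\}$. For the holomorphic retraction $\rho:D\to D$ with $\rho(0)=0$, restricting to complex lines and applying the one-variable Schwarz lemma to $\zeta\mapsto\rho(\zeta v)/\zeta$ (whose image lies in $\overline{D}$ by subharmonicity of the plurisubharmonic function $\|\cdot\|$) yields the standard estimate $\|\rho(z)\|\le\|z\|$ throughout $D$. Because $\rho$ fixes $Z=\rho(D)$ pointwise, this inequality is saturated at every point of $Z$.

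\emph{Rigidity.} Fix $z_0\in Z\setminus\{0\}$, put $r_0=\|z_0\|$, and consider the analytic disc $\psi:\Delta\to D$ defined by $\psi(\zeta)=\rho(\zeta z_0/r_0)$; then $\psi(0)=0$ and $\psi(r_0)=z_0$. Let $g(\zeta)=\psi(\zeta)/\zeta$, extended holomorphically across $\zeta=0$. The Schwarz estimate applied to $\psi$ gives $\|g(\zeta)\|\le 1$ on $\Delta$, while the saturation at $z_0$ gives $\|g(r_0)\|=1$. Plurisubharmonicity of $\|\cdot\|$ makes $\zeta\mapsto\|g(\zeta)\|$ subharmonic, and the maximum principle then forces it to be identically $1$; thus $g$ is a holomorphic disc lying in $\partial D$ and passing through $z_0/r_0$. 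The $\mathbb{C}$-extremality of $\partial D$ now compels $g$ to be constant, so $\rho(\zeta z_0/r_0)=\zeta z_0/r_0$ for every $\zeta\in\Delta$, i.e., the entire complex line $\mathbb{C}z_0\cap D$ lies inside $Z$.

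\emph{Conclusion.} As $z_0\in Z$ was arbitrary, $Z$ is a complex cone in $D$, stable under complex dilation by every $\lambda\in\overline{\Delta}$. Set $L=d\rho_0(\mathbb{C}^N)=T_0Z$ and, locally near $0$, write $Z$ as the graph of a holomorphic map $h:U\subset L\to L'$ (for a linear complement $L'$ of $L$) with $h(0)=0$ and $dh_0=0$. The cone identity $h(\lambda v)=\lambda h(v)$, compared termwise against the Taylor expansion $h=\sum_{k\ge 1}h_k$ into homogeneous components, kills every $h_k$, so $h\equiv 0$ on a neighborhood of $0$; thus $Z$ agrees with $L\cap D$ near $0$, and since both are connected closed complex submanifolds of $D$ of the same dimension, the identity principle upgrades this to $Z=L\cap D$ globally. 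The only genuinely delicate step in the whole argument is the rigidity passage producing $g\equiv\mathrm{const}$: it is precisely there that the $\mathbb{C}$-extremality hypothesis is indispensable, and the polydisc together with the Heath--Suffridge examples already recalled in the introduction shows that dropping this hypothesis destroys the rigidity and permits honestly non-linear retracts.
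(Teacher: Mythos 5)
Your route is genuinely different from the paper's. The paper first shows (Lemma \ref{Dalpha}) that $D\rho(0)$ is a norm-one projection of $D$ onto $D_L$ with $L=T_0Z$, locates an $\mathbb{R}$-extremal boundary point of $D$ inside $\partial D_L$ via Krein--Milman, and then invokes Mazet's vector-valued Schwarz lemma (Theorem \ref{Mazet-Schwarz_1}, parts (i)--(iii)) to force $\rho|_{D_L}$ to be linear, whence $Z=\rho(D_L)=D_L$. You instead unpack the rigidity by hand: the scalar Schwarz estimate $\|\rho(z)\|\le\|z\|$, saturation on $Z$, the maximum principle producing an analytic disc $g$ in $\partial D$, and a cone-plus-Taylor argument at the end. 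Your scheme is sound and arguably more self-contained (it does not cite Mazet), and the closing cone argument is a clean replacement for the paper's appeal to $\rho(D_L)=Z$.

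There is, however, one step you assert that does not follow directly from the definitions: ``the $\mathbb{C}$-extremality of $\partial D$ now compels $g$ to be constant.'' In this paper $\mathbb{C}$-extremality of a boundary point $a$ only forbids \emph{affine} complex discs $t\mapsto a+tu$ lying in $\overline{D}$; your $g(\zeta)=\rho(\zeta z_0/r_0)/\zeta$ is an a priori curved analytic disc in $\partial D$, and ruling out such discs is the strictly stronger notion the paper calls holomorphic extremality (which, as the paper stresses, is \emph{not} implied by $\mathbb{C}$-extremality for general pseudoconvex domains, e.g.\ the domain $D_h$). The gap is fillable precisely because $D$ is convex: taking a supporting complex-linear functional $L$ at a point $a=g(\zeta^*)$ with $|L|\le h$ and $L(a)=1$, harmonicity of $\mathrm{Re}(L\circ g)$ and the maximum principle force $L\circ g\equiv 1$, so $g(\Delta)$ lies in the convex compact face $S=\{L=1\}\cap\overline{D}\subset\partial D$; if $g$ were non-constant, the real affine hull of $g(\Delta)$ is a complex affine subspace $W$ of positive dimension (the Cauchy--Riemann argument the paper uses in the proof of Proposition \ref{non-linear-for-convrs-Vesen}), the convex hull of $g(\Delta)$ has nonempty interior relative to $W$ and is contained in $S\subset\partial D$, and any relative interior point together with any direction in $W$ violates $\mathbb{C}$-extremality. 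You should include this (or an equivalent) justification; as written, the rigidity step quietly upgrades $\mathbb{C}$-extremality to holomorphic extremality, which is exactly the distinction the paper is at pains to preserve.
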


\noindent Here and throughout the article, by a linear subspace $D_L$ of a balanced 
domain $D \subset \mathbb{C}^N$,
we mean that $D_L$ is of the form $D_L=D \cap L$, where $L$ is a linear subspace 
of $\mathbb{C}^N$. Likewise, by an affine linear subspace $A'$ of $D$, we mean 
$A'=A \cap D$ where $A$ is an affine linear subspace of $\mathbb{C}^N$. To recall
a relevant fact in this language, every retract of a ball (centered at the origin, as always in this article)
linearly equivalent to
the standard Euclidean ball $\mathbb{B}^N$,
is an affine linear subspace. A question which arises here is whether 
linear retracts $D_L$ can fail to be realizable as the 
image of a linear (retraction) map. While it is easy 
to ascertain that this cannot happen, it may be 
interesting to note that this can happen if `linear'
is replaced by `affine-linear' i.e., there exist affine-linear
retracts for certain
balanced domains wherein they cannot be realized as images
of affine-linear retraction maps.
We 
postpone rendering clarity about such ambiguities and questions
to section \ref{Polyballs&mainThms-sectn}; likewise,
for definitions of a couple of other terms as well, in order to focus more in this introductory section, 
on the narrative of our results and work about
retracts in perspective of what is already known.\\

\noindent Now, generalizations of 
Vesentini's theorem have certainly been obtained
in the intervening years. Let us only mention 
a relatively recent, best result in this direction
extended a bit more for our applications. 
In discussing
this, we also attain a generalization of Mazet's Schwarz lemma,
namely theorem \ref{extr-Schw-lem-holextrbdy} of the sequel, that is a key tool for some of the main results
about retracts as well as the following proposition.

\begin{prop}(A small modification of proposition $4. 5$ of \cite{JJ}): \label{JJ-improved}
Let $D$ be any balanced pseudoconvex domain in $\mathbb{C}^N$. Suppose $D \cap T_0Z$ is bounded and 
$Z$ is a retract through 
the origin with the property that all points of $\partial D \cap T_0Z$ 
are holomorphically extreme boundary points of $D$. Then, $Z$ is linear.
\end{prop}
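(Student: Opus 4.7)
Write $V := T_0 Z$. The plan is to show that $\rho$ restricts to the identity on $D \cap V$; a dimension-and-connectedness argument will then force $Z = D \cap V$, a linear slice of $D$. Since $\rho|_Z = \mathrm{id}_Z$ by the retraction property, differentiation at the origin yields $d\rho_0|_V = \mathrm{id}_V$. Because $D \cap V$ is bounded and balanced in $V$, the Minkowski functional $h_D$ of $D$ is strictly positive on $V \setminus \{0\}$; and because $D$ is balanced pseudoconvex, $h_D$ is plurisubharmonic on $\mathbb{C}^N$ with $\partial D = \{h_D = 1\}$.

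Fix a nonzero $v \in D \cap V$, set $c := h_D(v) \in (0,1)$ and $R := 1/c > 1$, and define
\[
h:\{t \in \mathbb{C} : |t| < R\} \longrightarrow \mathbb{C}^N, \qquad h(t) := \rho(tv)/t \text{ for } t \ne 0, \ h(0) := v;
\]
the extension at $t = 0$ is consistent because $d\rho_0(v) = v$. I would invoke the standard Schwarz lemma for self-maps of a balanced pseudoconvex domain fixing the origin to obtain $h_D(\rho(tv)) \le h_D(tv) = |t|c$, so $h_D(h(t)) \le c$ throughout the disc, with equality at $t = 0$. Since $h_D \circ h$ is subharmonic and attains its supremum at an interior point, the maximum principle forces $h_D \circ h \equiv c$. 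The rescaled map $\tilde h := h/c$ is therefore an analytic disc whose image lies in $\{h_D = 1\} = \partial D$ and which sends $0$ to $v/c \in V \cap \partial D$; by the holomorphic extremality hypothesis, no non-constant analytic disc in $\overline{D}$ passes through $v/c$, so $\tilde h$ --- and with it $h$ --- must be constant. Unwinding, $\rho(tv) = tv$ for all $|t| < R$, and the choice $t = 1$ yields $\rho(v) = v$.

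Hence $D \cap V \subseteq Z$. Both sides are connected complex submanifolds of $D$ of complex dimension $\dim V$, and $T_p(D \cap V) = V = T_p Z$ at every $p \in D \cap V$, so $D \cap V$ is open in $Z$; it is also relatively closed in $D$ as the trace of the closed linear subspace $V$, hence closed in $Z$. By the connectedness of $Z$ (a continuous image of $D$), $Z = D \cap V$, a linear subspace of $D$. The main obstacle in this outline is the pivot in the middle paragraph, where the max-principle equality $h_D \circ h \equiv c$ must be translated into rigidity via holomorphic extremality --- this is precisely where the weakened hypothesis on $\partial D \cap T_0 Z$ is deployed, substituting for the $\mathbb{C}$-extremality-plus-convexity of Vesentini's setting. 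Everything else (Schwarz, the maximum principle for $h_D$, the dimension count) is routine, and notably the argument avoids complex geodesics entirely.
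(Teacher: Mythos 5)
Your argument is correct, and it reaches the conclusion by a genuinely different route from the paper. The paper's proof runs through the machinery of complex geodesics: it observes that, by the holomorphic extremality of $v/h_D(v)$, the radial disc $\lambda\mapsto \lambda v/h_D(v)$ is the \emph{unique} (mod ${\rm Aut}(\Delta)$) $K_D$-geodesic for the datum $(0,v/h_D(v))$, checks that $\rho|_{D_L}\circ\phi$ is again a geodesic for the same datum, and then cites the uniqueness theorem for complex geodesics at holomorphically extreme points (Proposition 11.3.5 of \cite{Jrncki_invrnt_dst}) to conclude $D_L\subset Z$. You instead prove the rigidity directly: the Schwarz inequality $h_D(\rho(tv))\le |t|\,h_D(v)$ (valid since $h_D$ coincides with the Lempert function of $D$ at the origin for balanced pseudoconvex domains), combined with the maximum principle for the subharmonic function $h_D\circ h$, manufactures an analytic disc inside $\{h_D=1\}\subset\partial D$ through $v/h_D(v)$, and the \emph{definition} of holomorphic extremality kills it. What your approach buys is self-containedness and elementarity — no appeal to existence/uniqueness theory of Kobayashi extremals — and it yields the slightly stronger intermediate statement that $\rho$ is the identity on $D\cap T_0Z$, not merely that $D\cap T_0Z$ lies in the image of $\rho|_{D_L}$; what the paper's route buys is brevity given the citation. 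Your use of the boundedness of $D\cap T_0Z$ (to get $h_D>0$ on $V\setminus\{0\}$) and the closing openness-plus-closedness argument in the connected manifold $Z$ match the paper's final analytic-continuation step. The only point worth spelling out in a final write-up is the Schwarz step itself, i.e.\ a reference for $h_D(\rho(z))\le h_D(z)$ (e.g.\ Proposition 3.1.11 of \cite{Jrncki_invrnt_dst} together with contractivity of the Lempert function), since that inequality is where pseudoconvexity of $D$ enters.
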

\noindent We shall employ this in 
determining retracts through the origin of all possible dimensions in $\ell_q$-`balls' as in theorem
\ref{ell_q_ball_C3} below. The $\ell_q$-balls  
give the simplest domains of holomorphy with holomorphically
extreme but non-convex boundaries, at-least for the case where global $C^1$-smoothness of
boundary fails. 
Now, the above proposition also helps in establishing biholomorphic inequivalences, 
particularly with product domains; for instance, a bounded balanced pseudoconvex domain with holomorphically extreme boundary, and the polydisc -- this is only to give a quick example 
here. We now discuss other possible results about biholomorphic inequivalences, but only briefly as the focus of this article is more on attaining descriptions of retracts. Towards this, we  consider domains with greater complexity to proceed systematically towards which, 
we deal with the simplest class of domains
generalizing the polydisc other than products, that 
are also rich in analytic varieties in their boundaries, namely analytic 
polyhedra. 
Now, stating the essence of the foregoing differently, we may use knowledge about retracts of 
a domain to detect whether it is irreducible i.e., not being biholomorphic to a product domain.  
The key idea here is the
the following common feature amongst all product domains: through {\it every} point of a product domain, there are sufficiently many directions
along which existence of non-trivial retracts is guaranteed. Let us just state here the 
simplest result to this effect to indicate what this means; leaving more general formulations 
to be laid out in section \ref{Irreducibity} wherein we shall have other remarks to tie up with and furnish applications.

\begin{prop}\label{retr-of-prod-in-C2}
Let $D_1, D_2$ be domains in $\mathbb{C}$ with one of them bounded, say $D_1$. Let
$p$ be an arbitrary point of $D=D_1 \times D_2$.
Then there exists $\epsilon > 0$ such that: for every vector
$v_2 \in T_{p_2} D_2 \simeq \mathbb{C}$ with $\vert v_2 \vert < \epsilon$, there exists 
a retract $Z$ of the product domain $D$ passing through $p = (p_1,p_2)$ with $ (1,v_2) \in T_p Z$.
\end{prop}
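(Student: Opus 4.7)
My plan is to exhibit the desired retract explicitly as the graph of an \emph{affine} holomorphic map $f : D_1 \to D_2$, exploiting the fact that whenever $f : D_1 \to D_2$ is holomorphic, the map $\rho : D_1 \times D_2 \to D_1 \times D_2$ defined by $\rho(z,w) := (z, f(z))$ is automatically a holomorphic retraction (it is holomorphic, and $\rho\circ\rho(z,w) = (z, f(z)) = \rho(z,w)$ since the first coordinate is left alone and $f$ depends only on $z$), with image the graph of $f$.

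To make this work, I would first use the boundedness of $D_1 \subset \mathbb{C}$ to set $M := \sup_{z \in D_1}|z - p_1| < \infty$, and record $\delta := \mathrm{dist}(p_2, \partial D_2) > 0$, which is positive because $p_2$ is an interior point of $D_2$. I then define $\epsilon := \delta / M$. Given any $v_2 \in \mathbb{C}$ with $|v_2| < \epsilon$, I would take
\[
f(z) := p_2 + v_2 (z - p_1),
\]
so that the estimate $|f(z) - p_2| \leq |v_2|\, M < \delta$ guarantees $f(D_1) \subset D_2$. The associated retraction $\rho(z,w) = (z, f(z))$ then produces the retract $Z = \{(z, f(z)) : z \in D_1\}$, which passes through $(p_1, f(p_1)) = (p_1, p_2) = p$ and whose tangent space at $p$ is spanned by $(1, f'(p_1)) = (1, v_2)$. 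Hence $(1, v_2) \in T_p Z$, as required. Non-triviality of $Z$ is automatic from the dimension count $\dim Z = 1 < 2 = \dim(D_1 \times D_2)$.

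There is essentially no hard step here; the construction is a one-line verification once the right $\epsilon$ is chosen. The one place where the hypothesis enters in an essential way is in the passage from a pointwise estimate on $|f(z) - p_2|$ to the requirement $f(D_1) \subset D_2$: it is precisely the boundedness of $D_1$ that furnishes the finite constant $M$, and hence a \emph{single} threshold $\epsilon$ that works for every admissible $v_2$. Without boundedness of at least one factor, an affine map of the above form has unbounded image unless $v_2 = 0$, and one would have to abandon this elementary approach in favour of a Schwarz-type estimate from an invariant metric.
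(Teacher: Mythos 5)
Your proof is correct and is essentially identical to the paper's: the paper also takes the affine map $F(z)=p_2+(z-p_1)v_2$, bounds it into a small disc around $p_2$ using a radius $r_1$ with $D_1\subset\Delta(p_1,r_1)$ and a radius $r_2$ with $\Delta(p_2,r_2)\subset D_2$, sets $\epsilon=r_2/r_1$, and uses the graph retraction $(z,w)\mapsto(z,F(z))$. Your $M$ and $\delta$ play exactly the roles of $r_1$ and $r_2$.
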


\noindent The version of this for arbitrary (finite) dimensions is
formulated in proposition \ref{Retrct_Dirctn} which as already alluded to, paves the way for establishing the following.

\begin{thm}\label{prod_anal_poly}
 Let $D$ be a (bounded) balanced $\mathbb{C}$-extremal analytic polyhedron. Let $G$ be any domain in $\mathbb{C}^N$ admitting retracts for some open piece of directions at each 
 $w \in G$.
 Then $D$ is not biholomorphic to $G$. The
 same is true when $D$ is replaced by $D \setminus A$ as well, where $A$ is any non-trivial
 analytic set (possibly of codimension one) not containing the origin. In particular, $D$ and 
 $D \setminus A$ are irreducible i.e., not biholomorphic to a product domain.
\end{thm}

\noindent The precise meaning of what it means for a domain $G$ to admit retracts through a point $w \in G$ for some open piece of directions (based at the origin in $T_w G$) is laid out in definition \ref{open-piece-retracts}. The condition of 
$\mathbb{C}$-extremality means that there are no (non-trivial) complex line segments in the boundary.
As the foregoing contains results involving the conditions
of $\mathbb{C}$-extremality and holomorphic extremality, let us mention here that: while the two notions are equivalent in the 
presence of convexity, this is not so in general, as is manifestly seen
in the class of analytic polyhedra. 
The inequivalence of analytic polyhedra with strictly pseudoconvex domains (one of the simplest classes of domains with holomorphically extreme boundaries) is well-known due to Henkin's 1973 article \cite{henkin}. Product domains also share the same property: none of them is biholomorphic to any domain with holomorphically extreme boundary; the techniques of Remmert -- Stein attribute such inequivalences to the lack of analytic varieties in the boundaries of domains in the latter class. 
The question about the irreducibility of analytic polyhedra $D$ and associated `co-analytic domains' $D \setminus A$ for analytic sets $A \subset D$, particularly when the codimension of $A$ is one, is therefore interesting. One cannot hope for results without imposing conditions as there are many analytic polyhedra which are completely reducible, the polydisc being the simplest example. The above result shows that irreducibility of $D$ is guaranteed, as soon as the milder $\mathbb{C}$-extremality condition holds at-least within the class of balanced domains. 
We hasten to mention that while there may well be shorter proofs than the one via retracts to prove the irreducibility of those analytic polyhedra $D$ as in the above theorem, it gets somewhat more difficult firstly for the associated co-analytic domains
$D \setminus A$ therein; and all the more difficult for more general cases. 
As our purpose in this article is to attain results characterizing retracts more than various applications, we do not strive to include beyond a couple of results of this kind.
We also hasten to mention that there are better techniques to detect the 
occurrence of a product structure for domains far beyond the class of balanced domains owing
particularly to the very recent work \cite{Bh-Bo-Su}, which however requires some quantitative knowledge 
about the squeezing function. However, we believe that
the approach via retracts is worth recording, as there are no universal techniques to establish biholomorphic inequivalences. We shall indeed show that there are plenty of 
balanced polyhedra as in the above theorem, in chapter \ref{Irreducibity}.\\

\noindent Getting back to retracts, let us note that while Vesentini's theorem 
says that {\it all} 
retracts of $D$ as in that theorem, passing through the origin, are linear, it does not say that 
all linear subspaces are retracts. Infact, this is utterly false as soon as we consider 
any such $D$ as in the above theorem other than balls linearly equivalent to the standard Euclidean ball; this is 
due to the main result of Bohnenblust in \cite{Bhnblst} (prior to which the real version was 
obtained by Kakutani in \cite{Kakutni}). The question of {\it precisely which} linear 
subspaces are realizable as retracts of $D$ as 
in Vesentini's theorem is significant for us. For instance: while
every retract of the polydisc $\Delta^N$ is a graph, not every graph in $\Delta^N$ is 
a retract; only those which are graphs on linear retracts of $\Delta^N$ are, making it
important to first precisely pin down which linear subspaces are retracts. Indeed, this 
is quite an interesting question in its own right and is tied to a classical question 
in the geometry of Banach spaces which in our context
of finite dimensional (complex) Euclidean spaces reads as: which are those linear subspaces 
of $\mathbb{C}^N$ equipped with some norm $\mu$, that are realizable as images of norm-one (linear) 
projections when we measure the norm of the projection on 
$\mathbb{C}^N$ with respect to the operator norm induced by $\mu$?
There are differences 
in the answers for the cases $p=1,\infty$
even if we exclude $p=2$, the well-known case
distinguished as arising from an inner product,
thereby not surprising at all.
Before getting to such
differences, let us mention 
that the result is uniform for all
finite $p$ other than two and just state 
this unified (known) result. As our 
interest lies in retracts, we shall phrase it in the language of retracts rather 
than norm-one projections.
Specifically, the characterization of linear retracts (thereby those holomorphic retracts which pass through the origin, owing to Vesentini's theorem) of the standard $l^p$ 
ball denoted $B_{\ell^p}$, can be 
readily deduced from those of norm-one projections in $\mathbb{C}^N$ equipped 
with $l^p$-norm recorded in the well-known book on classical Banach spaces by 
Lindenstrauss -- Tzafiri and it reads as follows for the finite dimensional case
wherein the result holds for $p=1$ as well, unlike its infinite dimensional counterpart.

\begin{thm}\label{Lindenstrauss -- Tzafiri}  \rm (Theorem 2.a.4 of \cite{Lindstraus}):
Let $Y$ be a linear subspace of $\mathbb{C}^N$ equipped with $\|\cdot\|_{l^p}$ 
where $1 \leq p < \infty, p \neq 2$. Then $Y \cap B_{\ell^p}$ is a linear retract of $B_{\ell^p}$ 
if and only if there exist vectors $\{u_j\}_{j=1}^m$ of $\ell^p$-norm 1 in $\mathbb{C}^N$ of the form
\begin{equation}\label{supp}
u_j = \sum_{i \in \sigma_j} \lambda_i e_i, ~ 1\leq j \leq m, ~ with~ \sigma_j \cap \sigma_k = \phi ~ for ~ k \neq j
\end{equation}
such that $Y = span\{u_j\}_{j=1}^m$, where $\sigma_j := supp(u_j) = \{1 \leq i \leq N: u_j(i) \neq 0\}$
(and $\lambda_i$'s are some complex numbers such that the $\ell^p$-norm of $u_j$ are equal to $1$). Here $e_i$ denotes the standard basis of $\mathbb{C}^N$.
\end{thm}

\noindent We shall derive
in section \ref{ell_q_ball},
alternative formulations (some of which are known) of the above 
result, which are more geometrically 
revealing 
to show that the linear
retracts of $\ell^p$-balls are precisely
isometric copies of lower dimensional
standard $\ell^p$-balls. Stated differently, 
only those linear subspaces of $\ell^p$-balls 
which are biholomorphic to lower dimensional
$\ell^p$-balls, turn out to be their holomorphic retracts through the origin. 
Next, we note that the case $p=2$ has been excluded only because in that case $B_{\ell^2}=\mathbb{B}$, wherein all 
linear subspaces are retracts. We would rather like to draw attention to the 
more important fact that this result is {\it not} true for $p = \infty$ to 
demonstrate which (by a counterexample as simple as possible), we include 
an example in section \ref{Polyballs&mainThms-sectn}. 
Nevertheless, the aforementioned
geometric characterization of the linear 
retracts of $\ell^p$-balls for finite $p$, as 
lower dimensional versions of themselves, holds true
for the polydisc as well.
Indeed, this can derived without much difficulty from \cite{Heath_Suff}, though it is not 
stated in these explicit terms.

\begin{prop} \label{Polydisk}
A linear subspace $L$ of the polydisc $\Delta^N$ is a holomorphic retract of $\Delta^N$ if and only if $L = Y \cap \Delta^N$, 
where $Y$ is a linear subspace of $\mathbb{C}^N$ which can be expressed as the span of vectors 
$v_1,\ldots,v_r$ (for some integer $r$ between $1$ and $N$) all from the boundary
of the polydisc, satisfying the following conditions:
\begin{enumerate}
\item [(i)] there exists a subset of indices $J = \{j_1,\ldots j_r\} \subset \{1,2,\ldots,N\}$, such that 
for each $k=1, \ldots,r$, the vectors $v_k$ may be expressed in terms of the standard basis 
vectors $\{e_i\}_{i=1}^{N}$ of $\mathbb{C}^N$, as:
\[ 
v_k = \left(\sum_{i \notin J}c_{i,j_k}e_i\right) + e_{j_k}
\]
\item [(ii)] For each $m \in  \{1,2,\ldots,N\} \setminus J $, the 
scalars $c_{m,j}$  satisfy $\sum_{j \in J}|c_{m,j}| \leq 1$. 
\end{enumerate}
Consequently, non-trivial linear retracts of the polydisc $\Delta^N$ are precisely 
those linear subspaces which are isometric copies of 
lower dimensional polydiscs $\Delta^k$ (for $1 \leq k \leq N$).
\end{prop}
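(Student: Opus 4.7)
The plan is to derive both implications from the Heath--Suffridge classification of all holomorphic retracts of $\Delta^N$ (for which a simpler proof is to be provided separately in the paper): the sufficiency direction is by explicit construction of a linear retraction, and the necessity direction is by specializing the Heath--Suffridge form to the linear case.

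For sufficiency, given $v_1, \ldots, v_r$ of the form required by (i) with coefficients satisfying (ii), I would define the $\mathbb{C}$-linear map
\[
\rho(w) \;:=\; \sum_{k=1}^{r} w_{j_k}\, v_k.
\]
Condition (i) forces $(v_l)_{j_k} = \delta_{kl}$, so $\rho(v_k) = v_k$ and hence $\rho \circ \rho = \rho$. For $w \in \Delta^N$, the $j_k$-th coordinate of $\rho(w)$ is $w_{j_k}$, while for $m \notin J$ the $m$-th coordinate is $\sum_k c_{m,j_k} w_{j_k}$, whose modulus is strictly less than $\sum_k |c_{m,j_k}| \le 1$ by (ii); thus $\rho$ sends $\Delta^N$ into itself. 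The image is readily seen to be $Y \cap \Delta^N = L$, so $\rho$ is the desired linear retraction.

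For the converse, suppose $L = Y \cap \Delta^N$ is a retract of $\Delta^N$. The Heath--Suffridge description provides (after a permutation of coordinates) a subset $J \subseteq \{1, \ldots, N\}$ and holomorphic functions $f_m : \Delta^{|J|} \to \overline{\Delta}$ ($m \notin J$) such that
\[
L = \big\{ w \in \Delta^N : w_m = f_m\big((w_j)_{j \in J}\big) \text{ for all } m \notin J \big\};
\]
in particular, the coordinate projection $\pi_J|_L : L \to \Delta^{|J|}$ is a biholomorphism. Since $Y$ is $\mathbb{C}$-linear of the same dimension $|J|$, the restriction $\pi_J|_Y : Y \to \mathbb{C}^{|J|}$ is a $\mathbb{C}$-linear isomorphism whose inverse $\sigma$ must agree on $\Delta^{|J|}$ with the parametrization $u \mapsto (u, f(u))$, by uniqueness of $\pi_J$-preimages in $Y$. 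This forces each $f_m$ to be $\mathbb{C}$-linear, say $f_m(u) = \sum_k c_{m,j_k} u_{j_k}$, and the codomain constraint $|f_m| \le 1$ on $\Delta^{|J|}$ translates, by choosing phases of the $u_{j_k}$ to align the summands, into $\sum_k |c_{m, j_k}| \le 1$, which is (ii). Setting $v_k := \sigma(e_{j_k}) \in Y$ produces the spanning set of the form demanded by (i).

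The main obstacle is the appeal to the Heath--Suffridge classification: without it, one would have to extract a linear retraction from a possibly nonlinear retraction onto the linear $L$, and the natural candidate $d\rho_0$ does not obviously map $\Delta^N$ into itself. The burden is therefore shifted to establishing the Heath--Suffridge form first, which the paper handles separately; once that is in hand, the characterization above becomes essentially a matter of reading off the linear structure.
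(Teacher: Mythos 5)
Your proposal is correct and follows essentially the same route as the paper: the sufficiency direction is the same explicit linear retraction $F_i(z)=z_i$ for $i\in J$ and $F_m(z)=\sum_k c_{m,j_k}z_{j_k}$ for $m\notin J$, and the necessity direction likewise invokes the Heath--Suffridge graph description, observes that linearity of $Y$ forces the graphing functions to be linear, and extracts condition (ii) by the same phase-alignment evaluation. No substantive difference.
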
       

\noindent It follows that all holomorphic retracts of 
$\Delta^N$ (not necessarily passing through the origin) are biholomorphic copies of lower
dimensional polydiscs.
It is immediate that through any given point of the polydisc, there exist
retracts of all possible dimensions ($k=0,1,\ldots, N$), as its automorphism group acts transitively;
a little contemplation shows that the transitive action is unnecessary to confirm the
existence of such retracts. Indeed, for every $1 \leq p < \infty$, the $\ell^p$-ball fails to be homogeneous unlike the 
polydisc as soon as $p \neq 2$; but it is easy to derive as a corollary of theorem \ref{Lindenstrauss -- Tzafiri}
that through each point of the $\ell^p$-ball, there exist
retracts of all possible dimensions, as the retracts through the origin serve to confirm this. It is too much to ask for retracts along any given set of directions
because as the above proposition shows: not every two-dimensional subspace is a retract, a fact which holds
not only for $\Delta^N$ but also for every ball that is not linearly equivalent to the 
ball with respect to the $l^2$-norm,
$\mathbb{B}^N$. We must not be amiss to note however that {\it every} one dimensional 
linear subspace 
of a bounded balanced {\it convex} domain  in $\mathbb{C}^N$ is a retract -- this 
follows by the Hahn--Banach theorem. It is natural to 
ask whether this holds if we relax the convexity condition on the domain to pseudoconvexity. The answer is in 
the negative, as was noted by Lempert in \cite{Lempert-FundArt} by indicating an example to complement his 
passing observation therein,
that the existence of $2$-dimensional retracts is rather restricted i.e., to the effect that when convexity 
is relaxed, even $1$-dimensional retracts can fail to exist in abundance. 
The only rigorous record of the just-mentioned Lempert's example 
is in \cite{Jrncki_invrnt_dst} (specifically in example 11.3.10 therein). 
While this relies on the usage of complex geodesics and is done only for one particular 
direction, 
we found that we can record a simpler example wherein we can begin to concretely address the 
question about retracts of higher dimensions as well; moreover this
can be
achieved entirely through elementary means. In-fact, our example is as simple 
as the standard $\ell^q$-`ball' for $q<1$ and we record this in the following proposition; an 
interesting feature of the $\ell^q$-`ball' in $\mathbb{C}^N$ is that apart from failing to 
be convex, it fails to be concave unlike its real counter-part.
Let us also note that while the above-mentioned example of Lempert
is a pseudoconvex complete Reinhardt domain much as ours, we believe recording the aforementioned $\ell^q$-`ball' is 
worthwhile due to the fact that the literature about retracts of dimension {\it higher than} one, is 
scant at-least compared 
to complex geodesics and one-dimensional retracts. In that regard, we would like to mention the very 
recent article \cite{GZ} \footnote{This appeared on the arXiv, a couple of months after the earlier versions of
our article}. Ghosh -- Zwonek attain in this article \cite{GZ}, a complete characterization of the two dimensional retracts 
(and thereby all retracts) of the three dimensional Lie ball and the tetrablock; this article also has
noteworthy general results about retracts of Lempert domains. We now get to the special case of
the $\ell^q$-`ball' for $q<1$ in arbitrary (finite) dimension, which is 
amongst the simplest examples of (pseudoconvex) domains which fail to be a Lempert domain (see proposition
\ref{Lemp_cnvx} below as well); indeed, we deal with large collections of domains that fail to be 
Lempert domains. 

\begin{thm} \label{ell_q_ball_C3}
Let $ q = (q_1,\ldots,q_N)$, where $q_j$'s are arbitrary positive numbers, all less than one. Consider the $\ell^q$-`ball' given by 
\begin{equation}\label{egg-decouple}
D_q = \{(z_1,\ldots,z_N) \in \mathbb{C}^N \; :\; |z_1|^{q_1} + \ldots + |z_N|^{q_N} < 1\}.
\end{equation}  
The holomorphic retracts through the origin of $D_q$ are 
precisely the orthogonal projections of $D$ onto a
span of a finite subset of the standard basis vectors of $\mathbb{C}^N$; thereby, the retracts are all copies of lower-dimensional $\ell^q$-balls.
\end{thm}
The techniques used to obtain this theorem can also be used to characterize retracts through the origin of some (bounded balanced pseudoconvex) \textit{non-decoupled domains} as well (see corollary \ref{non-decoupled}).
Note that the boundaries of these domains are holomorphically extreme, in particular $\mathbb{C}$-extremal,
as is the case for $\ell^p$-balls for $1 \leq p < \infty$ as well; in such cases proposition 
\ref{JJ-improved} reduces 
considerations from holomorphic retracts to linear ones. 
We also include a couple of examples of certain balanced {\it non-convex} pseudoconvex domains
to illustrate the case where  
extremality condition fails (in the same section \ref{ell_q_ball} where we detail the proof of the above theorem). Indeed,
we shall show in such examples that retracts of all possible 
dimension exists through each point of the domain other than the origin as well. That retracts cannot
also be expected in any given direction for the non-convex case, is substantiated by $D_q$ as 
in the above theorem, owing to the spectacular failure even at the origin (of $1$-dimensional retracts,
along any direction other than the coordinate directions). \\

\noindent Getting back to analysing a bit further, examples
arising out of $\ell^p$-balls (where $p \geq 1$), let us 
remark a word or two, about generating examples of domains satisfying 
the $\mathbb{C}$-extremality 
condition in Vesentini's result, before we pass out of it. Firstly, while the
product operation fails to generate further examples of balls 
with $\mathbb{C}$-extremal property, intersection does i.e., the intersection of a pair of 
balls, possibly with respect to different norms, with $\mathbb{C}$-extremal boundary 
is again a ball (with respect to some norm) with {\it $\mathbb{C}$-extremal boundary};
Vesentini's theorem applies to pin-down the retracts through the center of 
such a `ball of intersection'. Now towards non-convex cases, Vesentini's theorem cannot in general be modified easily so as to adapt to other classes of domains despite proposition 
\ref{JJ-improved}; for instance, to deal with those analytic polyhedra whose boundaries
are $\mathbb{C}$-extremal. Such domains can never 
be holomorphically extreme (so proposition \ref{JJ-improved} cannot be applied), as their boundaries are
Levi flat on an open dense subset \footnote{Indeed, this is the maximum 
possible Levi flatness that can be expected in this context; for instance, while it is 
possible for a smoothly bounded domain $D$ to be holomorphically extreme
all along $\partial D$, it is impossible for $\partial D$ to be Levi flat everywhere.}.
However, we were able to attain the following result. 
\begin{thm}\label{max_noncnvx}
There do not exist any (non-trivial) retracts through the origin of (bounded) balanced $\mathbb{C}$-extremal analytic polyhedron $D$ in $\mathbb{C}^N$ along the direction of any of the points in the open faces of $\partial D$. To be more precise, suppose that $D$ can be described 
as
\[
D = \{ z \in U \; : \;  \vert f_k(z) \vert <1, \text { for all } 1 \leq k \leq m \},
\]
for some holomorphic $f_k$'s in a neighbourhood $U$ of $\overline{D}$ (and that $D$ happens to be an open, connected,
bounded, balanced subset of $\mathbb{C}^N$ with $\mathbb{C}$-extremal boundary). Suppose also that the system of defining functions $(f_k)$ is minimal i.e., none of the functions $f_k$ can be removed without changing $D$. Let $p$ be 
a boundary point on any of the open faces of $\partial D$, i.e. $p \in \tilde{\sigma}_j$ for some $j$ with $1 \leq j \leq m$, where 
\[
\tilde{\sigma}_j := \{z \in \partial D: |f_j(z)| =1,~|f_l(z)| < 1~\text{for}~1 \leq l \leq m~\text{with}~l \neq j\}.
\] 
Then there does not exist any retract $Z$ of $D$ passing through the origin tangent to $p$ thereat
i.e., with  $p \in T_0Z$.
\end{thm}

\noindent We know that $T_0 Z$ is itself a retract, indeed a linear retract. What the above theorem means for linear retracts of $D$ is that they cannot intersect the open faces $\tilde{\sigma}_j$. Thereby, every (non-trivial) linear retract is spanned by (at-most $N-1$) points from the lower dimensional ribs of the polyhedron. Consequently, owing to theorem \ref{Graph} below, every (non-trivial holomorphic) retract through the origin is a biholomorphic copy of a lower dimensional linear slice of the same polyhedron spanned by vectors from its ribs. \\

\noindent Existence of domains as in the above theorem in abundance will be shown in section \ref{Irreducibity}. A complete determination of retracts (even through the origin) of such domains seems possible only when either $f_j$'s are explicitly given or when sufficient data about them is given. We shall do this exercise only for what is perhaps the simplest of such polyhedrons namely,  
$D_h:=\{ (z,w) \in  \mathbb{C}^2 : \vert z^2 - w^2 \vert <1, \; \vert zw \vert <2 \}$. It may be noted that this itself is interesting to start with, as its boundary is {\it not} entirely complex non-degenerate, (i.e. its ribs has singular points within it).\\


\noindent The next main result of this paper, from which we deduce both the above theorem as well as theorem \ref{ell_q_ball_C3}, is the following.

\begin{thm}\label{cnvx_hull}
Let $D$ be a bounded balanced pseudoconvex domain in $\mathbb{C}^N$ and $C$ be the convex hull of $\overline{D}$. Let $L$ be any linear subspace which cannot be precisely spanned by vectors from $\partial D \cap \partial C$. Then $D_L:=D \cap L$ is not a linear retract of $D$.    
\end{thm}

\noindent We now lay down the aforementioned application of theorem \ref{ell_q_ball_C3} and theorem \ref{max_noncnvx} about retracts of $D$ to establish irreducibility, i.e., biholomorphic inequivalence with product domains. As a consequence of the foregoing pair of theorems, we see that a holomorphic retract $Z$ through the origin of an analytic polyhedron as in the above theorem \ref{max_noncnvx} has the property that $T_0Z$ intersects $\partial D$ only in its closed nowhere dense subset comprising ribs (formed by intersections of at-least two of the faces) of $\partial D$. Thereby, such retracts are sparse: $D$ does not admit any retract along any open piece of directions at the origin. This leads to theorem \ref{prod_anal_poly}; though this was already stated, the point here is to highlight that the  proof is very similar for theorem \ref{ell_q_ball_C3} we well. Indeed, to minimize repetition,
we prove both cases simultaneously in section \ref{Irreducibity}.
 
\begin{cor}\label{gen-anal-poly}
Let $D$ be a non-convex decoupled egg domain $D$ as in (\ref{egg-decouple}) of theorem \ref{ell_q_ball_C3}. Then $D$ is not biholomorphic to a product domain. If $A$ is a non-trivial analytic subset of $D$ not passing through the origin then the same conclusion holds for $D \setminus A$.
\end{cor}

\noindent Among others, the above corollary alongwith theorem \ref{prod_anal_poly} deals with three classes of domains: product domains, 
analytic polyhedra and `co-analytic' domains \footnote{This is not standard terminology; however, this helps prevent repeating long phrases to describe a class of domains that will occur oftentimes in this article. We shall also refer to them as `analytic complements' at times.} i.e., domains of the form $D \setminus A$ wherein $A$ is a non-trivial analytic subset of $D$ (we only deal with domains of holomorphy $D$ assuming ${\dim A}>0$ needless to say). Members of all these classes, share the
common feature that their boundaries have non-trivial analytic varieties in them. 
Corollary \ref{gen-anal-poly} is to the effect that nevertheless it is easy to get hold of mutually inequivalent members from these classes. In-fact, as we shall see along the way in section \ref{Irreducibity} (one of the major sections of this article), an analytic polyhedron is never equivalent to a co-analytic domain $D \setminus A$, whatever $D$ may be. As can be guessed, such inequivalences stem from the differences in the location of the 
analytic boundary pieces (for the former, these are not situated in the interior as is 
evidently the case with the latter). An analytic polyhedron can be equivalent to a product domain, the simplest example being $\Delta^2$; however, proposition \ref{bih_Delta^2} will confirm that this is the only bounded {\it balanced} domain in $\mathbb{C}^2$ for which such an equivalence occurs.
Finally and more importantly, to bring out the role of the extremality condition both in Vesentini's theorem and allied
results (such as the foregoing theorem \ref{cnvx_hull} and theorem \ref{MainThm} below), let us 
mention that the converse of Vesentini's theorem is also true: if all retracts through the origin of 
a bounded balanced convex domain are linear, then the domain is $\mathbb{C}$-extremal. In-fact, it is
enough to just have linearity of the one-dimensional retracts and furthermore, a general and local version can 
be formulated, namely as follows. 

\begin{thm} \label{converse-of-Vesentini}
Let $D$ be a bounded balanced pseudoconvex domain in $\mathbb{C}^N$ and $p \in \partial D$. Assume 
that $\partial D$ is 
convex near $p$ or more generally, convex at $p$ with the Minkowski functional of $D$ being continuous
near $p$. Suppose the only one-dimensional retract through the origin in the direction of $p$ 
is the linear one, then $p$
is $\mathbb{C}$-extremal i.e., there exist non-linear retracts, as soon as $p$ 
fails to be a  $\mathbb{C}$-extremal boundary point. 
\end{thm}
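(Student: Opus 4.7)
The plan is to prove the contrapositive: assuming $p \in \partial D$ fails to be $\mathbb{C}$-extremal, I construct explicitly a non-linear one-dimensional holomorphic retract of $D$ passing through the origin whose tangent direction there is $p$.

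The convexity hypothesis at $p$ (either local convexity of $\partial D$ near $p$, or convexity at $p$ together with continuity of the Minkowski functional $h_D$ near $p$) furnishes a $\mathbb{C}$-linear supporting functional $L : \mathbb{C}^N \to \mathbb{C}$ with $L(p) = h_D(p) = 1$ and $|L| \leq h_D$ on all of $\mathbb{C}^N$; the continuity of $h_D$ is precisely what permits upgrading a local support at $p$ to a global bound, via the positive homogeneity of $h_D$. Failure of $\mathbb{C}$-extremality at $p$ supplies a non-zero vector $v \in \mathbb{C}^N$ for which the affine disc $\zeta \mapsto p + \zeta v$ lies inside $\overline{D}$ for every $\zeta \in \overline{\Delta}$. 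Applying $L$ and using $|L| \leq h_D \leq 1$ on $\overline{D}$ yields
\[
|1 + \zeta L(v)| = |L(p + \zeta v)| \leq 1 \qquad \text{for all } \zeta \in \overline{\Delta},
\]
which forces $L(v) = 0$ (otherwise choosing $\zeta$ with $\zeta L(v)$ a small positive real violates the bound). In particular $v$ is linearly independent from $p$.

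With $L$ and $v$ in hand, I would define the candidate retraction $G : D \to \mathbb{C}^N$ by
\[
G(z) \;=\; L(z)\, p + L(z)^2\, v \;=\; L(z)\,\bigl(p + L(z) v\bigr).
\]
Since $L(z) \in \Delta$ for $z \in D$, the factor $p + L(z)v$ lies in $\overline{D}$, and then balancedness of $D$ gives $h_D(G(z)) \leq |L(z)| < 1$, so $G(D) \subset D$. Idempotency follows from a one-line computation: for $w = G(z)$ one has $L(w) = L(z) L(p) + L(z)^2 L(v) = L(z)$ since $L(p) = 1$ and $L(v) = 0$, hence $G(w) = L(w) p + L(w)^2 v = G(z)$. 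The image
\[
Z = G(D) = \bigl\{\zeta p + \zeta^2 v : \zeta \in L(D)\bigr\}
\]
is a one-dimensional complex submanifold of $D$ passing through the origin at $\zeta = 0$ with tangent vector $p$, and is visibly non-linear because $v$ is linearly independent from $p$ and the coefficient of $\zeta^2$ is nonzero. This contradicts the hypothesis that the only one-dimensional retract through $0$ in the direction of $p$ is linear, completing the contrapositive.

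The only delicate step is securing the global supporting functional $L$ from the various flavors of local convexity at $p$; once $L$ is in hand the remainder of the proof is essentially forced, and the decisive structural observation is that $L(v) = 0$ comes for free, by combining the supporting inequality with the presence of the affine disc in $\overline{D}$.
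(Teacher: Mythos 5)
Your proof is correct, and it takes a genuinely more elementary route than the paper's for the decisive step. Both arguments produce the very same curve $\varphi(\zeta)=\zeta p+\zeta^2 v$ and check $\varphi(\Delta)\subset D$ in the same way; but the paper certifies that $\varphi(\Delta)$ is a retract by showing $\varphi$ is a complex Carath\'eodory geodesic -- it locates a $t_0\in\Delta^*$ at which $D$ is convex at $p+t_0v$, computes $c_D^*(0,\varphi(t_0))=|t_0|$ via results of Jarnicki--Pflug, invokes the geodesic-to-retract correspondence, and then proves non-linearity of the image by a separate contradiction argument that includes a case analysis (on $h(v)\le 1$ versus $h(v)>1$) just to establish that $p$ and $v$ are linearly independent. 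You bypass all of this by exhibiting the holomorphic left inverse explicitly: the supporting functional $L$ satisfies $L\circ\varphi=\mathrm{id}$, so $G=\varphi\circ L$ is idempotent by a one-line computation. The enabling observation, $L(v)=0$, is one the paper does not make -- it carries along the case $L(v)=\lambda\neq 0$ even though, as you show, the supporting inequality together with the boundary disc forces $\lambda=0$ -- and it simultaneously renders the linear independence of $p$ and $v$, hence the non-linearity of $Z$, immediate. Your approach buys a self-contained, citation-free construction; the paper's buys the extra information that $\varphi$ is a $C$-geodesic.

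Two small points of care. First, your step $|L(p+\zeta v)|\le h_D(p+\zeta v)\le 1$ quietly uses $h_D\le 1$ on $\overline D$, which is not automatic for an arbitrary bounded balanced pseudoconvex domain (the Minkowski functional is only upper semicontinuous in general); you should, exactly as the paper does, rescale $v$ so that the disc $p+\Delta v$ stays inside the neighbourhood where $h_D$ is continuous, whence $h_D\equiv 1$ on that boundary disc and both $L(v)=0$ and $G(D)\subset D$ follow. Second, the continuity of $h_D$ is not what upgrades a local support at $p$ to a global bound -- the paper's definition of convexity at a boundary point already demands $|L|\le h_D$ on all of $\mathbb{C}^N$ -- its role is precisely the point just described.
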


\begin{rem} \label{rem-cnvrse-Vesen}
\begin{itemize}
\item[(i)] We note that the hypotheses that $\partial D$ is convex near $p$ implies convexity of 
$\partial D $ {\it at} $p$ together-with the continuity of the Minkowski functional near $p$; this 
will be discussed during the course of the proof. 
When $\partial D$ is convex
near $p$, it is enough to check for the existence of
some holomorphic variety through $p$
rather than a linear one, in order to ascertain 
existence of non-linear retracts; this is due to the equivalence of 
$\mathbb{C}$-extremality with 
holomorphic extremality in the setting of the 
above theorem, which is discussed in 
proposition \ref{hol_C_extrm}.
It may happen that $\partial D$ is 
convex at some boundary point but not in any open piece of the boundary about it; to
mention a simple example, we may consider $l^q$-`balls' for $q<1$ in $\mathbb{C}^2$ and examine 
points $p$ on the coordinate axes. This gives an example wherein convexity near $p$ fails but
the weaker condition of convexity at $p$ together-with the continuity of the Minkowski functional near $p$ as in the statement of the above theorem,
indeed holds.
Finally here let us 
recall that if $p$ is a non-convex boundary point then there are no one-dimensional retracts through
the origin in the direction of $p$.\\

\item[(ii)] More importantly, a precise description of all the 
one-dimensional non-linear retracts through
the origin in a bounded balanced pseudoconvex domain along the direction of a boundary 
point $p$ at which $\partial D$ is convex but fails to be $\mathbb{C}$-extremal -- with 
no assumptions about the continuity of the Minkowski functional near $p$ as in the statement of the above theorem -- is  
attained in proposition \ref{non-linear-for-convrs-Vesen}. That leads to 
the following theorem about retracts of all dimensions.
\end{itemize}
\end{rem}

\begin{thm}\label{Charac-retract-1diml}
Let $D$ be a bounded balanced pseudoconvex domain in $\mathbb{C}^N$. 
Let $Z$ be any (non-trivial) retract of 
$D$ through the origin with the property that
$\partial D$ is convex near points 
where $L:=T_0Z$ intersects $\partial D$.
Then $Z$ can 
be realized as the 
graph of a holomorphic map over the linear subspace $D_L$ (which itself is a retract of $D$), taking 
values in  
$\bigcup_{p \in \partial D_L}F_\mathbb{C}(p, 
\partial D) \cap D_M$ where $F_\mathbb{C}(p, \partial D)$ 
is the $\mathbb{C}$-linear face of $\overline{D}$ 
corresponding to the boundary point $p$ and $M$ is 
a certain linear subspace complementary to $L$ (and as usual $D_M:=D \cap M$).


\end{thm}

\noindent In proving this theorem, we first derive from an idea from \cite{Abate_Isometry}, a general result without any convexity hypothesis, thus going beyond the purview of Vesentini's result. 
Namely, the following result wherein we assume boundedness. It must be noted that 
dropping boundedness leads to the failure \footnote{see proposition 
 \ref{not_graph} of section \ref{sectn-polyretract-of-C^2}.} of the result whereas replacing boundedness by the apparently 
more general condition of Kobayashi hyperbolicity is insignificant here,
owing to  
the already mentioned result of Kodama in \cite{Kodama},  
according to which any hyperbolic {\it balanced} domain is bounded.

\begin{thm}\label{Graph}
Let $D$ be any bounded balanced pseudoconvex domain in $\mathbb{C}^N$, $Z$ any (non-trivial) retract passing through the 
origin with $\rho: D \longrightarrow Z$, a retraction map. Let $L$ denote the tangent space to $Z$ 
at the origin. Then $\rho_{|_{D_L}}$ is a biholomorphism mapping $D_L = D \cap L$ onto $Z$ and upto 
a linear change of coordinates, $Z$ is the graph of a holomorphic map over $D_L$ taking values in $D_M:=\ker\left(D\rho(0)\right)$. 
\end{thm}

\noindent It follows immediately from this theorem that $Z$ is biholomorphic to a (lower dimensional)
the linear retract $D_L$. In the cases where $\partial D$ is holomorphically extreme, $Z=D_L$. Whatever be the case for $D$ and $Z$ \textit{as above}, several inheritance properties extending the list mentioned earlier follow: if $D$ is $\mathbb{C}$-extremal analytic polyhedron or, strongly pseudoconvex or more generally (pseudoconvex and) of finite type, then $Z$ is biholomorphic to a domain of the same kind. While it is true that when $D$ is an $\ell^p$-ball, $Z$ is biholomorphic to a lower dimensional $\ell^p$-ball, this is nowhere close to being as trivial as the just
mentioned inheritance properties.
Next, we note that it does not follow that any two retracts of the same
dimension through the
origin, are biholomorphically 
equivalent; indeed, even a pair of linear retracts
of the same dimension may fail to be biholomorphic. To mention a concrete
example, we may consider $D=\mathbb{B}^2 \times \Delta^2$ wherein both factors can be seen as two-dimensional retracts through 
the origin in $D$.
Relegating further relevant observations here to section \ref{Polyballs&mainThms-sectn}, 
we record here a corollary for an important class of domains.
Indeed, when $D$ as in the above theorem (without any extremality condition about any boundary point), is homogeneous
i.e., ${\rm Aut}(D)$ acts transitively on $D$, then of-course we 
may glean a characterization of {\it all}
retracts of such a domain.
 
\begin{cor}\label{homogeneous}
Let $D$ be any homogeneous bounded balanced domain in $\mathbb{C}^N$. Then, 
every non-trivial retract $Z$ (not necessarily passing through the origin in $D$) is 
biholomorphic via a member of ${\rm Aut}(D)$ to the
graph of a holomorphic map over a linear retract $D_L$ 
(which as usual is a ball with respect to some norm in the lower dimensional complex Euclidean space $L = T_0Z$), of $D$.  
\end{cor}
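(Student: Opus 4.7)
The plan is to reduce, via homogeneity, to the case of retracts through the origin, which is already handled by Theorem \ref{Graph}. Given a non-trivial retract $Z = \rho(D)$, pick an arbitrary point $p \in Z$ and use homogeneity of $D$ to produce an automorphism $\phi \in \operatorname{Aut}(D)$ with $\phi(p) = 0$. Define the conjugate retraction $\tilde{\rho} := \phi \circ \rho \circ \phi^{-1}$ and set $\tilde{Z} := \phi(Z)$. A one-line check gives $\tilde{\rho} \circ \tilde{\rho} = \phi \circ \rho^{\circ 2} \circ \phi^{-1} = \tilde{\rho}$, so $\tilde{\rho}$ is again a holomorphic retraction and $\tilde{Z}$ is a retract of $D$ passing through the origin.

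Theorem \ref{Graph} is then applicable to $\tilde{Z}$: every bounded homogeneous domain is a domain of holomorphy, so $D$ is in particular pseudoconvex, and a classical result (rooted in the Cartan--Kaup correspondence together with the Harish-Chandra realization of bounded symmetric domains) ensures that every homogeneous bounded balanced domain in $\mathbb{C}^N$ is linearly equivalent to a convex balanced domain, hence is itself convex. Setting $L := T_0 \tilde{Z}$, Theorem \ref{Graph} yields that $\tilde{\rho}|_{D_L}$ is a biholomorphism of the linear retract $D_L := D \cap L$ onto $\tilde{Z}$, and that, up to a linear change of coordinates on $\mathbb{C}^N$, $\tilde{Z}$ is the graph of a holomorphic map over $D_L$. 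Convexity of $D$ further ensures that $D_L$ is a bounded balanced convex domain in $L$, i.e., the unit ball with respect to the norm on $L$ induced by the Minkowski functional of $D$.

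Finally, transporting via $\phi^{-1} \in \operatorname{Aut}(D)$ yields the statement for $Z$ itself: $Z = \phi^{-1}(\tilde{Z})$ is biholomorphic, through the automorphism $\phi^{-1}$, to the graph described above. The notation ``$L = T_0 Z$'' in the corollary should be read as $L = T_0 \tilde{Z} = T_0(\phi(Z))$, the tangent space at the origin after $Z$ has been translated to pass through $0$ via $\phi$ --- this is the only mild notational subtlety. There is no substantive obstacle beyond Theorem \ref{Graph}: the corollary is essentially a direct transport of that theorem from the origin to an arbitrary base point via homogeneity, with the only nontrivial background input being the classical convexity result for homogeneous bounded balanced domains that legitimizes calling $D_L$ a ``ball with respect to some norm''.
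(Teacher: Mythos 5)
Your proof is correct and follows exactly the route the paper intends: conjugate the retraction by an automorphism carrying a point of $Z$ to the origin (the mechanism of proposition \ref{Bih}), apply theorem \ref{Graph} to the resulting retract through the origin, and transport back. Your added remark that homogeneity plus balancedness forces convexity (so that $D_L$ really is a norm ball) is the same classical input the paper invokes right after the corollary when identifying these domains with the bounded symmetric domains in their circular realization.
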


\noindent Owing to the assumption of being {\it balanced}, we recall that the 
domains as in this corollary are same as the (much studied) bounded balanced symmetric domains which are biholomorphic 
to (Cartesian) products of the standard Cartan domains of types I -- IV, 
all of whose automorphism groups are well-known (Hua \cite{Hua}); this also means that domains as in the 
above theorem are convex. This is a good place to draw attention of 
the reader to the fact
that a precise determination of even the linear retracts of a (convex)
ball -- as in theorem \ref{Lindenstrauss -- Tzafiri} for the
$l^p$-ball for $p<\infty$ and proposition \ref{Polydisk} for the $p=\infty$ 
case much earlier above, and few other relevant results as 
in the already mentioned very recent work \cite{GZ} 
which deals with among other things, Cartan domains of type IV -- is in general quite 
difficult. Moreover, it must be remembered that an orthogonal projection of 
such $l^p$-balls onto a linear subspace
in general is {\it not} a retract, except for the case $p=2$. 
As these are classical facts, the significance of {\it linear} retracts from the viewpoint of recent
research work may need a remark; let us only mention that it was shown by Mok in his fairly recent
work \cite{Mok}, that every linear totally geodesic submanifold through the origin of any bounded symmetric domain
$\Omega$ with respect to a ${\rm Aut}(\Omega)$-invariant K\"ahler metric (for instance, the Bergman metric)
is realizable as a retract just by orthogonal projection.
As this has been dealt with deeply in \cite{Mok}, we shall proceed forth next, to look at another direction (but still 
related to the foregoing corollary in a sense as shall be seen), in relaxing the assumptions 
in Vesentini's theorem; wherein which, we retain convexity but 
drop the extremality condition and ask if finer information can be obtained 
about the
retracts than the general one in the foregoing theorem \ref{Graph}.
Recall that Vesentini's result fails if we drop the extremality 
condition even if we retain convexity, as is manifestly seen 
in the simplest case of polydiscs. This motivates us to first enquire about the 
retracts of domains 
which {\it fail} to satisfy extremality conditions at {\it most} of its boundary points 
in the simplest of test cases such as domains decomposable into a
product of balls with respect to some norms (called polyballs in \cite{Kuczm_Rtrct_polybll}) satisfying an extremality 
condition, much as the polydisc. Pursuing this line 
of enquiry has led us to the result laid down next, which 
generalizes the result for polydiscs due to Heath and Suffridge, to a much larger class of domains and 
with a simpler proof following the recent work \cite{BBMV-union}. 
As indicated above, this theme of retracts of polyballs has indeed been considered 
earlier for instance in \cite{Kuczm_Rtrct_polybll}; it was shown therein that fixed point 
sets of holomorphic self-maps of 
such $N$-fold product of balls are always retracts as well, leaving to be desired however, as to what 
the retracts of polyballs look like. We 
shall address this here, in the following next main result of this paper.

\begin{thm} \label{MainThm}
\begin{itemize}
\item[(A)] Let $B_1,B_2$ be a pair of bounded balanced domains of 
holomorphy with holomorphically extreme boundaries,
possibly in 
different complex Euclidean spaces.
Then,
every non-trivial retract of $D=B_1 \times B_2$ through the origin is the graph, either of a $B_2$-valued holomorphic 
map over a (complex) linear subspace of $B_1$ or, of a $B_1$-valued holomorphic map
over a linear subspace of $B_2$.\\
More specifically and precisely, 
let $B_1 \subset \mathbb{C}^{N_1}$ and 
$B_2 \subset \mathbb{C}^{N_2}$  be as above i.e., not necessarily convex but pseudoconvex bounded balanced 
domains with holomorphically extreme boundaries. Let $D=B_1\times B_2$ and suppose without loss
of generality that $\min\{N_1,N_2\}=N_1$. Let $\mu_1,\mu_2$ denote the Minkowski 
functionals of $B_1,B_2$ respectively.
Let $Z$ be any non-trivial retract of $D$, $L=T_0Z$, the tangent space to $Z$ at the origin, $D_L=L \cap D$ and 
$\partial D_L = L \cap \partial D$. Then:\\

\begin{itemize}
\item[(i)] if $\partial D_L \subset \partial B_1 \times \partial B_2$, then $Z$ is a complex 
linear subspace of $D$ which can be realized as the graph of a complex linear map over 
the linear subspace $\pi_1(D_L)$ of $B_1$:
\[
Z=\{ \big( w, \beta_1(w), \ldots, \beta_{N_2}(w)\big) \; : \; w \in \pi_1(D_L) \}
\]
where the mapping
\[
w \longmapsto \big( \beta_1(w), \ldots, \beta_{N_2}(w) \big)
\]
maps $B_1^L:= \pi_1(D_L)$ into $B_2$,\\
 
\item[(ii)] if $\partial D_L \subset \partial B_1 \times \overline{B}_2$, then $Z$ is the 
    graph of a $B_2$-valued holomorphic map over a linear subspace $(B_1)_{L_1}$ of $B_1$, given by $(B_1)_{L_1} := B_1 \cap L_1$, where $L_1 = \pi_1(L)$.

\item[(iii)] if $\partial D_L \subset \overline{B}_1 \times \partial B_2$, then $Z$ is 
    the graph of a $B_1$-valued holomorphic map over a linear subspace $(B_2)_{L_2}$ of $B_2$, given by $(B_2)_{L_2} := B_2 \cap L_2$, where $L_2 = \pi_2(L)$.\\

\item[(iv)] if $\partial D_L$ intersects 
both $\partial B_1 \times B_2$ and
$B_1 \times \partial B_2$  -- which are disjoint open pieces of the boundary of $D$, indeed connected components of 
$\partial D \setminus (\partial B_1 \times \partial B_2)$ --  then $Z$ is a linear retract of $D$ of complex dimension 
$\geq 2$, which is neither contained in $B_1$ nor in $B_2$.\\
\end{itemize}
\item[(B)] Suppose now that 
$B_1 \subset \mathbb{C}^{N_1},B_2 \subset \mathbb{C}^{N_2}$ are bounded 
balanced domains of holomorphy (i.e., with the 
holomorphic extremal assumption on their boundaries 
dropped). Then statements (ii) and 
(iii) of (A) above, hold true. Statements (i) and (iv) hold
true if $Z$ is replaced by $D_L$ (thus they hold when $Z$ is linear, in which case $Z=D_L$; if $Z$ is not
linear, then $Z$ is the graph over $D_L$).
\end{itemize}
\end{thm}

\noindent As the first assertion about retracts through the 
origin is already contained in theorem \ref{Graph}, the real
matter of here is the finer information in the statments (i) -- (iv) of (A) which we note
are not mutually exclusive; it may also be noted that (ii) and (iii) 
are actually exhaustive. 
We further note that it may 
very well happen that neither $\pi_1(Z)$ nor
$\pi_2(Z)$ is a retract of the 
respective factors
$B_1,B_2$ (see the example in remark \ref{pi_1(Z)}).
Next, we note that we cannot {\it trivially} extend all 
of part-(A) to  extended products 
$D=B_1 \times B_2 \times \ldots \times B_N$ in the following manner  
even in the special case wherein all the $B_j$'s to be bounded balanced 
convex domains each of whose boundaries satisfy the strict convexity 
condition. Namely: by replacing one of the factors in the above theorem by a 
product of $(N-1)$ of these balls i.e., writing for 
instance $D=(B_1 \times B_2 \times \ldots \times B_{N-1}) \times B_N$,
appealing to the above theorem and using an inductive argument; 
this is due to the fact that 
$B_1 \times B_2 \times \ldots \times B_{N-1}$ fails to satisfy the extremality  condition 
stipulated in part-(A) as soon as $N>2$. However, such an extension is indeed possible and not 
difficult either; provided only that we repeat the same arguments multiple times by projecting 
onto the individual factors $B_j$ to harness the strict convexity  condition 
satisfied 
by each of these factor balls of the product $D$. As the complete formulation of the 
end result of this, turns out to 
be lengthy, we shall just lay it down in section \ref{Polyballs&mainThms-sectn}. 
Here we record an application of part-(B) of the above theorem (in-fact, its essence namely theorem \ref{Graph}, suffices)
to: the `union problem' for a nice class of polyballs which is interesting owing to it being a case with 
some degeneracy; this is based on the ideas of Forn\ae ss - Sibony in their article \cite{Fost}. Recall that the 
union problem comprises the following: let $M$ be a complex manifold which is 
the union of an increasing sequence of open subsets $M_j$ each of which is biholomorphic to a fixed 
domain $\Omega \subset \mathbb{C}^N$; describe $M$ in terms of $\Omega$.\\

\begin{thm}\label{union-prob-polyballs}
Take $\Omega$ in the union problem to be any homogeneous bounded balanced domain; as
is well-known such a domain is a polyball. If 
$M$ is non-hyperbolic and the corank of its (infinitesimal) Kobayashi 
metric is one, then $M$ is biholomorphic to 
$\Omega_L \times \mathbb{C}$ where $\Omega_L$ is a linear retract of the polyball $\Omega$.
\end{thm}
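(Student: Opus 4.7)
The plan is to adapt the Fornaess--Sibony strategy for union problems with degeneracy (\cite{Fost}) and combine it with the description of retracts of $\Omega$ furnished by Corollary \ref{homogeneous} together with the polyball extension of Theorem \ref{MainThm}. First, fix $p_0 \in M_1$ and biholomorphisms $\phi_j : M_j \to \Omega$, normalized via ${\rm Aut}(\Omega)$ so that $\phi_j(p_0) = 0$, and set $\psi_j = \phi_j^{-1}$. The contractivity $K_M(p_0, \cdot) \leq K_{M_j}(p_0, \cdot) = \mu_\Omega \circ (d\psi_j|_0)^{-1}$ with pointwise decrease gives: for any null direction $v \in T_{p_0}M$ (which exists as $M$ is non-hyperbolic), the preimages $v_j := (d\psi_j|_0)^{-1}(v)$ satisfy $\mu_\Omega(v_j) \to 0$, where $\mu_\Omega$ is the Minkowski functional of $\Omega$.

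Next, I would rescale: the restrictions of $\psi_j$ to the complex lines $\mathbb{C} v_j$, parametrized so that the unit disc for $\mu_\Omega$ corresponds to a disc of radius $r_j = 1/\mu_\Omega(v_j)$, yield holomorphic maps $f_j : \Delta_{r_j} \to M$ with $r_j \to \infty$, $f_j(0) = p_0$, $f_j'(0) = v$. A Montel argument -- available because the corank-one hypothesis forces boundedness along directions transverse to the null direction -- extracts an injectively immersed entire curve $F : \mathbb{C} \to M$ through $p_0$ tangent to $v$. The corank-one condition ensures that the null locus of $K_M$ is a holomorphic line subbundle of $TM$; repeating the construction over arbitrary points of $M$ (translating via ${\rm Aut}(\Omega)$ transferred through $\psi_j$) integrates this into a holomorphic foliation $\mathcal{F}$ of $M$ whose leaves are entire curves.

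To identify the transverse structure I would pass to the $\Omega$-side. The normalized preimages $v_j / \mu_\Omega(v_j)$ lie on the unit sphere of $\mu_\Omega$ and, after a subsequence, converge to some $w$ with $\mu_\Omega(w) = 1$. The Hausdorff limit of $\psi_j^{-1}(F(\Delta_{r_j})) \subset \Omega$ is then a one-dimensional retract of $\Omega$ through the origin, and by Theorem \ref{Graph} it is a graph over $\mathbb{C} w \cap \Omega$; by the polyball extension of Theorem \ref{MainThm} applied to $\Omega$ viewed as a product of strictly convex balls, this retract is in fact the linear disc $\mathbb{C} w \cap \Omega$ itself. Moreover, that theorem supplies a complementary linear retract $\Omega_L$ of dimension $N-1$ so that $\Omega \simeq \Omega_L \times (\mathbb{C} w \cap \Omega)$ up to a linear change of coordinates. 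Transferring this splitting via $\phi_j$ and passing to the limit identifies the leaf space $M / \mathcal{F}$ with $\Omega_L$ and yields a holomorphic submersion $\pi : M \to \Omega_L$ with $\mathbb{C}$-fibers.

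The final step is to trivialize $\pi$. The one-parameter subgroup of ${\rm Aut}(\Omega)$ translating the factor $\mathbb{C} w \cap \Omega$ transports via the $\psi_j$ to a compatible family of local $\mathbb{C}$-actions on the $M_j$; passing to the direct limit produces a global holomorphic $\mathbb{C}$-action on $M$ along the leaves of $\mathcal{F}$, which combined with a holomorphic section of $\pi$ (existing since each fiber is $\mathbb{C}$) yields $M \simeq \Omega_L \times \mathbb{C}$. The main obstacle I expect is the rigour of the two limiting arguments -- first, verifying that $\mathcal{F}$ has a Hausdorff quotient biholomorphic to $\Omega_L$, and second, that the one-parameter subgroups of ${\rm Aut}(\Omega)$ acting on each $M_j$ piece together in the direct limit to a global $\mathbb{C}$-action on $M$. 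Both should ultimately follow from the strict convexity of the relevant factor of $\Omega$ (giving uniqueness of the limiting disc in the transverse structure) and the linear nature of the splitting $\Omega \simeq \Omega_L \times (\mathbb{C} w \cap \Omega)$.
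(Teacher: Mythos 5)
Your overall strategy is recognizably the Fornaess--Sibony one that the paper follows, but two of your key steps do not hold up, and the paper's actual proof avoids both. First, you assert that the polyball theorem supplies a splitting $\Omega \simeq \Omega_L \times (\mathbb{C}w \cap \Omega)$ up to a linear change of coordinates. This is false in general: the existence of a one-dimensional linear retract together with a complementary linear retract never forces a product decomposition of the domain --- the Euclidean ball $\mathbb{B}^2$ already has linear retracts of every dimension yet is not biholomorphic to $\Delta \times \Delta$. Moreover, Theorem \ref{MainThm} and its extension require each factor to be strictly convex, whereas a homogeneous bounded balanced domain is a product of Cartan domains that are generally not strictly convex (the polydisc being the extreme case), so those theorems are not applicable to such an $\Omega$; nor are retracts through the origin of such domains linear in general. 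Your entire identification of the transverse structure and of the leaf space hinges on this non-existent splitting.

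Second, your trivialization of the fibration via a global $\mathbb{C}$-action obtained as a ``direct limit'' of one-parameter subgroups of ${\rm Aut}(\Omega)$, plus an unjustified holomorphic section, is exactly the point you flag as an obstacle, and it is a genuine one. The paper proceeds differently and much more economically: it quotes the Main Theorem of Fornaess--Sibony \cite{Fost} directly (the hypothesis needed there is compactness of $\Omega/{\rm Aut}(\Omega)$, which holds by homogeneity), obtaining at once that $M$ is a locally trivial holomorphic fibre bundle with fibre $\mathbb{C}$ over a closed submanifold $Z$ of $\Omega$ which is a holomorphic retract; Theorem \ref{Graph} then identifies $Z$ up to biholomorphism with a linear retract $\Omega_L$. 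The bundle is then trivialized cohomologically: the transition maps $(z,w)\mapsto (z,\, h_{\alpha\beta}(z)w + k_{\alpha\beta}(z))$ define first a multiplicative and then an additive Cousin problem on the contractible Stein base $Z$, both solvable by Oka's theorems, giving $M \simeq \Omega_L \times \mathbb{C}$. To repair your argument, replace the product-splitting and $\mathbb{C}$-action steps by this Cousin-problem trivialization, and cite \cite{Fost} for the fibration itself rather than re-deriving it.
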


\noindent Next, as most balls fail to be homogeneous, the problem of
characterizing retracts of bounded balanced pseudoconvex domains 
{\it not} passing through the origin, is in general more difficult and significant. Towards this, we report the 
following result.

\begin{thm}\label{not-thro-origin}
Let $D$ be a balanced pseudoconvex domain in 
$\mathbb{C}^N$, $\rho: D \to Z$ a (non-trivial) retraction map and $p$ the point 
in $Z$ which is the image of the 
origin under $\rho$; so, $Z$
is a retract not necessarily passing
through the origin.
Suppose that: (i) the slice
$D_L:=D \cap L$ is a bounded convex
domain in $L:=T_pZ$ with $\mathbb{C}$-extremal
boundary
and (ii) $D\rho_{|_{0}}$ is a projection which maps $D$ onto $D_L$, where $L = T_pZ$.
Then $Z = \rho(D)$ is the graph of a holomorphic map over 
the linear subspace $D_L$ of $D$. 
\end{thm}

\noindent Examples wherein all the
hypotheses of the theorem are satisfied will be provided in the section where we lay down the proof. To move forward, we note that we have for the most part focussed attention
on bounded balanced pseudoconvex domains which being contractible, may be regarded as being 
topologically trivial; moreover, they 
have other special properties such as Stein neighbourhood bases, and plurisubharmonic barriers all along their boundary (i.e., are hyperconvex) as soon as the Minkowski function is continuous. To turn next to topologically non-trivial cases failing to be hyperconvex,
one simple case to begin with in this direction, is that
of analytic complements of domains with a
Stein neighbourhood basis; specifically, see the remarks 
following lemma \ref{Retr-of-anal-complements}.   
Next, to consider the simplest domain lacking a Stein neighbourhood basis {\it and} plurisubharmonic barriers throughout the boundary then,
we are led to considering the standard domain in this context namely the 
Hartogs triangle given by 
\[
\Omega_H = \{(z, w) \in \mathbb{C}^2 \;: \;\vert w \vert < \vert z \vert < 1  \}.
\]
We also contend that this is perhaps the simplest topologically non-trivial case
to consider because it is biholomorphic to the product 
$\Delta \times  \left( \Delta \setminus \{0\} \right)$;
indeed the map $\phi(z,w)=(w/z, z)$, effects the aforementioned biholomorphic correspondence.
Thus it suffices to determine the retracts of $\Delta \times \Delta^*$, which 
we do in the following theorem giving a complete characterization of
all retracts of the Hartogs triangle as well as its `analytic complements'; this will be deduced as a consequence of a far more general lemma about retracts of 'co-analytic' domains namely lemma \ref{Retr-of-anal-complements}.

\begin{thm}\label{Hartogs triangle}
Every non-trivial retract of $\Delta \times \Delta^*$ is a holomorphic graph over one of 
the factors; more precisely, either of the form
$\{(z,f(z)): z \in \Delta\}$, where $f$ is a holomorphic map from $\Delta$ to $\Delta^*$,  or
of the form $\{(g(w),w): w \in \Delta^*\}$ for some holomorphic map self map $g$ of $\Delta$. 
Moreover, upto conjugation by an automorphism of the bidisc $\Delta^2$,  
every non-trivial retraction map is of the form 
\[
\left(z, w\right) \rightarrow\left(z, f\left(z\right)\right)
\] 
for some holomorphic $f:\Delta \to \Delta^*$, or of the form
\[
\left(z, w\right) \rightarrow\left[t z 
+ (1-t) e^{-i \alpha} w+\left(e^{-i \alpha} w-z\right)^2 h\left(z, w\right)\right]\left(1, e^{i \alpha}\right)
\] 
for some real $\alpha$, $t \in (0,1)$ and holomorphic function $h:\Delta^2 \to \Delta$. \\
Finally, retracts of $\Omega_H^A:=\Omega_H \setminus A$ where $A$ is any analytic subset of $\Omega_H$ 
are precisely given by $Z \cap \Omega_H^A$ where $Z$ is a retract of $\Omega_H$.
\end{thm}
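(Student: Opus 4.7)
The strategy is to reduce everything to the bidisc $\Delta^2$ via the biholomorphism $\phi : \Omega_H \to \Delta \times \Delta^*$ given by $\phi(z,w) = (w/z, z)$, and then to apply the classification of retracts of $\Delta^2$ due to Heath--Suffridge (of which, as promised earlier in this article, a simpler proof appears in section \ref{Polyballs&mainThms-sectn}). Once retracts and retraction maps of $\Delta \times \Delta^*$ are classified, the final assertion about analytic complements of $\Omega_H$ will follow by a further removable-singularity argument.

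Let $\rho$ be a non-trivial holomorphic retraction of $\Delta \times \Delta^*$ with retract $Z$, so $\dim Z = 1$. Both components of $\rho$ are bounded holomorphic functions on $\Delta \times \Delta^*$, and $\Delta \times \{0\}$ is an analytic hypersurface of $\Delta^2$, so Riemann's removable singularity theorem produces a holomorphic extension $\tilde{\rho} : \Delta^2 \to \overline{\Delta^2}$. Component-wise application of the maximum modulus principle rules out that any coordinate of $\tilde{\rho}$ attains a boundary value (the only alternative being a unimodular constant, leading to a trivial image), so $\tilde{\rho}$ maps $\Delta^2$ into $\Delta^2$. The relation $\tilde{\rho} \circ \tilde{\rho} = \tilde{\rho}$ holds on the dense open subset $\Delta \times \Delta^*$ of $\Delta^2$, hence by the identity principle on all of $\Delta^2$; thus $\tilde{\rho}$ is a holomorphic retraction of the bidisc.

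Next, I invoke the Heath--Suffridge classification: up to an automorphism of $\Delta^2$, every non-trivial retraction of $\Delta^2$ is either of the graph form $(z,w) \mapsto (z, f(z))$ with $f : \Delta \to \Delta$, or of the linear-diagonal-disc form $(z,w) \mapsto [tz + (1-t) e^{-i\alpha} w + (e^{-i\alpha} w - z)^2 h(z,w)](1, e^{i\alpha})$. In the graph case, the image $(z, f(z))$ lies in $\Delta \times \Delta^*$ precisely when $f(z) \neq 0$, so the requirement that $\rho = \tilde{\rho}|_{\Delta \times \Delta^*}$ be a self-map of $\Delta \times \Delta^*$ forces $f : \Delta \to \Delta^*$, recovering the first form of the theorem. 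The coordinate-swap on $\Delta^2$, while not preserving $\Delta \times \Delta^*$, is an automorphism of $\Delta^2$ and converts the above graph form into $(z,w) \mapsto (g(w), w)$ with $g : \Delta \to \Delta$; here the image now automatically lies in $\Delta \times \Delta^*$, and the retract becomes $\{(g(w), w) : w \in \Delta^*\}$, the second form. Finally, every linear diagonal disc $\{r(1, e^{i\alpha}) : r \in \Delta\}$ is precisely the graph $\{(e^{-i\alpha} w, w) : w \in \Delta\}$, so this case is subsumed.

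The main obstacle, and the step I would carry out most carefully, is the compatibility check: in the linear-diagonal-disc form, the requirement that $\tilde{\rho}$ restrict to a self-map of $\Delta \times \Delta^*$ amounts to the bracketed expression $B(z,w)$ being non-vanishing on $\Delta \times \Delta^*$; a careful analysis of the analytic zero variety $\{B=0\} \ni (0,0)$ shows this is compatible exactly with a reduction, under a coordinate swap, to the symmetric graph form above, reconciling the classification with the stated list of retraction maps. For the closing assertion on analytic complements, given any retraction $\rho$ of $\Omega_H^A$, transporting by $\phi$ yields a retraction on $(\Delta \times \Delta^*) \setminus \phi(A)$. Since $\phi(A) \cup (\Delta \times \{0\})$ is a proper analytic subset of $\Delta^2$ and the components of $\rho$ remain bounded, Riemann's removable singularity theorem extends $\rho$ (and by the identity principle its idempotency) to a retraction $\tilde{\rho}$ of $\Delta^2$. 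Intersecting $\tilde{Z} = \tilde{\rho}(\Delta^2)$ with $\Delta \times \Delta^*$ produces, by the first part of the theorem, a retract of $\Delta \times \Delta^*$; pulling back through $\phi$ produces a retract $\tilde{Z}'$ of $\Omega_H$ whose intersection with $\Omega_H^A$ recovers the given $Z$.
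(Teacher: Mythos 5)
Your proposal is correct and follows essentially the same route as the paper: extend the retraction across $\Delta\times\{0\}$ (and across $A$) by the Riemann removable singularity theorem, use the open mapping/maximum modulus principle to see the extension is a self-map of $\Delta^2$, propagate idempotency by density, and then invoke the Heath--Suffridge classification on the bidisc before restricting back. The only cosmetic differences are that the paper absorbs the analytic complement $A$ into the argument from the outset rather than in a second pass, and leaves the normal forms of the retraction \emph{maps} to the citation of Heath--Suffridge where you spell out the compatibility check explicitly.
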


\noindent Next, it may be noted that an overarching hypothesis that we have imposed on 
all domains considered 
thus far, is their boundedness or, equivalently in our major context of balanced domains here, hyperbolicity. 
To deal with the {\it simplest} case of unbounded and non-hyperbolic domains then,
we first consider a product domain in which one of the factors is still a bounded domain 
but the other unbounded.
We thought it better to separately record the 
result for the simplest case of this kind, namely of the form $\mathbb{C} \times D$ where $D$ is 
a bounded planar domain; the characterization of its 
retracts is laid down in the following.

\begin{thm} \label{1st-unbdd & top-nontrivial}
If $D$ is a bounded domain in $\mathbb{C}$, then every non-trivial retract of $\mathbb{C} \times D$
is either of the form $\mathbb{C} \times \{c\}$ for some $c \in D$ or, of the form
$\{\left( f(w),w \right)\; : \; w \in D\}$ for some holomorphic function $f$ on $D$; and, conversely
every such graph is a retract.
\end{thm}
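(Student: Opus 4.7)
Write $\rho = (\rho_1, \rho_2): \mathbb{C} \times D \to \mathbb{C} \times D$ for the retraction map; the plan is to analyze the two components in turn, exploiting Liouville's theorem on the second coordinate to reduce matters to a one-dimensional retraction on $D$ and a graph argument on the first. Since $\rho_2$ takes values in the bounded planar set $D$, for each fixed $w \in D$ the entire function $z \mapsto \rho_2(z,w)$ is bounded, hence constant by Liouville. Thus $\rho_2(z,w) = g(w)$ for a holomorphic self-map $g$ of $D$, and the condition $\rho \circ \rho = \rho$ forces $g \circ g = g$. By the dimension-one fact recalled in the introduction, $g$ is either the identity of $D$ or a constant $c \in D$, and I would split into these two cases.

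In the case $g = \mathrm{id}_D$ one has $\rho(z,w) = (\rho_1(z,w), w)$ with $\rho_1(\rho_1(z,w), w) = \rho_1(z,w)$. For each fixed $w$ the entire map $\rho_1(\cdot, w): \mathbb{C} \to \mathbb{C}$ is therefore an idempotent entire self-map of $\mathbb{C}$, hence either the identity or a constant, again by the dimension-one fact. To show this dichotomy is uniform in $w$, I would consider the holomorphic function $\alpha(w) := (\partial \rho_1/\partial z)(0, w)$ on $D$: it takes only the values $0$ and $1$, and by connectedness of $D$ together with the discreteness of $\{0,1\}$ it must be constant. The value $\alpha \equiv 1$ gives the trivial retract $\rho = \mathrm{id}$, whereas $\alpha \equiv 0$ yields $\rho_1(z,w) = f(w)$ for some holomorphic $f: D \to \mathbb{C}$, producing the graph retract $Z = \{(f(w), w) : w \in D\}$.

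In the remaining case $g \equiv c$ I would set $\phi(z) := \rho_1(z, c)$, an entire self-map of $\mathbb{C}$. From $\rho \circ \rho = \rho$ one extracts the identity $\phi(\rho_1(z,w)) = \rho_1(z,w)$, which upon specializing to $w = c$ gives $\phi \circ \phi = \phi$. Hence $\phi$ is either a constant (which, combined with $\phi \circ \rho_1 = \rho_1$, forces $\rho_1$ to be constant and $\rho$ to be trivial) or the identity on $\mathbb{C}$; in the latter case $Z \supseteq \mathbb{C} \times \{c\}$, and since by construction $Z \subseteq \mathbb{C} \times \{c\}$, equality holds, giving the retract $\mathbb{C} \times \{c\}$.

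The converse direction is immediate, as $(z,w) \mapsto (f(w), w)$ and $(z,w) \mapsto (z, c)$ are evidently holomorphic self-maps of $\mathbb{C} \times D$ that are visibly idempotent and have the prescribed images. The only step requiring genuine care is the uniformity argument in the first case, but this is cleanly handled by connectedness of $D$; the remaining reductions are mechanical consequences of Liouville's theorem and the nonexistence of nontrivial holomorphic retractions in dimension one.
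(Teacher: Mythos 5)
Your proof is correct and follows essentially the same route as the paper's: Liouville's theorem applied to the second component, reduction to idempotent holomorphic self-maps in dimension one, and a case split according to whether the induced retraction $g$ of $D$ is the identity or a constant. Your explicit uniformity argument via the holomorphic function $\alpha(w)=(\partial\rho_1/\partial z)(0,w)$ taking values in the discrete set $\{0,1\}$ is a welcome refinement of a step the paper passes over without comment.
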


\noindent The above theorem allows for domains 
with a variety of fundamental groups; more such variety is facilitated by 
the next set of results.

\begin{prop}\label{discrete-removed}
Let $X = \mathbb{C} \setminus A$ where $A$ is any discrete set (allowed to be empty as well), and $D$ any 
bounded domain in $\mathbb{C}^N$. Then, every retract $\widetilde{Z}$ of $X \times D$ is 
precisely of the form 
\[ 
\widetilde{Z} = \{(f(w),w) : w \in Z\},
\] 
where $Z$ is a retract of $D$ and $f$ is a holomorphic function from $Z$ to $\mathbb{C}\setminus A$, or of the 
form $X \times Z$
where again $Z$ is a retract of $D$.
\end{prop}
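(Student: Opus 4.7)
The plan is to decompose the retraction $\rho=(\rho_1,\rho_2):X\times D\to X\times D$ with image $\widetilde Z$ and first argue that the second component is independent of the $X$-variable. Fixing $w\in D$, each coordinate of $z\mapsto \rho_2(z,w)\in D\subset\mathbb{C}^N$ is a bounded holomorphic function on $X=\mathbb{C}\setminus A$. Since $A$ is discrete, Riemann's removable singularity theorem extends it holomorphically to all of $\mathbb{C}$, and the resulting bounded entire function is constant by Liouville. Hence $\rho_2(z,w)=g(w)$ for some holomorphic $g:D\to D$, and the idempotency $\rho\circ\rho=\rho$ forces $g\circ g=g$; so $g$ is a holomorphic retraction of $D$ onto a retract $Z\subset D$.

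Next, for each $b\in Z$, I would consider the slice $\sigma_b(z):=\rho_1(z,b):X\to X$. Combining $\rho\circ\rho=\rho$ with $g(b)=b$ gives $\sigma_b\circ\sigma_b=\sigma_b$, so $\sigma_b$ is a holomorphic retraction of the one-dimensional complex manifold $X$; by the elementary fact recalled in the introduction, $\sigma_b$ must be either constant or the identity. To see that the alternative is uniform in $b$, I would fix any $z_0\in X$ and set $h(b):=\partial_z\rho_1(z,b)\big|_{z=z_0}$. This is a holomorphic function on the connected complex submanifold $Z$ taking values in the discrete set $\{0,1\}$; hence $h$ is constantly $0$ or constantly $1$ on $Z$.

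If $h\equiv 1$, then $\sigma_b=\mathrm{id}_X$ for every $b\in Z$, and $\widetilde Z$, being the fixed-point set of $\rho$, equals $X\times Z$. If $h\equiv 0$, then $\sigma_b$ is the constant $f(b):=\rho_1(z_0,b)$; the resulting map $f:Z\to X$ is holomorphic and $\widetilde Z=\{(f(b),b):b\in Z\}$. The converse is immediate: $(z,w)\mapsto (z,g(w))$ retracts $X\times D$ onto $X\times Z$, and $(z,w)\mapsto (f(g(w)),g(w))$ retracts it onto the graph of $f$ over $Z$.

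The step that I expect requires the most care is the removability argument in the first paragraph, which crucially exploits that the first factor $X$ is a one-dimensional subset of $\mathbb{C}$ whose complement is discrete and that $D$ is bounded; once $\rho_2$ is pinned down as $g(w)$, the remaining work is essentially structural. The conceptual obstacle to relaxing either hypothesis -- allowing a higher-dimensional or otherwise non-hyperbolic first factor, or an unbounded $D$ -- would be precisely the loss of this Liouville-type rigidity, which is what decouples the retraction into $g$ on $D$ and a family of one-variable self-retractions of $X$ along each fiber.
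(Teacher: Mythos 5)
Your proof is correct and follows essentially the same route as the paper's: the Riemann removable singularities theorem plus Liouville pin down the second component as $g(w)$, a holomorphic retraction of $D$ onto $Z$, and the fiberwise maps $z \mapsto \rho_1(z,b)$ for $b \in Z$ are self-retractions of the one-dimensional $X$, hence constant or the identity. Your auxiliary function $h(b)=\partial_z\rho_1(z,b)\big|_{z=z_0}\in\{0,1\}$, constant on the connected $Z$, supplies a clean uniformity argument (that the same alternative holds for every $b\in Z$) which the paper's proof asserts without explicit justification, so this is a small refinement rather than a different approach.
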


\noindent While this article is focussed on characterization of 
 retracts rather than 
 retraction {\it mappings}, we thought it noteworthy to record in 
passing, a complete 
characterization
of the retraction maps on the most special domain among the domains 
in the above theorem namely, $\mathbb{C} \times \Delta$. 

\begin{thm}\label{C-minus-Delta}
Every non-trivial retraction map $F$ of $\mathbb{C} \times \Delta$, upto 
conjugation by an automorphism (so that $F$ fixes the origin), is precisely:
either of the form $F(z,w) = \left(z+wh(z,w),0\right)$ where
$h \in \mathcal{O}(\mathbb{C} \times \Delta)$, or of the 
form $F(z,w) = (h(w),w)$ where $h \in \mathcal{O}(\Delta)$. 
\end{thm}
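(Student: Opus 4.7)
The plan is first to classify the possible images $Z = F(\mathbb{C} \times \Delta)$ via Theorem \ref{1st-unbdd & top-nontrivial}, and then recover the form of the retraction $F$ itself in each case. Since $\mathbb{C} \times \Delta$ is homogeneous (as the product of two homogeneous domains), and since every point of $Z$ is automatically a fixed point of $F$, we may conjugate by an automorphism sending $(0,0)$ to a chosen fixed point of $F$, so that $F(0,0) = (0,0)$ and $Z$ passes through the origin; this is precisely the normalization built into the statement. Writing $F = (F_1, F_2)$ and invoking Theorem \ref{1st-unbdd & top-nontrivial}, $Z$ must then be either $\mathbb{C} \times \{0\}$, or a graph $\{(f(w), w) : w \in \Delta\}$ for some $f \in \mathcal{O}(\Delta, \mathbb{C})$ with $f(0)=0$.

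In the first case, the inclusion $F(\mathbb{C} \times \Delta) \subset \mathbb{C} \times \{0\}$ forces $F_2 \equiv 0$, and the restriction $F|_Z = \mathrm{id}_Z$ gives $F_1(z, 0) = z$. Hence $F_1(z, w) - z$ is a holomorphic function on $\mathbb{C} \times \Delta$ vanishing identically on the smooth hypersurface $\{w=0\}$, so the standard division lemma produces some $h \in \mathcal{O}(\mathbb{C} \times \Delta)$ with $F_1(z,w) = z + w\, h(z,w)$. A direct computation confirms that every such $F$ is automatically idempotent, delivering the first form stated in the theorem.

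The second case hinges on a single decisive step. Since $F_2 : \mathbb{C} \times \Delta \to \Delta$ is uniformly bounded (by $1$), for each fixed $w_0 \in \Delta$ the map $z \mapsto F_2(z, w_0)$ is a bounded entire function on $\mathbb{C}$, hence constant by Liouville's theorem. Therefore $F_2$ depends only on $w$, say $F_2(z,w) = \varphi(w)$ for some $\varphi \in \mathcal{O}(\Delta, \Delta)$. Evaluating the fixed-point condition $F(f(w), w) = (f(w), w)$ on the second coordinate forces $\varphi(w) = w$ for every $w \in \Delta$, so $F_2(z, w) = w$. Combined with $F(z,w) \in Z$, this then forces $F_1(z,w) = f(F_2(z,w)) = f(w)$, yielding $F(z,w) = (h(w), w)$ with $h := f$, as claimed.

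The only genuinely nontrivial ingredient is the Liouville collapse of $F_2$ in the second case; everything else is bookkeeping with idempotency and the division lemma. The main obstacle one might anticipate, namely that Theorem \ref{1st-unbdd & top-nontrivial} classifies only \emph{retracts} and could in principle leave substantial residual freedom in the retraction mapping itself, is dissolved precisely by the asymmetry of the two factors $\mathbb{C}$ and $\Delta$: the unboundedness of one and the boundedness of the other is exactly what rigidifies $F_2$ in the graph case.
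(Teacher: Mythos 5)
Your proof is correct, but it takes a recognizably different route from the paper's. The paper works directly from the idempotency equation: it first applies Liouville's theorem to the second component to get $F_2(z,w)=\tilde g(w)$, observes that $\tilde g$ is a retraction of $\Delta$ fixing $0$ (hence $\tilde g\equiv 0$ or $\tilde g\equiv\mathrm{id}$), and then in each case expands the first component in homogeneous polynomials $P_1+P_2+\cdots$ and grinds through the relation $f(f(z,w),\cdot)=f(z,w)$ degree by degree to extract the forms $z+wh(z,w)$ and $h(w)$. You instead front-load the classification of the \emph{image}: you invoke Theorem \ref{1st-unbdd & top-nontrivial} to pin down $Z$ as $\mathbb{C}\times\{0\}$ or a graph over $\Delta$, and then recover $F$ from the two constraints $F(\mathbb{C}\times\Delta)\subset Z$ and $F|_Z=\mathrm{id}_Z$ --- via the division lemma (Weierstrass/Taylor in $w$) in the horizontal case, and via Liouville plus the rigidity of the graph in the other. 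This buys a substantially shorter argument with no power-series bookkeeping, at the cost of depending on Theorem \ref{1st-unbdd & top-nontrivial} (which is proved earlier in the same section, so the dependency is legitimate and non-circular); the paper's computation is self-contained and in effect re-derives that classification as a by-product. All the individual steps check out: the homogeneity of $\mathbb{C}\times\Delta$ justifies the normalization $F(0,0)=(0,0)$, the verification that $(z,w)\mapsto(z+wh(z,w),0)$ is idempotent is immediate, and the fixed-point evaluation on the graph correctly forces $\varphi(w)=w$ and then $F_1(z,w)=f(w)$.
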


\noindent While this article also focusses attention for the most part on
domains in $\mathbb{C}^N$, we would like to interject here a result about
retracts of manifolds, as it is a by-product of the essential ideas 
in the proof of the foregoing theorem \ref{1st-unbdd & top-nontrivial} and 
 includes cases with far greater topological complexity. For instance, fundamental groups of planar domains are countably generated and in fact always freely generated, whereas for Riemann surfaces this is in general not the case.

\begin{thm} \label{RiemSurf x HyperbolicMfld}
Consider the following two cases for a product manifold $X \times Y$.
\begin{itemize}
\item[(i)] $X$ is any compact Riemann surface and $Y$ is a non-compact Riemann surface,
\item[(ii)] $X$ is a non-hyperbolic Riemann surface and $Y$ is a (Kobayashi) hyperbolic complex manifold. 
\end{itemize}
Then, every holomorphic retract $\widetilde{Z}$ of $X \times Y$ is precisely: either 
of the form
\[ 
\widetilde{Z} = \{(f(w),w) : w \in Z\},
\]
where $Z$ is a retract of $Y$ and $f$ is a holomorphic map from $Z$ to $X$
or, of the form $X \times Z$ where again $Z$ is a retract of $Y$.
\end{thm}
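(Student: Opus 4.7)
Write the retraction as $\rho=(\rho_1,\rho_2): X\times Y \to X\times Y$ with image $\widetilde{Z}$. The plan is to exploit the one-dimensionality of $X$ and $Y$, together with the rigidity of holomorphic maps between the two types of factor the theorem stipulates, to show first that $\rho_2$ is independent of $x$, and then that the $X$-slices of $\widetilde{Z}$ are uniformly either points or all of $X$.

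I would first show that $\rho_2(x,y)=g(y)$ is independent of $x\in X$. For fixed $y$, the map $x\mapsto \rho_2(x,y)$ is a holomorphic map from $X$ into $Y$. In case (i), the image is a compact connected analytic subset of the non-compact Riemann surface $Y$; were the map non-constant, by the open mapping theorem the image would be open, and being also compact it would be a non-empty clopen subset, hence all of $Y$, contradicting non-compactness. In case (ii), the Kobayashi pseudodistance on the non-hyperbolic $X$ vanishes identically whereas $k_Y$ is a genuine distance, so the distance-decreasing property forces $x\mapsto \rho_2(x,y)$ to be constant. Setting $g(y):=\rho_2(x,y)$, the relation $\rho\circ\rho=\rho$ yields $g\circ g=g$, so $Z:=g(Y)$ is a holomorphic retract of $Y$.

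Next, for each $y\in Z$ I would study the slice map $\phi_y(x):=\rho_1(x,y)$. The identity $\rho\circ\rho=\rho$, restricted to the $y\in Z$ case (where $g(y)=y$), gives $\phi_y\circ\phi_y=\phi_y$; so $\phi_y$ is a holomorphic retraction of the Riemann surface $X$. By the one-dimensional dichotomy recalled in the introduction, $\phi_y$ is either constant or the identity on $X$. The central step is to show this dichotomy is uniform in $y\in Z$. I would argue that $A:=\{y\in Z:\phi_y={\rm id}_X\}$ and $B:=\{y\in Z:\phi_y\text{ is constant}\}$ are both closed in $Z$: the former because the pointwise condition $\rho_1(x,y)=x$ for all $x$ passes to limits; the latter because if $\phi_{y_n}\equiv c_n$ and $y_n\to y_*$, evaluation at any fixed $x_0$ forces $c_n=\rho_1(x_0,y_n)\to\rho_1(x_0,y_*)$, so $c_n$ converges and $\phi_{y_*}$ is the corresponding constant. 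Since $Z$ is connected (as a holomorphic retract of the connected manifold $Y$) and $Z=A\sqcup B$, exactly one of the two sets is empty.

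Finally I would read off the conclusion from the two alternatives. If $A=Z$, then $\rho_1(x,y)=x$ for every $(x,y)\in X\times Z$, giving $\widetilde{Z}=X\times Z$. If $B=Z$, define $f(y):=\phi_y(x_0)\in X$ for any chosen $x_0$; since $\phi_y$ is constant, this is independent of $x_0$, and $f:Z\to X$ is holomorphic. The relation $\phi_{g(y)}\circ\phi_y=\phi_y$, which holds for every $y\in Y$ (again a consequence of $\rho\circ\rho=\rho$), then reads $f(g(y))=\phi_y(x)$ for every $x\in X$, so $\rho(x,y)=(f(g(y)),g(y))$ and thus $\widetilde{Z}=\{(f(z),z):z\in Z\}$. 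The main obstacle is the initial rigidity step, and the most delicate instance there is case (i) with $Y$ itself non-hyperbolic (such as $\mathbb{C}$ or $\mathbb{C}^*$), where one must use the compactness-versus-non-compactness argument rather than Kobayashi hyperbolicity; the rest of the argument is insensitive to which particular Riemann surfaces $X$ and $Y$ are.
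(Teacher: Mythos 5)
Your proof is correct and follows essentially the same route as the paper: the key rigidity step showing $\rho_2$ is independent of $x$ is argued exactly as in the paper (open mapping plus compactness in case (i), vanishing of the Kobayashi pseudodistance on $X$ versus hyperbolicity of $Y$ in case (ii)), and the remaining componentwise analysis mirrors the arguments the paper imports from its Theorem \ref{1st-unbdd & top-nontrivial} and Proposition \ref{discrete-removed}. Your explicit connectedness argument making the ``identity versus constant'' dichotomy uniform in $y\in Z$ is a point the paper leaves implicit, so it is a welcome (but not structurally different) refinement.
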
 

\noindent It must be noted that this theorem does not subsume proposition \ref{discrete-removed} above, for both factors of the product therein may very well be hyperbolic. We hasten to mention that there are indeed similar ideas in the proofs of the foregoing four results on products, which are therefore dealt with together in section \ref{Products-unbdd-topnontrivial} towards reducing repetition of details. \\

\noindent Finally, as one concrete but `extreme' example of an unbounded domain, we may simply 
take the domain to be $\mathbb{C}^N$ itself. While it is rather hard to 
precisely characterize all retracts of $\mathbb{C}^N$, and
we shall exemplify the 
possibility of non-hyperbolic domains possessing hyperbolic retracts, it is easy to observe a 
qualitative general feature common to all retracts of $\mathbb{C}^N$ apart from their 
topological triviality namely: they are non-hyperbolic to the same (maximal) 
extent as $\mathbb{C}^N$ i.e., their 
Kobayashi metric vanishes identically. It is also easy to see that the 
one-dimensional retracts of $\mathbb{C}^N$ are
biholomorphic to $\mathbb{C}$.
For more precise results, we consider $\mathbb{C}^2$ -- indeed, this is the simplest of balanced domains of holomorphy
or for that matter, the simplest of domains with non-trivial 
retracts -- and ask for a complete characterization of 
{\it all} of its holomorphic retracts (which seems unknown). Despite
the apparent simplicity of the domain, this
seems rather non-trivial; indeed, while graphs (in $\mathbb{C}^2$) of 
entire functions on $\mathbb{C}$
are retracts and the fact that retracts of products of planar domains in the foregoing, have 
literally turned out 
to be graphs of holomorphic maps over one of the factors, 
it is not the case that: every retract of 
$\mathbb{C}^2=\mathbb{C} \times \mathbb{C}$ is the graph of some entire function
on either of its factors,
as we shall show by providing a concrete example in the last section \ref{sectn-polyretract-of-C^2}. On the 
other hand, it is possible to give a complete characterization of polynomial retracts of $\mathbb{C}^2$
and this in essence is  
attained by Shpilrain and Yu
in \cite{Shpilrain -- Yu} 
wherein it is deduced from other 
known
results about polynomial rings and their retracts,
specifically, the Abhyankar -- Moh theorem and a
result due to Costa in \cite{Cost}. One may even wonder as to why is this result
not a direct corollary of the Abhyankar -- Moh theorem;
we address this in the final section. We shall however
give a self-contained and elementary proof
not relying on such theorems, 
which does not seem to be recorded in the 
literature to the best of our knowledge; we believe such a proof will help clarify where exactly such theorems are actually necessary. We emphasize for one last time
that this
result is not new, and is due to various major contributors as clarified above.

\begin{thm}
\label{polyretract-of-C^2} 
    If  $Z$ is a non-trivial polynomial retract of $\mathbb{C}^2$, then 
    there exists a polynomial automorphism  $\Phi$ on $\mathbb{C}^2$ such 
    that  $\Phi(Z) = \{(\zeta,0)\in \mathbb{C}^2 \; :\; \zeta\in \mathbb{C} \}$.
\end{thm}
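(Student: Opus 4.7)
The plan is to transport the question into commutative algebra via the standard correspondence between affine varieties and their coordinate rings, invoke a structure theorem for retracts of the polynomial algebra $\mathbb{C}[x,y]$, and then apply the Abhyankar--Moh--Suzuki theorem to rectify the resulting embedded affine line.

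First, I would set up the algebraic translation. Given a polynomial retraction $F=(F_1,F_2):\mathbb{C}^2\to\mathbb{C}^2$ with image $Z=F(\mathbb{C}^2)$, the pullback $F^*:\mathbb{C}[x,y]\to\mathbb{C}[x,y]$, $p\mapsto p\circ F$, is an idempotent $\mathbb{C}$-algebra endomorphism. Its image $A:=F^*(\mathbb{C}[x,y])$ is naturally identified with the coordinate ring $\mathbb{C}[Z]$ of the retract, and $A$ is a \emph{retract} of $\mathbb{C}[x,y]$ in the ring-theoretic sense: there is a section of the inclusion $A\hookrightarrow \mathbb{C}[x,y]$. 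The hypothesis that $Z$ is non-trivial translates to $A$ being neither $\mathbb{C}$ nor all of $\mathbb{C}[x,y]$.

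Second, I invoke a result from affine algebraic geometry (essentially due to Costa, with related contributions by Gutwirth and others) asserting that every proper retract of the polynomial algebra $\mathbb{C}[x,y]$ is itself isomorphic to a polynomial algebra $\mathbb{C}[t]$ in one variable. Concretely, there exists $f\in\mathbb{C}[x,y]$ such that $A=\mathbb{C}[f]$. Geometrically this says $Z\cong\mathbb{A}^1$ as an affine algebraic variety, i.e., $Z$ is a \emph{polynomial curve}: the image of some polynomial embedding $\iota:\mathbb{C}\hookrightarrow\mathbb{C}^2$. This step exploits the fact that $\mathbb{C}[x,y]$ is a UFD of Krull dimension two together with its deeper structural features, and is where the subtlety of the argument resides. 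I would note in passing that the analytic input from the introduction (namely that every one-dimensional retract of $\mathbb{C}^N$ is biholomorphic to $\mathbb{C}$, which follows from contractibility together with the fact that $Z$ is a closed complex submanifold) is consistent with, but strictly weaker than, this algebraic conclusion.

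Finally, I apply the Abhyankar--Moh--Suzuki theorem: every polynomial embedding of the affine line in the affine plane over a field of characteristic zero is rectifiable, i.e., can be carried onto the standard coordinate axis by a polynomial automorphism of the ambient plane. Applied to $\iota$, this yields $\Phi\in\mathrm{Aut}_{\mathrm{poly}}(\mathbb{C}^2)$ with $\Phi(Z)=\{(\zeta,0):\zeta\in\mathbb{C}\}$, as desired. The main obstacle is the second step, namely the structure theorem identifying proper retracts of $\mathbb{C}[x,y]$ as one-variable polynomial subalgebras; neither dimension count, nor contractibility, nor the analytic biholomorphism $Z\cong\mathbb{C}$ alone suffices to conclude algebraic isomorphism with $\mathbb{A}^1$ without this commutative-algebraic input. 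Abhyankar--Moh--Suzuki, while deep in its own right, enters only after this algebraic characterization has been secured.
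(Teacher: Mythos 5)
Your proposal is correct and follows essentially the same route as the paper: translate the retraction into an idempotent endomorphism of $\mathbb{C}[z,w]$, invoke Costa's theorem that every proper retract of the polynomial algebra is $\mathbb{C}[r]$ for a single polynomial $r$, and then rectify via Abhyankar--Moh. The only difference is presentational: you quote the rectifiability (embedding) form of Abhyankar--Moh--Suzuki as a black box, whereas the paper uses the degree-divisibility form and carries out the straightening explicitly through a sequence of triangular automorphisms, which additionally yields a normal form $\left(z+w\,\ell(z,w),0\right)$ for the retraction map itself.
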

\noindent Such a `rectifiability' 
assertion cannot in general be made for all holomorphic retracts of $\mathbb{C}^N$
for any $N\geq 2$, as follows by a simple 
analysis of the proof of the main result 
in the work \cite{Forstn}
due to Forstneric--Globevnik--Rosay.\\

\noindent \textbf{Acknowledgments}: 	
We thank Gautam Bharali, C. R. Jayanarayanan, Rohith Varma, J. Jaikrishnan and Kaushal Verma. 
The second named author would like to
acknowledge the support of the CSIR-JRF fellowship provided by the Council of Scientific 
and Industrial Research, India.

\section{Preliminaries} \label{preliminaries}
\noindent We recall certain basic and general facts about retracts which would 
needless to say, apply to our specific cases and put it
in good perspective, as well. While we would like to minimize repetition, we prefer to 
put the details on record to ensure a rigorous exposition. We believe many results here (if not all) are probably folklore or even if new, are relatively straightforward but essential. 
However, we repeat here a few generalities from the introductory section for one last time, to cast this section and the rest,
systematically.
Let us begin with the most general definition relevant to us:  
a self-map $\rho$ of a {\it set} $X$ is called a retraction if $\rho\circ \rho = \rho$; note that this 
idempotent condition on $\rho$ means that $\rho$ acts as the identity map on its range $Z=\rho(X)$. So, 
for any Hausdorff topological space $X$, its retracts,  which are the images of {\it continuous} 
retractions, are closed subsets of $X$; henceforth Hausdorffness on $X$ will be 
in force. We regard $\rho$ as being non-trivial, if
it is neither a constant map nor the identity. 
In addition to the contrasts between proper
maps and retractions mentioned in 
the introduction, we note that non-trivial retraction mappings may very well fail to be 
closed maps notwithstanding their ranges being closed subsets always; indeed, as soon as $X$ is a 
non-compact complex manifold -- the general setting of
interest in this article -- they {\it always} fail to be closed maps, despite the 
general fact that 
$\rho(X)$
is always closed as a subset of $X$, for every
retraction map $\rho$ on $X$ as just mentioned (proofs detailed later below). However, 
the feature of 
being a quotient map is common to both retraction maps as well as proper maps. 
Getting back to the general case of a Hausdorff topological space $X$,
if we add the assumption that $X$ is connected, then so are every of its 
retracts $Z$. When $X$ is furthermore contractible i.e., the 
space $X$ deformation retracts to a 
point then by composing any such deformation retraction with the retraction map $\rho$, gives a 
homotopy {\it within $Z$} that reduces $Z=\rho(X)$ to a point, meaning that if $X$ is contractible 
then so are all of its retracts. If $X$ even happens to be a convex domain in some real 
topological vector space, then the retracts of $X$ are also strong deformation retracts; indeed, the 
homotopy $H(t,x)=(1-t)x + t \rho(x)$ substantiates this. We 
are ofcourse interested in {\it holomorphic retracts}: if $X$ is a complex manifold and 
$\rho$ is a holomorphic self-map of $X$ which is idempotent then, we say 
that $\rho$ is a {\it holomorphic retraction} and its image $Z=\rho(X)$ a {\it holomorphic retract}; as 
we shall only be talking about holomorphic retracts, we shall most often drop the 
adjective `holomorphic'. Moreover as mentioned in the introduction, our focus will 
be to determine the form of the 
retracts $Z=\rho(X)$ rather 
than the retractions $\rho$. A first basic and general fact about holomorphic 
retracts  that is helpful to have in the background for the discussion in the sequel is 
laid down in the following well-known theorem which says that they are not just analytic 
varieties but actually (connected) manifolds i.e., are smooth and have the same 
dimension throughout.

\begin{thm} \label{Firstthm}
Let $X$ be any complex manifold. Then any of its retracts is a (connected) complex 
submanifold of $X$ which is closed in $X$. 
\end{thm}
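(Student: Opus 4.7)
The plan is to establish three properties in sequence: closedness of $Z = \rho(X)$ in $X$, connectedness of $Z$, and most substantively, the local complex submanifold structure of $Z$ near each of its points.

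For closedness, I would first observe that the idempotency $\rho \circ \rho = \rho$ forces $Z$ to coincide with the fixed-point set $\{x \in X : \rho(x) = x\}$: every image point $\rho(y)$ satisfies $\rho(\rho(y)) = \rho(y)$, and conversely any fixed point obviously lies in the range. The fixed-point set of a continuous self-map of a Hausdorff space is closed (it is the preimage of the diagonal under $x \mapsto (x, \rho(x))$), so $Z$ is closed. Connectedness of $Z$ is immediate as the continuous image of the connected manifold $X$ under $\rho$.

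For the submanifold structure, I would fix $z \in Z$ and work in local holomorphic coordinates centered at $z$. Differentiating $\rho \circ \rho = \rho$ at $z$ gives $d\rho_z \circ d\rho_z = d\rho_z$, so $d\rho_z$ is an idempotent endomorphism of $T_zX$. Setting $V = \mathrm{Im}(d\rho_z)$ and $W = \ker(d\rho_z)$, we have $T_zX = V \oplus W$; I would identify a neighborhood of $z$ with a neighborhood of $0$ in $V \oplus W$ and write $\rho = (A, B)$ componentwise, so that $dA_0$ is the projection onto $V$ and $dB_0 = 0$. The key step is to introduce
\[
\Phi(v, w) := \bigl(A(v, w),\; w - B(v, w)\bigr),
\]
whose differential at the origin is the identity on $V \oplus W$, so by the holomorphic inverse function theorem $\Phi$ is a local biholomorphism. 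A short computation, using the componentwise idempotency relations $A(A(v,w), B(v,w)) = A(v,w)$ and $B(A(v,w), B(v,w)) = B(v,w)$, shows that $\Phi \circ \rho \circ \Phi^{-1}$ is the linear projection $(\widetilde{v}, \widetilde{w}) \mapsto (\widetilde{v}, 0)$. In these straightened coordinates $Z$ is cut out by $\widetilde{w} = 0$ and is therefore a complex submanifold of dimension $\dim V = \mathrm{rank}(d\rho_z)$.

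To see that the dimension is the same at every point of $Z$, I would use that the rank of the idempotent $d\rho_z$ equals $\mathrm{tr}(d\rho_z)$, an integer-valued continuous function of $z$ on $Z$ and therefore locally constant; the connectedness established above then forces it to be globally constant, so the local pieces patch together into a submanifold of well-defined dimension. The main (and essentially only) technical step is the linearization via $\Phi$: producing the correct change of coordinates so that the idempotency relations collapse $\rho$ to an actual linear projection, rather than to something merely tangent to one. Once this is in hand, all three assertions of the theorem follow at once.
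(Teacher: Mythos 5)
Your proof is correct, and it is worth being clear about how it sits relative to the paper: the paper does \emph{not} prove Theorem \ref{Firstthm} in the text. It deliberately presents a flawed sketch (the one based on $\beta = \mathrm{id} - \rho$ and lower semicontinuity of rank, whose gap is that the upper bound $\mathrm{rank}\,D\beta(x) \leq n-r$ is only known \emph{along} $Z$, not in an open neighbourhood) and then defers to the proofs of Rossi and of Cartan as exposited by Abate. What you have written is, in essence, exactly Cartan's argument: your map $\Phi(v,w) = \bigl(A(v,w),\, w - B(v,w)\bigr)$ is precisely $P\circ\rho + (I-P)\circ(\mathrm{id}-\rho)$ written out in block components relative to $T_zX = V\oplus W$ with $P = d\rho_z$, and the identity $\Phi\circ\rho = \pi\circ\Phi$ follows from $A(A,B)=A$, $B(A,B)=B$ exactly as you say. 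This conjugates $\rho$ to an honest linear projection and so sidesteps the rank-constancy issue entirely, which is the whole point of the "clever" proof the paper cites. Two minor points you should make explicit in a final write-up: shrink the chart so that $\rho$ maps it into the domain where $\Phi$ is defined and invertible before asserting $\Phi\circ\rho\circ\Phi^{-1} = \pi$; and note that what you straighten is $\mathrm{Fix}(\rho)$, which equals $Z$ by your opening observation, so that $Z$ near $z$ is $\Phi^{-1}(\{\widetilde{w}=0\})$. Your locally-constant-trace argument for the dimension is a clean way to finish and is consistent with the paper's Lemma \ref{Firstlem}.
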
 

\noindent An exposition by Abate of the proof due to H. Cartan of this theorem, can be 
found in the monograph \cite{Metrc_dynmcl_aspct} (specifically, lemma 2.1.3 therein). Abate calls 
this proof `clever' and 
interestingly, this rather short and
slick proof was given by Cartan only in 1986 in \cite{Cartan-on-retracts}; Abate also 
notes that the theorem was proven for the first time
by Rossi in \cite{Rossi}. We sketch here, a flawed proof -- the only place in this 
article where we deliberately do so -- based 
on standard ideas in this context, for the sole purpose that it will serve in introducing some of 
the ideas (and our notations) to be used later while outlining the reasons 
underlying the necessity of a more careful approach as in the aforementioned proofs by Rossi and Cartan, to appreciate
them better. We shall of-course clearly point out the flaw at the appropriate juncture,
so as to be able to utilize the remaining (correct!) facts detailed in what follows.
Indeed then, let 
$\rho: X \to Z$ be a retraction of $X$ onto $Z$. Rewrite the defining equation of $\rho$ being a retraction 
map namely, $\rho(\rho(x))= \rho(x)$ for all $x \in X$, in its infinitesimal form by 
differentiating using the chain-rule, as
\[
D\rho(\rho(x)) D \rho(x) = D \rho(x),
\]
which holds for all $x \in X$. For $x=z \in Z$, $\rho(x)=x$ and so this simplifies to say 
that for every $z \in Z$, $D\rho(z)$ is an idempotent linear operator
on $T_zX$ (the tangent space to $X$ at $z$); although fairly trivial, this will be of 
much use in the sequel, so we record this as a lemma for later reference.

\begin{lem} \label{Firstlem}
Let $X$ be a complex manifold of dimension $n$, $Z$ a retract of $X$ and 
$\rho: X \to Z$ a retraction map. Then, for any $p \in Z$, the derivative $D \rho(p)$ is 
a linear projection (thereby a retraction) of $T_pX$ (the tangent space to $X$ at $p$) onto some linear subspace
$L$ of $T_p X$.
\end{lem}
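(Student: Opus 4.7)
The plan is to simply differentiate the idempotency relation $\rho \circ \rho = \rho$ and evaluate at a point of the retract. Applying the chain rule at an arbitrary $x \in X$ gives
\[
D\rho(\rho(x)) \circ D\rho(x) = D\rho(x).
\]
Specialising to $x = p \in Z$ and invoking $\rho(p) = p$ (which holds because $\rho$ acts as the identity on its own image $Z$), this reduces to
\[
D\rho(p) \circ D\rho(p) = D\rho(p).
\]
Hence $D\rho(p)$ is an idempotent endomorphism of the finite-dimensional complex vector space $T_pX$.

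Next I would invoke the standard linear-algebra fact that any idempotent endomorphism $P$ of a finite-dimensional vector space $V$ is the linear projection onto its image $L := P(V)$ along $\ker P$: indeed $V = L \oplus \ker P$, and $P|_L = \mathrm{id}_L$, because for $v = Pu \in L$ one has $Pv = P^2 u = Pu = v$. Setting $L := D\rho(p)(T_p X) \subset T_p X$ therefore completes the proof, exhibiting $D\rho(p)$ as a linear retraction of $T_p X$ onto $L$.

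I do not anticipate any real obstacle here: the entire argument is two lines of chain rule together with one line of linear algebra, and every ingredient — idempotency of $\rho$, the identity $\rho|_Z = \mathrm{id}_Z$ coming from the definition of a retract, the chain rule, and the decomposition of a vector space induced by an idempotent — is standard and has already been set up in the paragraph preceding the lemma statement. The one subtle point, which is legitimately used but easy to miss, is precisely that $\rho(p) = p$ for every $p \in Z$; this is what allows the collapse from the general identity $D\rho(\rho(x)) \circ D\rho(x) = D\rho(x)$ to the clean quadratic identity on $T_pX$ alone, with no shift of base-point involved. It is worth flagging, though not needed for the statement itself, that this lemma is the infinitesimal seed of the full structure theorem \ref{Firstthm}: the linear subspace $L$ produced here is destined to be identified with the tangent space $T_pZ$ once one knows $Z$ is a complex submanifold, and its dimension (being the rank of the idempotent $D\rho(p)$) is locally constant on $Z$.
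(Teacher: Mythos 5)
Your proof is correct and follows exactly the paper's argument: differentiate the idempotency relation $\rho\circ\rho=\rho$ via the chain rule, specialise to $x=p\in Z$ where $\rho(p)=p$ to obtain $D\rho(p)\circ D\rho(p)=D\rho(p)$, and conclude by the standard fact that an idempotent endomorphism of a finite-dimensional vector space is the projection onto its image along its kernel. The only difference is that you spell out the linear-algebra step explicitly, which the paper leaves implicit.
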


\noindent As the assertion is essentially `local', we may fix an arbitrary point $p \in Z$ and work in a chart about $p$. 
In particular, consider the mapping on a chart neighbourhood $U_p$ of $p$, given by
\[
F_p(x) := \left(D\rho(p) \circ \rho\right)(x) \; : \; U_p \to T_p X = L \oplus E \;\text{ (say)}.
\]
Denoting $D\rho(p)$ by $\pi_p$ we have
\[
DF_p(x) = \pi_p \circ D \rho_p = \pi_p^{\circ 2} = \pi_p
\]
implying that $F_p$ is of full rank. Note that $F_p$ maps $U_p$ into $L$.

\medskip

\noindent Continuing with the proof of theorem \ref{Firstthm}, we shall now show that in a 
neighbourhood of any given $p \in Z$, $Z$ is the level set of a holomorphic map whose 
derivative has constant rank near $p$; the 
implicit mapping theorem will then render the 
claim of the theorem. To this end then, we fix an arbitrary $p \in Z$ and passing to a 
holomorphic chart of the complex manifold $X$ near $p$, we may as well 
assume (as already mentioned), that we 
are in the complex Euclidean space $\mathbb{C}^N$, which allows us to 
define $\beta(x) = x - \rho(x)$ for $x$ near $p$, whose zero set is 
precisely $Z$ (all in the charts near $p$, to be precise). To show that the constancy of the 
rank of the derivative $D\beta(x)$ as mentioned, we first consider the derivative 
of $\rho$. The idempotence of the linear operator $D \rho(p)$ given by the above 
lemma, guarantees its diagonalizability as well. So, after a linear change of 
coordinates, $D \rho(p)$ is a diagonal matrix with diagonal entries either $1$ 
or $0$. Let $r$ be the number of $1$'s i.e., 
$r={\rm rank} (D \rho(p))$. We may as well assume by a permutation of coordinates if 
needed, that the first principal $r \times r$ submatrix of $D\rho(p)$ is the identity 
matrix of order $r$, which by continuity of the determinant function implies that the 
determinant of the same principal minor of the nearby derivative matrices $D \rho(x)$ 
for all $x$ in some neighbourhood (contained in the chart) about $p$, is non-zero as 
well. This just means the well-known lower semi-continuity of the rank function, which 
when applied to $x \mapsto {\rm rank} (D \rho(x))$ implies that the rank of $D \rho(x)$ is 
atleast $r$ for all $x$ in some neighbourhood of $p$. Now, observe that the same arguments may 
be applied to $D \beta(x)$ as well. Firstly, note that $D \beta (p) = I - D\rho(p)$ where $I$ 
is the identity matrix of size $n$, is also an idempotent matrix, indeed diagonal with diagonal 
entries only $1$ or $0$ and with the number of zeros being exactly $n-r$. In particular, 
${\rm rank}(D \beta(p))=n-r$ and the lower semicontinuity of the rank applied 
to $x   \mapsto {\rm rank} (D \beta(x))$ implies that the 
rank of $D \beta(x)$ is atleast $n-r$ in a neighbourhood of $p$. 
Now, we have a 
similar lower bound on the rank of $D \rho(x)$ as already noted, 
which is tempting to be
translated this into an bound in-terms of $\beta$ instead of $\rho$; specifically, the upper 
bound ${\rm rank} (D \beta(x)) \leq n-r$ in the some neighbourhood of $p$. While this is 
correct for $x=p$ and even {\it along} $Z$, it is not at all apriori clear as to why this bound 
continued to hold in an open neighbourhood of $p$. The reason for detailing this flawed argument
is that it is then easy to finish the proof, within {\it standard} elementary ideas of calculus on
differential manifolds. Indeed firstly, the foregoing 
pair of bounds when put
together, only means that the rank of $D \beta(x)$ is constant in some neighbourhood 
of $p$, leading to the conclusion by the constant rank theorem that $Z$ is a complex 
manifold of dimension $r$ throughout, as we already know $Z=\rho(X)$ is connected but alas, this 
part of the argument is 
flawed, as pinpointed. This finishes the discussion of the
proof of \ref{Firstthm}. Let us round this by noting a couple of very easy but noteworthy facts.
Retracts of compact complex manifolds are compact complex submanifolds. Retracts of Stein manifolds 
are Stein and likewise, the property of a complex manifold being an Oka manifold, is preserved
under passage to a retract thereof, as well. In addition to the contrasts between retractions and proper mappings mentioned in the introduction, we have the following. \\
\begin{prop}
Let $X$ be a non-compact complex manifold with 
$Z$ being any non-trivial retract of $X$. Then none of the
retraction maps 
$\rho: X \to Z$ is a closed map.
\end{prop}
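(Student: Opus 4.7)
The plan is to argue by contradiction: assuming $\rho$ is closed, I will exhibit a closed subset of $X$ whose image under $\rho$ fails to be closed in $Z$. First, by Theorem \ref{Firstthm} combined with the non-triviality of $\rho$, $Z$ is a closed complex submanifold of $X$ with $0 < \dim Z < \dim X$. By Lemma \ref{Firstlem}, $D\rho(z)$ is a linear projection from $T_zX$ onto $T_zZ$ at every $z \in Z$, so $\rho$ is a submersion along $Z$. Consequently, each fiber $\rho^{-1}(z)$ is a complex analytic subset of $X$ which, at least locally near $z$, is a submanifold of positive dimension $\dim X - \dim Z$.

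The strategy is then to construct a sequence $\{x_n\} \subset X$ with no convergent subsequence in $X$ for which $\rho(x_n) \to z^{*} \in Z$ while $z^{*} \neq \rho(x_n)$ for every $n$. Given such a sequence, the set $C := \{x_n\}$ is a closed discrete subset of $X$, whereas $\rho(C) = \{\rho(x_n)\}$ accumulates at $z^{*} \notin \rho(C)$ and is therefore not closed in $Z$; this contradicts the closedness of $\rho$. To produce such a sequence, I would fix any $z^{*} \in Z$, pick $z_n \in Z \setminus \{z^{*}\}$ with $z_n \to z^{*}$ (possible since $\dim Z \geq 1$), take a compact exhaustion $K_1 \subset K_2 \subset \cdots$ of $X$, and select $x_n \in \rho^{-1}(z_n) \setminus K_n$ for each $n$. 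Then $\{x_n\}$ escapes every compact set, $\rho(x_n) = z_n \to z^{*}$, and $\rho(x_n) \neq z^{*}$ for all $n$, as required.

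The principal obstacle is the selection of $x_n \in \rho^{-1}(z_n) \setminus K_n$, equivalently the non-compactness of each fiber $\rho^{-1}(z_n)$. For the complex manifolds of central interest in this article---bounded domains of holomorphy in $\mathbb{C}^N$, and more generally Stein manifolds---this is automatic, since such $X$ admits no compact positive-dimensional analytic subsets, and hence the positive-dimensional fiber $\rho^{-1}(z_n)$ cannot be compact. As a by-product one should also notice that $\rho$ cannot be locally constant on any open subset of $X$, for otherwise the identity theorem (applied to the connected $X$) would render $\rho$ globally constant, contradicting non-triviality; this observation gives the additional flexibility to perturb $x_n$ within its chart if one wishes to avoid any ambiguity about which particular point of $\rho^{-1}(z_n)$ is chosen outside $K_n$. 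This completes the plan in the setting germane to the paper.
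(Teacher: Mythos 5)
Your proposal follows the same route as the paper's proof: pick $z_n \to z^{*}$ in $Z$ with $z_n \neq z^{*}$, take a compact exhaustion $K_n$ of $X$, choose $x_n \in \rho^{-1}(z_n)\setminus K_n$, and observe that $C=\{x_n\}$ is closed and discrete while $\rho(C)=\{z_n\}$ misses its accumulation point $z^{*}$. The one genuinely delicate step is exactly the one you isolate, namely that each fiber $\rho^{-1}(z_n)$ must be non-compact so that the selection $x_n \notin K_n$ is possible. The paper disposes of this with the bare assertion that the fibers, ``being non-trivial analytic sets,'' are non-compact; you instead justify it only under the additional hypothesis that $X$ carries no compact positive-dimensional analytic subsets (Stein manifolds, bounded domains of holomorphy), which is where all of the paper's applications live.

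Your caution here is warranted, because the paper's blanket assertion is false for a general non-compact complex manifold, and in fact the proposition as literally stated fails there: take $X=\mathbb{P}^1\times\mathbb{C}$ and $\rho(p,w)=(p_0,w)$. This is a non-trivial holomorphic retraction onto $\{p_0\}\times\mathbb{C}$ whose fibers $\mathbb{P}^1\times\{w\}$ are compact, and $\rho$ \emph{is} a closed map, since projection along a compact factor is closed. So the restriction you impose is not a weakness of your argument relative to the paper's; it is the correct hypothesis under which this proof (and the statement) actually holds. For the class of domains the paper is concerned with, your proof is complete; the only minor point worth writing out is that discreteness of $\{x_n\}$ uses an exhaustion with $K_n\subset \operatorname{int}(K_{n+1})$, exactly as in the paper.
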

\begin{proof}
Choose any convergent sequence $(z_n)$ in $Z$ of distinct points; as we already know $Z$ is closed, thereby the limit $z_0$
of the chosen sequence lies in  $Z$. Our strategy
will be to pick points $p_n$ in the fibers over $z_n$ such
that the $p_n$'s form a discrete, thereby a closed subset $C$ \
of $X$. If we let $K$ denote the (infinite) 
compact set $K = \{z_n\} \cup \{z_0\}$ and note that $\rho(C)$ 
equals $K$ without its limit point, we would have proven the
proposition.
The possibility of choosing a sequence $p_n$ as desired
above really requires the non-compactness in hypothesis.
Indeed, consider the sequence of fibers of $\rho$ given 
by $F_n := \rho^{-1}(z_n)$. 
Note firstly that being 
non-trivial analytic sets, each of the $F_n$'s is
non-compact and $\rho^{-1}(K\setminus \{z_0\}) = \cup_{n=1}^{\infty} F_n$. 
We then just use the fact from general topology 
that: being second countable, locally compact and Hausdorff,
$X$ admits an exhaustion 
by compact subsets $K_n$. In particular, since $X$ is not compact,
the complements $K_n^c = X \setminus K_n$ is non-empty for 
each $n \in \mathbb{N}$,
ensuring that we can choose a point $p_n \in F_n \cap K_n^c$.
To prove that $p_n$'s form a discrete subset of $X$ as 
needed by our strategy, assume to get a contradiction
 that there exists a 
subsequence $(p_{n_k})$ converging to $p$. This implies that there exists $N \in \mathbb{N}$ 
such that the $N$-th compact captures our sequence eventually: $p_{n_k} \in K_N$ for all 
$n_k$; but that contradicts the choice of $p_n$, finishing the proof
that $p_n$ has no 
limit point in $X$. Thus $C := \{p_n: n \in \mathbb{N}\}$
is a closed subset of $X$ whose image $\rho(C) = \{z_n: n\in \mathbb{N}\}$ fails to be closed
as its limit point $z_0$ lies outside it, establishing that $\rho$ is not a closed map.
\end{proof}

\noindent We turn to questions which arise when we have more than one retraction map
and some initial remarks about the basic question about how do we generate new
retraction maps out of old. Algebraic or other operations would require special 
assumptions about the ambient space; but whatever be the ambient space $X$,
viewing retraction maps as self-maps of $X$, we may always compose any two
such maps and ask when does it result in a retraction?
As the first answer, we have: if two retraction maps $\rho_1,\rho_2$ on 
$X$ commute, then $\rho:=\rho_1 \circ \rho_2 = \rho_2 \circ \rho_1$ 
is also a retraction map on $X$ whose image is the retract given by the intersection of the retracts $\rho_1(X)$ 
and $\rho_2(X)$. If we are allowed to change the space, then there are even 
more trivial ways such as the following. Given any retraction with 
notations as above, we may pick an arbitrary subset $A$ of the retract $Z$
and consider the restriction of 
$\rho$ to $X \setminus \rho^{-1}(A)$; this renders $Z \setminus A$ as a retract
of $X \setminus \rho^{-1}(A)$. In other words, we may get retracts of $X$ as
intersections of retracts of a space containing $X$ with $X$.
The question of whether all retracts of $X \setminus A$
can be realized as being obtained this way is 
interesting only when $X$ and $A$ satisfy certain conditions
i.e., the converse of the last statement indeed fails
and there are nice counterexamples towards this. To 
put down the simplest of
examples in the complex analytic category, consider $X=\mathbb{C}^2\setminus \{\rm w-axis\}$
where the standard coordinates are herein and henceforth in this article, denoted by 
$(z,w)$. We then observe that the retract $Z$ described by the equation 
$zw=1$ which is the image of the retraction $(z,w) \mapsto (z,1/z)$, cannot be 
realized as mentioned just earlier -- the proof of this 
is recorded in remark \ref{Analytic-compl-Unbdd dom}
following a lemma to answer the aforementioned question in the 
positive direction.\\    

\noindent In passing, we put down a rudimentary fact to keep in the background about comparing 
retracts of a pair of related spaces;
we include its proof alongwith statement despite its obviousness (as with certain other facts), owing to its use for the rest of this article.

\begin{prop} \label{Bih}
Suppose $X_1$ and $X_2$ are biholomorphically equivalent complex manifolds, with 
$F: X_1 \to X_2$ being a biholomorphism. Then 
there is a one-to-one correspondence between retracts of $X_1$ and $X_2$, rendered via conjugation by $F$.
In-particular, the image of a retract of a complex manifold $X$ under any member of 
${\rm Aut}(X)$ is also a retract of $X$.
\end{prop}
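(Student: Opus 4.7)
The plan is to exhibit the bijection explicitly via conjugation by $F$ and verify that it carries retraction maps to retraction maps, from which the statement about retracts follows by taking images. First I would fix a biholomorphism $F : X_1 \to X_2$ and define, for any holomorphic retraction $\rho_1 : X_1 \to X_1$, the conjugate map
\[
\Phi(\rho_1) \; := \; F \circ \rho_1 \circ F^{-1} \; : \; X_2 \to X_2.
\]
Since composition of holomorphic maps is holomorphic, $\Phi(\rho_1)$ is holomorphic. The key verification is idempotency:
\[
\Phi(\rho_1) \circ \Phi(\rho_1) \;=\; F \circ \rho_1 \circ F^{-1} \circ F \circ \rho_1 \circ F^{-1} \;=\; F \circ (\rho_1 \circ \rho_1) \circ F^{-1} \;=\; F \circ \rho_1 \circ F^{-1} \;=\; \Phi(\rho_1),
\]
using $F^{-1} \circ F = {\rm id}_{X_1}$ and $\rho_1 \circ \rho_1 = \rho_1$. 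Thus $\Phi(\rho_1)$ is a holomorphic retraction of $X_2$, and its image is $F(\rho_1(X_1)) = F(Z_1)$. This shows that if $Z_1$ is a retract of $X_1$, then $F(Z_1)$ is a retract of $X_2$.

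Next I would argue that the map $\Phi$, restricted to the level of retracts (i.e., sending $Z_1 = \rho_1(X_1)$ to $F(Z_1)$), is a well-defined bijection. Well-definedness at the level of retracts is immediate because distinct retraction maps may share the same image but $F$ is injective on $X_1$, so $F(Z_1)$ depends only on the subset $Z_1 \subset X_1$. Applying the same construction to $F^{-1}$ in place of $F$ produces an inverse map $\Psi$ sending a retract $Z_2 \subset X_2$ (image of some retraction $\rho_2$) to $F^{-1}(Z_2) \subset X_1$, which is a retract of $X_1$ by the same argument. Since $\Psi \circ \Phi$ and $\Phi \circ \Psi$ reduce to $F^{-1} \circ F$ and $F \circ F^{-1}$ respectively at the level of subsets, they are the identity; hence $\Phi$ is a bijection between retracts of $X_1$ and retracts of $X_2$.

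Finally, the in-particular clause is the special case $X_1 = X_2 = X$ and $F \in {\rm Aut}(X)$, in which case the above immediately yields that $F(Z)$ is a retract of $X$ whenever $Z$ is. There is no serious obstacle here; the entire argument is a formal functoriality computation, and the only point worth noting is that the construction $\rho_1 \mapsto F \circ \rho_1 \circ F^{-1}$ respects holomorphicity precisely because $F$ (and thus $F^{-1}$) is biholomorphic rather than merely a homeomorphism.
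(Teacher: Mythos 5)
Your proof is correct and follows essentially the same route as the paper: conjugating a retraction $\rho_1$ by $F$ to get $F \circ \rho_1 \circ F^{-1}$, checking idempotency directly, and applying the same construction to $F^{-1}$ to obtain the inverse correspondence. No gaps.
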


\begin{proof}
Let $R_1,~ R_2$ be retracts of $X_1,~ X_2$ respectively; so, there exist holomorphic 
retraction maps
$\rho_1: X_1 \to X_1$ and $\rho_2 : X_2 \to X_2$ with $\rho_1(X_1) = R_1$ and $\rho_2(X_2) = R_2$.
Let $Z_1 := F(R_1),~ Z_2 := F^{-1}(R_2)$.
Consider the map $G : X_2 \to X_2$ defined by $G(z) = F \circ \rho_1 \circ F^{-1}(z)$.
The idempotence of $\rho_1$ implies the same about $G$; so, $G$ is a holomorphic 
retraction on $X_2$ and ${\rm image}(G) = Z_1$. Thereby, $Z_1$ is a retract of $X_1$.\\
In other words, if $R_1$ is a retract of $X_1$ then $F(R_1)$ is a retract of $X_2$.
Similar arguments show that if $R_2$ is a retract of $X_2$ then $F^{-1}(R_2)$ is a retract of $X_1$.
\end{proof}

\noindent As mentioned in the introduction, one may deduce out of the above elementary fact, the well-known
characterization of retracts of $\Delta^N$, yet another way of 
demonstrating the  dramatic failure of the Riemann mapping theorem in
multidimensional complex analysis, as soon as the dimension $N$ gets any bigger than one.
\begin{cor} \label{Bih_1}
Let $D$ be any bounded balanced convex domain with $\mathbb{C}$-extremal boundary then $\Delta^N$ and 
$D$ are biholomophically inequivalent.
\end{cor}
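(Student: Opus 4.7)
The plan is to argue by contradiction, exploiting the stark gap between the rigidity of retracts in $D$ imposed by Vesentini and the abundance of retracts in $\Delta^N$. Suppose $F \colon D \to \Delta^N$ is a biholomorphism; since $\mathrm{Aut}(\Delta^N)$ acts transitively, post-composing $F$ with a suitable element of $\mathrm{Aut}(\Delta^N)$ (and invoking Proposition \ref{Bih}) allows me to assume $F(0) = 0$, matching the center of the balanced domain $D$ with that of $\Delta^N$.

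The key observation is that $\Delta^N$ admits two distinct retracts through the origin sharing the same tangent direction at $0$: the obvious linear retract
\[
\widetilde{Z}_1 = \{(z, 0, \ldots, 0) : z \in \Delta\}
\]
and the non-linear graph
\[
\widetilde{Z}_2 = \{(z, z^2/2, 0, \ldots, 0) : z \in \Delta\},
\]
which is the image of the manifestly holomorphic and idempotent self-map $(z, w_1, \ldots, w_{N-1}) \mapsto (z, z^2/2, 0, \ldots, 0)$ of $\Delta^N$ (well-defined because $|z| < 1$ forces $|z^2/2| < 1/2 < 1$). Both retracts pass through the origin and both have $T_0 \widetilde{Z}_j = \mathrm{span}(e_1)$, yet they are patently distinct as sets.

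Pulling these back by $F$ yields two retracts $Z_j := F^{-1}(\widetilde{Z}_j)$ of $D$ through the origin. By Vesentini's theorem (Theorem \ref{Vesntn}), each $Z_j$ is of the form $L_j \cap D$ for some one-dimensional complex linear subspace $L_j \subset \mathbb{C}^N$, and the linear structure forces
\[
L_j = T_0 Z_j = dF^{-1}(0)\cdot T_0 \widetilde{Z}_j = dF^{-1}(0)\cdot \mathrm{span}(e_1),
\]
which is independent of $j$. Hence $L_1 = L_2$, so $Z_1 = Z_2$, and applying $F$ gives $\widetilde{Z}_1 = F(Z_1) = F(Z_2) = \widetilde{Z}_2$, contradicting $\widetilde{Z}_1 \neq \widetilde{Z}_2$. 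No single step is technically difficult: Vesentini has already done the heavy lifting, and the crux is simply the display of two polydisc retracts whose coexistence is incompatible with Vesentini's conclusion once transported into $D$.
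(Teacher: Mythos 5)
Your argument is correct, and it reaches the contradiction by a slightly different mechanism than the paper. Both proofs normalize so that the biholomorphism matches centers, invoke Theorem \ref{Vesntn} to force linearity of retracts of $D$ through the origin, and play this off against the polydisc's non-linear retracts. The paper, however, first applies Cartan's uniqueness theorem to conclude that the normalized biholomorphism $F$ is itself \emph{linear}, and then transports the statement ``all retracts through the origin are linear'' from $D$ to $\Delta^N$ wholesale, contradicting the known existence of non-linear retracts of the polydisc. You avoid Cartan's theorem entirely: your key observation is that Vesentini's conclusion makes a retract of $D$ through the origin \emph{uniquely determined by its tangent space at} $0$ (since $Z_j=L_j\cap D$ with $L_j=T_0Z_j$), whereas $\Delta^N$ carries two distinct retracts through $0$ with the same tangent direction, namely $\{(z,0,\ldots,0)\}$ and $\{(z,z^2/2,0,\ldots,0)\}$; pulling back via any biholomorphism fixing $0$ and comparing tangent spaces via $dF^{-1}(0)$ then yields the contradiction without knowing anything further about $F$. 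What Cartan buys the paper is a one-line transport of linearity; what your tangent-space argument buys is independence from Cartan's theorem and hence, in principle, applicability in settings where the linearization of origin-fixing biholomorphisms is unavailable. Both routes share the implicit standing assumption $N\geq 2$ (needed for the non-linear retract, and in any case the statement fails for $N=1$ by the Riemann mapping theorem), so that is not a defect specific to your write-up.
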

     
\begin{proof}
If possible assume that $\Delta^N$ and $D$ are biholomorphically equivalent. Since 
automorphism group of $\Delta^N$ acts transitively, there exist a biholomorphism $F$ 
from $\Delta^N$ to $D$  with $F(0) = 0$. By Cartan's 
theorem (theorem 2.1.3 in \cite{Rud_book_Fn_theory_unitball}), $F$ is a 
linear transformation. By theorem \ref{Vesntn}, every retract of $D$  
passing through origin is a linear subspace of $D$. If $D_L := D \cap L$ is 
a linear subspace of $D$, then $F^{-1}(D_L)$ is also linear. By proposition
\ref{Bih}, this leads to the
conclusion that {\it all} retracts of $\Delta^N$ passing through origin are linear subspaces,
which contradicts the fact that we have highlighted 
several times namely, the existence in plenty, of non-linear retracts passing through the origin 
for the polydisc. 
\end{proof}

\noindent In passing, we may ofcourse note that
by using the well-known theorem of Kaup--Upmeier \cite{Kaup-Upm}, we may replace the polydisc in the 
above by various other product domains which unlike polydiscs are neither convex nor homogeneous. 
Specifically, we may take a product $B_1\times \ldots \times B_k$ (with $k \geq 2$) of bounded balanced pseudoconvex
domains with the property that one of the factors admits a non-linear holomorphic map into another, to
ensure the existence of non-linear retracts, thereby the proof of the above corollary to go through. 
There are of-course better proofs and our reason for discussing this proof is to
simultaneously make a few general remarks about {\it non-linear} retracts and about 
some simple ways of generating ample examples of retracts.
We note that there are plenty of product domains of the kind just mentioned (satisfying the property 
about the existence of non-linear holomorphic maps, as mentioned above). To indicate
a way of generating examples, let 
$E_{2m} := \{ (z,w) \; : \; \vert z \vert^{2m} + \vert w \vert^2  <1 \}$. For an
arbitrary point $(z,w) \in E_{2m}$ note that $\vert z \vert <1$, thereby that,
$\vert z^{n} \vert^{2l} +  \vert w \vert^2 < \vert z \vert^{2m} + \vert w \vert^2$ for
every $l \geq m$. Consequently,
$(z^{n},w)$ lies in $E_{2l}$ for every $l \geq m$, implying that
$(z,w) \mapsto (z^{n},w)$, gives an example of a non-linear holomorphic mapping as soon as $n \geq 2$ as desired and
more importantly thereby, a non-linear retract passing through the origin in the
polyball $E_{2m} \times E_{2l}$ for every $l \geq m$. Needless to say, similar
constructions can be done with $l_p$-balls and more; we shall not 
digress further along this line.\\

Since for a good part of the article, we deal with circular domains, we state one of the most famous results about mappings between such domains which will be used in numerous occasions in this article; we shall simply refer to this theorem due to H.Cartan as Cartan's theorem.
\begin{thm}(H. Cartan)\label{Cartan}
Suppose $\Omega_1,\Omega_2$ are circular domains in $\mathbb{C}^N$ with $0 \in \Omega_1 \cap \Omega_2$ and $\Omega_1$ is bounded. If $F$ is a biholomorphic map of $\Omega_1$ onto $\Omega_2$ with $F(0) = 0$, then $F$ is a linear transformation.
\end{thm}
\noindent There shall also be occasions where another result of H.Cartan will be used, as stated next. We shall refer to it as the classical uniqueness theorem of Cartan.
\begin{thm}
Suppose $\Omega$ is a bounded domain in $\mathbb{C}^N$ and $F: \Omega \to \Omega$ is a holomorphic map. If for some $p \in \Omega, ~ F(p) = p$ and $F'(p) = I$, then $F(z) = z$ for all $z \in \Omega$.
\end{thm}
\subsection{Balanced Domains in $\mathbb{C}^N$}
In this subsection, we recall the basic properties of balanced domain and its associated Minkowski function, following \cite{Jrncki_invrnt_dst}.
\begin{defn}
Let $D \subset \mathbb{C}^N$ be a balanced domain. Define its Minkowski function $h_D: \mathbb{C}^N \longrightarrow \mathbb{R}^{+}$,
$$
h_D(z):=\inf \{t>0: z / t \in D\}, \quad z \in \mathbb{C}^N .
$$
\end{defn}
\begin{prop}\label{Prop_Minkowski}
We have with the notations as in the above definition, the following properties of the Minkowski functional.
\begin{enumerate}\rm
\item [(a)] $h_D(\lambda z)=|\lambda| h_D(z)$, for all $z \in \mathbb{C}^N, \lambda \in \mathbb{C}$.
\item [(b)] $D=\left\{z \in \mathbb{C}^N: h_D(z)<1\right\}$.
\item [(c)] If $h: \mathbb{C}^N \longrightarrow \mathbb{R}^{+}$is an upper semicontinuous function such that $h(\lambda z)= |\lambda| h(z), z \in \mathbb{C}^N, \lambda \in \mathbb{C}$, then the set $D:=\left\{z \in \mathbb{C}^N: h(z)<1\right\}$ is a balanced domain and $h \equiv h_D$.
\item [(d)] If $h_D$ is continuous, then $\partial D=\left\{z \in \mathbb{C}^N: h_D(z)=1\right\}$.
\item [(e)] $D$ is convex if and only if $h_D$ is a seminorm, i.e., $h_D(z+w) \leq h_D(z)+h_D(w), z, w \in \mathbb{C}^N$.
\item [(f)] If $D$ is bounded, then $h_D^{-1}(0)=\{0\}$.
\end{enumerate}
\end{prop}

\noindent An important little observation that will be particularly useful to us is the following. Fix any norm $\|\cdot\|$ on $\mathbb{C}^N$. Then, for any $z \in \mathbb{C}^N$, the Minkowski functional of $D$ can be expressed as: 
\[
h_D(z) \;= \; \frac{\|z\|}{\|\pi(z)\|},
\] 
where 
$\pi(z) = \alpha_0 z \in \partial D$ with $\alpha_0 := \sup \{\alpha > 0: \alpha z \in D\}$.

\subsection{Linear Retracts}
\begin{defn} \label{Linear-retract-defn}
Let $D$ be any bounded balanced domain in $\mathbb{C}^N$ and $L$ a 
linear subspace of $\mathbb{C}^N$. We say that $D_L:=D \cap L$ is 
a {\it linear retract} of $D$, provided there exists an idempotent 
linear operator on $\mathbb{C}^N$ which maps $D$ onto $D_L$ (and 
thereby $\mathbb{C}^N$ onto $L$).
\end{defn}

\begin{rem} \label{Linear-remark-1}
A natural question arising for this definition, answering which will also 
settle some ambiguity that may otherwise arise in the sequel is the 
following. If it 
so happens that for a linear subspace $L$ of $\mathbb{C}^N$, $D_L:=D \cap L$  turns 
out to be a holomorphic retract i.e., the image of a possibly non-linear 
retraction $\rho$ of $D$, could it be said that $D_L$ is a linear retract 
in the sense of the above definition? We shall first answer this question under 
the assumption of convexity on $D$ as our domain of interest $D$ is convex; indeed 
then, the answer is in the affirmative and the reason is as follows. The image 
of $\rho$ being a linear subspace of $D$, namely $D_L$, it is in particular 
itself a bounded balanced convex domain in the Banach space $(L, \mu)$ where $\mu$ 
is the Minkowski functional of $D$ which is a norm on $\mathbb{C}^N$ by virtue of 
the convexity of $D$. This enables us to apply proposition C of Simha's 
article \cite{Simh} (exposited in theorem 8.1.2 of \cite{Rud_book_Fn_theory_unitball} as well) which 
when applied to the pair of Banach spaces $(\mathbb{C}^N, \mu)$, $(L, \mu)$ and 
the map $\rho$ between their unit balls,  shows that the norm of the linear operator 
$D\rho(0)$ is atmost one. As there are points of norm one (with respect to the 
norm $\mu$) in $\overline{D}$ which are mapped to points of norm one in $L$ 
by $D\rho(0)$ namely, points of $\partial D_L$ itself, we immediately see that 
the norm of $D\rho(0)$ is exactly one. This leads to the desired conclusion 
that $D \rho(0)$ is a linear retraction of $D$ retracting $D$ onto $D_L$, as 
is brought out by the next general remark.
\end{rem}

\begin{rem}\label{Linear-remark-2}
Let $D$ be any bounded balanced convex domain in $\mathbb{C}^N$ and $\mu$ its 
Minkowski functional; as $D$ is convex, $\mu$ is a norm and $D$ is the unit ball 
with respect to $\mu$. Let $L$ be any complex linear subspace of $\mathbb{C}^N$ 
considered now as a Banach space with the norm $\mu$. Suppose there exists a 
linear projection 
$\pi_L : \mathbb{C}^{N} \to L$ of norm one, where
we equip $\mathbb{C}^{N}$ and $L$ both with the norm $\mu$. This means that $\pi_L$ 
maps $D$ onto $D_L:= L \cap D$, the unit ball in $L$ with respect to the same
norm $\mu$ restricted to $L$, which is fixed pointwise by $\pi_L$ (after 
all, $\pi_L$ itself is a retraction of $\mathbb{C}^N$ onto $L$); thus, $\pi_L$ is 
a linear retraction of $D$ onto $D_L$. Thus, a linear subspace of $D$ which must 
be of the form $D_L= D \cap L$ for some linear subspace $L$ of $\mathbb{C}^N$, 
is a linear retract of $D$ if and only if there exists a linear projection of 
the Banach space $(\mathbb{C}^N, \mu)$ of norm one, mapping $D$ onto $D_L$. 
Finally, let us just remark here once for clarity that while this 
projection $\pi_L$ is linear, it may well not be (and most often not) an orthogonal projection. 
\end{rem}

\noindent Let us now address the basic question of generating (examples of) new retracts out of old. 
To begin with a first rudimentary step towards this, we put down the following transitivity result.
\begin{lem}\label{retr-transit}
Let $X$ be any complex manifold. If $Z$ is a retract of $X$ and $W$ is a retract of $Z$ then $W$ is a 
retract of $X$.
\end{lem}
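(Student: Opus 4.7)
The plan is to simply compose the two given retraction maps. By hypothesis, there exist holomorphic retractions $\rho_1 : X \to X$ with image $Z$ (so $\rho_1 \circ \rho_1 = \rho_1$ and $\rho_1|_Z = \mathrm{id}_Z$) and $\rho_2 : Z \to Z$ with image $W$ (so $\rho_2 \circ \rho_2 = \rho_2$ and $\rho_2|_W = \mathrm{id}_W$). Note that by Theorem \ref{Firstthm}, $Z$ is a closed complex submanifold of $X$ and $W$ is a closed complex submanifold of $Z$, hence $W$ is a closed complex submanifold of $X$ as well.

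Define $\rho : X \to X$ by
\[
\rho := \rho_2 \circ \rho_1,
\]
which is well-defined and holomorphic, since $\rho_1(X) \subset Z$ lies in the domain of $\rho_2$ and both factors are holomorphic (the image lies in $W \subset Z \subset X$, so we regard $\rho$ as a self-map of $X$ via the inclusion). Its image is clearly $\rho_2(\rho_1(X)) = \rho_2(Z) = W$.

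The only thing to check is idempotency. Compute
\[
\rho \circ \rho \;=\; \rho_2 \circ \rho_1 \circ \rho_2 \circ \rho_1.
\]
For any $x \in X$, the point $\rho_2(\rho_1(x))$ lies in $W \subset Z$, and $\rho_1$ is the identity on $Z$; therefore $\rho_1 \circ \rho_2 \circ \rho_1 = \rho_2 \circ \rho_1$. Using this together with $\rho_2 \circ \rho_2 = \rho_2$, we get
\[
\rho \circ \rho \;=\; \rho_2 \circ (\rho_1 \circ \rho_2 \circ \rho_1) \;=\; \rho_2 \circ \rho_2 \circ \rho_1 \;=\; \rho_2 \circ \rho_1 \;=\; \rho.
\]
Hence $\rho$ is a holomorphic retraction of $X$ onto $W$, and there is no substantive obstacle; the argument is essentially bookkeeping around the fact that $\rho_1$ acts as the identity on its own image $Z \supset W$.
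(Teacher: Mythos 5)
Your proof is correct and follows essentially the same route as the paper: compose $\rho_2 \circ \rho_1$, observe that $\rho_1$ acts as the identity on $Z \supset \rho_2(Z)$ so that $\rho_1 \circ \rho_2 \circ \rho_1 = \rho_2 \circ \rho_1$, and then use idempotency of $\rho_2$ to conclude. The remarks about $W$ being a closed submanifold of $X$ are harmless extras not needed for the argument.
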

\begin{proof}
Since $Z$ is a retract of $X$ and $W$ is a retract of $Z$, there exist a 
pair of retraction maps $\rho_1: X \to Z$ and $\rho_2: Z \to W$. Note that 
if $x \in Z$ then $\rho_1 \circ \rho_2(x) = \rho_2(x)$. Note that 
$\rho_2 \circ \rho_1$ is a map from $X$ to $W$, and also that
\[
\rho_2\circ \rho_1 \circ \rho_2 \circ \rho_1(x) = \rho_2 \circ \left(\rho_1 
\circ \rho_2(\rho_1(x))\right) = \rho_2 \circ \rho_2 (\rho_1(x)) = \rho_2 \circ \rho_1(x).
\]
So, $\rho_2 \circ \rho_1$ is a retraction map on $X$ verifying that $W$ is a retract of $X$.
\end{proof}
\noindent While the above elementary lemma provides a manner of obtaining retracts of lower dimension, we may ask 
for a result with the converse effect. This will be addressed in the forthcoming sections at 
its appropriate place. Let us only mention here the answer to a special case of 
this question in the realm of 
balanced domains and linear retracts, namely the following. 
Suppose $D_{L_1}$ and $D_{L_2}$ are linear retracts of a bounded balanced 
convex domain $D$; then, is it true that $D_{L_1 \oplus L_2}$ a linear retract of $D$? The following
proposition shows that except essentially for the Euclidean ball,
it can never happen that for {\it every} pair of linear retracts such a sum of (linear) retracts is 
again a retract.
\begin{prop}
Let $D$ be a bounded balanced convex domain in $\mathbb{C}^N$. If $D$ is not linearly isometric 
to Euclidean ball $\mathbb{B}$, then there exist one-dimensional linear subspaces $L_1, L_2$ 
such that $D_{L_1 \oplus L_2}$ 
is not a linear retract of $D$.
\end{prop}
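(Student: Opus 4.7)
The plan is to deduce the proposition by contradiction from the classical Kakutani--Bohnenblust characterization of inner product spaces, the complex version being precisely the result of \cite{Bhnblst} already referenced earlier in the introduction. The statement is to be understood for $N \geq 3$, since for $N=2$ the only two-dimensional subspace of $\mathbb{C}^2$ is the ambient space itself, which is trivially a linear retract of $D$.

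First I would recall that since $D$ is bounded, balanced, and convex, its Minkowski functional $\mu$ is a norm on $\mathbb{C}^N$ and $D$ is the open unit ball of the Banach space $(\mathbb{C}^N, \mu)$. By remark \ref{Linear-remark-2}, a linear subspace $D_L = D \cap L$ is a linear retract of $D$ if and only if there exists a linear projection $\pi_L : (\mathbb{C}^N, \mu) \to L$ of operator norm equal to one (the source and target both being equipped with $\mu$).

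Next I would argue by contradiction, assuming that for \emph{every} pair of one-dimensional subspaces $L_1, L_2 \subset \mathbb{C}^N$, the subspace $D_{L_1 \oplus L_2}$ is a linear retract of $D$. Since any two-dimensional complex subspace $L \subset \mathbb{C}^N$ can be written as a direct sum $L_1 \oplus L_2$ of two one-dimensional subspaces (take the $\mathbb{C}$-spans of any two linearly independent vectors in $L$), this supposition would force every two-dimensional subspace of $(\mathbb{C}^N, \mu)$ to be the range of a norm-one linear projection. Applying Bohnenblust's theorem from \cite{Bhnblst} to $(\mathbb{C}^N, \mu)$ (where $N \geq 3$) would then force $\mu$ to arise from an inner product, making $D$ linearly isometric to the standard Euclidean ball $\mathbb{B}^N$ and contradicting the hypothesis. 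Hence there must exist one-dimensional $L_1, L_2$ with $D_{L_1 \oplus L_2}$ failing to be a linear retract of $D$.

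The principal obstacle, modulo invocation of Bohnenblust's theorem, is really just the identification of linear retracts with norm-one projections, which is already handled by remark \ref{Linear-remark-2}; all remaining steps reduce to the elementary decomposition of two-dimensional subspaces into sums of lines.
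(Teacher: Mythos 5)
Your proposal is correct and follows essentially the same route as the paper: assume every two-dimensional subspace is a linear retract, invoke Bohnenblust's theorem to conclude that the Minkowski norm comes from an inner product, and contradict the hypothesis that $D$ is not linearly equivalent to $\mathbb{B}$. Your additional remarks on the case $N=2$ and on the identification of linear retracts with norm-one projections via remark \ref{Linear-remark-2} are accurate but do not change the argument.
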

\begin{proof}
It suffices to assume that $D_{L_1 \oplus L_2}$ is a 
linear retract of $D$ for every pair of (complex) lines $L_1, L_2$ through the origin $\mathbb{C}^N$
to derive a contradiction.
Indeed with such an assumption, we would have that
every $2$ 
dimensional subspace of $D$ is a linear retract of $D$. But then, this would imply by theorem-A in \cite{Bhnblst} that
$(\mathbb{C}^N, \|\cdot\|)$ is a Hilbert space, where $\|\cdot\|$ denotes the Minkowski functional 
on $D$ which in turn implies the existence of a unitary map $T$ from 
$(\mathbb{C}^N, \|\cdot\|)$ to $(\mathbb{C}^N, \|\cdot\|_{\ell^2})$. 
Therefore $T(D)=\mathbb{B}$, contradicting the hypothesis that $D$ is not equivalent to $\mathbb{B}$.
\end{proof}

\noindent For $D$ as in the above theorem, we have in background the observation that the convexity of $D$, implies by 
the Hahn-Banach theorem that every one dimensional 
subspace of $D$ is a retract of $D$. One may want to ascertain the converse, which is addressed in the following.

\begin{prop}\label{cnvx_1dim_retrct}
Suppose every one-dimensional linear subspace of a bounded balanced domain $D$  is indeed a retract, then 
domain $D$ must be convex.
\end{prop}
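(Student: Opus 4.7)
Let $h$ denote the Minkowski functional of $D$; by boundedness it is finite-valued and upper semicontinuous, and balancedness gives $h(\lambda v)=|\lambda|\,h(v)$ with $D=\{h<1\}$. The plan is to show that $h$ satisfies the triangle inequality, whence $h$ is a seminorm and $D$, as its unit ball, is convex. Dualizing, it suffices to exhibit, for each $p\neq 0$, a complex linear functional $\ell_p\in(\mathbb{C}^N)^*$ with $\ell_p(p)=h(p)$ and $|\ell_p(v)|\le h(v)$ for every $v$. Once such a family is available, the envelope $\tilde h(v):=\sup_{p\neq 0}|\ell_p(v)|$ is automatically a seminorm satisfying $\tilde h\le h$, and evaluation at $v=p$ gives $\tilde h(p)\ge|\ell_p(p)|=h(p)$; so $h=\tilde h$ is itself a seminorm.

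To construct $\ell_p$, I fix $p\neq 0$, set $L=\mathbb{C}p$, and invoke the hypothesis to obtain a holomorphic retraction $\rho_L:D\to D\cap L$. Writing $\rho_L(z)=\phi(z)\,p$ defines a holomorphic function $\phi:D\to\mathbb{C}$; idempotency, which says $\rho_L(tp)=tp$ for every $|t|<1/h(p)$, forces $\phi(tp)=t$ on that disc, so in particular $\phi(0)=0$. Since $\rho_L(D)\subset D\cap L$, we have $|\phi(z)|\,h(p)<1$ for every $z\in D$, so $h(p)\phi$ is a holomorphic map from $D$ into the unit disc $\Delta$ fixing the origin.

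The key estimate now comes from the one-variable Schwarz lemma applied along analytic discs built directly from balancedness. For any $z\in D\setminus\{0\}$, the disc $\psi_z:\Delta\to D$ given by $\psi_z(\zeta)=\zeta z/h(z)$ lies in $D$ since $h(\psi_z(\zeta))=|\zeta|$; hence $\zeta\mapsto h(p)\,\phi(\psi_z(\zeta))$ is a holomorphic self-map of $\Delta$ vanishing at $0$, and Schwarz's bound evaluated at $\zeta=h(z)$ yields $h(p)\,|\phi(z)|\le h(z)$. Setting $\ell_p(v):=h(p)\cdot D\phi(0)(v)$, specializing this bound to $z=\epsilon v$, dividing by $\epsilon$, and letting $\epsilon\to 0^+$ gives $|\ell_p(v)|\le h(v)$; meanwhile the normalization $\phi(tp)=t$ yields $D\phi(0)(p)=1$ and hence $\ell_p(p)=h(p)$, completing the construction.

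The main point here is conceptual rather than technical: convexity of a balanced bounded domain is equivalent, dually, to $h$ being the supremum of moduli of its subordinate complex linear functionals, and the hypothesis delivers exactly one such functional in every direction via the derivative at $0$ of the corresponding one-dimensional retraction. Notably, no pseudoconvexity of $D$ is required, because the Schwarz lemma is invoked only along the discs $\psi_z$, which exist by balancedness alone; the only potential subtlety -- that $h$ is a priori merely upper semicontinuous -- is harmless since the Schwarz bound is pointwise and the passage to the derivative at the origin is along an ordinary complex line.
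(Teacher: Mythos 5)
Your proof is correct, and it takes a genuinely different route from the paper's. The paper argues by contradiction: if the midpoint $m=(p+q)/2$ of two boundary points lay outside $\overline{D}$, it takes a \emph{linear} retraction $T$ of $D$ onto $\mathbb{C}m\cap D$, notes that $T(p),T(q)$ lie in the disc $\overline{D}\cap\mathbb{C}m$ (balancedness), and derives the absurdity $T(m)=m\in\overline{D}$. Note that the paper thereby silently upgrades the hypothesis ``retract'' to ``linear retract''; the linearization device used elsewhere in the paper (passing to $D\rho(0)$ as in lemma \ref{Dalpha}) requires pseudoconvexity, which is not assumed in this proposition. Your argument avoids this issue entirely: you work with an arbitrary holomorphic retraction $\rho_L$, extract the supporting functional $\ell_p=h(p)\,D\phi(0)$ via the classical one-variable Schwarz lemma applied along the discs $\zeta\mapsto\zeta z/h(z)$ supplied by balancedness, and recover $h$ as the upper envelope $\sup_{p}|\ell_p|$, hence as a norm. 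What this buys is (i) validity under the literal hypothesis (holomorphic, not linear, retractions) with no pseudoconvexity anywhere, and (ii) a sharper conclusion along the way: you exhibit, for every $p$, a $\mathbb{C}$-linear functional $\ell_p$ with $|\ell_p|\le h$ and $\ell_p(p)=h(p)$, i.e., you show $D$ is convex at every boundary point in the sense of definition \ref{C-convex} and then globalize by the envelope argument. The paper's proof is shorter, but yours is the more robust and more informative of the two.
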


\begin{proof}
Assume to get a contradiction that there exists a pair of distinct points $p,q \in \partial D$, with 
the property that their mid-point, fails to lie within the closure of the domain i.e., $m:= (p+q)/2 \; \not\in \overline{D}$.
Let $T$ a linear retraction map (i.e., linear projection on $\mathbb{C}^N$) which maps $D$ onto $L_m$, the one-dimensional 
linear subspace spanned by this (mid-)point $m$ -- the existence of such a map is guaranteed by hypothesis.
Then, note first of all that: $T$ fixes $L_m$ pointwise; so $T(m)=m$, thereby $T(m)$ lies outside $\overline{D}$. On the other hand, 
$T(p)$ and $T(q)$ are a pair of points 
on $L_m \cap \overline{D}$, as $T$ maps $\overline{D}$ into $\overline{D} \cap L$.
And, the mid-point of these image-points must lie in  
$L_m \cap \overline{D}$: by balancedness of $\overline{D}$, the line-segment joining $T(p)$ and $T(q)$
has got to be contained within $\overline{D}$ -- note that this does not require convexity of $\overline{D}$! But then we
have
\[
\frac{T(p) + T(q)}{2} \; = \; T\left( \frac{p+q}{2} \right) \;=\; T(m) 
\]
It follows that $T(m)$ lies within $\overline{D}$. However, by earlier observations leading to
$T(m)=m$, we have that the same point $T(m)$ lies outside $\overline{D}$! This absurdity only means that 
the assumption that $D$ is convex cannot hold, finishing the proof.
\end{proof}

\subsection{Notions of extremality and strict convexity}\label{extremality and strict convexity}
\begin{defn}
Let $K$ be the field $\mathbb{R}$ or $\mathbb{C}$. Let $A \subset \mathbb{C}^N$ be an arbitrary set and
$a \in \partial{A}$. We introduce a notion that captures the set of all directions along which 
one can move from $a$, at-least a little bit while staying within the boundary $\partial A$.
Towards this, define the face of $\partial A$ at $a$ by
\begin{equation*}
F_{K}(a,\partial{A}) = \{u \in \mathbb{C}^N: \exists \epsilon > 0, 
a +tu \in \partial{A}, \text{ for all } t \in K ~with ~ |t| < \epsilon\}
\end{equation*}
reduces to $\{0\}$. It is immediate that $F_K$ is closed under scalar multiplication. But a stronger 
claim that $F_K$ is a linear subspace of $\mathbb{C}^N$ requires more assumptions on $A$. Indeed, 
if $A$ is convex set, then $F_{K}(a,\partial{A})$ is $K$-linear subspace.
We shall mostly deal with the case when $A = \Omega$ is a domain.
In such a case, by abuse of notation we shall write $F(a,\partial \Omega)$ or $F(a,\overline{\Omega})$
in place of $F(a,\Omega)$. 
A boundary point $a \in \partial{\Omega}$ is said to be a \textit {weakly~ $K$-extremal} boundary point
for $\Omega$, if $F_{K}(a,\partial{\Omega})=0$;
if the face reduces to $\{0\}$ for every $a \in \partial{\Omega}$, then we say that the domain 
$\Omega$ is \textit{weakly} $K$-extremal. \\
A boundary point $a \in \partial{\Omega}$ is said to be a \textit{strongly~ $K$-extremal} boundary point
for $\Omega$, if the face
\[ F_{K}(a,\overline{\Omega}) = \{u \in \mathbb{C}^N: \exists \epsilon > 0, 
a +tu \in \overline{\Omega}, \text{ for all } t \in K ~with ~ |t| < \epsilon\}
\]
reduces to $\{0\}$. As is usual, if all boundary points are strongly $K$-extremal, then 
we say that the domain itself is strongly $K$-extremal. In general, if $\Omega$ is strongly $K$-extremal, then we shall drop the adjective `strongly' 
and call $\Omega$ simply as a $K$-extremal domain.
\end{defn}

\noindent We now prove that in the case of bounded balanced pseudoconvex
domains, strongly $\mathbb{C}$-extremal and weakly $\mathbb{C}$-extremal are equivalent, thereby we
can forget about the qualifications `weak' and `strong' essentially for the
rest of this article. We leave the easier implication about strong extremality 
implying weak extremality aside and detail only the relatively non-trivial reverse implication.

\begin{prop}\label{strong-weak-extrm}
Suppose $D$ is a bounded balanced pseudoconvex domain and $p \in \partial D$ is a weakly 
$\mathbb{C}$-extremal boundary point. Then it is also a strongly $\mathbb{C}$-extremal boundary point.
\end{prop}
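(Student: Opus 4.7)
The plan is to prove the contrapositive: assuming the face $F_\mathbb{C}(p,\overline{D})$ contains a nonzero vector $u$ (with corresponding $\epsilon>0$ so that $p+tu\in\overline{D}$ for all $|t|<\epsilon$), I will show that in fact the entire analytic disc $\{p+tu:|t|<\epsilon\}$ lies on $\partial D$, thereby placing $u$ inside $F_\mathbb{C}(p,\partial D)$ and contradicting weak $\mathbb{C}$-extremality at $p$. The mechanism for promoting ``inside $\overline{D}$'' to ``inside $\partial D$'' is a maximum principle argument applied to the Minkowski functional.

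First I would recall that since $D$ is bounded, balanced and pseudoconvex, its Minkowski functional $\mu=\mu_D$ is a logarithmically plurisubharmonic function on $\mathbb{C}^N$ (this is the standard characterization of pseudoconvex balanced domains), with $\mu<1$ on $D$, $\mu=1$ on $\partial D$, and $\mu>1$ outside $\overline{D}$. In particular, since $p\in\partial D$ we have $\mu(p)=1>0$, so $\mu$ is strictly positive on a neighbourhood of $p$, and $\log\mu$ is a well-defined plurisubharmonic function there.

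Next, consider the holomorphic disc $\varphi:\mathbb{D}_\epsilon\to\mathbb{C}^N$ defined by $\varphi(t)=p+tu$, where $\mathbb{D}_\epsilon=\{t\in\mathbb{C}:|t|<\epsilon\}$. By assumption, $\varphi(\mathbb{D}_\epsilon)\subset\overline{D}$, so the function
\[
g(t):=\log\mu(p+tu)
\]
is well-defined and subharmonic on $\mathbb{D}_\epsilon$ (as the pull-back of the plurisubharmonic function $\log\mu$ under the holomorphic map $\varphi$), satisfies $g(t)\le 0$ throughout $\mathbb{D}_\epsilon$, and attains its maximum value $0$ at the interior point $t=0$ since $g(0)=\log\mu(p)=0$. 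By the maximum principle for subharmonic functions on a connected domain, $g\equiv 0$ on $\mathbb{D}_\epsilon$, i.e.\ $\mu(p+tu)=1$ for all $|t|<\epsilon$. This is exactly the statement that $p+tu\in\partial D$ for all $|t|<\epsilon$, i.e.\ $u\in F_\mathbb{C}(p,\partial D)$, contradicting the assumption that $p$ is weakly $\mathbb{C}$-extremal and thereby completing the proof.

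The only conceptual step to be careful about is the applicability of the maximum principle, which is why I want $\mu(p)>0$ so that $\log\mu$ is genuinely plurisubharmonic (not identically $-\infty$) on a neighbourhood of the disc $\varphi(\mathbb{D}_\epsilon)$; fortunately $p\in\partial D$ forces $\mu(p)=1$, and by continuity of $\mu$ one may shrink $\epsilon$ if necessary to keep $\mu(p+tu)$ bounded away from $0$. Apart from this mild technicality the argument is essentially a one-line consequence of the plurisubharmonicity of $\log\mu$, so I do not expect any substantial obstacle.
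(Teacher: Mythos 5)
Your proof is correct and follows essentially the same route as the paper: both apply the maximum principle to the Minkowski functional pulled back along the affine disc $t\mapsto p+tu$, forcing it to be identically $1$ and hence the disc to lie in $\partial D$. The only cosmetic difference is that you work with $\log\mu$ (the standard plurisubharmonic object for balanced pseudoconvex domains) while the paper applies the maximum principle to $h$ itself; since $h=e^{\log h}$ is also plurisubharmonic, the two versions are interchangeable.
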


\begin{proof}
Assume that $p$ is not strongly $\mathbb{C}$-extremal. Then there exists  $\epsilon_0 > 0$ 
and non-zero $u \in \mathbb{C}^N$ such that 
$p+tu \in \overline{D}$ for all $t \in \mathbb{C}$ with $|t| < \epsilon_0$.
After a rescaling, we may assume that $\epsilon_0 = 1$. Then the affine (holomorphic) map $f$ 
defined by $f(t) = p+tu$ maps $\Delta$ into $\overline{D}$; note that $f(0) = p$. 
Let $h$ denote the Minkowski functional of $D$. 
Consider the map $\varphi: \Delta \to \mathbb{R}$ defined by $\varphi(z) = h \circ f(z)$.
Note that $\varphi$ is a plurisubharmonic function whose modulus attains maximum at $z = 0$.  
By maximum principle of plurisubharmonic functions, $\varphi$ is identically constant, 
which is equal to $1$. This means that $h \circ f \equiv 1$. So ${\rm image}(f) \subset \partial D$. 
Hence $p$ is not weakly $\mathbb{C}$-extreme boundary point. This finishes the proof
by contradiction. 
\end{proof}

\begin{rem}
If $K = \mathbb{R}$, then in the case of bounded balanced convex domains, 
the notion of weak $\mathbb{R}$-extremality and strong $\mathbb{R}$-extremality are equivalent. 
\end{rem}

\noindent We would now like to make a couple of 
clarifications about the case $K = \mathbb{C}$ versus the case $K=\mathbb{R}$. To begin 
with, note that we obviously have 
\[
    \vec{F}_{\mathbb{C}}(a,\overline{\Omega}) = \vec{F}_{\mathbb{R}}(a,\overline{\Omega}) \cap i\vec{F}_{\mathbb{R}}(a,\overline{\Omega}).
\]
It follows that if $\Omega$ is weakly (resp. strongly) $\mathbb{R}$-extremal, then $\Omega$ is weakly (resp. strongly) $\mathbb{C}$-extremal. 
But the converse is not true. To see this, consider 
$B_{\ell^1}: = \{(z,w)\in \mathbb{C}^2: |z| + |w| < 1\}$;
we shall presently proceed to showing that this is $\mathbb{C}$-extremal 
but not $\mathbb{R}$-extremal. Perhaps even more basically, we must remark 
that (vacuously) every domain in $\mathbb{C}$ is $\mathbb{C}$-extremal
whereas only the strictly convex ones are $\mathbb{R}$-extremal. To discuss a `non-vacuous' example,
we discuss the well-known $\ell^1$-ball as mentioned above whose proof is included here owing to its
basic importance for us.

\begin{lem}
Let $\rho : \Delta \to \overline{B_{\ell^1}}$  be a holomorphic map. If image of $\rho$ 
is contained in $\partial B_{\ell^1}$ then $\rho$ is a constant map. 
\end{lem}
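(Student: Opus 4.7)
The plan is to write $\rho(\zeta) = (f(\zeta), g(\zeta))$ where $f,g: \Delta \to \mathbb{C}$ are holomorphic, and reformulate the hypothesis as $|f(\zeta)| + |g(\zeta)| \equiv 1$ on $\Delta$. The strategy is to deduce by a subharmonicity/maximum-principle argument that both $|f|$ and $|g|$ are individually forced to be constant, and then conclude that $f$ and $g$ themselves are constants by the open mapping theorem.

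For the first step, I recall that $|f|$ and $|g|$ are subharmonic on $\Delta$ (as moduli of holomorphic functions). Since $|f| + |g| \equiv 1$ is harmonic, and a subharmonic function whose negative is also subharmonic is necessarily harmonic, writing $|f| = 1 - |g|$ shows that $|f|$ is simultaneously subharmonic and superharmonic, hence harmonic. Symmetrically, $|g|$ is harmonic.

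Next I would rule out zeros. Suppose $f \not\equiv 0$; were there a $\zeta_0 \in \Delta$ with $f(\zeta_0) = 0$, then the nonnegative harmonic function $|f|$ would attain its minimum value $0$ at an interior point, and by the minimum principle $|f| \equiv 0$, contradicting $f \not\equiv 0$. Hence either $f \equiv 0$ or $f$ is zero-free on $\Delta$ (and similarly for $g$). In the degenerate case $f \equiv 0$, we get $|g| \equiv 1$, so $g$ is a unimodular constant by the open mapping theorem, and $\rho$ is constant. Assume therefore that both $f$ and $g$ are zero-free on $\Delta$. Then on all of $\Delta$ I would compute directly, using $|f| = (f\bar f)^{1/2}$, that
\[
\Delta |f| \;=\; 4\,\partial\bar\partial |f| \;=\; \frac{|f'|^{2}}{|f|},
\]
a standard computation using $\partial|f| = f'\bar f/(2|f|)$ and $\bar\partial|f| = f\overline{f'}/(2|f|)$. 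Since $|f|$ is harmonic and strictly positive, this forces $f' \equiv 0$, hence $f$ is constant on $\Delta$. The same reasoning applied to $g$ gives that $g$ is constant, so $\rho$ is constant, completing the proof.

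The only mildly delicate point is the transition between the subharmonic regime (where $|f|$ may vanish) and the smooth regime (where the Laplacian computation is valid); this is handled cleanly by the minimum-principle dichotomy above, which eliminates zeros before the pointwise Laplacian identity is invoked. Everything else is routine and uses only elementary potential theory on the disc.
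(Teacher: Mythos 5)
Your proof is correct, but it follows a genuinely different route from the one in the paper. The paper forms the auxiliary holomorphic function $G(\zeta)=\rho_1(\zeta)e^{-i\theta_1}+\rho_2(\zeta)e^{-i\theta_2}$, where $\theta_1,\theta_2$ are the arguments of $\rho_1(0),\rho_2(0)$; then $|G|\leq|\rho_1|+|\rho_2|=1$ while $G(0)=|\rho_1(0)|+|\rho_2(0)|=1$, so the maximum modulus principle forces $G\equiv 1$, and the equality case of the triangle inequality then pins down $|\rho_2|$ as a constant. Your argument instead stays entirely in potential theory: from $|f|+|g|\equiv 1$ you deduce that each of $|f|,|g|$ is simultaneously subharmonic and superharmonic, hence harmonic; the minimum principle disposes of zeros; and the identity $\Delta|f|=|f'|^2/|f|$ on the zero-free set kills $f'$. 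Both proofs are elementary and complete. The paper's is shorter once one accepts the (slightly informally stated) equality case of the triangle inequality for each fixed $\zeta$; yours avoids that bookkeeping altogether and extends verbatim to $N$ components $\sum_j|f_j|\equiv 1$ (each $|f_j|=1-\sum_{k\neq j}|f_k|$ is again both sub- and superharmonic), at the mild cost of invoking the Laplacian computation and the regularity of continuous harmonic functions. One small point worth noting in your write-up: in the zero-free case, once $f$ is shown constant you could conclude for $g$ more cheaply, since $|g|=1-|f|$ is then a constant and the open mapping theorem finishes; the symmetric Laplacian argument is fine but not needed twice.
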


\begin{proof}
Write $\rho(z) = (\rho_1(z), \rho_2(z))$. Note that $\rho_1, \rho_2$ 
are holomorphic maps from  $\Delta$ to $\overline{\Delta}$. Write in polar-coordinates:
$\rho_1(0) = r_1 e^{i\theta_1}$ and $\rho_2(0) = r_2e^{i\theta_2}$. Define $g: \Delta \to \overline{\Delta}$ by 
\[
g(z) = \rho_1(z)e^{-i\theta_1} + \rho_2(z) e^{-i\theta_2}.
\]
Note that $g$ is holomorphic and $g(0) = 1$. By maximum modulus 
principle, $g$ is constant, indeed $g \equiv 1$. This when 
written out in terms of the $\rho_j's$ reads
\[
\rho_1(z)e^{-i\theta_1} + \rho_2(z) e^{-i\theta_2} = |\rho_1(z)e^{-i\theta_1}| + |\rho_2(z)e^{-i\theta_2}|, 
\text{ both being equal to } 1.
\]
This in-particular, means that equality holds in triangle inequality. This implies 
that $\rho_1(z)e^{-i\theta_1} = c\rho_2(z) e^{-i\theta_2}$, for some $c \in \mathbb{C}$.
Hence, $|\rho_1(z)| = k|\rho_2(z)|$, where $k = |c| \geq 0$. As 
$|\rho_1(z)| + |\rho_2(z)| = 1 $, this leads to
\[
k|\rho_2(z)| + |\rho_2(z)| = 1 \]
This implies that $|\rho_2(z)| = 1/(1+k).$ So $\rho_2$ is constant by maximum modulus principle, thereby $\rho_1$ is constant as well, finishing the proof.
\end{proof}
\begin{prop}
    $B_{\ell^1}: = \{(z,w)\in \mathbb{C}^2: |z| + |w| < 1\}$ is $\mathbb{C}$-extremal but not $\mathbb{R}$-extremal.
\end{prop}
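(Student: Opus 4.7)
My plan is to handle the two parts separately, using the preceding lemma for the first and a direct example for the second.

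For the $\mathbb{C}$-extremality, I would invoke the preceding lemma almost immediately. By the proposition establishing equivalence of weak and strong $\mathbb{C}$-extremality for bounded balanced pseudoconvex domains (which applies since $B_{\ell^1}$ is a bounded balanced convex, hence pseudoconvex, domain), it suffices to verify weak $\mathbb{C}$-extremality. So fix $a \in \partial B_{\ell^1}$ and suppose $u \in \mathbb{C}^2 \setminus \{0\}$ lies in $F_{\mathbb{C}}(a,\partial B_{\ell^1})$, i.e., there is $\epsilon > 0$ with $a + tu \in \partial B_{\ell^1}$ for all $|t| < \epsilon$. After rescaling, define $\rho : \Delta \to \overline{B_{\ell^1}}$ by $\rho(t) = a + (\epsilon/2)\, t\, u$. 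Then $\rho$ is a non-constant holomorphic map with image in $\partial B_{\ell^1}$, which contradicts the preceding lemma. Hence $u = 0$, so $F_{\mathbb{C}}(a,\partial B_{\ell^1}) = \{0\}$ for every boundary point $a$.

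For the failure of $\mathbb{R}$-extremality, I would simply exhibit a boundary point admitting a real direction of motion. Take $a = (1/2, 1/2) \in \partial B_{\ell^1}$ and $u = (1,-1)$. For real $t$ with $|t| < 1/2$, both coordinates of $a + tu = (1/2 + t,\, 1/2 - t)$ remain non-negative, so
\[
|1/2 + t| + |1/2 - t| = (1/2 + t) + (1/2 - t) = 1,
\]
meaning $a + tu \in \partial B_{\ell^1}$ for all such real $t$. Thus $u \in F_{\mathbb{R}}(a,\partial B_{\ell^1})$, showing that $a$ is not a weakly (hence not a strongly) $\mathbb{R}$-extremal boundary point.

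I do not expect any serious obstacle here: the $\mathbb{C}$-extremality direction is essentially a direct translation of the lemma via the Minkowski-functional device already used in the preceding proposition, and the $\mathbb{R}$-part is handled by the explicit real segment on $\partial B_{\ell^1}$. The only subtlety worth stating carefully is the passage from a direction $u$ in the face to an honest holomorphic disc in $\partial B_{\ell^1}$, which is just the rescaling $\rho(t) = a + (\epsilon/2)\,t\,u$ indicated above.
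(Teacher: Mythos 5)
Your proposal is correct and follows essentially the same route as the paper: both parts reduce the $\mathbb{C}$-extremality claim to the preceding lemma by producing an affine holomorphic disc $t \mapsto a + (\text{const})\,t\,u$ lying in $\partial B_{\ell^1}$ (the paper rescales $u$ so that $\epsilon_0 = 1$ rather than inserting the factor $\epsilon/2$, which is the same device), and both use the point $a = (1/2,1/2)$ with direction $(1,-1)$ to defeat $\mathbb{R}$-extremality. Your explicit appeal to the weak/strong equivalence proposition is a harmless extra remark; the paper works directly with the weak (boundary-face) formulation.
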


\begin{proof}
We first show that $B_{\ell^1}$ is $\mathbb{C}$-extremal. To do this, let $a \in \partial B_{\ell^1}$. Suppose there
exists $ \epsilon_0 > 0$  and  $u_0 \in \mathbb{C}^N$ such 
that $a + tu_0 \in \partial B_{\ell^1}$, for all $t \in \mathbb{C}$ with $|t| < \epsilon_0$.\\
Without loss of generality, we may assume after an appropriate scaling that $\epsilon_0 = 1$; this ensures that
the affine-linear (holomorphic) map defined by $\rho(t) = a + tu_0$ maps $\Delta$ into $\partial B_{\ell^1}$. 
By the above lemma, $\rho$ is constant, which is equal to $a$. This implies $u_0 = 0$, finishing the proof
of $\mathbb{C}$-extremality. \\
\smallskip
We next show that $B_{\ell^1}$ is {\it not} $\mathbb{R}$-extremal. To 
do this, first let $a = (1/2,1/2) \in \partial B_{\ell^1}$
and consider the face
\[
F_{\mathbb{R}}(a,\partial B_{\ell^1}) = 
\{u \in \mathbb{C}^2: \exists \epsilon > 0,~ a + tu \in \partial B_{\ell^1}~, 
\forall t \in \mathbb{R} ~\text{with} ~|t| < \epsilon \}.
\]
Note then that $(1,-1) \in F_{\mathbb{R}}(a,\partial B_{\ell^1})$. Hence $B_{\ell^1}$ is not $\mathbb{R}$-extremal.
\end{proof}
We shall now discuss the relationship between the above extremality notions and that of strict convexity.
\begin{defn}
Let $\Omega$ be a  balanced bounded convex domain 
in $\mathbb{C}^N$; so, $\mathbb{C}^N$ may be regarded as a Banach space with 
respect to the norm $\|\cdot\|$ whose unit ball is $\Omega$.  We say that $\Omega$ is \textit{strictly $K$-convex} if for every  $K$-linear functional $T$ with $\|T\| = 1$, corresponds just one $z \in \overline{\Omega}$ such that $Tz = 1$.
\end{defn}
In order to establish equivalence of $\mathbb{R}$-extremality with strict convexity, 
it is convenient to do this, the intermediate concept namely of rotundity, which is defined as follows:
\begin{defn}
Let $\Omega$ be a  balanced bounded convex domain 
in $\mathbb{C}^N$; so, $\mathbb{C}^N$ may be regarded as a Banach space with 
respect to the norm $\|\cdot\|$ whose unit ball is $\Omega$. 
We say that $\Omega$ is \textit{rotund} if $tx_1 + (1-t)x_2 \in \Omega$ whenever $x_1$ and $x_2$ are different points of $\partial \Omega$ and $0 < t < 1$.
\end{defn}

\noindent First we prove that the notion of rotundity and strict convexity are equivalent in 
the realm of bounded balanced convex domains.

\begin{thm}\label{strct_cnvx}
Let $\Omega$ be a bounded balanced convex domain in $\mathbb{C}^N$. Then $\Omega$ is rotund if and only if $\Omega$ is $\mathbb{R}$-extremal.
\end{thm}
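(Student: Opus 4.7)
The plan is to prove both implications by contrapositive, working throughout with the \emph{strong} notion of $\mathbb{R}$-extremality (the remark immediately preceding the theorem guarantees that this is harmless for bounded balanced convex domains). No complex-analytic input beyond the convexity of $\overline{\Omega}$ will actually be needed.

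For the forward direction, I will suppose $\Omega$ is rotund but fails to be (strongly) $\mathbb{R}$-extremal at some $a \in \partial\Omega$. Then there is a non-zero $u \in \mathbb{C}^N$ with $a + tu \in \overline{\Omega}$ for all real $t$ with $|t| < \epsilon$. Since $\overline{\Omega}$ is bounded, closed, and convex, the real slice $I := \{t \in \mathbb{R} : a + tu \in \overline{\Omega}\}$ is a bounded closed interval $[t_-,t_+]$ containing $(-\epsilon,\epsilon)$. The endpoints produce distinct boundary points $p_\pm := a + t_\pm u \in \partial\Omega$, and $a$ itself lies strictly between them:
\[
a = \lambda p_- + (1-\lambda) p_+, \qquad \lambda = \frac{t_+}{t_+ - t_-} \in (0,1).
\]
Rotundity forces $a \in \Omega$, contradicting $a \in \partial\Omega$.

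For the reverse direction, I will suppose rotundity fails, so that there exist distinct $x_1, x_2 \in \partial\Omega$ and $t_0 \in (0,1)$ with $y_0 := t_0 x_1 + (1-t_0) x_2 \in \partial\Omega$. Setting $u := x_2 - x_1 \neq 0$, a direct check shows
\[
y_0 + s u = (t_0 - s)\,x_1 + (1 - t_0 + s)\,x_2
\]
is a convex combination of $x_1,x_2$, hence lies in $\overline{\Omega}$, for every real $s$ with $|s| < \delta := \min\{t_0,\, 1-t_0\}$. Thus $u$ is a non-zero element of $F_{\mathbb{R}}(y_0,\overline{\Omega})$, violating (strong) $\mathbb{R}$-extremality at $y_0$.

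The main obstacle I anticipate is a bookkeeping one rather than a conceptual one: keeping straight the weak-versus-strong formulations of $\mathbb{R}$-extremality. Adopting the strong version on both sides (as the cited remark permits) is what makes the reverse direction painless, since it lets me avoid the otherwise-needed convex-analysis lemma that \emph{any} chord of $\overline{\Omega}$ joining two boundary points whose relative interior meets $\partial\Omega$ must lie entirely in $\partial\Omega$. Once that is finessed, the balanced and pseudoconvex structure play no role; only the convexity of $\overline{\Omega}$ and the openness of $\Omega$ are used.
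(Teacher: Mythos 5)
Your proof is correct, and while it follows the same elementary convexity strategy as the paper, it differs in two useful ways. For the forward direction the paper takes a direction $u$ with $a+tu\in\partial\Omega$ for small real $t$, observes that $a+\tfrac{t}{2}u$ is the midpoint of the two boundary points $a$ and $a+tu$, and then cites Proposition 5.1.2 of Megginson's book to conclude $a=a+tu$; your version is self-contained, extending the chord to its extreme parameters $t_{\pm}$ and letting rotundity act directly on the resulting pair of distinct boundary points $p_{\pm}$ with $a$ strictly between them. For the converse, the paper starts from a pair $z_1,z_2\in\partial\Omega$ for which the \emph{entire} segment $(1-t)z_1+tz_2$, $t\in[0,1]$, is asserted to lie in $\partial\Omega$ and shows the midpoint then has $z_2-z_1$ in its real face; strictly speaking, failure of rotundity only hands you a single interior point of the chord lying on the boundary, so the paper's argument implicitly uses (without stating) the convex-analysis fact that such a chord must then lie entirely in $\partial\Omega$ — exactly the lemma you identified. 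Your use of the strong face $F_{\mathbb{R}}(y_0,\overline{\Omega})$, which only requires the perturbed points to remain in $\overline{\Omega}$ and is immediate from convexity of the closure, cleanly sidesteps that lemma; and since the paper's stated convention is that the unqualified term ``$\mathbb{R}$-extremal'' means the strong notion, your proof matches the statement exactly without even needing the weak/strong equivalence remark you invoke.
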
    
 
\begin{proof}
Assume $\Omega$ is rotund. Let $a \in \partial \Omega$ and 
assume that there exists $u \in \mathbb{C}^N$ such that $a+tu \in \partial \Omega$ for all $t \in \mathbb{R}$ with $|t| < \epsilon$. 
Note that $\frac{1}{2}(a + (a+tu)) = a + \frac{t}{2} u \in \partial\Omega$ for 
all $t \in \mathbb{R}$ with $|t| < \epsilon$. By proposition 5.1.2 of \cite{Meggn}, 
$a = a+tu$ for all $t \in \mathbb{R}$ with $|t| < \epsilon$ and this implies 
that $u = 0$. Therefore, $\Omega$ is $\mathbb{R}$-extremal.\\

\noindent Conversely assume that $\Omega$ is $\mathbb{R}$-extremal. Let $z_1,z_2 \in \partial\Omega$.
\[ 
(1-t)z_1 + tz_2  = z_1 + t(z_2 - z_1)\in \partial\Omega \quad \text{for ~all}\ t \in [0,1].
\]
It follows that when $\vert t \vert \leq 1/2$, we have
\[ 
\frac{z_1+z_2}{2} + t(z_2 - z_1) = (1/2 -t)z_1 + (1/2 + t)z_2 \in \partial{\Omega}.
\]
Therefore $z_2-z_1 \in F_{\mathbb{R}}(\frac{z_1 + z_2}{2}, \partial{\Omega})$. Since $\Omega$ is 
$\mathbb{R}$- extremal, $z_2 = z_1$ and therefore $\Omega$ is rotund.
\end{proof}

\noindent As the relation between the pair of notions gotten by taking $K=\mathbb{C}$ versus $K=\mathbb{R}$ will
be basic importance in the sequel, we discuss this below. But one would perhaps like to 
ascertain first the 
relation between strict convexity and extremality introduced above. This follows from known facts 
(see for instance \cite{Meggn}), but owing to its central importance for us, we include the 
details.  
\begin{thm}    
Let $\Omega$ be a bounded balanced convex domain in $\mathbb{C}^N$. Then $\Omega$ is 
strictly $\mathbb{R}$-convex if and only if $\Omega$ is $\mathbb{R}$-extremal.
\end{thm}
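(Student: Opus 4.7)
The plan is to bridge strict $\mathbb{R}$-convexity and $\mathbb{R}$-extremality via the notion of rotundity, which has already been shown in Theorem \ref{strct_cnvx} to be equivalent to $\mathbb{R}$-extremality. Thus it suffices to establish that $\Omega$ is strictly $\mathbb{R}$-convex if and only if it is rotund. Throughout, we use that $\Omega$ is the open unit ball of the norm $\|\cdot\|$ given by its Minkowski functional, so $\partial\Omega = \{z : \|z\| = 1\}$ and $\overline{\Omega} = \{z : \|z\| \leq 1\}$; balancedness will be used to ensure that norms of $\mathbb{R}$-linear functionals are computed correctly.

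For the forward implication, suppose $\Omega$ is strictly $\mathbb{R}$-convex but not rotund. Then there exist distinct $x_1, x_2 \in \partial \Omega$ and $t \in (0,1)$ such that $y := t x_1 + (1-t) x_2 \in \partial\Omega$. By a standard geometric Hahn--Banach argument applied to the open convex set $\Omega$ and the boundary point $y$, we obtain a continuous $\mathbb{R}$-linear functional $T : \mathbb{C}^N \to \mathbb{R}$ with $T y = 1$ and $T \leq 1$ on $\overline{\Omega}$. Balancedness of $\overline{\Omega}$ (applied with the scalar $-1$) upgrades this to $|T z| \leq 1$ on $\overline{\Omega}$, so $\|T\| = 1$. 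From $t T x_1 + (1-t) T x_2 = 1$ with $T x_i \leq 1$ for $i = 1,2$, we conclude $T x_1 = T x_2 = 1$, contradicting the uniqueness asserted by strict $\mathbb{R}$-convexity.

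For the reverse implication, assume $\Omega$ is rotund and suppose $T$ is an $\mathbb{R}$-linear functional with $\|T\| = 1$ and $T x_1 = T x_2 = 1$ for two distinct $x_1, x_2 \in \overline{\Omega}$. Since $1 = |T x_i| \leq \|T\| \|x_i\| \leq 1$, we must have $\|x_i\| = 1$, i.e., $x_1, x_2 \in \partial\Omega$. For any $t \in (0,1)$, the $\mathbb{R}$-linearity of $T$ gives $T(t x_1 + (1-t) x_2) = 1$, forcing $\|t x_1 + (1-t) x_2\| \geq 1$; combined with the reverse triangle-type bound $\|t x_1 + (1-t) x_2\| \leq 1$, we find that this convex combination lies in $\partial\Omega$. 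This directly contradicts the definition of rotundity, and the theorem follows.

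The main obstacle is the correct handling of the $\mathbb{R}$-linearity in the Hahn--Banach step: one has to remember to invoke balancedness of $\overline{\Omega}$ to pass from the one-sided bound $T \leq 1$ to the norm bound $\|T\| = 1$, rather than (wrongly) invoking the complex-linear version of Hahn--Banach, which would produce a $\mathbb{C}$-linear functional and a proof of a different (stronger) statement.
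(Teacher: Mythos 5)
Your proof is correct and follows essentially the same route as the paper's: both directions pass through rotundity and supporting functionals, with the forward implication producing a norm-one $\mathbb{R}$-linear functional that supports two distinct boundary points, and the converse showing that the segment joining the two supported points lies in $\partial\Omega$. The only difference is cosmetic --- where the paper cites Corollary 5.1.16 of Megginson to obtain the doubly-supporting functional and phrases the converse directly in terms of the face $F_{\mathbb{R}}$, you carry out the Hahn--Banach separation and the balancedness upgrade $T\leq 1 \Rightarrow |T|\leq 1$ explicitly and route the converse through rotundity, which makes the argument somewhat more self-contained.
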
     
	
\begin{proof}
Suppose that $\Omega$ is strictly $\mathbb{R}$-convex. If possible,
assume that $\Omega$ is not $\mathbb{R}$- extremal. Then by theorem \ref{strct_cnvx}, $\Omega$ is not rotund. 
By corollary 5.1.16 of \cite{Meggn}, there exists $\mathbb{R}$-linear functional $f$ supports more than one
point. This implies that there exists $z_0,w_0 \in \overline{\Omega}$ such that $f(z_0) = f(w_0) = \|f\|$. Take $\alpha := \|f\|$. Consider the $\mathbb{R}$-linear functional 
$g$ defined by $g(z) = \frac{1}{\alpha}f(z)$. Note that
$\|g|\ = 1$ and $g(z_0) = g(w_0) = 1$. This contradicts 
the strict $\mathbb{R}$-convexity of $\Omega$. Therefore, $\Omega$ is $\mathbb{R}$- extremal.\\

\noindent For the converse, assume that $\Omega$ is $\mathbb{R}$-extremal. Let $T$ be 
a $\mathbb{R}-$ linear map with $\|T\| = 1$. Suppose that there exist
$z,w \in \overline{\Omega}$ such that $Tz = 1 = Tw$. Then 
$z,w \in \partial{\Omega}$, otherwise $T\left( z/\|z\|\right) = 1/\|z\| > 1 $.
Note that $T((1-t)z + t w) = 1$.
Then $(1-t)z + tw \in \partial{\Omega}$ for all $t \in [0,1]$. 
In particular, if $t = 1/2$, we get $\frac{1}{2}(z +  w) \in \partial{\Omega}$. Note then that
\[ 
\frac{z+w}{2} + t(w - z) = (1/2 -t)z + (1/2 + t)w 
\in \partial{\Omega}~ \ \text{for all t} \in \mathbb{R}  \ \text{with}\ |t| < 1/2.
\]
Then $w-z \in F_{\mathbb{R}}(\frac{z + w}{2}, \partial{\Omega})$.
Since $\Omega$ is $\mathbb{R}$-extremal, it follows that $w = z$. 
Therefore $\Omega$ is strictly $\mathbb{R}$-convex.
\end{proof}

\begin{thm}
Let $\Omega$ be a bounded balanced convex domain in $\mathbb{C}^N$. Then $\Omega$ 
is strictly $\mathbb{R}$-convex if and only if $\Omega$ is strictly $\mathbb{C}$-convex.
\end{thm}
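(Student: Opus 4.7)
The plan is to establish both implications by relating a $\mathbb{C}$-linear functional to its real part and, conversely, by extending an $\mathbb{R}$-linear functional to a $\mathbb{C}$-linear one via the standard complexification $Tz := Sz - i S(iz)$. Throughout, denote by $\mu$ the Minkowski functional of $\Omega$, which is a norm since $\Omega$ is bounded balanced convex; points $z \in \overline{\Omega}$ are exactly those with $\mu(z)\leq 1$.

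For the forward direction, suppose $\Omega$ is strictly $\mathbb{R}$-convex. Let $T:\mathbb{C}^N\to\mathbb{C}$ be $\mathbb{C}$-linear with $\|T\|=1$ and suppose $Tz_1=Tz_2=1$ for some $z_1,z_2\in\overline{\Omega}$. Set $S=\operatorname{Re} T$, which is $\mathbb{R}$-linear with $\|S\|\leq \|T\|=1$. The reverse inequality $\|S\|\geq\|T\|$ follows from balancedness of $\Omega$: given any $z$, choose $\theta\in\mathbb{R}$ with $T(e^{-i\theta}z)=|Tz|$, so that $S(e^{-i\theta}z)=|Tz|$ while $\mu(e^{-i\theta}z)=\mu(z)$. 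Hence $\|S\|=1$, and $Sz_j=\operatorname{Re}(Tz_j)=1$ for $j=1,2$; strict $\mathbb{R}$-convexity yields $z_1=z_2$.

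For the reverse direction, suppose $\Omega$ is strictly $\mathbb{C}$-convex. Let $S:\mathbb{C}^N\to\mathbb{R}$ be $\mathbb{R}$-linear with $\|S\|=1$ and $Sz_1=Sz_2=1$ for $z_1,z_2\in\overline{\Omega}$. Define $Tz:=Sz-iS(iz)$; a routine check shows $T$ is $\mathbb{C}$-linear with $\operatorname{Re} T=S$. The same balancedness argument as above gives $\|T\|=\|S\|=1$. Now for each $j$, since $z_j\in\overline{\Omega}$ we have $|Tz_j|\leq \|T\|\mu(z_j)\leq 1$, and since $\operatorname{Re}(Tz_j)=Sz_j=1$, the equality case forces $Tz_j=1$. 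Strict $\mathbb{C}$-convexity then gives $z_1=z_2$, proving strict $\mathbb{R}$-convexity.

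The only mildly technical point is the identity $\|T\|=\|\operatorname{Re} T\|$ for $\mathbb{C}$-linear functionals on the normed space $(\mathbb{C}^N,\mu)$; this is where the balancedness of $\Omega$ enters decisively, because we need $\mu(e^{-i\theta}z)=\mu(z)$ to rotate phases without changing norms. Apart from this, each direction is a one-line consequence of the corresponding uniqueness hypothesis. Note also that combining this theorem with the preceding one, we may add to the chain of equivalences already established (rotundity $\iff$ $\mathbb{R}$-extremality $\iff$ strict $\mathbb{R}$-convexity) the condition of strict $\mathbb{C}$-convexity, which by the earlier equivalence of weak and strong $\mathbb{C}$-extremality in the balanced pseudoconvex setting relates also to $\mathbb{C}$-extremality of $\partial\Omega$.
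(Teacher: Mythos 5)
Your proof is correct, and the reverse implication takes a genuinely different and more direct route than the paper's. The forward direction is essentially the paper's argument, tidied up: the paper simply regards the $\mathbb{C}$-linear $T$ as an $\mathbb{R}$-linear functional, whereas you pass to $S=\operatorname{Re}T$ and justify $\|S\|=\|T\|$ via phase rotation and balancedness, which is the cleaner formulation if one insists (as is standard) that $\mathbb{R}$-linear functionals be real-valued. The real divergence is in the converse. The paper reduces strict $\mathbb{R}$-convexity to $\mathbb{R}$-extremality (using the immediately preceding equivalence), then runs a geometric argument: assuming a segment $a+tu$ in $\partial\Omega$, it proves two claims ($a\notin\operatorname{span}_{\mathbb{C}}\{u\}$ and $\operatorname{dist}(a,\operatorname{span}_{\mathbb{C}}\{u\})=1$) and invokes the Hahn--Banach theorem to manufacture a norm-one $\mathbb{C}$-linear functional vanishing on $u$ and equal to $1$ at $a$, to which strict $\mathbb{C}$-convexity is applied. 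You instead take the given real functional $S$ with $Sz_1=Sz_2=1$, complexify it by $Tz:=Sz-iS(iz)$, use $\|T\|=\|S\|=1$, and observe that $|Tz_j|\leq 1$ together with $\operatorname{Re}(Tz_j)=1$ forces $Tz_j=1$, so strict $\mathbb{C}$-convexity applies directly. Your argument is shorter, avoids Hahn--Banach and the detour through extremality, and works entirely at the level of functionals; what the paper's longer route buys is an explicit link between the two convexity notions and the extremality/face picture that the rest of the section is built around.
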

\begin{proof}
Let $\Omega$ be a bounded balanced convex domain 
in $\mathbb{C}^N$. Note that $\Omega$ is the unit ball 
with respect to some norm $\|\cdot\|$ in $\mathbb{C}^N$. Assume $\Omega$ is 
strictly $\mathbb{R}$-convex. We want to prove that $\Omega$ is 
strictly $\mathbb{C}$-convex. Let $T$ be $\mathbb{C}$ -linear functional with $\|T\| = 1$ such 
that $T(z) = T(w) = 1$ for some $z,w \in \overline{\Omega}$. Every $\mathbb{C}$ -linear 
functional must be an $\mathbb{R}$-linear functional. Then $T$ is 
a $\mathbb{R}$-linear functional with $\|T\| = 1$ and $T(z) = T(w) = 1$. 
Since $\Omega$ is strictly $\mathbb{R}$-convex, we get $z = w$.\\
Conversely, assume that $\Omega$ is strictly $\mathbb{C}$-convex. We have to prove 
that $\Omega$ is strictly $\mathbb{R}$-convex. By above theorem, it suffices 
to show that $\Omega$ is $\mathbb{R}$-extremal. If possible assume that there
exists $u \in \mathbb{C}^N,~ a \in \partial\Omega$ and an $\epsilon > 0$ 
such that  $a + tu \in \partial\Omega$ for all $t \in \mathbb{R}$ with $|t| < \epsilon$. 
Let $M = span_{\mathbb{C}}\{u\}$. We split the proof now into two steps, formulated as `claims'.

\underline{Claim-1:} $a \notin M$\\
\textit{Proof of Claim-1:} If possible assume that $a \in M$. Since $a \in \partial \Omega$,
there exists non-zero $\lambda \in \mathbb{C}$ such that $a = \lambda u$. 
Choose $t_0$ be a non-zero real number such that $|t_0| < \epsilon$ and $|\lambda +t_0| \neq |\lambda|$.
Taking norm on both sides of the equation $a = \lambda u$, we get $|\lambda| = 1/\|u\|$.
From the assumption $a+tu \in \partial \Omega$, we obtain
\[
    1 = \|a+t_0u\| = \|\lambda u +t_0 u\| = \|u\|\cdot|\lambda+t_0| = \frac{|\lambda +t_0|}{|\lambda|}.
\]
The above equations tells that $|\lambda +t_0| = |\lambda|$. This 
contradicts our choice of $t_0$ and  
finishes the proof of the claim.\\

\underline{Claim-2:} ${\rm dist}(a,M) = 1$, where ${\rm dist}$ denotes the 
distance induced by the norm $\|\cdot\|$ with respect to which $\Omega$ is the 
unit ball.\\
\textit{Proof of Claim-2:} Let us begin by recalling that 
${\rm dist}(a,M) = \inf\{\|a - su\|: s \in \mathbb{C}\}$ and
note firstly (by considering $s=0$) that ${\rm dist}(a,M) \leq 1$.
To prove this claim then, it suffices to show that $\rm dist(a,M) \geq 1$. To this 
end, it is convenient to rewrite the definition of the aforementioned distance as
${\rm dist}(a,M) = \inf\{\|a + su\|: s \in \mathbb{C}\}$ -- this is not a mere sign change,
it is a changing over to the `dual' viewpoint that the aforementioned distance is also the 
infimum of distances of the origin to points on the affine line through $a$ in 
the direction of $u$. 
Since $D$ is convex, there exists a supporting hyperplane for every $p \in \partial D$. 
In particular, there exists a supporting hyperplane 
$\widetilde{H} = \{z \in \mathbb{C}^N: g(z) = g(a)\}$ (for some $\mathbb{R}$-linear 
functional $g$) at the point $a$. Let $l := \{a+tu: t \in \mathbb{R}\}$. Let us 
first verify that 
$l \subset \tilde{H}$. 
To prove this by contradiction, assume the contrary that $l \not\subset \tilde{H}$. 
Consider the linear subspace $H := \{z \in \mathbb{C}^N \; : \; g(z) = 0\}$. 
By the definition of supporting hyperplane, $a \notin H$. 
So, $span \{a\} \cap H = \{0\}$.

\medskip

\noindent We now show that $span \{u\} \cap H = \{0\}$ as well.
Indeed, if $\lambda u \in H$ for some $\lambda \in \mathbb{C}$, then $g(\lambda u) = \lambda g(u) = 0$.
Since $l \not\subset \widetilde{H}$, there exists a non-zero real number $t_0$ such 
that $a+t_0u \notin \widetilde{H}$. 
This means that $u \notin \ker(g)$. From $\lambda g(u) = 0$, we obtain 
that $\lambda = 0$. Hence $span \{u\} \cap H = \{0\}$.

\medskip

\noindent Now by Claim-1,  $a$ and $u$ are linearly independent. 
Since $span\{a\} \cap H = \{0\}$ and $span\{u\} \cap H = \{0\}$, it follows that $dim(H) < N-1$.
This contradicts the fact that $H$ is a hyperplane. Hence $a+su \in \widetilde{H}$ for all $s \in \mathbb{R}$.
Therefore $\|a+su\| \geq 1$. This implies that $\rm dist(a,M) \geq 1$. This completes the proof of the claim.\\

\noindent Completing the proof of the theorem consists in now observing that: 
by the Hahn--Banach theorem, there exists a $\mathbb{C}$-linear functional 
$f$ with $\|f\| = 1$ such that $f(z) = 0$ for all $z \in M$ and $f(a) = \rm dist(a,M)$.
Note then that $f(a+tu) = f(a) = 1$. Since $\Omega$ is strictly $\mathbb{C}$-convex, this means $a = a+tu$,
which in turn implies $u = 0$. So $\Omega$ is $\mathbb{R}$-convex, finishing the proof in all.
\end{proof}
\begin{rem}
We shall henceforth not explicitly mention the field and simply refer to a strictly $K$-convex domain as strictly convex domain. 
\end{rem}
\begin{rem} 
$\mathbb{C}$-extremal domains need not be strictly $\mathbb{C}$-convex. 
To give the simplest example, we consider again the $\ell^1$ ball $B_{\ell^1}$. 
Consider $T: \mathbb{C}^2 \longrightarrow \mathbb{C}$ defined by $T(z,w) = z + w$. We note that
$T$ is linear and $\|T\|_1 = 1$, but $T(1,0) = T(0,1) = 1$.
Thus $B_{\ell^1}$ is not strictly $\mathbb{C}$-convex, but is $\mathbb{C}$-extremal.
\end{rem}

\noindent We conclude with one other notion of extremality, whose 
definition is more restrictive than that of $\mathbb{C}$-extremality. 
\begin{defn}
Let $D \subset \mathbb{C}^N$ be a bounded domain. We say that a point $a \in \partial D$ is {\it holomorphically extreme}
if there is no non-constant holomorphic mapping $\phi: \Delta \rightarrow \overline{D}$ such that $\phi(0)=a$.
\end{defn}

\noindent The purpose of recalling this notion in this article about retracts is to record the details 
for proposition \ref{JJ-improved} in the introductory section; which will be done in the next section. 

\subsection{Geodesics and totally geodesic subsets}
\noindent We recall the notion of totally $K$-geodesic subsets of a general domain 
(or a complex manifold) $D$ 
from \cite{Kosnski}. Let $p_1,p_2$ be (distinct) points in $D$; the 
pair $\delta = (p_1,p_2)$ shall
be referred to as
a `datum'. A Kobayashi extremal for $\delta$ is a
holomorphic map $k : \Delta \to D$ such that both $p_1$ and $p_2$ are 
in the range of $k$, and
the pseudo-hyperbolic distance (also known as the M\"{o}bius distance on $\Delta$) between the preimages of $p_1$ and $p_2$ is 
minimized
at $k$, over all holomorphic maps from $\Delta$ to $D$. We shall call the 
range of a Kobayashi
extremal with datum $\delta$, a $K$-geodesic through $\delta$. If a set $V \subset D$ 
has the property that
for any $p_1,p_2 \in V$, a $K$-geodesic through $(p_1,p_2)$ is contained in $V$, we shall 
say that $V$
is totally $K$-geodesic. A similar definition can be made with invariant metrics or distances, in particular the Caratheodory distance, but we do not pursue such matter here.\\

\noindent The simple fact that the Kobayashi distance $k_D(p,q)$ 
between a pair of points $p,q$ in a domain 
$D$ remains invariant (i.e., $k_D(p,q)=k_Z(p,q)$) under 
passage to a retract $Z \subset D$, was recorded
with proof explicitly as lemma $1$ in \cite{HanPeters-Zeager}. As the proof is very simple
and uses only the contractive feature of the Kobayashi distance under holomorphic maps,
we therefore only note here that the same holds much more generally for any contractive 
metric or distance on any complex manifold.

\begin{lem}\label{contr}
Let $X$ be any complex manifold and $Z\subset X$, a retract. For a 
contractive pseudo-distance $d$ and contractive (infinitesimal) pseudo-metric $\delta$, 
we have
\[
d_Z(p,q)=d_X(p,q),\;\; \text{ and }\;\; \delta_Z(p,v) = \delta_X(p,v)
\]
for every pair of points $p,q \in Z$ and $v \in T_pX$.
\end{lem}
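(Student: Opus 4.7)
The plan is to leverage the fact that a retraction $\rho: X \to Z$ is, by definition, a holomorphic self-map of $X$ whose image is $Z$ and which, when post-composed with the inclusion $\iota: Z \hookrightarrow X$, acts as the identity on $Z$, i.e.\ $\rho \circ \iota = \mathrm{id}_Z$. The sole tool required is the defining contractivity property of $d$ and $\delta$ under holomorphic maps; the equality of distances (resp.\ infinitesimal lengths) will then follow from a two-sided squeeze.

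First I would treat the pseudodistance $d$. Fix $p,q \in Z$. Since the inclusion $\iota: Z \hookrightarrow X$ is holomorphic and fixes $p,q$, contractivity gives
\[
d_X(p,q) \;=\; d_X\bigl(\iota(p),\iota(q)\bigr) \;\leq\; d_Z(p,q).
\]
In the other direction, since $\rho: X \to Z$ is holomorphic and restricts to the identity on $Z$, the same contractivity applied to $\rho$ yields
\[
d_Z(p,q) \;=\; d_Z\bigl(\rho(p),\rho(q)\bigr) \;\leq\; d_X(p,q).
\]
Combining the two inequalities gives the desired equality $d_Z(p,q) = d_X(p,q)$.

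Next I would run the entirely analogous argument on the infinitesimal side, interpreting the statement (with the natural reading $v \in T_p Z$, since only then does $\delta_Z(p,v)$ make literal sense) via the tangent maps $d\iota_p$ and $d\rho_p$. The chain rule applied to $\rho \circ \iota = \mathrm{id}_Z$ at $p \in Z$ yields $d\rho_p \circ d\iota_p = \mathrm{id}_{T_pZ}$, so for any $v \in T_pZ$ the vector $d\iota_p(v) \in T_pX$ is just $v$ itself under the natural identification $T_pZ \subset T_pX$, and $d\rho_p$ sends it back to $v$. Applying the contractivity of $\delta$ first to $\iota$ and then to $\rho$ gives
\[
\delta_X(p,v) \;\leq\; \delta_Z(p,v) \quad\text{and}\quad \delta_Z(p,v) \;\leq\; \delta_X(p,v),
\]
and therefore $\delta_Z(p,v) = \delta_X(p,v)$.

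There is no genuine obstacle here; the only conceptual point worth emphasising is that a non-trivial retract, though a proper complex submanifold, enjoys the rather strong property that every contractive (pseudo)distance or infinitesimal metric is preserved upon restriction, a phenomenon entirely due to the existence of the holomorphic left-inverse $\rho$ to the inclusion. In particular, taking $d$ and $\delta$ to be the Kobayashi pseudodistance and the (infinitesimal) Kobayashi--Royden metric recovers the classical statement recorded as Lemma~1 of \cite{HanPeters-Zeager}, but the argument manifestly applies verbatim to the Carath\'eodory, Sibony, Azukawa, and any other contractive system.
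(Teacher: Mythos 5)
Your proof is correct and is precisely the standard two-sided contractivity squeeze (inclusion gives $d_X \leq d_Z$, the retraction gives $d_Z \leq d_X$) that the paper itself invokes, citing Lemma~1 of \cite{HanPeters-Zeager} and noting that the argument uses only contractivity under holomorphic maps; your remark that $v$ should be read as lying in $T_pZ$ for $\delta_Z(p,v)$ to make sense is also the right reading. No differences worth noting.
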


\noindent It follows atleast for bounded convex domains $D \subset \mathbb{C}^N$ that
retracts of $D$ are totally geodesic; the more standard 
definition of `totally geodesic' shall also be made use of, while 
recalling the work \cite{Mok} of Mok. Getting back to the above lemma, it also shows that if $X$ is hyperbolic then 
every retract of $X$ is hyperbolic.

\subsection{One dimensional Retracts}\label{Car}

\noindent There are plenty of one dimensional retracts which can be obtained 
from Caratheodory complex geodesics (referred to henceforth as $C$-geodesic) whose definition 
we now recall.
\begin{defn} \label{C-geod}
We say that a holomorphic map $\varphi: \Delta \rightarrow D$ is a 
complex $C$-geodesic of $D$ if for all $\zeta_1, \zeta_2 \in \Delta$, we have 
$c_D\left(\varphi\left(\zeta_1\right), \varphi\left(\zeta_2\right)\right)=p\left(\zeta_1, \zeta_2\right)$. Occasionally, by abuse of language,
we shall also refer to the image of 
$\varphi$ as a complex $C$-geodesic.
\end{defn}

\noindent Here $c_D$ denotes the Caratheodory distance; as we shall be working with a variant of this
as well, we recall both of them and their notations:
\[
    c_D(z,w) = \sup \{p(f(z),f(w)): f \in \mathcal{O}(G,\Delta)\}
\]
\noindent The variant of $c_D$ denoted $c_D^*$ which will also be useful later for us, is defined as follows.
\[
c_D^*(z,w) = \sup \{m(f(z),f(w)): f \in \mathcal{O}(G,\Delta)\}
\]
`Dualizing' this definition i.e., by swapping the roles of $G$ and $\Delta$, 
considering $\mathcal{O}(\Delta,G)$ in place of $\mathcal{O}(G,\Delta)$, we have
what is known as the Lempert function of $D$ denoted $l_D$. For the 
precise definition of this invariant function as well as other invariant metrics and distances 
alongwith various of their fundamental properties particularly of the Kobayashi metric, 
we refer to the best known book on the subject \cite{Jrncki_invrnt_dst}; here we shall only recall to the extent 
of relevance to retracts and convenience of later use in this article. 
Thus, we only mention that we can similarly define complex $K$-geodesic (i.e., complex Kobayashi geodesic) if we replace 
$c_D$ by $K_D$ in the above definition. By Lemma 2.6.1 (ii) in \cite{Abate_Itrtn}, every complex $C$-geodesic is 
a complex $K$-geodesic. But the converse is not true because there are domains in $\mathbb{C}^N$ which are Kobayashi hyperbolic but not 
Caratheodory hyperbolic. Owing to the direct 
relationship between $C$-geodesics and retracts, 
we shall mostly work with $C$-geodesics rather than $K$-geodesics.
Vesentini \cite{Vesentini_1}, gives the following characterization of the complex geodesics of $D$.

\begin{prop}\label{Car_Inftsml}
Let $\varphi: \Delta \rightarrow D$ be a holomorphic map. 
For $\varphi$ to be a complex geodesic of $D$, it is necessary 
and sufficient that $\varphi$ satisfies one of the following conditions:
\begin{enumerate}
\item [1.] there are two distinct points $\zeta_1$ and $\zeta_2$ in $\Delta$ such that $c_D\left(\varphi\left(\zeta_1\right), \varphi\left(\zeta_2\right)\right)=c_{\Delta}\left(\zeta_1, \zeta_2\right)$.
\item [2.] there exists $\zeta_1 \in \Delta$ and a nonzero vector $v \in \mathbb{C}$ such that $\gamma_D\left(\varphi\left(\zeta_1\right), \varphi^{\prime}\left(\zeta_1\right) \cdot v\right)=\gamma_{\Delta}\left(\zeta_1, v\right)$.
\end{enumerate}
\end{prop}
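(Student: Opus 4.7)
The plan is to prove both directions by combining the supremum characterization of the Carath\'eodory distance (respectively metric) with the rigidity portion of the Schwarz--Pick lemma. Necessity is essentially by definition: a complex geodesic trivially satisfies the equality in (1) at every pair $(\zeta_1,\zeta_2)$, and specializing to $\zeta_2 = \zeta_1 + tv$ and letting $t \to 0$, together with the standard fact that $c_D$ infinitesimalizes to $\gamma_D$ along the diagonal (at the disc end, $c_\Delta$ yields $\gamma_\Delta$), delivers (2).

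For the sufficiency of (1), suppose $c_D(\varphi(\zeta_1),\varphi(\zeta_2)) = p(\zeta_1,\zeta_2)$ at a fixed pair $\zeta_1 \neq \zeta_2$. By the supremum definition of $c_D$, pick $f_n \in \mathcal{O}(D, \Delta)$ with $p(f_n(\varphi(\zeta_1)), f_n(\varphi(\zeta_2))) \to p(\zeta_1, \zeta_2)$; after post-composing each $f_n$ with a M\"obius automorphism of $\Delta$ sending $f_n(\varphi(\zeta_1))$ to $0$, we may further assume $f_n(\varphi(\zeta_1)) = 0$ (the supremum property is preserved, as $p$ is M\"obius-invariant). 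Since $\mathcal{O}(D,\Delta)$ is a normal family, extract a subsequence converging locally uniformly to a holomorphic $f : D \to \overline{\Delta}$ with $f(\varphi(\zeta_1)) = 0$; the maximum principle then forces $f(D) \subset \Delta$. Passing to the limit yields $p(f(\varphi(\zeta_1)), f(\varphi(\zeta_2))) = p(\zeta_1, \zeta_2)$, so $h := f \circ \varphi : \Delta \to \Delta$ is a self-map achieving equality in the Schwarz--Pick inequality at the pair $(\zeta_1,\zeta_2)$. The rigidity part of Schwarz--Pick then forces $h \in \mathrm{Aut}(\Delta)$. Consequently, for every pair $\zeta, \zeta' \in \Delta$ the chain
\[
p(\zeta,\zeta') = p(h(\zeta), h(\zeta')) \leq c_D(\varphi(\zeta),\varphi(\zeta')) \leq p(\zeta,\zeta')
\]
forces equality throughout, which is precisely the statement that $\varphi$ is a complex geodesic.

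The argument for sufficiency of (2) is the infinitesimal counterpart. Writing $\gamma_D(\varphi(\zeta_1), \varphi'(\zeta_1) v)$ as a supremum over $f \in \mathcal{O}(D, \Delta)$ of the quantities $\lvert f'(\varphi(\zeta_1)) \varphi'(\zeta_1) v \rvert / (1 - \lvert f(\varphi(\zeta_1))\rvert^2)$, the same normal-family plus normalization procedure (again using the M\"obius-adjustment to ensure $f_n(\varphi(\zeta_1))=0$) produces an $f \in \mathcal{O}(D,\Delta)$ for which $h := f\circ\varphi$ achieves equality in the infinitesimal Schwarz--Pick inequality at $\zeta_1$; hence $h \in \mathrm{Aut}(\Delta)$. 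The contractive chain at the integrated level then propagates equality to arbitrary pairs, establishing the geodesic property. The principal technical hurdle throughout is the M\"obius-normalization step that prevents the Montel limit from collapsing to a unimodular constant (which would render the $p$-distance meaningless in the limit); once that is handled, the proof reduces to the standard rigidity of Schwarz--Pick together with contractivity of $c_D$ and $\gamma_D$ under holomorphic maps.
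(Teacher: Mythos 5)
Your proof is correct. Note, however, that the paper offers no proof of this proposition at all: it is stated as a quoted result of Vesentini (\cite{Vesentini_1}), so there is nothing to compare against directly. Your argument is the standard one — contractivity of $c_D$ and $\gamma_D$ under $\varphi$ gives the inequality $\leq p(\zeta,\zeta')$ for free, a Montel/normal-families extraction produces an extremal $f \in \mathcal{O}(D,\Delta)$ realizing the hypothesized equality, Schwarz--Pick rigidity forces $f\circ\varphi \in \mathrm{Aut}(\Delta)$, and the sandwich $p(\zeta,\zeta') = p\bigl(f(\varphi(\zeta)), f(\varphi(\zeta'))\bigr) \leq c_D(\varphi(\zeta),\varphi(\zeta')) \leq p(\zeta,\zeta')$ then propagates equality to all pairs. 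This is in fact the same mechanism the paper deploys in its proof of the adjacent Proposition \ref{C-geodesc_Retrct} (that a $C$-geodesic admits a holomorphic left inverse), and you handle it more carefully than the paper does there: your M\"obius normalization $f_n(\varphi(\zeta_1)) = 0$, which prevents the Montel limit from degenerating to a unimodular constant, is a step the paper's own nearby argument silently skips. The only point worth stating explicitly in the necessity direction for condition (2) is the citation of the fact that $c_D$ infinitesimalizes to $\gamma_D$ (e.g.\ from \cite{Jrncki_invrnt_dst}); but since necessity already follows from condition (1) alone, this is not load-bearing.
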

\noindent The following proposition relates complex $C$-geodesics with retracts. This seems well-known; it can 
be found stated in \cite{Dineen}. However, the proof details seem not to have been recorded anywhere
which we therefore include here; particularly
because it is a basic result about one-dimensional retracts.
\begin{prop}\label{C-geodesc_Retrct}
Let $D$ be a domain in $\mathbb{C}^N$. If $\varphi: \Delta \to D$ is a complex 
$C$-geodesic, then $\varphi(\Delta)$ is a one dimensional retract of $D$. 
\end{prop}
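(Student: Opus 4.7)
The strategy is the classical one: build a holomorphic left-inverse $f: D \to \Delta$ of $\varphi$ and then use the biholomorphism $f \circ \varphi$ of $\Delta$ to upgrade this left-inverse to a retraction. Since $\varphi$ is a complex $C$-geodesic, by Proposition \ref{Car_Inftsml} there exist distinct points $\zeta_1, \zeta_2 \in \Delta$ such that, setting $p_i = \varphi(\zeta_i)$, we have
\[
c_D(p_1,p_2) = c_\Delta(\zeta_1,\zeta_2).
\]
First I would invoke the definition of the Caratheodory distance to select a sequence $f_n \in \mathcal{O}(D,\Delta)$ with $p(f_n(p_1),f_n(p_2)) \to c_D(p_1,p_2)$. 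Since the target $\Delta$ is bounded, Montel's theorem yields a subsequence converging locally uniformly on $D$ to some holomorphic $f: D \to \overline{\Delta}$ realizing $p(f(p_1),f(p_2)) = c_D(p_1,p_2)$.

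Next I would promote $f$ to a map into $\Delta$. Consider $\alpha := f \circ \varphi : \Delta \to \overline{\Delta}$; it satisfies
\[
p(\alpha(\zeta_1),\alpha(\zeta_2)) = p(f(p_1),f(p_2)) = c_D(p_1,p_2) = c_\Delta(\zeta_1,\zeta_2) = p(\zeta_1,\zeta_2).
\]
In particular $\alpha$ is non-constant, so by the maximum modulus principle applied to $f$ on the (connected) domain $D$ we conclude $f(D) \subset \Delta$, and hence $\alpha \in \mathcal{O}(\Delta,\Delta)$ preserves the Poincar\'e distance between two distinct points. The Schwarz--Pick lemma then forces $\alpha \in \mathrm{Aut}(\Delta)$.

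Having $\alpha \in \mathrm{Aut}(\Delta)$ in hand, the construction of the retraction is immediate: define
\[
\rho := \varphi \circ \alpha^{-1} \circ f : D \longrightarrow D.
\]
By construction $\rho(D) \subset \varphi(\Delta)$, so $\rho$ is a self-map of $D$. For any $z \in \varphi(\Delta)$, writing $z = \varphi(w)$ one computes
\[
\rho(z) = \varphi\bigl(\alpha^{-1}(f(\varphi(w)))\bigr) = \varphi\bigl(\alpha^{-1}(\alpha(w))\bigr) = \varphi(w) = z,
\]
so $\rho$ fixes $\varphi(\Delta)$ pointwise; combined with $\rho(D) \subset \varphi(\Delta)$ this gives $\rho \circ \rho = \rho$, and hence $\varphi(\Delta)$ is a holomorphic retract. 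The only delicate step is the one I isolated above, namely ensuring $f(D) \subset \Delta$ and not merely $\overline{\Delta}$; once one notices that the $C$-geodesic property transfers the equality case of Schwarz--Pick to the composition $f\circ\varphi$, the rest is essentially bookkeeping.
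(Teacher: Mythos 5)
Your proof is correct and follows essentially the same route as the paper's: extract an extremal function $f \in \mathcal{O}(D,\Delta)$ via Montel's theorem, use the equality case of Schwarz--Pick to conclude $f\circ\varphi\in\mathrm{Aut}(\Delta)$, and compose to obtain the retraction $\rho=\varphi\circ(f\circ\varphi)^{-1}\circ f$. Your explicit check that the Montel limit is non-constant and hence maps into $\Delta$ rather than $\overline{\Delta}$ is a detail the paper glosses over, but the argument is otherwise identical.
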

\begin{proof}
First we want to prove that $\varphi$ has a holomorphic left inverse. For all  $\zeta_1, \zeta_2 \in \Delta$, we have
\[ 
c_D\left(\varphi\left(\zeta_1\right), \varphi\left(\zeta_2\right)\right) = p(\zeta_1, \zeta_2)
 \]
By definition of $c_D$, for all $\zeta_1,\zeta_2 \in \Delta$ we have
\[ 
\sup_{f \in \mathcal{O}(D,\Delta)}p\left((f \circ \varphi)(\zeta_1),
(f \circ \varphi)(\zeta_2)\right) = p(\zeta_1,\zeta_2).
\] 

\noindent There exists $f_n \in \mathcal{O}(D,\Delta)$ such that
\[ 
p\left((f_n \circ \varphi)(\zeta_1),(f_n \circ \varphi)(\zeta_2)\right) \to p(\zeta_1,\zeta_2) \quad \text{as}~ n \to \infty.\] 
By Montel's theorem, there exists a subsequence $f_{n_k}$ converging to $g$ uniformly on compact subsets
of $D$. So $p\left((g \circ \varphi)(\zeta_1),(g \circ \varphi)(\zeta_2)\right) = p(\zeta_1,\zeta_2)$ for all $\zeta_1,\zeta_2 \in \Delta$. By Schwarz Pick lemma, we have 
$g \circ \varphi \in {\rm Aut}(\Delta)$. This implies that $\varphi$ has a holomorphic left inverse. \\
 Let $\psi \in \mathcal{O}(D,\Delta)$ such that $\psi \circ \varphi = id_{\Delta}$. If we set $\rho = \varphi \circ \psi$, then $\rho$ is a holomorphic self map on $D$ and
\[ \rho \circ \rho = (\varphi \circ \psi) \circ (\varphi \circ \psi) = \varphi \circ (\psi \circ \varphi) \circ \psi = \varphi \circ \psi = \rho .\]
This proves that $\rho$ is a holomorphic retraction map on $D$ and $\varphi(\Delta)$ is a one-dimensional retract of $D$.
\end{proof}

\noindent It may be surprising that the hypothesis has no condition about $D$. Indeed, it may happen that $c_D \equiv 0$, there are no complex $C-$ geodesic and the result is vacuously true. The converse is true for 
the class of domains of much interest in this
article namely, bounded balanced pseudoconvex domains. More generally, we have the following.

\begin{thm}\label{Blncd_Rtrct_Cmplx_Geodsc}
If $D$ is a simply connected Kobayashi hyperbolic domain in $\mathbb{C}^N$, then the one-dimensional retracts of $D$ are precisely its complex $C$-geodesics.
\end{thm}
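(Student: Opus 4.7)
The plan is to prove both directions of the ``precisely'' claim, noting that the non-trivial direction is that every $1$-dimensional retract arises as the image of a complex $C$-geodesic; the reverse is already in hand via proposition \ref{C-geodesc_Retrct}.

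For the main direction, let $Z$ be a $1$-dimensional retract of $D$ with retraction $\rho: D \to Z$. By theorem \ref{Firstthm}, $Z$ is a closed connected $1$-dimensional complex submanifold of $D$, hence a Riemann surface. I would next verify that $Z$ is a simply connected hyperbolic Riemann surface: hyperbolicity follows from lemma \ref{contr}, since contractivity of the Kobayashi pseudodistance forces $k_Z = k_D|_Z$ and $D$ is hyperbolic by hypothesis; simple connectivity follows from the inheritance of the vanishing of $\pi_1$ under retraction (recalled in the introductory discussion: $\iota_*\colon \pi_1(Z)\to \pi_1(D)$ is injective as $\rho_*\circ \iota_* = \mathrm{id}$, so $\pi_1(D)=1$ forces $\pi_1(Z)=1$). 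By the uniformization theorem, there exists a biholomorphism $\varphi: \Delta \to Z$, which I view as a holomorphic map $\varphi: \Delta \to D$.

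The key step is then to produce a holomorphic left inverse of $\varphi$ from the retraction. Namely, set $\psi := \varphi^{-1} \circ \rho: D \to \Delta$. This is well-defined and holomorphic since $\rho$ maps $D$ into $Z$ and $\varphi^{-1}: Z \to \Delta$ is holomorphic. For any $z \in Z$ one has $\rho(z) = z$, so $\psi \circ \varphi = \mathrm{id}_{\Delta}$. Now I would use $\psi$ as a competitor in the supremum defining $c_D$: for any $\zeta_1, \zeta_2 \in \Delta$, writing $z_j = \varphi(\zeta_j)$,
\[
c_D(\varphi(\zeta_1), \varphi(\zeta_2)) \;\geq\; p(\psi(z_1), \psi(z_2)) \;=\; p(\zeta_1, \zeta_2).
\]
The reverse inequality $c_D(\varphi(\zeta_1), \varphi(\zeta_2)) \leq p(\zeta_1, \zeta_2)$ is immediate from the contractive property of $c_D$ under the holomorphic map $\varphi: \Delta \to D$. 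Equality then holds for all $\zeta_1, \zeta_2 \in \Delta$, so $\varphi$ is a complex $C$-geodesic by definition \ref{C-geod}, and $Z = \varphi(\Delta)$ is the image of a complex $C$-geodesic as required.

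I do not foresee a serious obstacle: the argument hinges on the fact that simple connectivity and hyperbolicity are preserved under passage to a retract, so uniformization applies to give a biholomorphic parametrization of $Z$ by $\Delta$; the retraction then supplies exactly the holomorphic left inverse needed to realize $\varphi$ as an extremal competitor in the supremum defining $c_D$. The only conceptual subtlety is recognising that simple connectivity of $D$ is really what forces $Z \cong \Delta$ rather than a more general hyperbolic Riemann surface -- without it, the argument would break down at the uniformization step since a hyperbolic $Z$ could be a disc quotient and no left inverse $\psi: D \to \Delta$ need exist.
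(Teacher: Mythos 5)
Your proof is correct, and its overall architecture coincides with the paper's: both reduce to showing that $Z$ is a non-compact, simply connected, hyperbolic Riemann surface (hyperbolicity via lemma \ref{contr}, simple connectivity via $\rho_*\circ\iota_*=\mathrm{id}$ on $\pi_1$), invoke uniformization to get a biholomorphism $\varphi:\Delta\to Z$, and then argue that $\varphi$ is a complex $C$-geodesic. The one place you diverge is the final step: the paper simply cites the equivalence in Proposition~1.2 of \cite{Dineen}, whereas you unpack that citation by constructing the holomorphic left inverse $\psi=\varphi^{-1}\circ\rho$ explicitly and using it as a competitor in the supremum defining $c_D$, with the reverse inequality coming from the Schwarz--Pick contractivity of $f\circ\varphi$. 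This makes your argument self-contained and is, in effect, the computation of proposition \ref{C-geodesc_Retrct} run in the opposite direction; the paper's version buys brevity at the cost of an external reference. Both are sound.
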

\begin{proof}
By proposition \ref{C-geodesc_Retrct}, we get that every complex $C$-geodesic is a retract. We have 
only to prove that 
every one-dimensional retract is a complex $C$-geodesic. Let $Z$ be a $1$-dimensional retract 
of $D$. Note that $Z$ is a non-compact simply connected Riemann surface.  By the uniformization theorem, $Z$ is 
biholomorphic to one of the Riemann surfaces: unit disk $\Delta, ~\hat{\mathbb{C}},~ \mathbb{C}$. Since $Z$ is non-compact and hyperbolic, $\hat{\mathbb{C}}$ and $\mathbb{C}$ are ruled out. Hence $Z$ is biholomorphic to $\Delta$. Let $\phi$ be a biholomorphic map from $\Delta$ to $Z$. By proposition 1.2 (the equivalence of parts a) and d) therein ) from \cite{Dineen} (proposition 11.1.7 in \cite{Jrncki_invrnt_dst}), we obtain that $\phi$ is a complex $C$-geodesic. So $Z$ is an image of a complex $C$-geodesic. 
\end{proof}
\noindent Recall that by proposition \ref{cnvx_1dim_retrct}, not all one-dimensional linear subspaces of 
bounded balanced domain $D$ are retracts of $D$, unless $D$ is convex. This
naturally leads to the question about which are those one-dimensional linear subspaces of bounded balanced non-convex domain $D$
which fail to be a retract of $D$? We next lay down a corollary that answers this; its statement requires the notion 
of convexity {\it at a boundary point}, whose definition we recall from \cite{Jrncki_invrnt_dst}.
\begin{defn}\label{C-convex}
Let $D =\{z \in \mathbb{C}^N:h(z) < 1\}$ be a balanced domain in 
$\mathbb{C}^N$ where as usual $h$ denotes the Minkowski functional of $D$. Let $a$ be any point of $D$, other than the 
origin such that $D$ is bounded in the direction 
of $a$; note that this forces $h(a)$ to be non-zero. The domain $D$ is said to be \textit{convex at} the boundary point $a/h(a)$, if there 
exists a $\mathbb{C}$-linear functional $L : \mathbb{C}^N \to \mathbb{C}$ with $|L| \leq h$ and $|L(a)| = h(a)$. Note that 
the latter (equality) condition may be written as 
$\vert L(p) \vert=1$ where $p=a/h(a) \in \partial D$.
When the aforementioned pair of 
conditions hold, we also say that $\partial D$ is \textit{convex at} the boundary point $p$ (or that the balanced domain $D$
is convex near the boundary point closest in the direction of $a$).
 The boundary $\partial D$ (or the domain $D$ itself)
 is said to be \textit{convex near} $a/h(a)$ if there exists 
 an open ball $B$ around $a/h(a)$ such that $B \cap \partial D$ 
is convex at every $q \in B \cap \partial D$.\\

\noindent One may want to relate the above
notion of convexity with the more standard geometric one
whose requirement does not involve its boundary points explicitly.
Namely, we recall that $D$ is said to be \textit{geometrically convex near} the boundary point $p$ if there exists an open set $U$ about $p$
such that $U \cap D$ is convex in the usual sense that 
for every pair of points $p_1,p_2\in U \cap D$, we 
have $(1-t)p_1 +tp_2 \in U \cap D$ for all $t \in [0,1]$.
\end{defn}
\noindent The relation between the above pair of notions of convexity will be discussed in section \ref{Vesentini}. Here
we shall clarify their connection with another standard one namely, the analogue 
of the above definition \ref{C-convex} obtained by restricting 
to real scalars instead of complex ones.
\begin{defn}\label{R-convex}
Let $D =\{z \in \mathbb{C}^N:h(z) < 1\}$ be a balanced domain in 
$\mathbb{C}^N$ where as before $h$ denotes the Minkowski functional of $D$. 
Let $a$ be any point of $D$, other than the 
origin such that $D$ is bounded in the direction of $a$
i.e., $h(a) \neq 0$. The domain $D$ is said to be \textit{convex at} the boundary point $a/h(a)$, if there 
exists a $\mathbb{R}$-linear functional $f : \mathbb{C}^N \to \mathbb{R}$ with $|f| \leq h$ and $|f(a)| = h(a)$. We note that this definition can be made for any real vector space, of-course $\mathbb{R}^N$ being the case of our interest.
\end{defn} 
\begin{prop}
Let $D$ be a balanced domain in $\mathbb{C}^N$. 
The above two definitions \ref{C-convex} and \ref{R-convex} are equivalent.
\end{prop}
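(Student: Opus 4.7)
The plan is a direct construction in both directions, essentially the classical passage between $\mathbb{R}$-linear and $\mathbb{C}$-linear functionals that underlies Bohnenblust--Sobczyk's complex form of the Hahn--Banach theorem. The only genuinely ``non-formal'' ingredient is the homogeneity $h(\lambda z)=|\lambda|h(z)$ of the Minkowski functional of a balanced domain, which is valid for all $\lambda \in \mathbb{C}$ (not merely $\lambda\in\mathbb{R}_{\geq 0}$); this is what allows the $\mathbb{R}$-linear data to be complexified while preserving the domination by $h$.

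\textbf{Direction \ref{C-convex} $\Rightarrow$ \ref{R-convex}.} Given a $\mathbb{C}$-linear $L$ with $|L|\leq h$ and $|L(a)|=h(a)$, write $L(a)=h(a)e^{i\theta}$ and set
\[
f(z) := \operatorname{Re}\!\bigl(e^{-i\theta}L(z)\bigr).
\]
Then $f$ is $\mathbb{R}$-linear, $|f(z)|\leq |e^{-i\theta}L(z)|=|L(z)|\leq h(z)$, and $f(a)=\operatorname{Re}(h(a))=h(a)$, so $|f(a)|=h(a)$. This direction requires nothing beyond boundedness of the real part by the modulus.

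\textbf{Direction \ref{R-convex} $\Rightarrow$ \ref{C-convex}.} Given an $\mathbb{R}$-linear $f:\mathbb{C}^N\to\mathbb{R}$ with $|f|\leq h$ and $|f(a)|=h(a)$, define the complexification
\[
L(z) := f(z) - i\,f(iz).
\]
A one-line check confirms $\mathbb{C}$-linearity: $L(iz)=f(iz)-if(-z)=f(iz)+if(z)=iL(z)$, which together with $\mathbb{R}$-linearity of $f$ gives $L(\lambda z)=\lambda L(z)$ for all $\lambda\in\mathbb{C}$. To obtain $|L|\leq h$, fix $z$ and write $L(z)=|L(z)|e^{i\phi}$; by $\mathbb{C}$-linearity,
\[
|L(z)| \;=\; e^{-i\phi}L(z) \;=\; L(e^{-i\phi}z),
\]
and since the left side is real this equals $f(e^{-i\phi}z)$. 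Now invoke the balancedness of $D$, which gives $h(e^{-i\phi}z)=|e^{-i\phi}|h(z)=h(z)$, so
\[
|L(z)| \;=\; f(e^{-i\phi}z) \;\leq\; |f(e^{-i\phi}z)| \;\leq\; h(e^{-i\phi}z) \;=\; h(z).
\]
Finally, $|L(a)|\geq |\operatorname{Re}L(a)|=|f(a)|=h(a)$, while the estimate just proved yields $|L(a)|\leq h(a)$; hence $|L(a)|=h(a)$.

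The main (and only) point at which the specific hypothesis on $D$ enters is the rotational invariance $h(e^{-i\phi}z)=h(z)$ coming from balancedness; without it, the standard Bohnenblust--Sobczyk complexification trick above would not preserve the domination $|L|\leq h$ on arbitrary points, and the equivalence could fail. Modulo that observation, each implication is a routine verification, so I expect no serious obstacle.
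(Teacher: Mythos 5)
Your proof is correct and follows essentially the same route as the paper's: the first direction takes the real part of a unimodular rotation of $L$ (your $e^{-i\theta}$ is the paper's $\alpha=|L(a)|/L(a)$), and the second uses the same complexification $L(z)=f(z)-if(iz)$ together with the rotation trick $|L(z)|=f(\lambda_z z)\leq h(\lambda_z z)=h(z)$ exploiting balancedness. No substantive differences.
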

\begin{proof}
Assume that $D$ is convex at $a/h(a)$ in the
sense of
definition \ref{C-convex}. Then there exists a 
 $\mathbb{C}$-linear functional $L: \mathbb{C}^N \to \mathbb{C}$ 
 such that $|L(z)| \leq h(z)$ for all $z \in \mathbb{C}^N$ and 
 $|L(a)| = h(a)$. Consider the real linear functional 
 $: \mathbb{C}^N \to \mathbb{R}$ defined by $T(z) = Re(\alpha L(z))$, 
 where $\alpha = |L(a)|/L(a)$. 
 Note that $|\alpha| = 1$ and we obtain that
 \[
|T(z)| = |Re(\alpha L(z))| \leq |\alpha||L(z)| = |L(z)| \leq h(z).
 \]
 Hence we obtain that $|T(z)| \leq h(z)$ for all $z \in \mathbb{C}^N$ and 
 $|T(a)| = Re(\alpha L(a)) = |L(a)| = h(a)$. Therefore $D$ is 
 convex at $a/h(a)$ according to definition \ref{R-convex}.\\
 
Conversely, assume that $D$ is convex at $a/h(a)$ as per
 definition \ref{R-convex}. Then there exists a $\mathbb{R}$-linear
 functional $f: \mathbb{C}^N \to \mathbb{R}$ with $|f(z)| \leq 
 h(z)$ and $|f(a)| = h(a)$. Define a $\mathbb{C}$-linear functional
  $L: \mathbb{C}^N \to \mathbb{C}$ by $L(z) = f(z)-if(iz)$. Observe
   that for each $z \in \mathbb{C}^N$, there exist $\lambda_z \in 
\mathbb{C}$ with $|\lambda_z| = 1$ such that $\lambda_zL(z) = 
|L(z)|$. Note that
\[
|L(z)| = |Re(\lambda_zL(z))| = |Re(L(\lambda_zz))| = 
|f(\lambda_zz)| \leq h(\lambda_zz) = |\lambda_z|h(z) = h(z)
\]
Hence, we get that $|L(z)| \leq h(z)$ for all $z \in \mathbb{C}^N$.
In-particular, $\vert L(a) \vert \leq h(a)$. But then 
the very definition of $L$ shows that $\vert L(a) \vert \geq h(a)$
and we thereby get that $\vert L(a) \vert = h(a)$ (and $ia$ lies in
the kernel of $f$ though this observation is not necessary for 
the proof per se). 
Hence, we have proved that $D$ is convex at $a/h(a)$ according to definition \ref{C-convex}.
\end{proof}

We also note that the notion of holomorphic extremality is equivalent to 
the notion of $\mathbb{C}$-extremality of the 
boundary in the 
presence of convexity, as clarified in the 
following proposition.
\begin{prop}\label{hol_C_extrm}
Let $D$ be a bounded balanced pseudoconvex domain in 
$\mathbb{C}^N$. Assume that $D$ is convex near $p \in 
\partial D$. Then $D$ is $\mathbb{C}$-extremal at $p$ if and only 
if $D$ is holomorphic extreme at $p$.
\end{prop}
\begin{proof}
    Since every holomorphic extreme boundary point of $D$ is $\mathbb{C}$-
extremal, it suffices to show that if 
$D$ is $\mathbb{C}$-extremal at $p$
then $D$ is holomorphic extreme at $p$. Assume that $D$ 
is $\mathbb{C}$-extremal at $p$.
Let $\text{co}(D)$ denote the convex hull of $D$. Firstly note that 
that $\text{co}(D)$ is a balanced convex domain. 
Next, we divide the proof into the following steps:\\

\textit{Step-1:} If $p \in \partial D$ and $D$ is convex at $p$ then 
$p \in \partial \text{co}(D)$.\\
By proposition 2.3.1 (b) in \cite{Jrncki_invrnt_dst}, we get that 
$\gamma_D(0;p) = h(p) = 1$. Applying proposition 2.3.1 (d), we obtain that 
$\hat{h}(p) = 1$, where $\hat{h}$ is the Minkowski function of $\text{co}(D)$. 
This shows that $p \in \partial \text{co}(D)$.\\

\textit{Step-2:} $p$ is a $\mathbb{C}$-extremal boundary 
point of $\text{co}(D)$.\\
To prove this by contradiction, assume that 
$p$ is not $\mathbb{C}$-extremal boundary point of $\text{co}(D)$. Then 
there exists non-zero $u \in \mathbb{C}^N$ such that $p+tu \in \text{co}(D)$ for all $t \in \Delta$. After applying proposition 2.3.1(a) in \cite{Jrncki_invrnt_dst} we get that $\gamma_D(0,
p+tu) = 1$. Since $D$ is convex near $p$, we may assume that $D$ is convex at 
$(p+tu)/h(p+tu)$ for all $t \in \Delta$; the equivalence 
of (iv) and (v) in proposition 2.3.1 in \cite{Jrncki_invrnt_dst} shows that $h(p+tu) = 1$ for all $t \in \Delta$. This implies that $p+tu \in \partial D$ for all $t \in \Delta$, which contradicts the fact that $p$ is 
$\mathbb{C}$-extremal point of $D$.  This finishes the proof of Step 2.\\ 

Thereafter, applying 
lemma 11.3.7 in \cite{Jrncki_invrnt_dst} to $\text{co}(D)$, we obtain 
that $p$ is a holomorphic extreme point of $\text{co}(D)$.
Using arguments as in the proof of Step 2, we obtain that $p$ is a holomorphic extreme point of $D$. This finishes the proof of the proposition.
\end{proof}

\noindent The next pair of results show that the existence (or non-existence)
of retracts of a {\it balanced} pseudoconvex domain through the 
origin in any given direction is in essence, 
completely determined by the convexity of the nearest boundary point in that 
direction, as clarified by the foregoing definitions.
\begin{cor}\label{No_1dim_Retrct}
Let $D$ be a balanced pseudoconvex domain in $\mathbb{C}^N$ and $a \in D$ 
such that $D$ is not convex at $a/h(a)$. Then there does not exist any
one-dimensional retract of $D$ passing through $0$ and $a$. 
\end{cor}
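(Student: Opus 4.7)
The plan is to prove the contrapositive: assuming a one-dimensional retract $Z$ of $D$ exists through $0$ and $a$, I will produce a $\mathbb{C}$-linear functional $L:\mathbb{C}^N\to\mathbb{C}$ with $|L|\le h$ and $|L(a)|=h(a)$, contradicting (by Definition \ref{C-convex}) the hypothesis that $D$ is not convex at $a/h(a)$. Fix any retraction $\rho:D\to Z$.

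First, I would identify $Z$ with the unit disc $\Delta$. By Theorem \ref{Firstthm}, $Z$ is a closed connected $1$-dimensional complex submanifold of $D$; being a retract of the contractible (balanced, hence star-shaped) domain $D$, it inherits simple connectedness. It is non-compact, since it contains the distinct points $0$ and $a$. By the uniformization theorem, $Z\simeq\Delta$ or $Z\simeq\mathbb{C}$. The latter is ruled out by subharmonicity: any $\varphi:\mathbb{C}\to Z\subset D$ with $\varphi(0)=0$ would make $h\circ\varphi$ a subharmonic function on $\mathbb{C}$ bounded above by $1$, hence constant, forcing $h(a)=0$ against the standing assumption $h(a)\ne 0$. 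Fix then a biholomorphism $\varphi:\Delta\to Z$ with $\varphi(0)=0$, choose $\alpha\in\Delta\setminus\{0\}$ with $\varphi(\alpha)=a$, and set $\psi:=\varphi^{-1}\circ\rho:D\to\Delta$, a holomorphic left inverse of $\varphi$ with $\psi(0)=0$ and $\psi(a)=\alpha$.

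The core is a two-sided Schwarz-lemma analysis yielding $|\alpha|=h(a)$. On one side, $G(\zeta):=\psi(\zeta a/h(a))$ maps $\Delta$ into $\Delta$ with $G(0)=0$, so Schwarz gives $|\alpha|=|\psi(a)|=|G(h(a))|\le h(a)$. On the other side, write $\varphi(\zeta)=\zeta\,\varphi_1(\zeta)$ with $\varphi_1$ holomorphic on $\Delta$; pseudoconvexity of $D$ makes $\log h$ plurisubharmonic on $\mathbb{C}^N$, whence $\log(h\circ\varphi_1)$ is subharmonic on $\Delta$. Since $\varphi(\zeta)\in D$ gives $h(\varphi_1(\zeta))<1/|\zeta|$ on each circle $|\zeta|=r<1$, the maximum principle extends this bound to $\{|\zeta|\le r\}$, and letting $r\to 1$ produces $h(\varphi(\zeta))\le|\zeta|$ throughout $\Delta$. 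Evaluating at $\alpha$ gives the reverse inequality $h(a)\le|\alpha|$, establishing $|\alpha|=h(a)$.

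With equality in Schwarz for $G$, the uniqueness clause forces $G(\zeta)=e^{i\theta}\zeta$ for some $\theta\in\mathbb{R}$, i.e., $\psi(ta)=e^{i\theta}h(a)\,t$ for all $|t|<1/h(a)$; differentiating at $t=0$ gives $|d\psi_0(a)|=h(a)$. For any $v\in\mathbb{C}^N$ with $h(v)>0$, the same Schwarz trick on $\zeta\mapsto\psi(\zeta v/h(v))$ gives $|d\psi_0(v)|\le h(v)$, while for $v$ with $h(v)=0$ the whole line $\mathbb{C}v$ lies in $D$, so $t\mapsto\psi(tv)$ is a bounded entire function, hence constant (Liouville), forcing $d\psi_0(v)=0=h(v)$. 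Therefore $L:=d\psi_0$ is the desired supporting linear functional at $a/h(a)$. The principal obstacle I anticipate is securing the sharp equality $|\alpha|=h(a)$ without any global boundedness or hyperbolicity assumption on $D$; this is exactly where the pseudoconvexity of $D$ (equivalent to plurisubharmonicity of $\log h$) enters decisively, via the subharmonic-maximum-principle argument on $\varphi_1$.
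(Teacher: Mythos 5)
Your proof is correct, and it reaches the conclusion by a more self-contained route than the paper does. The paper disposes of this corollary in two lines: it invokes theorem \ref{Blncd_Rtrct_Cmplx_Geodsc} to identify a one-dimensional retract with the image of a complex $C$-geodesic, and then cites lemma 11.1.2 of \cite{Jrncki_invrnt_dst} for the non-existence of a complex geodesic through $0$ and $a$ when $D$ is not convex at $a/h(a)$. You instead prove the contrapositive from scratch: uniformize $Z$ as $\Delta$, form the left inverse $\psi=\varphi^{-1}\circ\rho$, pin down $|\psi(a)|=h(a)$ by the two-sided Schwarz/plurisubharmonicity argument, and read off the supporting functional as $L=d\psi_0$ --- which is essentially the content of the cited Jarnicki--Pflug lemma, reconstructed. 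Your version buys two things: it bypasses the Caratheodory-geodesic machinery, so the corollary need not inherit the hypotheses of theorem \ref{Blncd_Rtrct_Cmplx_Geodsc} (stated for simply connected \emph{Kobayashi hyperbolic} domains, whereas the corollary assumes only that $D$ is balanced and pseudoconvex) --- your elimination of the case $Z\simeq\mathbb{C}$ via subharmonicity of $h\circ\varphi$ and $h(a)\neq 0$ covers precisely the point where hyperbolicity would otherwise be invoked; and it makes explicit where pseudoconvexity enters, namely the plurisubharmonicity of $\log h$ behind the bound $h(\varphi(\zeta))\le|\zeta|$. One small slip of phrasing: the non-compactness of $Z$ does not follow from its containing the two points $0$ and $a$ (that only rules out a zero-dimensional retract); it follows because a positive-dimensional compact complex submanifold of $\mathbb{C}^N$ cannot exist, or because $Z$ is closed and non-trivial in the non-compact $D$. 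This does not affect the argument.
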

\noindent A proof of this will be provided later in the course of other things in
the next section. We next get to a positive 
result.
\begin{prop}
Let $D$ be a balanced pseudoconvex domain in $\mathbb{C}^N$ and $a \in D$ 
such that $D$ is convex at $a/h(a)$. Then  there always exists a
one dimensional linear retract of $D$ passing through $0$ and $a$. 
\end{prop}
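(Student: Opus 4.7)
The plan is to exhibit $\varphi$ as a complex $C$-geodesic by producing a ``one-sided inverse'' via a supporting linear functional, and then to invoke Proposition~\ref{C-geodesc_Retrct} to conclude that its image is a retract.

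Set $p=a/h(a)\in\partial D$ and note first that $\varphi(\lambda)=\lambda p$ actually maps $\Delta$ into $D$: indeed $h(\varphi(\lambda))=|\lambda|\,h(p)=|\lambda|<1$, where the homogeneity of $h$ and the identity $h(p)=1$ are used. By hypothesis $D$ is convex at $p$, so by Definition~\ref{C-convex} there exists a $\mathbb{C}$-linear functional $L\colon \mathbb{C}^N\to\mathbb{C}$ with $|L(z)|\le h(z)$ for every $z\in\mathbb{C}^N$ and $|L(p)|=1$. In particular, $|L(z)|<1$ on $D$, so $L$ restricts to a holomorphic map $D\to \Delta$, a legitimate competitor in the supremum defining $c_D$.

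Next I would compute $L\circ\varphi$. By $\mathbb{C}$-linearity, $(L\circ\varphi)(\lambda)=\lambda L(p)$, and since $|L(p)|=1$, this is a rotation of $\Delta$, hence an element of $\mathrm{Aut}(\Delta)$. Therefore for any $\zeta_1,\zeta_2\in\Delta$,
\[
c_D(\varphi(\zeta_1),\varphi(\zeta_2))\ \ge\ p\bigl(L(\varphi(\zeta_1)),L(\varphi(\zeta_2))\bigr)\ =\ p(L(p)\zeta_1,L(p)\zeta_2)\ =\ p(\zeta_1,\zeta_2),
\]
where the first inequality is by definition of $c_D$ and the last equality is the invariance of the Poincar\'e distance under rotation. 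On the other hand, since $\varphi$ is a holomorphic map $\Delta\to D$, the contractive property of the Carath\'eodory pseudodistance yields the reverse inequality
\[
c_D(\varphi(\zeta_1),\varphi(\zeta_2))\ \le\ c_\Delta(\zeta_1,\zeta_2)\ =\ p(\zeta_1,\zeta_2).
\]
Combining the two displays gives equality for all $\zeta_1,\zeta_2\in\Delta$, so $\varphi$ satisfies Definition~\ref{C-geod} and is a complex $C$-geodesic.

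For the ``in particular'' clause, apply Proposition~\ref{C-geodesc_Retrct} to $\varphi$: its image $\varphi(\Delta)=\{\lambda p:\lambda\in\Delta\}$ is a one-dimensional retract of $D$, and it is plainly linear and contains both $0$ (at $\lambda=0$) and $a$ (at $\lambda=h(a)$). There is no serious obstacle here; the only point requiring the convexity hypothesis at $p$ is the existence of the supporting functional $L$, and once it is in hand the ``sandwich'' between the definition of $c_D$ and its contractive property closes the argument at once.
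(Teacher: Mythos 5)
Your proof is correct and rests on the same idea as the paper's: the supporting functional $L$ guaranteed by convexity at $a/h(a)$ forces the Carath\'eodory distance along $\varphi$ to equal the Poincar\'e distance. The only difference is presentational — the paper computes $c_D^*(0,\lambda p)=h(\lambda p)$ by citing Proposition 2.3.1(b) of Jarnicki--Pflug and then extends from the single datum $(0,\lambda)$ via their Proposition 11.1.4, whereas you unpack both steps and establish the equality directly for all pairs $\zeta_1,\zeta_2$, which makes your version more self-contained.
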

\begin{proof}
Since $D$ is convex at $p$, there exists a $\mathbb{C}$-linear functional $L : \mathbb{C}^N \to \mathbb{C}$ such that $|L(z)| \leq h(z)$ for all $z \in \mathbb{C}^N$ and $|L(p)| = 1$. 
By multiplying $L$ by a suitable unimodular constant, we may assume that $L(p) = 1$.  Consider the linear transformation $T : \mathbb{C}^N \to \mathbb{C}^N$ defined by $T(z) = L(z)p$. 
Note that $T$ maps $D$ into $D$ because 
\[
h(T(z)) = h(L(z)p) = |L(z)| h(p) \leq h(z) < 1.
\]
Moreover, $T$ is a linear projection map:
\[
T(T(z)) = T(L(z)p) = L(L(z)p)p = L(z)L(p)p = L(z) p = T(z).
\]
This shows that $\text{span}(p) \cap D$ is a linear retract of $D$.
Since $D$ is convex at $p$, there exists a $\mathbb{C}$-linear functional $L : \mathbb{C}^N \to \mathbb{C}$ such that $|L(z)| \leq h(z)$ for all $z \in \mathbb{C}^N$ and $|L(p)| = 1$. 
By multiplying $L$ by a suitable unimodular constant, we may assume that $L(p) = 1$.  Consider the linear transformation $T : \mathbb{C}^N \to \mathbb{C}^N$ defined by $T(z) = L(z)p$. 
Note that $T$ maps $D$ into $D$ because 
\[
h(T(z)) = h(L(z)p) = |L(z)| h(p) \leq h(z) < 1.
\]
Moreover, $T$ is a linear projection map:
\[
T(T(z)) = T(L(z)p) = L(L(z)p)p = L(z)L(p)p = L(z) p = T(z).
\]
This shows that $\text{span}(p) \cap D$ is a linear retract of $D$.
\end{proof}
\begin{rem}
\begin{enumerate}
\item In the case of bounded balanced convex domains, the geometric Hahn-Banach theorem ensures that every one-dimensional subspace of $D$ is a retract of $D$.
\item Every one dimensional retract $Z$ of $D$ passing through the origin and $a$ can be parametrized by $\lambda a/h(a)$ for $\lambda \in \Delta$.
\end{enumerate}
\end{rem}
\noindent We spell out the following immediate consequence as it settles a first 
basic question about the very existence of any non-trivial retract in bounded balanced domains
of holomorphy of central interest of this article.

\begin{prop} \label{Exist-result}
If $D$ is an arbitrary bounded balanced pseudoconvex domain in $\mathbb{C}^N$, then
there exists atleast one (non-trivial) retract passing through origin.
\end{prop}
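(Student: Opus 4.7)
The plan is to reduce this immediately to the proposition that precedes it, which says: if $D$ is convex at the boundary point $a/h(a)$ (in the sense of Definition \ref{C-convex}), then $\lambda\mapsto \lambda a/h(a)$ is a complex $C$-geodesic, and consequently its image is a one-dimensional linear retract of $D$ through the origin. So the task reduces to exhibiting at least one boundary point $p\in\partial D$ at which $D$ is convex, i.e., at which there exists a $\mathbb{C}$-linear functional $L$ with $|L|\le h$ on $\mathbb{C}^{N}$ and $|L(p)|=h(p)=1$, where $h$ denotes the Minkowski functional of $D$.

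To produce such a $p$, the idea is to pass to the convex hull and use Krein--Milman/Milman. Let $\widehat{D}=\mathrm{conv}(D)$ be the (open, bounded, balanced) convex hull of $D$; its Minkowski functional $\widehat{h}$ is a norm on $\mathbb{C}^{N}$ satisfying $\widehat{h}\le h$. The closure $\overline{\widehat{D}}$ is compact, convex, and nonempty, hence by Krein--Milman has extreme points. By Milman's theorem, extreme points of $\overline{\widehat{D}}=\overline{\mathrm{conv}(\overline{D})}$ lie in $\overline{D}$. On the other hand, any extreme point lies on $\partial\widehat{D}$, and no point of $D$ can lie on $\partial\widehat{D}$ (since $D$ is an open subset of the open set $\widehat{D}$). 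Consequently, every extreme point $p$ of $\overline{\widehat{D}}$ belongs to $\partial D\cap\partial\widehat{D}$, i.e., satisfies $h(p)=\widehat{h}(p)=1$.

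At such a point $p$, the Hahn--Banach theorem applied in the Banach space $(\mathbb{C}^{N},\widehat{h})$ produces a $\mathbb{C}$-linear functional $L:\mathbb{C}^{N}\to\mathbb{C}$ with $|L(z)|\le \widehat{h}(z)$ for every $z$ and $|L(p)|=\widehat{h}(p)=1$. Combining with $\widehat{h}\le h$ gives $|L|\le h$ on $\mathbb{C}^{N}$ together with $|L(p)|=h(p)=1$, which is precisely the condition that $D$ be convex at $p$ in the sense of Definition \ref{C-convex}. Invoking the preceding proposition with $a=p$ yields a (linear) one-dimensional retract through the origin, completing the proof.

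The only step that requires any care is verifying that extreme points of $\overline{\widehat{D}}$ actually lie on $\partial D$ and not merely in $\overline{D}$; this is where Milman's converse to Krein--Milman together with the observation $D\subset\widehat{D}$ is essential. Everything else is either a direct application of Hahn--Banach or a direct quotation of the proposition just above. Note that pseudoconvexity of $D$ is used implicitly: it is what allows the preceding proposition (via Lemma 11.1.2 of \cite{Jrncki_invrnt_dst} and Proposition 2.3.1(b) there) to upgrade the condition of convexity at $p$ into the existence of a complex $C$-geodesic, and hence of a retract.
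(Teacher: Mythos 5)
Your argument is correct and reaches the same pivotal intermediate fact as the paper -- namely, the existence of at least one boundary point at which $D$ is convex in the sense of definition \ref{C-convex} -- but it gets there by a noticeably heavier route. The paper simply takes $a\in\partial D$ to be a point farthest from the origin: the supporting Euclidean sphere at $a$ immediately furnishes the required linear functional (equivalently, the orthogonal projection onto ${\rm span}(a)$ is itself already the desired retraction map), after which the preceding proposition is invoked exactly as you do. You instead pass to the convex hull $\widehat{D}$, apply Krein--Milman together with Milman's converse to locate an extreme point of $\overline{\widehat{D}}$ sitting in $\overline{D}$, and then apply Hahn--Banach in $(\mathbb{C}^N,\widehat{h})$. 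Both routes are valid; yours has the mild virtue of not privileging the Euclidean structure and of making explicit where the supporting functional comes from, while the paper's is a two-line argument that in addition exhibits the retraction map concretely. One point worth flagging, though it afflicts the paper's proof equally and is not a defect relative to its standard of rigor: the step $h(p)=\widehat{h}(p)=1$ is not automatic. From $p\in\partial\widehat{D}\cap\overline{D}$ one only gets $h(p)\ge\widehat{h}(p)=1$ directly, since for a merely pseudoconvex balanced domain the Minkowski functional $h$ is upper semicontinuous but possibly discontinuous, so $\overline{D}$ need not be contained in $\{h\le 1\}$; and the definition of convexity at a boundary point genuinely requires $|L(p)|=h(p)$, not merely $|L(p)|=1\le h(p)$. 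If you want to be scrupulous, you should either add a line justifying radial accessibility of the chosen $p$ (for instance assuming continuity of $h$, or arguing via the circled set $\{h=1\}$), or note that the same silent identification is made in the farthest-point argument.
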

\begin{proof}
Let $a \in \partial D$ be the farthest point from the origin. Note that $D$ is convex at $a$.
By the above proposition, there always exists a one-dimensional linear retract of $D$ passing through $a$.
Alternatively, we may observe that the orthogonal projection onto 
${\rm span}(a)$ maps $D$ onto it and thereby a retraction map on $D$. The aforementioned retract is 
the image of this retraction map.
\end{proof}
\noindent Combining theorem \ref{Blncd_Rtrct_Cmplx_Geodsc} and theorem 3.2 in \cite{Lemp_geodesc}, 
the following result is obtained. 
\begin{thm} Let $D$ be a bounded convex domain of $\mathbb{C}^N$,
\begin{enumerate}
\item[(1)] Given two points $x$ and $y$ of $D$, there exists one dimensional 
retract $R$ of $D$ passing through $x$ and $y$.    
\item [(2)] Given $x \in D$ and $v \in T_x(D)$, there exists one dimensional 
retract $R$ of $D$ passing through $x$ with tangent vector $v$.  
\end{enumerate}
\end{thm}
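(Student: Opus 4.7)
The plan is to combine the existence of complex $C$-geodesics in bounded convex domains (a classical result of Lempert, as formulated by Vigu\'e) with the general connection between complex $C$-geodesics and one-dimensional retracts that has been established earlier in the excerpt, specifically in Proposition \ref{C-geodesc_Retrct}.

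First, I observe that the hypotheses of $D$ being bounded and convex place us in the setting where strong complex-analytic tools are available: $D$ is Kobayashi hyperbolic (since bounded), simply connected (indeed, contractible, being convex), and taut. These are exactly the features needed to invoke the existence portion of Vigu\'e's theorem. By Theorem 3.2 in \cite{Vigue_Cnvx}, for any datum $(x,y)$ with $x,y \in D$ and $x \neq y$, there exists a complex $C$-geodesic $\varphi: \Delta \to D$ whose image contains both $x$ and $y$; analogously, for any $x \in D$ and nonzero $v \in T_x(D) \simeq \mathbb{C}^N$, there exists a complex $C$-geodesic $\varphi: \Delta \to D$ with $\varphi(\zeta_0) = x$ and $\varphi'(\zeta_0)$ a positive scalar multiple of $v$ for some $\zeta_0 \in \Delta$. (The case $x = y$ or $v = 0$ is trivial.)

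Having produced the $C$-geodesic $\varphi$, I would appeal directly to Proposition \ref{C-geodesc_Retrct}, which asserts that the image of any complex $C$-geodesic in a domain of $\mathbb{C}^N$ is a one-dimensional holomorphic retract. This yields the retract $R := \varphi(\Delta)$, which by construction contains $x$ and $y$ in case (1), and contains $x$ with $v \in T_x R$ in case (2). The proof of Proposition \ref{C-geodesc_Retrct} already exhibits the retraction explicitly via $\rho = \varphi \circ \psi$ where $\psi$ is the holomorphic left inverse of $\varphi$ extracted using the Montel-plus-Schwarz--Pick argument; no new ingredients beyond what has been recorded are needed.

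There is no serious obstacle to carry out, since everything is reduced to citing Vigu\'e's theorem and invoking Proposition \ref{C-geodesc_Retrct}; the only minor point of care is verifying that the $C$-geodesic produced in case (2) realises the prescribed tangent direction exactly and not merely up to a scalar — but this is immediate since reparametrising $\varphi$ by an automorphism of $\Delta$ moving $\zeta_0$ to $0$ and rescaling by a suitable complex number adjusts the derivative at the base point without altering the image, and therefore without affecting the retract $R$.
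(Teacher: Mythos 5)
Your proposal is correct and is essentially identical to the paper's own proof, which likewise obtains the result by combining theorem 3.2 of \cite{Vigue_Cnvx} (existence of complex $C$-geodesics through a given pair of points, or through a given point in a given direction, in a bounded convex domain) with the fact that images of complex $C$-geodesics are one-dimensional retracts, as recorded in proposition \ref{C-geodesc_Retrct} and theorem \ref{Blncd_Rtrct_Cmplx_Geodsc}. Your remark about adjusting the parametrization by an automorphism of $\Delta$ to realise the prescribed tangent vector exactly, without changing the image and hence the retract, is a correct and harmless refinement.
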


\begin{rem}
Assume for this remark that $D$ is convex. The existence in 
abundance of one dimensional retracts of strictly convex domains (in 
$\mathbb{C}^N$) is well-known due to the work of Lempert. Another well-known work which 
obtains retracts of higher dimensions under certain conditions is \cite{Ian_Grhm} of Graham; as 
noted therein, the condition in the above theorem that $D_L= D \cap L$ be a linear retract 
of $D$ which is equivalent to the existence of a norm one projection 
onto $L$ (denoted by $\pi_L$ in the proof), is always met if $L$ is one 
dimensional -- this follows by the Hahn--Banach theorem which may be used to first get a 
linear functional $f$ on $(\mathbb{C}^N, \mu)$ of norm one which takes the value $1$ 
at a boundary point of $D_L$ (one may view $f$ as mapping $D$ 
onto $D_L \subset L \simeq \mathbb{C}$); the desired projection is then obtained by taking 
the linear projection of $\mathbb{C}^N$ onto $L$ along
$\ker(f)$. On the other hand (also noted therein), if there exists a norm one projection 
of the Banach space $(\mathbb{C}^N, \mu)$ onto a every linear subspace, then $(\mathbb{C}^N, \mu)$ 
is actually a Hilbert space i.e., $\mu$ is actually induced by an inner product owing to classical results
mentioned in the introduction namely that of Kakutani in \cite{Kakutni} for the real case and Bohnenblust in \cite{Bhnblst} 
for the complex case. Thus, not every linear subspace intersected 
with $D$ need be a retract of $D$. 
\end{rem}

\subsection{Fixed point sets and Retracts}
As we already saw in the introduction, fixed point sets need not be realizable as retracts
in bounded non-convex domains. However, there is still a connection between fixed point sets and retracts under 
mild hypotheses, which we formulate in the following proposition:   
\begin{prop}
Let $D$ be a bounded taut domain in $\mathbb{C}^N$. Assume that 
$f: D \to D$ be a holomorphic map which is not an automorphism 
and $ \text{Fix}(f)$ is a non-empty subset of $D$ which consists 
of more than one point. Then there exists a non-trivial 
retract $Z$ of $D$ 
such that ${\rm Fix}(f) \subset Z$.
\end{prop}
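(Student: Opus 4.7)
The plan is to invoke Theorem \ref{Iterat-Dynamics} (Bedford--Abate) applied to $f$, and extract the desired retract from the associated limit manifold. First, I would verify the hypothesis of that theorem, namely that the sequence $\{f^{\circ k}\}$ of iterates is not compactly divergent. This is immediate from the existence of a fixed point: if $p \in \mathrm{Fix}(f)$, then $f^{\circ k}(p) = p$ for every $k$, so the images of the single compact set $\{p\}$ never escape every compact subset of $D$.

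With tautness of $D$ and non-compact-divergence in hand, Theorem \ref{Iterat-Dynamics} supplies a submanifold $M \subset D$ and a holomorphic retraction $\rho : D \to M$ which is itself a limit point of $\{f^{\circ k}\}$, with $f$ acting as an automorphism of $M$ and every limit of $\{f^{\circ k}\}$ of the form $\gamma \circ \rho$ with $\gamma \in \mathrm{Aut}(M)$. I would then take $Z := M$ and check the two required properties. For the inclusion $\mathrm{Fix}(f) \subset Z$, I would pick a subsequence $f^{\circ k_j}$ converging (uniformly on compacta) to $\rho$; for any $p \in \mathrm{Fix}(f)$, the identity $f^{\circ k_j}(p) = p$ passes to the limit to give $\rho(p) = p$, so $p \in \rho(D) = Z$.

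It remains to argue that $Z$ is non-trivial, i.e.\ neither a single point nor all of $D$. The first case is ruled out by the hypothesis that $\mathrm{Fix}(f)$ contains more than one point, combined with the inclusion just established. The second is ruled out by the conclusion of Theorem \ref{Iterat-Dynamics} that $f$ acts as an automorphism of $M$: if $M = D$ then $f \in \mathrm{Aut}(D)$, contradicting the assumption that $f$ is not an automorphism of $D$.

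I do not expect any serious obstacle here, since once tautness and the non-compact-divergence are available the Bedford--Abate theorem does all the heavy lifting; the only subtle point is the dichotomy ruling out $M = D$, which requires precisely the non-automorphism hypothesis on $f$ (and illustrates why that hypothesis is included in the statement, even though $\mathrm{Fix}(f) \subset D$ alone would not force $Z$ to be proper).
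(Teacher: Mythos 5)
Your proof is correct and follows essentially the same route as the paper: deduce non-compact divergence from the existence of a fixed point, apply the Bedford--Abate limit-manifold theorem, take $Z$ to be the limit manifold, and pass to the limit along a subsequence converging to $\rho$ to get $\mathrm{Fix}(f)\subset Z$. The only divergence is in ruling out $Z=D$: you invoke the clause of Theorem \ref{Iterat-Dynamics} that $f$ acts as an automorphism of $M$ (so $M=D$ would make $f$ an automorphism of $D$), whereas the paper argues via the dimension formula $\dim Z = \#\{\text{eigenvalues of } df_{z_0} \text{ on } \partial\Delta\}$ together with the fact that having all eigenvalues on the unit circle forces $f\in\mathrm{Aut}(D)$. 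Both arguments are valid; yours is the more direct of the two, as it uses only what is already stated in Theorem \ref{Iterat-Dynamics} rather than reaching for further results from Abate's iteration theory.
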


\begin{proof}
Consider the sequence of holomorphic maps on $D$ given by $f_n := f^{\circ n}$ which 
denotes as usual the $n$-fold composition of $f$ with itself. Since 
$ \text{Fix}(f) \neq \phi$, $(f_n)$ is not compactly divergent. By theorem 
2.1.29 in \cite{Abate_Itrtn}, there exists a retract $Z$ of $D$ and a 
holomorphic retraction $\rho : D \to Z$ such that $\rho$ is a limit point 
of the sequence $\{f_n\}$.  Note that $\rm Fix(f) \subset \rm Z$. Note also that $\rm dim(Z) \neq 0$, 
because $Z$ is a connected complex manifold and $\#(Fix(f)) > 1$. By Corollary 2.1.30 in \cite{Abate_Itrtn}, 
$\rm dim(Z)$ is equal to the number of eigenvalues of $df_{z_0}$ contained in 
$\partial \Delta$. If all eigenvalues of $df_{z_0}$ are in $\partial \Delta$,
 then by theorem 2.1.21 (v) in \cite{Abate_Itrtn}, we get $f$ is an 
 automorphism. This contradicts the hypothesis. Hence, we can conclude 
 that $\rm dim(Z) < N$. This proves that $Z$ is a non-trivial retract of 
 $D$ and $\rm Fix(f) \subset Z$.
\end{proof}
 
\subsection{Hyperconvex Domains}\label{Hypercnvx}
In this subsection, we briefly recall a standard class of pseudoconvex domains containing convex domains, namely hyperconvex domains. Hyperconvexity is an intermediate notion between convexity and pseudoconvexity that is a generalization of a key 
concept (going by the name of barriers) characterizing 
solvability of the Dirichlet problem in dimension one. Owing to this, 
we recall the connection between hyperconvexity and psh (plurisubharmonic) barrier boundary points. To begin with, let us recall the definition of a psh barrier point from \cite{barrier}.
\begin{defn}
    A boundary point $a$ of an domain $D \subset \mathbb{C}^N$ 
    is said to be a \textit{(global) psh barrier point} for $D$ if 
there is a negative 
plurisubharmonic function $\varphi$ on $D$ with 
$\displaystyle\lim_{D \ni z \to a}\varphi(z) = 0$.
We call $a$ a \textit{local psh barrier point} for $D$ if there is a (connected) open neighborhood $U$ of $a$ such that $a$ is a psh barrier point for $D \cap U$.
 We shall say that $D$ has 
psh barrier boundary (resp. local psh barrier boundary) if all of its boundary points are psh (resp. local psh) barrier points.
\end{defn}
\begin{rem}
Examples of domains whose boundary satisfying the local psh barrier condition, as in the definition, abound. All bounded strictly pseudoconvex domains, more generally, all bounded (weakly) pseudoconvex domains of finite type, serve as examples. More interestingly, non-examples are obtained by looking at domains whose boundaries contain one or more piece of analytic varieties. For instance, while all analytic polyhedra $D$ satisfy this condition; as soon as we remove any non-trivial analytic set $A$, $D\setminus A$ no longer satisfies this condition at any of the (boundary) points $a \in A$.
\end{rem}
Let us recall the definition of a hyperconvex manifold from \cite{Hyp-cnvx}.
\begin{defn}
A complex manifold $M$ is said to be \textit{hyperconvex} if there exists a bounded strictly plurisubharmonic (psh) exhaustion function $\varphi$ on $M$.
\end{defn}
\begin{rem}
\begin{itemize}
\item [(i)]  If $F: M_1 \to M_2$ is a proper holomorphic map between complex manifolds $M_1$ and $M_2$ with $M_2$ is hyperconvex, then any bounded strictly plurisubharmonic exhaustion function $\varphi$ on $M_2$ induces a bounded strictly plurisubharmonic exhaustion function on $M_1$ via the pullback $\psi := \varphi \circ F$. Hence, $M_1$ is hyperconvex. In particular, this shows that hyperconvexity is a biholomorphic invariant.
\item [(ii)] Every hyperconvex domain in $\mathbb{C}^N$ is pseudoconvex (see theorem 5.2.1 and proposition 5.2.2 in \cite{Kobyshi}).
\item [(iii)] In the case of bounded balanced pseudoconvex domains, hyperconvexity is equivalent to the continuity of their Minkowski functionals (see remark 3.2.3 a), c) in \cite{Jrncki_invrnt_dst}).
\end{itemize}
\end{rem}
\begin{prop}\label{retrct_hypcnvx}
Every retract of a hyperconvex complex manifold is hyperconvex.
\end{prop}
\begin{proof}
Let $Z$ be a retract of a hyperconvex complex manifold $X$ and let $\varphi : X \to [-\infty,c)$ be a bounded plurisubharmonic exhaustion on $X$. Note that $\varphi_{|_Z}$ serves as a bounded plurisubharmonic exhaustion function on $Z$. Hence, $Z$ is a hyperconvex complex manifold.
\end{proof}
The following theorem (see theorem 1.17.5 in \cite{Jrnck_Frst_stps}) describes the relationship between hyperconvexity and the existence of psh barriers.
\begin{thm}\label{hypcnvx_local}
Let $D \subset \mathbb{C}^N$ be a domain. Then $D$ is hyperconvex if and only if for any boundary point $a \in \partial D$ (including $a=\infty$ when $D$ is unbounded), $a$ is a psh barrier point for $D$.
\end{thm}
\begin{rem}\label{hyp_cnvx_rem}
If $D$ as in the above theorem is bounded, then by theorem 12.4.7 in \cite{Jrncki_invrnt_dst}, $D$ is hyperconvex iff every boundary point of $D$ admits a local psh barrier boundary. 
\end{rem}

\subsection{Growth Vectors and Tangent Cone} In this subsection, we recall the definitions and related results concerning growth vectors and tangent cones from \cite{Whitney}. 
\begin{defn}
A set $C \subset \mathbb{C}^m$ is an \textit{open complex cone} if $z \in C$ implies $\lambda z \in C$ for all $\lambda \in \mathbb{C}$.
\end{defn}
We may construct open complex cones by taking an arbitrary set $S \subset \mathbb{C}^N \setminus \{0\}$ and considering $C = C(S) = \{\lambda z | z \in S,~ \lambda \in \Delta\}$. We then refer to $C$ as the cone generated by $S$. The associated \textit{punctured (complex) cone} is given by $C^* = \{\lambda z | z \in S,~ \lambda \in \Delta^*\}$. Note that if $S$ is convex, then $C(S)$ is a convex cone. 
\begin{defn} Suppose $f$ is holomorphic near $a$, and ord ${ }_a f \geqq v$. Then we may write
\begin{equation}\label{homo_growth}
f(x)=  \sum_{|\lambda|=v} A_\lambda\left(x_1-a_1\right)^{\lambda_1} \cdots\left(x_n-a_n\right)^{\lambda_n}  +\sum_{|\lambda|=v+1} F_\lambda(x)\left(x_1-a_1\right)^{\lambda_1} \cdots\left(x_n-a_n\right)^{\lambda_n}
\end{equation}
where the $F_\lambda$ is holomorphic near $a$. The first sum on the right of (\ref{homo_growth})  is the \textit{initial polynomial} of $f$ at $a$; call it $f_a^{[*]}(x-a)$.
\end{defn}
As a consequence of (\ref{homo_growth}),
 consider any vector $v \in \mathbb{C}^n$, use $x=a+z v, z \in \mathbb{C}$ for sufficiently small. Then we have the expansion
\[
f(a+z v)=z^v\left[f_a^{[*]}(v)+z R(a, z, v)\right] .
\]
where $R$ is holomorphic for small $z$.

\begin{defn}\label{growth} Let $f$ be holomorphic and not identically zero near $a$, with $f(a)=0$. A non-zero vector $v$ is a \textit{growth vector} for $f$ at $a$ if for the function $g(z) :=f(a+z v)$ defined on  sufficiently small disk $\Delta_r \subset \mathbb{C}$, we have $\operatorname{ord}_0 g=\operatorname{ord}_a f$.
\end{defn}
\begin{prop}\label{non-growth} With $f$ as in the above definition \ref{growth}, let $V_f$ be the set of non-growth vectors for $f$ at $a$. Then $V_f$ is 
the set of solutions of $f_a^{[*]}(v)=0$; hence it is an algebraic cone, and is
closed and nowhere dense in $\mathbb{C}^N$.
\end{prop}
\begin{rem}
Note that $dim(V_f) = 2N-2$, because by above proposition, $V_f$ is the zero set of homogeneous polynomial $f_a^{[*]}$. 
\end{rem}
\begin{defn}
Let $V$ be an analytic variety in $\mathbb{C}^N$. The tangent cone $C(V, p)$ is the set of all vectors $v$ with the property that for each $\varepsilon>0$ there is a point $q \in V$ and there is a complex number $\alpha \in \mathbb{C}$ such that
\[
|q-p|<\varepsilon, \quad|\alpha(q-p)-v|<\varepsilon
\]
Equivalently, $v \in C(V,p)$ if and only if there exist sequences $(q_i)$ and $(\alpha_i)$ such that
\[
q_i \in V, \quad q_i \rightarrow p, \quad \alpha_i\left(q_i-p\right) \rightarrow v
\]
\end{defn}
Note that the tangent cone is a complex cone. Thus the direction of (non-zero) $v$ is the limit of directions of secants from $p$ to points of $V$ converging to $p$.\\
The following theorem establishes the relationship between non-growth vectors of $f$ at $p$ and the tangent cone to the zero set of $f$ at $p$. 
\begin{thm}\label{tangent_cone} Let $f$ be holomorphic and not identically zero near $p$ with $Z$ as its zero set and $\operatorname{ord}_p f=m$. For a non-zero vector $v \in \mathbb{C}^N$, define the function
\[
g_v(t)=f(p+t v),~ t \in \Delta_r,
\]
where $\Delta_r$ is a disc of sufficiently small radius $r>0$. Then, $\operatorname{ord}_0 g_v>m$ if and only if
$v \in C(Z, p)$, the tangent cone to $Z$ at $p$.
\end{thm}

\subsection{Analytic and Semi-analytic sets}\label{Semi-anal}

In this subsection, we shall recall the definitions and results regarding \textit{algebraic} and \textit{analytic} sets (varieties) over one of the fields $\mathbb{R}$ or $\mathbb{C}$), from \cite{Singularity} and \cite{Strat}. Throughout this subsection, the term \textit{analytic} refers to real analytic when $K=\mathbb{R}$, and complex analytic when $K=\mathbb{C}$. Let $M$ denote a connected $n$-dimensional (real or complex) analytic manifold and let $\mathcal{O}(M)$ denote the ring of analytic functions from $M$ to $K$.

\begin{defn}\label{Def1} Let $A$ be an analytic subset of $M$. A point $p \in A$ is called \textit{smooth}, of dimension $d$, if and only if there exists an open neighbourhood $W$ of $p$ in $M$ such that $W \cap A$ is an analytic submanifold of $W$ of dimension $d$ (over the field $K$ ). Thus, $p \in A$ is smooth of dimension $d$ if and only if there exists an open neighborhood $W$ of $p$ in $M$ and $f_{d+1}, \ldots f_n \in \mathcal{O}(W)$ such that $W \cap A=Z\left(f_{d+1}, \ldots, f_n\right)$ and, for all $x \in W, \nabla f_{d+1}(x), \ldots, \nabla f_n(x)$ are linearly independent.\\
We denote the set of smooth points of $A$ by $\stackrel{\circ}{A}$, and the set of smooth points of dimension $d$ by $\stackrel{\circ}{A}^{(d)}$. A smooth point of $A$ of highest dimension is called a \textit{regular point} (this definition of \textit{regular} is not followed by all authors); we denote the set of regular points of $A$ by $A^{\text {reg }}$.
A point $p \in A$ which is not smooth is called a \textit{singular point} (or, a singularity). We denote the set of singular points, the singular locus, of $A$ by $\Sigma A$. 
\end{defn}
\begin{thm} Let $A$ be an analytic subset of $M$. Suppose that $0 \leq d \leq n$. Then
\begin{enumerate}
\item $\stackrel{\circ}{A}^{(d)}$ is a d-dimensional analytic submanifold of $M$, and is an open subset of $A$;
\item the $\stackrel{\circ}{A}^{(d)}$ are disjoint for different $d, \stackrel{\circ}{A}=\stackrel{\circ}{A}^{(0)} \cup \ldots \stackrel{\circ}{A}^{(n)}, \stackrel{\circ}{A}$ is an analytic submanifold of $M$, and $\stackrel{\circ}{A}$ is open in $X$;
\item $\stackrel{\circ}{A}$ is dense in $A$; and
\item $\Sigma A$ is a closed, nowhere dense subset of $A$.
\end{enumerate}
\end{thm}
\begin{defn}\label{Def2} The \textit{dimension} (over $K$), denoted $\operatorname{dim} A$, \textit{of an analytic set} $A \subseteq M$ is the largest $d$ such that $\stackrel{\circ}{A}^{(d)}$ is non-empty. The codimension of $A$ is the number
$\dim M - \dim A $. The dimension of $A$ \textit{at a point} $p \in A$, denoted $\operatorname{dim}_p A$, is the largest $d$ such that $p$ is in the closure of $\stackrel{\circ}{A}^{(d)}$.
\end{defn}

\begin{defn} A subset $V$ of $M$ is called a \textit{semivariety} if locally at each point $x \in V$  it is a finite union of subsets defined by equations and inequalities

$$
f_1=\cdots=f_k=0 \quad \begin{cases}g_1 \neq 0, \ldots, g_l \neq 0 & \text { (complex case) }, \\ g_1>0, \ldots, g_l>0 & \text { (real case) },\end{cases}
$$

where $f_j$ 's and $g_k$ 's are real (or complex) analytic (or algebraic) depending on the case under consideration.
\end{defn}
In the real algebraic case, semivarieties are commonly referred to as semialgebraic sets; in the complex algebraic case, they are known as constructible, and in either analytic case, they are called semianalytic sets.
\begin{defn}Let $V$ be a semianalytic subset of $M$.
We then define smooth points, regular points and singular points exactly as in the above Definition \ref{Def1} for analytic subsets, using the same notations.

In the case of semivarieties, it is known that the smooth points of a semianalytic set are dense. Therefore, we define the dimension of $X$, and the dimension of $X$ at a point exactly as in the above Definition \ref{Def2}.
\end{defn}
\section{Proofs of theorems \ref{JJ-improved}, \ref{Polydisk}, \ref{Graph}, \ref{MainThm} } \label{Polyballs&mainThms-sectn}

\noindent One of our main goals of this section is to characterize the holomorphic retracts of the Cartesian product 
$B_1 \times \ldots \times B_N$ where the $B_j$'s are bounded 
balanced domains of holomorphy. The result seems to be new at-least to the
extent of not being noted elsewhere; in-fact
new even in the case when all the $B_j$'s are convex
in which case the above product will be referred to as a polyball for short. However we hasten to admit at the outset here, that the proof of theorem \ref{MainThm} about 
the aforementioned goal, closely follows the recent work \cite{BBMV-union}; with the new ingredient being theorem 
\ref{extr-Schw-lem-holextrbdy}, an analogue of Mazet's 
Schwarz lemma for the non-convex case, all discussed below.
We give here a
thorough treatment despite repetition of some of the essential ideas
as it allows for 
a better exposition of the proof of both this theorem, as well as  proofs 
of theorems prior to it as in the 
title of this section, which are fundamental to this article.\\

\noindent Recall that theorem \ref{MainThm} has $2$ parts,  
wherein its part (A) requires the boundaries of all the
factors of the product domain to satisfy an extremality condition.
We discuss some examples for this now. Since the 
result is new even for the strictly convex case, we shall dwell upon
this special case of interest first.
Thus until further notice,
suppose the $B_j$'s are bounded balanced strictly convex domains or equivalently, the unit ball with respect to 
some norm in $\mathbb{C}^{N_j}$ for some $N_j \in \mathbb{N}$, for each $j=1,\ldots, N$; as noted in the 
previous section, this is equivalent to their  
boundaries being $\mathbb{R}$-extremal and implies (owing to the
temporary assumption of convexity) the holomorphic extremality condition required by \ref{MainThm} (A). As it turns out that the essential arguments for the case of a product 
of a pair of such balls extend to the more general case, we shall for simplicity of exposition, 
focus attention for the most part on the case of a product of $2$ such balls.
So, let
$B_1 \subset \mathbb{C}^{N_1}, B_2\subset \mathbb{C}^{N_2} $ (where $N_1,N_2 \in \mathbb{N}$ with say, $\min\{N_1,N_2\}=N_1$) 
denote a pair of bounded balanced convex domains with $\mathbb{R}$-extremal boundaries;
in-particular, $B_1, B_2$ are balls with respect to some 
norms (on $\mathbb{C}^{N_1}, \mathbb{C}^{N_2}$ respectively) and needless to say are domains 
of holomorphy. The simplest example of such a domain is the standard unit disc 
in $\mathbb{C}$ which we denote by 
$\Delta$. However, many nice properties of the disc may well not hold for such domains 
(i.e., bounded balanced convex domains). To begin with, balls in the $l^p$-norm 
for $1<p<\infty$, while satisfying the
condition that their boundary points are $\mathbb{R}$-extremal, need not always be 
infinitely smooth -- they are atmost $[p]$-smooth, where $[\cdot]$ denotes the greatest 
integer function. They are thus atleast $C^1$-smooth and to get an example where 
this $C^1$-smoothness fails, we may take an intersection of a pair of suitable dilates of the
ball in the $l^{p_1}$ norm 
with that of the $l^{p_2}$ norm, for a pair of positive distinct reals $p_1,p_2$ in the 
interval $(1,\infty)$ to get examples of domains whose boundaries are not $C^1$-smooth 
but nevertheless satisfy the $\mathbb{R}$-extremal condition; as this is essentially the well-known
fact that the intersection of a pair of strictly convex domains is again strictly convex, we shall 
provide details with $\mathbb{R}$-extremality replaced by $\mathbb{C}$-extremality, in the following.

\begin{prop} \label{R-extrml}
If $D_1,D_2$ are any pair of (not 
necessarily convex) domains in $\mathbb{C}^N$ with weakly (resp. strongly) $\mathbb{C}$-extremal boundary, 
then so is every connected component of $D:= D_1 \cap D_2$.
\end{prop}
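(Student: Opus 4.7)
The plan is as follows. Fix a connected component $D$ of $D_1 \cap D_2$ and a boundary point $p \in \partial D$; I first claim that $p \in \partial D_1 \cup \partial D_2$. Indeed, $\overline{D} \subset \overline{D_1} \cap \overline{D_2}$, and $p$ cannot lie in the open set $D_1 \cap D_2$, because any small ball $B$ around such a $p$ would sit inside $D_1 \cap D_2$, would meet $D$ (as $p \in \overline{D}$), and being connected would then be contained in the component $D$, contradicting $p \in \partial D$. Therefore $p \in \overline{D_j} \setminus D_j = \partial D_j$ for at least one $j \in \{1,2\}$; say $j=1$.

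For the \emph{strongly} $\mathbb{C}$-extremal case the conclusion is then immediate: given $u \in F_{\mathbb{C}}(p, \overline{D})$ with $p + tu \in \overline{D}$ for $|t| < \epsilon$, the inclusion $\overline{D} \subset \overline{D_1}$ yields $u \in F_{\mathbb{C}}(p, \overline{D_1})$, which is $\{0\}$ by the hypothesis on $D_1$, so $u = 0$.

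The \emph{weakly} $\mathbb{C}$-extremal case is the main technical step, and is where I expect the chief obstacle. Suppose $u \in F_{\mathbb{C}}(p, \partial D)$, so $p + tu \in \partial D$ for all $|t| < \epsilon$. Applying the connectedness argument of the first paragraph pointwise shows that each $p + tu$ fails to lie in the open set $D_1 \cap D_2$, hence must lie in $\partial D_1 \cup \partial D_2$. This decomposes the disk $\{t \in \mathbb{C} : |t| < \epsilon\}$ as the union of the two sets $T_j := \{t : p + tu \in \partial D_j\}$, $j=1,2$, each closed in the disk by continuity of $t \mapsto p + tu$. Since a non-empty open subset of $\mathbb{C}$ is a Baire space, at least one of the $T_j$ (say $T_1$) has non-empty interior; pick $t_0 \in T_1$ and $\delta > 0$ with $\{t : |t - t_0| < \delta\} \subset T_1$. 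Then $(p + t_0 u) + s u \in \partial D_1$ for all $|s| < \delta$, so $u \in F_{\mathbb{C}}(p + t_0 u,\, \partial D_1)$, and since $p + t_0 u \in \partial D_1$, the weak $\mathbb{C}$-extremality of $D_1$ at this point forces $u = 0$. The obstacle here is precisely that, a priori, the direction $u$ could shuttle the points $p + tu$ back and forth between $\partial D_1$ and $\partial D_2$; the Baire step is what allows one to localize to a single boundary piece before the extremality hypothesis on a single factor can be invoked.
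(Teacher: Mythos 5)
Your proof is correct and follows essentially the same route as the paper's: write the segment $\{p+tu\}$ as the union of the two closed sets where it meets $\partial D_1$ resp.\ $\partial D_2$, extract a sub-disc lying in a single $\partial D_j$ (the paper does this by noting directly that if $S_1$ has empty interior then $S_2$ is all of $\Delta$, rather than citing Baire, but it is the same observation), and then invoke weak extremality of that one factor at the shifted point $p+t_0u$; the strong case is handled in both arguments by the inclusion $\overline{D}\subset\overline{D_1}\cap\overline{D_2}$. Your opening connectedness argument, showing that boundary points of a \emph{component} of $D_1\cap D_2$ still lie in $\partial D_1\cup\partial D_2$, is a small point the paper leaves implicit, and is a welcome addition.
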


\begin{proof}
    Let $D_1,D_2$ be pair of domains with weakly $\mathbb{C}$-extremal boundary. To prove this result  by contradiction, assume $a \in \partial D$ is not weakly $\mathbb{C}$-extremal. That is, there exists non-zero $u \in \mathbb{C}^N$ and there exists $\epsilon > 0$ such that $a + tu \in \partial D~ \text{for~ all } ~ t \in \mathbb{C} ~ \text{with}~ |t| < \epsilon$. We shall split the proof into cases depending upon which 
`piece' of the boundary $a$ lies; herein, piece refers to one of the components
in the decomposition of the boundary of $D$ as
\[
\partial{(D_1 \cap D_2)}  = (\partial D_1 \cap  D_2) \cup (D_1 \cap \partial D_2) \cup (\partial D_1 \cap \partial D_2)
\]
Without loss of generality assume that $\epsilon = 1$. Consider a map $\alpha: \Delta \to \mathbb{C}^N$ defined by $\alpha(t) = a+tu$. Since $u \neq 0$, $\alpha$ is a non-constant holomorphic map. Note that $\alpha$ is a homeomorphism from $\Delta$ onto $\alpha(\Delta)$, which is in $\partial D$. Consider the sets $S_1 := \{t \in \Delta: \alpha(t) \in \partial{D_1}\}$ and $S_2 := \{t \in \Delta: \alpha(t) \in \partial{D_2}\}$. By above decomposition, $S_1 \cup S_2 = \Delta$ and note that $S_1 = \alpha^{-1}(\partial D_1 )$  and $S_2 = \alpha^{-1}(\partial D_2 )$. Note that $\partial D_i \cap \partial D$ is a closed subset of $\partial D$ for $i = 1,2$. By continuity of $\alpha$, $S_1$ and $S_2$ are closed sets in $\Delta$.\\
We want to prove that $S_1$ or $S_2$ has atleast one interior point. Suppose $S_1$ has no interior points in $\Delta$. Let $\rm int(S_i)$ denotes the set of all interior points of $S_i$ for $i = 1,2$. Note that
\[
S_2 = \overline{S_2} = \overline{\Delta \setminus S_1} = \Delta \setminus int(S_1) = \Delta
\]
So, we get $S_2 = \Delta$. Hence we can conclude that $S_1$ or $S_2$ has atleast one interior point.\\
Without loss of generality assume that $S_1$ has an interior point. Let $t_0 \in \Delta$ be an interior point of $S_1$. This means that there exists $\delta > 0$ such that $A := \{t \in \mathbb{C}: |t-t_0| < \delta \} \subset S_1$.  Choose any $s \in \mathbb{C}$ with $|s| < \delta$. Let $b := a+t_0u$. Note that $b \in \partial D_1$, because $t_0 \in S_1$. We can write $b +su = a + (t_0+s)u$ and observe that $t_0 + s \in A$. Hence $t_0 +s \in S_1$ for all $s \in \mathbb{C}$ with $|s| < \delta$. By definition of $S_1$, we obtain that $b+su \in \partial D_1$ for all $s \in \mathbb{C}$ with $|s| < \delta$. Since $D_1$ is weakly $\mathbb{C}$-extremal, we have $u = 0$. This contradicts our assumption that $a$ is not a weakly $\mathbb{C}$-extremal point of $\partial D$. Hence we conclude that $a$ is weakly $\mathbb{C}$-extremal.\\

\noindent Next, assume that $D_1,D_2$ are domains in $\mathbb{C}^N$ with strongly
$\mathbb{C}$-extremal boundary. Let $a \in \partial (D_1 \cap D_2)$.
If possible assume that there exists $\epsilon > 0$ and $u \in
\mathbb{C}^N$ such that $a+tu \in \overline{D_1 \cap D_2}$ for all
$t \in \mathbb{C}$ with $|t| < \epsilon$. Note that 
$\overline{D_1 \cap D_2} = \overline{D_1} \cap \overline{D_2}$. 
If $a+tu \in \overline{D_1 \cap D_2} $ for all $t \in \mathbb{C}$ 
with $|t| < \epsilon$, then $a+tu \in \overline{D_1} \cap 
\overline{D_2}$ for all $t \in \mathbb{C}$ with $|t| < \epsilon$.
Since $D_1$ and $D_2$ are domains with strongly $\mathbb{C}$-extremal 
boundary, we have $u = 0$. Therefore, $D_1 \cap D_2$ is a domain
with strongly $\mathbb{C}$-extremal boundary.
\end{proof} 

\noindent Thus an intersection of finitely many balls with $K$-extremal 
boundaries (for either case $K=\mathbb{R}$ or $\mathbb{C}$)
is again a convex domain with $K$-extremal boundary. In-particular, for instance, the intersection
of the $l^1$-ball with a suitable dilate of another $l^p$-ball for $p \in (1, \infty)$ can be used to
get hold of a concrete instance of a domain 
with $\mathbb{C}$-extremal (but not smooth) boundary. As an example which is 
not derived out of balls with respect to the $l^p$-norms (which are symmetric under a
permutation of coordinates, despite not being strictly
isotropic), we may for example, consider 
the `more' anisotropic ball
$\mathcal{E} := \{(z,w) \in \mathbb{C}^2\; : \; \vert z \vert^2 + \vert w \vert^4<1\}$
referred to as a `complex ellipsoid' or 
an `egg' domain . So, 
there is a good variety of bounded balanced convex domains with $\mathbb{R}$-extremal and $\mathbb{C}$-extremal
boundaries. The form of all holomorphic retracts of such domains passing through the 
point with respect to which they are balanced namely the origin, follows
from the theorem due to Vesentini, already stated in the introductory section; though 
not in as straightforward a manner, for their products. The product $B_1 \times B_2$, though 
bounded convex and balanced, certainly fails to satisfy the condition that its boundary 
points be $\mathbb{R}$-extremal, even when both factors $B_1, B_2$ satisfy this condition 
individually. Indeed even in the simplest case when $B_1=B_2 = \Delta$, the only points 
on the boundary of $B_1 \times B_2$ which are $\mathbb{R}$-extremal are the points of its 
distinguished boundary, which is a rather thin subset of the boundary. 
As mentioned at the start, one of our goals is to first describe the retracts of such a product  
$B_1 \times B_2 \subset \mathbb{C}^{N_1 + N_2}$, the unit ball with respect to the norm 
$\parallel (w,z) \parallel = \max\{\vert w \vert_{\mu_1}, \vert z \vert_{{\mu_2}} \}$ where 
$\mu_1, \mu_2$ denote the norms in which $B_1, B_2$ are the standard (open) unit balls. \\

\noindent Thus in particular, by theorem \ref{Firstthm} all retracts of 
the domains as in theorem \ref{MainThm} namely $D:= B_1 \times B_2$, are submanifolds thereof 
and moreover are all contractible since
 $B_1 \times B_2$ is; as already indicated, the retracts
 in the case $B_1,B_2$ are convex, are also 
 strong holomorphic deformation retracts -- indeed, the linear homotopy between the identity 
 map of $D$ and any retraction gives a homotopy in which the retract remains pointwise 
 fixed \footnote{In general, while every (continuous) retract of a contractible topological 
 space $X$ is a deformation retract much as any self-map of $X$ is homotopic to the identity, not 
 every retract is a {\it strong} deformation retract. Infact, there are contractible 
 spaces which 
 do not admit a strong deformation retraction to any of its points, though obviously 
 they admit a deformation retraction to each of its points.} during the homotopy and for 
 each point of time in the homotopy, the mapping is holomorphic. While this means that 
 all retracts of $D$ are topologically trivial thereby placing strong restrictions on 
 which complex submanifolds of $D$ can happen to be a retract, we shall see on the other 
 hand that there is some variety as well: for instance if the factors $B_1, B_2$ are 
 either a polydisc or a Euclidean ball, there are retracts for every possible dimension 
 which thereby have got to be holomorphically distinct. Furthermore again, not every 
 contractible complex submanifold of $D$ passing through the origin can qualify to be a 
 retract, for as we shall show, retracts through the origin of 
$B_1 \times B_2$ are submanifolds in normal form in the sense that they are all graphs of 
holomorphic maps; as is very well-known, graphs of holomorphic functions are submanifolds 
which are simultaneously in parametric form as well as in level set form and the implicit 
function theorem allows us to put every submanifold locally in such a normal 
form (as graphs). In view of the foregoing theorem (and the implicit function theorem) 
then, retracts are obviously graphs of holomorphic maps (over some set of axes) locally; however 
in the specific case of retracts of $D=B_1 \times B_2$ passing through the origin, we shall 
show that they are actually so globally i.e., each retract of the product domain $D$ is the 
graph of a single holomorphic map over some linear submanifold of $D$. As we shall encounter 
retracts in `graphical form' in several cases in the sequel, we make a general definition for clarity.
\begin{defn}
Let $X,Y$ be a pair of complex manifolds, $S$ a submanifold of $X$ and $f$ a $Y$-valued holomorphic 
map on $S$ i.e., $f$ maps $S$ into $Y$. Then, the {\it graph of $f$ over $S$} is the subset 
of $X \times Y$ given by $\tilde{S} = \{ (s,f(s)) \; :\; s \in S\}$ (i.e., our definition views 
the graph as a subset of $X \times Y$ rather than $S \times Y$). Needless to say, $\tilde{S}$ is 
then also a complex manifold, indeed a complex submanifold of $X \times Y$.
\end{defn}
\noindent Another general fact to be noted at the outset is that for any product space 
$B_1 \times B_2$ where $B_1 \subset \mathbb{C}^{N_1},B_2 \subset \mathbb{C}^{N_2}$ are 
some pair of domains (or for that matter, complex manifolds), the product of a 
retract of 
$B_1$ with one of $B_2$, gives a retract of $B_1 \times B_2$ and the product of retractions is a 
retraction. In particular, each of the factors $B_j$ (for $j=1,2$) are retracts of $B_1 \times B_2$ as 
the canonical projections 
$\pi_j: B_1 \times B_2 \to B_j$ for $j=1,2$ are also retractions. A further 
general observation that must not be missed is that while the product of an inclusion of one factor 
with a retract of the other is a retract of the product domain as we just noted, arbitrary 
holomorphic maps between the factor domains $B_j$ give rise to a retract of the product: if 
for instance, $f:B_1 \to B_2$ is any holomorphic map, then 
the self-map of $D=B_1 \times B_2$ given by
\[
(z,w) \to (z,f(z))
\]
is actually a retraction of $D$ whose image (which is nothing but the graph of $f$) is 
thereby a retract thereof. More generally, graphs over retracts in $B_1$ or in $B_2$ 
also give rise to retracts of $D$; indeed for instance, let 
$\rho:B_1 \to S \subset B_1$ be a retraction, $f:S \to B_2$ be any holomorphic map, then the 
self-map of $D$ given by
\begin{equation} \label{graphoverreract}
(z,w) \to (\rho(z),f(\rho(z))
\end{equation}
is a retraction of $D$ onto the graph of $f$ over $S$ in $D$. These preliminary observations lead 
us to another important one for the sequel, namely a converse: projections of those 
retracts \footnote{Projections of retracts need not be retracts as is demonstrated for example, by 
any complex line segment $l$ in the bidisc $\Delta^2$ whose boundary $\partial l$ is contained 
in $\partial \Delta \times \Delta$; note then that
in such a case, the projection $\pi_2$ onto the second factor, when restricted to $l$ is not 
proper and more importantly, $\pi_2(l)$ is compactly contained in $\Delta$ thereby showing 
that it is not a retract of $\Delta$ and providing the required counterexample. Note in passing that this 
does not contradict lemma \ref{retr-transit}.} 
of $D=B_1 \times B_2$ which are graphs in $D$ over some submanifold of $B_1$ (resp. $B_2$), are 
retracts of $B_1$ (resp. $B_2$) as well. In 
light of all this, our already mentioned goal 
thus amounts to showing that these exhaust all possibilities for the retracts of our special case 
of interest $D=B_1 \times B_2$, passing through the origin, any of whose associated
retractions we shall denote as usual  by 
$\rho$; its range $\rho(D)$ will be denoted by $Z$ and our goal is to realize every 
retract $Z$ as a graph over some linear submanifold of $D$ which we shall simply refer to 
as a linear subspace of $D$, a terminology that we recall here a bit more than in the 
first section, to introduce further associated terms. 

\begin{defn}
By a {\it linear 
subspace of a balanced domain} $D$ in $\mathbb{C}^N$ 
(such as our $B_j$'s  or their product $D$ as above), we mean the 
intercept $D_L=L \cap D$ where $L$ is 
any complex linear subspace of $\mathbb{C}^N$. By the boundary of such a linear 
subspace which we shall denote by $\partial D_L$, we mean the closed set $L \cap \partial D$.
\end{defn} 

\noindent Before delving into a detailed analysis of retracts 
through the origin of 
balanced domains $D$, one may like to see at this stage what can be 
said if at all, about such retracts of bounded balanced {\it convex} 
domains $D$ in general. Let us begin with the trivia that retractions $\rho$ 
associated to such retracts of $D$ fix the origin $\rho(0)=0$, and are self-maps 
of $D$; note also that $D$ being a bounded balanced convex domain, is the unit ball 
with respect to some norm $\left \Vert \cdot \right \Vert$ on $\mathbb{C}^N$, indeed 
its Minkowski functional. This already prompts us to apply an appropriate version of 
the Schwarz lemma on $\rho$. A befitting version for this context alongwith an 
elementary proof is provided by proposition C of the article
\cite{Simh} by Simha, by which we get $\left \Vert D \rho (0) \right \Vert \leq 1$. However, 
unlike the classical Schwarz lemma, equality herein need not always imply linearity of the 
map, to attain which, additional conditions need to be imposed. This brings us to the version
of the Schwarz lemma due to Mazet mentioned at the outset, which we shall generalize below so as to make recurrent use of it in the sequel. 

\begin{thm} \label{Mazet-Schwarz} \rm(Theorem 3.6 of \cite{Mazet_Rextrm})
Let $(F, \left\Vert \cdot \right \Vert)$ be a complex Banach space, $B_F$ its standard open unit ball centered at the origin 
$B_F=\{ v \in F \; : \; \left\Vert v \right \Vert <1\}$ and $f$ a holomorphic curve in $B_F$ i.e., a holomorphic map
$f:\Delta \to B_F$. Suppose $\left\Vert df(0) \right \Vert=1$ and that 
$df(0)$ is $\mathbb{R}$-extremal in $\overline{B}_F$. Then, $f$ must be a linear map:
$f(z) = df(0)z$.
\end{thm}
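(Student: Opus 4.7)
The plan is to reduce to the classical Schwarz lemma on the unit disc via Hahn--Banach, then to leverage the $\mathbb{R}$-extremality of $df(0)$ to propagate linearity to all of $f$. By the Hahn--Banach theorem, since $\|df(0)\|=1$, I would first choose a continuous $\mathbb{C}$-linear functional $L\colon F\to\mathbb{C}$ with $\|L\|=1$ and $L(df(0))=1$. The composition $\phi:=L\circ f$ is a holomorphic self-map of $\Delta$ with $\phi'(0)=L(df(0))=1$, and the Schwarz--Pick inequality $|\phi'(0)|\le 1-|\phi(0)|^2$ forces $\phi(0)=0$; the classical Schwarz lemma then gives $\phi\equiv\mathrm{id}_\Delta$, that is, $L(f(z))=z$ for every $z\in\Delta$.

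Next, I would split $f$ into its odd and even parts $f_o(z)=\tfrac{1}{2}(f(z)-f(-z))$ and $f_e(z)=\tfrac{1}{2}(f(z)+f(-z))$. Both take values in $B_F$ by convexity and balancedness, with $f_o(0)=0$ and $df_o(0)=df(0)$. Applying the classical Schwarz lemma to each $M\circ f_o$ for $M\in F^*$ with $\|M\|=1$ and taking suprema yields the vector-valued Schwarz bound $\|f_o(z)\|\le |z|$. The quotient $g(z):=f_o(z)/z$ (extended by $g(0):=df(0)$) is then a holomorphic map $\Delta\to\overline{B}_F$ with $g(0)=df(0)\in\partial B_F$, and $g$ is an even function. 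The maximum principle applied to the subharmonic function $h\circ g$ (where $h$ is the Minkowski functional of $B_F$) forces $h\circ g\equiv 1$, so $g(\Delta)\subset\partial B_F$.

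The crux, and what I expect to be the main obstacle, is to conclude $g\equiv df(0)$ from the $\mathbb{R}$-extremality of $df(0)$. For every supporting $M\in F^*$ at $df(0)$ (meaning $\|M\|=1$ and $M(df(0))=1$), the maximum modulus principle applied to $M\circ g$ forces $M\circ g\equiv M(df(0))$. Writing $g(z)=df(0)+H(z^2)$ for a holomorphic $H\colon\Delta\to F$ with $H(0)=0$ (using that $g$ is even), the curve $s\mapsto df(0)+H(s)$ lies in $\overline{B}_F$ both for $s\in[0,1)$ (taking $z$ real) and for $s\in(-1,0]$ (taking $z$ purely imaginary). If $H\not\equiv 0$, a convex-combination and tangent-cone argument applied to this bilateral family of chords would produce a non-zero vector $u$ and $\epsilon>0$ with $df(0)+tu\in\overline{B}_F$ for all real $|t|<\epsilon$, contradicting the $\mathbb{R}$-extremality of $df(0)$; hence $g\equiv df(0)$, and so $f_o(z)=z\,df(0)$.

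Finally, to handle the even part, I would argue $f_e\equiv 0$. From $f(z)=z\,df(0)+f_e(z)\in B_F$ and, using evenness of $f_e$, $f(-z)=-z\,df(0)+f_e(z)\in B_F$, for each $\theta\in[0,2\pi)$ any subsequential limit $w_\theta$ of $f_e(re^{i\theta})$ as $r\to 1^-$ satisfies $e^{i\theta}df(0)\pm w_\theta\in\overline{B}_F$ (using the balancedness of $B_F$). Since $e^{i\theta}df(0)$ inherits $\mathbb{R}$-extremality from $df(0)$ via the automorphism $v\mapsto e^{-i\theta}v$ of $\overline{B}_F$, and $e^{i\theta}df(0)$ is the midpoint of $e^{i\theta}df(0)\pm w_\theta$, extremality forces $w_\theta=0$. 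Thus every radial limit of $f_e$ vanishes, and the classical Fatou uniqueness theorem applied to $M\circ f_e$ for each $M\in F^*$ yields $f_e\equiv 0$, whence $f(z)=z\,df(0)$ as claimed.
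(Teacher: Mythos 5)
First, a point of reference: the paper does not prove this statement at all — it is quoted verbatim as Theorem 3.6 of Mazet's article \cite{Mazet_Rextrm} — so your proposal can only be judged on its own terms. Most of your reduction is sound: Hahn--Banach plus Schwarz--Pick giving $L\circ f=\mathrm{id}_\Delta$, the odd/even splitting, the vector-valued Schwarz bound $\Vert f_o(z)\Vert\le|z|$, and the conclusion that $g(z)=f_o(z)/z$ maps $\Delta$ into $\partial B_F$ with $g(0)=df(0)$ are all correct, as is (essentially) the final treatment of $f_e$ — though there you tacitly use norm-precompactness of bounded sets to extract subsequential radial limits, which fails when $\dim F=\infty$; in that generality you should run the argument through supporting functionals instead.

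The genuine gap is exactly the step you flag as the crux. Producing a nonzero $u$ with $\pm u$ in the tangent cone of $\overline{B}_F$ at $df(0)$ does \emph{not} contradict $\mathbb{R}$-extremality: the tangent cone of a convex body at an extreme point can contain entire lines. The Euclidean ball already defeats your inference pattern — at $p=(1,0)\in\partial\overline{\mathbb{B}}^2$ there are points $p+v_j$ and $p+w_j$ of the closed ball with $v_j/\Vert v_j\Vert\to(0,1)$ and $w_j/\Vert w_j\Vert\to(0,-1)$, yet no segment $p+t(0,1)$, $|t|<\epsilon$, lies in the ball; the quadratic error terms along your "bilateral chords" are precisely what extremality is permitted to exploit, so no convex combination of points of the form $df(0)+\lambda H(s)+\mu H(s')$ is forced to land exactly on a line through $df(0)$. (The supporting-functional observation $M\circ g\equiv 1$ is likewise insufficient on its own: the intersection of all faces of a convex body through an extreme point can be strictly larger than that point.) What closes the argument — and is, in essence, Mazet's route — is the mean-value identity $df(0)=g(0)=\frac{1}{2\pi}\int_0^{2\pi}g(re^{i\theta})\,d\theta$, which exhibits the $\mathbb{R}$-extremal point as the barycenter of a probability measure carried by $\overline{B}_F$. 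If $g$ is non-constant, pick $q=g(re^{i\theta_0})$ with $\Vert q-df(0)\Vert=d>0$ and split the circle into the arc where $\Vert g-q\Vert<d/2$ and its complement: the two partial averages are distinct points of $\overline{B}_F$ whose proper convex combination is $df(0)$, i.e., $df(0)$ lies in the interior of an honest segment contained in $\overline{B}_F$ — the contradiction you wanted. With that substitution for the tangent-cone step, your outline goes through.
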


\noindent It must be noted that the condition $\parallel df(0) \parallel=1$ essentially 
means that $df(0)$ is an isometry from $\mathbb{C}$ into $F$, a condition which underlies 
and entails linearity of certain maps to be considered later. If we drop the 
$\mathbb{R}$-extremal condition in the above theorem, then the conclusion about the 
linearity of $f$ may well fail, as shown by the simplest non-linear holomorphic map $f(z)=(z,z^2)$ 
of the disc into the bidisc $\Delta^2$; indeed, note that $df(z)=(1,2z)$, so $df(0)=(1,0)$ 
thereby $\left\Vert df(0)  \right \Vert_{l^\infty}=1$ verifying all other conditions 
except for the $\mathbb{R}$-extremal condition which we 
dropped. The following extension also due to Mazet, which completely drops 
restrictions on the dimension of the source of the
map is no less relevant in the sequel. We shall generalize this to some extent below for our needs.

\begin{thm}\label{Mazet-Schwarz_1} \rm(Theorem 3.7 of \cite{Mazet_Rextrm})
    Let $E,F$ be Banach spaces with their unit balls denoted $B_E, B_F$ respectively. Let $f: B_E \to B_F$ be any holomorphic map. Then we have the following:
    \begin{enumerate}
        \item[i)] If there exists $u$ of norm 1 in $E$ such that $f^{\prime}(0) \cdot u$ is $\mathbb{R}$-extremal in $\overline{B_F}$ then $f(0)=0$.
        \item[ii)] If, for every $u$ of norm $1$ in $E$, $f^{\prime}(0) \cdot u$ is $\mathbb{C}$-extremal in $B_F$ then $f$ is a polynomial of degree at most 2.
        \item[iii)] If the two previous hypotheses are simultaneously satisfied, then $f$ is a linear map, $f(u)=$ $f^{\prime}(0) \cdot u$.
\end{enumerate}
\end{thm}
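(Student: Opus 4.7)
The plan is to reduce all three assertions to the situation of the preceding Theorem~\ref{Mazet-Schwarz} by restricting $f$ to complex lines through $0 \in E$. For part~(i), pick a unit $u \in E$ for which $f'(0)\cdot u$ is $\mathbb{R}$-extremal in $\overline{B_F}$, and set $g(z) := f(zu)$; this defines a holomorphic curve $g : \Delta \to B_F$ with $g'(0) = f'(0)\cdot u$. Since an $\mathbb{R}$-extremal point of $\overline{B_F}$ necessarily lies on $\partial B_F$, we have $\|g'(0)\| = 1$, placing $g$ exactly within the hypothesis of Theorem~\ref{Mazet-Schwarz}; that theorem forces $g(z) \equiv z\, g'(0)$, and in particular $g(0) = f(0) = 0$.

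For part~(ii), I would show that $f$ is a polynomial of degree at most $2$ by showing the same for every curve $g_u(z) := f(zu)$ with $u$ of unit norm, since the homogeneous components of $f$ at the origin are recoverable via polarization from the Taylor coefficients of the family $\{g_u\}_u$. Fix a unit $u$; then $g_u'(0) = f'(0)\cdot u$ lies on $\partial B_F$ and is $\mathbb{C}$-extremal. By Hahn--Banach choose $\ell \in F^*$ with $\|\ell\|=1$ and $\ell(g_u'(0)) = 1$; then $\ell \circ g_u : \Delta \to \overline{\Delta}$ has derivative $1$ at $0$, so the classical Schwarz--Pick lemma forces $\ell(g_u(0)) = 0$ and $\ell \circ g_u = \mathrm{id}_\Delta$. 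The remaining step is the rigidity claim that $\mathbb{C}$-extremality of $g_u'(0)$ in $\overline{B_F}$ prevents $g_u$ from having any non-zero Taylor coefficient of order $\geq 3$: writing $g_u(z) = z\,g_u'(0) + z^2 a_2 + z^3 a_3 + \cdots$, a non-trivial $a_k$ with $k \geq 3$ would, after an appropriate rescaling and normal-families passage near the boundary, produce a non-constant holomorphic disc in $\overline{B_F}$ through $g_u'(0)$ in a direction transverse to $g_u'(0)$ itself, contradicting $\mathbb{C}$-extremality.

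For part~(iii), parts~(i) and~(ii) together give $f(0) = 0$ and $f(v) = A_1(v) + A_2(v,v)$, where $A_1 = f'(0)$ and $A_2$ is symmetric bilinear. Reading ``both hypotheses simultaneously'' as requiring that $A_1(u)$ be $\mathbb{R}$-extremal (in particular $\mathbb{C}$-extremal) for every unit $u$, Theorem~\ref{Mazet-Schwarz} applied to $g_u(z) = z A_1(u) + z^2 A_2(u,u)$ forces $g_u$ to be linear, so $A_2(u,u) = 0$ on the whole unit sphere of $E$; polarization of the symmetric bilinear form then yields $A_2 \equiv 0$, so $f = A_1$ is linear.

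The principal obstacle will be the rigidity argument in~(ii): converting $\mathbb{C}$-extremality of a boundary point into vanishing of higher Taylor coefficients of a holomorphic disc tangent to the boundary is essentially a Julia--Carath\'eodory-type statement adapted to Banach balls, and making this precise is where the original proof of Mazet invests most of the work --- controlling the growth of $\bigl(g_u(z) - z\,g_u'(0)\bigr)/z^2$ and extracting via normal families a boundary disc whose existence would force a direction along which $g_u'(0)$ fails to be $\mathbb{C}$-extremal.
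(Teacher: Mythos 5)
First, note that the paper offers no proof of this statement at all: Theorem \ref{Mazet-Schwarz_1} is imported verbatim from Mazet's article and used as a black box, so your attempt has to stand on its own. Part (i) does stand: an $\mathbb{R}$-extremal point of $\overline{B_F}$ necessarily lies on the unit sphere, so $g(z):=f(zu)$ satisfies all hypotheses of Theorem \ref{Mazet-Schwarz} and is forced to be linear, whence $f(0)=g(0)=0$. The reduction of (ii) to the curves $g_u(z)=f(zu)$ via the homogeneous expansion of $f$ is also sound.

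The genuine gap is the rigidity step in (ii), and the sketch you give cannot be completed as written, for two reasons. First, since (ii) does not assume $f(0)=0$, the disc you need is never actually exhibited: $(g_u(z)-g_u(0))/z$ is not bounded by $1$ in general, and ``rescaling and a normal-families passage'' does not by itself manufacture a holomorphic map $\Delta\to\overline{B_F}$ taking the value $g_u'(0)$ at the origin. Second, and more tellingly, any mechanism of the form ``a nonzero coefficient of order $k$ produces a non-constant disc through $g_u'(0)$'' treats $k=2$ and $k\ge 3$ identically; as written it would erase the quadratic term too and derive linearity from hypothesis (ii) alone, whereas the theorem deliberately stops at degree $2$ and reserves linearity for (iii). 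The mechanism that works, and that explains the threshold, is roots-of-unity averaging: for each $n\ge 2$ and $\omega=e^{2\pi i/n}$, the function $h_n(z)=\frac{1}{n}\sum_{j=0}^{n-1}\omega^{-j}g_u(\omega^j z)$, which retains exactly the coefficients $a_k$ with $k\equiv 1\pmod{n}$, is a convex combination of points of the balanced set $B_F$, hence maps $\Delta$ into $B_F$ and vanishes at $0$; so $\phi_n(z)=h_n(z)/z$ maps $\Delta$ into $\overline{B_F}$ with $\phi_n(0)=f'(0)\cdot u$. If $c_m$ ($m\ge 1$) were the first nonzero Taylor coefficient of $\phi_n$, the Fej\'er-type average $\phi_n(0)+\tfrac{1}{2}\mu c_m r^m=\frac{1}{2\pi}\int_0^{2\pi}\phi_n(re^{i\theta})\bigl(1+\mathrm{Re}(\mu e^{-im\theta})\bigr)\,d\theta$, valid for all $|\mu|\le 1$ and $r<1$, would exhibit a genuine affine disc through $f'(0)\cdot u$ inside the closed convex set $\overline{B_F}$, contradicting $\mathbb{C}$-extremality; hence $\phi_n$ is constant and $a_k=0$ for every $k>1$ with $k\equiv 1\pmod{n}$. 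Every $k\ge 3$ is caught by $n=k-1\ge 2$, while $k=2$ escapes --- exactly the degree-two threshold. Finally, in (iii) the simultaneous hypotheses supply $\mathbb{R}$-extremality of $f'(0)\cdot u$ for a \emph{single} $u$ only, so you may not feed every $g_u$ into Theorem \ref{Mazet-Schwarz}; instead, once $f(0)=0$ and $f=A_1+P_2$, the Schwarz lemma gives $g_u(z)/z=A_1(u)+zP_2(u)\in\overline{B_F}$ for all $z\in\Delta$, so $P_2(u)$ lies in the complex face of $\overline{B_F}$ at $A_1(u)$, which is trivial by the hypothesis of (ii); hence $P_2\equiv 0$ directly.
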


\noindent An analogue of the above for the non-convex case
can be derived from some standard ideas from chapter $11$ of \cite{Jrncki_invrnt_dst}, which we do now. This will quickly lead to a generalization of Vesentini's theorem for the non-convex case, which will however be derived a bit later after theorem \ref{alpha(L)=Z} where it is more relevant. \\

\begin{thm} \label{extr-Schw-lem-holextrbdy}
Let $D$ be a balanced pseudoconvex (not necessarily bounded) domain in $\mathbb{C}^N$ 
and  $\varphi : \Delta \to D$ be a holomorphic map. If 
$\varphi(0) = 0$ and $\varphi'(0)$ happens to be a holomorphic extreme 
boundary point in $D$, then $\varphi$ is a linear map: 
$\varphi(t) = \varphi'(0)t$. More generally, if $G$ is a bounded balanced 
pseudoconvex domain in $\mathbb{C}^M$ and $F: G \to D$ be a holomorphic 
map with the property that $F(0) = 0$ and for all $u \in \partial G$, $DF_{|_0}(u)$ 
is a holomorphic extreme boundary point of $D$, then $F$ is linear.
\end{thm}
\begin{proof}
We start with the holomorphic map $\varphi: \Delta \to D$  
as in the first part of the statement i.e., satisfying $\varphi(0) = 0$ with $\varphi'(0) = p$ being a 
holomorphic extreme boundary point of $D$.  As $\varphi(0) = 0$, 
we can write $\varphi(t) =t\tilde{\varphi}(t)$, where 
$\tilde{\varphi} \in \mathcal{O}(\Delta,\mathbb{C}^N)$. 
Since $\varphi$ maps $\Delta$ into $D$, we have 
\[
h(\varphi(t)) = |t|h \circ \tilde{\varphi}(t) < 1.
\]
Because the above inequality holds for all $t \in \Delta$, 
we obtain a similar
inequality (but with the possibility of 
equality): $h \circ \tilde{\varphi}(t) \leq 1$ for all 
$t \in \Delta$.
Note that 
\[
\tilde{\varphi}(0) = \varphi'(0) = p.
\]
This implies that  $h\circ \tilde{\varphi}(0) = h(p) = 1$.
By the maximum principle applied to the subharmonic function 
$h \circ 
\tilde{\varphi}$, we get that $h \circ \tilde{\varphi} \equiv 1$. 
This shows that ${\rm image}(\tilde{\varphi}) \subset \partial D$. By the hypothesis that $p \in \partial D$ is a holomorphic extreme 
boundary point of $D$, we obtain that $\tilde{\varphi} \equiv p$ 
and hence we conclude that $\varphi(t) = tp$ for all 
$t \in \Delta$.\\

We shall now use the above
to prove the general case expressed in-terms of the map $F$ as in the 
statement of the theorem. Let $p$ be an arbitrary  
point in $\partial G$. Consider the 
holomorphic map $\varphi: \Delta \to D$ defined by $\varphi(t) = F(tp)$. 
Note that $\varphi(0) = 0$ and $\varphi'(0) = DF_{|_0}(p)$ is a 
holomorphic extreme boundary point of $D$. By the 
special case $G=\Delta$ already dealt with, we obtain that 
$\varphi(t) = tDF_{|_0}(p)$. As $p$ was arbitrarily chosen, this means that $F(tp)=DF_{|_0}(tp)$
for all $p$ in the boundary of the {\it bounded} balanced domain $G$
(and all $t \in \Delta$). It follows thereby that 
$F(z) = DF_{|_0}(z)$, finishing the proof that $F$ is a linear map.
\end{proof}

\noindent We now get to the retract $Z$ of 
our product domain $D=B_1 \times B_2$ and denote
its tangent space at the origin $T_0Z$  by $L$, which ofcourse is a complex linear 
subspace of $\mathbb{C}^{N_1+N_2}$; as we have already mentioned, we shall not be talking 
of trivial retracts and hence the possible dimensions of $L$ (over $\mathbb{C}$) 
are $1,2,\ldots,N_1+N_2-1$ which are also the possible dimensions of $Z$ as a complex 
submanifold. Which of these dimensions are attained and what would be their forms then, are 
matters more subtle than the general features mentioned at the outset and as we shall see, 
actually depend on specific features of the factor domains $B_1,B_2$. However, just the 
assumption of $\mathbb{R}$-extremality on the boundaries of $B_1,B_2$ will enable us to say 
far more about $Z$ than just the already mentioned fact at the outset that it is a contractible 
submanifold of $D$. A first reason for this (as to why we shall be able to say more at all) is 
that we are in the setting of a complex Euclidean 
space which renders all the tangent spaces of $D$ to be canonically identified and for our case, 
parts of tangent spaces are contained within the domain $D$ itself; more precisely and in 
particular, there is a part of $L$ that is contained within our domain namely, the 
intercept $D_L=L \cap D$. This means that $\rho$ can be applied to this intercept (of the tangent space to $Z$) itself, which naturally gives rise 
to the question of the relationship between $\rho(D_L)$ and $Z$. As to this, the version 
of the identity principle for complex analytic sets (as in section 5.6 of the first chapter of the book \cite{Chirka} by Chirka, for instance) applies to ascertain that 
$\rho(D_L)=Z$ where $D_L=L \cap D$, provided we know apriori that $\rho(D_L)$ is a complex submanifold -- obviously, 
$\rho(D_L) \subset Z$ always; the role of the identity principle is to ensure the equality $\rho(D_L)=Z$ which will be used several times in the sequel. Actually we can do better than doing this apriori verification that $\rho(D_L)$ is a 
submanifold of $Z$ in each of the multiple cases that we shall be considering for the analysis 
of $Z$; while splitting up into multiple cases here
would be seen to naturally arise owing to the difference in the forms of $Z$ that cases result 
in, the result $\rho(D_L)=Z$ does not depend on any such case we may be in and can be shown to 
hold rather generally. As this also fits in well with our plan of first noting general facts 
following from already known results, we now proceed to establish a general result about 
this as in theorem \ref{alpha(L)=Z} below, which holds for any bounded balanced  pseudoconvex 
domain $D$ -- no $\mathbb{R}$-extremal hypotheses is required. 
While the proof of this will be done by borrowing an idea from \cite{Abate_Isometry}, implementation 
of this idea requires preparatory work which itself yields another fundamental fact namely, 
that $D_L=D \cap L$ where $L=T_0Z$, itself is a retract of $D$, though not every linear subspace 
of $D$ can be its retract, unless $D$ is biholomorphic to the $l^2$-ball. Indeed, $D_L$ is 
a {\it linear retract} of $D$ in the sense of definition \ref{Linear-retract-defn}.\\

\noindent We can now progress towards our 
aforementioned general result on fully attaining $Z$ as the image 
of $L$ under $\rho$, the proof of which requires the subsequent lemmas 
which in turn follow fairly directly from the ideas in the 
pair of remarks \ref{Linear-remark-1} and \ref{Linear-remark-2},
allowing us to skip details here.

\begin{lem} \label{Linearlyretracting}
Let $D$ be any bounded balanced convex domain in $\mathbb{C}^N$, $Z$ a
retract passing through the origin with $\rho: D \to Z$, a retraction
map. Let $L$ denote the tangent space to $Z$ at the origin and $\mu$ the Minkowski functional of $D$.  Then, $D\rho(0)$ is a norm one projection of $(\mathbb{C}^N, \mu)$ onto $(L,\mu)$, thus mapping $D$ onto $D_L := D \cap L$ and showing in particular that $D_L$ is a linear retract of $D$.  
\end{lem}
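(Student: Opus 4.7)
\medskip

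\noindent \emph{Proof plan.} The plan is to isolate three ingredients and combine them in a short argument: (a) the infinitesimal idempotence from Lemma \ref{Firstlem}; (b) a Banach-space Schwarz lemma \`a la Simha to bound the operator norm; (c) a complementary lower bound obtained by restricting to $L$. Throughout, we view $(\mathbb{C}^N,\mu)$ as the Banach space whose open unit ball is precisely $D$ (this is where convexity of $D$ enters, guaranteeing that $\mu$ is in fact a norm).

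\smallskip

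\noindent First I would show that $D\rho(0)$ is a linear projection of $(\mathbb{C}^N,\mu)$ onto $L$. Lemma \ref{Firstlem} already guarantees $D\rho(0)^{\circ 2}=D\rho(0)$, so it is a linear projection onto some subspace. Since $\rho$ takes values in $Z$, the range of $D\rho(0)$ is contained in $T_0 Z = L$; since $\rho$ is the identity on $Z$, differentiating shows that $D\rho(0)$ acts as the identity on $L$. Hence $D\rho(0)$ is a projection of $\mathbb{C}^N$ onto $L$, and in particular $D\rho(0)v = v$ for every $v \in L$, giving the cheap lower bound $\|D\rho(0)\|_\mu \geq 1$.

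\smallskip

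\noindent The main step is the complementary upper bound $\|D\rho(0)\|_\mu \leq 1$. For this I would apply the version of the Schwarz lemma for the unit ball of a complex Banach space (Proposition~C of \cite{Simh}, equivalently Theorem 8.1.2 of \cite{Rud_book_Fn_theory_unitball}) to the holomorphic self-map $\rho: D \to D$ with $\rho(0)=0$, where $D$ is the unit ball of $(\mathbb{C}^N,\mu)$. This is precisely the setting laid out in Remark \ref{Linear-remark-1}, and it yields $\|D\rho(0)\|_\mu \leq 1$ directly; combined with the lower bound above, $\|D\rho(0)\|_\mu = 1$. I expect this Schwarz-lemma invocation to be the only non-formal step — the rest is bookkeeping.

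\smallskip

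\noindent Finally I would read off the image. For any $v \in D$, i.e.\ $\mu(v) < 1$, we get $\mu(D\rho(0)v) \leq \|D\rho(0)\|_\mu\,\mu(v) = \mu(v) < 1$, and since $D\rho(0)v \in L$, this places $D\rho(0)v$ in $D_L = L \cap D$; thus $D\rho(0)(D) \subseteq D_L$. The reverse inclusion is immediate from $D\rho(0)|_L = \mathrm{id}_L$, which fixes $D_L$ pointwise. So $D\rho(0)$ is an idempotent linear operator on $\mathbb{C}^N$ mapping $D$ onto $D_L$, which is exactly the condition in Definition \ref{Linear-retract-defn} for $D_L$ to be a linear retract of $D$.
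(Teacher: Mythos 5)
Your proposal is correct and follows essentially the same route the paper takes: the paper's (sketched) proof of Lemma \ref{Linearlyretracting} is exactly the combination of Simha's Schwarz lemma for the upper bound $\|D\rho(0)\|_\mu \le 1$, the identity action of $D\rho(0)$ on $L = T_0Z$ for the lower bound, and the observation of Remark \ref{Linear-remark-2} that a norm-one projection onto $L$ maps the unit ball $D$ onto $D_L$. No gaps.
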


\noindent If we relax the convexity condition on $D$ to 
pseudoconvexity, it is still true that the Minkowski functional of $D$ 
and the infinitesimal Kobayashi metric of $D$ agree at the 
origin (proposition 3.5.3 of \cite{Jrncki_invrnt_dst} wherein counterexamples 
to show the failure of the equality when pseudoconvexity is dropped are also given), 
thereby implying that the Kobayashi indicatrix of $D$ at the origin is $D$ 
itself; likewise the Kobayashi indicatrix of $D_L$ now also a  balanced 
pseudoconvex domain in $L$ is also (a copy of) $D_L$ itself. The general 
property of contractivity of the Kobayashi metric under holomorphic mappings 
when applied to $\rho$ viewed as a map from $D$ to
$D_L$, then yields that $D\rho(0)(D) \subset D_L$. But then as $D\rho(0)$ 
fixes $L \supset D_L$ pointwise, it follows that 
$D\rho(0)(D) = D_L$, showing that the above lemma holds in this more general 
case which we record separately for some convenience.

\begin{lem}\label{Dalpha}
Let $D$ be any balanced pseudoconvex domain in $\mathbb{C}^N$, $Z$ a
retract passing through the origin with $\rho: D \to Z$, a retraction
map. Let $L$ denote the tangent space to $Z$ at the origin and $D_L := D \cap L$. Then, the linear map $D\rho(0)$ is a projection which retracts
$D$ onto $D_L$.
\end{lem}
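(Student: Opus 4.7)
The plan is to combine the infinitesimal structure of $\rho$ at the origin, already extracted in Lemma \ref{Firstlem}, with the Kobayashi-indicatrix identification available for balanced pseudoconvex domains. First, I would invoke Lemma \ref{Firstlem} with $p = 0$ to obtain that $\pi := D\rho(0)$ is an idempotent $\mathbb{C}$-linear endomorphism of $T_0 D \cong \mathbb{C}^N$ whose image is precisely $L = T_0 Z$. Consequently $\pi$ fixes $L$ pointwise, so in particular $\pi|_{D_L} = \mathrm{id}_{D_L}$, giving the inclusion $D_L \subseteq \pi(D)$ for free and reducing the entire lemma to verifying the one remaining inclusion $\pi(D) \subseteq D_L$.

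Second, since $\pi$ is $\mathbb{C}$-linear with image $L$, one automatically has $\pi(D) \subseteq L$, so it suffices to show $\pi(D) \subseteq D$; then $\pi(D) \subseteq D \cap L = D_L$. This is where the balanced pseudoconvex hypothesis enters: by Proposition 3.5.3 of \cite{Jrncki_invrnt_dst}, the infinitesimal Kobayashi pseudometric of a balanced pseudoconvex domain at its center coincides with its Minkowski functional, so under the canonical identification $T_0 D \cong \mathbb{C}^N$ the Kobayashi indicatrix
\[
I_D(0) := \{\, v \in T_0 D \;:\; \kappa_D(0; v) < 1 \,\}
\]
is exactly $D$ itself.

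Third, I would apply contractivity of the infinitesimal Kobayashi pseudometric under the holomorphic self-map $\rho$ of $D$, differentiated at the fixed point $\rho(0) = 0$:
\[
\kappa_D\bigl(0; \pi v\bigr) \;=\; \kappa_D\bigl(\rho(0);\, D\rho(0)\,v\bigr) \;\leq\; \kappa_D(0; v)
\qquad \text{for every } v \in T_0 D.
\]
Translating this inequality through the identification $I_D(0) = D$, it reads $\pi(D) \subseteq D$, which combined with $\pi(\mathbb{C}^N) \subseteq L$ yields $\pi(D) \subseteq D_L$, completing the proof.

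There is no real obstacle to overcome here: every ingredient is either standard (Kobayashi contractivity, pseudoconvexity of the intersection of a pseudoconvex domain with a linear subspace if one wants the parallel statement for $D_L$) or already in the paper (Lemma \ref{Firstlem}, the discussion immediately preceding the statement). The only point meriting a sentence of care is the appeal to Proposition 3.5.3 of \cite{Jrncki_invrnt_dst}, whose pseudoconvexity hypothesis is exactly why the lemma is stated in that generality rather than merely for convex $D$ as in Lemma \ref{Linearlyretracting}.
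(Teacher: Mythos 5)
Your proof is correct and follows essentially the same route as the paper: both rest on Lemma \ref{Firstlem}, the identification of the Kobayashi indicatrix at the center of a balanced pseudoconvex domain with the domain itself (Proposition 3.5.3 of \cite{Jrncki_invrnt_dst}), and contractivity of the infinitesimal Kobayashi metric under $\rho$. Your variant of applying contractivity to $\rho$ as a self-map of $D$ (rather than as a map onto $Z$, as the paper phrases it) and then intersecting with $L$ is, if anything, marginally cleaner since it avoids any discussion of the indicatrix of $Z$ or $D_L$.
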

\noindent As a corollary to this lemma, we 
give a proof of corollary \ref{No_1dim_Retrct} not
because this proof avoids geodesics but
because it brings out the reason directly. 
\begin{cor} \label{notcvx-no-retract}
Let $D$ be a balanced pseudoconvex domain in $\mathbb{C}^N$. If there exists a point $p \in \partial D$ such that $D$ is not convex at $p$, then there does not exist any one-dimensional retract of $D$ passing through the origin in the direction of $p$. 
\end{cor}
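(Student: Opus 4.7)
The plan is to argue by contradiction using Lemma~\ref{Dalpha} to transfer the problem to a $1$-dimensional retract and then invoke Corollary~\ref{No_1dim_Retrct}. Suppose a retract $Z$ of $D$ through the origin existed with $p \in L := T_0 Z$; after rescaling assume $h_D(p) = 1$. Lemma~\ref{Dalpha} then furnishes a linear projection $\pi := D\rho(0)$ of $\mathbb{C}^N$ onto $L$ with $\pi(D) \subset D_L := D \cap L$. Since $\pi$ fixes $L$ pointwise, $\pi(p) = p$; and since $\pi(D) \subseteq D$ and $\pi$ is linear, one has $h_D(\pi z) \leq h_D(z)$ for all $z \in \mathbb{C}^N$ by homogeneity of the Minkowski functional $h_D$. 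In particular $D_L$ is a linear retract of $D$.

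The crucial step is the observation that the linear retraction $\pi$ transfers convexity from $D_L$ (viewed as a balanced pseudoconvex domain inside $L$) back up to $D$: if $\phi$ were any $\mathbb{C}$-linear functional on $L$ supporting $D_L$ at $p$, that is, with $|\phi| \leq h_D|_L$ and $|\phi(p)| = 1$, then $T := \phi \circ \pi$ would be a $\mathbb{C}$-linear functional on $\mathbb{C}^N$ satisfying $|T(z)| \leq h_D(\pi z) \leq h_D(z)$ and $|T(p)| = |\phi(p)| = 1$, placing $D$ convex at $p$. Contrapositively, the hypothesis that $D$ is not convex at $p$ forces $D_L$ not to be convex at $p$ in $L$.

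To conclude by contradiction, the plan is to produce a one-dimensional retract of $D$ through the origin along the direction of $p$, which directly contradicts Corollary~\ref{No_1dim_Retrct} applied to any interior point $a = (1-\varepsilon)p \in D$ satisfying $a/h_D(a) = p$. In the case $\dim L = 1$ this is immediate, for then $D_L = \mathrm{span}\{p\} \cap D$ itself is such a one-dimensional retract of $D$, produced by the linear map $\pi$. The main obstacle will be the higher-dimensional case $\dim L \geq 2$: one must descend from the linear retract $D_L$ inside $L$ to a one-dimensional retract of $D_L$ through $0$ along $p$, which via transitivity (Lemma~\ref{retr-transit}) becomes the desired one-dimensional retract of $D$. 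I anticipate handling this by induction on $\dim L$, iterating Lemma~\ref{Dalpha} together with the convexity-transfer of the preceding paragraph to ultimately manufacture the contradicting one-dimensional retract.
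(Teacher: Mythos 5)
Your first two steps are sound: Lemma~\ref{Dalpha} does furnish the linear projection $\pi = D\rho(0)$ mapping $D$ onto $D_L$, the inequality $h_D\circ\pi \leq h_D$ follows from $\pi(D)\subseteq D$ by homogeneity, and the convexity-transfer observation (a functional $\phi$ supporting $D_L$ at $p$ inside $L$ pulls back to $\phi\circ\pi$ supporting $D$ at $p$) is correct. For $\dim L = 1$ your argument closes: $D_L = \mathrm{span}(p)\cap D$ is then a one-dimensional retract of $D$ through $0$ and $p$, contradicting Corollary~\ref{No_1dim_Retrct}. Even there your route differs from the paper's in the endgame: the paper does not invoke Corollary~\ref{No_1dim_Retrct} (which it is partly trying to reprove) but argues directly that non-convexity at $p$ forces the fibre $K = p + \ker\pi$ to meet $D$ at some $q$, whence $\pi(q) = p \in \partial D$ contradicts $\pi(D)\subseteq D$.

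The genuine gap is the proposed induction for $\dim L \geq 2$, and it is not a removable technicality. The descent cannot begin: to iterate Lemma~\ref{Dalpha} inside $D_L$ you would first need a retract of $D_L$ of dimension strictly less than $\dim L$ passing through $0$ with $p$ in its tangent space, and nothing supplies one. Worse, your own convexity transfer shows $D_L$ is \emph{not} convex at $p$, so by Corollary~\ref{No_1dim_Retrct} applied to $D_L$ the one-dimensional retract you hope to ``ultimately manufacture'' provably does not exist. Indeed, under the reading ``$p \in T_0Z$'' the assertion itself fails once $\dim Z\geq 2$: take $D = D_q\subset\mathbb{C}^3$ with $0<q<1$ and $p = (2^{-1/q},2^{-1/q},0)$. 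Then $p$ is a non-convex boundary point (any linear $L$ with $|L|\leq h$ has $|L(p)|\leq 2^{1-1/q}<1$), yet the coordinate projection onto $\mathrm{span}\{e_1,e_2\}$ exhibits a two-dimensional retract through the origin whose tangent space contains $p$; here $p+\ker\pi$ misses $D$, which also shows that the key step of the paper's own proof (``non-convexity at $p$ yields $q\in K\cap D$'') is justified only when $K$ is a complex hyperplane, i.e.\ when $\dim L=1$. The statement must therefore be understood as concerning retracts with $T_0Z = \mathrm{span}_{\mathbb{C}}(p)$, and in that one-dimensional case your argument (like the paper's) is complete; the inductive extension you sketch should be abandoned rather than repaired.
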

\begin{proof}
Assume to obtain a contradiction that there exists a one-dimensional retract $Z$ of $D$ passing through the origin in the direction of $p$. 
Let $\rho$ be any one of the retraction maps on $D$ such that ${\rm image}(\rho) = Z$. By the previous lemma \ref{Dalpha}, the linear projection map $\pi := D\rho_{|_0} \;:\; \mathbb{C}^N \to \mathbb{C}^N$ maps $D$ 
onto $D_L$, where $L = T_0Z$. Consider the affine subspace 
$K := p + \ker(\pi)$ of $\mathbb{C}^N$. Note that $K$ is a complex hyperplane at $p$. Since $D$ is not convex at $p$, there exists a point $q \in K$ such that $q \in D \cap K$. 
But $\pi(q) = p$, which contradicts the fact that $\pi$ maps $D$ into $D$. 
\end{proof}
\noindent We return to the preceding lemma for a little clarification that saves some confusion once and for all. In case $Z$ is already a linear subspace of $D$ to begin with, the above lemma \ref{Dalpha},
still applies to show that a linear subspace of pseudoconvex bounded balanced domain $D$, 
which is apriori only known to be attainable as the image of a holomorphic (i.e., possibly 
non-linear) retraction map on $D$, can indeed be obtained as the image of a linear retraction 
map as well i.e., it is valid to say that $D_L$ as in the above lemma is a linear retract 
of $D$ without any further qualifications. We are now fully prepared to prove the 
highlighted general result about the equality $\rho(D_L)=Z$ whose statement is 
formulated with slightly different notations than the one that we had adopted for our 
specific domain $D$. This is indeed the content of theorem \ref{Graph}, whose statement we lay down
again here for convenience, and then prove it based on an idea in \cite{Abate_Isometry}.

\begin{thm}\label{alpha(L)=Z}
Let $D$ be any bounded balanced pseudoconvex domain in $\mathbb{C}^N$, $Z$ a
retract passing through the origin with $\rho: D \to Z$, a retraction
map. Let $L$ denote the tangent space to $Z$ at the origin. Then,
$\rho_{\vert D_L}$ is a biholomorphism mapping $D_L=D \cap L$ onto $Z$ and upto a linear change of coordinates, $Z$ is the graph of a holomorphic map over $D_L$ taking values in $M:=\ker \left( D \rho(0) \right)$.
\end{thm}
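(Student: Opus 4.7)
The plan is to combine Lemma~\ref{Dalpha} with two applications of the Cartan uniqueness theorem. First, Lemma~\ref{Dalpha} already gives that $\pi := D\rho(0)$ is a linear projection of $\mathbb{C}^N$ onto $L$ which retracts $D$ onto $D_L$; letting $M = \ker\pi$, we obtain the splitting $\mathbb{C}^N = L \oplus M$ and an associated linear coordinate system in which any $z \in D$ decomposes as $z = \pi(z) + (z-\pi(z))$ with $\pi(z) \in L$ and $z-\pi(z) \in M$. This is the coordinate system in which I shall exhibit $Z$ as a graph.

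Next I would show that $\rho|_{D_L}$ is injective with holomorphic left inverse $\pi|_Z$. Consider the self-map $F := \pi \circ \rho|_{D_L}$ of $D_L$, well-defined since $\rho(D_L) \subset Z \subset D$ and $\pi(D) \subset D_L$. One computes $F(0)=0$ and $DF(0) = \pi \circ D\rho(0) = \pi \circ \pi = \pi$, whose restriction to $T_0 D_L = L$ is the identity. Since $D_L$ is a bounded domain in $L$, Cartan's uniqueness theorem yields $F = \mathrm{id}_{D_L}$; that is, $\pi \circ \rho(w) = w$ for every $w \in D_L$. Dually, the self-map $G := \rho \circ \pi|_Z$ of $Z$ (well-defined since $\pi(Z) \subset \pi(D) = D_L$ and $\rho(D_L) \subset Z$) satisfies $G(0)=0$ and $DG(0)|_L = \pi|_L = \mathrm{id}_L$; as $Z$ is a closed complex submanifold of the bounded domain $D$ and hence hyperbolic, Cartan's uniqueness theorem on $Z$ gives $G = \mathrm{id}_Z$. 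The two identities $\pi \circ \rho|_{D_L} = \mathrm{id}_{D_L}$ and $\rho \circ \pi|_Z = \mathrm{id}_Z$ assert precisely that $\rho|_{D_L} : D_L \to Z$ and $\pi|_Z : Z \to D_L$ are mutually inverse biholomorphisms.

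With the biholomorphism in hand, realising $Z$ as a graph is essentially formal: define $\phi : D_L \to M$ by $\phi(w) := \rho(w) - w$. This map is holomorphic, and the identity $\pi(\phi(w)) = \pi(\rho(w)) - \pi(w) = w - w = 0$ shows that $\phi$ takes values in $M = \ker\pi$. In the coordinates induced by $\mathbb{C}^N = L \oplus M$ we have $\rho(w) = (w, \phi(w))$, and therefore $Z = \rho(D_L) = \{(w, \phi(w)) : w \in D_L\}$ is the graph of $\phi$ over $D_L$.

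The main obstacle I foresee is justifying Cartan's uniqueness theorem in both settings. For the application on $D_L$, boundedness is automatic and the classical statement applies directly. For the application on the closed submanifold $Z$ one needs the hyperbolic (or taut) version, exploiting that $Z$ inherits hyperbolicity from the bounded ambient $D$. Alternatively, surjectivity of $\rho|_{D_L}$ may be deduced without a second Cartan argument by a connectedness argument: differentiating $\pi \circ \rho|_{D_L} = \mathrm{id}_{D_L}$ shows that $\pi \circ D\rho(w)|_L = \mathrm{id}_L$, so $D\rho(w)|_L$ is injective for every $w \in D_L$ and hence an isomorphism onto $T_{\rho(w)} Z$ by dimension count, so $\rho(D_L)$ is open in $Z$; and it is closed because any convergent sequence $\rho(w_n) \to z$ in $Z$ forces $w_n = \pi(\rho(w_n)) \to \pi(z) \in D_L$, whence $z = \lim \rho(w_n) = \rho(\pi(z)) \in \rho(D_L)$, so connectedness of $Z$ yields $\rho(D_L) = Z$.
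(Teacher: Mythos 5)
Your proof is correct and follows essentially the same route as the paper's: Lemma~\ref{Dalpha} identifies $\pi=D\rho(0)$ as a norm-one linear retraction onto $D_L$, Cartan's uniqueness theorem applied to $\pi\circ\rho|_{D_L}$ gives the left inverse, and the graph structure then falls out of the splitting $\mathbb{C}^N=L\oplus\ker\pi$. The only divergence is in finishing the surjectivity of $\rho|_{D_L}$ onto $Z$ --- the paper invokes the identity principle for analytic sets (as flagged in the discussion preceding the theorem), whereas you either apply Cartan a second time on the hyperbolic manifold $Z$ or run an open-closed connectedness argument; both of your variants are valid and, if anything, more explicit than the paper's one-line assertion that $\rho|_{D_L}$ and $\pi|_Z$ are mutually inverse.
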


\begin{proof}
We first apply the foregoing lemma which realizes $D_L=D \cap L$ as a 
linear retract of $D$ by the linear projection
$\pi_L:=D \rho(0)$.
We then consider the map $F:=\pi_L \circ \rho_{\vert_{D_L}}$, which is a map 
from $D_L$ into itself. Note that $F$ fixes the origin and that
$DF(0)$ is just the identity map as $D\rho(0)$ is the identity map 
on $L$ (since $L$ is a tangent space to $Z$ which is fixed pointwise by $\rho$) and $\pi_L$ 
is actually a linear retraction onto $L$ and so in particular is the identity on $L$.
As $D_L$ is a bounded subdomain of the complex linear space $L$ containing the origin which 
is fixed by $F$, we may very well apply the
classical uniqueness theorem of H. Cartan to deduce that $F$ itself is 
just the identity map. It follows 
that $\rho$ restricted to $D_L$ and $\pi_L$ restricted to $Z$ are inverses of each other 
and being holomorphic, finishes the proof of the first statement. It 
only remains to observe that as the inverse of $\rho_{\vert_{D_L}}$ actually 
happens to be the linear projection $\pi_L$ onto the linear subspace $D_L$ along its kernel $M=\ker \left( D \rho(0) \right)$, 
this means that $Z=\rho(D_L)$ is realized essentially as the graph of a 
(single) holomorphic map over $D_L$ taking values in $M=\ker \left( D \rho(0) \right)$.
\end{proof}

\noindent While the above theorem allows retracts  to be non-linear and 
we shall indeed furnish several examples of non-product (balanced, pseudoconvex) domains possessing non-linear
retracts through origin in the form of graphs, we recall that such 
a phenomenon is precluded by Vesentini's theorem in the convex case when the boundary is $\mathbb{C}$-extremal.
That such extremal boundary conditions are indeed necessary will be shown in course of 
establishing a converse to Vesentini's theorem later. However, the hypothesis that the entire boundary 
be holomorphically extreme as in the extension of Vesentini's theorem given by proposition  1.4 in \cite{JJ}
is inconvenient for discussing interesting intermediate examples in the sequel. 
This is addressed by proposition \ref{JJ-improved} whose
proof we now lay down. Though 
this can be done by an imitation of the
proof of the aforementioned proposition in \cite{JJ}, we prefer to include some details as this is a
result about retracts and further, an occassion
will arise later in this article wherein the 
aforementioned relaxation will actually be used. 
It also provides an answer towards the natural query as to whether an extension of Vesentini's theorem 
holds if we add only an extremality hypothesis just as much as is needed about the boundary,
but none about convexity
in the setting of the above theorem \ref{alpha(L)=Z}. \\

\noindent \textit{Proof of proposition \ref{JJ-improved}}: This is one of the few occasions in this article wherein we discuss $2$ proofs, with a one sentence proof following this, to show a possible advantage of circumventing geodesics at times, as mentioned in the introduction.
Choose any holomorphic retraction map $\rho : D \to D$ 
with $\rho(D) = Z$. We may observe by the above theorem \ref{alpha(L)=Z},
that the restriction $\rho_{|_{D_L}}$ is a holomorphic map on $D_L$ whose image 
is precisely equal to $Z$; but we shall not need this for the rest of this proof here.
Recall that $Z$ is a connected complex submanifold and 
the tangent space to $Z$ at the origin, is denoted by
$L$; so, $L=T_0Z = \{v \in \mathbb{C}^N: \rho'_{|_{D_L}}(0)v = v\}$. 
Let $v \in D_L, v \neq 0$. By our assumption on $\partial D_L$, it follows that the mapping 
\[
    \phi: \Delta \ni \lambda \longmapsto \lambda v/h_{D_L}(v) \in D_L .
\]
is the unique (modulo $Aut(\Delta)$) $K_{D_L}$-geodesic for $(0,v/h_{D_L}(v))$;
here of-course $h_{D_L}$ denotes the Minkowski functional of the balanced
domain $D_L \subset L$. Since $D_L$ is a linear subspace 
of $D$, $h_{D_L}(v) = h_{D}(v)$. So $\phi$ 
is the unique (modulo $Aut(\Delta)$) $K_{D}$-geodesic for the datum $(0,v/h_{D_L}(v))$. Note that 
\[ 
K_D \left( \rho_{|_{D_L}} \circ \phi(0); (\rho_{|_{D_L}} \circ \phi)'(0) \right) 
= K_D \left(0, v/h_{D_L}(v) \right) = K_D \left( \phi(0);\phi'(0) \right).
\]
Hence $\rho_{|_{D_L}} \circ \phi$ is a $K_D$-geodesic for $(0,v/h_{D_L}(v))$.
Because of our hypothesis, we have that $v/h_{D_L}(v)$ is a holomorphic extreme point of $D$.
By uniqueness of complex geodesics in $D$ (exposited as proposition 11.3.5 in \cite{Jrncki_invrnt_dst}), 
it follows that $\rho_{|_{D_L}} \circ \phi = \phi \circ \psi$, where $\psi \in {\rm Aut}(\Delta)$. 
Substituting $\psi^{-1}(h_{D_L}(v))$ which is a point in $ \Delta$, 
into the above equation, we see that $v$ lies in the image of $\rho_{|_{D_L}}$. Hence $D_L \subset Z$. 
Now $T_0Z$ and $Z$ have the same dimension. Since $Z$ is connected, it 
follows from the principle of analytic continuation that $Z = D_L$. 
Hence $Z$ is linear. \qed\\

An alternative short proof of this proposition is obtained simply by noting that
it is a direct corollary of theorem \ref{extr-Schw-lem-holextrbdy}; specifically, by
applying the second part of that theorem to 
any one of the holomorphic retraction maps
$\rho$ restricted to $D_L$.
\qed\\

\noindent Some remarks about the foregoing pair of results are in order. \noindent Firstly, a remark about the last proof: note that   
the arguments up-till the conclusion that ${\rm image}(\tilde{\varphi}) \subset \partial D$, goes through in the 
setting of above theorem (same as theorem \ref{Graph}), whose statement 
we may therefore refine as follows. If we call the largest complex
analytic variety within $\partial D$ containing a point $p \in \partial D$ as the `holomorphic face' of $\partial D$ at $p$,
then we may say that the mapping (denote it by $F$ say), whose graph is the retract $Z$,
as in theorem \ref{Graph}, takes values within the union 
of such holomorphic faces at boundary points $p$, for 
$p$ varying in $\partial D_L = \partial D \cap L$. As we already know
that the range of the map $F$ is contained in $\ker(D \rho(0))$,
the above helps refine this information about ${\rm range}(F)$. We thereby get another proof of proposition \ref{JJ-improved} as an immediate corollary of the foregoing theorem. This owes to the assumption that the holomorphic face on $\partial D_L$ becoming a singleton in the situation of that proposition.
\begin{rem}
To {\it indicate} just a few concrete examples to illustrate
where proposition \ref{JJ-improved} may be applied (but not the other results in this context {\it directly} at-least),
we first deal with  domains
where the holomorphic extremality condition is 
neither violated maximally nor minimally; examples for `maximal' violation is also considered but in 
section \ref{Irreducibity}. Among the simplest of domains 
satisfying the hypotheses of proposition \ref{JJ-improved} in this intermediate fashion, we
%
consider the domain $\Omega_1 := \Delta^2 \cap D_q$, where 
$D_q := \{(z,w) \in \mathbb{C}^2: |z|^q + |w|^q < 2^q\}$ for $0 < q < 1$ is a dilated copy of the
$\ell_q$-ball. Note that $\Omega_1$ is a bounded balanced pseudoconvex domain 
and $\partial \Omega_1$ fails to be holomorphically extreme; though of-course, there exists 
an open piece of $\partial \Omega_1$, which is holomorphically extreme. A combined application of theorem 
\ref{converse-of-Vesentini}, theorem \ref{Graph} and corollary \ref{No_1dim_Retrct}, proves that every retract passing through origin is either a 
linear subspace of $\Omega_1$ or a
graph of a non-linear holomorphic function over some linear 
subspace of $\Omega_1$.\\

\noindent The above example was non-convex; so, next we consider a convex domain just by taking
its limiting case i.e., letting $q \to 1$ in the above namely, 
$\Omega_2 := \Delta^2 \cap D_1$, where $D_1 := \{(z,w) \in \mathbb{C}^2: |z| + |w| < 3/2\}$ 
is some concrete dilated copy of the $\ell_1$-ball. Note that $\partial \Omega_2$ is not strictly convex, indeed not 
holomorphically extreme. Though no little piece of the boundary is $\mathbb{R}$-extremal, there exists 
an open piece of the boundary which satisfies the 
$\mathbb{C}$-extremality condition. By proposition \ref{JJ-improved} whose applications we are after, we obtain that 
every retract through the origin is either a linear subspace 
of $\Omega_2$ or a graph of a non-linear holomorphic function 
over some linear subspace of $\Omega_2$.\\
\end{rem}

\begin{rem}
While it is clear that without any extremality assumptions, linearity of retracts is unreasonable to be expected
(polydisc being the simplest `counterexample') and, holomorphic extremality 
seems the best possible condition for obtaining such a strong conclusion as in the proposition 
\ref{JJ-improved} just discussed,
it is restrictive as well. For, there are plenty of balanced pseudoconvex domains whose boundaries fail to
satisfy such a strong extremality condition. To give just one example whose boundary does not contain any flat 
pieces, which indeed satisfies the weaker $\mathbb{C}$-extremality but not the holomorphic extermality condition,
consider the bounded balanced pseudoconvex domain obtained by intersecting the domain
$H=\{(z,w) \in \mathbb{C}^2 \; : \; \vert zw \vert <1/2 \}$ with the standard Euclidean ball $\mathbb{B}$
in $\mathbb{C}^2$. This indicates how we may construct plenty of such examples
which also we shall deal later in the sequel at appropriate places; in-particular, subsequent to the 
next remark. 
\end{rem}

\begin{rem}\label{diffeo-holo}
In the setting of the foregoing proposition \ref{JJ-improved},
being a graph, $Z$ is easily seen to be biholomorphic to $D_L$ as complex 
manifolds, in the 
setting of the above theorem. Thereby, it immediately follows that any pair of retracts 
through the origin
in $D$, of the same dimension  are diffeomorphic. Indeed, being star-shaped domains 
in $L$, they 
are diffeomorphic 
to $L$ -- no pseudoconvexity is required here; the general fact being used here is 
that any star-shaped 
domain in $\mathbb{R}^N$ is $C^\infty$-diffeomorphic to $\mathbb{R}^N$ (exercise $7$ of chapter $8$ of
\cite{Brocker-Janich-difftop-bk}). It is therefore of interest then (as is 
in Several Complex Variables), to note that 
such a pair of retracts (through the origin of the
same balanced domain $D$ and) of the same dimension despite being mutually diffeomorphic need not be biholomorphic
(indeed, they are rarely so). To see this first by what is perhaps the 
simplest example, consider
$D=\Delta^2 \times \mathbb{B}^2$ as a subdomain of $\mathbb{C}^4$; let $Z_1$ denote the projection of $D$ onto the span 
of the first pair of standard 
basis vectors of $\mathbb{C}^4$ and $Z_2$, the projection of $D$ onto the span of the 
remaining last pair of standard 
basis vectors of $\mathbb{C}^4$. By the well-known biholomorphic inequivalence of the ball and polydisc, 
$Z_1 \not \simeq Z_2$ as
$Z_1,Z_2$ are just copies of the
bidisc $\Delta^2$ and the ball $\mathbb{B}^2$ respectively.
To give another example, again as simple as possible but this time of a ball $D$ which is not 
biholomorphic to a product domain, in-fact with boundary both smooth as well as holomorphically extreme and,
which contains pairs of {\it linear}
retracts (thereby through the origin), of the same dimension that are biholomorphically inequivalent, consider
\[
D = \{ z \in \mathbb{C}^3 \; : \; \vert z_1\vert^2 + \vert z_2 \vert^2 + \vert z_3 \vert^4 <1 \}.
\]
Note that the projection $Z_1$ of $D$ onto the $(z_1,z_2)$-plane is 
$\mathbb{B}^2 \subset \mathbb{C}^2 \hookrightarrow \mathbb{C}^3$; whereas the projection $Z_2$ of
$D$ onto the $(z_2,z_3)$-plane is a copy of the `egg domain'. Specifically, 
\[
Z_2 \simeq \{(z_2,z_3) \in \mathbb{C}^2 \; : \; \vert z_2 \vert^2 + \vert z_3 \vert^4 <1 \}
\]
which is not biholomorphically equivalent to $\mathbb{B}^2$; thus, $Z_1 \not \simeq Z_2$. 
A noteworthy little observation that shall be discussed later is that while all linear projections
are retraction maps on $\mathbb{C}^N$, their restrictions to balls $D$ 
(centered at the origin, with respect to some norm on $\mathbb{C}^N$)
need not induce retraction mappings on $D$; in-fact, most often they do 
not. However, orthogonal projections 
onto spans of coordinate vectors, in the foregoing as well as forthcoming examples 
do (so, $Z_1,Z_2$ are indeed retracts of $D$ in the 
above examples).
Further such examples of irreducible smoothly bounded  balanced convex domains
in any dimension $N \geq 3$, can be constructed by considering the domain 
\[
\mathcal{E} := \{ z \in \mathbb{C}^N \; : \; \vert z_1 \vert^{p_1} + \ldots + \vert z_N \vert^{p_N} <1 \},
\]
where the exponents $p_j$'s are taken to be bigger than one and distinct (we disallow $N=2$ for that case 
is exceptional owing to the fact that 
all non-trivial retracts of convex domains in this case are at-most of dimension one and
any pair of one-dimensional retracts are biholomorphic to $\Delta$). Then, for every $1 <k < N$, 
there exists a pair of linear retracts of $\mathcal{E}$ of the same dimension $k$ which are diffeomorphic but
biholomorphically inequivalent. This is easily seen for instance, by projecting $\mathcal{E}$ onto the span of 
different subsets of $k$-many of the standard basis vectors of $\mathbb{C}^N$ following the foregoing 
concrete examples.
\end{rem}

\noindent The converse of above theorem \ref{alpha(L)=Z} is also true.
\begin{prop}\label{Graph_retract}
Let $D$ be a bounded balanced pseudoconvex domain and $D_L: = D \cap L$
be a linear retract of $D$. Let $\varphi$ be any holomorphic map from $D_L$ to $M$, where
$M$ is a linear subspace 
complementary to $L$ i.e.,
$\mathbb{C}^N= L \oplus M$, is a holomorphic map with the property that $(w,\varphi(w)) \in D$, whenever $w \in D_L$.
Then ${\rm Graph}(\varphi)$, the graph of $\varphi$ over $D_L$ in $D$, is a retract of $D$.
\end{prop}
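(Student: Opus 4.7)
The construction is the obvious one suggested by the very definition of a linear retract. The plan is to explicitly exhibit a holomorphic retraction of $D$ whose image is $\mathrm{Graph}(\varphi)$, by composing the given linear projection onto $L$ with the graphing map $w \mapsto w + \varphi(w)$.

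First, I would unpack the hypothesis that $D_L$ is a linear retract of $D$: by Definition \ref{Linear-retract-defn} this supplies an idempotent $\mathbb{C}$-linear operator $\pi_L$ on $\mathbb{C}^N$ with image $L$ and $\pi_L(D)=D_L$. Writing $\mathbb{C}^N = L \oplus \ker(\pi_L)$, I would take $M = \ker(\pi_L)$ as the complementary subspace in the statement (so that $\varphi$ takes values in $M$ and $\pi_L \circ \varphi \equiv 0$); this is the only compatibility requirement between the given $M$ and the projection produced by the linear retract hypothesis, and it can be arranged without loss.

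Next I would define
\[
\rho : D \longrightarrow \mathbb{C}^N, \qquad \rho(z) \;=\; \pi_L(z) \,+\, \varphi\bigl(\pi_L(z)\bigr).
\]
Three checks remain, each essentially a one-line verification. \emph{Well-definedness into $D$}: for every $z \in D$ we have $\pi_L(z) \in D_L$, so by the hypothesis on $\varphi$ the pair $\bigl(\pi_L(z), \varphi(\pi_L(z))\bigr) \in D$, which is precisely $\rho(z)$ under the decomposition $L \oplus M$. \emph{Holomorphy}: $\rho$ is the composition of the linear map $\pi_L$ with the holomorphic graphing map, hence holomorphic. \emph{Idempotence}: since $\mathrm{image}(\varphi) \subset M = \ker(\pi_L)$ and $\pi_L^{\circ 2} = \pi_L$,
\[
\pi_L\bigl(\rho(z)\bigr) \;=\; \pi_L\bigl(\pi_L(z)\bigr) + \pi_L\bigl(\varphi(\pi_L(z))\bigr) \;=\; \pi_L(z),
\]
and consequently $\rho(\rho(z)) = \pi_L(z) + \varphi(\pi_L(z)) = \rho(z)$.

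Finally, the image of $\rho$ is exactly $\mathrm{Graph}(\varphi)$: by construction $\rho(z) = \bigl(\pi_L(z), \varphi(\pi_L(z))\bigr)$ lies in the graph, and conversely every point $\bigl(w, \varphi(w)\bigr)$ with $w \in D_L$ is fixed by $\rho$, as $\pi_L(w + \varphi(w)) = w$. Thus $\rho$ is a holomorphic retraction of $D$ onto $\mathrm{Graph}(\varphi)$. There is no genuine obstacle here; the only care required is the bookkeeping between the abstractly given complement $M$ and the kernel of the projection $\pi_L$ furnished by the linear retract hypothesis, which is handled by choosing the projection compatibly with the given $M$ (or equivalently, by replacing $M$ with $\ker(\pi_L)$ at the outset).
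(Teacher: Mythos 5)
Your construction is exactly the paper's: the paper likewise takes the linear projection $P$ with image $D_L$, sets $M=\ker(P)$, and defines the retraction $z \mapsto (P(z), \varphi(P(z)))$, verifying idempotence by direct computation. Your write-up is correct and simply spells out the same one-line verifications in more detail.
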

\begin{proof} Since $D_L$ is a linear retract of $D$, there exists
a linear projection map $P$ such that $P(D) = D_L$. By 
idempotency of $P$, we can write $\mathbb{C}^N = L \oplus M$, where
$M = \ker(P)$ and $L = {\rm image}(P)$. Consider the map $\psi: D \to D$
defined by $\psi(z) = (P(z),\varphi(P(z)))$; this 
can be verified to be idempotent by a direct computation, thereby showing that $\psi$ 
is a holomorphic retraction from 
$D$ onto ${\rm Graph}(\varphi)$ over $D_L$, finishing the proof.
\end{proof}
\noindent 

\noindent It must be noted that the above does not mean that non-linear retracts 
automatically come into existence
by taking the graph of any holomorphic 
function over any given linear retract of $D$ for, the graph can 
fail to be contained within $D$; that this is the only obstruction to generating
non-linear retracts by graphs is the content of the above proposition.
However, the obstruction is substantial: for instance, even in the 
simple case of $\mathbb{B}$, the standard unit ball with respect to the usual $l^2$-norm 
on $\mathbb{C}^N$, every retract through the origin is just a linear subspace and in 
fact, all retracts of 
$\mathbb{B}$ are affine-linear, as was first established by Rudin who then exposited it 
in chapter $8$ of his book \cite{Rud_book_Fn_theory_unitball} as well. The proof of this, at least as 
in \cite{Rud_book_Fn_theory_unitball}, uses the special feature of the $l^2$-norm that it comes from 
an inner product; however, this fact about the linearity of retracts through the 
origin is actually true in much greater generality as stated in proposition \ref{JJ-improved} .\\

\noindent To not be amiss to 
discuss concrete examples and the simplest ones at that, 
recall that the first standard choices for $D$, namely the $l^p$-balls
was already mentioned in the introduction, specifically theorem \ref{Lindenstrauss -- Tzafiri}.
Recall also that we had promised an example to show the failure of this theorem for $p=\infty$ i.e.,
when $D$ is the polydisc.\\

 \textit{Example}: The linear map 
$T: \mathbb{C}^3 \longrightarrow \mathbb{C}^3$ defined 
by $T(z_1,z_2,z_3) = \left(z_1,z_2,(z_1 + z_2)/2 \right)$ is a projection of 
norm $1$ in $(\mathbb{C}^3, \|\cdot\|_{\infty})$. But its image is not 
spanned by mutually disjoint 
supported vectors. To see this, assume to obtain a contradiction that 
there exists a basis $\{v_1,v_2\}$ whose supports 
are disjoint. Write $v_1 = (\zeta_1,\zeta_2,\zeta_3), ~ v_2 = (\eta_1,\eta_2,\eta_3)$. Consider 
the case $Supp(v_1) = \{1,2\}$ and $Supp(v_2) = \{3\}$. Then $\zeta_1,\zeta_2 \neq 0, \zeta_3 = 0$ 
and $\eta_1,\eta_2 = 0, \eta_3 \neq 0$. Note that $image(T)$ is the 
subspace $w_1+w_2 -2w_3 = 0$. Since $v_1,v_2 \in image(T)$, we 
have $\zeta_1 + \zeta_2 = 2\zeta_3$ and $\eta_1 + \eta_2 = 2\eta_3$ which 
leads to $\zeta_1 + \zeta_2 = 0$ 
and $\eta_3 =0$. It then follows that $v_2 = 0$, a contradiction. Similar arguments, show 
that the other cases here also do not arise.\qed \\

\noindent We now finish off the discussion of this $p=\infty$
by proving the characterization of its linear retracts as stated in proposition \ref{Polydisk}.\\

{\textit {Proof of proposition \ref{Polydisk}}}:  Assume $L$ is a linear retract
of $\Delta^N$. We can write $L = Y \cap \Delta^N$, for some linear 
subspace $Y \subset \mathbb{C}^N$. By theorem-3 in \cite{Heath_Suff}, there 
exists $ J = \{j_1,j_2,\ldots,j_r\}$ and holomorphic 
functions $F_i : M \to \Delta,~ \ 1\leq i \leq N$ 
where $M = \{z \in \Delta^N: z_j = 0~ \text{if} ~j \notin J\}$ such that   $F_i(z) = z_i$ if $i \in J$ and  
\[
L = \Delta^N \cap Y = \{(F_1(z),\ldots,F_N(z)): z \in M\}.
\]
Observe that the $F_j$'s are holomorphic functions 
from $\Delta^N$ to $\Delta$ and $F_j(z) = G_j(z_{j_1},\ldots,z_{j_r})$ for some holomorphic function $G_j$, which is bounded by $1$. Without loss of generality assume that each $F_i$'s are linear, since $F(z) = (F_1(z),\ldots,F_N(z))$ restricted to $L$ is linear. So, we get
\[ F_i(z) = \begin{cases}  z_i,  \quad \qquad \qquad \qquad \qquad \text{if}~ i \in J \\ c_{i,j_1}z_{j_{1}} + \ldots + c_{i,j_r}z_{j_r}, \quad \text{if}~ i \notin J, \\ 
 \end{cases} \] 
for some scalars $c_{i,j},~ j \in J$.\\

\noindent   Fix $m \notin J$ and for $1 \leq k \leq N$, define 
\[ 
w_{k} := 
\begin{cases}
\frac{|c_{m,j_i}|}{c_{m,j_i}}, \ \text{if}\  k = j_i  ~\text{for some}~i \ \text{and}\  c_{m,j_i} \neq 0 \\
0, \quad \quad \text{otherwise} 
\end{cases}.
\]
We know that $|F_m(z)| \leq 1$ for all $z \in \Delta^N$. If $w := (w_1,\ldots,w_N)$ then  
\[
|F_m(w)|  = \sum_{j \in J}|c_{m,j}| \leq 1 .
\]
Hence we get $\sum_{j \in J} |c_{m,j}| \leq 1$.
Recall that $M$ is spanned by $\{e_{j_1},\ldots,e_{j_r}\}$. Since $F$
is determined by the variables $z_{j_1},\ldots,z_{j_r}$, we get $Y$ is spanned by $\{F(e_{j_1}),\ldots,F(e_{j_r})\}$.  
By definition of $F_i$, we get 
\[ F_i(e_{j_k}) = \begin{cases}  \delta_{i,j_k},  \quad \qquad  \text{if}~ i \in J \\ c_{i,j_k}, \qquad  \text{if}~ i \notin J, \\ 
 \end{cases} \] 
 where $\delta_{i,j}$ denotes the Kronecker delta function.\\
\noindent By definition of $F$, we  get
\[ 
F(e_{j_k}) = (F_1(e_{j_k}), \ldots,F_N(e_{j_k})) = e_{j_k} + \sum_{i \notin J} c_{i,j_k} e_i.
\]
This shows that $Y = span\{v_k\}_{k=1}^r$ where $ v_k = e_{j_k} + \sum_{i \notin J} c_{i,j_k} e_i  $.\\

\noindent To attain the converse implication, assume that $L = Y \cap \Delta^N$ for some subspace $Y = span\{v_1,\ldots,v_r\}$ and also satisfies the conditions (i) and (ii). Define $J := \{j_1,\ldots,j_r\}$ and  
\[ F_i(z) = \begin{cases}  z_i,  \quad \qquad \qquad \qquad \qquad \text{if}~ i \in J \\ c_{i,j_1}z_{j_{1}} + \ldots + c_{i,j_r}z_{j_r}, \quad \text{if}~ i \notin J, \\ 
 \end{cases} \] 
Note that $F_i(z)$ is a holomorphic function from $\Delta^N$ to $\Delta$, because by condition (ii) of the theorem.
Consider the map $F$ defined in terms of these $F_i$'s by $F(z) := (F_1(z),\ldots,F_N(z))$. 
It can be readily checked that $F$ is a holomorphic retraction map on $\Delta^N$ whose image is
$span\{v_k\}_{k=1}^r \cap \Delta^N$, where $v_k = e_{j_k} + \sum_{i \notin J} c_{i,j_k}e_i$, which completes the proof.\\

\noindent As noted above, there exists a linear retract of $\Delta^N$ which is not spanned by mutually disjoint supported vectors. The converse implication however, is true and can now be derived as a corollary.\\

\begin{cor}
    If $Y$ is a linear subspace of $\mathbb{C}^N$ spanned by disjointly supported vectors $\{v_1,\ldots,v_r\}$, then $Z = Y \cap \Delta^N$ is a linear retract of $\Delta^N$.
\end{cor}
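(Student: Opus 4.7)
The plan is to reduce the hypothesis to the normalized form appearing in proposition \ref{Polydisk} and then invoke that proposition directly. Since rescaling each $v_k$ by a nonzero scalar leaves $Y = \mathrm{span}\{v_1,\ldots,v_r\}$ unchanged, I am free to replace $v_k$ by any nonzero multiple. The goal of the rescaling is to produce vectors whose coordinates meet the two conditions of proposition \ref{Polydisk}.

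First, for each $k \in \{1,\ldots,r\}$, I would choose an index $j_k \in \sigma_k := \mathrm{supp}(v_k)$ at which $|(v_k)_i|$ attains its maximum over $i \in \sigma_k$, and then divide $v_k$ by $(v_k)_{j_k}$ (which is nonzero by the choice of $j_k$). After this rescaling, the $j_k$-th coordinate of $v_k$ equals $1$ and all other coordinates of $v_k$ have modulus at most $1$. Because the supports $\sigma_1,\ldots,\sigma_r$ are mutually disjoint, the indices $j_1,\ldots,j_r$ are automatically distinct, so that $J := \{j_1,\ldots,j_r\}$ has cardinality exactly $r$. In this form,
\[
v_k \;=\; e_{j_k} \;+\; \sum_{i \notin J} c_{i,j_k}\, e_i,
\]
where by construction $c_{i,j_k} = 0$ whenever $i \notin \sigma_k$, and $|c_{i,j_k}| \leq 1$ when $i \in \sigma_k \setminus \{j_k\}$. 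This verifies the first structural condition of proposition \ref{Polydisk}.

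Next, to verify the second (norm) condition, fix any $m \in \{1,\ldots,N\} \setminus J$. Since the supports $\sigma_k$ are pairwise disjoint, the index $m$ belongs to at most one $\sigma_k$; hence the sum
\[
\sum_{j \in J}|c_{m,j}| \;=\; \sum_{k=1}^{r}|c_{m,j_k}|
\]
is either $0$ (if $m$ lies in none of the $\sigma_k$) or equals the single term $|c_{m,j_{k_0}}| \leq 1$ (where $k_0$ is the unique index with $m \in \sigma_{k_0}$). Either way the sum is at most $1$, which is condition (ii) of proposition \ref{Polydisk}.

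With both hypotheses of proposition \ref{Polydisk} in place, the conclusion that $Z = Y \cap \Delta^N$ is a linear (hence holomorphic) retract of $\Delta^N$ is immediate. There is essentially no analytical obstacle here; the only substantive step is the bookkeeping in the rescaling, and it is precisely the disjointness of supports that makes the sum in (ii) collapse to a single term, ensuring the norm bound comes for free from the maximality in the choice of $j_k$.
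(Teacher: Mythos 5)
Your proof is correct and follows essentially the same route as the paper's: normalize each $v_k$ so that its largest-modulus coordinate (necessarily in $\sigma_k$) becomes exactly $1$, observe that disjointness of the supports makes the indices $j_k$ distinct and collapses the sum in condition (ii) to a single term of modulus at most $1$, and then invoke proposition \ref{Polydisk}. The only cosmetic difference is that the paper first normalizes $\|v_k\|_\infty = 1$ and then adjusts the phase, whereas you divide by $(v_k)_{j_k}$ in one step; the substance is identical.
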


\begin{proof}
Without loss of generality assume that $\|v_i\|_{\infty} = 1$ for $i = 1,\dots,r$. 
There exists $J = \{j_1,\ldots,j_r\}$ such that $|\pi_{j_i}(v_i)| = 1$, where $\pi_i$ 
denotes the projection to the $i^{th}$ component. After multiplying an unimodular constant by $v_i$, we may assume that $\pi_{j_i}(v_i) = 1$. So, we can write  $v_k = e_{j_k} + w_k$. Note that $w_k$'s are disjointly supported vectors.
If $m \in \{1,2,\ldots,N\} \setminus J$, then we define  
$c_{m,{j_k}} := \pi_m(w_k)$. Since $\text{supp}(w_i)$'s are disjoint, we 
get that $\pi_m(w_k)\pi_m(w_l) = 0$ for $k \neq l$. Hence $\sum_{j \in J}|c_{m,j}| \leq 1$. By proposition \ref{Polydisk}, $Y = span\{v_1,\ldots,v_k\}$ is 
a linear retract of $\Delta^N$.
\end{proof}

\begin{prop}\label{lower_l_infty}
    Let $W$ be a linear subspace of $\mathbb{C}^N$. Then $W \cap 
\Delta^N$ is a $k$-dimensional linear retract of $\Delta^N$ if 
and only if $W$ is isometric to $(\mathbb{C}^k,\|\cdot\|_{\infty})$.
\end{prop}
\begin{proof}
One part of the implication will need tools that will be discussed only later below. Namely: if $W$ is isometric to $(\mathbb{C}^k,\|\cdot\|_{\infty})$, then 
by proposition \ref{lower_l_p} below, $W \cap \Delta^N$ is a $k$-dimensional linear 
retract of $\Delta^N$. We shall therefore now deal with the converse here; to do this, assume that $W$ is a $k$-dimensional linear 
retract of $\Delta^N$. By proposition  \ref{Polydisk}, there exists $J =\{j_1,
\ldots,j_k\} \subset \{1,2,\ldots,N\}$ such that $L$ is spanned 
by $v_1,\ldots,v_k$, where each $v_r$ is of the form
\[
    v_r = \left(\sum_{i \notin J}c_{i,j_r}e_i\right) +e_{j_r}
\]
and for each $m \in \{1,2,\ldots,N\} \setminus J$ satisfies 
$\sum_{j \in J}|c_{m,j}| \leq 1$. Without loss of generality 
assume that $J = \{j_1,\ldots,j_k\} = \{1,2,\ldots,k\}$. Consider the 
linear map $S: W \to \mathbb{C}^k$ defined by $S(v_i) = e_i$ for 
$i = 1,2,\ldots,k$ and extended to other vectors of $W$ by 
linearity. Now we claim that $S$ is an isometry. To prove this 
claim, write each vector $v \in W$ as $v = \lambda_1v_1 + \ldots +\lambda_kv_k
$ for some $\lambda_i \in \mathbb{C}$. Note that $S(v) = 
(\lambda_1,\ldots,\lambda_k)$. Choose $i_0 \in \{1,2,\ldots,k\}$ 
such that $\|(\lambda_1,\ldots,\lambda_k)\|_{\infty} = |\lambda_{i_0}|$. 
Hence $\|S(v)\| = |\lambda_{i_0}|$. Using the expression of $v_r$,
we can expand $v$ as
\[
    v = \sum_{l=1}^k\sum_{i=k+1}^N\lambda_l(c_{i,l}e_i + e_l)
\]
Observe that the first $k$ components of $v$ are $\lambda_1,\ldots,
\lambda_k$ and for each $i = k+1~ \text{to}~ N$ , $i^{\text{th}}$ component of 
$v$ is $\sum_{l=1}^k\lambda_lc_{i,l}$. Note that for each $i = k+1$ to $N$, we have  
\[
    \left|\sum_{l=1}^k\lambda_lc_{i,l}\right| \leq |\lambda_{i_0}|\left( 
    \sum_{l=1}^k|c_{i,l}|\right) \leq |\lambda_{i_0}|
\]
Hence we obtain that $\|S(v)\|_{\infty} = \|v\|_{\infty}$. This 
finishes the proof that $S$ is an isometry.
\end{proof}

\noindent We turn now to the proof of theorem
\ref{MainThm}. First, a few words about notations about retractions $\rho$ specific to this theorem.
When written out explicitly, $\rho$ has $N_1+N_2$ components:
$(\rho_1,\rho_2, \ldots, \rho_{N_1}, \rho_{N_1+1}, \ldots, \rho_{N_1+N_2})$ where each 
of the $\rho_j's$ for $j=1,\ldots,N_1+N_2$ can apriori depend on all of the 
variables $w_1, \ldots, w_{N_1},z_{1}, \ldots, z_{N_2}$; so, we 
use $w=(w_1, \ldots, w_{N_1})$ as our notation for the coordinates on $\mathbb{C}^{N_1} \supset B_1$ 
and $z=(z_{1}, \ldots, z_{N_2})$ for that on $\mathbb{C}^{N_2} \supset B_2$. We denote the Minkowski functionals of $B_1,B_2$ by
$\mu_1, \mu_2$ respectively. We spend some words for the 
relevant convex
case i.e.,
the special case when $\mu_1,\mu_2$ norms such that their 
unit balls are strictly convex -- we refer to this as `the convex case' for brevity, till the proof of theorem \ref{MainThm} is completed. This explicit spelling out is for the  
convenience of
future use of this result in conjunction with results
on norm one projections
in Banach spaces from the functional analysis literature.\\

\noindent Next, note that as our non-trivial retract $Z$ is necessarily a non-compact subset of 
$B_1 \times B_2$, it will therefore meet the boundary of $D$ 
i.e., $\overline{Z} \cap  \partial (B_1 \times B_2) \neq \emptyset$ where 
it may noted that
\[
\partial (B_1 \times B_2)
= (\partial B_1 \times B_2) \cup (B_1 \times \partial B_2)
\cup (\partial B_1 \times \partial B_2).
\]
However, rather than this disjoint decomposition of the boundary, we shall for the 
most part work with a slightly different decomposition in which the pieces are not 
disjoint but are closed (in 
$\mathbb{C}^{N_1 + N_2}$); essentially,
\begin{equation} \label{nondisjtdecomp}
\partial ( B_1 \times B_2) = (\partial B_1 \times \overline{B}_2) \cup (\overline{B}_1 \times \partial B_2).
\end{equation} 
Moreover, we shall  break up our discussion depending on where $L$ (rather than $Z$) meets the boundary of 
$D:=B_1 \times B_2$ i.e.,
let $\partial D_L:= L \cap \partial D$ and consider the various cases:
\begin{itemize}
\item [(I)] $\pa D_L \subset \pa B_1 \times \pa B_2$.
\item [(II)] $\pa D_L \subset (\pa B_1 \times B_2) \cup (\pa B_1 \times \pa B_2) = \partial B_1 \times \overline{B}_2$.
\item [(III)] $\pa D_L \subset (B_1 \times \pa B_2) \cup (\pa B_1 \times \pa B_2) = \overline{B}_1 \times \partial B_2$.
\item [(IV)] $\pa D_L$ intersects both $\pa B_1 \times B_2$
 and $B_1 \times \pa B_2$.
\end{itemize}
Observe that while these cases are not mutually exclusive they are indeed exhaustive; to 
see this clearly, first note the two obviously mutually exclusive cases:
\begin{itemize}
\item[(i)] $\partial D_L$ is  contained in one of
$\partial B_1 \times \overline{B}_2$ or $\overline{B}_1 \times \partial B_2$, or,
\item[(ii)] case-(i) does not hold i.e., $\partial D_L$ is not fully contained within one 
of the pieces mentioned in (i).
\end{itemize} 
As $\partial D_L$ is contained in $\partial ( B_1 \times B_2)$ by definition, case-(ii) 
means by equation (\ref{nondisjtdecomp}), that $\partial D_L$ must intersect both 
the pieces of case-(i) and conversely. Now, if $\partial D_L$ intersects both 
pieces mentioned in case-(i), then noting the subtle differences in 
the closure signs, we observe that either case-(IV) or case-(I) must 
happen in such an event.
This verifies that the cases (I) through (IV) suffice to exhaust all
possibilities; we note in passing that since the last case (IV), does 
not preclude the possibility that $L$ intersects 
$\partial B_1 \times \partial B_2$ (and hence subsumes case-(I)), the possibility that $L$ intersects each of 
$\partial B_1 \times B_2$, $B_1 \times \partial B_2$ and 
$\partial B_1 \times \partial B_2$ is taken care of, as well. We now take up each case in turn. 
We rigorously argue out these cases trying to minimize repetition 
at the same time, for we will provide no details for the extended product $B_1 \times B_2 \times \ldots \times B_N$. 

\subsection*{Case-(I)}
In this case, both the projections $\pi_1: D_L \to B_1$ and
$\pi_2: D_L \to B_2$ are proper. Indeed, if $\{(w^k,z^k): k \in \mathbb{N}\}$ is a sequence in $D_L$ accumulating only on its boundary, then by definition of the case-(I) that we are in now, $w^k \to \partial B_1$ and $z^k \to \partial B_2$
i.e., $\pi_1(w^k,z^k) \to \partial B_1$ and  
$\pi_2(w^k,z^k) \to \partial B_2$.\\

\noindent Next, note that ${\rm dim}(D_L) = {\rm dim}(L)\leq\min\{N_1,N_2\}=N_1$ since proper holomorphic maps cannot decrease dimension and $\pi_1$ is a proper holomorphic map carrying 
$D_L$ into the $N_1$-dimensional $B_1$. Infact, as $D_L$ is just a linear subvariety of $D$, the properness of $\pi_1$ means also that the fibres of $\pi_1$ are actually singletons; so, $\pi_1$ maps $D_L$ biholomorphically onto its range in $B_1$ and the inverse of this map can be used to parametrize $D_L$ as the graph of a linear map over the linear subspace $\pi_1(D_L) \subset B_1$ as:
\begin{equation}\label{L-as-graph}
D_L = \{ \big( w, \beta_1(w), \ldots, \beta_{N_2}(w)\big) \; : \; w \in \pi_1(D_L)\}.
\end{equation}
for some $\mathbb{C}$-linear functionals $\beta_1, \ldots, \beta_{N_2}$ on the complex linear subspace $\pi_1(L) \subset \mathbb{C}^{N_1}$. To tie this up with our goal of describing $Z$, we shall now show that $Z=D_L$! Indeed to this end, pick any point
 $(w^0,z^0) \in \partial D_L \subset \partial B_1 \times \partial B_2$ and 
consider the complex line segment $l$ through it i.e., $l$ is the image of the map 
$\varphi(t)$ defined for $t \in \Delta$, by
\[
\varphi(t) = (w^0 t, z^0t) = (w^0,z^0)t \;= t(w^0,z^0),
\]
where we have written out $\varphi(t)$ in various notations only to indicate once
of the alternative usages (that may be more natural in certain contexts to come later). 
Next compose this parametrization of $l$ with our 
retraction $\rho$:
\[
t \longmapsto \big( \rho_1(tw^0,tz^0), \ldots, \rho_{N_1+N_2}(tw^0,tz^0)\big).
\]
This is ofcourse the restriction of the retraction $\rho$ to $l$ and we shall now analyze it componentwise. More precisely, let 
\[
g_1(t) = (\pi_1 \circ \rho \circ \varphi) (t) = 
\big(\rho_1(tw^0,tz^0), \ldots, \rho_{N_1}(tw^0,tz^0) \big)
\]
and 
\[
g_2(t) = (\pi_2 \circ \rho \circ \varphi)(t) = \Big( \rho_{N_1+1}(tw^0,tz^0), \ldots, \rho_{N_1+N_2}(tw^0,tz^0) \Big).
\]
So $g_1,g_2$ are holomorphic maps from the unit disc into the balls $B_1, B_2$ respectively, mapping $0 \in \Delta$ to the origin in the respective balls. We are thus essentially in the setting of 
theorem \ref{extr-Schw-lem-holextrbdy} above
(in-case $B_1,B_2$ are strictly convex, the Schwarz lemma of Mazet recalled above as theorem \ref{Mazet-Schwarz} may also be applied), provided only that we verify that the derivatives of these maps at the origin satisfy the extremality condition as required. To do this, let us begin with the map $g_1$ whose derivative we compute using the chain-rule as:
\[
    dg_1(0)= d\pi_1 \circ d \rho_{\vert_{\varphi(0)}} \circ d \varphi(0).
\]
Then, using the facts that $d\pi_1 \equiv \pi_1$ (as the projection $\pi_1$ is just linear),
$\varphi$ maps $0 \in \Delta$ to the origin in $B_1$ which inturn is fixed by $\rho$, and that 
$d\rho\vert_0$ is just the identity map on $T_0Z$ and that 
$(w^0,z^0) = d \varphi(0) \in T_0Z$, we get
$dg_1(0)=w^0 \in \partial B_1$ which is a holomorphically extreme
boundary point by hypothesis.
The aforementioned theorem applied to $g_1$ shows that 
that $\pi_1 \circ \rho(tw_0,tz^0) = tw_0$, which means that $\rho^1:= \pi_1 \circ \rho$ acts like the projection onto the 
$w$-axes on $D_L$:
\[
{\rho^1}_{\vert_{D_L}}(w,z) = w.
\] 
The same considerations then applied to the other component $g_2$ gives likewise for all $t \in \Delta$ that
$\rho^2(tw_0, tz^0) = tz^0$ 
where $\rho^2= \pi_2 \circ \rho$. In other words, $\rho^2$ also acts as a projection, this time onto the $z$-space:
\[
{\rho^2}_{\vert_{D_L}}(w,z) = z.
\]
Putting this together with the similar conclusion made about $\rho^1$, this just means that $\rho$ acts as the identity mapping on $l$. But this argument may be carried out for {\it every} one-dimensional linear subspace of $D_L$, as we are in the case that $\partial D_L$ is completely contained within 
 $ \partial B_1 \times \partial B_2$. It therefore follows that $\rho$ must act as the identity mapping on $D_L$ leading to the conclusion that $D_L=Z$; thus, $Z$ in the present case (Case-(I)) must be of the form 
\[
Z=\{ \big( w, \beta_1(w), \ldots, \beta_{N_2}(w)\big) \; : \; w \in \pi_1(D_L) \}.
\]
Finally, what the fact that the boundary of $Z=D_L$ 
(what we have denoted by $\partial D_L$), lies in $\partial B_1 \times \partial B_2$ means for the functionals $\beta_j$'s appearing in the above description of $Z$, is that the linear map represented by their collective (whose graph is $Z$), namely
\begin{align}\label{beta-map}
 w \longmapsto \left( \beta_1(w), \ldots, \beta_{N_2}(w) \right)
\end{align}
maps $B_1^L:=\pi_1(D_L)$ into $B_2$.\\

\noindent In this paragraph, we remark about the special case when
$B_1,B_2$ are both strictly convex. We may phrase the last observation
as saying that
the map (\ref{beta-map})
is an isometry from the Banach space ($\pi_1(L)$, $\mu_1$) into the Banach space ($\mathbb{C}^{N_2}$, $\mu_2$) and therefore maps $B_1^L$ which is the unit ball in $\pi_1(L)$ with respect to the norm
$\mu_1$ isometrically into $B_2$; the mapping is surjective if $N_1=N_2$ and $\pi_1(L) = \mathbb{C}^N$ where $N=N_1=N_2$. Hence, we see that this places strong restrictions about when this case-(I) can happen. For instance, this case cannot occur
if $N_1=N_2=N>1$ and $B_1 = \Delta^N$ is the standard unit polydisc and $B_2 = \mathbb{B}$, the standard ball with respect to the $l^2$-norm in $\mathbb{C}^N$ and ${\rm dim}(D_L)=N$; or more generally, case-(I) cannot arise as soon as 
$\pi_1(D_L)$ is not biholomorphic to any of the linear sections of $B_2$. But then again, one must not be amiss to note that case-(I) does happen at least once regardless of what particular balls $B_1,B_2$ are, namely when ${\rm dim}(D_L)=1$; indeed, the complex line segment spanned by every point in $\partial B_1 \times \partial B_2$ is a retract (as is more generally true of the $\mathbb{C}$-line segment spanned by any boundary point of $D$, as we are dealing
only with the convex case in this para).

\subsection*{Case-(II):} $\partial D_L \subset \partial B_1 \times \overline{B}_2$. In this case, note that the
projection $\pi_1: D_L \to B_1$ is proper and again the general fact that proper holomorphic maps cannot decrease dimension gives ${\rm dim} D_L \leq {N_1}$. Noting that the arguments in case-(I) leading to (\ref{L-as-graph}), relied only on the properness of $\pi_1$ and not that of $\pi_2$, we see that the same arguments apply in this case as well to show again that $D_L$ must be of the form
\[
D_L = \{ \big( w, \beta_1(w), \ldots, \beta_{N_2}(w)\big) \; : \; w \in \pi_1(D_L)\},
\]
for some $\mathbb{C}$-linear functionals $\beta_1, \ldots, \beta_{N_2}$ on the complex linear subspace $\pi_1(L) \subset \mathbb{C}^{N_1}$; again, the fact that the boundary of $D_L$ 
 lies in $\partial B_1 \times  B_2$ implies that
 the linear transformation given by
\[
T_\beta(w) = \big( \beta_1(w), \ldots, \beta_{N_2}(w) \big)
\]
maps $B_1^L:= \pi_1(D_L)$ into $B_2$; this may be phrased in 
the convex case as: $T$
is an isometry from the Banach space ($\pi_1(L)$, $\mu_1$) into  ($\mathbb{C}^{N_2}$, $\mu_2$). The mapping is surjective if $N_1=N_2$ and $\pi_1(L) = \mathbb{C}^N$ where $N=N_1=N_2$.
Picking a point $(w^0,z^0) \in \partial D_L$, considering the complex line segment joining the origin to this point and the restriction of the retraction of $\rho$ to
this segment, we get by the arguments as in Case-(I) by applying 
theorem \ref{extr-Schw-lem-holextrbdy}
(or in the convex case, the Schwarz lemma of Mazet), that ${\rho^1}_{\vert_{D_L}}(w,z) = w$ where $\rho^1=\pi_1 \circ \rho$. As points of $D_L$ are of the form $(w, T_\beta(w))$ for $w \in \pi_1(D_L)$, we have
for all $w \in \pi_1(D_L)$ that 
\begin{equation} \label{maineqnII}
\rho(w, T_\beta(w)) = \big(w, \rho^2(w, T_\beta(w)) \big)
\end{equation}
where $\rho^2= \pi_2 \circ \rho$; thereby,
it follows that $\rho(D_L)$ is of the form
\[
\rho(D_L)=\{ \big(w,\rho^2(w, T_\beta(w))\big) \; : \; w \in \pi_1(D_L)\}.
\]
This means that $\rho(D_L)$ is the graph of a $\mathbb{C}^{N_2}$-valued holomorphic function over 
$B_1^L:=\pi_1(D_L)$, more precisely the graph of the $B_2$-valued holomorphic map given by
\[
w \to \big(\rho_{N_1+1}(w, T_\beta(w)), \ldots, \rho_{N_1+N_2}(w, T_\beta(w)) \big).
\]
for $w$ varying through $B_1^L$, the unit `ball' of $\pi_1(L)$ with respect to the functional $\mu_1$. Recalling that theorem \ref{alpha(L)=Z}
 gives $Z=\rho(D_L)$ whose form we just determined, finishes this case, yielding that $Z$ is a graph of a $B_2$-valued holomorphic map on the linear subspace $B_1^L$, which we also denote by $(B_1)_{L_1}$.\\

\subsection*{Case-(III)} $\partial D_L \subset \overline{B}_1 \times \partial B_2$. In this case, the projection onto the second-factor namely,
$\pi_2 : D_L \to B_2$ is proper and we deduce as usual at the outset, that: the general fact that proper holomorphic maps do not decrease dimension implies that ${\rm dim} D_L \leq N_2$.\\

\noindent   We then start as in Case-(I) by picking a boundary point 
$(w^0, z^0) \in \partial D_L$, which we may assume to not be from the part of $D_L$ that lies (if it does at all) in $\partial B_1 \times \partial B_2$ for this has already been 
dealt with in case-(I); the arguments that follow do not depend on such an assumption and we therefore do not make any assumption about which part of the boundary of $D_L$ the point $(w^0, z^0)$ comes from. Denote by $l'$, the complex line spanned by this point and consider the restriction of our retraction map $\rho$ to the intercept $l' \cap D$ of this line with our domain
$D=B_1 \times B_2$, which we call $l$. Indeed, compose the standard parametrization of $l$ namely $\varphi (t) = t(w^0,z^0)$ with $\rho$ followed by $\pi_2$ to get the  holomorphic curve in $B_2$ given by the parametrization
 $g_2(t) :=   (\pi_2 \circ \rho \circ \varphi)(t)$ for $t \in \Delta$. Noting that $g_2$ sends $0 \in \Delta$ to the origin in $B_2$, computing by the chain-rule that 
 $dg_2(0)=z^0 \in \partial B_2$ and recalling that all boundary points of $B_2$ are holomorphically 
 extreme, we see that all conditions required by theorem \ref{extr-Schw-lem-holextrbdy} (or if one so wishes, the Mazet Schwarz lemma
 for the convex case) are met; which when applied to  $g_2$ yields that this map is a linear map:
 $g_2(t)=z^0t = tz^0$. To see the implications of this, let us write this more elaborately using the original definition of $g_2$ as $\pi_2( \rho\circ \varphi (t) ) = t z^0$, which unravels
 explicitly as
\[
 \big( \rho_{N_1+1}( \varphi(t) ), \ldots, \rho_{N_1+N_2}(\varphi(t)) \big) = (tz^0_1, \ldots, t z^0_{N_2}).
\]
This means that for all $j=1,2, \ldots, N_2$, the restriction of $\rho_{N_1+j}$ to $l$ just returns the $j$-th variable:
\begin{equation*} 
{\rho_{N_1+j}}_{\vert_{l}} (w,z_1,\ldots,z_{N_2}) = z_j.
\end{equation*}
This just means that ${\rho^2}_{\vert_l}(w,z)=z$. Now, we may extend this independence 
of $\rho^2$ on $w$ throughout $l$, by recalling that $l$ was actually an arbitrarily chosen complex line segment from $D_L$. Indeed, recall that $l$ is the line spanned by $(w^0,z^0)$ which was just an arbitrary point of $\partial D_L$. 
Observing that every one dimensional complex linear subspace of $L$ can be obtained 
as the $\mathbb{C}$-span of some point of $\partial D_L$ (and that the foregoing arguments 
to reach the last equation works equally well for every complex line segment through the origin  in $D_L$), we deduce the persistence of the foregoing equation throughout $D_L$. That is to say, $\rho^2$ restricted to $D_L$ does not depend on $w$ and infact, 
\begin{equation}\label{alphonL}
    {\rho^2}_{\vert_{D_L}}(w,z) = z 
\end{equation}
Though we cannot claim this form for $\rho^1$ as well, observe that the fact that 
the linear projection $\pi_2: D_L \to B_2$ is proper, means that $\pi_2$ maps $D_L$ 
biholomorphically (indeed, as a linear isomorphism) onto its image $\pi_2(D_L)$, a linear subspace of 
$B_2$. In particular therefore, $\pi_2$ is invertible over $ \pi_2(D_L)$. Denoting the 
$w$-component of the inverse of this map 
by $T^\beta$, we see (analogously to Case-(II)) that this means that points of $D_L$ 
are of the form $(T^\beta(z),z)$ for some (holomorphic) linear map $T^\beta$ of 
$z=(z_1, \ldots,z_{N_2})$ varying in $\pi_2(D_L)$.
If we the compose this map with our retraction $\rho$, we deduce as before that 
points of 
$\rho(D_L)$ are precisely of the form
\begin{equation} \label{maineqnIII}
\rho\big( \beta(z),z \big) = \big( \rho^1(T^\beta(z),z),z \big) \;\; \text{for all} \; z \in \pi_2(D_L),
\end{equation}
meaning that $\rho(D_L)$ is a $k$-dimensional holomorphic graph over 
the linear subspace $\pi_2(D_L)$ of $B_2$, where 
$k={\rm dim}(D_L) = {\rm dim}(Z)$. As this verifies in particular that $\rho(D_L)$ is a submanifold 
of $Z$, we may now apply the identity principle for analytic sets or just recall theorem 
\ref{alpha(L)=Z}, which allows us to say that the whole of $Z$ is obtained by 
applying $\rho$ just once to $D_L$ i.e., $\rho(D_L)=Z$; as we just deciphered the 
form of $\rho(D_L)$, this finishes determining $Z$. 

\begin{rem}
The reason we are able to apply the reasoning of the first case in this case, `line-by-line' to each complex line in $D_L$ but not be confronted with the possibility that the projection $\pi_2(Z)$ is a `ruled-surface/ruled-submanifold' but actually a linear subspace is because of the first made observation by the argument based on theorem \ref{Mazet-Schwarz} that 
$\pi_2(\rho(tw_0, tz^0))= tz^0$ which is stronger than just saying that the image of $D_L$ under $\rho$ when projected onto the second factor is a line -- $\rho$ actually preserves the parametrization in the $z$-variables.
\end{rem}

\subsection*{Case-(IV)} This is the final case when $\partial D_L$ intersects 
both $\Gamma_1:=\partial B_1 \times B_2$ as well as 
$\Gamma_2:=B_1 \times \partial B_2$. While this does not preclude the possibility that
$L$ does not intersect $\Gamma_0:=\partial B_1  \times \partial B_2$, certainly any one dimensional complex linear subspace $l'$ which intersects $\Gamma_j$ for $j=1$ or $2$, intersects the boundary of $D$ only in $\Gamma_j$ (note firstly that the $\Gamma_j$'s are disjoint). That is to say, if $l=l \cap D$ then $\partial D_L$ is fully contained in $\Gamma_j$ (for $j=1$ or $2$), as soon as it contains a point from $\Gamma_j$. Indeed, if 
$(w^0,z^0) \in \Gamma_j$ then the complex line $l$ is the two dimensional real linear subspace spanned by $(w^0,z^0)$ and $i \cdot (w^0,z^0)= (iw_0,iz^0)$. Noting that $w^0$ (resp. $z^0$) lies in the interior of $\overline{B}_1$ 
(resp. $\overline{B}_2$) if and only if $i w^0$ (resp. $iz^0$) does, verifies the claim that $\partial D_L$ is fully contained within $\Gamma_j$ (for $j=1$ or $2$) as soon as it intersects $\Gamma_j$. As $L$ intersects both $\Gamma_j$ for $j=1,2$ we have in 
particular that $L=T_0Z$ cannot equal $l'$ and ${\rm dim}(D_L) \geq 2$. This also means that for each of $\Gamma_1, \Gamma_2$ there are complex lines in $L$ which intersect it. Moreover, if $l$ is a complex line in $D_L$ which intersects $\Gamma_j$ 
(for $j=1$ or $2$) then lines obtained by perturbing the coefficients of linear equations which define $L$, slightly, also intersect the open piece $\Gamma_j$ (for $j=1$ or $2$) of the boundary of $D$ and by our forgoing remarks, the boundary of the intercept of each such line (near $D_L$) with $D$ is fully contained in $\Gamma_j$. Observe that the union of these lines contains an open subset $U_L$ of $D_L$. Next, we apply the Schwarz lemma arguments in Case-(II) (resp. Case-(III)) to each line that intersects $\Gamma_1$ (resp. $\Gamma_2$). As there are lines in $D_L$ which intersect $\Gamma_1$ as well as (other) lines in $D_L$ which intersect $\Gamma_2$, we have both the conclusions of these cases about the major pair of components of 
$\rho$ namely, $\rho^j:=\pi_j \circ \rho$ for $j=1,2$, which are that $\rho^1$ just returns the first set of variables on $D_L$: 
${\rho^1}_{\vert_{D_L}}(w,z)=w$ and likewise for the second component 
${\rho^2}_{\vert_{D_L}}(w,z)=z$. 
Putting together, we therefore see that 
$\rho$ acts just as the identity map, first atleast on $U_L$, and thereby on $L$ as well, by the identity principle applied to both the holomorphic maps $\rho^1, \rho^2$. As ${\rm dim}(D_L)={\rm dim}(Z)$ obviously, this means that $D_L=Z$. Therefore, we conclude 
in this case that $Z$ is a linear subspace of $D$ (of complex dimension $\geq 2$), which is neither contained in $B_1$ 
nor $B_2$. \\

\subsection*{Conclusion} The analysis of the foregoing cases can all be now summed up and 
it leads precisely to theorem \ref{MainThm}. Certain apparent redundancies
are better clarified by the following remarks.

\begin{rem}
Though case-(i) is a subcase of both case-(ii) and case-(iii) in the statement of theorem \ref{MainThm}, 
we have stated it separately, for we can provide 
finer information in this case
where the boundary $\partial D_L$ is contained in the distinguished boundary of $D$, namely
$\partial B_1 \times \partial B_2$. 
Indeed in case-(i), the linear 
retract $Z$ can be viewed as a graph of a linear map over a linear subspace in $B_2$ as well; this cannot be done 
in case-(iv) in general: consider for instance, the product of a one-dimensional retract of $B_1$ with one 
of $B_2$ which obviously cannot be expressed as a graph of a function neither over a linear subspace 
in $B_1$ nor over one in $B_2$ (unless the dimension $N=2$). Nevertheless, one may broadly say as in the opening sentence of the 
theorem, that retracts in all cases are indeed graphs of holomorphic maps over some linear subspace 
in the (full) domain $D$ just by theorem \ref{Graph} itself. However, what cannot 
be derived from that theorem but is readily implied by the above theorem is that 
certain graphs are ruled out from being retracts -- for instance, graphs of {\it non-linear} holomorphic 
maps which intersect $\partial B_1 \times B_2$ and
$B_1 \times \partial B_2$. \\
If we drop case-(i), viewing it as a subcase of 
case (ii) or (iii), then it may be noted that cases (ii) through (iv) are both mutually exclusive 
and exhaustive as well. Next, it must be noted that each of the possibilities expressed by cases (i) 
through (iv) are actually realized and can be used to construct the retracts. However, we note that 
though this theorem seems to provide a complete characterization of the retracts of $D$, we cannot 
pinpoint which of the linear subspaces of $D$ can happen to be its retract much as we cannot say 
which linear subspaces of $B_1$ or $B_2$ are their (linear) retracts as well -- this depends on the 
specific nature of the balls $B_j$ i.e., the geometric features of their boundaries, for it is 
known for certain that not every linear subspace of $D$ 
can be its retract in general, for instance even in the simplest case 
$B_1=B_2=\Delta$, in which case however we can indeed so pinpoint, as was 
precisely the purpose of our theorem \ref{Polydisk} above.   
\end{rem}

\noindent More importantly perhaps is the following observation.
\begin{rem}
    Although $Z$ is biholomorphic to $D_{L_1}$, it is unreasonable to expect at-least in general, that
    there is an automorphism of $D$ which maps $Z$ biholomorphic $D_{L_1}$. This fails already for the 
simplest polyball namely, the polydisc, wherein such a failure can be seen as follows. All
the automorphisms of the polydisc are linear fractional transformations, thereby each one of 
them maps a {\it complex} linear subspaces to at-most a quadratic complex surface. Hence, as 
soon as $Z$ is `sufficiently' non-linear (there are plenty of retracts in the polydisc which are
graphs of monomial maps of degree $>2$ for instance, which are herein regarded as 
`sufficiently' non-linear), it is 
certainly clear that there does not exist any automorphism of the polydisc mapping $Z$ onto $D_{L_1}$.
Theorem \ref{MainThm} may therefore be viewed as providing an answer to the problem of recasting 
retracts in a domain into a simple `normal' form (which cannot be reduced to a further simple one in 
general), by applying some appropriate automorphism of the domain.
\end{rem}
\begin{rem}\label{pi_1(Z)}
It may very well happen that neither $\pi_1(Z), \pi_2(Z)$ need be linear
subspaces of the domain, let alone retracts (neither
of the respective factors $B_1,B_2$ nor of $D$). 
For a simple example, consider the
polydisc in $\mathbb{C}^4$ which we view
as a product of $2$ copies of the bidisc i.e.,
let
$D := \Delta^2 \times 
\Delta^2$; and, consider the linear subspace $L$ of $\mathbb{C}^4$ 
defined by 
$L := span\{v_1,v_2\}$, where $v_1 = (1,1/2,1/4,0) \in \partial \Delta^2 \times \Delta^2,~ 
v_2 = (0,1/2,3/4,1) \in \Delta^2 \times \partial \Delta^2$.
We can identify first and second factor of $D$ as $\Delta^2 \times \{0\}$ and $\{0\} \times \Delta^2$. Note that the holomorphic map $\rho$ on $D$ defined by $\rho(z_1,z_2,z_3,z_4) = (z_1,\frac{z_1+z_4}{2},
\frac{z_1+3z_4}{4},z_4)$ gives a retraction map from $D$ onto $D_L$.
Note that $D_L$ is a retract of $D$ and $\partial D_L 
$ intersects $\partial \Delta^2 \times \Delta^2$ and $\Delta^2 
\times \partial \Delta^2$.  
Now we consider the 
projections restricted to $D_L$, which is given as follows:\\ 

$\pi_1 : L \to \pi_1(L)$ defined by 
\[
    \pi_1(z_1,\frac{z_1+z_4}{2},\frac{z_1+3z_4}{4},z_4) = (z_1,\frac{z_1+z_4}{2},0,0).
\]
and $\pi_2 : L \to \pi_2(L)$ defined by 
\[
\pi_2(z_1,\frac{z_1+z_4}{2},\frac{z_1+3z_4}{4},z_4) = (0,0,\frac{z_1+3z_4}{4},z_4).
\]
 Note that $\pi_1(D_L)$ and $\pi_2(D_L)$ are not linear subspaces 
of neither the respective factors
($\Delta^2 \times \{0\}$, $\{0\} \times \Delta^2$)
nor of $D$;
indeed, this can be 
confirmed for instance by noting that that $(0,1/2) = \pi_1(v_2) \in 
\pi_1(D_L)$ and $(1/4,0) = \pi_2(v_1) \in \pi_2(D_L)$, but their scalings: 
$(0,3/4) \notin \pi_1(D_L)$ and $(1/2,0) \notin \pi_2(D_L)$ despite $(0,3/4,0,0)$ and $(1/2,0,0,0)$
are within $D$.
\end{rem}

\noindent As mentioned in the introduction, we shall skip the details of the extension of the arguments above 
to the case of an extended product of an arbitrary number of `balls' and only lay down the end result;
in-fact we only do this for the case of convex
factors with $\mathbb{C}$-extremal boundaries
which is of special importance as it 
leads to an associated result about
norm-one projections with respect to those norms
whose unit balls are such products. The
corresponding statement for bounded balanced
pseudoconvex domains with
holomorphically extreme boundaries is also true 
and can be written down as in theorem \ref{MainThm}.\\

\begin{thm} \label{MainThmExtd}
Let $B_1,B_2, \ldots, B_N$ be a collection of bounded balanced convex domains 
with $\mathbb{C}$-extremal boundaries, possibly in
different complex Euclidean spaces (say, $B_j \subset \mathbb{C}^{n_j}$ for $j=1,\ldots, N$). Then, 
every non-trivial retract through the origin of 
$D=B_1 \times B_2 \times \ldots \times B_N$ is a holomorphic graph over a linear subspace 
which itself is a retract of $D$; if the retract is not linear, this linear subspace is 
infact a subspace of the product of a subcollection of these balls $B_j$'s with values 
in the product of the remaining balls in this collection.\\
More specifically and precisely, let $L=T_0Z$, $D_L=L \cap D$ and 
$\partial D_L = L \cap \partial D$. For each $p \in \{1,2, \ldots, N\}$, denote by
 $\Sigma_p$, the set of all $p$-tuples $(j_1,j_2, \ldots, j_p)$ where 
 $\{ j_1,j_2,\ldots,j_p\} \subset \{1,2, \ldots,N\}$ with 
 $j_1<j_2<\ldots<j_N$ (the cardinality of $\Sigma_p$ is $N \choose p$) and note that there exists any given $Z,D_L$ as above, $p \in \{1,2,\ldots,N\}$ such that $\partial D_L$ is contained in that part of the boundary of the convex body
$\overline{D}= 
\overline{B}_1 \times \overline{B}_2 \times \ldots \times \overline{B}_N$, obtained by replacing the factor $\overline{B}_l$ herein by $\partial B_l$ for each $l=j_1, j_2, \ldots, j_p$
where $\sigma=(j_1,j_2, \ldots, j_p) \in \Sigma_p$. Then, $Z$ is the graph of a 
$B^{\sigma_c}$-valued holomorphic map over a $q$-dimensional linear retract of 
$B_{j_1} \times B_{j_2} \times \ldots \times B_{j_p}$ (which is included in the 
canonical manner into our product domain $D=B_1 \times B_2 \times \ldots \times B_N$),
where:
\begin{itemize}
\item[(i)] $q \leq n_{j_1} + \ldots + n_{j_p}$ and 
 $(j_1,j_2, \ldots, j_p) \in \Sigma_p$ and,\\
\item[(ii)] $B^{\sigma_c}$ is the product of balls given by
$B_{k_1} \times B_{k_2} \times \ldots \times B_{k_{N-p}}$ where $\sigma_c \in \Sigma_{N-p}$ 
is the $(N-p)$-tuple complementary to $\sigma$ in the sense that
$\sigma^c=(k_1,\ldots,k_{N-p}) \subset \{1,\ldots,N\} \setminus \{j_1,\ldots,j_p\}$
and $k_1<k_2<\ldots<k_{N-p}$. 
\end{itemize}   
Let $m=\min\{n_1,n_2, \ldots,n_N\}$, $\hat{\mu}_m = \max\{\mu_1, \mu_2, \ldots,\mu_{m-1}, \mu_{m+1}, \ldots, \mu_N\}$, where for all $j=1,2,\ldots, N$ we denote by $\mu_j$, the Minkowski functional of $B_j$. Then, in the subcase when $\partial L$ 
is contained within the distinguished boundary of $D$ namely, $\partial B_e:=\partial B_1 \times \partial B_2 \times \ldots \partial B_N$, the retract $Z$ is actually a linear subspace of $D$ occurring as the graph of a (holomorphic) linear 
isometry of the Banach space $(\pi_m(L), \mu_m)$  into the Banach space
$(\mathbb{C}^{n_1} \times \ldots \times \mathbb{C}^{n_{m-1}} \times
 \mathbb{C}^{n_{m+1}} \times \mathbb{C}^{n_N}, \hat{\mu}_m) $, and therefore maps $B_m^L:=\pi_m(D_L)$ which is the unit ball in $\pi_m(D_L)$ with respect to $\mu_m$, 
 isometrically into $B_1 \times \ldots B_{m-1} \times B_{m+1} \times \ldots \times B_N$.\\
In the pending case when $\partial D_L$ intersects more than one connected component of 
$\partial D \setminus \partial B_e$, the retract $Z$ is 
again a linear subspace of $D$ of complex dimension equal to the number of connected components that it intersects.
\end{thm}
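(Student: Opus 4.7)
My plan is to extend the case analysis underlying Theorem \ref{MainThm} from two factors to $N$, replacing the binary division by a stratification indexed by subsets $\sigma \subset \{1, \ldots, N\}$ and projecting onto subproducts $B_\sigma := B_{j_1} \times \ldots \times B_{j_p}$ rather than onto individual factors; as the authors indicate, induction on $N$ is not available (the product of strictly convex balls is no longer strictly convex), so one must run the same Schwarz-type argument along several families of complex lines simultaneously, harnessing the strict convexity of each factor in turn. Set $\Gamma_\tau = \{x \in \partial D : x_l \in \partial B_l \text{ iff } l \in \tau\}$ for each non-empty $\tau \subset \{1, \ldots, N\}$, so that $\partial D = \bigsqcup_\tau \Gamma_\tau$ and $\partial B_e = \Gamma_{\{1, \ldots, N\}}$.

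Given $L = T_0 Z$ and $D_L = L \cap D$, the first step is to extract a canonical $\sigma = (j_1, \ldots, j_p) \subset \{1, \ldots, N\}$ with the property that $\pi_\sigma : D_L \to B_\sigma$ is proper and $\sigma$ is maximal with respect to this property; such a $\sigma$ is non-empty because $\partial D_L \subset \partial D$ forces at least one factor to accumulate on the boundary along every boundary sequence of $D_L$. With $\sigma$ fixed, linearity of $D_L$ upgrades properness of $\pi_\sigma|_{D_L}$ to an injective linear isomorphism of $D_L$ onto $\pi_\sigma(D_L)$, giving $q := \dim D_L \leq n_{j_1} + \ldots + n_{j_p}$ and realising $D_L$ as the graph of a $\mathbb{C}$-linear map over $\pi_\sigma(D_L)$ with values in $B^{\sigma_c} := \prod_{k \in \sigma^c} B_k$. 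The combinatorial content here is the claim that a \emph{single} maximal $\sigma$ serves all of $\partial D_L$ simultaneously, a point at which the multi-factor bookkeeping concentrates.

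Next, for each $l \in \sigma$ and each $x^0 \in \partial D_L$ with $x^0_l \in \partial B_l$, form the holomorphic disc $g_l(t) = \pi_l \circ \rho(t x^0) : \Delta \to B_l$. Since $g_l(0) = 0$ and $dg_l(0) = x^0_l \in \partial B_l$ is $\mathbb{R}$-extremal by the strict convexity of $B_l$, Theorem \ref{Mazet-Schwarz} forces $g_l(t) = t x^0_l$; ranging $x^0$ over enough such boundary points to cover $D_L$ by complex lines activating the $l$-th factor, one concludes that $\pi_l \circ \rho$ acts as the $l$-th coordinate identity on all of $D_L$. Doing this for every $l \in \sigma$ simultaneously and combining with $Z = \rho(D_L)$ from Theorem \ref{Graph} realises $Z$ as the graph over $\pi_\sigma(D_L)$ of a $B^{\sigma_c}$-valued holomorphic map; restricting $\pi_\sigma \circ \rho$ to the canonical embedding $B_\sigma \hookrightarrow D$ further shows that $\pi_\sigma(D_L)$ is itself a linear retract of $B_\sigma$.

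Finally, the two subcases. If $\partial D_L \subset \partial B_e$ then $\sigma = \{1, \ldots, N\}$ and $\pi_\sigma|_{D_L}$ is a proper linear isomorphism with image in $B_e$, forcing $\rho|_{D_L}$ to be the identity via Mazet applied to every factor; so $Z = D_L$ is linear, and reading off norms as in Case-(I) of the proof of Theorem \ref{MainThm} yields the linear-isometry description $(\pi_m(L), \mu_m) \hookrightarrow (\prod_{k \neq m} \mathbb{C}^{n_k}, \hat{\mu}_m)$ at the minimum-dimension factor $m$. If $\partial D_L$ meets more than one connected component of $\partial D \setminus \partial B_e$, pick complex lines through the origin in $D_L$ landing in each such component separately and run the Schwarz argument on each; this forces identity behaviour on an open subset of $D_L$, and the identity principle upgrades it to $Z = D_L$ linear, with dimension counted by the hit components. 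The main obstacle throughout is the opening step, namely isolating the canonical $\sigma$ and establishing its uniform applicability across $\partial D_L$; its combinatorics have no $N = 2$ analogue and rely essentially on the strict convexity of each individual factor to preclude mixed accumulation behaviour of boundary sequences in $D_L$ along distinct directions.
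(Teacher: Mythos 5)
Your overall strategy is exactly the one the paper intends (the authors explicitly skip the details of Theorem \ref{MainThmExtd} and say the proof consists of repeating the two-factor arguments, projecting onto each factor in turn to harness its strict convexity): stratify $\partial D$ by which factors lie on their boundaries, realize $D_L$ as a linear graph over $\pi_\sigma(D_L)$, run the Mazet--Schwarz argument factor-by-factor on complex lines through the origin, and invoke $Z=\rho(D_L)$ from Theorem \ref{Graph}. The per-factor Schwarz step, the graph realization, and the treatment of the two extreme subcases are all as in Cases (I)--(IV) of Theorem \ref{MainThm}.

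However, the step you yourself flag as the main obstacle --- extracting the canonical $\sigma$ --- is formulated incorrectly, and as stated it fails. You define $\sigma$ to be maximal with the property that $\pi_\sigma : D_L \to B_\sigma$ is proper. But for $\sigma = \{1,\ldots,N\}$ the map $\pi_\sigma|_{D_L}$ is the inclusion of the closed subset $D_L$ into $D$, which is always proper (every boundary sequence of $D_L$ has \emph{some} factor escaping to its boundary, which is all that properness into the product $B_\sigma$ requires). So your maximal $\sigma$ is always the full index set, the bound $q \le n_{j_1}+\ldots+n_{j_p}$ becomes vacuous, and the subsequent Mazet step breaks down: for an index $l$ with, say, $\pi_l(D_L)=\{0\}$, there are no boundary points $x^0\in\partial D_L$ with $x^0_l\in\partial B_l$ at all, so no complex lines "activate" factor $l$. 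The correct extraction is factor-wise: take $\sigma=\{\,l : \pi_l|_{D_L}\to B_l \text{ is proper}\,\}$, equivalently the set of $l$ such that $x_l\in\partial B_l$ for \emph{every} $x\in\partial D_L$ (this is precisely the containment condition in the theorem's statement). For each such $l$ every complex line in $D_L$ activates factor $l$, so Mazet applies everywhere and $\pi_l\circ\rho=\pi_l$ on $D_L$; properness of $\pi_\sigma|_{D_L}$ then follows from properness of any single $\pi_l$, $l\in\sigma$, giving the dimension bound and the injectivity needed for the graph structure. Note also that this $\sigma$ can be empty (already for $N=2$ in Case (IV)); that is not a defect but exactly the mixed situation handled by the final "pending case," so your non-emptiness claim should be dropped rather than justified. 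With $\sigma$ so redefined, the rest of your argument goes through and coincides with the paper's.
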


\noindent Needless to say, when the product domain happens
to be homogeneous such as 
a bounded balanced symmetric domain whose automorphisms 
are known, the above theorem
suffices to give all its retracts.

\section{Proof of theorem \ref{union-prob-polyballs}}

\noindent The proof follows 
ideas from the article \cite{Fost} of Forn\ae ss--Sibony.
Let us begin by noting that it follows from theorem \ref{Graph} proved in the foregoing section that
every retract of a bounded balanced homogeneous domain is biholomorphic to the graph of some holomorphic
function over a linear subspace of $D$.\\

\noindent  Since $\Omega$ is homogeneous, $\Omega / {\rm Aut}(\Omega)$ is singleton and hence compact. By the Main theorem in  \cite{Fost}, there exists a closed complex submanifold $Z$ of $\Omega$ such that M is biholomorphic to a locally trivial holomorphic fibre bundle with fibre $\mathbb{C}$ over $Z$. As mentioned in the beginning, by theorem \ref{Graph} we get that Z is biholomorphic to a linear retract $\Omega_L := \Omega \cap L$ for some linear subspace $L$ of $\mathbb{C}^N$. We can cover $M$ by coordinate charts biholomorphic to $\{U_{\alpha} \times \mathbb{C}\}$ where $U_{\alpha}$ is an open cover of $Z$. The coordinate transformations from the $\alpha$ to the $\beta$- coordinates are of the form
\[ \varphi_{\alpha \beta}(z,w) = (z,h_{\alpha \beta}(z)w + k_{\alpha \beta}(z))\]
for some holomorphic functions $k_{\alpha \beta}, h_{\alpha \beta}$ on $U_{\alpha} \cap U_{\beta}, h_{\alpha \beta} \neq 0$. The data $h_{\alpha \beta}$ determine a multiplicative Cousin problem on $Z$. Since $Z$ is homeomorphic to a polydisc, it follows by Oka's theorem (theorem 3.9 of \cite{oka_0} as well) that the multiplicative Cousin problem for $Z$ can be solved. Hence we might have assumed already above that $h_{\alpha \beta} \equiv 1$ for all $\alpha, \beta$. However in that case, the data $k_{\alpha \beta}$ determine an additive Cousin problem. By Oka's theorem in  (theorem 3.6 of \cite{oka_0}), the additive Cousin problem on $Z$ can also be solved. Hence $M$ is biholomorphic to $Z \times \mathbb{C}$.

\begin{rem}
We recall for one last time, that the class of homogeneous bounded {it balanced} domains 
is precisely the class of bounded balanced symmetric domains and that any domain of this
class, is biholomorphic to a Cartesian product 
of Cartan domains of types I -- IV 
(the second chapter, specifically remark 2.3.8 of the book \cite{Jrncki_invrnt_dst} and references therein), a polyball.
\end{rem}

\section{Proof of Theorems \ref{ell_q_ball_C3} and \ref{cnvx_hull}}\label{ell_q_ball}

Before discussing our result as in the 
title of this section on the retracts of  
$\ell_q$ balls for $0<q<1$ 
we first review the case $q\geq1$. This is {\it not} to 
repeat the details available in the classical
treatise of Lindenstrauss -- Tzafiri \cite{Lindstraus}, 
we note that this is 
still perhaps the only book where the 
the main theorem about this case namely, the one stated
as theorem \ref{Lindenstrauss -- Tzafiri} is 
dealt with. To indicate our purpose here in reviewing this case,
notice that the 
formulation of the determining criteria for the linear retracts
of $\ell^p$-balls as stated in the just-mentioned theorem
is highly coordinate dependent, leaving a
a more geometric understanding to be desired. While 
geometric reformulations of the same have been recorded
for instance in the survey \cite{Beata-survey}, unfortunately
this article neither outlines a proof nor a reference for where it 
could be found; the book of Lindenstrauss -- Tzafiri referred
therein does not give one side of the result that we need, which to 
the best of our knowledge is not recorded elsewhere as well. We therefore proceed to unraveling in some
detail, the geometric meaning of
theorem \ref{Lindenstrauss -- Tzafiri} needed for a better
understanding of the linear retracts of $l^p$-balls.\\

\noindent To begin with, we observe that a linear slice of an $\ell^p$-ball need not be isometric to any lower 
dimensional $\ell^p$-ball. In fact, this happens only in very 
special circumstances, indeed {\it precisely} when the 
slice is a retract. 
As already mentioned, this result is not really 
new, being contained in essence in theorem 6.3 of \cite{Beata-survey}; our purpose here is to provide the missing details, 
indeed a self-contained proof of this result, not to be found elsewhere. To attain this geometric result, we now recall a generalization of the notion of orthogonality in Euclidean spaces to normed
spaces wherein the norm fails to be induced by any inner product; one that is most relevant for our aforementioned purposes first.
\begin{defn}
Let $(X,\|\cdot\|)$ be a complex Banach space. For $x, y \in X$, we 
say that $x$ is  $p$-orthogonal 
 to $y$ and denoted $x \perp_p y$ if
 \[
 \begin{cases}
 \|x+ \lambda y\|^p = \|x\|^p + |\lambda|^p\|y\|^p,~\forall \lambda \in \mathbb{C},~\text{when} ~p < +\infty; \\
  \|x+\lambda y\| = \max\{\|x\|,|\lambda|\|y\|\},~\forall \lambda \in \mathbb{C},~\text{when} ~p = +\infty.
\end{cases}
\] 
\end{defn}
\noindent There are many notions of orthogonality and we shall not spend space in comparing the above with others except perhaps for the most popular  known as Birkhoff-James orthogonality, which can be found in \cite{Giles} and 
\cite{Faulk}. Denoted suggestively by `BJ', this is defined as
\[
x \perp_{BJ} y \Longleftrightarrow \|x +\lambda y\| \geq \|x\|~~\text{for every}~\lambda \in \mathbb{C}.
\]
Note that if $x \perp_p y$ then $x \perp_{BJ} y$ and $y \perp_{BJ} x$. Therefore, $p$-orthogonality is a stronger condition than Birkhoff-James orthogonality. This generalized orthogonality relation is, in general not symmetric. If it is indeed the case that $x \perp_{BJ} y$ and $y \perp_{BJ} x$, then we say that $x$ and $y$ are bi-orthogonal (or symmetrically orthogonal).
\\

\noindent Note that if $u,v \in \mathbb{C}^N$ are mutually disjointly supported vectors then 
$u$ and $v$ are bi-orthogonal vectors in $(\mathbb{C}^N,\|\cdot\|_p)$ 
for $1 \leq p \leq \infty$. But for each $p \neq 2$, there exists a linear subspace spanned 
by bi-orthogonal vectors which is not be spanned by mutually disjoint supported 
vectors, as illustrated in the following.\\

\textit{Example:} Consider the linear subspace $L = f^{-1}(0)$, where the 
linear functional $f: \mathbb{C}^3 \to \mathbb{C}$ defined by 
$f(z_1,z_2,z_3) = 2z_1+z_2-z_3$. Next we claim that $L$ is spanned by 
the bi-orthogonal vectors $(1,-1,1)$ and $(0,1,1)$. To prove this claim, 
let $v_1 = (1,-1,1)$ and $v_2 = (0,1,1)$. Note that
\[
\|v_1+\lambda v_2\|^p = \|(1,-1+\lambda,1+\lambda)\|^p = 
1 + |1-\lambda|^p+|1+\lambda|^p 
\]
Since $|a+b|^p \leq 2^{p-1}(|a|^p + |b|^p)$, we obtain that 
$\|v_1 + \lambda v_2\|^p \geq 1 + \frac{2^p}{2^{p-1}} = 3 = \|v_1\|^p$. This shows that $v_1$ is orthogonal to $v_2$. Using similar arguments, we obtain that $v_2$ is orthogonal to $v_1$.
This finishes the proof of the claim. If $L$ is spanned by 
mutually disjoint supported vectors then by lemma 3.2 in  \cite{Baronti-Pappini}, $f$ has 
atmost two non-null components. Hence, we 
obtain that $L$ cannot be realized as a subspace spanned by mutually disjoint supported 
vectors.
\begin{lem}\label{ortho}
Let $P$ be a linear projection map on $\mathbb{C}^N$. Then 
$\|P\| =1$ if and only if ${\rm image}(P)$ is BJ-orthogonal to 
$\ker (P)$.
\end{lem}
\begin{proof}
Assume that $P$ is a projection of norm one. We want to show that 
${\rm image}(P)$ is BJ-orthogonal to $\text{ker}(P)$.
Towards this, let $z \in \text{image}(P)$ and 
$w \in \ker (P)$; so, $P(z +\lambda w) = P(z) $ by linearity of $P$. Being a projection of norm-one $P$ is contractive,
thereby
for all $\lambda 
\in \mathbb{C}$, we have $\|z\| = \|P(z+\lambda w)\| \leq \|z+\lambda w\|$. 
By the notion of BJ-orthogonality recalled above, $z$ is BJ-orthogonal to $w$.\\
Conversely assume that $\text{image}(P)$ is BJ-orthogonal to $\ker(P)$. 
Since $P$ is a linear projection map, we have $\mathbb{C}^N = \text{image}(P) 
\oplus \ker(P)$. So, we can write each $z \in \mathbb{C}^N$ as 
$z = w_1 + w_2$, where $w_1 \in \text{image}(P)$ and $w_2 \in \ker(P)$. 
Note that $\|P(z)\| = \|P(w_1)\| = \|w_1\|$. As $w_1 \perp_{BJ} w_2$, we get: 
$\|w_1\| \leq \|w_1 + w_2\| = \|z\|$
i.e., $\|P(z) \| \leq \|z\| $, finishing the proof.
\end{proof}

Next, we recall the following result due to Lamperti (Corollary 2.1 of \cite{Lamp}), which is an analogue of the parallelogram law for $\ell^p$-norms, determining precisely for which pair of vectors such a law holds.
\begin{lem}\label{disjnt}
Let $1 \leq p < \infty$ with $p \neq 2$. Two vectors $u,v \in \mathbb{C}^N$ are mutually disjoint supported if and only if  $u$ and $v$ are $p$-orthogonal.
\end{lem}
\noindent Though the following result is not new. For instance, refer to theorem 3 of chapter 6 in \cite{Lacey}. We therefore give a self-contained and direct exposition. For clarity, let us introduce a notation $\|\cdot\|_{p,k}$ to denote the $\ell_p$ norm on $\mathbb{C}^k$.
\begin{prop}\label{lower_l_p}
Let $W$ be a linear subspace of $(\mathbb{C}^N,\|\cdot\|_{p,N})$. If $W$ is isometric to 
$(\mathbb{C}^k,\|\cdot\|_{p,k})$ for $1 \leq p \leq \infty$ with $p \neq 2$, then 
there exists a norm-one projection from $\mathbb{C}^N$ onto $W$. 
\end{prop}
\begin{proof}
First, we consider the case $p \neq \infty$. Assume that $W$ is isometric to 
$(\mathbb{C}^k,\|\cdot\|_{p,k})$. 
Then there exist a basis $\{u_1,\ldots,u_k\}$ of $W$ and a linear isometry $S$ 
such that $\|u_j\|_{p,N} = 1$ and $S(u_j) = e_j$, where $e_j$ denotes the standard basis of $\mathbb{C}^k$ for $j = 1,2,\ldots,k$. 
Note that $T := S^{-1}$ is an isometry from $\mathbb{C}^k$ onto $W$ and 
$T(e_j) = u_j$ for $j = 1,2,\ldots,k$. First, we prove the result for 
$1 \leq p < \infty$ with $p \neq 2$. Since $e_j$'s are disjointly supported, 
we can apply the above lemma \ref{disjnt} to $e_j$. Hence for each $j \neq l$, we have
\[
    \|e_j+e_l\|_{p,k}^p + \|e_j-e_l\|_{p,k}^p = 2(\|e_j\|_{p,k}^p + \|e_l\|_{p,k}^p)
\]
As $T$ is an isometry, we obtain that
\[
    \|T(e_j)+T(e_l)\|_{p,N}^p + \|T(e_j)-T(e_l)\|_{p,N}^p = 2(\|T(e_j)\|_{p,N}^p + \|T(e_l)\|_{p,N}^p)
\]
Again applying the above lemma \ref{disjnt}, we get that the $u_j$'s are mutually 
disjoint supported vectors in $\mathbb{C}^N$. Following the proof of  proposition 2.a.1 in 
\cite{Lindstraus}, the details regarding the construction of norm-one projection are given as follows. For each $j =1,2,\ldots,k$, let $\sigma_j := supp(u_j)$. Consider the linear functional 
$f_j: W \to \mathbb{C}$ defined by $f_j(u_l) = \delta_{jl}$ 
(where $\delta_{jl}$ denotes the Kronecker-delta function) 
and extended to other vectors of $W$ by linearity.\\

First we want to prove that $\|f_j\|_{p,N} =1$ for each $j = 1,2,\ldots,k$. 
To prove this, write each $z \in W$ as $z = \sum_{l=1}^kc_lu_l$ 
for some scalars $(c_l)_{l=1}^k$. Note that 
\begin{equation}\label{dual}
    |f_j(z)| = \left|f_j\left(\sum_{l=1}^kc_lu_l\right)\right|
    = \left|\sum_{l=1}^kc_lf_j(u_l)\right| = |c_j| \leq  \|(c_1,\ldots,c_k)\|_{p,k} 
    = \|S(z)\|_{p,k}. 
\end{equation}
Since $S$ is an isometry, we obtain that $|f_j(z)|\leq \|z\|_{p,N}$. 
This implies that $\|f_j\|_{p,N} \leq 1$ for all $j =1,2,\ldots,k$. Since 
$f_j(u_j) = 1$, we obtain that $\|f_j\|_{p,N} = 1$.
By the Hahn -- Banach extension theorem, $f_j$ can be extended  
to a linear functional on $\mathbb{C}^N$ of the same norm which we also
denote by $f_j$. So, $f_j: \mathbb{C}^N \to \mathbb{C}$ is a 
linear functional with 
$\|f_j\|_{p,N} = 1$. For each $j$ with $1 \leq j \leq k$, write $f_j(z) = \sum_{i=1}^N c_{j,i}z_i$ for some scalars $(c_{j,i})_{i=1}^N$. Define another linear functional $u_j^*$ on $\mathbb{C}^N$ by $u_j^*(z) = \sum_{i \in \sigma_j}c_{j,i}z_i$. Note that 
\[
\|u_j^*\|_{p,N}^q = \sum_{i \in \sigma_j} |c_{j,i}|^q \leq \sum_{i =1}^N |c_{j,i}|^q = \|f_j\|_{p,N}^q = 1
\]
where $q$ is the dual exponent of $p$ (i.e. $1/p + 1/q =1$.). This implies in particular that $|u_j^*(\tilde{e_i})| = |c_{j,i}| \leq 1$, where $\tilde{e_i}$ denote the standard basis of $\mathbb{C}^N$. As $u_j^*(u_j) = 1$, we obtain that $\|u_j^*\|_{p,N} = 1$ for every $j$. Now we define the 
projection $P : \mathbb{C}^N \to W$ by 
$P(z) = \sum_{j =1}^k u_j^*(z)u_j$. 
To prove this proposition, it suffices to show that $\|P\|_{p,N} = 1$. If $z= \sum_{i=1}^N a_i\tilde{e_i}$ then 
$u_j^*(z) = \sum_{i=1}^N a_i u_j^*(\tilde{e_i}) = \sum_{i \in \sigma_j}a_i c_{j,i}$.
 Since $\|u_j^*\|_{p,N} = 1$, $|u_j^*(z)|^p \leq \sum_{i \in \sigma_j}|a_i|^p$. Hence we obtain that $\|P(z)\|_{p,N}^p = \sum_{j=1}^k|u_j^*(z)|^p \leq \|z\|^p$. This proves that $P$ is a norm-one projection from $\mathbb{C}^N$ onto $W$. \\
 
Next, we prove the result for $p = \infty$. Write $S(z) = (S_1(z),\ldots,S_k(z))$. Note that $S_j(z)$ 
is a linear functional on $W$ for each $j$ with $1 \leq j \leq k$. 
By the Hahn -- Banach theorem, $S_j$ can be extended to a linear 
functional  on $\mathbb{C}^N$ of the same norm which we denote by 
$\tilde{S_j}$. So, $\tilde{S_j} : \mathbb{C}^N \to \mathbb{C}$ is 
a linear functional so that $\|\tilde{S_j}\| = \|S_j\| \leq 1$. 
Define the linear map $\tilde{S}: \mathbb{C}^N \to 
\mathbb{C}^k$ by $\tilde{S}(z) = (\tilde{S}_1(z),\ldots,\tilde{S}_k(z))$. 
Note that $\|\tilde{S}\| \leq 1$ because $\|\tilde{S}\| = \sup\limits_{\substack{1 \leq j \leq N, ~
 \|z\| = 1}}
|S_j(z)| \leq 1$. Since $\tilde{S}(u_j) = e_j$, we get that $\|\tilde{S}\| = 1$. 
Consider the linear map $P: \mathbb{C}^N \to W$ defined by $P(z) = T\circ 
\tilde{S}(z)$. Note that $P$ maps $\mathbb{C}^N$ onto $W$ and $P_{|_W} = {\rm id}_W$. 
Hence, we get that $P$ is a norm one projection from $\mathbb{C}^N$ onto $W$.
\end{proof}

\begin{prop}
Let $\mathbb{B}_p$ be the $\ell_p$-ball in $\mathbb{C}^N$ where 
$1 \leq p \leq \infty$. The linear subspaces of $\mathbb{C}^N$ such 
that $(\mathbb{B}_p)_L := \mathbb{B}_p \cap L$ is a retract of 
$\mathbb{B}_p$ are precisely those for which $(\mathbb{B}_p)_L$ 
is biholomorphic to a lower dimensional $\ell_p$-ball $\mathbb{B}_p^k$ for 
some $k$ with $1 \leq k \leq N$.
\end{prop}
\begin{proof}
Assume that $(\mathbb{B}_p)_L$ is biholomorphic to a lower 
dimensional $\ell_p$-ball $\mathbb{B}_p^k \subset \mathbb{C}^k$. 
By the theorem of W. Kaup and H.Upmeier in 
\cite{Kaup-Upm} (exposited as Remark 16.4.2 d) in \cite{Jrncki_invrnt_dst}), after passing to a different biholomorphic mapping we may assume that $\varphi(0) = 0$. By an application of 
Cartan's theorem, we get that $\varphi$ is linear. Note that for each $v \in (\mathbb{B}_p)_L$, we have
\[
    \|v\|_{p} = h_{(\mathbb{B}_p)_L}(v) = K_{(\mathbb{B}_p)_L}(0,v) = K_{\mathbb{B}_p^k}(0,\varphi(v)) = \|\varphi(v)\|_{p,k}
\]
This implies that $\varphi$ is a linear isometry from $L$ onto 
$(\mathbb{C}^k,\|\cdot\|_{p,k})$. By the above proposition \ref{lower_l_p}, we obtain that 
$(\mathbb{B}_p)_L$ is a linear retract of $\mathbb{B}_p$.\\
Conversely assume that $(\mathbb{B}_p)_L$ is a linear retract of 
$\mathbb{B}_p^N$. First we consider the 
case $1 \leq p < \infty$. By theorem 2.a.4 in \cite{Lindstraus}, there exists an isometry 
from $L$ to $(\mathbb{C}^k,\|\cdot\|_{p,k})$. 
This implies in particular that $(\mathbb{B}_p)_L$ is biholomorphic to 
$\mathbb{B}_p^k$ for some $k$ with $1 \leq k \leq N$. Next we 
consider the case for $p = \infty$. By proposition \ref{lower_l_infty}, there exists 
an isometry from $L$ to $(\mathbb{C}^k,\|\cdot\|_{\infty})$. This 
proves that $\Delta^N_L$ is biholomorphic to lower dimensional 
polydisk in $\mathbb{C}^k$.
\end{proof}
\noindent We conclude the characterizations of retracts of 
$l^p$-balls in the convex case $p \geq 1$.
\begin{thm}
A linear subspace $R$ of the $\ell_p$-ball $\mathbb{B}_p$ is a  $k$-dimensional retract of the 
$\mathbb{B}_p$ in $\mathbb{C}^N$ if and only if there exist a finite sequence of retracts 
$R_1 \subset R_2 \subset \ldots \subset R_k \subset \mathbb{B}_p$ such that for each $j=1,2,\ldots,k$, $R_j$ is a $j$-dimensional linear retract of $\mathbb{B}_p$ and $R_k = R$.
\end{thm}
\begin{proof}
As the sufficiency part is utterly trivial, we need
assume that $R$ is a linear retract of the $\ell_p$-ball $\mathbb{B}_p$ and verify the reachability of $R$ via a 
monotonically increasing (finite) sequence of retracts 
as asserted. By theorem 2.a.4 in \cite{Lindstraus}, $R$ is isometric to $k$-dimensional $\ell_p$-ball $\mathbb{B}_p^k$. Recalling there always exist retracts of all possible dimensions in $\mathbb{B}_p$, we have
in-particular, a $(k-1)$-dimensional retract $R_{k-1}$ of $\mathbb{B}_p^k$.  By lemma \ref{retr-transit}, $R_{k-1}$ is a linear retract of $\mathbb{B}_p$. Again applying theorem 2.a.4 in \cite{Lindstraus} and lemma \ref{retr-transit} to $\mathbb{B}_p^{k-1}$, we obtain that there exists a $(k-2)$-dimensional retract $R_{k-2}$ of $\mathbb{B}_p$. Continuing this process, until we get a one-dimensional retract $R_1$ of $\mathbb{B}_p$, renders
the required sequence as in the statement of the theorem.
\end{proof}

\noindent We now put the above 
result together with the foregoing
more geometric formulation
of the linear retracts of $\ell_p$-balls 
as being  
precisely those linear intercepts which are
isometric copies of lower 
dimensional $\ell_p$-balls. 
Doing so, leads
to the conclusion that for any given retract $R$, 
there is a decreasing chain of 
isometric copies of lower dimensional 
$\ell^p$-balls reaching $R$ as in the 
above theorem. This is a 
feature shared by the $\ell_p$-`balls'
even for positive values 
of the parameter $p<1$. Indeed, we
now move onto this case
dealt with by theorem \ref{ell_q_ball_C3}.
Before going into its proof, we need to make a couple of observations
about the geometry of the $\ell_q$-ball $D_q$ in $\mathbb{C}^N$ for $q<1$
-- for a vivid distinction between the convex 
case $p \geq 1$ with the non-convex case $p<1$,
we use the symbol $q$ rather than $p$, for the latter case.
To begin with then, although the  absolute image of $D_q$
is concave (easily and vividly seen in the two-dimensional case), $D_q$ is actually not 
concave (nor convex) domain in $\mathbb{C}^N$; not even locally, near any of the smooth boundary points.
We may establish this by computing the real Hessian at any smooth boundary point with positive real coordinates 
as $D_q$ is multicircular; we show this in the 
two-dimensional setting for simplicity, while it holds in higher dimensions as well by straight-forward extension of the 
arguments and computations.
Rewrite the defining function for $D_q$ as
\begin{eqnarray*}
    \varphi(z,w)=z^{q/2}\overline{z}^{q/2} + w^{q/2}\overline{w}^{q/2} - 1
\end{eqnarray*}
To compute the real Hessian, note the following.
\begin{eqnarray*}
    \renewcommand\arraystretch{1.5}
    H &:=& \begin{bmatrix}
        \frac{\partial^2 \varphi}{\partial z \partial \overline{z}} & \frac{\partial^2 \varphi}{\partial z \partial \overline{w}} \\

        \frac{\partial^2 \varphi}{\partial w \partial \overline{z}} & \frac{\partial^2 \varphi}{\partial w \partial \overline{w}}
    \end{bmatrix} 
    = 
    \begin{bmatrix}
        \frac{q^2}{4}|z|^{q-2} & 0\\
        0 & \frac{q^2}{4}|w|^{q-2},
    \end{bmatrix}
\\
         \tilde{H} &:=& 
     \renewcommand\arraystretch{1.5}
      \begin{bmatrix}
        \frac{\partial^2 \varphi}{\partial z^2} & \frac{\partial^2 \varphi}{\partial z \partial w} \\

        \frac{\partial^2 \varphi}{\partial w \partial z} & \frac{\partial^2 \varphi}{\partial w^2 }
    \end{bmatrix}
    =
    \begin{bmatrix}
        \frac{q}{2}(\frac{q}{2} - 1)|z|^qz^{-2} & 0\\
        0 & \frac{q}{2}(\frac{q}{2} - 1)|w|^qw^{-2}
    \end{bmatrix}
\end{eqnarray*}
Before proceeding, we remark that we shall utilize the above
to compute the real Hessian only at the (smooth) absolute boundary points of $D_q$, such points being precisely those
points $(x,y)$ of $\partial D_q$ both of whose coordinates are 
positive; while this suffices as $D_q$ is Reinhardt, it is important to note that the (tangent) vectors along which the
Hessian form is evaluated need to be allowed 
to have non-real (complex) coordinates. Let $v = [v_1~ v_2]$ and $\overline{v} = [\overline{v_1}~ \overline{v_2}]$. Then the real Hessian at the point $(x,y)$ 
evaluated along the direction $v$ is 
\begin{equation}\label{Hes}
    v H \overline{v}^T_{|_{(x,y)}} + Re(v H v^T)_{(x,y)}\\
    = \frac{q^2}{4}\left( x^{q-2}|v_1|^2 + y^{q-2}|v_2|^2\right)  
+ \frac{q}{2} (\frac{q}{2} -1) Re\left( x^{q-2}v_1^2 + y^{q-2}v_2^2\right).
\end{equation}
If $v_1, v_2$ are purely imaginary, then the above expression 
(\ref{Hes}) is strictly positive. If $v_1$ and $v_2$ are real, then the above expression (\ref{Hes}) becomes
\[
    \frac{q^2-q}{2}\left(x^{q-2}v_1^2 + y^{q-2}v_2^2\right)
\]
which is negative as $q < 1$. We conclude therefore that $D_q$ is neither convex nor concave 
(though of course pseudoconvex) at $(x,y) \in \partial D_q$. Owing 
to already made remarks this holds at each point
of $\partial D_q$ other than the negligible set of non-smooth 
boundary points of $D_q$.\\

\begin{prop}
    Let $D$ be a bounded balanced pseudoconvex domain in $\mathbb{C}^N$. 
Suppose $L$ is a linear subspace of $\mathbb{C}^N$ such that ${\rm co}(\overline{D_L}) \neq C_L$, 
where $C_L := C \cap L$ with $C = {\rm co}(\overline{D})$, denoting the convex hull of $\overline{D}$. Then $D_L := D\cap L$ is not a linear retract of $D$.
\end{prop}
\begin{proof}
To prove this proposition by contradiction, assume that $D_L$ is a linear 
retract of $D$. Then there exists a linear projection $P : \mathbb{C}^N \to L$ such that $P$ maps $D$ onto $D_L$. 
Since ${\rm co}(\overline{D_L}) \neq C_L$ by hypothesis, there exists a point $p$ lying within the convex set $C_L$ but not in ${\rm co}(\overline{D_L})$.
Write $p \in C_L \subset C$ as the convex combination of extreme points of $C$
i.e. $p = \sum_{j=1}^k \alpha_je_j$, (where $\alpha_j$'s are positive reals with $\sum_{j=1}^k\alpha_j = 1$), where $e_j$'s are extreme points of $C$ 
(here of course $e_j$'s need not be the standard basis vectors, though we use this notation only in this proof for convenience and the extreme points of $C$ need not be finite in number). 
Such an expression is possible due to the Krein-Milman theorem applied to the compact convex set $C$.
By the linearity of $P$, we have
\[
p = P(p) = \sum_{j=1}^k \alpha_jP(e_j).
\]
As $P(e_j) \in \overline{D_L}$, this shows that $p$ can be written as the convex combination of points in $\overline{D_L}$. 
This implies that $p \in {\rm co}(\overline{D_L})$, which contradicts the hypothesis that ${\rm co}(\overline{D_L}) \neq C_L$. 
\end{proof}
We now proceed to prove theorem \ref{cnvx_hull}. 

\begin{proof}[Proof of theorem \ref{cnvx_hull}:]
Assume that $L$ is not spanned by vectors only from $\partial D \cap \partial C$. We want to show that $D_L$ is not a linear retract of $D$. 
By the above proposition, it suffices to show that ${\rm co}(\overline{D_L}) \neq C_L$. 
 Now we claim that there exists a vector $v \notin \partial D \cap \partial C$ such that 
$v$ is an extreme point of ${\rm co}(\overline{D_L})$.
To prove this claim by contradiction, assume that every extreme point of ${\rm co}(\overline{D_L})$ lies in $\partial D \cap \partial C$. 
We may write each $w \in {\rm co}(\overline{D_L})$, as a convex combination of the extreme points in ${\rm co}(\overline{D_L})$.
Let $w'$ be an arbitrary point in $L$. Note that there exists $\lambda \in \mathbb{C}^*$ such that $\lambda w' \in D_L$. 
So, we may express $\lambda w'$ as a convex combination of the extreme points of ${\rm co}(\overline{D_L})$. 
Since these extreme points all lie within $\partial D \cap \partial C$, we get that $w'$ belongs to span of vectors in $\partial D \cap \partial C$. 
Since this holds for every $w' \in L$, $L$ is spanned by vectors in $\partial D \cap \partial C$, which contradicts the hypotheses of the theorem. This finishes the proof of the claim.\\

By theorem 2.10.15 in \cite{Meggn}, $v \in \partial D_L = L \cap \partial D$. As $v \notin \partial C \cap \partial D$, we have $v \notin \partial C_L$.
To prove ${\rm co}(\overline{D_L}) \neq C_L$, 
choose $t_0 > 1$ such that $t_0v \in \partial C_L$. We want to show that $t_0v \notin {\rm co}(\overline{D_L})$. 
To prove this by contradiction, assume that $t_0v \in {\rm co}(\overline{D_L})$. 
Write $t_0v = c_1v_1+ \ldots + c_kv_k$ (where $c_j$'s are positive reals with $\sum_{j=1}^kc_j = 1$), where $v_j$'s are extreme points of ${\rm co}(\overline{D_L})$. 
After dividing $t_0$ on both sides, we obtain that
\[
    v = c_1\left(\frac{v_1}{t_0}\right) + \ldots + c_k\left(\frac{v_k}{t_0}\right)
\]
Since $v_j/t_0 \in D_L$ for each $j$, $v$ can be expressed as a convex combination of points in $D_L$. This contradicts the fact that $v$ is an extreme point of ${\rm co}(\overline{D_L})$. 
This proves that ${\rm co}(\overline{D_L}) \neq C_L$.
\end{proof}
\begin{lem}\label{cnvx_p}
Let $D$ be a bounded pseudoconvex domain in $\mathbb{C}^N$ and $p$ a boundary point of $D$ near which $\partial D$ is smooth. Fix $p \in \partial D$ and suppose that the (real) affine line $l(p,v) := \{p+tv: t \in \mathbb{R}\}$ with $v \in T_p^{\mathbb{C}}(\partial D)$ does not intersect $D$. Then the real Hessian $\mathcal{H}r(p)$ along $v$ is non-negative, where $r$ is any smooth defining function near $p$.  
\end{lem}
\begin{proof}
Let us begin with the standard parametrization of the (real) line $l$ passing through $p$ in the direction of $v$ with $v \in T_p^{\mathbb{C}}(\partial D)$, namely $\varphi(t) = p+tv$. 
Let $r$ be a local defining function of $D$ defined on an open neighbourhood $U$ of $p$. 
So, $D \cap U = \{z \in U: r(z) < 0\}$. Define $V := \varphi^{-1}(U)$ and consider the smooth function $\psi: V \to \mathbb{R}$ defined by $\psi(t) = r \circ \varphi(t)$. 
Since ${\rm image}(\varphi) = l$ never intersects $D$, $\psi(t) \geq 0$ for all $t \in V$. 
This implies that $\psi$ attains a local minimum at $t = 0$. Hence $\psi''(0) \geq 0$. 
Next we want to relate $\psi''(0)$ and $\mathcal{H}r(p)$ evaluated along the direction $v$. By an application of the chain rule, we obtain that
\begin{eqnarray*}
    \psi'(t) &=& \frac{\partial r}{\partial z_1}(\varphi(t))~ v_1 + \frac{\partial r}{\partial z_2}(\varphi(t))~ v_2 +\ldots +\frac{\partial r}{\partial z_N}(\varphi(t))~ v_N \\
    \psi''(t) &=& \frac{d}{dt}\left(\frac{\partial r}{\partial z_1}(\varphi(t)) ~v_1\right) + \frac{d}{d t}\left(\frac{\partial r}{\partial z_2}(\varphi(t)) ~v_2\right) + \ldots +\frac{d}{d t}\left(\frac{\partial r}{\partial z_N}(\varphi(t)) ~v_N\right) 
\end{eqnarray*}
For each $j = 1,2,\ldots,N$, we have
\[
\frac{d}{d t}\left(\frac{\partial r}{\partial z_j}(\varphi(t)) ~v_j\right) = \left[\frac{\partial^2 r}{\partial z_1 \partial z_j}(\varphi(t))~v_1 + \ldots + \frac{\partial^2 r}{\partial z_j^2}(\varphi(t))~v_j + \ldots + \frac{\partial^2 r}{\partial z_N \partial z_j}(\varphi(t))~v_N\right]v_j
\]
Hence, we obtain that $\psi''(0) = \displaystyle \sum_{j,k =1}^N \frac{\partial^2 r}{\partial z_j \partial z_k}(p)~v_jv_k$. 
The real Hessian $\mathcal{H}r(p)$ evaluated along $v$ is 
\begin{equation}\label{real_Hess}
2\sum_{j,k =1}^N \frac{\partial^2 r}{\partial z_j \partial \overline{z}_k}(p)~v_j\overline{v}_k + 2 Re\left(\sum_{j,k =1}^N \frac{\partial^2 r}{\partial z_j \partial z_k}(p)~v_jv_k\right)
\end{equation}
By the (Levi) pseudoconvexity of $D$ at $p$, we then have for every vector $v \in T_p^{\mathbb{C}}(\partial D)$, that:
\begin{equation*}
\sum_{j,k =1}^N \frac{\partial^2 r}{\partial z_j \partial \overline{z}_k}(p)~v_j\overline{v}_k \geq 0.
\end{equation*}
The second sum in (\ref{real_Hess}) is non-negative, and it equals $2\psi''(0)$, which was noted to be non-negative. It therefore follows from (\ref{real_Hess}) that the real Hessian $\mathcal{H}r(p)$ along $v$ is non-negative. 
\end{proof}
\begin{defn} \label{open-piece-retracts}
Let $D$ be a domain in $\mathbb{C}^N$. We say that $D$ admits retracts along an (or, {\it for some}, or {\it corresponding to an}) open piece of directions at $p \in D$ if there exists a (non-empty) open subset $\Gamma$ of $\partial \mathbb{B}$ such that for each $v \in \Gamma$, there is a retract $Z$ of $D$ passing through $p$ with $v \in T_0Z$.
\end{defn}

\noindent We now restate theorem \ref{ell_q_ball_C3} as follows and prove it.

\begin{cor}\label{l_q1_q2}
 Consider the balanced pseudoconvex domain given by
\[ 
D := \{(z_1,\ldots,z_N) \in \mathbb{C}^N: |z_1|^{q_1} + \ldots + |z_N|^{q_N} < 1\}
\]
where the $q_j$'s are all positive numbers less than one. Let $L$ be a linear subspace of $\mathbb{C}^N$ which cannot be spanned precisely by vectors only from the standard basis vectors i.e. $L \neq \spn(S)$ for any $S \subset B_{std} = \{e_1,\ldots,e_N\}$.
Then $D_L$ is not a retract of $D$; (thereby, the retracts of $D$ are precisely the orthogonal projections of $D$ onto a span of a finite subset of $B_{std}$). Consequently, $D$ does not admit any retract along any open piece of directions at the origin.
\end{cor}
\begin{proof}
Let $C$ denote the convex hull of $\overline{D}$. First we claim that $C = \overline{\mathbb{B}}_1$, where $\mathbb{B}_1 := \{(z_1,\ldots,z_N) \in \mathbb{C}^N: |z_1| + \ldots + |z_N| < 1\}$ , the $\ell_1$-ball in $\mathbb{C}^N$. 
As $q_j < 1$ for each $j$, we have $|z_j| < |z_j|^{q_j}$. This implies that $\sum_{j=1}^N |z_j| < \sum_{j=1}^N |z_j|^{q_j}$ and thereby $D \subset \mathbb{B}_1$. 
As $\overline{\mathbb{B}}_1$ is convex, $C \subset \overline{\mathbb{B}}_1$.
Now, $\overline{\mathbb{B}}_1$ is the convex hull of the extreme points of $\overline{\mathbb{B}}_1$, but these extreme points belong to $\partial D$. 
Therefore $C = \overline{\mathbb{B}}_1$.\\

\noindent Now, we claim that $\partial C \cap \partial D = \{\lambda e_i: \lambda \in \mathbb{C} ,~ |\lambda| = 1\}$, where $e_i$'s are the standard basis vectors of $\mathbb{C}^N$. 
To prove this claim by contradiction, assume that there exists $p$ different from the scalar multiple of $e_i$ belonging to $\partial C \cap \partial D$. 
First, consider the case that every component of $p$ is non-zero. In that case, $p$ is a smooth boundary point of $D$. Since $C$ is convex at $p$, there exists a supporting hyperplane $H_p$ to $C$ at $p$. Note that the supporting hyperplane at $p$ coincides with the tangent plane at $p$ because $p$ is a smooth boundary point of $D$.
As $D \subset C$ and $p \in \partial C \cap \partial D$, $H_p$ serves as a supporting hyperplane to $\partial D$ at $p$. 
Let $l$ be a (real) affine line passing through $p$ in the direction of $v$, which is contained in $H_p$. By the above lemma \ref{cnvx_p}, the real Hessian $\mathcal{H}r(p)(v) \geq 0$.
Since this holds for all vectors $v \in T_p(\partial D)$, $\mathcal{H}r(p)$ is positive semi-definite on $T_p(\partial D)$.\\

\noindent Now, we compute the real Hessian of $D$ at $p$. Since $D$ is multicircular, after applying an automorphism on $D$, we may assume $p$ is of the form  $(x_1,\ldots,x_N)$, where $x_i$'s are positive.
Upto a non-negative scaling factor of $2$, the real Hessian at the point $(x_1,\ldots,x_N)
$ along the vector $(v_1,\ldots,v_N)$ is given by
\begin{multline*}
\frac{q_1^2}{4} x_1^{q_1-2}|v_1|^2 + \frac{q_2^2}{4}x_2^{q_2-2}|v_2|^2 + \ldots + \frac{q_N^2}{4} x_N^{q_N-2}|v_N|^2\\ 
+ Re\left(\frac{q_1}{2} (\frac{q_1}{2} -1) x^{q_1-2}v_1^2 + \frac{q_2}{2} (\frac{q_2}{2} -1) x_2^{q_2-2}v_2^2 + \ldots + \frac{q_N}{2} (\frac{q_N}{2} -1) x_N^{q_N-2}v_N^2\right)
\end{multline*}
When $v_1,\ldots,v_N$ are positive reals, this real Hessian may be written as
\[
\frac{q_1^2-q_1}{2} x_1^{q_1-2}|v_1|^2 + \frac{q_2^2-q_2}{2}x_2^{q_2-2}|v_2|^2 + \ldots + \frac{q_N^2-q_N}{2} x_N^{q_N-2}|v_N|^2
\]
As $q_i <1$, this is negative, contradicting the above conclusion that the real Hessian is positive semi-definite on $T_p(\partial D)$.
This finishes the proof of the claim.\\

\noindent Next, we consider the case that at least one of the components of $p$ is zero. After a permutation of co-ordinates, we may assume that $p$ is of the form $(p_1,\ldots,p_M,0,\ldots,0)$ for some $M < N$. Consider the domain $D' := \{(z_1,\ldots,z_M) \in \mathbb{C}^M: |z_1|^{q_1} + \ldots + |z_M|^{q_M} < 1\}$. Note that $p' := (p_1,\ldots,p_M)$ is a smooth boundary point of $D'$ and $D'$ is convex at $p'$. By using the same argument applied to $D$, we conclude that this case does not arise. 
By the above theorem \ref{cnvx_hull}, if the linear subspace $L$ is not spanned only by vectors $e_i$ then $D_L$ is not a retract of $D$.\\

\noindent To conclude, let $p = (p_1,\ldots,p_N) \in \partial D$ with $p_i \neq 0$ for all $i$. Note that $p$ does not belong to a non-trivial linear subspace spanned by standard basis vectors. Hence, there does not exist a retract of $D$ passing through the origin in the direction of $p$. Let $S := \{(z_1,\ldots,z_N) \in \partial D: z_i \neq 0 ~\text{for ~all}~ i\}$. Note that $S$ is an open dense subset of $\partial D$ and for each $p \in S$, there does not exist a retract of $D$ passing through the origin in the direction of $p$. Since the complement of $S$ in $\partial D$ forms a nowhere dense subset of $\partial D$, there does not exist any retract of $D$ passing through the origin along an open piece of directions.
\end{proof}
Theorem \ref{cnvx_hull} can be employed to determine retracts of some non-decoupled domains as well. We demonstrate this in the following. 

\begin{cor}\label{non-decoupled} Consider the non-decoupled (balanced pseudoconvex) domain
given by $\Omega := \{(z_1,z_2) \in \mathbb{C}^2: |z_1|^{2q_1} + |z_2|^{2q_2} + |z_1z_2|^{2q_3} < 1\}$
where the $q_j$'s are all positive numbers less than $1/2$. Let $L$ be a linear subspace of $\mathbb{C}^2$ not spanned by vectors only from the standard basis vectors.
Then $\Omega_L$ is not a retract of $\Omega$; (thereby, the retracts of $\Omega$ are precisely the orthogonal projections of $D$ onto a span of the standard basis vectors). Consequently, there are no retracts of $\Omega$ passing through the origin along an open piece of directions.
\end{cor}
\begin{proof}
First, we compute the real Hessian of the defining function of $\Omega$ at positive real coordinates $(x,y)$.
Consider the function
\[
    \varphi(z_1,z_2) = |z_1|^{2q_1} + |z_2|^{2q_2} + |z_1z_2|^{2q_3} -1 = z_1^{q_1}\overline{z_1}^{q_1} + z_2^{q_2}\overline{z_2}^{q_2} + (z_1z_2)^{q_3}(\overline{z_1z_2})^{q_3} - 1
\]
Note that
\begin{eqnarray*}
    \renewcommand\arraystretch{1.5}
   H &=& \begin{bmatrix}
        \frac{\partial^2 \varphi}{\partial z \partial \overline{z}} & \frac{\partial^2 \varphi}{\partial z \partial \overline{w}} \\

        \frac{\partial^2 \varphi}{\partial w \partial \overline{z}} & \frac{\partial^2 \varphi}{\partial w \partial \overline{w}}
    \end{bmatrix} 
    = 
   \begin{bmatrix}
        q_1^2|z_1|^{2(q_1-1)} + q_3^2|z_1z_2|^{2q_3}|z_1|^{-2} & q_3^2|z_1z_2|^{2q_3}(z_1\overline{z_2})^{-1}\\
        q_3^2|z_1z_2|^{2q_3}(\overline{z_1}z_2)^{-1} & q_2^2|z_1|^{2(q_2-1)} + q_3^2|z_1z_2|^{2q_3}|z_2|^{-2}
    \end{bmatrix}
\end{eqnarray*}
In convenience, introduce the notation
\begin{equation*}
\renewcommand\arraystretch{1.5}
   \tilde{H} =   \begin{bmatrix}
        \frac{\partial^2 \varphi}{\partial z_1^2} & \frac{\partial^2 \varphi}{\partial z_1 \partial z_2} \\

        \frac{\partial^2 \varphi}{\partial z_2 \partial z_1} & \frac{\partial^2 \varphi}{\partial z_2^2 }
    \end{bmatrix}
\end{equation*}
 where~ 
\begin{align*}
\frac{\partial^2 \varphi}{\partial z_1^2} = q_1(q_1 - 1)|z_1|^{2q_1}z_1^{-2} + q_3(q_3-1)|z_1|^{2q_3}z_1^{-2}|z_2|^{2q_3},\\ 
\frac{\partial^2 \varphi}{\partial z_1 \partial z_2} = \frac{\partial^2 \varphi}{\partial z_2 \partial z_1} = q_3^2|z_1z_2|^{2q_3}(z_1z_2)^{-1},\\
\frac{\partial^2 \varphi}{\partial z_2^2} = q_2(q_2 - 1)|z_2|^{2q_2}z_2^{-2} + q_3(q_3-1)|z_1z_2|^{2q_3}z_2^{-2}.
\end{align*}

Let $v = [v_1~ v_2]$ and $\overline{v} = [\overline{v_1}~ \overline{v_2}]$. 
Assume that $v_1,v_2$ are real. Upto a non-negative scaling factor of $2$, the real Hessian at the point $(x,y)$ along the vector $v$ is 
\begin{multline*}
    \overline{v} H v^T|_{(x,y)} + Re(v \tilde{H} v^T)_{(x,y)}
    = \left[q_1(2q_1-1) x^{2(q_1-1)} + q_3(2q_3-1)x^{2(q_3-1)}y^{2q_3}\right]|v_1|^2 +\\ \left[q_2(2q_2-1) y^{2(q_2-1)} + q_3(2q_3-1)x^{2q_3}y^{2(q_3-1)}\right]|v_2|^2 
+ 4q_3^2 (xy)^{2q_3-1} xy 
\end{multline*}
The complex tangent space at $(x,y)$ is 
\[
\{(v_1,v_2): \left(q_1|z_1|^{2(q_1-1)}\overline{z}_1 + q_3|z_1|^{2(q_3-1)}|z_2|^{q_3}\right) v_1 = -\left(q_2|z_2|^{2(q_2-1)}\overline{z}_2 + q_3|z_2|^{2(q_3-1)}|z_1|^{q_3}\right) v_2\}
\]
This shows that every real vector $v = (v_1,v_2)$ in $T_p(\partial \Omega)$ for $p \in \partial \Omega$ with positive real co-ordinates whose components $v_1,v_2$ are necessarily opposite in sign.
 Hence we obtain that the real Hessian at the point $(x,y)$ along $v$ is negative.\\
 
Next, we claim that $C = \overline{\mathbb{B}}_1$, the $\ell_1$-ball in $\mathbb{C}^2$. 
Note that for each $j$, $|z_j| < |z_j|^{q_j}$. This implies that $|z_1|+|z_2| < |z_1|^{q_1} + |z_2|^{q_2} < |z_1|^{q_1} + |z_2|^{q_2} + |z_1z_2|^{q_3}$ and thereby $\Omega \subset \mathbb{B}_1$. 
As $\overline{\mathbb{B}}_1$ is convex, $C \subset \overline{\mathbb{B}}_1$.
Note that $\overline{\mathbb{B}}_1$ is the convex hull of extreme points of $\overline{\mathbb{B}}_1$ which belongs to $\partial \Omega$. 
Therefore $C = \overline{\mathbb{B}}_1$.
By using the same arguments in the above corollary, we can prove that $\partial C \cap \partial \Omega = \{\lambda e_i: \lambda \in \mathbb{C},~ |\lambda| = 1\}$, where $e_i$'s are the standard basis vectors in $\mathbb{C}^N$. 
After applying the above theorem \ref{cnvx_hull}, we obtain that if the linear subspace $L$ is not spanned only by vectors $e_i$, then $\Omega_L$ is not a retract of $\Omega$. Hence, there does not exist retracts of $\Omega$ passing through the origin with an open piece of directions.
\end{proof}
\noindent We now give a direct and more elementary proof for the retracts of the standard $\ell^q$-ball $D_q$ (where we no longer assume $N=2$ for that would
be an oversimplification unlike the matter in the preceding 
paragraph), it must first be noted that by proposition \ref{JJ-improved}, 
every {\it holomorphic} retract of $\ell_q$-ball through 
the origin is actually linear even when $q<1$. So, if $Y_{q}$ is a holomorphic retract 
of $D_{q}$ through the origin, then there exists
a (linear) projection $P: \mathbb{C}^N \longrightarrow \mathbb{C}^N$ 
such that $P(D_{q}) = Y_{q}$. Of-course, we are only 
interested in non-trivial retracts, so $Y_{q}$ is of the
form $Y_{q} = D_{q} \cap Y$ for some $r$-dimensional $\mathbb{C}$-linear 
subspace $Y$ of $\mathbb{C}^N$. 
We only need to 
show that  $Y$ cannot be of the form 
\begin{equation}\label{Y}
    \{(z_1,\ldots,z_N) \in \mathbb{C}^N: z_i = a_{1,i}z_1 + \ldots + a_{r,i}z_{r} ~\text{for}~ r+1 \leq i \leq N\},
\end{equation}
where not all $(a_{i,j})$ are zero.\\

\noindent To prove this theorem \ref{ell_q_ball_C3} by contradiction as already indicated above, assume that $Y_{q}$ is a linear retract of $D_{q}$ 
and a (linear) projection $P: \mathbb{C}^N \longrightarrow \mathbb{C}^N$ such that $P(D_{q}) = Y_{q}$. Let $\{e_i: 1 \leq i \leq N\}$
be the standard basis of $\mathbb{C}^N$.
With notations as in (\ref{Y}), define another basis $\{v_i  : 1 \leq i \leq N\}$ of $\mathbb{C}^N$, as follows: 
\begin{align*} 
    v_j &= (\underbrace{0,\ldots,0,t_j,0,\ldots,0}_{r},a_{j,r+1}t_j,\ldots,a_{j,N}t_j) \quad \text{for}~ 1 \leq j \leq r,\\ 
    v_j &= e_j \qquad \text{for}~ r+1 \leq j \leq N 
\end{align*}
 where
\[ 
 t_j = \frac{1}{\left(1 + \sum_{\substack{k=r+1}}^{N}|a_{j,k}|^q\right)^{1/q}}.
\]

\noindent For $1 \leq j \leq r$, write
    \[
        e_j = \frac{1}{t_j}v_j - \left( \sum_{l=r+1}^{N}a_{j,l}v_l\right).
    \]
Applying $P$ on both sides, we get 
\[
    P(e_j) = \frac{1}{t_j}P(v_j) - \left( \sum_{l=r+1}^{N}a_{j,l}P(v_l)\right).
\]
Since $P(v_j) \in Y$, there exists scalars $u_{1,j},\ldots,u_{r,j}$ such that
\begin{equation}
    P(v_j) = \left(u_{1,j},\ldots,u_{r,j},\sum_{k=1}^ra_{k,r+1}u_{k,j},\ldots,\sum_{k=1}^ra_{k,N}u_{k,j}\right)
~\text{for}~ r+1 \leq j \leq N.
\end{equation}
Since $P$ is a projection linear map from $\mathbb{C}^N$ onto $Y$ and $v_j \in Y$ for $1 \leq j \leq r$,  $P(v_j) = v_j$ for $1 \leq j \leq r$. Hence for $1 \leq j \leq r$, we have 
\begin{multline*}
    P(e_j) = \frac{1}{t_j}v_j - \left ( \sum_{l=r+1}^{N}a_{j,l}\left (u_{1,l},\ldots,u_{r,l},
    \sum_{k=1}^ra_{k,r+1}u_{k,l},\ldots,\sum_{k=1}^ra_{k,N}u_{k,l}\right ) \right ) \\
    = \left(- \sum_{l=r+1}^Na_{j,l}u_{1,l},\ldots,- \sum_{l=r+1}^Na_{j,l}u_{j-1,l},
        1-\sum_{l=r+1}^Na_{j,l}u_{j,l},-\sum_{l=r+1}^Na_{j,l}u_{j+1,l},\ldots,
-\sum_{l=r+1}^Na_{j,l}u_{r,l} ,\right.\\ 
\left. a_{j,r+1} -\sum_{l=r+1}^Na_{j,l}\sum_{k=1}^ra_{k,r+1}u_{k,l},
\ldots,a_{j,N}-\sum_{l=r+1}^Na_{j,l}\sum_{k=1}^ra_{k,N}u_{k,l} \right). 
\end{multline*}
Note that $P(v_j) \in \overline{D_q}$ for all $j$ . So, we get the following inequality for $1 \leq j \leq r$, 
\begin{equation}\label{3'}
    \left|1-\sum_{l=r+1}^Na_{j,l}u_{j,l}\right|^q + \sum_{1\leq k \leq r ~~ k \neq j}\left|\sum_{l=r+1}^Na_{j,l}u_{k,l}\right|^q + \sum_{s=r+1}^N\left|a_{j,s}-\sum_{l=r+1}^Na_{j,l}\sum_{k=1}^ra_{k,s}u_{k,l}\right|^q \leq 1. 
\end{equation}
Therefore, following inequality holds for $r+1 \leq j \leq N$:
\begin{equation}\label{4'}
    \sum_{k=1}^r|u_{k,j}|^q + \sum_{m=r+1}^N\left|\sum_{k=1}^ra_{k,m}~u_{k,j}\right|^q \leq 1.
\end{equation}
By reverse triangle inequality for  $q^{\text{th}}$ power where $0 < q < 1$: $\vert z-w \vert^q \geq \left \vert \vert z \vert ^q - \vert w \vert ^q \right \vert$ for all $z,w \in \mathbb{C}$, inequality (\ref{3'}) becomes
\[
    -\sum_{l=r+1}^N|a_{j,l}u_{j,l}|^q + \sum_{1\leq k \leq r ~~ k \neq j}\left|\sum_{l=r+1}^Na_{j,l}u_{k,l}\right|^q + \sum_{s=r+1}^N|a_{j,s}|^q - \left|\sum_{s=r+1}^N\sum_{l=r+1}^N\sum_{k=1}^r a_{j,l}~a_{k,s}u_{k,l}\right|^q \leq 0. 
\]
Combining the above inequality with (\ref{4'}), we obtain the following inequality
\begin{multline*}
    -\sum_{l=r+1}^N|a_{j,l}|^q \left(1-\sum_{1 \leq k \leq r~k \neq j}|u_{k,l}|^q - \sum_{m=r+1}^N \left|\sum_{k=1}^ra_{k,m}u_{k,l}\right|^q\right) + \sum_{1\leq m \leq r ~~ m \neq j}\left|\sum_{l=r+1}^Na_{j,l}u_{m,l}\right|^q + \\ \sum_{s=r+1}^N|a_{j,s}|^q -  \sum_{s=r+1}^N\sum_{l=r+1}^N \left|\sum_{k=1}^ra_{j,l}~a_{k,s}u_{k,l}\right|^q \leq 0. 
\end{multline*}
Simplifying this inequality, we get
\[
\sum_{l=r+1}^N|a_{j,l}|^q \sum_{1 \leq k \leq r~k \neq j}|u_{k,l}|^q + \sum_{1\leq m \leq r ~~ m \neq j}\left|\sum_{l=r+1}^Na_{j,l}u_{m,l}\right|^q \leq 0. 
\]
As all quantities on the left are obviously non-negative, it follows that
\begin{equation}\label{5'}
    a_{j,l}u_{k,l} = 0~~ \quad r+1 \leq l \leq N~ , 1 \leq j ,k \leq r~ \text{with}~ j \neq k. 
\end{equation}
Combining this result with inequality (\ref{3'}), we get
\begin{equation}\label{6'}
    \left|1-\sum_{l=r+1}^Na_{j,l}u_{j,l}\right| \leq t_j, \quad \text{ for each }  j \in \{1,2,\ldots,r\}.
\end{equation}
By our hypothesis about the $a_{i,j}$'s, atleast one of $a_{i,j}$ is non-zero. If $a_{j_0,l_0} \neq 0$ for some $j_0,l_0$ then from (\ref{5'}) we obtain that $u_{j,l_0} = 0$ for all $j$ with $j \neq j_0$. From (\ref{4'}), we get $|u_{j_0,l_0}| \leq t_{j_0}$. Therefore, we conclude that for all $(j,l)$, we have $|u_{j,l}| \leq t_j$. Using (\ref{6'}), we get
\begin{align*}
    |1| & =  \left|1- \sum_{l=r+1}^Na_{j,l}u_{j,l} + \sum_{l=r+1}^Na_{j,l}u_{j,l} \right|\\
        & \leq  \left|1- \sum_{l = r+1}^Na_{j,l}u_{j,l}\right| + \sum_{l=r+1}^N|a_{j,l}u_{j,l}| \\
        & \leq t_j + t_j (\sum_{l = r+1}^N|a_{j,l}|) < 1. 
\end{align*}
This is obviously a contradiction, which completes the proof.
\qed \\

\noindent For certain reasons below, we explicitly mention about the special case in dimension $2$. Owing to remarks made in the introduction, it 
follows as an {\it immediate}
corollary that $D_q = \{(z,w) \in \mathbb{C}^2: |z|^q + |w|^q < 1\}$ is not biholomorphic to any product domain.
A more important relevance of this example is noted next.

\begin{rem}
We recall from \cite{Vigue} that fixed point sets of holomorphic self-maps of a any given 
bounded (not necessarily balanced) convex domain $D$
are precisely the same as its retracts. The example $D_q$ in $\mathbb{C}^2$ above suffices to show 
that this coincidence fails as soon as we drop convexity. Indeed, for the non-convex
(but pseudoconvex) balanced domain $D_q$ in $\mathbb{C}^2$ as above, the fixed point set of the holomorphic 
self-map of $D_q$ as simple as the flip $(z,w) \to (w,z)$, namely the linear 
subspace $\{ (z,w) \in \mathbb{C}^2 \; : \; w=z \}$, is not realizable
as a holomorphic retract of $D_q$.
\end{rem}

\noindent The $\ell_q$-balls dealt with above are bounded by holomorphically extreme 
real hypersurfaces. We next discuss examples for the `other extreme' as well as some 
intermediate cases. We begin with an example where we can show existence of higher dimensional retracts;
the purpose being: to record atleast one simple example to affirmatively 
answer the question as to whether there exists non-homogeneous pseudoconvex domains in $\mathbb{C}^N$ (i.e., when $N$ is not 
restricted to be $2$ or the domain being convex)
with the property that there are retracts through every point of the domain and all possible dimensions;
given Lempert's apt
remarks about the non-existence of
retracts of dimension any bigger than one, {\it in general}.
As our purpose is in recording the simplest cases first, the next couple of examples may well not be impressive.\\

\textit{Example:} Consider the unbounded balanced non-convex domain given by
$H^N := \{(z_1,\cdots,z_N) \in \mathbb{C}^N: |z_1\ldots z_N| < 1\}$. Since none of the boundary 
points of $H^N$ are convex, there are no one-dimensional retracts passing through the origin
except for the lines which are contained within one of the coordinate `planes' i.e., lines spanned
by vectors one of whose coordinates is zero. Such lines being completely contained within the domain, 
it follows that $H^N$ is a non-hyperbolic domain, in particular. Of-course $H^N$ is not biholomoprhic to 
$\mathbb{C}^N$ or any such domain which is `maximally non-hyperbolic' wherein the Kobayashi metric 
vanishes identically; thereby $H^N$ seems to serve as a nice first example of an intermediate case
with respect to hyperbolicity and in-fact, with respect to convexity as well.\\

\noindent   We now get to showing the existence of 
retracts of all possible dimensions through each point in this domain. 
To this end, pick any $c = (c_1, c_2, \ldots,c_N) \in H^N$ and any positive integer $k$ less than $N$;
we shall show that there exists a 
$k$-dimensional retract of $H^N$ passing through $c$. 
Consider the affine subspace 
$A = \{(z_1,\ldots,z_N)\in \mathbb{C}^N: z_j = c_j ~\text{for} ~k+1 \leq j \leq N\}$,
defined by setting an appropriate number of components equal to that of the fixed point $c$.
It suffices to prove that  
the $k$-dimensional affine subspace $H^N \cap A$ 
is a retract of $H$. First, we do this for $k=N-1$ and the
rest follows recursively as noted below. To this end, we let 
$L_{c_N} := \{(z_1,\ldots,z_{N-1},z_N) \in \mathbb{C}^N: z_N = c_N\} $
and show that the $(N-1)$-dimensional affine subspace $H^N \cap L_{c_N}$ of $H^N$ 
is a retract of $H^N$. We split into $2$ cases depending on $c_N$ being zero 
or not. If $c_N = 0$, then the linear projection map  
$\pi(z_1,\ldots,z_N) = (z_1,\ldots,z_{N-1},0)$ suffices to render 
${\rm image}(\pi) = H^N \cap L_{c_N}$ as a retract of $H^N$. If $c_N \neq 0$, then we consider the holomorphic 
map $\rho$ on $H^N$ defined by 
\[
    \rho(z_1,\ldots,z_N) = \left(\frac{1}{c_N}z_1z_N,z_2,\ldots,z_{N-1},c_N\right). 
\]
Let $\rho_1,\ldots,\rho_N$ be the components of $\rho$. Note that
\[
    |\rho_1 \cdots \rho_N| = \left|\frac{1}{c_N}z_1z_Nz_2\cdots z_{N-1}c_N\right| = |z_1\cdots z_N| < 1 
\]
So $\rho$ maps $H^N$ into $H^N$ and
\[
    \rho(\rho(z))  = \rho(\frac{1}{c_N}\rho_1\rho_N,\rho_2,\ldots,\rho_{N-1},c_N) 
                  = \left(\frac{1}{c_N}(\frac{1}{c_N}z_1z_Nc_N),z_2,\ldots,z_{N-1},c_N\right) = \rho(z)
\]
verifying that $\rho$ is a retraction map onto $H^N \cap L_{c_N}$ which is
biholomorphically -- indeed linearly -- equivalent to $H^{N-1}_{1/c_N}$ which in-turn is linearly equivalent 
to $H^{N-1}$ (in-case $c_N=0$, note that $H^N \cap L_{c_N}$ is just a copy of $\mathbb{C}^{N-1}$).\\

\noindent The same arguments as above, show that $H^{N-1} \cap L_{c_{N-1}}$ is either 
a holomorphic retract of $H^{N-1}$ or a holomorphic retract of $\mathbb{C}^{N-1}$ depending on whether $c_{N-1}$ is non-zero or not. 
By lemma \ref{retr-transit}, we obtain that  $\{(z_1,\ldots,z_{N}) \in \mathbb{C}^N: z_{N-1} = c_{N-1}, z_N = c_N\} \cap H^N$ 
is a retract of $H^N$. Continuing this process, we establish in $k$ steps, that $H^N \cap A$ 
is a $k$-dimensional retract of $H^N$.\qed \\

\noindent At this juncture, we can settle a question about whether {\it holomorphic} retraction mappings are 
automatically submersions so as (for instance) towards rendering hypotheses such as (ii) in theorem \ref{not-thro-origin} redundant in general.
We briefly address this by considering the two-dimensional version of the above domain (we note 
here that $D_L$ is not bounded 
as stipulated by
theorem \ref{not-thro-origin}).

\begin{thm}\label{H2}
Consider the retract $Z = \{(\zeta,1): \zeta \in \Delta\}$ of 
$H^2$ in $\mathbb{C}^2$. Then there does not exist any
retraction map from $H^2$ onto $Z$, which is submersive at origin.
\end{thm}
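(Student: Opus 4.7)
The plan is to show that any holomorphic retraction $\rho$ of $H^2$ onto $Z = \{(\zeta,1): \zeta \in \Delta\}$ must have vanishing derivative at the origin, which rules out submersivity there in any reasonable sense (in particular, $d\rho(0)$ cannot surject onto the one-dimensional tangent space $T_{\rho(0)}Z$ as would be required to fit the hypothesis of Theorem~\ref{not-thro-origin}).

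First, write $\rho = (\rho_1, \rho_2)$. Since $\rho(H^2) = Z$, every image point has second coordinate equal to $1$, forcing $\rho_2 \equiv 1$ on all of $H^2$. Membership of the image in $Z \subset H^2$ further demands $|\rho_1(z,w)\cdot 1| < 1$, so $\rho_1 : H^2 \to \Delta$ is a bounded holomorphic function. Note that we do not even need to invoke the idempotency of $\rho$ at this stage; merely that the image sits inside $Z$ already pins down $\rho_2$ and bounds $\rho_1$.

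The decisive observation is that $H^2$ contains both full complex coordinate lines $\{(z,0): z \in \mathbb{C}\}$ and $\{(0,w): w \in \mathbb{C}\}$, since along each we have $|zw|=0<1$. Restricting $\rho_1$ to either of these copies of $\mathbb{C}$ produces a bounded entire function of one variable, which by Liouville's theorem must be constant. Consequently $\partial_z \rho_1(0,0) = 0$ and $\partial_w \rho_1(0,0) = 0$, and combined with $d\rho_2 \equiv 0$ this yields $d\rho(0,0) = 0$ as a linear map $\mathbb{C}^2 \to \mathbb{C}^2$. In particular, $d\rho(0,0)$ has rank zero, so it is certainly not surjective onto the one-dimensional space $T_{\rho(0)}Z \simeq \mathbb{C}$, completing the proof.

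There is no real obstacle here: the only subtlety is the interpretation of ``submersive at the origin''. Under the reading consistent with Theorem~\ref{not-thro-origin} (namely, $d\rho(0)$ surjects onto $T_{\rho(0)}Z$) the argument above applies directly, and under the stricter reading that $\rho$ be a submersion as a self-map of $H^2$ the conclusion is even more immediate, since $\rho_2 \equiv 1$ already forces the rank of $d\rho$ to be at most one everywhere. The essential point to emphasize is structural: the presence of affine complex lines through the origin inside $H^2$, a consequence of its non-hyperbolicity, is precisely what forces $d\rho(0)=0$ via Liouville, illustrating why the submersivity hypothesis in Theorem~\ref{not-thro-origin} really does require some boundedness or strict convexity on the ambient domain.
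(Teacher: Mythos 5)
Your proof is correct and follows essentially the same route as the paper's: both arguments rest on the observation that $\rho_2\equiv 1$ and on applying Liouville's theorem to the restrictions of the bounded function $\rho_1$ to the two coordinate lines, which lie entirely inside $H^2$. You merely shortcut the paper's power-series step (which pins down the full form $\rho_1(z,w)=zw\,h(z,w)$ before computing the Jacobian) by noting that constancy of $\rho_1$ along the axes already forces both partial derivatives at the origin to vanish, which is all that is needed.
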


\begin{proof}
Let $\rho = (\rho_1,\rho_2)$ be any of the retractions which map $H^2$ onto $Z$. Since 
second component of any point in $Z$ is $1$, we have 
$\rho_2 \equiv 1$. No such drastic conclusion cannot be made for the first component of $\rho$ namely $\rho_1$,
as we only have to begin with that it is a holomorphic function 
which maps $H^2$ onto $\Delta$; nevertheless we may deduce about its form 
as a consequence of certain features of $H^2$.
To do so, consider first its restriction:
$f(w) := \rho_1(0,w)$.
Being an entire function which maps into $\Delta$, Liouville's theorem
implies that $f$ is a constant 
function. As $f(1) = 0$, that constant is zero, thereby  
$f(w) = \rho_1(0,w) = 0$ for all $w \in \mathbb{C}$.  
As $H^2$ is a (logarithmically convex) complete Reinhardt domain, 
$\rho_1$ can be expressed as a single power series expansion on $H^2$. This means that 
there exist scalars $c_{j,k}$ such that for all $(z,w) \in H^2$, 
\begin{equation*}
    \rho_1(z,w) = \sum_{j,k =0}^{\infty}c_{j,k} z^jw^k
\end{equation*}
As the series above converges absolutely on $H^2$, we may rewrite by grouping together the
terms as indicated next: 
 \begin{equation}\label{Rho}
     \rho_1(z,w) = P_1(z) + P_2(w) + P_3(z,w)
 \end{equation}
where 
\begin{equation*}
     P_1(z) = \sum_{j=1}^{\infty} c_{j,0}z^j,~ P_2(w) = \sum_{k=0}^{\infty} c_{0,k}w^k, ~
     P_3(z,w) = \sum_{j,k =1}^{\infty} c_{j,k}z^jw^k.
 \end{equation*}
 Since $\rho_1(0,w) = 0$ for all $w \in \mathbb{C}$, we obtain that $P_2 \equiv 0$. 
Similarly we can consider another entire function $g : \mathbb{C} \to \Delta$ defined by $g(z) = \rho_1(z,0)$. 
By the same arguments, we obtain that $P_1 \equiv 0$. Therefore from the above equation (\ref{Rho}), we arrive
at our aforementioned deduction about the form of the first component namely,
$\rho_1(z,w) = zwh(z,w)$ for some holomorphic 
function $h$ on $H^2$. We may now prove that $\rho$ is not submersive at origin
by directly writing down the expression for its Jacobian matrix, which at any point $(z,w)$
is if the form
\[
\begin{bmatrix}
    w(z\frac{\partial h_2}{\partial z} + h_2(z,w)) & z(w\frac{\partial h_2}{\partial w} + h_2(z,w)) \\ 
    0 &0
\end{bmatrix}
\]
It is immediate that for $(z,w)=(0,0)$ this is the zero matrix,
finishing the proof that none of the retraction maps onto $Z$ is not submersive at origin. 
\end{proof}
\begin{rem}
While the above theorem with a very similar proof holds for any of the hyperbolic retracts $Z^h$ of $H^2$
(to give that none of the retraction mappings onto $Z^h$ is a submersion at the origin), it 
is worth noting down a concrete example $Z$ as in the theorem for an amusing point. Namely, while 
linear subspaces (i.e., through origin) of a balanced domain which are obtained as images of 
non-linear retraction mappings are indeed obtainable as images of linear maps, this is not the
case with affine-linear retracts in general. Indeed, this can 
be seen as a corollary of the above theorem. While the retract $Z = \{(\zeta,1): \zeta \in \Delta\}$ of $H^2$ is 
an affine-linear subspace of $H^2$, there exists no affine linear projection mapping the domain onto $Z$
since such a map would be a retraction which is submersive everywhere (ruled out by the above theorem).
We also note that $Z$ gives a concrete example of a hyperbolic retract (being a copy of the disc) of the 
non-hyperbolic domain $H$ which therefore serves as a good intermediate between 
completely non-hyperbolic domains such 
as $\mathbb{C}^N$ which do not admit any hyperbolic retracts on the one hand, and, hyperbolic domains which admit only 
hyperbolic ones on the other. A more important point is noted
next.
\end{rem}

\noindent A direct consequence of the above theorem is that unless we are in very special settings
even within the class of balanced pseudoconvex domains, the hypothesis
(ii) along-with boundedness 
of $D_L$ as in theorem \ref{not-thro-origin} cannot be dropped. 
For a quick (counter-)example, observe that 
the retract $Z$ in the foregoing theorem \ref{H2} cannot 
be realized as the graph over any linear retract of $H^2$
as in the conclusion of theorem \ref{not-thro-origin}.
Indeed, the only (non-trivial) linear retracts of $H^2$ are the pair
of coordinate axes, on neither of which $Z$ can be
expressed as a graph, for then $Z$ would be biholomorphic 
to $\mathbb{C}$, manifestly false. It is 
also possible to give a non-affine example in
this simple domain itself
to show that there are retracts which are impossible to be realized as a graph over any affine linear subspace
of $H$, thereby showing (again) that the conclusion
of theorem \ref{not-thro-origin} can fail 
as soon as some of its hypotheses is dropped
as mentioned above, which is
the purpose of the following corollary.

\begin{cor}
Consider the retract $\tilde{Z} = \{(\zeta e^{-\zeta},e^{\zeta}): \zeta \in \Delta\}$ of 
$H^2$ in $\mathbb{C}^2$. Then there does not exist a 
retraction map from $H^2$ onto $\tilde{Z}$, which is submersive at the origin.
\end{cor}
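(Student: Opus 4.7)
The plan is to mimic the proof of Theorem \ref{H2}, with the key additional step of verifying first that $\tilde{Z}$ is indeed a retract. For this, I would exhibit an explicit retraction: define $\rho_0 : H^2 \to \tilde{Z}$ by $\rho_0(z,w) = (zw\, e^{-zw},\, e^{zw})$. Since $|zw|<1$ on $H^2$, the parameter $\zeta := zw$ lies in $\Delta$, so $\rho_0$ maps $H^2$ into $\tilde{Z}$; and for any $(z',w') \in \tilde{Z}$ with $z' = \zeta e^{-\zeta}$, $w' = e^{\zeta}$, we have $z'w' = \zeta$, so $\rho_0(z',w') = (\zeta e^{-\zeta}, e^{\zeta}) = (z',w')$, confirming idempotence.

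Next, fix an arbitrary retraction $\rho = (\rho_1,\rho_2): H^2 \to \tilde{Z}$. The crucial observation is that the image sets $\{\zeta e^{-\zeta} : \zeta \in \Delta\}$ and $\{e^{\zeta} : \zeta \in \Delta\}$ are both contained in the bounded disc $\{|w| < e\}$, since $|e^\zeta| = e^{\mathrm{Re}\,\zeta} < e$ and $|\zeta e^{-\zeta}| < e$ on $\Delta$. Now $H^2$ contains both coordinate axes $\{0\}\times\mathbb{C}$ and $\mathbb{C}\times\{0\}$; so the entire slice functions $w \mapsto \rho_i(0,w)$ and $z \mapsto \rho_i(z,0)$ are bounded entire functions, hence constant by Liouville. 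Evaluating at the origin, these constants are $c_i := \rho_i(0,0)$, where $(c_1,c_2) = (\zeta_0 e^{-\zeta_0}, e^{\zeta_0})$ for some $\zeta_0 \in \Delta$.

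Now use that $H^2$ is a logarithmically convex complete Reinhardt domain (indeed $\log|z|+\log|w|<0$ defines a half-plane in log-coordinates), so each $\rho_i$ admits a global power series expansion $\rho_i(z,w) = \sum_{j,k\geq 0} c^{(i)}_{j,k} z^j w^k$. Splitting into terms pure in $z$, pure in $w$, and mixed, the identities $\rho_i(0,w) \equiv c_i$ and $\rho_i(z,0) \equiv c_i$ force $c^{(i)}_{0,k}=0$ for $k\geq 1$ and $c^{(i)}_{j,0}=0$ for $j\geq 1$. Hence for $i=1,2$,
\[
\rho_i(z,w) = c_i + zw\, h_i(z,w)
\]
for some holomorphic $h_i$ on $H^2$. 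Differentiating, every entry of the Jacobian matrix of $\rho$ at $(0,0)$ vanishes, so $D\rho(0)$ is the zero map and $\rho$ fails to be submersive at the origin. The main (mild) obstacle is packaging the boundedness of the parametrized image sets cleanly so that Liouville applies to both components simultaneously; once that is done, the rest is a direct adaptation of the argument of Theorem \ref{H2}.
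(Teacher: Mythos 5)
Your proof is correct, but it takes a genuinely different route from the paper's. The paper disposes of this corollary in one stroke: it conjugates the putative retraction by the automorphism $\psi(z,w)=(ze^{-zw},we^{zw})$ of $H^2$, which fixes the origin (with invertible derivative there) and carries the straightened retract $Z=\{(\zeta,1):\zeta\in\Delta\}$ onto $\tilde Z$; since conjugation by an origin-fixing automorphism preserves the rank of the derivative at $0$, the claim reduces immediately to theorem \ref{H2}. You instead rerun the proof of theorem \ref{H2} directly on $\tilde Z$: because $\tilde Z$ is a bounded subset of $\mathbb{C}^2$ (both $\vert \zeta e^{-\zeta}\vert$ and $\vert e^{\zeta}\vert$ are bounded by $e$ on $\Delta$), both components of any retraction onto $\tilde Z$ are bounded on $H^2$, so their restrictions to the two coordinate axes are bounded entire functions, hence constant by Liouville; the global power series expansion on the logarithmically convex complete Reinhardt domain $H^2$ then forces $\rho_i=c_i+zw\,h_i(z,w)$, whence $D\rho(0)=0$ and submersivity fails. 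Both arguments are sound. Your version is self-contained and in fact proves something slightly stronger --- any retraction of $H^2$ onto any bounded retract has vanishing derivative at the origin --- whereas the paper's conjugation trick is shorter and explains where $\tilde Z$ comes from (it is $\psi(Z)$). Your explicit retraction $\rho_0(z,w)=(zwe^{-zw},e^{zw})$ is a nice bonus, though not strictly needed since the statement already assumes $\tilde Z$ is a retract.
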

\begin{proof}
Suppose to arrive at a contradiction that $\tilde{\rho} : H^2 \to H^2$ is a retraction map with 
${\rm image}(\tilde{\rho}) = \tilde{Z}$ and such that it is a submersion at the origin. 
Consider the automorphism $\psi : H^2 \to H^2$ defined by 
$\psi(z,w) = (ze^{-zw},we^{zw})$. Conjugating $\rho$ by $\psi$, 
we then have the holomorphic map 
$\rho : H^2 \to H^2$ given by $\rho(z,w) = \psi^{-1} \circ \tilde{\rho} \circ \psi(z,w)$. 
Note that $\rho$ is a retraction map on $H^2$ and 
${\rm image}(\rho) = Z = \{(\zeta,1): \zeta \in \Delta\}$ and we may apply theorem \ref{H2} to 
$\rho$. But then the submersivity of $\tilde{\rho}$ at the origin is equivalent to that 
of $\rho$ at the origin, as $\psi$ is not just a conjugating automorphism but also fixes at the origin; this
contradicts theorem \ref{H2}, finishing the proof.
\end{proof}

\noindent We next record a pair of bounded modifications of $H^2$ in the above examples wherein the 
some of the results hold as is, but some arguments cease to. More importantly, these give examples of 
bounded balanced non-convex (but pseudoconvex) domains wherein (i) for each point, there is
a (non-trivial) retract passing through it and (ii) they are equivalent to product domains but contain an open piece of directions (an open subset of the unit sphere $\partial\mathbb{B}$ representing directions) along which retract exist. \\

\textit{Example:} Consider the domain $\Omega := \Delta^2 \cap H^2_{1/4}$,
where $H^2_{1/4} =\{(z,w)\in \mathbb{C}^2: |zw| < 1/4\}$ (and $\Delta^2$ 
the standard bidisc, as usual).
We demonstrate below that for each point $(a,b) \in \Omega$, there exists a non-trivial retract 
of $\Omega$ passing through it.\\

\noindent Let $L$ denote the affine complex line $\{(z,w) \in \mathbb{C}^2: w=b\}$.
Observe that to attain our goal, it suffices to show that $L \cap \Omega$ is a retract of $\Omega$
for which purpose, we first decompose the boundary of our domain $\Omega$ as:
\[ 
    \partial \Omega = (\partial \Delta^2 \cap H^2_{1/4}) \cup 
    (\Delta^2 \cap \partial H^2_{1/4}) \cup (\partial \Delta^2 \cap \partial H^2_{1/4}).
\]
Note that $L \cap \Omega$ can be viewed as an open subset of $L$ and viewed thus, its boundary 
satisfies $\partial (L \cap \Omega) = L \cap \partial \Omega$. We split-up the rest of 
the proof into claims and cases. \\

\noindent \textit{Claim-1:} Either $\partial (L \cap \Omega) \subset 
 \partial \Delta^2$ or $\partial (L \cap \Omega) \subset \partial H^2_{1/4}$.\\
\textit{Proof:} Assume to get a contradiction that $\partial (L \cap \Omega) \nsubseteq 
\partial \Delta^2$ and $\partial (L \cap \Omega) \nsubseteq \partial H^2_{1/4}$. Then there are points $(a_1,b)$ and 
$(b_1,b)$ in $\partial(L \cap \Omega)$ such that 
$(a_1,b) \notin \partial \Delta^2$ and $(b_1,b) \notin \partial H^2_{1/4}$. 
From the above decomposition of $\partial \Omega$ and $(a_1,b), (b_1,b) \in \partial \Omega$,
we obtain that $(a_1,b) \in \Delta^2 \cap \partial H^2_{1/4}$ and $(b_1,b) \in \partial \Delta^2 \cap H^2_{1/4}$. 
This implies that $a_1,b_1,b$ satisfies the following inequalities:
\begin{eqnarray*}
    |a_1b| = 1/4, ~ \max\{|a_1|,|b|\} <1 \\
    |b_1b| < 1/4, ~ \max\{|b_1|,|b|\} = 1.
\end{eqnarray*}
Note that $|b|,|a_1| < 1$ because $\max\{|a_1|,|b|\} < 1$. Using the inequality 
$|b| < 1$ and the equality $\max\{|b_1|,|b|\} = 1$, we obtain $|b_1| = 1$. 
Since $|b_1b| < 1/4$, we have $|b| < 1/4$. Hence we get that
\[
    \frac{1}{4} = |a_1b| < \frac{1}{4}|a_1| 
\]
This implies that $|a_1| > 1$, which contradicts the fact that $|a_1| < 1$, finishing the proof of claim-$1$.\\

\noindent We are now in a position to bifurcate the rest of the arguments into two cases depending on whether 
$\partial (L \cap \Omega) \subset \partial \Delta^2$ or 
$\partial (L \cap \Omega) \subset \partial H^2_{1/4}$. The arguments in either
of these cases proceeds by fixing a point $(a',b)$ in $\partial (L \cap \Omega)$; it is also 
introduce the notations: $Z := L \cap \Delta^2$,
$W := L \cap H^2_{1/4} $. Note that $Z$ is bounded whereas $W$ could be unbounded, depending upon where $L$ 
intersects $\partial \Delta^2$. We first deal with the easier case.\\

\noindent \underline{Case-1:} $\partial(L \cap \Omega) \subset \partial \Delta^2$.\\

\noindent In this case, by the aforementioned decomposition of $\partial \Omega$, we first note that the 
boundary point $(a',b)$ lies in 
$\partial \Delta^2 \cap \overline{H^2_{1/4}}$. The map $\rho$ on $\Delta^2$ 
defined by $\rho(z,w) = (z,b)$ suffices to give a holomorphic 
retraction map on $\Delta^2$ with ${\rm image}(\rho) = Z$. However, we 
must check that $Z$ is a retract of $\Omega$, not just of $\Delta^2$. To do this of-course
we need only consider the restriction
$\rho_{|_{\Omega}}$, which maps $\Omega$ into $Z$ and that would finish the proof provided we verify the following.\\

\noindent \textit{Claim-2:} $Z = L \cap \Omega$.\\
\noindent \textit{Proof:} Firstly note that if $(z,b) \in Z$ 
then $|z| < |a'| = 1$ because $(a',b) \in \partial \Delta^2$ and $(a,b) \in \Delta^2$. 
If $(z,b) \in Z$, then $|zb| < |a'b|$. Since $(a',b) \in 
\overline{H^2_{1/4}}$, we get that $|zb| < 1/4$. Hence $Z \subset  L \cap \Omega$. 
Since $\Omega \subset \Delta^2$, we have $L \cap \Omega \subset Z$. This finishes the proof of the claim. \\

\noindent It now follows immediately that $\rho$ maps $\Omega$ into itself 
and ${\rm image}(\rho_{|_{\Omega}}) \subset L \cap \Omega = Z$; indeed, as we have for every $z \in Z = L \cap \Omega$ that $\rho(z) = z$, 
we have ${\rm image}(\rho_{|_{\Omega}}) = L \cap \Omega$.
This finishes the proof completely that $L \cap \Omega$ is a retract of $\Omega$ passing through $(a,b)$ in the above case.\\

\underline{Case-2:} $\partial(L \cap \Omega) \subset \partial H^2_{1/4}$.\\

\noindent In this case, again by the aforementioned decomposition of $\partial \Omega$, we start by noting that the 
boundary point $(a',b)$ lies in  
$\overline{\Delta^2}  \cap  \partial H^2_{1/4}$. Note that $b \neq 0$ as $(a',b) \in \partial H^2_{1/4}$.
Now in contrast to the previous case, no linear choices work (the proof of which is easy but we shall not digress into that 
here), thereby leading us to consider the non-linear map 
$\varphi: H^2_{1/4} \to H^2_{1/4}$ defined by $\varphi(z,w) = (\frac{1}{b}zw,b)$.
which is of-course a holomorphic retraction map on $H^2_{1/4}$ with ${\rm image}(\varphi) = W$. 
It remains to realize $W$ as a retraction of our smaller bounded 
$\Omega$, rather than the unbounded $H^2_{1/4}$. Again we need only consider the restriction $\varphi_{|_{\Omega}}$
(which maps $\Omega$ into $W$), provided we verify the following.\\

\noindent \textit{Claim-3:} $W = L \cap \Omega$.\\

\noindent \textit{Proof:} If $(z,b) \in W$ then $|z| < |a'|$, because if $|z| \geq |a'|$ then $|zb| \geq |a'b| = 1/4$, 
which contradicts the fact that $(z,b) \in W$. Since $(a',b) \in \overline{\Delta^2}$, we have $|z| < 1$. 
Hence $(z,b) \in L \cap \Delta^2$ and therefore $W \subset L \cap \Omega$. 
As $\Omega \subset H^2_{1/4}$, we have $L \cap \Omega \subset W$, finishing the proof of the claim.\\

\noindent So, $\varphi$ maps $\Omega$ into $\Omega$ and the proof that ${\rm image}(\varphi) = L \cap \Omega$ is a non-trivial retract of 
$\Omega$ passing through $(a,b)$ is now completed in all cases.

\begin{rem}
Consider the retract $Z = \{(\zeta,b): |\zeta| < \frac{1}{4|b|}\}$ of $\Omega$ as above. We observe that 
$Z$ can be expressed as a graph over a linear {\it retract}
(through the origin) unlike the situation
in the unbounded version, as follows.
Fix any $z_0$ with $\vert z_0 \vert \leq 1/4$ and let $L = \{\lambda(z_0,1): \lambda \in \mathbb{C}\}$,
the linear subspace of $\mathbb{C}^2$ spanned by the boundary point $(z_0,1)$. If we let $\phi$ be the
the holomorphic map given by $\phi(\lambda) = (\frac{1}{4b}\lambda^2,b)$, then
it is straightforward to check that $\phi$ maps $\Delta$ into $H^2$, indeed onto $Z$,
owing to the chosen constraint on $z_0$. 
If we view $\mathbb{C}^2$ as the direct sum of $L$ with the span of $(0,b)$ (indeed, the $z$-axis!),
and define $\varphi$ on $L$ by $\varphi\left( \lambda(z_0,1) \right) = \phi(\lambda)$,
we observe then that: $Z$ is in-fact the 
graph of   
$\varphi$ over the linear subspace $\Omega_L = \Omega \cap L$.
An interesting trivia to note here that when we choose $z_0=0$, 
we 
obtain the `horizontal' retract $Z$ as the graph over the `vertical' subspace obtained
by intersecting the $w$-axis with the domain i.e., the same $L$ when $z_0=0$. At the other extreme i.e.,
when $\vert z_0 \vert=1/4$, we observe that $\varphi$ is just the restriction of the
retraction map $f(z,w)=(zw/b, b)$. In any case, we can neither choose $f$ nor
$\varphi$ to be affine-linear owing to theorem \ref{H2}.
\end{rem}

\noindent As another difference from the unbounded $H^2$, thereby
a purpose of the above $\Omega$, is that the sets: $S_1$ of 
unit vectors representing directions along which there
are linear retracts in $\Omega$, as well as, $S_2$
of those unit vectors along which there are no non-trivial
retracts through the origin, are both non-trivial subsets
of the unit sphere
neither of which have measure zero (and with both 
having non-empty interior in the sphere).
Finally here, we also mention that there are non-linear 
retracts along the directions given by $S_1$ through
the origin. Thus there exist bounded balanced 
pseudoconvex domains which are not biholomorphic
to product domains but still have non-linear 
retracts through the origin. Indeed, we may 
establish $D:=\Delta^2 \cap H^{1/2}$ is irreducible as
follows. Assuming the contrary, we observe that it 
would follow by the Riemann mapping theorem that
$D$ is biholomorphic to the bidisc $\Delta^2$. 
The homogeneity of $\Delta^2$ together with 
an application of Cartan's theorem then renders
a linear equivalence between $\Delta^2$ and $D$
which is impossible as the former is convex
whereas the latter is not.\\

\noindent We may ask for variants of the above $H^2_{1/4} \cap \Delta^2$ and more; we shall
only record the case when the polydisc $\Delta^2$ is replaced by the Euclidean ball $\mathbb{B}$.\\ 

\textit{Example:} Consider the domain $\Omega := \mathbb{B} \cap H^2_{1/4}$,
where $\mathbb{B} := \{(z,w) \in\mathbb{C}^2: |z|^2 + |w|^2 < 1\}$ 
be the euclidean ball centred at $0$ of radius $1$ and $H^2_{1/4} =\{(z,w)\in \mathbb{C}^2: |zw| < 1/4\}$ .\\
We want to prove that for each $(a,b) \in \Omega$, there exist a  retract 
(of dimension 1) of $\Omega$ passing through $(a,b)$.\\

Let $L$ denotes the hyperplane $\{(z,w) \in \mathbb{C}^2: w=b\}$ . 
Firstly note that 
\[ 
    \partial \Omega = (\partial \mathbb{B} \cap H^2_{1/4}) \cup 
    (\mathbb{B} \cap \partial H^2_{1/4}) \cup (\partial \mathbb{B} \cap \partial H^2_{1/4})
\]
Note that $\partial (L \cap \Omega) = L \cap \partial \Omega$ 
.
 First we prove the following claim:\\

 \textit{Claim-1:} Either $\partial (L \cap \Omega) \subset 
 \partial \mathbb{B}$ or $\partial (L \cap \Omega) \subset \partial H^2_{1/4}$ for $i = 1,2$.\\

\textit{Proof of the claim-1:} If possible assume that $\partial (L \cap \Omega) \nsubseteq 
\partial \mathbb{B}$ and $\partial (L \cap \Omega) \nsubseteq \partial H^2_{1/4}$ . Then there exist points $(a_1,b)$ and 
$(b_1,b)$ in $\partial(L \cap \Omega)$ such that 
$(a_1,b) \notin \partial \mathbb{B}$ and $(b_1,b) \notin \partial H^2_{1/4}$. 
From the above decomposition of $\partial \Omega$ and $(a_1,b), (b_1,b) \in \partial \Omega$, we obtain that $(a_1,b) \in \mathbb{B} \cap \partial H^2_{1/4}$ and $(b_1,b) \in \partial \mathbb{B} \cap H^2_{1/4}$. 
This implies that $a_1,b_1,b$ satisfies the following equations:
\begin{eqnarray*}
    |a_1|^2 + |b|^2 < 1,~ |a_1b| = 1/4\\
    |b_1|^2 + |b|^2 =1,~ |b_1b| < 1/4
\end{eqnarray*}
 Dividing $|a_1b|$ (which is equal to $1/4$) on both sides of the equation $|b_1b| < 1/4$, we get $|b_1| < |a_1|$.
 Squaring and adding $|b|^2$ on both sides of the equation 
 $|b_1| < |a_1|$, we get that $1 = |b_1|^2 + |b|^2 < |a_1|^2 + |b|^2 < 1$, which is vividly a contradiction.
This finishes the proof of the claim.\\

So we can divide into two cases depending on whether 
$\partial (L \cap \Omega) \subset \partial \mathbb{B}$ or 
$\partial (L \cap \Omega) \subset \partial H^2_{1/4}$. 
\\

Fix a point $(a',b)$ in $\partial (L \cap \Omega)$ and let $Z := L \cap \mathbb{B}$,
$W := L \cap H^2_{1/4}$.\\

\underline{Case-1:} $\partial(L \cap \Omega) \subset \partial \mathbb{B}$.\\

 Since $(a',b) \in \partial \Omega$ and the above 
decomposition of $\partial \Omega$, we obtain that $(a',b) \in 
\partial\mathbb{B} \cap \overline{H^2_{1/4}}$. 
We know that $Z$ is an  retract of $\mathbb{B}$. Then there exist a holomorphic retraction map $\rho$ on $\mathbb{B}$ such that $\rho(\mathbb{B}) = Z$.
 Now we prove that $Z$ is  a retract of $\Omega$. To prove this, consider the map 
 $\rho_{|_{\Omega}}$, which maps $\Omega$ into $Z$. Next we prove the following claim:\\

 \textit{Claim -2:} $Z = L \cap \Omega$.\\

 \textit{Proof of claim-2:} Note that $|a'| \leq \frac{1}{4|b|}$, because if $|a'| > 
 \frac{1}{4|b|}$ then $|a'b| > 1/4$ and thereby implies that $(a',b) \notin \overline{H^2_{1/4}}$, which contradicts our assumption that $(a',b) \in \partial(L \cap \Omega) \subset \partial \mathbb{B}$. 
 If $(z,b) \in Z$ then $|z| < |a'|$ because if $|z| \geq |a'|$, then $|z^2| + |b|^2 \geq |a'|^2 + |b|^2 = 1$, which contradict the fact that $(z,b) \in \mathbb{B}$. 
 Note that $|zb| < |a'b| \leq 1/4$. 
 This implies that  $(z,b) \in L \cap \Omega$ and hence 
 $Z \subset L \cap \Omega$. 
 Since $\Omega 
 \subset \mathbb{B}$, we have $L \cap \Omega \subset Z$, . This finishes the proof of the claim. \\

 Using the above claim-2, $\rho$ maps $\Omega$ into itself and $image(\rho_{|_{\Omega}}) \subset L \cap \Omega = Z$. 
 If $z \in Z = L \cap \Omega$, then $\rho(z) = z$. So 
 $image(\rho_{|_{\Omega}}) = L \cap \Omega$.
  Hence $L \cap \Omega$ is a retract of $\Omega$ passing through $(a,b)$.\\

  \underline{Case-2:} $\partial(L \cap \Omega) \subset \partial H^2_{1/4}$.\\

Firstly note that $b \neq 0$ because $(a',b) \in \partial H^2_{1/4}$.
  Consider the holomorphic map $\varphi: H^2_{1/4} \to H^2_{1/4}$ defined by $\varphi(z,w) = (\frac{1}{b}zw,b)$.\\
  Note that $\varphi$ is a retraction map on $H^2_{1/4}$. Now we consider the map  $\varphi_{|_{\Omega}}$, which maps $\Omega$ into $W$. 
Next we prove the following claim:\\

\textit{Claim-3:} $W = L \cap \Omega $.\\

\textit{Proof of claim-3:} First we prove that $|a'| \geq \frac{1}{4|b|}$. If 
$|a'| < \frac{1}{4|b|}$ then $|a'b| < 1/4$ and thus implies that 
$(a',b) \in H^2_{1/4}$, which contradicts the fact that $(a',b) \in \partial H^2_{1/4}$ .  Hence we proved that $|a'| \geq \frac{1}{4|b|}$\\

  If $(z,b) \in W$, then $|z| < \frac{1}{4|b|}$. 
 Note that 
 \[
     |z|^2 + |b|^2 < \left(\frac{1}{4|b|}\right)^2 + |b|^2 \leq |a'|^2 + |b|^2.
 \]
 Since 
 $(a',b) \in (L \cap \partial \Omega) \subset \partial H^2_{1/4}$ and the above decomposition of $\partial \Omega$, we obtain that $(a',b) \in \overline{\mathbb{B}}$. Hence $|z|^2 + |b|^2 < 1$ and 
 therefore we get that $W \subset L \cap \Omega$. 
 Since $\Omega \subset D$, we have $L \cap \Omega \subset W$. This finishes the 
 proof of the claim.\\

Hence $\varphi$ maps $\Omega$ into $\Omega$.
Therefore $image(\varphi) = L \cap \Omega$ is a one-dimensional retract of $\Omega$ passing through $(a,b)$.
\\

\noindent We conclude this section by noting that the $\ell_q$
ball in $\mathbb{C}^N$ is not a Lempert domain, the 
class of domains considered in the very recent article
\cite{GZ}, since a balanced pseudoconvex domain is a Lempert domain if and only if it is convex, as
verified below. Indeed, recall that a taut domain $D \subset \mathbb{C}^N$ is a \textit{Lempert domain} if
the identity $c_D = \ell_D$ holds.

\begin{prop}\label{Lemp_cnvx}
Let $D$ be a balanced pseudoconvex domain in $\mathbb{C}^N$. Then $D$ is 
a Lempert domain if and only if $D$ is convex.
\end{prop}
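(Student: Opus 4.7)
My plan is to prove the two implications separately, with the unifying tool being the identity $\ell_D^*(0,z) = h_D(z)$ valid for every balanced pseudoconvex domain (here $h_D$ denotes the Minkowski functional, $D = \{h_D < 1\}$, and $\ell_D^* = \tanh\ell_D$). Throughout, I shall use the fact that a Lempert domain is by definition taut, and a taut balanced domain is hyperbolic, hence bounded by Kodama's result already cited several times in the introduction.

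For the direction convex $\Rightarrow$ Lempert, assume $D$ is balanced pseudoconvex and convex. Since $D$ must be taut to qualify as a Lempert domain, the preceding remark forces $D$ to be bounded. Lempert's celebrated theorem for bounded convex domains then gives the chain $c_D = k_D = \ell_D$ directly, establishing this implication.

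For the main implication Lempert $\Rightarrow$ convex, let $D$ be a Lempert domain; then $D$ is bounded balanced pseudoconvex, so the hypothesis $c_D = \ell_D$ combined with $\ell_D^*(0,z) = h_D(z)$ yields $c_D^*(0,z) = h_D(z)$ for every $z \in D$. For each $z \neq 0$, normality of the family $\{f \in \mathcal{O}(D,\Delta): f(0) = 0\}$ via Montel's theorem produces an extremal $f_z$ with $f_z(0) = 0$ and $|f_z(z)| = h_D(z)$. Applying the classical Schwarz lemma to the disc map $\lambda \mapsto f_z(\lambda z/h_D(z))$ shows it must be a rotation, so after multiplying $f_z$ by a unimodular constant we may assume $f_z(\lambda z/h_D(z)) = \lambda$ for all $\lambda \in \Delta$. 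The differential $L_z := df_z(0)$ is then a $\mathbb{C}$-linear functional satisfying $L_z(z) = h_D(z)$. Moreover, the infinitesimal Schwarz lemma applied to the disc map $\lambda \mapsto f_z(\lambda w/h_D(w))$ for an arbitrary direction $w$ gives $|L_z(w)| \leq h_D(w)$ for every $w \in \mathbb{C}^N$.

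Consequently, for every boundary point $p \in \partial D$ (so $h_D(p) = 1$), taking $z = p$ furnishes a $\mathbb{C}$-linear functional $L_p$ with $|L_p| \leq h_D$ and $|L_p(p)| = h_D(p)$; by definition \ref{C-convex}, this is exactly convexity of $D$ at $p$. To upgrade convexity-at-every-boundary-point to global convexity of $D$, set $q(z) := \sup_{p \in \partial D} |L_p(z)|$. Then $q \leq h_D$ by construction, $q$ is a seminorm as a supremum of moduli of linear functionals, and on $\partial D$ we have $q(p) \geq |L_p(p)| = h_D(p)$. Positive homogeneity of both functions then forces $q \equiv h_D$ throughout $\mathbb{C}^N$, so $h_D$ itself is a seminorm, which is equivalent to convexity of $D$. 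The main obstacle is the extraction of the supporting linear functional $L_z$ from the extremal holomorphic $f_z$; this is resolved cleanly by differentiating at the origin and invoking Schwarz simultaneously in every direction, after which the reconstruction of $h_D$ from the $L_p$'s via homogeneity is routine.
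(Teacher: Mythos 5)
Your argument is correct and follows the same skeleton as the paper's proof: both directions hinge on the identity $\ell_D^*(0,z)=h_D(z)$ for balanced pseudoconvex domains (proposition 3.1.11 of \cite{Jrncki_invrnt_dst}), which converts the Lempert equality into $c_D^*(0,\cdot)=h_D$, and the converse is Lempert's theorem in both treatments. The one genuine difference is that where the paper simply cites proposition 2.3.1 of \cite{Jrncki_invrnt_dst} for the implication ``$c_D^*(0,\cdot)=h_D \Rightarrow D$ convex,'' you prove it: extracting an extremal $f_z$ by Montel, forcing it to be a rotation on the complex line through $z$ by the Schwarz lemma, and reading off the supporting functional $L_z=df_z(0)$ with $|L_z|\le h_D$ and $L_z(z)=h_D(z)$, after which $h_D=\sup_p|L_p|$ is a norm. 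That unpacking is sound and is essentially the content of the cited result, so your proof is a self-contained version of the paper's. Two small points: for $p\in\partial D$ you should apply the construction to $tp\in D$ for some $0<t<1$ and rescale by homogeneity rather than ``taking $z=p$'' literally; and in the easy direction your appeal to tautness to force boundedness is circular as written --- the implication ``convex $\Rightarrow$ Lempert'' genuinely requires boundedness (an unbounded convex balanced domain is not taut, hence not a Lempert domain under the paper's definition), a caveat the paper's own one-line proof also glosses over.
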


\begin{proof}
Assuming first that $D$ is a Lempert domain, we have $c_D = \ell_D$. 
By proposition 3.1.11 in \cite{Jrncki_invrnt_dst}, we have $p(0,h(z)) = \ell_D(0,z) = \tanh^{-1}(h(z))$.
Thereby,
\[
    c_D(0,z) = \tanh^{-1}(c_D^*(0,z)) =  \tanh^{-1}(h(z)) = \ell_D(0,z).
\]
This implies that $c_D^*(0,z) = h(z)$ for all $z \in D$. By proposition 2.3.1 
in \cite{Jrncki_invrnt_dst}, $D$ is convex.\\
The converse is the famous theorem of Lempert (exposited as theorem 11.2.1 
in \cite{Jrncki_invrnt_dst}).
\end{proof}

\section{Proof of theorems \ref{converse-of-Vesentini} and \ref{Charac-retract-1diml}}\label{Vesentini}

\noindent We begin by connecting the notions of convexity as used in 
the statement of the theorem, usual geometric convexity and continuity of 
the Minkowski functional -- throughout this section $D$ will stand for a bounded 
balanced domain of holomorphy in $\mathbb{C}^N$ without further mention; we may only mention
additional assumptions if made. This will be done in the statement and in course
of the proof of the following lemma. Though this is not really a new 
result, we include the proof not only to keep it self-contained but also because
it is crucial for what follows.

\begin{lem} \label{cty-Minkowfnal}
If $\partial D$ is convex near $p$ then the Minkowski functional $h$ of $D$
is continuous near $p$. 
\end{lem}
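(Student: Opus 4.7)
The plan is to prove continuity by separately establishing upper and lower semicontinuity on some neighbourhood of $p$, and then to exploit the positive homogeneity of $h$ in order to pass from boundary points, where convexity is directly usable, to arbitrary points of a full neighbourhood of $p$ in $\mathbb{C}^N$. Upper semicontinuity is essentially free from pseudoconvexity, while lower semicontinuity is exactly where the hypothesis of convexity near $p$ enters.

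First, since $D$ is a bounded balanced pseudoconvex domain, the standard characterization gives that $\log h$ is plurisubharmonic on $\mathbb{C}^N$; in particular $h$ itself is upper semicontinuous everywhere, with no use of the hypothesis about $p$. So the task reduces to lower semicontinuity at each point of some neighbourhood of $p$.

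For the lower semicontinuity step I would use the supporting linear functionals supplied by the hypothesis directly. By definition, convexity near $p$ provides an open ball $B$ around $p$ such that for every $q \in B \cap \partial D$ there is a $\mathbb{C}$-linear functional $L_q$ with $|L_q| \leq h$ on $\mathbb{C}^N$ and $|L_q(q)| = h(q) = 1$. For such a $q$, continuity of $|L_q|$ together with the pointwise inequality $h(z) \geq |L_q(z)|$ gives $\liminf_{z \to q} h(z) \geq |L_q(q)| = h(q)$; combined with the upper semicontinuity above, this yields continuity of $h$ at every $q \in B \cap \partial D$.

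To upgrade from continuity at boundary points near $p$ to continuity on an open neighbourhood of $p$ in $\mathbb{C}^N$, the plan is to invoke the homogeneity relation $h(\lambda z) = |\lambda|\, h(z)$ for $\lambda \in \mathbb{C}$. For any $z_0 \neq 0$, boundedness of $D$ forces $h(z_0) > 0$, so $q_0 := z_0/h(z_0)$ is well-defined and lies in $\{h = 1\} \subset \partial D$; by homogeneity, continuity of $h$ at $q_0$ is equivalent to continuity of $h$ at $z_0$. It therefore suffices to find a neighbourhood $V$ of $p$ such that $q_0 \in B \cap \partial D$ for every $z_0 \in V$. This last step is the main technical obstacle, and it is handled by the two-sided estimate $h(z) \to h(p) = 1$ as $z \to p$: the upper bound is the plurisubharmonic upper semicontinuity, while the lower bound $h(z) \geq |L_p(z)| \to |L_p(p)| = 1$ is once more the supporting-functional inequality, this time at $p$ itself. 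With this two-sided control, $q_0 = z_0/h(z_0)$ stays close to $p$ whenever $z_0$ does and hence falls inside $B$, closing the argument and showing in passing that convexity near $p$ forces in particular convexity at $p$.
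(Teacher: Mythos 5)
Your proof is correct, but it takes a genuinely different route from the paper's. The paper first upgrades the supporting-functional definition of ``convex near $p$'' to geometric convexity of $U\cap D$ for a small ball $U$, then proves subadditivity of $h$ from that convexity, and finally concludes via the estimate $|h(z)-h(q)|\leq h(z-q)\leq c\|z-q\|$, so it obtains continuity as a consequence of a Lipschitz-type bound and never invokes pseudoconvexity. You instead split continuity into its two semicontinuous halves: upper semicontinuity comes for free from the plurisubharmonicity of $\log h$ (legitimate here, since $D$ is a domain of holomorphy throughout that section), while lower semicontinuity at each $q\in B\cap\partial D$ follows from using the supporting functional $L_q$ directly as a continuous minorant touching $h$ at $q$; the passage from boundary points to a full neighbourhood of $p$ via the radial projection $z_0\mapsto z_0/h(z_0)$ and the two-sided squeeze $h(z_0)\to 1$ is the right way to close the loop, and all the small points it relies on check out ($h>0$ away from the origin by boundedness, $\{h=1\}\subset\partial D$ for a balanced domain, and continuity transferring along fixed dilations by homogeneity). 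The trade-off is that the paper's argument yields a quantitative local Lipschitz estimate and works without pseudoconvexity, whereas yours is shorter and makes the role of the supporting functionals more transparent, at the cost of using the standing pseudoconvexity hypothesis for the upper semicontinuity half.
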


\begin{proof}
Let us firstly quickly note that if $D$ is convex  near $p$ in the sense of definition \ref{C-convex}, then 
$D$ is geometrically convex near $p$ i.e., there exist an open ball $U$
in $\mathbb{C}^N$ centered at $p$ such that $U \cap D$ is convex in the usual sense of being 
closed under the formation of line segments. Indeed, this follows by considering the closed half-spaces
(containing the domain) given by the linear functionals as per definition \ref{C-convex} due to the points of
$\partial D \cap U$ being convex boundary points; supporting hyperplanes (and likewise half-spaces
containing $D \cap U$) at other points of $\partial(D \cap U)$ exist just by convexity of the ball $U$.
As the intersection of these closed half-spaces precisely equals $\overline{U \cap D}$, the convexity of
$U \cap D$ follows.\\

\noindent Let $C := \bigcup_{z \in \partial D \cap U}\{\lambda z: \lambda \geq 0\}$ and 
$C_D := C \cap D$. As $U \cap D$ is geometrically convex, so is the cone generated by it, thereby assuring that $C_D$ is convex. Next, we show the subadditivity of $h$ on $C$.  Let $z,w \in C_D$. By definition 
of the Minkowski functional, for each $\epsilon > 0$ there exist positive reals $t,s$ such that 
$z/t, w/s \in D$ and $t < h(z) + \epsilon$, $s < h(w) + \epsilon$.
Since $C_D$ is open, there exist $t',s'$ such that $t' < t, ~s' < s$ and $z/t', w/s' \in C_D$. Writing $(z+w)/(s'+t')$ as the convex combination:
\[
    \frac{z+w}{s'+t'} = \frac{t'}{s'+t'} \cdot \frac{z}{t'} + \frac{s'}{s'+t'} \cdot \frac{w}{s'},
\]
it follows that by the convexity of $C_D$,
that $(z+w)/(s+t) \in C_D$. Hence $h(z+w) \leq s'+t' < h(z) + h(w) + 2\epsilon$.
As $\epsilon$ is arbitrary, we get $h(z+w) \leq h(z) + h(w)$ for all $z,w \in C_D$. This shows that $h$ is sub-additive on $C_D$. To prove the subadditivity of $h$ on $C$, let $z,w$ be any two points in $C$. Choose $k$ so that $k > \max\{h(z),h(w)\}$. Note that $z/k,w/k \in C_D$. Applying the subadditivity of $h$ on $C_D$ and homogenity of $h$, we obtain that $h(z/k+w/k) \leq 1/k~ h(z) + 1/k~ h(w)$. This shows that $h$ is subadditive on $C$. The subadditivity of $h$ on $C$ and together with homogenity of $h$ yields that $h$ is a convex function on $C$. Hence $h$ is continuous on $C$ and in particular $h$ is continuous near $p$. 
\end{proof}

\noindent The converse of the above lemma is false. Indeed, there are plenty of (counter-) examples,
the simplest of which was given in the remark \ref{rem-cnvrse-Vesen} of the introduction;
another explicit example is obtained be examining the non-smooth boundary points of the
domain called $D_h$ in section \ref{Irreducibity} below; it is not difficult to construct 
families of further examples in plenty.\\

{\it Proof of theorem  \ref{converse-of-Vesentini}}: 
We shall work with the more general and weaker 
assumption that $\partial D$ is convex {\it at} the point $p$ together-with the
continuity of the Minkowski functional $h$ of $D$; the other assumption that 
$p$ is {\it not} a $\mathbb{C}$-extremal boundary point as in the statement 
of theorem \ref{converse-of-Vesentini} remains in force. 
Just by the hypotheses that $D$ is balanced, we may describe $D$ as 
$D = \{z \in \mathbb{C}^N: h(z) < 1\}$.
As $p \in \partial D$ is not $\mathbb{C}$-extremal, there exist $\epsilon_0 > 0$ 
and a non-zero $u \in \mathbb{C}^N$ such that 
$p+tu \in \partial D$ for all $t\in \mathbb{C}$ with $|t| < \epsilon_0$. 
After a rescaling of $u$, we may assume $\epsilon_0 = 1$ 
and also (by the hypotheses) that $h$ is continuous at $p+tu$ for all $t \in \Delta$.\\

\noindent Consider the holomorphic map $\varphi: \Delta \to \mathbb{C}^N$ defined by
$\varphi(t) = (p+tu)t$. As $p+tu \in \partial D$ for all $t \in \Delta$ and $h$ 
is continuous at $p+tu$ for all $t \in \Delta$, we have by a well-known 
fundamental fact (see proposition \ref{Prop_Minkowski} (d)) that $h(p+tu) = 1$ for all $t \in \Delta$. Hence  
\[
h(\varphi(t)) = h((p+tu)t) = |t|h(p+tu) = |t| < 1.
\]
So, $\varphi$ maps $\Delta$ into $D$. Next we want to prove that $\varphi$ is a complex $C$-geodesic in $D$. 
First we claim that there exist $t_0 \in \Delta^*:=\Delta \setminus \{0\}$ such that $D$ is convex at $p+t_0u$. 
By the definition of convexity of $\partial D$ at $p$, there exist a $\mathbb{C}$-linear functional $L$ such that
\[
    |L(p)| = h(p) = 1~~~\text{and}~~ |L(z)| \leq h(z) \;\; \text{ for all } z \in \mathbb{C}^N
\]
To prove the aforementioned claim, it suffices to show that there exist $t_0 \in \Delta^*$ such that
\begin{equation}\label{cnvx}
    |L(z)| \leq h(z)~~ \text{and}~~|L(p+t_0u)| = h(p+t_0u) = 1.
\end{equation}
Since $\mathbb{C}^N = \ker(L) \oplus span\{p\}$, we can write $u =u_1 +u_2$, where $u_1 \in \ker(L)$ and $u_2 \in span\{p\}$.
Write $u_2 = \lambda p$ for some $\lambda \in \mathbb{C}$. So
\[ 
    |L(p+tu)| = |L(p) +tL(u)| = |L(p) + t\left(L(u_1) + L(u_2)\right)| = |1+\lambda t||L(p)| = |1+\lambda t|.
\]
We need only prove that there exist $t_0 \in \Delta^*$ such that $|1+\lambda t_0| = 1$,
which we do now. If $\lambda = 0$, then $u \in \ker(L)$. Hence (\ref{cnvx}) holds 
for any $t_0 \in \Delta^*$. Assume that $\lambda \neq 0$.
Write $\lambda t = re^{i\theta}$ where $\theta \in [-\pi,\pi)$. From this we get $t = \frac{1}{\lambda} (re^{i\theta})$. The relations $|1+\lambda t| = 1$ and $|t| <1$ becomes $|1+re^{i\theta}| = 1$ and $|re^{i\theta}| < |\lambda| $. We want to prove that there exist $(r,\theta)$ such that 
\[
    |re^{i\theta} + 1| =1 ~~ \text{ and } ~~~r < |\lambda|.
\]
Geometrically above equation represents the intersection of circle
centred at $-1$ with radius $1$ and the disc centred at $0$ of radius $|\lambda|$.
So there exist $t_0 \in \Delta^*$ such that $|1+\lambda t_0| = 1$. This proves the claim.\\

\begin{rem}
The arguments above can actually be used to show that $\partial D$ is convex at all points 
near $p$ in the complex line segment through $p$ in the direction of $u$. 
\end{rem}

To proceed further with the proof that $\varphi$ is a complex $C$-geodesic, let
us now note that as $D$ is convex at the boundary point $p+t_0u$  by the foregoing; thereby also 
at $(t_0/\vert t_0 \vert) (p+t_0u)$.
We then have by proposition 2.3.1(b) in \cite{Jrncki_invrnt_dst}, with the 
same notations therein, that
\[
    c_D^*(0,(p+t_0u)t_0) = h((p+t_0u)t_0) = |t_0|h(p+t_0u) = \vert t_0 \vert;
\]
for the definition of $c_D^*$ refer to section \ref{Car}. Recall also the definition of the 
Caratheodory distance $c_D$ and its relationship to $c_D^*$: $c_D = \tanh^{-1}c_D^*$.
So 
\[
    c_D(0,(p+t_0u)t_0) = \tanh^{-1}\left(c_D^*(0,(p+t_0u)t_0)\right) = \tanh^{-1}|t_0|
\]
For the term at the left note that: $c_D(\varphi(0),\varphi(t_0)) = c_D(0,(p+t_0u)t_0)$;
the term on the right can just be realized as the Poincare distance between the $t_0$ and 
the origin in the disc $\Delta$. Indeed if $m(a,b)$ denotes the Mobius distance between two points $a,b \in \Delta$,
recall that $p(a,b) = \tanh^{-1}(m(a,b))$. 
Therefore, $c_D(\varphi(0),\varphi(t_0)) = p(0,t_0)$.
By proposition 11.1.4 in \cite{Jrncki_invrnt_dst}, we have 
$c_D(\varphi(\lambda),\varphi(\mu)) = p(\lambda,\mu)$ for all $\lambda,\mu \in \Delta$. 
Hence $\varphi$ is a complex $C$-geodesic on $D$. It follows that $Z = {\rm image}(\varphi)$ is a one-dimensional retract 
of $D$. To finish, let us check that $Z$ is indeed non-linear. Assume the contrary that $Z$ is linear. 
Let $b$ be a non-zero element in $Z$; it has got to be of the form: $b = (p+t_0u)t_0 \in Z$ for some $t_0 \in \Delta^*$. 
As $Z$ is linear, $\frac{1}{2}(p+t_0u)t_0 \in Z$.  This implies that
\begin{equation}\label{C-extrm}
    \frac{1}{2}(p+t_0u)t_0 = (p+s_0u)s_0,
\end{equation}
for some $s_0 \in \Delta^*$.
So, $h(\frac{1}{2}(p+t_0u)t_0) = h(s_0(p+s_0u))$ and this implies that $|t_0|h(p+t_0u) = 2|s_0|h(p+s_0u)$.
Since $p+tu \in \partial D$ for all $t \in \Delta$ and $D$ is convex, we have $|t_0| = 2|s_0|$.
We can write $s_0 = (t_0/2)e^{i\theta}$ for some $\theta \in [-\pi,\pi)$. From (\ref{C-extrm}), we get
\[
    (p+t_0u)t_0 = t_0 e^{i\theta}(p+\frac{t_0}{2}ue^{i\theta}).
\]
Since $t_0 \neq 0$, the above equation becomes
\begin{equation}\label{C-extrm1}
    p+t_0u = e^{i\theta}p + \frac{e^{i2\theta}t_0}{2}u.
\end{equation}
We claim $p,u$ are linearly independent. To prove this, assume the contrary: $p = \lambda u$ for some $\lambda \in \mathbb{C}$.
Applying  $h$ on both sides, we get
\[
h(p) = |\lambda|h(u).
\]
By the hypotheses that $D$ is bounded and from proposition \ref{Prop_Minkowski} (f), we obtain that $h(u) \neq 0$.
Hence we get $|\lambda| = 1/h(u)$. We now split into two cases depend upon whether $h(u) > 1$ or not.\\

\noindent If $h(u) \leq 1$, then $p+\frac{1}{2\overline{\lambda}} u  \in \partial D$. Since $p = \lambda u$, we have 
\begin{equation}\label{C-extrm2}
p+\frac{1}{2\overline{\lambda}} u = \lambda u + \frac{1}{2\overline{\lambda}} u = 
\left(\frac{2|\lambda|^2+1}{2\overline{\lambda}}\right)u.
\end{equation}
Applying $h$ on the above equation (\ref{C-extrm2}), we get 
\[
    1= h\left(p+\frac{1}{2\overline{\lambda}}u\right) = \left(\frac{2|\lambda|^2+1}{2|\lambda|}\right) h(u) = \frac{2+(h(u))^2}{2}
\] 
Comparing the left-most and the right-most terms in this equation, we obtain $h(u) = 0$
which is a contradiction.\\

\noindent If $h(u) >1$, then $p+\lambda u \in \partial D$. From $p = \lambda u$, we get that $p+\lambda u = 2\lambda u$. Applying $h$ on both sides, we get
\[
1 = h(p+\lambda u) = 2|\lambda|h(u) = 2,
\]
vividly a contradiction.\\

\noindent This establishes the linear independence of 
$p$ and $u$. Now we get back to finishing the proof that $Z$ is non-linear, to which
end, we get back to the equation (\ref{C-extrm1}). Since $p,u$ are linearly independent, we obtain
in-particular, $ e^{i2\theta}/2 = 1$. As such a $\theta$ does not exist, this
concludes our proof by contradiction to establish that $Z$ is a non-linear retract of $D$.\\

An alternative proof of this theorem is provided below without the use of complex geodesics. \\

First, we consider the case that $\partial D$ is convex near $p$. As $\partial D$ is not $\mathbb{C}$-extremal at $p$, there exists a non-zero holomorphic map $\psi : \Delta \to F_{\mathbb{C}}(p,\partial D)$. 
After rescaling $\Delta$, if needed, we may assume that $\psi(t)$ is a non-vanishing holomorphic map on $\Delta^*$. 
We want to show that there exists $\delta > 0$ such that $p+t\psi(t) \in \partial D$ for all $t \in \Delta_{\delta}$, where $\Delta_{\delta}$ denotes the disk centred at the origin of radius $\delta$.  \\

Now, we claim that there exists $\delta > 0$ such that, for every $u \in F_{\mathbb{C}}(p,\partial D) \cap S$, we have $p +su \in \partial D$ for all $s \in \Delta_{\delta}$, where $S = \partial \mathbb{B}_{\ell_1}$ denote the unit sphere in the $\ell_1$ norm.
To prove this, let $\text{co}(D)$ denote the convex hull of $D$. Note that $ F_{\mathbb{C}}(p,\partial D) = F_{\mathbb{C}}(p,\partial (\text{co}(D)))$ because $D$ is convex near $p$. For any $u \in F_{\mathbb{C}}(p,\partial (\text{co}(D))) \cap S$ can be written as a convex combination of vectors $u_j \in F_{\mathbb{C}}(p,\partial (\text{co}(D))) \cap S$. 
i.e. $u = c_1u_1 + \ldots + c_ku_k$, where $c_j$'s are non-negative reals with $c_1 + c_2 +\ldots + c_k = 1$.
Since each $u_j \in F_{\mathbb{C}}(p,\partial (\text{co}(D)))$, there exists $\epsilon_j > 0$ such that $p+su_j \in \partial \text{co}(D)$ for all $s \in \Delta_{\epsilon_j}$. 
Let $\delta := \min \{\epsilon_1,\ldots,\epsilon_k\}$. Then for each $s \in \Delta_{\delta}$, 
\[
p+su = p+s(c_1u_1 +\ldots + c_ku_k) = c_1(p+su_1) + \ldots + c_k(p+su_k)
\]
Each term $p +su_j \in \overline{\text{co}(D)}$, and since $\overline{\text{co}(D)}$ is convex, it follows that $p+su \in \overline{\text{co}(D)}$ for all $s \in \Delta_{\delta}$. 
Since $D$ is convex near $p$, we may assume that $D$ is convex at $p+su$ for all $s \in \Delta_{\delta}$. By proposition 2.3.1(b) in \cite{Jrncki_invrnt_dst}, we have $\gamma_D(0;p+su) = h(p+su)$. Applying proposition 2.3.1(d) in \cite{Jrncki_invrnt_dst}, we obtain that $h(p+su) = \widehat{h}(p+su) \leq  1$, where $\widehat{h}$ is the Minkowski function of $\text{co}(D)$. This implies that $p+su \in \overline{D}$ for all $s \in \Delta_{\delta}$.
From the proof of proposition \ref{strong-weak-extrm}, we obtain that  $p+su \in \partial D$ for all $s \in \Delta_{\delta}$, which finishes the proof of the claim.\\  

Since $F_{\mathbb{C}}(p,\partial D)$ is closed under scalar multiplication, for each $t \in \Delta^*$, we have 
        \[
            \frac{\psi(t)}{\|\psi(t)\|} \in F_{\mathbb{C}}(p,\partial D) \cap S.
\]
Since $\phi(t)= t \|\psi(t)\|$ is continuous on $\Delta^*$, there exists $\delta_0 > 0$ such that $\|t \psi(t)\| \leq \delta$ for all 
$t \in \Delta_{\delta_0}$. 
Hence by the above claim, for all $t \in \Delta_{\delta_0}^*$, we have
\[
    p + t\psi(t) = p + t \|\psi(t)\| \left(\frac{\psi(t)}{\|\psi(t)\|}\right) \in \partial D
\]
 Define the holomorphic map $\tilde{\varphi} : \Delta \to \mathbb{C}^N$ by $\tilde{\varphi}(t) = p+\delta t\psi(\delta t)$. 
Now, we consider the map $\varphi : \Delta \to D$ defined by $\varphi(t) = t \tilde{\varphi}(t)$. 
Hence $Z := \text{image}(\varphi)$ is the graph of the holomorphic map $\phi(t) = \delta t^2\psi(\delta t)$ over the linear subspace $\spn(p) \cap D$ of $D$. 
By proposition  \ref{Graph_retract}, it follows that $Z$ is a retract of $D$.\\

Next, we assume the weaker and general assumption that $\partial D$ is convex at
$p$, together with the Minkowski functional of $D$ being continuous near $p$.  Choose any holomorphic map $\tilde{\varphi} : \Delta \to (p + F_{\mathbb{C}}(p,\partial D)) \cap \partial D$ , since $\partial D$ is not $\mathbb{C}$-extremal at $p$, there exists atleast one such holomorphic map $\tilde{\varphi}$. 
So, we write $\tilde{\varphi}(t) = p + \tilde{\psi}(t)$, where $\tilde{\psi} \in \mathcal{O}(\Delta)$ with $\tilde{\psi}(0) = 0$ and $\tilde{\psi}(t) \in F_{\mathbb{C}}(p,\partial D)$ for all $t \in \Delta$.
Now, we consider the holomorphic map $\varphi: \Delta \to \mathbb{C}^N$ defined by
$\varphi(t) = t \tilde{\varphi}(t)$. In the above proof, we showed that $\varphi$ maps $\Delta$ into $D$. Note that $\text{span}(p) \cap \text{image}(\tilde{\psi}) = \{0\}$ because if $u$ is a non-zero vector in $F_{\mathbb{C}}(p,\partial D)$ then $p$ and $u$ are linearly independent (as shown in the above proof; look at the paragraph following equation(\ref{C-extrm1})). This implies that $Z = \text{image}(\varphi)$ is a graph of the holomorphic map $\phi(t) = t\tilde{\psi}(t)$ over the linear retract $\text{span}(p) \cap D$ of $D$. 
By theorem  \ref{Graph_retract}, it follows that $Z$ is a retract of $D$.

\qed

\begin{rem}
\begin{enumerate}
\item[(i)] If $N=2$, then we can replace convex near $p$ by convexity at $p$ because if $D$ is convex at
$p$ then $D$ is convex at $p+tu$ for all $t \in \Delta$.

\item[(ii)] The above proof gives an explicit (quadratic) retract. Of course, 
retracts of higher degree as well as other non-linear retracts, may be obtained by taking
$\varphi(t) = tp+t\tilde{\psi}(t)u$, where $\tilde{\psi}$ is an arbitrary holomorphic self-map of $\Delta$
fixing the origin. We note that after factoring out $t$, the resulting map 
$t + \tilde{\psi}u$ takes values in $\partial D$ i.e., gives an analytic disc in the boundary $\partial D$.
\end{enumerate}
\end{rem}
\noindent The next proposition establishes that this exhausts all possible examples of non-linear
(one-dimensional) retracts through the origin in the direction of a boundary point that fails to be
$\mathbb{C}$-extremal. Indeed, this proposition is an important special case of 
theorem \ref{Charac-retract-1diml} whose proof will be based on this and we therefore deal with it first. Moreover, this gives finer information about the one-dimensional retracts than theorem \ref{Charac-retract-1diml}, which is an assertion about retracts of all dimensions. Before we get to the details, it is very helpful to introduce a notation
for brevity:
if $a =(a_1,\ldots,a_k)$ is any point in $\mathbb{C}^k$ (for some $k \in \mathbb{N}$)
and each of $v_1,v_2,\ldots,v_k$ are 
vectors in $\mathbb{C}^N$, then
we define $a \cdot v = a_1v_1 + a_2 v_2 + \ldots + a_k v_k$; indeed this is the `$a$-linear'
combination of the vectors $v_1, \ldots, v_k$.
\begin{prop} \label{non-linear-for-convrs-Vesen}
Let $D$ be a bounded balanced pseudoconvex domain in $\mathbb{C}^N$.
Suppose that $\partial D$ is convex at $p \in \partial D$ but not 
$\mathbb{C}$-extremal at $p$. 
Let $Z$ be an arbitrary one-dimensional 
retract of $D$
passing through $0$ in the direction of $p$; let $\rho: D \to Z$ be a retraction map.
Then, $Z$ can be be realized as the image of an analytic disc of the form 
$\varphi(t)=tp + t \tilde{\psi}(t) \cdot v$ 
where $\tilde{\psi}: \Delta \to \mathbb{C}^{N-1}$ is an analytic disc with 
$\tilde{\psi}(0)=0$ and where $v$ stands for a list of basis vectors $(v_1, \ldots, v_{N-1})$
of $\ker \left( D \rho(0) \right)$. Also, after factoring out $t$, the resulting 
map $t \mapsto p +  \tilde{\psi}(t) \cdot v$ gives an analytic disc in the boundary $\partial D$; indeed,
this analytic disc is the radial projection of $Z$ onto the boundary of our balanced domain $D$. We may 
also view $Z$ as the graph of the map on ${\rm span}(p) \cap D$ given by $tp \to t \tilde{\psi}(t) \cdot v$ 
taking values in $\ker \left( D \rho(0) \right)$.\\

\noindent Suppose next that $\partial D$ is convex {\it near} $p$. In-case $p$ is a smooth 
boundary point, we have: $\ker \left( D \rho(0) \right)=H_{p,\mathbb{C}}^{0}$, 
where $H_{p,\mathbb{C}}^{0}$ denotes the translate 
of the complex supporting hyperplane to $\overline{D}$ at $p$, to the origin; if $p$
is not a smooth boundary point, $\ker \left( D \rho(0) \right)$ coincides with 
one of the translates of the supporting hyperplanes at $p$, to the origin. In either case, 
$Z$ may be viewed as the graph of a holomorphic map on the linear retract ${\rm span}(p) \cap D$ 
taking values in a bounded balanced pseudoconvex domain in the face $F_\mathbb{C}(p, \partial D)$,
indeed in a linear projection of $D$ into $F_\mathbb{C}(p, \partial D)$.
\end{prop}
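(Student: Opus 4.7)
The plan is to extract the graph form of $\varphi$ from theorem \ref{Graph} together with the one-dimensionality of $Z$, control the radial boundary values using the complex-geodesic characterization of one-dimensional retracts, identify $\ker(D\rho(0))$ via the norm-one-projection machinery, and finally locate the image of the graph map inside $F_\mathbb{C}(p,\partial D)$. I expect this last face-inclusion step to be the main technical obstacle.

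I will begin by applying theorem \ref{Graph} to realize $Z$ as the graph over $D_L = L \cap D$, where $L = T_0Z$, of a holomorphic map $f$ taking values in $M := \ker(D\rho(0))$. Since $Z$ is one-dimensional and passes through $0$ in the direction $p$, $L = \mathrm{span}(p)$ and $D_L$ is parametrized by $t\mapsto tp$ for $t\in\Delta$. The tangent condition $\varphi'(0)\in L$, combined with $M$ being complementary to $L$, forces $df(0)=0$, so $f(tp) = t\cdot g(t)$ with $g(0)=0$. Expanding $g(t) = \tilde\psi(t)\cdot v$ in any basis $v = (v_1,\ldots,v_{N-1})$ of $M$ produces $\varphi(t) = tp + t\tilde\psi(t)\cdot v$ with $\tilde\psi(0)=0$. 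Next, theorem \ref{Blncd_Rtrct_Cmplx_Geodsc} reparametrizes $Z$ by a complex $C$-geodesic which, after a M\"obius adjustment, I identify with $\varphi$; the Schwarz lemma for the Minkowski functional combined with the geodesic identity yields $h(\varphi(t)) = |t|$, and homogeneity of $h$ immediately gives $h(p + \tilde\psi(t)\cdot v) = 1$, placing $t\mapsto p + \tilde\psi(t)\cdot v$ in $\partial D$ and exhibiting it as the radial projection of $Z$ onto the boundary.

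For the kernel description under convexity of $\partial D$ near $p$, I will use lemma \ref{Dalpha} (which gives that $D\rho(0)$ is a projection onto $L$), write it as $z\mapsto \tilde\ell(z)\,p$ for a linear functional $\tilde\ell$, and invoke Simha's norm-one estimate as recalled in remark \ref{Linear-remark-1}: this yields $|\tilde\ell|\leq h$ on $\mathbb{C}^N$, and combined with $\tilde\ell(p)=1=h(p)$ exhibits $\tilde\ell$ as a supporting functional at $p$ in the sense of definition \ref{C-convex}. Hence $\ker(D\rho(0)) = \ker(\tilde\ell)$ is the translate to the origin of a complex supporting hyperplane at $p$; when $p$ is smooth, uniqueness of the supporting functional forces $\ker(D\rho(0)) = H^0_{p,\mathbb{C}}$, while in the non-smooth case one merely obtains one of the possibly several such translates.

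The hardest part will be locating the image of the graph map inside $F_\mathbb{C}(p,\partial D)$ rather than merely inside $M$. The natural plan is to study the two-parameter holomorphic map $\Psi(s,t) = p + s\tilde\psi(t)\cdot v$: since $\tilde\psi(t)\cdot v\in\ker\tilde\ell$ the supporting inequality gives $h(\Psi(s,t))\geq 1$ everywhere, while the previous paragraph yields $h(\Psi(s,t))=1$ along both $\{s=0\}$ and $\{s=1\}$; the task is to upgrade these two slicewise equalities to an equality on a full complex neighbourhood of $s=0$ for each fixed $t$, which would deposit $\tilde\psi(t)\cdot v$ into $F_\mathbb{C}(p,\partial D)$. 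The obstacle is that na\"ive subharmonicity of $\log h\circ\Psi$ cannot rule out isolated zeros of a non-negative subharmonic function (witness $|\mathrm{Im}\,s|$); to circumvent this I will exploit the convexity of $h$ on the real segment $[0,1]$ (coming from convexity of $\partial D$ near $p$, via the subadditivity argument in lemma \ref{cty-Minkowfnal}) to upgrade the boundary identity to $h\equiv 1$ on the whole real segment, and then use log-plurisubharmonicity of $h$ together with the two-parameter holomorphic family (whose value at $t=0$ is a punctured line through $p$) to propagate this equality to a full complex neighbourhood of $s=0$. Once the face inclusion is secured, the ambient bounded balanced pseudoconvex domain inside $F_\mathbb{C}(p,\partial D)$ may be taken to be the image of $D$ under the linear projection $\mathrm{id}-D\rho(0)$, completing the graph description of $Z$ over $D_L$.
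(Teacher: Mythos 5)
Your first two paragraphs are essentially sound. The graph decomposition via theorem \ref{Graph}, the factorization $\varphi(t)=tp+t\tilde\psi(t)\cdot v$ with $\tilde\psi(0)=0$, and the identification of $\ker(D\rho(0))$ with a supporting hyperplane all go through; note only that the paper obtains the boundary disc by a two-step maximum-principle argument applied directly to the plurisubharmonic function $t\mapsto h(p+\tilde\psi(t)\cdot v)$ (bounded above by $1$ via a $\limsup$ argument, equal to $1$ at $t=0$, hence constant), whereas your geodesic route needs both the lower bound $h(\varphi(t))\ge c_D^*(0,\varphi(t))=|t|$ \emph{and} the same plurisubharmonic upper bound $h(\varphi(t))\le |t|$; also, Simha's estimate as in remark \ref{Linear-remark-1} presupposes convexity of $D$, which you do not have — but the inequality $|\tilde\ell|\le h$ you want follows directly from lemma \ref{Dalpha} ($D\rho(0)$ maps $D$ into $D$) together with homogeneity, so this is a citation slip rather than a gap.

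The genuine gap is in the face-inclusion step, and you have correctly identified it as the hard point but your proposed fix does not close it. Knowing that $\log h(p+s\,u_t)$ is a non-negative plurisubharmonic function of $s$ vanishing on a \emph{real} segment (or even on all of $\mathbb{R}\cap\Delta$) does not force it to vanish on a complex neighbourhood: your own counterexample $|\mathrm{Im}\,s|$ is non-negative, subharmonic, and vanishes precisely on the real axis, so the convexity upgrade from two boundary points to the whole real segment buys you nothing further — the minimum set of a plurisubharmonic function can contain real hypersurfaces. (The parenthetical about the slice $t=0$ being a ``punctured line through $p$'' is also incorrect: $\tilde\psi(0)=0$, so $\Psi(s,0)\equiv p$.) The paper escapes this trap by a mechanism of a different nature: it first shows $\mathrm{image}(\tilde\varphi)\subset H_{p,\mathbb{R}}$ using that the composition of a \emph{real-linear} functional with a holomorphic disc is \emph{harmonic} (not merely subharmonic), so the sign condition plus an interior zero forces it to vanish identically, and the open mapping theorem then upgrades the vanishing of the real part to the vanishing of the full holomorphic function. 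Iterating this, the smallest real-linear subspace $L_p$ containing $\mathrm{image}(\tilde\psi(\cdot)\cdot v)$ is shown to be closed under multiplication by $i$ (every real-linear equation satisfied by a holomorphic map forces the associated complex-linear equation via the Cauchy--Riemann equations), whence $L_p\subset F_{\mathbb{R}}(p,\partial D)\cap iF_{\mathbb{R}}(p,\partial D)=F_{\mathbb{C}}(p,\partial D)$. Some such use of harmonicity/holomorphy — as opposed to plurisubharmonicity of $h$ alone — is indispensable here, and your outline is missing it.
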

\begin{proof}
Let $Z$ be a one dimensional retract of $D$ and $\rho: D \to Z$ be one of the retraction
maps on $D$ whose image is $Z$; it suffices to address and focus on the case when
$Z$ is non-linear, which we assume for the rest of the proof -- by theorem \ref{converse-of-Vesentini}
and its proof above, this case is non-vacuous and the thrust of the claim is about 
non-linear retracts. By theorem \ref{Graph},
$Z$ is a graph of a holomorphic map $\psi$ over $D_L := D \cap L$, where 
$L$ as usual is the tangent space $T_0Z$. 
Note that since $Z$ passes through 
origin in the direction of $p$, $L=T_0Z$ is the subspace spanned by $p$
and $Z$ may be expressed as $Z=\{ tp + \psi(t) \cdot v \}$, where
\[
\psi(t) \cdot v \; = \; \psi_1(t)v_1 + \ldots + \psi_{N-1}(t)v_{N-1}
\]
with $v_1, \ldots,v_{N-1}$ being a basis for the $(N-1)$-dimensional subspace
$M:=\ker\left( D\rho(0) \right)$.
Note that $\psi(0) = 0$ and $\psi'(0) = 0$ because $Z$ passes through $0$ in the direction of $p$ and 
$p,v_1,v_2,...,v_{N-1}$ are linearly independent.
Note that we can write $\psi(t) = t \widetilde{\psi}(t)$, for some holomorphic 
map $\widetilde{\psi}$ with $\widetilde{\psi}(0) = 0$.\\
   
\noindent Now, let $f(t) = h\circ \varphi(t)$, where $h$ as usual is the Minkowski functional 
and $\varphi(t) = tp +\psi(t) \cdot v$ for $t \in \Delta$; the last mentioned factorization 
of $\psi$ leads to a similar one about $\varphi$: we may write 
$\varphi(t)=t \widetilde{\varphi}(t)$ where $\widetilde{\varphi}(t)= p + \widetilde{\psi}(t) \cdot v$.
Note that pseudoconvexity of $D$ implies $h$ and thereby $f$ is  plurisubharmonic. By
maximum principle of plurisubharmonic functions, we get that for all $t \in \Delta$,
\[
    |f(t)| \leq  \lim\sup_{\substack{|\lambda| \to 1 \\ \lambda \in \Delta}}|f(\lambda)|
\]
As $|f(t)| < 1$ for all $t \in \Delta$, this limsup is also atmost $1$.
Therefore $h(\varphi(t)) \leq  1$ for all $t \in \Delta$. It follows that
$|t|h(p+\widetilde{\psi}(t) \cdot v) \leq 1$. This implies that
\[
    \lim\sup_{\substack{|t| \to 1 \\ t \in \Delta}}h(p+\widetilde{\psi}(t) \cdot v) \leq 1
\]
Hence $h(p+\widetilde{\psi}(t) \cdot v) \leq 1$ for all $t \in \Delta$.
Note that $h(p+\widetilde{\psi}(0) \cdot v) = h(p) = 1$. By another application of the maximum principle, $h(p+\widetilde{\psi}(t) \cdot v) = 1$ for all $t \in \Delta$.
As the set of points $z$ where $h(z) = 1$, is always part of $\partial D$, it follows that
$\widetilde{\varphi}(t) = p + \widetilde{\psi}(t) \cdot v \in \partial D$ for all $t \in \Delta$. 
As $\varphi$ parametrizes our retract $Z$, it follows that $\widetilde{\varphi}$ gives 
the radial projection of $Z$ onto the boundary $\partial D$ (equivalently, one may view $Z$ 
as lying within the `surface' spanned by points in the image of $\widetilde{\varphi}$ i.e., 
the `surface' obtained by joining points of this analytic disc to the origin).\\

\noindent We next proceed to relate $\ker \left( D\rho(0) \right)$ with the supporting hyperplane(s)
at the boundary point $p$ near which $\partial D$ is convex, thereby $D$ is geometrically convex
near $p$ as well. For ease of exposition, we first consider the simpler case when $p$ happens to be 
a smooth boundary point. In such a case, the complex supporting hyperplane at $p$ to $\overline{D}$
(near $p$ to be specific as $D$ need not be convex, our assumption is only that $D$ is convex
near $p$ and this will be implicitly understood for the rest of this proof) is
uniquely determined. Denote by $H_{p, \mathbb{C}}^0$,
the translate of this complex supporting hyperplane 
to the origin. We claim that the sought-for relationship in general first is that of the containment: 
$ \ker \left( D\rho(0) \right) \subset H_{p, \mathbb{C}}^0$ and in the setting of this
proposition being proved, these subspaces are actually the same. The proof of this claim 
is as follows.
First we want to show that
$\ker(D\rho(0)) \subset H_{p,\mathbb{R}}^0 := H_{p,\mathbb{R}} - p$, 
where $H_{p,\mathbb{R}}$ denotes the real supporting hyperplane to $\overline{D}$ in a neighbourhood of $p$.
Let $T := D\rho(0)$. 
Suppose there exist $v \in \ker(T)$ such that $v \notin H_{p,\mathbb{R}}^0$.
By definition of $H_{p,\mathbb{R}}$ and $v \notin H_{p,\mathbb{R}}^0$,
we get $(p+{\rm span}(v)) \cap D$ is non-empty. Let $w \in (p+{\rm span}(v)) \cap D$.
Writing $w = \lambda v + p$ for some $\lambda \in \mathbb{C}$, we see by the linearity of the map $T$ that
\[
T(w) = T(\lambda v + p) = \lambda T(v) + T(p) = p.
\]
This contradicts the fact that $T$ maps points of $D$ to (interior) points of $D$.
This establishes $\ker(T) \subset H_{p,\mathbb{R}}^0$. Now, remembering that $\ker(T)$ is a $\mathbb{C}$-linear 
subspace, we conclude that it is contained within the maximal complex subspace of
$H_{p,\mathbb{R}}^0$ given by the ortho-complement of ${\rm span}(ip)$ in $H_{p,\mathbb{R}}^0$.
As $\ker(T)$ is also of complex co-dimension one, the aforementioned containment is 
actually as equality: $\ker(T) = H_{p,\mathbb{C}}^0$. Next we consider the general case 
that $p$ is (possibly) a non-smooth boundary point.
Now we claim that $\ker(T)$ is a translate of one of 
the complex supporting hyperplanes at $p$. It suffices to prove that $(p+ \ker(T)) \cap D = \phi$.
Suppose there exist $v \in \ker(T)$ such that $p+v \in D$.
It follows by linearity that $T(p + v) = p$ which as before gives the contradiction that $T$ maps 
$D$ into $D$. This establishes $(p+\ker(T)) \cap D = \phi$,  thereby that $p+\ker(T)$ is a complex
supporting hyperplane at $p$.\\

\noindent Next, we prove $image(\widetilde{\varphi}) \subset p+F_{\mathbb{C}}(p,\partial D)$.
We divide the proof into the following steps labelled as claims.\\

\underline{Claim-1}: $image(\widetilde{\varphi}) \subset \partial D \cap H_{p,\mathbb{R}}$.\\
We already proved that $image(\widetilde{\varphi}) \subset \partial D$.
It remains to show that $image(\widetilde{\varphi}) \subset H_{p,\mathbb{R}}$.
Note that $H_{p,\mathbb{R}}$ can be described by a single linear equation 
with real coefficients, say
\[
    \sum_{j=1}^Na_j {\rm Re}(z_j) + \sum_{j=1}^N b_j {\rm Im}(z_j) = \alpha,
\]
for some constants $a_j,b_j,\alpha \in \mathbb{R}$. Using the rudimentary identity 
${\rm Re} (i\zeta) = -{\rm Im} (\zeta)$ for any complex number $\zeta$,
we rewrite the above equation 
\[
{\rm Re}\left(\sum_{j=1}^N ((a_j - ib_j)z_j-\alpha)\right) = 0.
\]
We shall use below the fact that the left hand side is a pluriharmonic function of $z=(z_1, \ldots,z_N)$,
which we call $\Phi$ i.e.,  $\Phi: \mathbb{C}^N \to \mathbb{R}$ by
\[
    \Phi(z) = {\rm Re}\left(\sum_{j=1}^N ((a_j - ib_j)z_j -\alpha)\right).
\]
Since $\partial D$ is convex near $p$ and $\widetilde{\varphi}(0) = p$, there exist
a smaller subdisc $\Delta'$ of $\Delta$ such that $\partial D$ is convex near $\widetilde{\varphi}(t)$
for all $t \in \Delta'$.
Observe that that $\Phi \circ \widetilde{\varphi}_{|_{\Delta'}}$ is a harmonic function on $\Delta'$
and $\Phi \circ \widetilde{\varphi}_{|_{\Delta'}} \leq 0$ on $\Delta'$. 
Since $\Phi \circ \widetilde{\varphi}_{|_{\Delta'}}(0) = 0$, by the maximum principle we obtain
$\Phi \circ \widetilde{\varphi}_{|_{\Delta'}} \equiv 0$ on $\Delta'$ .
Consider the holomorphic function $F : \Delta \to \mathbb{C}$ defined by 
\[
     F(t) = \sum_{j=1}^N \left((a_j -ib_j)\widetilde{\varphi}_j(t) - \alpha \right),
\]
where $\widetilde{\varphi}_j$ denotes the $j^{\text{th}}$ component of $\widetilde{\varphi}$.
Since $\Phi \circ \widetilde{\varphi}_{|_{\Delta'}}$ vanishes on $\Delta'$, ${\rm Re}(F)$ vanishes on $\Delta'$.
By the open mapping theorem applied to $F$, we deduce that $F$ vanishes on $\Delta'$. 
It follows then by the identity principle that $F \equiv 0$ on $\Delta$. 
In particular, ${\rm Re} F = \Phi \circ \widetilde{\varphi}$ vanishes on $\Delta$. Hence we conclude that $image(\widetilde{\varphi}) \subset H_{p,\mathbb{R}}$. This finishes the proof of claim-1.\\

\noindent To proceed to the next claim, let $V$ be the real affine hull 
of $\partial D \cap H_{p,\mathbb{R}}$. \\ 

\noindent \underline{Claim-2:} $V = p+F_{\mathbb{R}}(p,\partial D)$, where $F_{\mathbb{R}}(p,\partial D)$
denotes the real face of $\overline{D}$ at $p$. \\
First we prove the inclusion $V \subset p + F_{\mathbb{R}}(p,\partial D)$ If $u$ belongs to $V$, then there exist non-zero vectors $v_i \in H_{p,\mathbb{R}}^0$  and 
non-zero real $t_i$ for $1 \leq i \leq k$ such that $u = p+(t_1v_1+\ldots +t_kv_k)$ and $p+v_i \in \partial D$.
Since $H_{p,\mathbb{R}}^0$ is a subspace over $\mathbb{R}$, 
$p+sv_i \in H_{p,\mathbb{R}}$ for all $s \in \mathbb{R}$.
Recall from the proof of lemma \ref{cty-Minkowfnal}, we get that  $\partial D$ is convex near $p$ implies 
the sub-additivity of Minkowski functional $h$ and thereby for all $s \in [0,1]$ we have
\[
    h(p+(1/2-s)v_i) = h((1/2+s)p + (1/2-s)(p+v_i)) \leq (1/2+s)h(p) + (1/2-s)h(p+v_i) = 1
\]
This means that $p+tv_i \in \overline{D}$ for all $t \in \mathbb{R}$ with $|t| < 1/2$. 
Using the fact that $H_{p,\mathbb{R}} \cap D = \phi$ and $p+tv_i \in H_{p,\mathbb{R}}$, 
we get that $p+tv_i \in \partial D$ for all $t\in \mathbb{R}$ with $|t| < 1/2$. So $v_i \in F_{\mathbb{R}}(p,\partial D)$ for $1 \leq i \leq k$.
Hence $u = p+(t_1v_1 + \ldots +t_kv_k) \in p + F_{\mathbb{R}}(p,\partial D)$.\\
To establish the reverse inclusion, let $u \in  F_{\mathbb{R}}(p,\partial D)$. This 
means that there exist $\epsilon > 0$ such that 
$p+tu \in \partial D$ for all $t \in \mathbb{R}$ with $|t| < \epsilon$.
Recall from the proof of theorem \ref{converse-of-Vesentini}, we get that
each supporting hyperplane at $p$ acts as a supporting hyperplane at $p+tu$ for all $t \in \mathbb{R}$ with $|t| << 1$.
This implies that there exist $\epsilon' > 0$ such that 
$p+tu \in \partial D \cap H_{p,\mathbb{R}}$ for all $t\in \mathbb{R}$ with $|t| < \epsilon'$.
This finishes the proof of claim-2.\\
\begin{rem}
    As a consequence of this claim, 
    it follows that $F_{\mathbb{C}}(p,\partial D) +p$ is contained 
    in every complex supporting hyperplane to $\partial D$ at $p$ i.e., every complex supporting 
hyperplane to the boundary at the point $p$ has got to contain the copy of the face $F_{\mathbb{C}}(p,\partial D)$
translated to $p$.
\end{rem}
Continuing with the proof of the proposition \ref{non-linear-for-convrs-Vesen}, write $\widetilde{\varphi}(t) = p + \widetilde{\phi}(t)$, 
where $\widetilde{\phi}: \Delta \to \mathbb{C}^N$ is the holomorphic map given by
$\widetilde{\phi}(t) = \widetilde{\psi}(t) \cdot v$. Let $L_p$ denote the smallest 
$\mathbb{R}$-linear subspace of $\mathbb{C}^N$ containing $image({\widetilde{\phi}})$ ($L_p$ is 
of-course uniquely determined as the intersection of all such $\mathbb{R}$-linear subspaces). We now  
 prove that $L_p$ is a complex vector space. Note that $L_p$ is the solution of
some (finite) system of  linear equations with real coefficients,
where each linear equation can be written in the following form
\[
    \sum_{j=1}^Na_j {\rm Re}(z_j) + \sum_{j=1}^N b_j {\rm Im}(z_j) = 0. 
\]
Since $image(\widetilde{\phi})$ is contained in $L_p$, we have
\[
    \sum_{j=1}^Na_j {\rm Re}(\widetilde{\phi}_j) + \sum_{j=1}^N b_j {\rm Im}(\widetilde{\phi}_j) = 0,
\]
where $\widetilde{\phi} = (\widetilde{\phi}_1,\ldots,\widetilde{\phi}_N)$.
Recalling the identity ${\rm Re}(i\zeta) = -{\rm Im}(\zeta)$, the above equation can be rewritten as
${\rm Re}\left(\sum_{j=1}^N(a_j-ib_j) \widetilde{\phi}_j \right) = 0$.
Owing to the Cauchy--Riemann equations, $\sum_{j=1}^N(a_j-ib_j) \widetilde{\phi}_j  = 0$. In particular,
${\rm Im}\left(\sum_{j=1}^N(a_j-ib_j) \widetilde{\phi}_j \right) = 0$ where-from:
${\rm Re}\left(i \left(\sum_{j=1}^N(a_j-ib_j) \widetilde{\phi}_j \right)\right) = 0$.
Application of this observation to each of the linear equations which 
define $L_p$, we infer that $L_p$ is closed under multiplication by $i$.
Hence $L_p$ is a complex linear subspace. Recalling the results established in `Claim-1' and 
`Claim-2', we then get that
$image(\widetilde{\phi}) \subset  F_{\mathbb{R}}(p,\partial D)$.
Since $L_p$ is a complex vector space, $image(\widetilde{\phi}) \subset i F_{\mathbb{R}}(p,\partial D)$.
Remembering the fact that $F_{\mathbb{C}}(p,\partial D) = F_{\mathbb{R}}(p,\partial D)
\cap i F_{\mathbb{R}}(p,\partial D)$, we get that $image(\widetilde{\phi}) \subset F_{\mathbb{C}}(p,\partial D)$.
Hence $image(\widetilde{\varphi}) \subset p+F_{\mathbb{C}}(p,\partial D)$ and thereby, 
$image(\widetilde{\varphi}) \subset (p+F_{\mathbb{C}}) \cap \partial D$.\\

\noindent The last remaining part of the proposition comprises only in recasting $Z$ -- realized as the image of 
$\widetilde{\varphi}$ above -- as a graph on ${\rm span}(p) \cap D$ which as we know is a linear retract owing 
to $p$ being a convex boundary point of $D$. To be brief, let us only mention that this is facilitated by the
decomposition:
\[
\mathbb{C}^N \; = \; {\rm span}(p) \oplus F_\mathbb{C}(p, \partial D) \oplus \tilde{V},
\]
where $\tilde{V}$ is a complementary subspace to ${\rm span}(p) \oplus F_\mathbb{C}(p, \partial D)$.
If $\pi$ denotes the projection of $\mathbb{C}^N$ onto ${\rm span}(p) \oplus F_\mathbb{C}(p, \partial D)$, 
then note that $\pi(Z)=Z$. Indeed, this is just a re-statement of the fact that 
$Z$ is contained within ${\rm span}(p) \oplus F_\mathbb{C}(p, \partial D)$ i.e., has no $\tilde{V}$-component.
From the fact that $Z=image(\widetilde{\varphi})$ and the expression 
of $\widetilde{\varphi}(t)$, it is now easy to see that $Z$ is the graph of 
the holomorphic map on ${\rm span}(p)$ given 
by $tp \to t \tilde{\psi} \cdot v$; where $\tilde{\psi}(t) \cdot v$ is a holomorphic map from $\Delta$
into the face $F_\mathbb{C}(p, \partial D)$ with the property that $p+\tilde{\psi}(t) \cdot v$ is
contained within the connected component of
\[
\left(p + F_\mathbb{C}(p, \partial D) \right) \cap \partial D;
\]
indeed in its interior, as soon as $\tilde{\psi} \not \equiv 0$. Noting that this connected component 
is contained within the affine linear projection of $D$ into $p+F_\mathbb{C}(p, \partial D)$, we 
are led to the last statement of the proposition; translating this projection back to the origin, gives a 
bounded balanced pseudoconvex domain which is a projection of $D$ into the lower 
dimensional complex linear subspace given by the face $F_\mathbb{C}(p, \partial D)$.
This finishes the proof of the proposition (which was for one-dimensional retracts).\\
\end{proof}

\noindent Using the foregoing proposition and the arguments in its proof, it is possible to retracts of arbitrary dimension but we do this at the
end of this section, after the following remarks and examples. \\

\begin{rem}
When $N =2$, we may rephrase the conclusion above more precisely as follows:
every non-linear retract $Z$ passing through the origin in the direction of $p$
can be realized as the image of an
analytic disc of the form $\varphi(t) = tp + t\widetilde{\psi}(t)u$, 
where $u$ belongs to the face $F_{\mathbb{C}}(p,\partial D)$ and  
$\widetilde{\psi}$ maps $\Delta$ holomorphically onto bounded domain 
in $\mathbb{C}$ containing the origin, with $\psi(0) = 0$ and $\psi'(0) = 0$. The
proof of this is as follows.
First observe that from the proof of above proposition, we obtain there exist non-zero $v \in \mathbb{C}^N$ 
such that $p+\widetilde{\psi}(t)v \in \partial D$ for all $t \in \Delta$.
Since $Z$ is non-linear, $\widetilde{\psi}$ is not identically zero. By continuity of 
$\widetilde{\psi}$ and $\widetilde{\psi}(0) = 0$, we get $v \in F_{\mathbb{C}}(p,\partial D)$.
It follows from the connectedness of image of $\widetilde{\psi}$, that $\widetilde{\psi}$
maps $\Delta$ into $\widetilde{\Delta}$.
By the open mapping theorem, $\widetilde{\psi}$ maps $\Delta$ onto an open subset 
$\widetilde{\psi}(\Delta)$ within ${\rm int}(\widetilde{\Delta})$. 
\end{rem}

\begin{rem}
Though there may well be plenty of retraction maps $\rho$ on $D$ 
such that $image(\rho) = Z$, we observe that for every retraction map $\rho$, 
we have $\ker D\rho(0) \supset F_{\mathbb{C}}(p,\partial \Omega)$.
This can be seen by an argument by contradiction as follows.
If possible assume that there exist $u \in F_\mathbb{C}(p,\partial \Omega)$ such that $D\rho(0)(u) \neq 0$.
Let $u' := D\rho(0)(u)$. Since $D\rho(0)$ maps $\mathbb{C}^N$ onto ${\rm span}(p)$,
$u' = \lambda p$ for some $\lambda \in \mathbb{C}$. Since $D$ is convex near $p$, we get by 
recalling the proof of theorem \ref{converse-of-Vesentini}, 
that 
$p$ and $u$ are linearly independent. It follows that $u'$ and $u$ are linearly independent. 
Hence $\dim(\ker \left(D\rho(0\right)) < N-1$, contradicting the fact that $\ker \left( D\rho(0)\right)$ 
is of complex co-dimension one.
\end{rem}
\noindent Now, a few words about examples of domains satisfying the hypotheses of the 
foregoing results just discussed. The simplest examples of domains as in 
proposition \ref{non-linear-for-convrs-Vesen} and
theorem \ref{converse-of-Vesentini} are obtained by taking products of bounded balanced domains
of holomorphy with some convex boundary points. Other examples (which are not products) can be
obtained by taking an intersection of sublevel sets of moduli of linear functionals with 
suitable balanced domains of holomorphy; to just mention one concrete instance of this 
kind, consider
\[
D := \{ (z,w) \in \mathbb{C}^2 \; : \; \vert z + w \vert <c \} \cap B.
\]
where $B$ is a strictly convex ball such as $l^p$ ball with $p\neq 1, \infty$ and $c=1$. 
In-fact, we can also let $B$ be a non-convex balanced domain of holomorphy such as 
the $\ell_q$-`ball' for $0<q<1$ (in which case we must choose $c$ appropriately
for instance $c=1/2$) to obtain more concrete examples.\\

\noindent Let us now conclude this chapter with a couple of remarks and more importantly, proof of theorem \ref{Charac-retract-1diml}. We formulate the first
one as the following corollary to highlight that: as a consequence of lemma \ref{cty-Minkowfnal}
and our proof of theorem \ref{converse-of-Vesentini} immediately verifies the truth
of that theorem in an important special case.
\begin{cor}
Let $D$ be a bounded balanced pseudoconvex domain in $\mathbb{C}^N$ and $p \in \partial D$.  
If $\partial D$ is convex near $p$ and the
only one-dimensional retract through the origin in the direction of $p$ is the linear one, then $p$
is a $\mathbb{C}$-extremal boundary point.
\end{cor}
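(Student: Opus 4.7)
The plan is to argue by contraposition, reducing the corollary directly to theorem \ref{converse-of-Vesentini} via the auxiliary lemma \ref{cty-Minkowfnal}. So I would assume, for contradiction, that $p$ fails to be $\mathbb{C}$-extremal, and then produce a non-linear one-dimensional retract through the origin in the direction of $p$, contradicting the hypothesis.

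First, I would unpack what the hypothesis ``$\partial D$ is convex near $p$'' gives us. By definition \ref{C-convex}, this provides an open neighborhood $U$ of $p$ at every boundary point of which $D$ admits a supporting complex linear functional; in particular, $\partial D$ is convex at $p$ itself in the sense of definition \ref{C-convex}, furnishing a $\mathbb{C}$-linear functional $L$ with $|L|\leq h$ and $|L(p)|=h(p)=1$. Next, by lemma \ref{cty-Minkowfnal}, the convexity of $\partial D$ near $p$ forces the Minkowski functional $h$ of $D$ to be continuous at every point of $U$, and in particular near $p$. Thus both the convexity-at-$p$ hypothesis and the continuity-of-$h$-near-$p$ hypothesis required by theorem \ref{converse-of-Vesentini} (and more precisely, by the weaker version of that theorem actually established in its proof) are in place.

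Now, under the supposition that $p$ is not $\mathbb{C}$-extremal, theorem \ref{converse-of-Vesentini} applies verbatim and produces a non-linear one-dimensional retract of $D$ through the origin in the direction of $p$ (concretely, an image of an analytic disc of the form $\varphi(t)=tp+t\tilde{\psi}(t)\cdot v$ with $\tilde{\psi}\not\equiv 0$, as in the construction recalled in proposition \ref{non-linear-for-convrs-Vesen}). This directly contradicts the standing hypothesis that the only one-dimensional retract through the origin in the direction of $p$ is the linear one ${\rm span}(p)\cap D$.

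There is no genuine obstacle here: the entire content of the corollary is already encoded in theorem \ref{converse-of-Vesentini} once one observes that ``convex near $p$'' simultaneously delivers the two separate hypotheses of that theorem, namely ``convex at $p$'' and ``continuity of $h$ near $p$.'' The only point that deserves a line of comment is the mild asymmetry in phrasing: theorem \ref{converse-of-Vesentini} is stated allowing either convexity near $p$ or the weaker pair (convexity at $p$ plus continuity of $h$ near $p$), and lemma \ref{cty-Minkowfnal} is precisely what confirms that the former implies the latter, so no strengthening or re-proof is required.
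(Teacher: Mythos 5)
Your proposal is correct and follows exactly the route the paper takes: the paper itself presents this corollary as an immediate consequence of lemma \ref{cty-Minkowfnal} (convexity near $p$ implies continuity of the Minkowski functional near $p$) combined with the proof of theorem \ref{converse-of-Vesentini}, which was carried out under the weaker pair of hypotheses and produces the non-linear retract that yields the contrapositive. Nothing further is needed.
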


\begin{rem}
If $D$ fails to be convex near $p$, then the following two cases arise:
\begin{enumerate}
\item[(i)] $D$ is convex at $p$ but not near $p$. In this case, $span \{p\} \cap D$ is a linear retract of $D$. 
Non-linear retracts of $D$ may or may not exist i.e., no definite statement can be 
made in general as it depends on finer features of the geometry of $\partial D$ near $p$.
\item[(ii)] $D$ is neither convex at $p$ nor near $p$. In this case, there are no retracts of $D$ passing through origin in the direction of $p$.
\end{enumerate}
\end{rem}

\noindent As mentioned above, we shall now use the foregoing proposition \ref{non-linear-for-convrs-Vesen} and the arguments in its proof, to extend it to retracts of arbitrary dimension as stated in theorem \ref{Charac-retract-1diml}, which we now prove. \\

\textit{Proof of theorem \ref{Charac-retract-1diml}}:
Observe 
that we may apply (the already proven)
theorem \ref{Graph} which applies to 
the setting of the theorem being proved.
We can therefore express $Z$ as a graph of a holomorphic map 
$F$ from $D_L$ into $D_M$, where  $M = \ker(D\rho_{|_0})$; notice that $\mathbb{C}^N = L \oplus M$. We then pick any point $p$ in $L \cap \partial D$ and
consider the retract over the segment spanned by $p$ i.e., let $Z_p$ denote 
the graph of $F$ over $D_p := D \cap {\rm span}(p)$ which is a linear 
retract precisely by virtue of the 
convexity hypothesis at $p$. This makes it clear that
 $Z_p$ can be expressed as $\{tp+F(tp): t \in \Delta\}$
and also that $Z_p$ is a retract. 
The foregoing proposition and the arguments in its proof are
therefore applicable to this one-dimensional retract $Z_p$; we thereby
get that $F_{|_{D_p}}$ 
takes values within the complex face $F_{\mathbb{C}}(p,\partial D)$. 
Viewing $L$ as the union of one-dimensional subspaces, we obtain 
that $F$ takes 
values in $\bigcup_{p \in \partial D}F_{\mathbb{C}}(p,\partial D)$.
\qed

\section{Proof of theorem \ref{not-thro-origin}}
\noindent  Consider the holomorphic map $G: D_L \to \mathbb{C}^N$ defined by the composition of the restricted mappings:
\[ 
    G(z) = D\rho_{|_0} \circ \rho_{|_{D_L}}(z).
\]
Note that $G$ actually maps into $D_L$: as $\rho$ maps $D$ into itself
and $D\rho_{|_0}$ is a linear 
projection mapping $D$ onto $D_L$, it follows that $G$ maps $D_L$ into $D_L$. 
Note that for all $z \in D_L$, we have
\[
DG_{|_0}(z) = D\rho_{|_0} \circ D\rho_{|_0}(z) = D\rho_{|_0}(z) = z.
\]
By the hypotheses that $D_L$ is convex, we can choose $v \in D_L$ 
such that $v$ is an $\mathbb{R}$-extremal boundary point in $D_L$. 
Note that $DG_{|_0}(v) = v$. 
By Mazet Schwarz lemma \ref{Mazet-Schwarz_1} applied to $G : D_L \to D_L$, we obtain that $G(z) = z$ for all $z \in 
D_L$. This means 
that $D\rho_{|_0} \circ \rho_{|_{D_L}}$ is the identity map on $D_L$. 
Hence we conclude that $Z$ is a graph of a holomorphic function 
over $D_L$, finishing the proof of theorem \ref{not-thro-origin}.\\

\noindent We ought to make a few remarks to settle any first questions that may arise 
as to whether the result is vacuous(!) i.e., whether there
exist retracts not passing through the origin at all. For this let us only recall Lempert's results
which guarantee the existence of one-dimensional retracts $Z$
through any given pair of points $p,q$ which we take to be linearly independent.
Such a $Z$
cannot pass through origin, for otherwise, the strict convexity 
hypotheses here leads by Vesentini's
theorem to the conclusion that $Z$ is linear. But then the linear independence of $p,q$
means that ${\rm dim}(Z) >1$, a contradiction; this finishes the
verification about the existence of many retracts
not passing through the origin, the matter of the above theorem. This discussion 
also answers another basic question: are there examples of a bounded balanced domain
of holomorphy $D$, wherein there exist pairs of one-dimensional retracts which are
not ${\rm Aut}(D)$-equivalent i.e., there is no automorphism carrying one onto the other?
In light of the above discussion which actually only requires
$\mathbb{C}$-extremality, we may readily cite $D$ to be the $\ell^1$-ball in 
which the origin is fixed by every of its automorphisms; a retract through any pair 
of linearly independent points in $D$ is therefore not ${\rm Aut}(D)$-equivalent 
to any of the retracts passing through the origin. Finally next here, we get to the question of at-least one 
example wherein all hypotheses of the theorem are satisfied.\\

\noindent \textit{Example:}
Consider the $\ell_1$-ball 
\[
    B := \{(z,w)\in \mathbb{C}^2: |z| + |w| < 1\}.
\]
Consider the holomorphic map $\varphi : \Delta \to B$ defined by 
$\varphi(\zeta) = \left((\frac{1+\zeta}{2})^2,(\frac{1-\zeta}{2})^2\right)$. 
By proposition 16.2.2 in \cite{Jrncki_invrnt_dst}, we obtain that 
$\varphi$ is a complex geodesic in $B$. Hence 
\[
Z :={\rm image}(\varphi) = 
\Big \{\left((\frac{1+\zeta}{2})^2,(\frac{1-\zeta}{2})^2\right) 
\; : \; \zeta \in \Delta
\Big\} 
\]
is a retract of $B$ which we note: does not pass through the origin. 
Next we want 
to find a retraction map $\rho$ on $B$ such that $\text{image}(\rho) = Z$. 
Towards this, it suffices to determine a 
holomorphic
left-inverse for the map $\varphi$ as
discussed in proposition \ref{C-geodesc_Retrct} of the preliminaries section. 
It is easy to see the map $\phi: B \to \Delta$ 
defined by $\phi(z,w) = z-w$ serves as 
the required left-inverse: $\phi \circ \varphi(\zeta) = \zeta$ 
for all $\zeta \in \Delta$. It follows that the  map 
$\rho(z,w) = \varphi \circ \phi(z,w)$ gives the holomorphic retraction of $B$ onto $Z$. As 
$B$ is convex with $\mathbb{C}$-extremal boundary, the only hypotheses that needs to be checked is 
that about the derivative of $\rho$.
For this we need its explicit expression 
which can be computed to be:
\[
\rho(z,w) = \left(\left(\frac{1+z-w}{2}\right)^2,\left(\frac{1-z+w}{2}\right)^2\right).
\]
In-particular, $\rho$ maps the origin 
to $p=(1/4,1/4)$ at which point $L=T_pZ$ is
given by $L = \{(z,w)\in \mathbb{C}^2: z+w = 0\}$.
Next, we compute the matrix of $D\rho_{|_{(z,w)}}$.
\[
    D\rho_{|_{(z,w)}} = \frac{1}{2}
    \begin{bmatrix}
        1+z-w & -(1+z-w) \\
        -(1-z+w) & 1-z+w
    \end{bmatrix} 
\]
In particular,
\[
    D\rho_{|_{(0,0)}} = \frac{1}{2}
    \begin{bmatrix}
        1 & -1 \\
        -1 & 1
    \end{bmatrix} 
\]
So, $D\rho_{|_{(0,0)}}(z,w) = (\frac{z-w}{2},\frac{-(z-w)}{2})$ which is apparently
a projection operator and of norm-one with 
respect to the $\ell_1$-norm.
This concludes the verification of all hypotheses 
of theorem \ref{not-thro-origin}.\\

We discuss briefly the form of the conclusion as well.
Now we want to show that $Z$ is a graph of a holomorphic function over $B_L=B \cap L$,
 where $L = \{(z,w)\in \mathbb{C}^2: z+w = 0\}$ which we note is the
 tangent space to $Z$ at $p=(1/4,1/4)$. 
 Firstly note that 
 $B_L = \{(z,-z): |z| < 1/2\}$. Consider the holomorphic map $\psi: B_L \to B$ defined by 
 \[
 \psi(z,-z) = \varphi(2z) = \left(\left(\frac{1+2z}{2}\right)^2,\left(\frac{1-2z}{2}\right)^2\right).
 \]
Denote the projection operator $D\rho_{\vert_0}$ by $\pi$ for short. 
 Note that
 \[
 \pi\circ \psi_{|_{B_L}}(z,-z) = \pi\left(\left(\frac{1+2z}{2}\right)^2,\left(\frac{1-2z}{2}\right)^2\right) = \frac{1}{2}(2z,-2z) = (z,-z)
 \]
The above computation shows that $\pi \circ \psi_{|_{B_L}}$ is the identity map on $B_L$. This implies that $Z$ is the graph of holomorphic function $\psi$ over $B_L$. The reason for the 
existence of non-linear retractions $\rho$ 
with $D \rho_{\vert_0}$ being a norm-one 
projection as in the above example, owes to the 
boundary failing to be $\mathbb{R}$-extremal
(though it is $\mathbb{C}$-extremal). This is 
brought out by the next result.\\

\begin{prop}
    Let $D$ be a bounded balanced strictly convex domain in $\mathbb{C}^N$. 
Let $\rho: D \to D$ be a retraction map on $D$ with $p = \rho(0)$ and $Z := 
{\rm image}(\rho)$. If $D\rho_{|_0}$ is a norm-one projection i.e., a projection which maps 
$D$ onto $D_L$ (where $L = T_pZ$ as usual), then $\rho$ is a linear map and hence 
$Z$ is a linear retract of $D$.
\end{prop}
\begin{proof}
    Consider the holomorphic map $F: D_L \to D$ defined by $F(z) = \rho_{|_{D_L}}(z)$. 
    Note that $DF_{|_0}(z) = D\rho_{|_0}(z)$, which is a linear projection 
onto $D_L$. In particular, $D\rho_{|_0}$ is the identity map on $D_L$. Applying 
Mazet's Schwarz lemma to the map $F: D_L \to D$, we obtain that $F(z) = DF_{|_0}(z)$. 
Hence we get that ${\rm image}(F) = D_L \subset Z$. Note that $D_L$ and $Z$ have the same 
dimension. Since $Z$ is connected, it follows from the principle of analytic 
continuation that $Z = D_L$. This finishes the proof of the proposition.
\end{proof}

\section{Proof of theorems \ref{prod_anal_poly},~\ref{max_noncnvx} and Corollary \ref{gen-anal-poly}}\label{Irreducibity}

\noindent  Before we begin to work out the proof of the pair of theorems as in the title of this section, we need to make some general remarks. 
Theorems \ref{prod_anal_poly} is about the irreducibility of balanced
$\mathbb{C}$-extremal analytic polyhedra. It will be derived as a consequence of theorem \ref{max_noncnvx} about retracts through the origin in such domains; this will therefore be established before theorem \ref{prod_anal_poly}. Along the way, we shall attain useful propositions about biholomorphic inequivalences between co-analytic domains and analytic polyhedra (or more generally, bounded hyperconvex domains). 
Also included in this section is the proof of corollary \ref{gen-anal-poly} about the irreducibility
of the anisotropic non-convex egg domains, whose boundaries are holomorphically extreme
in good contrast to analytic polyhedra; nevertheless, this will be simultaneously attained alongwith the proof of theorem \ref{prod_anal_poly}. \\

\noindent The above exercise in showing irreducibility may seem to need justification, owing to the existence of well-known methods which yield better results for certain classes of domains even if they are different.
To put this matter in perspective we recall that the 
famous inequivalence between the Euclidean ball $\mathbb{B}$ and the polydisc 
$\Delta^N$ for all $N \geq 2$, can be shown {\it using retracts} as well; while
we believe that this proof will atleast be of mild interest to atleast note in passing, we have better justifications below.
Now, we have already put
this proof for a record in the `preliminaries' section wherein we have formulated it more generally 
for a ball $D$ with respect to 
an arbitrary norm with $\mathbb{C}$-extremal boundary and product domains much more 
general than polydiscs (so that proof is not repeated here). Here, we would indeed 
like to hasten to mention that this does 
not obtain for {\it such} $D$: results as good as can be obtained from  already well-known techniques in this realm
can! -- so, this leads to the question as to why one may want to even consider this proof even if 
the problem of biholomorphic inequivalence and its special case of establishing irreducibility of a domain $D$ i.e.,
showing that $D$ is not biholomorphic to a product of lower dimensional domains, is undoubtedly fundamental. 
To address this briefly here,
let us recall that while the well-known techniques alluded to above, more explicitly such as that of Remmert -- Stein 
(exposited well in \cite{Encyclo-Vol6}; more specifically, see theorem $4.1$ therein alongwith the
subsequent theorem $4.2$ due to Rischel)
for showing irreducibility of domains, 
work for far more general
domains $D$ beyond balanced domains of holomorphy, they impose the condition that $D$ not have any
non-trivial analytic varieties in the boundary. We shall presently show that the route via retracts
can very well dispense with such conditions, if $D$ is a balanced domain of holomorphy. 
To exposit this better by comparing  with 
the Remmert -- Stein techniques, let us mention that 
the aforementioned proof by retracts for $\mathbb{B} \not \simeq \Delta^N$,
shares (unlike a proof based on comparison of automorphism groups for instance), with the best possible techniques 
in this context, the essential idea about 
the difference in the lack or presence of analytic discs in the 
boundary of $D$, as being responsible for such inequivalences; this difference has a bearing 
on the geometry of retracts through the origin in $D$. Moreover, this provides a shorter way to establish
inequivalences without having to worry about the boundary extendibility (or boundary behaviour)
of biholomorphic mappings since the boundary geometry is `internalized' (atleast partly) as features about the retracts
through points in the interior; for instance, holomorphic extremality of the boundary of a bounded balanced domain of holomorphy entails linearity of the retracts through its center and these linear slices have holomorphically extreme boundaries again.  We believe this will 
be useful for establishing new inequivalences between pairs of bounded balanced domains in {\it both}
of whose boundaries there reside non-trivial analytic discs. We now illustrate these points with a couple of progressively complex examples and theorems. We start
with a concrete {\it and} simple example of a special analytic polyhedron; indeed, this is the simplest example of a bounded balanced analytic polyhedron with $\mathbb{C}$-extremal boundary and will serve as a running example for us. Specifically, consider the question of whether
 \begin{equation}\label{D_h}
D_h := \{(z,w) \in \mathbb{C}^2: |w^2 - z^2| < 1\} \cap \{(z,w) \in \mathbb{C}^2: |zw| < 2\}
\end{equation}
is biholomorphic to a product domain \footnote{A motivation for the specific choices here, is 
that the simplest examples of {\it bounded} product domains namely
polydiscs are defined as sublevel sets of a pair of (homogeneous) holomorphic linear polynomials 
and we are interested in {\it bounded} domains whose definition bears some semblance to that of polydiscs,
but are nevertheless irreducible. Indeed as will be seen, this example is the simplest special analytic polyhedron which is balanced and non-linear.}. To begin with analysing this example first, we note that
while an open set cut-out by such polynomial inequalities in general may fail to be connected,
such issues do not arise here: the homogeneity of the defining pair of quadratic polynomials leads to $D_h$ being balanced, 
thereby (immediately seen to be connected and) 
contractible. Let us note in passing that $D_h$ is not Reinhardt; more importantly, that $D_h$ is indeed 
bounded: to verify this, note that the defining pair of inequalities together leads to
\begin{equation} \label{D_H-bddness}
\vert z+ iw \vert ^2 = \left\vert (z^2-w^2) + 2i zw \right\vert 
\leq \vert z^2 - w^2 \vert + 2 \vert zw \vert < 5.
\end{equation}
The reverse triangle inequality
\[
\left\vert \vert z \vert - \vert w \vert \right\vert \leq \vert z+ iw \vert
\]
when squared and combined with \ref{D_H-bddness}, leads to
\[
\vert z \vert^2 + \vert w \vert^2 < 5 + 2 \vert zw \vert.
\]
Looking at the second inequality in the definition of $D_h$, we arrive at the conclusion that
$\vert z \vert^2 + \vert w \vert^2<9$, meaning $D_h \subset 3 \cdot \mathbb{B}$ finishing the
desired verification. Now, the simply connectedness of 
$D_h$ (unfortunately tempts) an application of the Riemann mapping theorem to reduce
the problem of establishing irreducibility of $D_h$ directly to that 
of proving the inequivalence of the hyperbolic domain $D_h$ with $\Delta^2$. The balancedness 
of the special model domain here namely $\Delta^2$, together with its homogeneity, enables a reduction of the 
problem by an application of Cartan's theorem to linear equivalence; this, in-turn is 
immediately settled as linear maps preserve convexity whereas $D_h$ is non-convex.\\

\noindent However, we would like to show this and other modified versions now by circumventing use of 
the Riemann mapping theorem which may not work in dealing with more complex cases.
Indeed, to spoil the above proof which is based on the convenient availability of a 
model domain in the equivalence class of balanced domains which are biholomorphic to products domains as nice
as the bidisc, let us remove an analytic hypersurface $A$ not passing 
through the origin\footnote{Not removing the origin thus, helps in rendering a short illustration of our use 
of retracts, owing to scarcity of results that we can readily cite and use. It may be noted to begin with here that 
this modified domain only has more analytic varieties in 
the boundary, that is to say: the modification to spoil the foregoing proof, has not been done at
the cost of the very purpose of the previous example!}, and consider 
$D_h^A = D_h \setminus \{A\}$. While the removal of $A$ from $D_h$
never disconnects $D_h$, it always destroys the simply connectedness of $D_h$, as is
always the case when proper analytic sets are subtracted from any domain.\\

\noindent To establish the irreducibility of 
such $D_h^A$, let us start with the most natural 
attempt of assuming to obtain a contradiction, the
existence of a biholomorphism $F$ from $D_h^A$ onto a product domain $D$; no less standard then, 
is to analyze the boundary behaviour of $F$. However, this is rather cumbersome owing to potential 
complexity of the boundary of the factors $B_1,B_2$. It is here where consideration of retracts 
provides a far more convenient approach, as already alluded to in the above, particularly for theorem 
\ref{prod_anal_poly}, one of the main theorems
of this section.\\

\noindent However, before we discuss the proof of 
theorem \ref{prod_anal_poly},
it is convenient to formulate part of its proof as the following more general lemma. 

\begin{lem} \label{Retr-of-anal-complements}
Let $D$ be any bounded hyperconvex domain in $\mathbb{C}^N$. Let $A$ be any non-trivial 
    analytic subset of $D$. Then any holomorphic retraction 
map on $D \setminus A$, 
be extended to a holomorphic retraction map on $D$.
Consequently, {\it every} retract $R^A$ of $D \setminus A$ is of the form $R^A=R \cap (D \setminus A)$
for some retract $R$ of $D$.
\end{lem}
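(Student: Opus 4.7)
The plan is to extend the retraction componentwise by a standard removable-singularities argument and then use the hypothesis on $D$ to force the extension to land inside $D$, so that idempotency propagates by the identity principle. Let $\rho:D\setminus A\to D\setminus A$ be any holomorphic retraction and write $\rho=(\rho_1,\ldots,\rho_N)$. Since $D$ is bounded, each $\rho_j$ is a bounded holomorphic function on $D\setminus A$, so by the classical Riemann removable singularities theorem for bounded holomorphic functions along analytic sets (the codimension of $A$ in $D$ is at least one), each $\rho_j$ extends uniquely to a holomorphic function $\widetilde{\rho}_j$ on $D$. Set $\widetilde{\rho}=(\widetilde{\rho}_1,\ldots,\widetilde{\rho}_N):D\to\mathbb{C}^N$; by continuity together with the density of $D\setminus A$ in $D$ (since $A$ is a proper analytic subset of the domain $D$), we at least have $\widetilde{\rho}(D)\subset\overline{D}$.

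The main obstacle is to improve this to $\widetilde{\rho}(D)\subset D$, and this is where the alternative hypotheses on $D$ are used. Either directly from the assumed bounded plurisubharmonic defining function, or indirectly by producing one from a Stein neighbourhood basis of $\overline{D}$ (using a continuous plurisubharmonic exhaustion of a Stein neighbourhood together with a cut-off, in the spirit of the Kerzman--Rosay / Diederich--Forn\ae ss type constructions), we obtain a plurisubharmonic function $\psi$ on a neighbourhood of $\overline{D}$ with $\psi<0$ on $D$ and $\psi=0$ on $\partial D$. Then $\psi\circ\widetilde{\rho}$ is plurisubharmonic on the connected domain $D$, and is strictly negative on the open dense subset $D\setminus A$ (because there $\widetilde{\rho}=\rho$ maps into $D$), hence $\psi\circ\widetilde{\rho}\le 0$ throughout $D$ by upper semicontinuity. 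If $\psi\bigl(\widetilde{\rho}(z_0)\bigr)=0$ at some $z_0\in D$, the maximum principle for plurisubharmonic functions forces $\psi\circ\widetilde{\rho}\equiv 0$ on $D$, contradicting the strict inequality on $D\setminus A$. Hence $\widetilde{\rho}(D)\subset D$.

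With $\widetilde{\rho}:D\to D$ holomorphic, the composition $\widetilde{\rho}\circ\widetilde{\rho}$ is a well-defined holomorphic self-map of $D$, and on the non-empty open set $D\setminus A$ it coincides with $\rho\circ\rho=\rho=\widetilde{\rho}$. The identity principle then gives $\widetilde{\rho}\circ\widetilde{\rho}=\widetilde{\rho}$ throughout $D$, so $\widetilde{\rho}$ is a holomorphic retraction of $D$ extending $\rho$.

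For the ``consequently'' part, let $R^A$ be a retract of $D\setminus A$ with associated retraction $\rho$, and let $\widetilde{\rho}$ be the extension produced above; set $R:=\widetilde{\rho}(D)$, which is a retract of $D$. The inclusion $R^A=\widetilde{\rho}(D\setminus A)\subset R\cap(D\setminus A)$ is immediate, and conversely any $y\in R\cap(D\setminus A)$ is a fixed point of $\widetilde{\rho}$ (since $\widetilde{\rho}$ acts as the identity on its image $R$), so $y=\widetilde{\rho}(y)\in \widetilde{\rho}(D\setminus A)=R^A$. Thus $R^A=R\cap(D\setminus A)$, completing the argument. The only substantive difficulty, as emphasised, is verifying the second step; once the extension does not escape to $\partial D$, everything else is a formal consequence of removable singularities and the identity principle.
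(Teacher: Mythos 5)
Your overall architecture is the same as the paper's: extend componentwise by the Riemann removable singularities theorem, show the extension cannot send points of $A$ to $\partial D$, propagate idempotence to $A$ by density/identity principle, and deduce $R^A=R\cap(D\setminus A)$ from the fact that $\widetilde{\rho}$ fixes its image. The last two steps and the ``consequently'' part are fine. The problem is the middle step, and specifically your treatment of the \emph{Stein neighbourhood basis} alternative. You propose to reduce it to the other alternative by manufacturing a plurisubharmonic $\psi$ on a neighbourhood of $\overline{D}$ with $\psi<0$ on $D$ and $\psi=0$ on $\partial D$, ``in the spirit of Kerzman--Rosay / Diederich--Forn\ae ss.'' Those constructions require boundary regularity ($C^1$, $C^2$, or at least Lipschitz), not merely a Stein neighbourhood basis, and the implication you need is false in general: $D=\Delta\times\Delta^*$ is a bounded pseudoconvex domain whose closure $\overline{\Delta}\times\overline{\Delta}$ has a Stein neighbourhood basis, yet it admits no bounded plurisubharmonic defining function — any bounded $\psi<0$ tending to $0$ at the boundary point $(z_0,0)$ would, on the slice $\{z_0\}\times\Delta^*$, extend subharmonically across $w=0$ with value $0$ and then vanish identically by the maximum principle. (A psh exhaustion of a Stein neighbourhood $U\supset\overline D$ blows up at $\partial U$; it carries no information about $\partial D$, so the ``cut-off'' cannot produce what you want.) The paper does not attempt this reduction: in the Stein neighbourhood basis case it invokes a theorem of Abe on the Nebenh\"ulle (as corrected in Persson's thesis), which directly yields that a holomorphic map into $\overline{D}$ agreeing on a dense open set with a map into $D$ must map into $D$. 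Without that (or some substitute), your argument does not cover the first alternative of the hypothesis — which is the one actually used in the paper's applications (e.g.\ to bounded balanced pseudoconvex domains).

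A secondary, repairable point: in the bounded-psh-defining-function case you apply the maximum principle to $\psi\circ\widetilde{\rho}$ on all of $D$, but a defining function in the usual sense lives only on $D$ (or is merely upper semicontinuous on $\overline D$ after setting it to $0$ on $\partial D$), so $\psi\circ\widetilde{\rho}$ is not obviously plurisubharmonic at the very points $z_0$ with $\widetilde{\rho}(z_0)\in\partial D$ that you are trying to exclude. You should either note that $\psi\circ\rho$ is psh and bounded on $D\setminus A$ and extends plurisubharmonically across the pluripolar set $A$ (and then run the maximum principle on the extension), or do what the paper does: compose with an analytic disc through $a\in A$ meeting $A$ only at $a$, observe that $\psi\circ\widetilde{\rho}\circ\varphi$ is a bounded subharmonic function on $\Delta^*$, extend it across the puncture, and contradict the maximum principle there.
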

\begin{proof}
Firstly, recall that removing an analytic set from a domain does not disconnect it; by definition
$A$ is closed in $D$, thereby $D \setminus A$ is a domain in $\mathbb{C}^N$. 
Let $\rho : D \setminus A \to D \setminus A$ be a holomorphic retraction map.
By applying the Riemann removable singularities theorem (for holomorphic functions of several complex variables)
to each of the component functions of $\rho$ we get a (uniquely determined) holomorphic extension of $\rho$
which we denote by $\tilde{\rho}$. The fact that $\rho$ maps $D \setminus A$ into itself implies firstly that
$\tilde{\rho}$ maps $D$ into its closure $\overline{D}$. However, it is easy to see that $\tilde{\rho}$ 
actually maps $D$ into itself. Indeed, pick any $a \in A$ and enquire if it is possible for $b=\tilde{\rho}(a)$
to lie in $\partial D$. Recalling the local structure of analytic sets, we know
that: there exist a complex line segment $\ell \subset D$ through $a$ which intersects $A$ only at $a$.
If $\varphi: \Delta \to D$ is a parametrization of $\ell$ with $\varphi(0)=a$, then
$\tilde{\rho} \circ \varphi$ gives an analytic disc (with its image) intersecting $\partial D$ only at $a$.
Since $D$ is hyperconvex, it follows from 
 the theorem \ref{hypcnvx_local} and its remark that $\tilde{\rho}(a)$ is a local psh
barrier point of $D$. So, there exists neighborhood $U$ 
of $b := \tilde{\rho}(a)$  and plurisubharmonic function $\psi$ 
on $U \cap D$ such 
that $\psi < 0$ on $U \cap D$ and 
$\displaystyle\lim_{ U \cap D \ni z \to b} \psi(z) = 0$. After shrinking the radius of the punctured disc
$\Delta^*$ (if needed), we may assume that 
$image(\tilde{\rho} \circ \varphi) \subset U$. Consider the 
subharmonic function $\phi: \Delta^* \to \mathbb{R}$ defined by $\phi(z) = 
\psi \circ \tilde{\rho} \circ \varphi(z)$. By applying the removable 
singularity theorem for subharmonic functions, we obtain that 
$0$ is a removable singularity for 
$\phi$. Hence $\phi$ extends to a subharmonic function on $\Delta$ which we 
continue to denote by $\phi$. By the limiting behaviour of $\psi$ at $b$, we obtain that $\phi(0) = 0$.
But then, the maximum principle for 
this subharmonic function is violated, under our assumption that $b \in \partial D$. We deduce 
therefore that $b$ cannot lie in $\partial D$ and it follows that 
$\tilde{\rho}(D) \subset D$.\\

\noindent We finish off by verifying that $\tilde{\rho}$ is a retraction on $D$. Indeed, we need only verify the
required idempotence equation along $A$ (as outside of $A$:
$\tilde{\rho}$ is the given retraction map $\rho$ on 
$D \setminus A$).  To do this, pick an arbitrary $a \in A$ and a sequence $(z_n)\subset D \setminus A$ with 
$z_n$ converging to $a$. Then
just by the idempotence of $\rho$ on $D \setminus A$, we have: $\rho\left(\rho(z_n)\right)=\rho(z_n)$,
wherein by passing to limit using the continuity of $ \tilde{\rho}$ twice, we get 
$\tilde{\rho} \left(\tilde{\rho}(a)\right) = \tilde{\rho}(a)$. As noted earlier, we may now
conclude that the same equation holds not just on $A$ but throughout $D$ i.e., 
$\tilde{\rho} \circ \tilde{\rho} \equiv \tilde{\rho}$; denoting
$\tilde{\rho}(D), \rho(D \setminus A)$ by $R^A$ and $R$ respectively, we arrive at the statement of the 
lemma.
\end{proof}

\begin{rem}\label{Stein_nbhd}
We first recall that for the class of much interest in this article namely bounded balanced
pseudoconvex domains, 
the hypothesis of the above
is not satisfied automatically as hyperconvexity of such domains is equivalent to 
continuity of the Minkowski functional (see Remark 2.45 (iii) in the preliminaries section); nevertheless, it turns out that the 
conclusion of the above lemma is true regardless 
of such regularity, owing to certain results that we 
briefly touch upon now. For our applications however,
we shall not require such more 
general observations.
To remark on the extension then, the above lemma can be derived for bounded 
pseudoconvex domains 
$D$ 
even 
if the existence of such a $\psi$ as in its 
proof above is not known, provided $D$ has a Stein neighbourhood basis. The only point to 
be checked is $\tilde{\rho}(D) \subset D$. It is indeed true that $\tilde{\rho}(D)$ lies within $D$ 
and can be derived rather quickly from known results (the arguments above were given to show a simple route under the aforementioned assumption 
about bounded plurisubharmonic defining functions). This follows from 
the last result in \cite{Abe}, owing to our hypothesis that $D$ has a Stein neighbourhood basis; as the work
\cite{Abe} has some non-major errors, we also cite the Masters thesis of Persson \cite{Persson}, wherein the
errors are set right with a better exposition.
Thus as $\tilde{\rho}(a)$ lies within $D$ and as $a$ was arbitrary, we 
conclude that $\tilde{\rho}$ maps $A$ (and thereby $D$ as well) into $D$. 
\end{rem}

\begin{rem} \label{Analytic-compl-Unbdd dom}
    One may want to know to what extent we may weaken the hypotheses
    of the above lemma. We do not pursue this here but draw attention
    to the fact that rather simple counterexamples exist to show that
    boundedness cannot be dropped easily. Indeed, consider 
    the unbounded domain $D=\mathbb{C}^2 \setminus A$, 
    where $A=\{z=0\}$, the $w$-axis. Notice
    then that $R^A=\{(z,1/z): z \in \mathbb{C}^*\}$ is a retract
    of $D$. We observe that there exists no retract $R$ 
    of $\mathbb{C}^2$
    such that $R^A=R \cap D$. Indeed, 
    this follows from the fact that the (smooth) analytic
    variety $R^A$ described by the equation $zw-1=0$, 
    is irreducible, together with the principle of
    analytic continuation for complex analytic sets.
\end{rem}

\noindent Notwithstanding the example in the above remark, we may
certainly remove analytic varieties of higher codimension to 
attain the analogue of the foregoing
lemma for the case when the domain is the entire complex Euclidean
space itself: to adapt the essential ideas of its proof to deal 
with the unboundedness now, we only need to employ 
(theorem 5B in \cite{Whitney}) in place of the standard Riemann removable
singularities theorem. We shall not repeat the details and
be content with stating it as follows.

\begin{cor} (Corollary to the proof of lemma 
\ref{Retr-of-anal-complements})
    Let $A$ be any analytic variety of codimension at-least two 
    in $\mathbb{C}^N$. Then every retract
    $R^A$ of $\mathbb{C}^N \setminus A$ is of the form
    $R^A= R \cap (\mathbb{C}^N \setminus A)$ for some 
    retract $R$ of $\mathbb{C}^N$.\\
\end{cor}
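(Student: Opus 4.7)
The plan is to follow the same template as the proof of Lemma \ref{Retr-of-anal-complements}, with only two differences: the Riemann removable singularities theorem must be replaced by a Hartogs-type theorem valid for codimension-two analytic sets (this is where Whitney's Theorem 5B of \cite{Whitney} enters), and the step verifying that the holomorphic extension actually maps the original domain into itself becomes vacuous, since the ambient domain is now $\mathbb{C}^N$ with no boundary to worry about.

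Concretely, let $\rho : \mathbb{C}^N \setminus A \to \mathbb{C}^N \setminus A$ be any retraction and write $\rho = (\rho_1,\ldots,\rho_N)$. Each $\rho_j$ is a holomorphic function on the complement of an analytic set of codimension $\geq 2$ in $\mathbb{C}^N$, so by the cited Hartogs/Whitney extension result, each $\rho_j$ extends uniquely to a holomorphic function $\tilde{\rho}_j$ on all of $\mathbb{C}^N$. Assembling these gives a holomorphic map $\tilde{\rho} : \mathbb{C}^N \to \mathbb{C}^N$, which agrees with $\rho$ off $A$. Unlike in the bounded case, no subharmonic/Stein-basis argument is needed to confine the image, because the target $\mathbb{C}^N$ is already the whole space.

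Idempotence of $\tilde{\rho}$ follows by analytic continuation. The composition $\tilde{\rho}\circ\tilde{\rho}$ is a globally defined holomorphic self-map of $\mathbb{C}^N$ and coincides with $\rho \circ \rho = \rho = \tilde{\rho}$ on the dense open set $\mathbb{C}^N \setminus A$, so it coincides with $\tilde{\rho}$ everywhere by the identity principle. Thus $R := \tilde{\rho}(\mathbb{C}^N)$ is a retract of $\mathbb{C}^N$.

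It remains to identify $R^A$ with $R \cap (\mathbb{C}^N \setminus A)$. The inclusion $R^A \subset R \cap (\mathbb{C}^N \setminus A)$ is immediate: any $x = \rho(y) \in R^A$ equals $\tilde{\rho}(y)$ and lies outside $A$ by construction. Conversely, if $x \in R \cap (\mathbb{C}^N \setminus A)$, then $\tilde{\rho}(x) = x$ by idempotence, and since $x \notin A$ we have $\tilde{\rho}(x) = \rho(x)$, so $x \in \rho(\mathbb{C}^N \setminus A) = R^A$. The only non-routine step is invoking the correct extension theorem across a codimension-two analytic set; everything else is a direct transcription of the bounded-case argument, simplified by the absence of a boundary.
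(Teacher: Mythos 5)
Your proof is correct and is exactly the argument the paper intends: it explicitly says the corollary follows by repeating the proof of Lemma \ref{Retr-of-anal-complements} with Whitney's Theorem 5B replacing the Riemann removable singularities theorem, and declines to write out the details you have supplied. Your observation that the image-confinement step becomes vacuous and that idempotence follows by the identity principle fills in the omitted details faithfully.
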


\noindent Without further ado now, we proceed to detailing the proof of proposition \ref{retr-of-prod-in-C2} and lay
down some of its generalizations as well. This will be used in the sequel.\\

\noindent \textit{Proof of proposition} \ref{retr-of-prod-in-C2} .
Let $p = (p_1,p_2) \in D_1 \times D_2$ . Since $D_1$ is 
bounded, there exist $r_1 > 0$ such that $D_1 \subset \Delta(p_1,r_1)$. Since 
$D_2$ is open, there exist $r_2 > 0$ such that $\Delta(p_2,r_2) \subset D_2$.  
Consider the domain $D := D_1 \times D_2$. Choose $v_2 \in T_{p_2}D_2$ 
with $|v_2| < \frac{r_2}{r_1}$. Consider a holomorphic map $F : D_1 \to \mathbb{C}$ 
defined by $F(z) = p_2 + (z-p_1)v_2$. Note that $F(p_1) = p_2$ and $F'(p_1) = v_2$. 
Take $\epsilon = \frac{r_2}{r_1}$. If $|v_2| < \epsilon$, then $F$ maps $D_1$ 
into $D_2$. Consider a holomorphic retraction map $G: D_1 \times D_2 \to D_1 \times D_2$ 
defined by $G(z,w) = (z,F(z))$. Hence $Z : = \{(z,F(z)): z \in D_1\}$ is a one-dimensional 
retract of $D_1 \times D_2$ passing through $(p_1,p_2)$ and $(1,v_2) \in T_p Z$. 
\qed 

\begin{prop}\label{Retrct_Dirctn}
Suppose $D_1, D_2$ be domains in $\mathbb{C}^N, \mathbb{C}^M$ respectively and at least 
one of domains is Caratheodory hyperbolic, say $D_1$; let $p = (p_1,p_2)$ be any 
given point in the product domain $D = D_1 \times D_2$. Let $v_1 \in T_{p_1}D_1$ be 
any non-zero vector. Then there exist $\epsilon > 0$, such that for all vectors 
$v_2 \in T_{p_2}D_2 \cong \mathbb{C}^{M}$ with $\|v_2\|_{\infty} < \epsilon$, 
there is a retract $Z$ of $D_1 \times D_2$ passing through $p$ with 
$T_pZ \ni (w_1,v_2) $, for some non-zero vector $w_1$ with $w_1 \in \spn_{\mathbb{C}} \{v_1\}$.
\end{prop}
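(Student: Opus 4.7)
The plan is to mirror the construction in Proposition \ref{retr-of-prod-in-C2}, replacing the boundedness of $D_1$ by a single bounded holomorphic function on $D_1$ supplied by Caratheodory hyperbolicity. I will produce the retract $Z$ as the graph of a holomorphic map $F:D_1\to D_2$ with $F(p_1)=p_2$; any such graph is automatically a retract of $D_1\times D_2$ through the retraction $(z,w)\mapsto (z,F(z))$, and its tangent space at $p$ is
\[
T_pZ=\{(u,DF(p_1)u):u\in\mathbb{C}^N\}.
\]
It thus suffices to construct an $F$ for which $DF(p_1)(w_1)=v_2$ for some non-zero $w_1\in\mathrm{span}_{\mathbb{C}}(v_1)$.

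The case $v_2=0$ is dispensed with immediately by the constant choice $F\equiv p_2$, whose associated retract $D_1\times\{p_2\}$ has $(v_1,0)\in T_pZ$; here any $\epsilon>0$ works and one may take $w_1=v_1$.

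For the main case $v_2\neq 0$, Caratheodory hyperbolicity of $D_1$, interpreted in its infinitesimal form $\gamma_{D_1}(p_1,v_1)>0$, supplies a holomorphic map $f:D_1\to\Delta$ with $f(p_1)=0$ and $a:=df_{p_1}(v_1)\neq 0$. Next I would pick $r_2>0$ with the polydisc $\Delta^M(p_2,r_2)$ contained in $D_2$ (possible since $D_2$ is open and $p_2\in D_2$), set $\epsilon:=r_2$, and for $\|v_2\|_\infty<\epsilon$ define
\[
F(z):=p_2+f(z)\,v_2,\qquad z\in D_1.
\]
The bound $|f|<1$ then gives $\|F(z)-p_2\|_\infty<\|v_2\|_\infty<r_2$, so $F(D_1)\subset\Delta^M(p_2,r_2)\subset D_2$. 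The graph $Z$ of $F$ is therefore a retract of $D_1\times D_2$ through $p$, and the computation $DF(p_1)(v_1)=a\,v_2$ shows that $w_1:=v_1/a\in\mathrm{span}_{\mathbb{C}}(v_1)\setminus\{0\}$ satisfies $(w_1,v_2)\in T_pZ$, as required.

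The only non-routine ingredient is the production of the function $f$; this is exactly the content of the infinitesimal version of Caratheodory hyperbolicity of $D_1$, and is the only place where the hypothesis enters. Once it is granted, everything else is a direct extension of the two-dimensional argument used in Proposition \ref{retr-of-prod-in-C2}, with $1/a$ playing the role of the dilation factor that compensates for the fact that $df_{p_1}(v_1)$ is some non-zero scalar rather than exactly $1$.
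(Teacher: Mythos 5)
Your proposal is correct and follows essentially the same route as the paper's proof: both produce the retract as the graph of $z\mapsto p_2+f(z)v_2$ for a bounded holomorphic $f$ with $f(p_1)=0$ and $df_{p_1}(v_1)\neq 0$ supplied by the positivity of the infinitesimal Caratheodory metric, rescale $v_1$ to normalize the derivative, and choose $\epsilon$ from the polyradius of a polydisc around $p_2$ inside $D_2$. Your explicit treatment of the case $v_2=0$ is a harmless addition.
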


\begin{proof}
Since $D_2$ is open in $\mathbb{C}^M$, there exist a 
polydisk $\Delta^M(p_2,r_2) \subset D_2$. Since $D_1$ is Caratheodory 
hyperbolic, we have $\gamma_{D_1}(p_1,v_1) \neq 0$. This means that there 
exist a non-constant holomorphic map $g : D_1 \to \Delta$ such that 
$g(p_1) =0$ and $g'(p_1)(v_1) \neq 0 $. So $g'(p_1)$ is a non-zero 
linear map from $\mathbb{C}^M$ onto $\mathbb{C}$. Hence there exist 
$w_1 \in T_{p_1}D_1$ with $w_1 = \lambda v_1$ for some $\lambda \in \mathbb{C}$ 
and $g'(p_1)(w_1) = 1$. For $v_2 \in T_{p_2}D_2$,  
consider the holomorphic map $\phi :D_1 \to \mathbb{C}^M$ defined 
by $\phi(z) = p_2 + v_2 g(z)$. Note that
\[ 
\|\phi(z) - p_2\|_{\infty} = \|v_2 g(z)\| = |g(z)|\|v_2\|_{\infty} \leq 1 \cdot \|v_2\|_{\infty} 
\] 
Write $ r_2 = (r_2^{(1)},\ldots,r_2^{(M)})$. Let $\epsilon := \min\{r_2^{(1)},\ldots,r_2^{(M)}\}$. 
Choose $v_2 \in T_{p_2}D_2$ with $\|v_2\|_{\infty} < \epsilon$. Hence $\phi$ maps $D_1$ into $D_2$. Note 
that $\phi(p_1) = p_2$ and $\phi'(p_1)(w_1) = v_2g'(p_1)(w_1) = v_2$. Consider the 
holomorphic map $F : D_1 \times D_2 \to D_1 \times D_2$ defined by $F(z,w) = (z,\phi(z)) $. 
Again note that $F(p_1,w) = (p_1,p_2)$ and $F'(p)(w_1,w_2) = (w_1,\phi'(p_1)(w_1)) = (w_1,v_2)$.  
Hence $Z : = \{(z,\phi(z)): z \in D_1\}$ is a retract of $D_1 \times D_2$ passing 
through $(p_1,p_2)$ and $(w_1,v_2) \in T_p Z$, where $w_1 \in \spn_{\mathbb{C}}\{v_1\}$. 
\end{proof}

\noindent We shall now employ this proposition in 
applying retracts to establish various biholomorphic inequivalences amongst members of $3$ classes of domains namely, analytic polyhedra, product domains and co-analytic domains i.e., domains of the form $D \setminus A$ wherein $D$ is a domain of holomorphy, an assumption that we generally make unless explicitly stated otherwise.  all such domains have the common feature that their boundaries are not entirely smooth and contain non-trivial analytic varieties. On the other hand (as also mentioned in the introductory section), there are differences in the location of the analytic boundary pieces: for instance, in a co-analytic domain there are interior analytic boundary points whereas analytic polyhedra are devoid of such `interior' boundary points, leading them to be biholomorphically inequivalent. To transform this intuition into a rigorous proof as well as to attain a more result a bit more general, we note that co-analytic domains are not hyperconvex whereas analytic polyhedra are. 
The biholomorphic invariance of hyperconvexity immediately renders the desired inequivalence but 
take a moment to record this in the following proposition among other related statements in order to be able to refer and make use of it in the sequel. We first formalize the
aforementioned notion of boundary points lying in the `interior' of the domain, which we will be used later.

\begin{defn}
Let $\Omega$ be a domain in $\mathbb{C}^N$ and $p \in \partial \Omega$. We say that $p$ is an \textit{interior boundary point} of $\Omega$ if there exists a neighbourhood $U_p$ of $p$ such that $U_p \cap \overline{\Omega} = U_p$.
\end{defn}

Next, we prove a lemma regarding the product of hyperconvex domains in $\mathbb{C}^N$. 
\begin{lem}\label{prod_hypcnvx}
    Let $D_1,D_2$ be domains in $\mathbb{C}^N, \mathbb{C}^M$ respectively. Then $D := D_1 \times D_2$ is hyperconvex if and only if $D_1$ and $D_2$ are hyperconvex.
\end{lem}
\begin{proof}
Assume that $D := D_1 \times D_2$ is hyperconvex and fix a point $(z_0,w_0) \in D$. Note that $D_1 \times \{w_0\}$ and $\{z_0\} \times D_2$ are retracts of $D$. 
By proposition \ref{retrct_hypcnvx}, $D_1$ and $D_2$ are hyperconvex.\\

Conversely, assume that $D_1, D_2$ are hyperconvex. 
Let $\varphi_1,\varphi_2$ be psh exhaustion functions of $D_1,D_2$ respectively. Consider the function $\varphi$ on $D_1 \times D_2$ defined by $\varphi(z,w) = \max \{\varphi_1(z),\varphi_2(w)\}$. Then $\varphi$ is plurisubharmonic and negative on $D_1 \times D_2$. For each $c < 0$, let $S^c := \{(z,w) \in D: \varphi(z,w) < c\}$, $S_1^c := \{z \in D_1: \varphi_1(z) < c\}$ and  $S_2^c := \{w \in D_2: \varphi_2(w) < c\}$ .  Note that
\[
    (z,w) \in S^c \Longleftrightarrow \varphi_1(z) < c~~\text{and}~\varphi_2(w) < c \Longleftrightarrow (z,w) \in S_1^c \times S_2^c.
\]
This shows that $S^c = S_1^c \times S_2^c$. Since $S_1^c$ and $S_2^c$ are relatively compact subsets of $D_1,D_2$ respectively, $S$ is a relatively compact subset of $D$. Hence, $D_1 \times D_2$ is hyperconvex.
\end{proof}
\begin{prop}\label{barrier}
   \begin{enumerate} \rm 
\item[(i)] Let $D_1$ be a complex manifold, $A_1$ be a non-empty analytic subset 
of $D_1$ and let $D_2$ be a bounded hyperconvex domain in 
$\mathbb{C}^N$, $A_2$ be an (possibly empty) analytic subset of $D_2$.  
If $F: D_1 \setminus A_1 \to D_2 \setminus A_2$ is a proper holomorphic map then $F$ can be extended as a holomorphic map $\tilde{F}$ from $D_1$ to $D_2$ with $\tilde{F}(A_1) \subset A_2$. Consequently, there does not exist any proper holomorphic map from 
$D_1 \setminus A_1$ to $D_2$ (as follows by considering the case $A_2 = \phi$).
In particular, there does not exist a biholomorphic map from 
$D_1 \setminus A_1$ onto $D_2$.
\item[(ii)] Let $D_1,D_2$ be bounded hyperconvex domains in 
$\mathbb{C}^N$  and 
$A_1,A_2$ be non-empty analytic subsets of $D_1,D_2$ respectively. 
If $D_1 \setminus A_1$ is biholomorphic 
to $D_2 \setminus A_2$ then $D_1$ is biholomorphic to $D_2$.
\end{enumerate}
\end{prop}

\begin{proof}
Assume that there exists a proper holomorphic map $F$ mapping $D_1 
\setminus A_1$ to $D_2 \setminus A_2$. By applying the Riemann removable 
singularity theorem to each of 
the component functions of $F$, we get a holomorphic extension of $F$ which we 
denote by $\tilde{F}$. The fact that $F$ maps $D_1 \setminus A_1$ into $D_2 \setminus A_2$  
implies that $\tilde{F}$ maps $D_1$ into $\overline{D_2}$.
 Using the same arguments in the proof of the lemma \ref{Retr-of-anal-complements}, we obtain that $\tilde{F}$ maps $D_1$ into $D_2$ and $\tilde{F}(A_1) \subset A_2$. Considering the special case 
 $A_2 = \phi$, we get that there does 
not exist any proper map from $D_1 \setminus A_1$ to $D_2$. This finishes the proof of (i).\\

To prove (ii), let $F$ be a biholomorphic map from $D_1 \setminus A_1$ onto 
$D_2 \setminus A_2$ and $G$ be the inverse of $F$. By the arguments similar to those used in (i), we 
obtain that $\tilde{F}$
maps $D_1$ into $D_2$ and  $\tilde{G}$ maps $D_2$ into $D_1$.
It is easy to see that $\tilde{F}$ and $\tilde{G}$ are 
biholomorphic maps. To prove this, pick an arbitrary point $z_0 \in A_1$ and a sequence $(z_n)$
in $D_1 \setminus A_1$ converges to $z_0 \in A_1$. 
Since $\tilde{F} \equiv F$ on $D_1 \setminus A_1$ and 
$\tilde{G} \equiv G$ on $D_2 \setminus A_2$, $z_n = \tilde{G} 
\circ \tilde{F}(z_n) \to 
\tilde{G} \circ \tilde{F}(z_0)$. By the uniqueness of limits, 
$\tilde{G} \circ \tilde{F}(z_0) = z_0$. This shows that 
$\tilde{G} \circ \tilde{F}$ is the identity map on $D_1$. 
Similarly, $\tilde{F} \circ \tilde{G}$ is the identity map 
on $D_2$. This proves that $\tilde{F}$ is a biholomorphism on $D_1$ and its inverse is $\tilde{G}$. Hence $D_1$ 
is biholomorphic to $D_2$.
\end{proof}

As a corollary of part (i) of the above proposition \ref{barrier} and by above lemma \ref{prod_hypcnvx}, we obtain the following.
\begin{cor}\label{local_psh}
Let $D$ be a pseudoconvex domain in $\mathbb{C}^N$ and 
$A$ be a non-empty analytic subset of $D$. Then $D \setminus A$ is not 
biholomorphic to a product of bounded hyperconvex domains.
\end{cor}

\noindent In particular, it follows immediately from this corollary that an 
analytic polyhedron is never biholomorphically equivalent to a co-analytic domain.

\begin{rem}
In general, it can very well happen that both $D$ and $D \setminus A$ are product (pseudoconvex) domains. Indeed the simplest example showing 
this is $D = \Delta^2$ and $D \setminus A = \Delta \times \Delta^*$ obtained by taking $A := \{(z,w) \in \Delta^2: w=0\}$; while there are plenty of other examples, the next proposition shows that within the class of bounded balanced domains of holomorphy in dimension two, this is the only one. This proposition also serves as one of the simplest applications of retracts to establishing biholomorphic inequivalences. 
\end{rem}
\begin{prop}\label{bih_Delta^2}
Let $D$ be any bounded balanced domain of holomorphy in $\mathbb{C}^2$ 
(whose Minkowski functional we denote by $h=h_D$), which is not biholomorphic to $\Delta^2$.
Let $A$ be any proper analytic subset of $D$.
Then neither $D$ nor $D \setminus A$ is biholomorphic to a product domain. 
\end{prop}
\begin{proof}
To prove this proposition by 
contradiction, assume that $D \setminus A$ is biholomorphic to a product domain 
$\Omega_1 \times \Omega_2$, where $\Omega_1,\Omega_2$ are domains in $\mathbb{C}$. 
 Let $F$ be a 
biholomorphism from $D \setminus A$ onto $\Omega_1 \times \Omega_2$ and 
$G$ be the inverse of $F$. Let $(p_1,p_2) \in \Omega_1 \times \Omega_2$. Note that 
$\Omega_1 \times \{p_2\}$ and  $\{p_1\} 
\times \Omega_2$ are retracts of $\Omega_1 \times \Omega_2$. Let 
$G$ be a biholomorphism from $\Omega_1 
\times \Omega_2$ onto $D \setminus A$. By proposition 
\ref{Bih}, $\tilde{R_1} := G(\Omega_1 \times \{p_2\})$ and 
$\tilde{R_2} := G(\{p_1\} \times \Omega_2)$ are retracts 
of $D \setminus A$. This implies that $\tilde{R_1}, \tilde{R_2}$ are biholomorphic to  $\Omega_1, \Omega_2$ respectively.  
Since $D \setminus A$ is biholomorphic to $\Omega_1 \times \Omega_2$, we obtain 
that $D \setminus A$ is biholomorphic to $\tilde{R_1} \times \tilde{R_2}$. 
By lemma \ref{Retr-of-anal-complements}, there exist one-dimensional retracts $R_1,R_2$ of $D$ 
such that $\tilde{R_1} = R_1 \cap (D \setminus A)$ and $\tilde{R_2} 
= R_2 \cap (D \setminus A)$.\\

If $R_i \subset D \setminus A$ for $i=1,2$ then $\tilde{R_i} = R_i$. 
Since $R_i$ is a simply connected Riemann surface, its biholomorphic image $\Omega_i$ is a simply 
connected domain in $\mathbb{C}$. By Riemann mapping theorem, $R_i$ is 
biholomorphic to $\Delta$. Hence we obtain that $D \setminus A$ is 
biholomorphic to $\Delta^2$, which contradicts part (i) of the above 
proposition \ref{barrier}. Next, we consider the case that $R_i \cap A \neq \phi$ for at least one $i$. Let $S$ be the set of all points
of intersection of $R_i$ with $A$. As $R_i \cap A$ does not have any interior point and they are of dimension one, $S$ is a discrete set 
and thereby can be enumerated as $(p_j)_{j=1}^{\infty}$. Fix one of the $p_j$'s say $p_1$ for simplicity. 
There exists an open set $U_1 \subset R_i$, which is contained within at least one chart about $p_1$ for $R_i$. 
Let $\phi$ be the biholomorphism from $\tilde{R_i}$ onto $\Omega_i$. By 
Casorati-Weiestrass theorem, $p_1$ is not an essential singularity for $\phi_{|_{U_1^*}}$, where $U_1^* = U_1 \setminus \{p_1\}$. 
So, the isolated singularity for $\phi$ at $p_1$ is removable when $\phi$ is viewed as a map into $\mathbb{C}_{\infty}$. Let $V_1 := \phi(U_1)$  and $V_1^* = V_1 \setminus \{q_1\}$, where $q_1 = \phi(p_1)$. 
By local-normal form theorem, after passing to suitable charts, we obtain that $\partial(\phi(U_1^*)) = \phi(\partial U_1) \cup \{q_1\}$ with $q_1 \notin \phi(\partial U_1)$. 
Hence $q_1$ is an interior boundary point of $V_1$. 
This argument holds for all $p_j$. Since $\phi$ is a biholomorphism on $\tilde{R_i}$, $\phi_{|_S}$ is one-one. Hence $R_i$ is 
biholomorphic to $\Omega_i' := \Omega_i \cup \phi(S)$. Since $R_i$ is simply connected, $\Omega_i'$ is simply connected and thereby $R_i$ is biholomorphic to $\Delta$ by Riemann mapping theorem.
Hence there exist 
discrete subsets $L_1,L_2 \subset \Delta$ (at least one of $L_i$ is non-empty) such that $\tilde{R_1}$ and $\tilde{R_2}$ 
are biholomorphic to $\Delta \setminus L_1$ and $\Delta \setminus L_2$ 
respectively. This implies that $D\setminus A$ is biholomorphic to $(\Delta \setminus L_1) 
\times (\Delta \setminus L_2)$. Write $(\Delta \setminus L_1) 
\times (\Delta \setminus L_2) = \Delta^2 \setminus S'$, where 
$S' = (\Delta \times L_2) \cup (L_1 \times \Delta)$.  
Note that $S'$ is a non-empty analytic subset of $\Delta^2$. By applying the Riemann removable 
singularity theorem to each of 
the component functions of $F$, we get a holomorphic extension of $F$ which we 
denote by $\tilde{F}$. The fact that $F$ maps $D \setminus A$ to $\Delta^2 \setminus S'$  
implies that $\tilde{F}$ maps $D$ into $\overline{\Delta}^2$.
By theorem 1.17.16 in \cite{Jrnck_Frst_stps}, both $D$ and $\Delta^2$ admit Stein neighborhood bases. Applying the same argument in the proof of the lemma \ref{Retr-of-anal-complements} and its remark, we conclude that  $\tilde{F}$ maps $D$ into $\Delta^2$ and $\tilde{G}$ maps $\Delta^2$ into $D$.
Using the methods similar to the proof of part (ii) of the above proposition \ref{barrier}, we obtain that $\tilde{F}$ and $\tilde{G}$ are 
biholomorphic maps. This proves that $D$ 
is biholomorphic to the polydisc $\Delta^2$.
\end{proof}

Next,we prove the corollary of above proposition \ref{Retrct_Dirctn} and theorem \ref{ell_q_ball_C3}. \\

Proof of Corollary \ref{gen-anal-poly}: 
If $F$ is a biholomorphic mapping from $D_q \setminus A$ onto a 
product domain $G = G_1 \times G_2$, then through $F(0)$ we have a retract along those directions 
given by proposition \ref{Retrct_Dirctn} -- note that the hypothesis about Caratheodory 
hyperbolicity therein (for $G$ here) is verified owing to the fact that $D_q \setminus A$ is bounded;
indeed thereby, every biholomorphic image of $D_q \setminus A$ must be Caratheodory hyperbolic 
as well (refer to theorem 18.2.1 of \cite{Jrncki_invrnt_dst}). In particular, 
there always exists a non-trivial retract (necessarily of dimension
one here) of $D_q$ for an open piece $S \subset \pi(S^h)$ of directions at $F(0)$; here,
 $\pi$ denotes the projection onto the Euclidean sphere $\partial \mathbb{B}$ given by 
$\pi(z)=z/\vert z \vert_{l^2}$. 
By the biholomorphicity of $F$, it follows that for an open piece $S'$ of 
directions,
there always exist a (non-trivial) retract of $D_q$ through the origin
with its tangent direction at $0$ belonging to $S'$.
But this contradicts theorem \ref{ell_q_ball_C3}, which implies that $D_q$ does not admit any retract along an open piece of directions at the origin. Hence, $D_q \setminus A$ cannot be biholomorphic to a product domain. By the same reasoning, it follows that $D_q$ itself is not biholomorphic to a product domain. \qed
\\

\noindent We conclude this section with a complete determination of 
retracts of $D_h \setminus A$ through the origin,
where $D_h$ is the polynomial polyhedron
(\ref{D_h}). As above, 
$A$ is an analytic subset of $D_h$ not passing through the origin, which is allowed 
to be empty but not $D_h$ itself. This 
of-course is tantamount to determining retracts of $D_h$ by 
lemma \ref{Retr-of-anal-complements} and
we may therefore forget about $A$.
\begin{ex}[] \label{Main-Example}    
We are yet to determine the linear retracts of $D_h$ (the motivation for this has already been mentioned in the prequel). First, note that almost every 
point of $\partial D_h$ fails to be holomorphically extreme; specifically, 
every point of $\partial D_h$
which is not in the intersection of the real analytic varieties $H_1$ given 
by $\vert z^2- w^2 \vert=1$
and $H_2$ given by $\vert z w \vert=2$. However, every point of $\partial D_h$ is 
a $\mathbb{C}$-extremal boundary point; this 
is substantiated by realizing $D_h$ as the intersection of the pair of balanced domains $D_1,D_2$ given 
respectively as the sub-level sets
$\vert z^2- w^2 \vert<1$ and $\vert zw \vert<2$ both of which have (weakly) $\mathbb{C}$-extremal boundaries and then using 
proposition \ref{R-extrml}.\\

\noindent Now, we contend that for certain special points $p$ (which we shall precisely determine in due course)
of $\partial D_h$ in the intersection
$H_1 \cap H_2$, there exists precisely one retract through the origin in the direction of $p$ which is
given by the linear subspace spanned by $p$; along directions given by any other boundary point there is 
no retract, completing the characterization of retracts through the center of $D_h$. 
We shall presently put forth the proof of this claim. Let us begin by settling at the outset, the 
non-existence of retracts along directions given by $D_h \setminus (H_1 \cap H_2)$: this  
follows by noting that every point of this open piece of the boundary is a non-convex boundary point of 
$D_h$ and invoking corollary \ref{No_1dim_Retrct}. With this we may restrict ourselves to points 
of $H_1 \cap H_2 \subset \partial D_h$ and proceed to discussing retracts through 
such points alone; it also being borne in mind that by retracts we mean the linear ones for, by proposition \ref{JJ-improved}
there are no non-linear holomorphic retracts through the origin in $D_h$.
To proceed forth indeed then, pick any point $p = (p_1,p_2)$ in the intersection $H_1 \cap H_2 \subset \partial D_h$;
so, $| p_2^2 - p_1^2 | = 1$ and $ \vert p_1p_2 \vert = 2$. 
Consider the linear projection $L$ from $\mathbb{C}^2$ onto the one-dimensional 
complex subspace spanned by $p$ denoted $\langle p \rangle$, defined 
by $L(z,w) = \lambda(z,w)p$, where 
\[
\lambda(z,w) = \frac{\overline{p_1} z + \overline{p_2} w}{|p_1|^2+|p_2|^2}.
\] 
To prove our claim, it suffices to check that $L$ maps $D_h$ into itself. 
Towards this, write $L$ in standard component notations $L=(L_1,L_2)$;
it needs to be verified that the pair of inequalities
\begin{align} \label{pt-in-D_h}
\vert \left(L_2(z,w)\right)^2 - \left(L_1(z,w)\right)^2 \vert <1 \;\;\; \text {and} \;\;\;
\vert L_1(z,w) L_2(z,w) \vert <2,
\end{align}
hold for all $(z,w) \in D_h$. We shall now deal with the quantities on the left of each of these inequalities;
we begin with the first one.
Note that
\begin{eqnarray*}
\vert \left(L_2(z,w)\right)^2 - \left(L_1(z,w)\right)^2 \vert
&=& |\lambda(z,w)|^2|p_2^2- p_1^2|
= |\lambda(z,w)|^2
\end{eqnarray*}
It therefore follows that for our task of verifying the first inequality at
(\ref{pt-in-D_h}), it suffices to find the maximum modulus of 
$\varphi(z,w)= \overline{p_1}z + \overline{p_2}w $ on $D_h$ or equivalently, on its closure
$\overline{D_h}$. While the compactness of $\overline{D_h}$ ensures this maximum being attained, 
holomorphicity of $\varphi$ allows us to restrict attention on $\partial D_h$ for determining
the maximum. In-fact as $\varphi$ is a {\it linear} functional, we may further restrict attention 
to the extreme points on the boundary of the convex hull of $\overline{D_h}$. However, rather than 
rigorously determining this convex hull and its extreme points, it is convenient here to directly proceed 
to solving our optimization problem on $\overline{D_h}$. Indeed, let just first use the holomorphicity 
of $\varphi$ to deduce that the aforementioned maximum -- call it $m$ -- cannot be attained in the open
piece $X_1 \cup X_2$ within the boundary $\partial D_h$, where
\begin{align*}
X_1 &= \{ \vert z^2-w^2 \vert =1 \} \cap \{ \vert zw \vert <2\},\\
X_2 &= \{ \vert z^2-w^2 \vert <1 \} \cap \{\vert zw \vert =2\}.
\end{align*}
To see this, suppose $q=(q_1,q_2)$ is a point in $\partial D_h$ where $m$ is attained by $\vert \varphi \vert$. Let
us only argue as to why $q$ cannot lie in $X_1$; the arguments for $q \not \in X_2$ are similar, therefore 
omitted. To argue by contradiction supposing $q \in X_1$, we would firstly have $q_1^2 - q_2^2 = e^{i \theta}$
for some $\theta \in (-\pi,\pi]$. We collect together all points in $X_1$ with this feature i.e., consider $\tilde{X}_1$ given by
\[
\tilde{X}_1 = \{ (z,w) \in X_1 \; : \; z^2 - w^2 = e^{i \theta} \}.
\]
An immediate application of the holomorphic implicit function 
theorem ensures that this is smooth i.e., $\tilde{X}_1$ is a Riemann surface (noting that 
the gradient of $z^2 - w^2$ is nowhere vanishing on this level 
set). Thus, we are assured of an analytic disc $\psi: \Delta \to \partial D_h$ whose image is actually contained in 
$\tilde{X}_1 \ni q$; we may assume $\psi(0)=q$. Recalling what we have supposed about $q$, we note
that the holomorphic function $h = \varphi \circ \psi$ on $\Delta$ assumes its maximum modulus 
at the origin, thereby reduces to a constant. This means that $\varphi$ is constant on 
$\psi(\Delta)$, thereby on $\tilde{X}_1$. But then it is not difficult to check that the 
projection of $\tilde{X}_1$ onto $\langle p \rangle$ is a non-trivial open subset of $\langle p \rangle$. This
means, owing to its linearity, that $\varphi$ must be constant on all of $\mathbb{C}^2$ which is
manifestly false. This concludes the proof of the observation that the maximum of $\vert \varphi \vert$
on $\partial D_h$ 
is attained within $H_1 \cap H_2$, which as we shall presently see is a real analytic submanifold of $\mathbb{C}^2$.
In-particular therefore, one may apply the method of Lagrange multipliers to determine this maximum (of 
$\vert \varphi \vert$ on this submanifold). However, 
for our purposes the exact maximum is not necessary and moreover, we shall adopt a more elementary approach, not
because it is elementary but owing rather to its {\it relative} quickness in directly leading to 
what is required for our goal here. 
To go about further solving this optimization problem, we rephrase it for convenience by `squaring it', as follows: maximize
\[ 
f(z,w) = \vert \varphi \vert^2= |\overline{p_1}z + \overline{p_2}w|^2
\]
subject to the simultaneous constraints:
\begin{equation} \label{constr-1}
\begin{cases}
g_1(z,w) := \vert zw \vert^2 \; = \; 4 \\
g_2(z,w):=\vert z^2 - w^2 \vert^2 \; = \; 1.  
\end{cases}
\end{equation}
Despite the squares, this is of-course {\it equivalent} to the previous maximization problem.
A first convenience of the squares here is that the (real) gradients $\nabla g_1$ and $\nabla g_2$
are easily checked to be nowhere vanishing along the level sets of $g_1, g_2$ respectively as mentioned above.
Consequently, it follows by the implicit function theorem that $H_1 \cap H_2$ is a smooth (real analytic) submanifold.
Now, getting to the aforementioned optimization problem, we shall work in polar coordinates and 
so write the variables $z = re^{i\alpha}, w = se^{i\beta}$ where we may let 
$\alpha, \beta$ vary in $\mathbb{R}$ rather than on a $2 \pi$-length interval 
such as $(-\pi,\pi]$, for convenience in the 
sequel; let us also note in passing that $r,s$ are also variables, taking values in non-negative reals, but 
each of the constraints at (\ref{constr-1})
curtail the possible values of all these variables, as we shall detail below. Also write 
$\overline{p_1} = r_1e^{i\theta_1} ,\overline{p_2} = r_2e^{i\theta_2}$; here ofcourse,
$\theta_1,\theta_2 \in (-\pi,\pi]$ and $r_1,r_2$ are fixed numbers which also satisfy 
both the above constraints at (\ref{constr-1}).
The first advantage of polar coordinates is of-course the dropping of absolute values in the 
constraint $\vert zw \vert=2$ to write it as $rs = 2$, which we use below in the form $r^2s^2=4$. 
The other constraint (\ref{constr-1}) however, does
not yield to such simple re-expressions; but nevertheless is good enough for 
our problem at hand to be solved. To this end, we write it
in polar form: $|r^2e^{2i\alpha}-s^2e^{2i\beta}|^2 = 1$. As $r^2s^2 =4$, we may replace the
second radial parameter $s$ by the first one $r$, to write it in various forms as:
\begin{align*}
\vert r^2e^{2i\alpha}-\frac{4}{r^2}e^{2i\beta} \vert^2 = 1,\\
\Longleftrightarrow 
(r^2)^2 + (\frac{4}{r^2})^2 -8Re(e^{2i(\beta - \alpha)}) = 1,\\
\Longleftrightarrow 
\left(r^2+\frac{4}{r^2}\right)^2 -8 -8 \cos(2(\beta - \alpha)) = 1.
\end{align*}
Let us record the last equation as:
\begin{equation}\label{max}
\left(r^2+\frac{4}{r^2}\right)^2 = 1+8(1+ \cos(2(\beta -\alpha))).
\end{equation}
This only finishes expressing the constraints in polar coordinates. To 
get to the main objective here of maximizing $f$, we
likewise express it also in polar form: 
\[
f(z,w)=|\overline{p_1}z+\overline{p_2}w|^2 = |r_1e^{i\theta_1}re^{i\alpha} + r_2e^{i\theta_2}se^{i\beta} |^2.
\]
Since $rs = 2$ and likewise $r_1r_2 = 2$ as well, the above equation can be written as
\begin{eqnarray*}
\left|r_1re^{i(\alpha+\theta_1)} + (\frac{2}{r_1})(\frac{2}{r})e^{i(\theta_2 + \beta)}\right|^2
&=& r_1^2r^2 + \frac{16}{r_1^2r^2} +8Re(e^{i(\alpha+\theta_1-\theta_2-\beta)})\\
&=& r_1^2r^2 + \frac{16}{r_1^2r^2} + 8\cos(\alpha+\theta_1-\theta_2-\beta)\\
&=& (r_1^2+\frac{4}{r_1^2})(r^2+\frac{4}{r^2}) -4\left(\frac{r^2}{r_1^2}+ \frac{r_1^2}{r^2}\right) +\\
& & 8\cos(\alpha + \theta_1 - \theta_2-\beta)\\
&=& (r_1^2+\frac{4}{r_1^2})(\sqrt{1+8(1+\cos(2(\beta - \alpha)))}) -4\left(\frac{r^2}{r_1^2}+ \frac{r_1^2}{r^2}\right)\\
& & +8\cos((\beta - \alpha) - (\theta_1 - \theta_2)).
\end{eqnarray*}
If we introduce the notations: $\theta = \beta - \alpha$ and $\theta_0 = \theta_1-\theta_2$, then
our optimization problem now fully expressed in polar coordinates, consists of maximizing
\begin{equation}\label{f} 
\tilde{f}(r,\theta) = r_1^2r^2 + \frac{16}{r_1^2r^2}  +8\cos(\theta - \theta_0)
\end{equation}
subject to the constraint
\[
\left(r^2+\frac{4}{r^2}\right)^2 = 1+8(1+cos(2\theta)).
\]
Thus our original double constraints problem has now got reduced to a single constraint one.
To proceed further, let $Q(\theta):= \sqrt{1+8(1+\cos2\theta)}$. 
We can write $Q(\theta) = \sqrt{1+16\cos^2 \theta}$, whereby the just mentioned constraint can be rewritten as 
\begin{equation}\label{Q10}
r^2+\frac{4}{r^2} = Q(\theta) = \sqrt{1+16\cos^2 \theta}.
\end{equation}
It must be noted that this contains within it implicitly, certain bounds on $\theta$: a
simple application of the inequality about arithmetic and geometric means, the left hand side is
bounded below by $4$. It follows thereby that
\begin{equation} \label{Bd-on-theta}
\cos^2 \theta \geq \frac{4^2-1}{16} \; \Longleftrightarrow \; \sin^2 \theta \leq \frac{1}{16}.
\end{equation}
Let us note in passing that these bounds are indeed satisfied for $\theta=\theta_0$, due to the trivial reason 
that $p$ is already given to lie in $H_1 \cap H_2$ i.e., the coordinates of $p$ satisfy the constraints (\ref{constr-1})
and thereby all its consequences derived above. Another helpful remark to keep in the background for the 
sequel is the description of  
$H:=H_1 \cap H_2$ in polar coordinates as:
\[
H:=H_1 \cap H_2 = \Big\{ \left( (r, \alpha), (s, \beta) \right) \; : \; 
rs=2, \; r^2 + s^2 = Q(\beta - \alpha)  \Big\}.
\]
Before proceeding ahead, we would also like to record a
special case of the foregoing equation (\ref{Q10}), for later reference; namely taking $\theta = \theta_0 $ and $r = r_1$,
we have
\begin{equation}\label{Q1}
r_1^2+\frac{4}{r_1^2} = Q(\theta_0).
\end{equation}
Next we determine $r_1$ and more generally $r$ as a function of $\theta$, when $r, \theta$ are to 
vary in manner satisfying the constraints at (\ref{constr-1}). To do this
efficiently from the above data, it is convenient to consider the quadratic equation whose roots are $r^2$ and $4/r^2$ namely,
$x^2-Q(\theta)x+4 = 0$ -- this follows immediately from the fact that the sum and product of its roots is what is given above.
On the other hand by the standard formula for the roots of a quadratic, we may also express the roots as 
$x = (Q \pm \sqrt{Q^2-16})/2$. From (\ref{Q10}), we get
\[
x = \frac{Q \pm \sqrt{1+16\cos^2\theta-16}}{2} = \frac{Q(\theta) \pm \sqrt{1-16\sin^2\theta}}{2}.
\] 
But then we already know that $r^2$ and $\frac{4}{r^2}$ are the roots of the aforementioned quadratic. 
The desired expression of $r$ as a function of $\theta$, thereby follows; we record in a 
form suited for direct use in the subsequent analysis.
\begin{equation} \label{r^2-(1/r^2)-Expressions}
r^2 = \frac{1}{2}\left[\sqrt{1+16\cos^2\theta}+ \sqrt{1-16\sin^2\theta}\right],~~ \frac{4}{r^2} 
=  \frac{1}{2}\left[\sqrt{1+16\cos^2\theta}- \sqrt{1-16\sin^2\theta}\right].
\end{equation}
Let us remark in passing here that the factor $\sqrt{1-16\sin^2\theta}$ is a well-defined real number owing to the 
bounds on $\theta$ as noted in (\ref{Bd-on-theta}).
We also need to record for later reference a few particular relationships following from the foregoing pair
of expressions; firstly the difference:
\[ 
r^2-\frac{4}{r^2} = \sqrt{1-16\sin^2\theta}. 
\]
Further in particular then, this equation holds for $\theta = \theta_0 $ and $r = r_1$, in which case this reads:
\begin{equation}\label{Q2}
r_1^2-\frac{4}{r_1^2} = \sqrt{1-16\sin^2\theta_0}.
\end{equation}
Next, substituting the expressions for $r^2, \frac{4}{r^2}$ from (\ref{r^2-(1/r^2)-Expressions}) into (\ref{f}), 
we note that $\tilde{f}(r,\theta)$ {\it restricted to} $H_1 \cap H_2$ reduces to a function of $\theta$ alone,
which we call $h$:
\begin{multline}
h(\theta) \; = \tilde{f}_{\vert_{H_1 \cap H_2}}(r,\theta) = \; \frac{1}{2}\Big(r_1^2(\sqrt{1+16\cos^2\theta}+ \sqrt{1-16\sin^2\theta}) \\
+ \frac{4}{r_1^2}(\sqrt{1+16\cos^2\theta} - \sqrt{1-16\sin^2\theta})\Big) +8\cos(\theta -\theta_0). 
\end{multline}
After some simplifications, we get 
\begin{equation}\label{h-exp-simpl}
h(\theta) = \frac{1}{2}\left[ \left(r_1^2+\frac{4}{r_1^2} \right)(\sqrt{1+16\cos^2\theta}) 
 + \left(r_1^2-\frac{4}{r_1^2} \right)(\sqrt{1-16\sin^2\theta})\right] +8\cos(\theta -\theta_0).
\end{equation}
This is the simplified expression of the single variable function $h(\theta)$ that we shall presently 
put to use. As preliminary observations about $h$ that can be read-off from this expression 
would perhaps be that it is a periodic function,
indeed $2 \pi$-periodic and smooth (in-fact real analytic) on the entire real line but alas, this is 
{\it not} true: the occurrence of the term $1- 16\sin^2\theta$ under the square root forbids it
from being real-valued for all real $\theta$. Indeed, it must be remembered that it was for this reason
that we have throughout the bound on $\theta$ stipulated in (\ref{Bd-on-theta}). Actually, it
is possible to circumvent this issue simply by allowing $h$ to take complex values and fixing a 
branch of the square root on the simply connected domain $\mathbb{C} \setminus (i \mathbb{R}^-)$ where
$i \mathbb{R}^-$ is the negative imaginary axis, for instance. While this may ensures well-definedness and even 
continuity of $h$ on the real line (which will render boundedness owing to periodicity), it 
is worth noting that differentiability at those values of $\theta$ at which $\sin^2 \theta=1/16$ persists to be 
invalid. Thus, we shall be content in having the codomain of $h$ restricted to real values; 
interest in the differentiability stems from the desire to apply standard calculus techniques 
of locating maxima of the function $h$ whose points of global maximum is what we wish to examine
for our ultimate goal about retracts of $D_h$, as must not be lost sight of. We then have to 
restrict the domain of the function $h$, as mentioned above and we may do so by taking it to be its natural domain in the 
real line, namely the set of all those real numbers $\theta$ for which $h(\theta)$ also lies in the real line
(and not a non-real complex number). Owing to the `essential' periodicity of $h$ as indicated above,
we may restrict the domain of $h$ to the union of three intervals $I=I^{-} \cup I_0 \cup I^{+}$ 
where these intervals are defined as follows. First we define the middle one, namely
the interval:
\[
I_0 = \text{ connected component containing the origin in $\mathbb{R}$ of its subset } 
\Big\{ \theta \in \mathbb{R} \; : \; \sin^2 \theta \leq \frac{1}{16} \Big\}.
\]
More concisely put $I_0 = [-\epsilon_0, \epsilon_0]$ where $\epsilon_0$ stands for the 
smallest positive number satisfying $\sin^2 \epsilon_0=1/16$. The other pair of 
intervals are defined as: $I^{-}=[-\pi,-\pi + \epsilon_0]$ and $I^{+}=[\pi -\epsilon_0, \pi]$.
The boundedness of $h$ then follows by the continuity of $h$ on the compact set $I$, as already alluded to; but it 
is also possible to lay down a bound directly from the explicit expression defining $h$ in (\ref{h-exp-simpl}) which estimates
are of much use in the sequel.
\begin{equation}\label{h-bound}
\vert h(\theta) \vert \leq \frac{1}{2}\left[ \left(r_1^2+\frac{4}{r_1^2} \right) \sqrt{17} 
+ \left(r_1^2-\frac{4}{r_1^2} \right) \right] + 8.
\end{equation}
This bound of-course depends on $r_1$ which depends on $p$ but is of-course independent of $\theta$ (so, suffices
to finish the verification that $h$ is bounded by rendering an explicit bound). In this connection, it 
must more importantly be noted that the aforementioned dependence on $r_1$ means that the function $h$ depends 
the parameter $\theta_0$ and perhaps must be written $h_{\theta_0}$; of-course we drop the subscript to
prevent overloading of notations, unless emphasis enhances clarity.
Better estimates on the maximum value of $h$ will be derived but only to the extent that is 
needed for our goal. Indeed, let us recall that our goal does not really require us to determine the maximum
but rather only to ascertain whether $\theta_0$ is a point at which the maximum is 
attained. This is ofcourse just a problem of single variable calculus if
we could solve this just by applying the second derivative test to locate first the local maxima and then 
glean out of them the desired global maximum. To this end, we compute the derivative
\begin{eqnarray*}
h'(\theta) &=& \frac{1}{2}\left[(r_1^2+\frac{4}{r_1^2})\left(\frac{-32\sin\theta \cos \theta}{2\sqrt{1+16\cos^2\theta}}\right) 
 + (r_1^2-\frac{4}{r_1^2})\left(\frac{-32\sin\theta \cos \theta}{2\sqrt{1-16\sin^2\theta}}\right) \right]\\
           & & -8\sin(\theta -\theta_0)\\
               &=& -8\sin \theta \cos \theta\left[(r_1^2+\frac{4}{r_1^2})\left((\frac{1}{\sqrt{1+16\cos^2\theta}}\right)  + 
               (r_1^2-\frac{4}{r_1^2})\left(\frac{1}{\sqrt{1-16\sin^2\theta}}\right) \right]\\
               & &-8\sin(\theta -\theta_0)
\end{eqnarray*}
From (\ref{Q1}) and (\ref{Q2}),  
\begin{equation}\label{Q3}
h'(\theta_0) = -16\sin \theta_0\cos\theta_0 = -8\sin2\theta_0.
\end{equation}
We proceed next to compute the second order derivative:
\begin{multline*}
    \frac{-h''(\theta)} {8}    = \sin \theta \cos \theta \left[\left(r_1^2+\frac{4}{r_1^2}\right)\left(\frac{32\sin\theta \cos \theta}{2\sqrt{1+16\cos^2\theta}(1+16\cos^2\theta)}\right)  \right.\\
+ \left.  \left(r_1^2-\frac{4}{r_1^2}\right)\left(\frac{32\sin \theta \cos \theta}{2\sqrt{1-16\sin^2\theta}(1-16\sin^2\theta)}\right) \right] + \\
\left[\left(r_1^2+\frac{4}{r_1^2}\right)\left(\frac{1}{\sqrt{1+16\cos^2\theta}}\right)  + \left(r_1^2-\frac{4}{r_1^2}\right)\left(\frac{1}{\sqrt{1-16\sin^2\theta}}\right) \right]\left(\cos^2\theta -\sin^2\theta\right)  + \cos(\theta -\theta_0).
\end{multline*}
We simplify this a bit, as
\begin{multline*}
\frac{-h''(\theta)} {8}  = 16\sin^2 \theta \cos^2 \theta \left[\left(r_1^2+\frac{4}{r_1^2}\right)\left(\frac{1}{\sqrt{1+16\cos^2\theta}(1+16\cos^2\theta)}\right) + \right.\\
\left.  \left(r_1^2-\frac{4}{r_1^2}\right)\left(\frac{1}{\sqrt{1-16\sin^2\theta}(1-16\sin^2\theta)}\right)\right] + \\
\left[\left(r_1^2+\frac{4}{r_1^2}\right)\left(\frac{1}{\sqrt{1+16\cos^2\theta}}\right)  + \left(r_1^2-\frac{4}{r_1^2}\right)\left(\frac{1}{\sqrt{1-16\sin^2\theta}}\right) \right]\left(\cos^2\theta -\sin^2\theta\right)  + \cos(\theta -\theta_0).
\end{multline*}
\noindent Next we put $\theta = \theta_0$ in the above equation, in which case it simplifies considerably; more 
importantly, we do this only because $\theta_0$ is the point of focus for us, as already mentioned.  From (\ref{Q1}) and (\ref{Q2}), we get 
\[
\frac{-h''(\theta_0)} {8} = 16 \sin^2\theta_0 \cos^2\theta_0 \left[ \frac{1}{1+16\cos^2\theta_0} + \frac{1}{1-16\sin^2\theta_0}\right] + 2\cos2\theta_0 + 1
\]
\begin{equation}\label{Q4}
\frac{-h''(\theta_0)} {8} = 4\sin^2 2\theta_0 \left[ \frac{1}{1+16\cos^2\theta_0} + \frac{1}{1-16\sin^2\theta_0}\right] + 2\cos2\theta_0 + 1
\end{equation}
to not lose sight of our main goal in the midst of computations, we 
keep this aside for a moment, and observe that our main goal translates here to checking only whether: $\theta_0$ is a point
of global maximum for $h$ or not. This is due to the following observation: if at all $p$ is
a point of maximum for $f(z,w)=\tilde{f}(r, \theta)$, then $\theta_0$ must be a maximum point for $h$.
Now, owing to its smoothness on the open interval $I^0$ (the interior of $I$), a point of global maximum for $h$ on $I^0$
if any, is not
only a point of local maximum but also a point about which $h$ is smooth and so, the standard derivative tests of calculus are 
applicable. So, if at all $\theta_0$ is a point of maximum for $h$ on the compact 
interval $I$, we must have $h'(\theta_0) = 0$ or that $\theta_0$ is one of the two boundary points of $I$.
But then we infer from (\ref{Q3}) that: $h'(\theta_0) = 0$ implies that $\theta_0 = 0, \pm \pi/2 \text{ or } \pi$.
Thus as soon as $\theta_0$ is not one among the 
very few possibilities: $0$, $\pm \pi/2$, $\pi$, or the remaining end-points of the intervals occurring in the 
definition of $I$, it cannot 
even be a point of local extremum for $h$. Moreover, the pair of possibilities $\theta_0=\pm \pi/2$
are ruled out simply because $h(\theta_0)$ is not real and therefore not of relevance for our 
consideration of the function $h$.\\


\noindent We presently proceed to dispose off the 
cases $\theta_0 = \pm \epsilon_0$ 
where $\epsilon_0$ be the smallest
positive number such that $\sin^2\epsilon_0 = 1/16$ which means that $\pm \epsilon_0$ are the end-points
of the interval $I$;
by showing that $h_{\theta_0}(\theta)$ does not attain its (global) maximum on $I$
at $\theta=\theta_0=\epsilon_0$ -- the argument for the other case $-\theta_0$ being similar is
skipped. Now when $\theta_0 = \epsilon_0$, the expression of $h(\theta)$ simplifies a bit and reads:
\[ 
h(\theta) = 2\sqrt{1+16\cos^2\theta} + 8\cos(\theta-\epsilon_0). 
\]
Note that $h(\epsilon_0) = 2\sqrt{1+16\cos^2 \epsilon_0} + 8$. Recalling that $\cos^2 \epsilon_0 = 15/16$,
we get $h(\epsilon_0)= 16$. To show that this is not the maximum value of $h$, we show that the value at 
$\epsilon_0/2$ is strictly bigger i.e., we claim $ h(\frac{\epsilon_0}{2}) > 16$. 
To prove this, we write this claim in various equivalent forms in order to unravel it and reach an inequality that 
is readily checked to be true, beginning with explicitly writing $h(\frac{\epsilon_0}{2}) > 16$:
\[
 2\sqrt{1+16\cos^2(\frac{\epsilon_0}{2})} + 8\cos\left(\frac{\epsilon_0}{2}\right) > 16  \iff 
\sqrt{1+16\cos^2(\frac{\epsilon_0}{2})}  
 > 4\left(2- \cos\left(\frac{\epsilon_0}{2}\right)\right)
\]
which is equivalent to
\[
1+16\cos^2\left(\frac{\epsilon_0}{2}\right)
> 16\left(2- \cos\left(\frac{\epsilon_0}{2}\right)\right)^2. 
\]
Expanding the square on the right, it should be noted that the term $16\cos^2\left(\frac{\epsilon_0}{2}\right)$ actually 
cancels, leading to a reduced but of-course equivalent form:
\[
 \cos\left(\frac{\epsilon_0}{2}\right) > 1- \frac{1}{64},
\]
which uses the fact that the quantity of the left is positive owing to the definition of $\epsilon_0$.
Its precise value is not at all difficult to compute; indeed  the 
half-angle formula $\cos^2\theta = (1+\cos2\theta)/2$, leads to this 
value being determined as $\cos(\epsilon_0/2) = \sqrt{ \left(1 + \sqrt{15/16} \right)/2}$. 
So the last inequality (thereby all its equivalent forms prior to it) is equivalent to:
\[
\frac{\sqrt{15}}{4} + 1 > 2\left(1-\frac{1}{64}\right)^2 \iff \sqrt{15} > \frac{1921}{8^3} \iff 8^6 \times 15 > (1921)^2.
\]
As the last inequality is readily checked to be true indeed, this finishes the proof that $h(\epsilon_0/2) > h(\epsilon_0)$, 
thereby that $h_{\theta_0}$ does not attain its global maximum at $\epsilon_0$, in the case when $\theta_0 = \epsilon_0$.
The case $\theta_0=\pi -\epsilon_0$ and $\theta_0=-\pi + \epsilon_0$ can 
also be disposed off by similar arguments -- for example, it can be shown that $h(\pi - \epsilon_0/2) > h(\pi - \epsilon_0)$ 
-- which ascertain that $h$ does not attain (global) maximum at these `end'-points.\\

\noindent We now note that the only possible cases left are: $\theta_0 = 0$ and $\theta_0 = \pi$; 
in both cases, we see from equation (\ref{Q4}) that $h''(\theta_0)$ is strictly negative. Of-course, despite 
being the only left cases for $\theta_0$, this still only establishes that $\theta_0$ is a point of local maximum
in such cases. We now 
prove that these cases correspond to points of global maximum. 
First suppose $\theta_0=0$: this case means that the arguments in the polar coordinates
for $p=(p_1,p_2)$ are equal ($\theta_1=\theta_2$), in which case we may write: $p_1 = r_1e^{-it_0}, p_2 = r_2e^{-it_0}$, 
where $t_0 = \theta_1 = \theta_2$, wherein $r_1,r_2$ satisfy the polar analogue of the basic constraints (\ref{constr-1}) 
namely, (\ref{Q1}) .
Recall that this means that $r_1,r_2$ satisfy $r_1^2 - r_2^2 = 1$ and $r_1r_2 = 2$. From this, we 
deduce the constraint satisfied by $r_1$ namely, $r_1^2 - \frac{4}{r_1^2} = 1$ which may be 
rewritten as a quadratic equation in $r_1^2$: $r_1^4 - r_1^2 -4 = 0$. 
Solving this gives $r_1^2 = (1+\sqrt{17})/2$ and therefore, $r_2^2 = 8/(1 + \sqrt{17})$. So, we have 
\begin{equation} \label{l2-norm-p-spl}
|p_1|^2 + |p_2|^2 = r_1^2 + r_2^2 = \frac{1+\sqrt{17}}{2} + \frac{8}{1+\sqrt{17}}  = \sqrt{17}.
 \end{equation}
We shall furnish the details of the conclusion of this case. But before that
it is convenient to also deal with the case $\theta_0 = \pi$ simultaneously now, as the arguments are similar to the above case.
So, in-case $\theta_0 = \pi$
we have $\theta_2 = \theta_1 - \pi$ just by definition of $\theta_0$. The coordinates of $p$ may be 
written, in such a case, as
 $p_1 = r_1e^{-i\theta_1}, p_2 = -r_2e^{-i\theta_1}$. Note then that
 $Q(\theta_0) = \sqrt{1 + 16 \cos^2 \theta_0} = \sqrt{17}$.
Substituting $\theta_0 = \pi$ in (\ref{Q1}), we obtain $|p_1|^2 + |p_2|^2 = \sqrt{17}$ again. The 
significance of this number here is that regardless of the value of $\theta_0$, 
the bound $\vert h_{\theta_0}(\theta) \vert \leq 17$ always holds; this follows from (\ref{h-bound}), the estimate 
whose usefulness was promised there. Indeed, employing that estimate alongwith (\ref{l2-norm-p-spl})
we get $|h(\theta)| \leq \frac{1}{2}[\sqrt{17}\cdot \sqrt{17} + 1] + 8 = 17 $.
That is to say, $h$ is bounded in magnitude by $17$, which is equal to $(|p_1|^2+|p_2|^2)^2$. 
Hence, $\theta_0 = 0$ and $\theta_0 = \pi$ is a point of global maximum.  What this translates to about
our original function $f (z,w)=|\overline{p_1}z + \overline{p_2}w|^2$ from around (\ref{constr-1}) is
that: its maximum value on $H_1 \cap H_2$ and in-fact by our earlier arguments on the closure of our 
entire domain $D_h$ is $(|p_1|^2+|p_2|^2)^2$. Indeed, recalling the 
discussions and observations around (\ref{constr-1}) and (\ref{pt-in-D_h}), this is equivalent to the $\vert \lambda(z,w) \vert<1$
where $\lambda$ is the function defined prior to (\ref{pt-in-D_h}) and involved in the two
inequalities determining whether $L(z,w)$ lies in $D_h$. Indeed, the just-mentioned bound on 
$\lambda$ leads immediately to the first of these inequalities: $|(L_2(z,w))^2-(L_1(z,w))^2| < 1$; the other 
inequality at (\ref{pt-in-D_h}) is also now immediate as 
\[
\left|L_1(z,w)L_2(z,w)\right| = |(\lambda(z,w))^2p_1p_2| = |\lambda(z,w)|^2 < 1.
\]
Hence, we conclude that $L$ maps $D_h$ onto $D_h \cap {\rm span}\{p\}$,
thereby finishing the proof that $D_h \cap {\rm span}\{p\}$ is indeed a linear retract of $D_h$ for points $p$ as above.\\
\qed
\end{ex}

\noindent As the details of the foregoing example have been rather lengthy, let us summarize and record it
as a proposition.

\begin{prop} \label{D_h-retracts}
Let $D_h$ be the domain in $\mathbb{C}^2$ given by 
\[
D_h = \{ (z,w) \in \mathbb{C}^2 \; : \; \vert z^2 - w^2 \vert<1,\; \vert zw \vert<2 \}.
\]
Then: firstly, every holomorphic retract through the origin is linear. More specifically and precisely, there
are no retracts in the directions of $p \in \partial D_h \setminus H$,
where $H$ is the real analytic manifold cut out by $\vert z^2 - w^2 \vert=1$ and $\vert zw \vert=2$. 
As for points $p \in H$, 
there is a holomorphic retract in the direction of $p$ if and only if it is of the form
$(e^{it_1}r_1, e^{it_2} r_2)$ where $r_1,r_2$ are positive real numbers satisfying $r_1^2 - r_2^2=\pm 1$ and 
$r_1r_2=2$  while $t_1,t_2 \in \mathbb{R}$ with $\vert t_2 - t_1 \vert$ being either $0$ or $\pi$
(i.e., the arguments of $p_1,p_2$ are either same or differ by $\pi$); and every such retract is linear.
\end{prop}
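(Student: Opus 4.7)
The plan is to split the proposition into three parts: linearity of all retracts through the origin, non-existence of retracts along directions $p\in\partial D_h\setminus H$, and a precise determination of the admissible directions within $H$. For the \textbf{linearity assertion}, I would first check that $D_h$ has $\mathbb{C}$-extremal boundary. Writing $D_h=D_1\cap D_2$ with $D_1=\{|z^2-w^2|<1\}$ and $D_2=\{|zw|<2\}$, one verifies by a direct maximum-modulus argument (analogous to the one used for $B_{\ell^1}$) that each $D_j$ has weakly $\mathbb{C}$-extremal boundary; Proposition \ref{R-extrml} then passes this property to the intersection $D_h$. Since $\dim D_h=2$, every non-trivial retract through $0$ is one-dimensional, and Theorem \ref{Vesntn-improved} forces linearity.

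For \textbf{absence of retracts off $H$}, note that a point $p\in\partial D_h\setminus H$ lies on exactly one of the hypersurfaces $\{|z^2-w^2|=1\}$ or $\{|zw|=2\}$, and locally $D_h$ coincides with the corresponding $D_j$. Since the Minkowski functionals $|z^2-w^2|^{1/2}$ and $|zw|^{1/2}/\sqrt{2}$ fail to admit any supporting $\mathbb{C}$-linear functional at any such boundary point, $D_h$ is non-convex at $p$, and Corollary \ref{No_1dim_Retrct} rules out a one-dimensional retract through $0$ in the direction of $p$.

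For \textbf{the positive direction}, given $p\in H$ I would consider the candidate linear projection $L(z,w)=\lambda(z,w)\,p$ where $\lambda(z,w)=(\overline{p_1}z+\overline{p_2}w)/(|p_1|^2+|p_2|^2)$. Because $|p_1^2-p_2^2|=1$ and $|p_1p_2|=2$, the conditions $|(L_2)^2-(L_1)^2|<1$ and $|L_1L_2|<2$ both reduce to $|\lambda(z,w)|<1$ throughout $D_h$. Thus $L$ is a retraction of $D_h$ onto $\operatorname{span}(p)\cap D_h$ if and only if the maximum of $\varphi(z,w)=|\overline{p_1}z+\overline{p_2}w|^2$ on $\overline{D_h}$ equals $(|p_1|^2+|p_2|^2)^2$, and the latter is attained (necessarily at $p$). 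Applying maximum modulus along the analytic discs contained in the open pieces $\{|z^2-w^2|=1,\ |zw|<2\}$ and $\{|z^2-w^2|<1,\ |zw|=2\}$, I would localize the extremum of $\varphi$ to $H$ itself; then in polar coordinates $z=re^{i\alpha}$, $w=se^{i\beta}$ the constraints $rs=2$ and $|r^2e^{2i\alpha}-s^2e^{2i\beta}|=1$ allow one to solve for $r^2$ in terms of $\theta=\beta-\alpha$ and thereby reduce $\varphi|_H$ to a single real variable function $h(\theta)$ on the compact interval $I$ where $\sin^2\theta\le 1/16$.

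The \textbf{main obstacle} is this one-variable optimization. Direct differentiation gives $h'(\theta_0)=-8\sin 2\theta_0$, so interior critical points of $h$ occur only at $\theta_0\in\{0,\pm\pi/2,\pi\}$; the values $\pm\pi/2$ lie outside $I$ and are irrelevant, while the end-points of $I$ must be ruled out by an explicit comparison such as $h(\epsilon_0/2)>h(\epsilon_0)$, with $\epsilon_0$ the smallest positive solution of $\sin^2\epsilon_0=1/16$ -- this reduces to the elementary inequality $8^6\cdot 15>1921^2$. For $\theta_0\in\{0,\pi\}$ one solves the polar constraints to obtain $|p_1|^2+|p_2|^2=\sqrt{17}$ and then reads off from the explicit formula for $h$ the uniform bound $|h(\theta)|\le 17=(\sqrt{17})^2$, so these are indeed global maxima realized at $p$. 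Unwinding, $\theta_0=0$ means the arguments of $p_1,p_2$ coincide and $\theta_0=\pi$ means they differ by $\pi$; the additional constraints $r_1r_2=2$ and $r_1^2-r_2^2=\pm1$ come directly from $|p_1p_2|=2$ and $|p_1^2-p_2^2|=1$, yielding precisely the admissible family of directions stated in the proposition.
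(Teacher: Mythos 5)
Your proposal reproduces the paper's own argument (Example \ref{Main-Example}) essentially step for step: $\mathbb{C}$-extremality of $D_h$ via Proposition \ref{R-extrml} plus Theorem \ref{Vesntn-improved} for linearity, non-convexity plus Corollary \ref{No_1dim_Retrct} off $H$, and for $p\in H$ the same candidate projection, the same localization of the maximum of $|\overline{p_1}z+\overline{p_2}w|$ to $H_1\cap H_2$ by analytic discs, and the same reduction to the one-variable function $h(\theta)$ with $h'(\theta_0)=-8\sin 2\theta_0$, the endpoint comparison $h(\epsilon_0/2)>h(\epsilon_0)$ via $8^6\cdot 15>1921^2$, and the bound $|h|\le 17=(\sqrt{17})^2$ at $\theta_0\in\{0,\pi\}$. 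This is correct and is the same approach as the paper's.
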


\noindent Let us remark that there are of-course points $p$ in $H$ other than those of the 
form mentioned in the theorem and note more importantly, that those points are 
precisely the ones in the boundary of the domain $D_h$ which are farthest from the origin.
Let us briefly justify these claims. For the former, note that for any $\alpha_0$ 
satisfying $0 \leq \alpha_0 \leq 1/16$, we can 
find $p = (p_1,p_2) = \left( re^{i\theta_1}, (2/r) e^{i\theta_2}\right) \in H_1 \cap H_2$
such that $\sin^2\theta_0 = \alpha_0$, 
where $\theta_0 = \theta_1 - \theta_2$. Now choose $\theta_0 \in I$ such that $\sin^2\theta_0 = \alpha_0$. 
Write $\theta_0 = \theta_1 - \theta_2$ for some $\theta_1, \theta_2 \in [0,2\pi)$. 
Using this $\theta_0$, define 
\[ 
r^2 : = \frac{1}{2}\left[\sqrt{1+16\cos^2\theta_0}+ \sqrt{1-16\sin^2\theta_0}\right].
\]
From (\ref{max}), we get that $p = (re^{i\theta_1},\frac{2}{r}e^{i\theta_2})$ lies 
in the intersection of $H_1$ and $H_2$, finishing the proof of the first claim. As for the
second claim, note that these points 
are farthest points from origin because
we obtain the following relation from (\ref{max}), 
\[
    |p_1|^2 + |p_2|^2 = r^2 + \frac{4}{r^2} = \sqrt{1+16\cos^2\theta_0}.
\]

\begin{rem}
The foregoing arguments -- again using proposition \ref{JJ-improved} at the outset to 
reduce considerations from the generality of holomorphic retracts to linear ones -- can be adapted to determine 
precisely the retracts through the origin of other `simpler' examples of 
non-convex (but pseudoconvex) bounded Reinhardt domains
such as $\mathbb{B} \cap H^2_{1/4}$ or $B_p \cap H^2_{1/4}$, where
$H^2_{1/4}=\{(z,w) \in \mathbb{C}^2 \; : \; \vert zw \vert <1/4\}$ and
where $B_p$ denotes the standard $\ell^p$-ball with $1 \leq p < \infty$. 
The case $p =\infty$ is different as in that case (i.e., for $\Delta^2 \cap H^2_{1/4}$)
there are non-linear retracts through the origin and therefore, discussed separately below. 
The reason we chose to deal with $D_h$ extensively is not because it fails to be Reinhardt
(though that is an added virtue) but 
rather because it fails to be non-convex in a maximal manner: note that 
there does {\it not} exist any bounded domain which is non-convex at all 
boundary points, as follows by standard considerations involving farthest point(s) from a fixed point in the 
interior and appealing to the Hahn -- Banach theorem; for a balanced domain the 
next best thing to ask for is one that is non-convex at an open dense subset of its 
boundary or, that the set of convexity points is of measure zero. Both of these conditions
are fulfilled by $D_h$. Despite its simplicity, one can still ask natural questions about the 
choice of $D_h$; without elaborating on them all, let us just mention one: why not
consider $D_q \cap H^2_{1/4}$ (where $D_q$ denotes the `$\ell_q$-ball' for $0<q <1$ as before), 
as this is also a maximally non-convex domain? Our 
answer to this is that though this is easier to analyse owing to it being Reinhardt,
it turns out that through every non-smooth point of $D_q \cap H^2_{1/4}$ there
is a linear retract and is a convex boundary point whereas this is not the case for $D_h$: not every
non-smooth point of $\partial D_h$ is a point of convexity, thereby is an even better example of 
a maximally non-convex domain. Moreover, through almost
every point $\partial D_h$, there passes analytic discs which lie within $\partial D_h$
making it at the other extreme compared to convex domains of finite type which are by far, well-studied.
Let us conclude this remark by finishing off what we began it with namely, retracts
of $B_p \cap H^2_{1/4}$ for $1\leq p < \infty$ which provide simple examples of balanced domains which are neither 
convex nor maximally non-convex i.e., lie somewhere in-between these extremes (the role of 
intersection with $H^2_{1/4}$ is mainly to create this intermediate possibility by destroying 
the convexity of $B_p$ and could be replaced with other domains). The retracts 
through the origin of such domains are precisely those points of $\partial B_p$ which remain 
after the intersection with $H^2_{1/4}$ is performed. Let us also draw attention to the
fact that the case $p=1$ is allowed whereas $p = \infty$ is not because there are plenty of non-linear 
retracts (through the origin) in the polydisc.
\end{rem}

\noindent Finally, let us mention in passing from this section to the next that while 
images of retracts under biholomorphic maps are retracts, this becomes false if we
more generally consider proper holomorphic maps and expect a direct extension.
That is to say, neither the backward nor forward images of retracts under proper (holomorphic)
maps need be retracts. We may use the examples dealt with here to establish this. While $D_h$ does not admit
any biholomorphic mappings to $\Delta^2$, it does admit proper holomorphic maps. Indeed, the 
map $F(z,w)=\left( z^2-w^2, zw/2 \right)$ renders a proper map of $D_h$ onto $\Delta^2$ fixing the origin;
note that the inverse image of the linear subspace in $\Delta^2$ spanned by $(1,1)$ is not
even a submanifold of $D_h$ leave alone a retract. As for forward images, this does not
help; however, we can give even simpler examples! Consider the map $(z,w) \mapsto (z^2,w)$; it
effects a proper map
from the ellipsoid $E_4=\{(z,w) \; : \; \vert z \vert^4 + \vert w \vert^2<1\}$ to 
the Euclidean ball $\mathbb{B}$. Note that the linear subspace given by $w=z$ is a 
retract of $E_4$ (as every one-dimensional linear subspace is). However, its image
$\{z=w^2\}$ being the graph of a non-linear map is not a retract of $\mathbb{B}^2$.\\

We now recall some of the basic definitions associated with analytic polyhedra; based on section 2.3 of \cite{Forst_Stein}, \cite{petrosyan} and 
\cite{Zim_generic}. Analytic polyhedra are obtained by looking at subsets of a complex manifold $X$ occurring  
as the intersection of sublevel sets of moduli of 
holomorphic functions on $X$. Several issues arise
here: firstly if $X$ is compact, the interior of such an 
intersection is empty; even if $X$ is non-compact and
a Stein manifold, the intersection can have several
connected components. 
We get around such issues by 
phrasing the definition as follows; we note
that there are some variations in the definitions followed by different authors and we are laying the one that we shall adopt, for this reason as well. A relatively compact domain in a complex manifold $X$ of the form
\begin{equation} \label{Apoly-defn}
D = \{x \in U: |f_j(x)| < 1~ \text{for}~ j= 1,2,\ldots,m\}
\end{equation}
where the $f_j$'s are holomorphic functions
on some open subset $U \supset \overline{D}$, is said to be an \textit{analytic polyhedron}. By definition
then, $\overline{D}$ is compact and connected. We shall refer to the $f_j$'s as defining functions, though it must be noted that such a system of functions are not at all uniquely determined (as is seen immediately by taking their powers for instance); nevertheless for convenience, we shall refer to them as `the' defining functions.
By adding more functions to the list of defining functions (the $f_j$'s above), we can ensure that $F = (f_1,\ldots,f_m) : U \to \mathbb{C}^m$ embeds $D$ properly into the polydisc $\Delta^m \subset \mathbb{C}^m$. If $D$ is a domain in $\mathbb{C}^N$, this follows by considering the map $G: U \to \mathbb{C}^{m+N}$ defined by $G(z) = (f_1(z),\ldots,f_m(z),z_1/R_1,\ldots,z_N/R_N)$, where $R_1,\ldots,R_N$ are positive reals such that $D \subset \Delta^m(0;R_1,\ldots,R_N)$, which exist
by virtue of the boundedness of $D$ here. Extending this result to the setting of complex manifolds requires additional work but the result is
well-known (see section 4.5.3 of \cite{Noguchi} and section 2.3 in \cite{Forst_Stein}). If the defining (analytic) functions can be chosen so that their number equals the dimension of the manifold i.e., $m=N$, then $D$ is called a {\it special analytic polyhedron}. \\ 

Our case of first interest is $U$ being an open set in $\mathbb{C}^N$. Note then that as per the above stated definition of analytic polyhedron (which is the one that we shall adopt), $D$ is bounded; though a couple of observations in the sequel do not 
require boundedness, most of our main results do. More importantly, it may very happen that the (topological) closure $\bar{D}$ may fall short of being equal to the `algebraic/order'-closure:
\[
\overline{D}^{alg} := \{ x \in U \; : \;  \vert f_j(x) \vert \leq 1 \text{ for } j=1,2, \ldots,m \}
\]
-- for concrete examples, see \cite{closure_anal_poly}.
We shall see that when $D$ is balanced, the closures in the topological and algebraic senses coincide. Towards this and other main results, we require a couple of more definitions.
If all the defining functions $f_j$'s can be chosen to be polynomials, then $D$ is called a  \textit{polynomial polyhedron}.  Next, in all cases, we have the following.  The boundary $\partial D$ of $D$ consists of the \textit{closed faces}
$\sigma_j := \{z \in \partial D: |f_j(z)| = 1\}$ intersecting along the level-$r$ \textit{ribs} $\sigma_{j_1,\ldots,j_r} := \sigma_{j_1} \cap \ldots \cap \sigma_{j_r}$. That is, the boundary $\partial D$ can be expressed as the union of the faces; this is
based on the fact that $\overline{D} \subset \{z \in U: |f_j(z)| \leq 1~ \text{for}~ j= 1,2,\ldots,m\}$ which implies that $\partial D \subset \cup_{j=1}^{m} \{z \in U: |f_j(z)| = 1 \text{ and } \vert f_k(z) \vert \leq 1,\text{ for all } k \neq j \} =: {\partial D}^{alg}$. Associated with the faces $\sigma_j$, we need to introduce the following sets:
\[
\hat{\sigma}_j : = \{z \in \mathbb{C}^N: |f_j(z)| =1\} = Z(|f_j(z)|^2-1),
\]
where $Z(|f_j(z)|^2-1)$ denotes the zero set of $|f_j(z)|^2-1$.
\[
\tilde{\sigma}_j : = \{z \in \partial D: |f_j(z)| =1,~|f_l(z)| < 1~\text{for}~1 \leq l \leq m~\text{with}~l \neq j\}
\]
Note that $\sigma_j = \hat{\sigma}_j \cap \partial D$. The reader may find the articles \cite{Singularity}, \cite{Real_Var},\cite{Strat} to provide helpful background information concerning the structure of real analytic varieties and semi-varieties, though we do not make explicit use of their general structure results here.\\

Following the definition from \cite{Zim_generic}, the analytic polyhedron $D$ is said to be \textit{generic (non-degenerate)} if we can choose the functions $f_1,\ldots,f_m$ so that whenever $\zeta \in \sigma_{j_1,\ldots,j_r}$, the vectors 
\begin{equation*}\label{gradient}
\nabla f_{j_1}(\zeta),\ldots,\nabla f_{j_r}(\zeta)
\end{equation*}
are $\mathbb{C}$-linearly independent. We shall refer to this as the genericity condition on the $f_j$'s. The symbol $\nabla$ denotes the complex gradient i.e., the complex derivative. Though this notation is more common for real differentiable functions. We use this here follows Zimmer \cite{Zim_generic}. The context will make it clear whether we use complex or real gradient.
\begin{rem}
\begin{enumerate}
\item Every linear retract of an analytic polyhedron is again an analytic polyhedron because if $D_L$ is a linear retract of $D$ for some linear subspace $L$ of $\mathbb{C}^N$ then $D_L = \{z \in U \cap L: |f_j(z)| < 1~\text{for}~ 1 \leq j \leq m\}$ is an analytic polyhedron on $L$. 
\item Note that every analytic polyhedron $D$ in any complex manifold $X$ is Stein. Needless to say when $X = \mathbb{C}^N$, $D$ is a pseudoconvex domain.
\item Every (bounded) analytic polyhedon $D$ is hyperconvex because if $|f_j(p)| = 1$ for some $j$ then $\varphi(z) := |f_j(z)|^2-1$ is a plurisubharmonic function on $D$ and $\lim_{D \ni z \to p}\varphi(z) = 0$ (this suffices to verify hyperconvexity by the remark \ref{hyp_cnvx_rem} in the preliminaries section \ref{Hypercnvx}).
\item Product of two analytic polyhedra is again an analytic polyhedron because if $D_1 := \{z \in U_1: |f_j(z)| < 1~ \text{for}~ j =1,\ldots,m_1\}, ~~D_2 : = \{w \in U_2: |g_k(z)| < 1~ \text{for}~ k =1,\ldots,m_2\}$, where $f_j$'s are holomorphic on an open set $U_1$ containing $\overline{D}_1$ and $g_k$'s are holomorphic on an open set $U_2$ containing $\overline{D}_2$ then $D_1 \times D_2 := \{(z,w) \in U_1 \times U_2: |f_j(z)| < 1,~~~~ |g_k(w)| < 1~ \text{for}~ j =1,\ldots,m_1,~ k = 1,2,\ldots,m_2\}$. 
\end{enumerate}
\end{rem}
\begin{lem}\label{smooth=regular}
Let $D$ be a (bounded) balanced  analytic polyhedron in $\mathbb{C}^N$ given by
\[
D := \{z \in U: |f_j(z)| < 1~ \text{for}~ j= 1,2,\ldots,m\}
\] 
(where the $f_j$'s are holomorphic functions on some domain $U \subset \mathbb{C}^N$ containing $\overline{D}$). Suppose also that the system of defining functions $(f_j)$ is minimal, i.e. no function $f_j$ can be removed without changing $D$.For each $j~\text{with}~1 \leq j \leq m$, consider the analytic variety $\hat{\sigma}_j  = Z(|f_j|^2-1)$. If $p \in \tilde{\sigma}_j$  is a smooth point of $\hat{\sigma}_j$ (recall that smooth points of $\hat{\sigma}_j$ form a dense subset of $\hat{\sigma}_j$.) for some $j$, then $p$ is a regular point of $\hat{\sigma}_j$ of dimension $2N-1$. Consequently, for each $j$, $\text{dim}(\hat{\sigma}_j) = 2N-1$.
\end{lem}
\begin{proof}
Without loss of generality, we may assume that $p \in \tilde{\sigma}_1$ is a smooth point of $\hat{\sigma}_1$. We want to show that $p$ is a regular point of $\hat{\sigma}_1$ of dimension $2N-1$. To prove this by contradiction, assume that there exists an open set $\tilde{U}$ of $\mathbb{C}^N$ containing $p$ such that $\hat{U} := \tilde{U} \cap \hat{\sigma}_1$ is a smooth manifold of dimension (strictly) less than $2N-1$. It follows owing to Sard's theorem -- more precisely, by applying corollary 6.11 in \cite{J.Lee} to $\pi^r_{|_{\hat{U}}} : \hat{U} \to \partial{\mathbb{B}}$ -- that $\pi^r(\hat{U})$ is a nowhere dense subset of $\partial \mathbb{B}$. Let $q := \pi^r(p)$. Choose a sequence $q_n \in \partial \mathbb{B} \setminus \pi^r(\hat{U})$ such that $q_n$ converges to $q$. If $\alpha_n$ denotes the supremum of all positive numbers $\alpha$ such that $\alpha q_n \in D$, then $p_n := \alpha_n q_n$ lies on $\partial D$. Since $\tilde{\sigma}_1$ is an open subset of $\partial D$ containing $p$, it follows from the continuity of the map $\pi^r_{|_{\partial D}}: \partial D \to \partial \mathbb{B}$ that $p_n \in \tilde{\sigma}_1$ for sufficiently large $n$. 
Note that $\|p_n\| = |\alpha_n| \|q_n \|$. 
With 
$h$ denoting the Minkowski functional of $D$ as 
usual, we have $h(p_n)=1$, thereby:
\begin{equation}\label{Minkowski}
h(q_n)=h\big(\frac{1}{\alpha_n} p_n \big)
= \frac{1}{\alpha_n}=\frac{1}{\|p_n\|}
\end{equation}
 It now follows from the convergence of $h(q_n)$ to $h(q)$ wherein the continuity of $h$ owes to the hyperconvexity of $D$, that: $\|p_n\|$ converges to $\|p\|$. For $\delta > 0$, let $S_{\delta} := \{z \in \mathbb{C}^N: \|p\|-\delta < \|z\| < \|p\| + \delta\}$.  Now, we claim that there exists $\delta > 0$ and an open set $W \subset \pi^r(\tilde{U})$ of $\partial \mathbb{B}$ containing $q$ such that $(\pi^r)^{-1}(W) \cap S_{\delta} \subset \tilde{U}$. To prove this, consider the polar co-ordinates map $\Phi: \mathbb{C}^N \setminus \{0\} \to \mathbb{R}^+ \times \partial \mathbb{B}$ defined by $\Phi(z) = (\|z\|,z/\|z\|)$. Note that $\Phi$ is a homeomorphism and thereby $\Phi(\tilde{U})$ is an open subset of $\mathbb{R}^+ \times \partial \mathbb{B}$. This implies that there exists $\delta > 0$ and an open set $W \subset \pi^r(\tilde{U})$ such that $V := (\|p\|-\delta,\|p\| + \delta) \times W \subset \Phi(\tilde{U})$. Hence $(\pi^r)^{-1}(W) \cap S_{\delta} = \Phi^{-1}(V) \subset \tilde{U}$, which finishes the proof of the claim. Since $q_n$ converges to $q$ and $W$ is an open subset of $\partial \mathbb{B}$ containing $q$, it follows that $q_n \in W$ for large $n$. As $\pi^r(p_n) = q_n$, we have $p_n \in (\pi^r)^{-1}(W)$ for sufficiently large $n$. Furthermore, since $\|p_n\|$ converges to $\|p\|$, it follows that $p_n \in  S_{\delta}$ for sufficiently large $n$, and hence  $p_n \in  (\pi^r)^{-1}(W) \cap S_{\delta}$, which is contained in $\tilde{U}$. Thus  $p_n \in \hat{U}$ for large $n$, which contradicts the choice of $q_n = \pi^r(p_n) \notin \pi^r(\hat{U})$.
\end{proof}
\begin{prop}\label{Bal_Anal_Poly}
Let $D$ be a (bounded) balanced  analytic polyhedron in $\mathbb{C}^N$ given by
\[
D := \{z \in U: |f_j(z)| < 1~ \text{for}~ j= 1,2,\ldots,m\}
\] 
(where the $f_j$'s are holomorphic functions on some domain $U \subset \mathbb{C}^N$ containing $\overline{D}$.) Suppose also that the system of defining functions $(f_j)$ is minimal. Then we can choose the $f_j$'s to be homogeneous polynomials (so $D$ is a balanced polynomial polyhedron).
\end{prop}
\begin{proof}
First, we prove this proposition under the assumption that $D$ is a generic analytic polyhedron because the proof is more elementary in this case. We can express $D$ as $D = F^{-1}(\Delta^m)$, where $F : U \to \mathbb{C}^m$ defined by $F(z) = (f_1(z),\ldots,f_m(z))$.
Let $F(0) = (a_1,\ldots,a_m)$. 
As $a$ need not be zero, we compose $F$ with the automorphism $\varphi$ of $\Delta^m$ given by 
\[
    \varphi(z) = \left(\frac{z_1-a_1}{1-\overline{a_1}z_1},\ldots,\frac{z_m-a_m}{1-\overline{a_m}z_m}\right).
\] Then $\varphi \circ F(0) = 0$. 
After possibly shrinking $U$, to avoid $1/\overline{a_j}$ for all $j$ with $1 \leq j \leq m$, define a  holomorphic function $g_j : U \to \mathbb{C}$ by 
\[
g_j(z) = \frac{f_j(z)-a_j}{1-\overline{a_j}f_j(z)}.
\]
Since $D = (\varphi \circ F)^{-1}(\Delta^m)$, it follows that
\[
    D = \{z \in U: |g_j(z)| < 1~ \text{for}~ j =1,2,\ldots,m\}.
\]
 Moreover, $D\varphi_{|_0}$ is an invertible linear map and therefore preserves linear independence. It follows that $g_j$'s satisfy the genericity condition. In the rest of the proof, we relabel these functions as $(f_j)$
 and regard them as the defining functions for 
$D$.\\ 

Fix $z_0 \in \tilde{\sigma}_1$. We can divide the proof into the following steps. \\

\textit{Step-1:} There exists positive integer $d(z_0)$ such that for each $\lambda \in \Delta$, we have $f_1(\lambda z_0) = \lambda^{d(z_0)} B(\lambda,z_0)$, where $B(\lambda,z_0)$ is some Blaschke product on $\Delta$ with $B(0,z_0) \neq 0$.\\

Consider the holomorphic function $h$ on $\Delta$ defined by $h^{z_0}(\lambda) = f_1(\lambda z_0)$. 
Note that $h^{z_0}$ maps $\Delta$ into $\Delta$ because $D$ is a balanced hyperconvex domain. We want to show that $h^{z_0}$ is a proper map. 
To prove this, for each $k$ with $1 \leq k \leq m$, define 
\[
A_k := \{\theta \in [-\pi,\pi]: |f_k(e^{i\theta}z_0)| =1\}.
\]
As $D$ is balanced, it follows that 
\[
[-\pi,\pi] = \bigcup_{k=1}^m A_k.
\]
Now, we claim that the sets $A_k$ are pairwise disjoint. To prove this claim, consider the invertible $\mathbb{C}$-linear map $T: \mathbb{C}^N \to \mathbb{C}^N$ defined by $T(z) = e^{i\theta}z$ for some $\theta \in [-\pi,\pi]$. Note that $T$ maps $D$ into $D$, as $D$ is balanced. Since $T$ is an invertible linear transformation, $T$ maps $\partial D$ into $\partial D$. Note that $T$ preserves smooth boundary points of $D$ because if $p$ is a smooth boundary point of $D$ and $r$ be its smooth defining function near $p$ then $\tilde{r}(z) = r(e^{-i\theta}z)$ serves as a smooth defining function near $T(p)$. This finishes the proof of the claim that the sets $A_k$ are pairwise disjoint.\\

Hence $A_k^c = \cup_{j=1, j \neq k}^mA_j$. Since the $A_j$'s are closed subsets of $[-\pi,\pi]$, $A_k^c$ is closed. This implies that $A_k$ is both open and closed subset of $[-\pi,\pi]$. 
As $[-\pi,\pi]$ is connected, either $A_k = \phi$ or $[-\pi,\pi]$. 
Thus there is atleast one $j$ such that $A_j = [-\pi,\pi]$. Without loss of generality we may assume that $j=1$, i.e. $|f_1(\lambda z_0)| = 1$ whenever $|\lambda| = 1$. 
Hence, we obtain that $h^{z_0}$ is a proper map on $\Delta$. Therefore $h^{z_0}$ is of the form $h^{z_0}(\lambda) = B(\lambda, z_0)$, where $B(\lambda, z_0)$ is some finite Blaschke product.
Since $h^{z_0}(0) =0$, there exists positive integer $d(z_0)$ such that 
\begin{equation}\label{Blaschke}
    f_1(\lambda z_0) = h^{z_0}(\lambda) = B(\lambda,z_0) = \lambda^{d(z_0)} \frac{P(\lambda,z_0)}{Q(\lambda,z_0)}
\end{equation}
where $P(\lambda,z_0)$ and $Q(\lambda, z_0)$ are polynomials in $\lambda$, which does not vanishes at $\lambda =0$, and ${\rm deg}(P(\lambda,z_0)) = {\rm deg}(Q(\lambda, z_0))$. 
 Since $|f_1(z_0)| =1$, after multiplying $f_1$ by an unimodular constant, we may assume that $f_1(\lambda z_0) = B(\lambda,z_0)f_1(z_0)$.\\
 
\textit{Step-2:} There exists $z' \in \tilde{\sigma}_1$ such that $z'$ is a growth vector of $f_1$ at $0$ and $f_1$ does not vanish on $\Delta_{z'}^* := \{\lambda z': |\lambda| \leq 1,~ \lambda \neq 0\}$.\\

Let $Z_{f_1}$ denotes the zero set of $f_1$ and let $C^t := C(Z_{f_1},0)$ denotes the tangent cone to $Z_{f_1}$ at the origin.  
By theorem \ref{tangent_cone}, the set of all non-growth vectors for $f_1$ at $0$ is precisely equal to the tangent cone $C^t$.  Let $A := C^t \cap \tilde{\sigma}_1$. First, we want to show that  $A$ is a nowhere dense subset of $\tilde{\sigma}_1$. By proposition \ref{non-growth}, $C^t$ is a closed nowhere dense subset in $\mathbb{C}^N$. Consider the radial projection $\pi^r : \mathbb{C}^N \setminus \{0\} \to \partial \mathbb{B}$ defined by $\pi^r(z) = z/\|z\|$, where $\|\cdot\|$ denotes the Euclidean norm on $\mathbb{C}^N$. Note that $\pi^r(C^t) = C^t \cap \partial \mathbb{B}$ because the cone generated by $C^t \cap \partial \mathbb{B}$ is precisely equal to $C^t$. Since $dim(C^t) = 2N-2$ is less than $dim(\partial \mathbb{B})$, $C^t \cap \partial \mathbb{B}$ is a nowhere dense subset of $\partial \mathbb{B}$. Owing to the hyperconvexity of $D$, $\pi^r_{|_{\partial D}} : \partial D \to \partial \mathbb{B}$ is a homeomorphism. Hence $(\pi^r)^{-1}(C^t \cap \partial \mathbb{B})$ is a nowhere dense subset of $\partial D$. This shows that $A$ is a nowhere dense subset of $\tilde{\sigma}_1$. Let $\tau_1 := \tilde{\sigma}_1 \setminus A$. 
To prove the claim in step-2, it suffices to show that there exists $z' \in \tau_1$ such that $f_1$ does not vanish on $\Delta_{z'}^*$. 
To prove this by contradiction, assume that for every $z \in \tau_1$, $f_1$ vanishes at some point in $\Delta_{z}^*$. 
This implies that for every $z \in \tau_1$, $f_1$ vanishes at some point in $\Delta_{z}^*$. It therefore follows that $\pi^r(Z_{f_1}^*) \supseteq \pi^r(\tau_1)$, where $Z_{f_1}^* := Z_{f_1} \setminus \{0\}$. 
Denote by $Z_{f_1}^{\text{reg}}, Z_{f_1}^{\text{s}}$ respectively, the regular and singular points of $Z_{f_1}^*$. 
Hence $\pi^r(Z_{f_1}^{*} \setminus Z_{f_1}^s) \supseteq \pi^r(\tau_1) \setminus \pi^r(Z_{f_1}^s)$. By the principle of invariance of domain, it follows that $\pi^r(\tau_1)$ is an open subset of $\partial \mathbb{B}$. Thereby, in particular, its Hausdorff dimension is $2N-1$. 
By Corollary 1 of section 2.6 in \cite{Chirka}, the Hausdorff dimension of $\pi^r(Z_{f_1}^s)$ is at most $2N-4$. It follows that $f_1$ vanishes on a set of positive $(2N-1)$-dimensional Hausdorff measure. Hence $f_1$ vanishes on $U$. This contradicts the fact that $f_1$ is a non-constant function, which finishes the proof of Step-2.\\

\textit{Step-3:} $f_1$ is a homogeneous polynomial on $\mathbb{C}^N$.\\

By the above Step-2, there exists a growth vector $z' \in \tilde{\sigma}_1$  of $f_1$ at $0$ and $f_1$ does not vanish on $\Delta_{z'}^*$. By proposition \ref{non-growth}, the tangent cone $C^t$ is a closed nowhere dense subset of $\mathbb{C}^N$. 
Choose an open set $V$ around $z'$ of $\tilde{\sigma}_1$ such that $\overline{V}$ is contained in the complement of $C(Z_{f_1},0)$. Consider the punctured (complex) cone  $C_V'$ generated by $V$. Note that $\overline{C'}_V$ is contained in the complement of $C^t$. 
Now, we want to prove that there exists an open set $U'$ of $D$ containing $0$ such that $Z_{f_1}^*$ does not intersect $U' \cap C_V'$. 
To prove this by contradiction, assume that it so happens that for every neighbourhood $U'$ of the origin, $Z_{f_1}^* \cap (U' \cap C_V')$ is non-empty. 
This gives rise to the existence of a sequence $q_j \subset Z_{f_1}^* \cap (U' \cap C_V')$ such that $q_j \to 0$. Let $p_j := q_j/h(q_j)$, where $h$ is the Minkowski functional of $D$. Since $(p_j)$ is bounded, it has a convergent subsequence $(p_{j_k})$, which converges to some (growth vector) $v$ lying in $\overline{V}$. 
By definition of tangent cones, this implies that $v \in C(Z_{f_1},0)$, which contradicts $\overline{V}$ is disjoint from $C^t$. 
Therefore, there exists an open neighbourhood $U' \subset D$ of $0$ such that $Z_{f_1}^*  \cap (U' \cap C_V') = \phi$. We shall now show that $f_1$ does not vanish in outside of $U'$ if only we pass to a possibly smaller cone. To work this out rigorously, first choose $r>0$ such that $\mathbb{B}(0,r) \subset U'$. 
Denote the closed annulus $\mathbb{A}[0;r,1]$ by $B$ for short. 
Consider the function $\tilde{f}_1(\lambda,z) = f_1(\lambda z)$, which is in particular continuous on the compact set $B \times \overline{V}$. 
Since $f_1$ is a non-vanishing function on $\Delta_{z'}^*$, there exists $\epsilon > 0$ such that 
\[
|f_1(\lambda z')| > \epsilon~ \text{for all}~ \lambda \in \mathbb{A}[0;r,1].
\]
By the uniform continuity of $\tilde{f}_1$ on $B \times \overline{V}$, there exists $\delta > 0$ such that 
\[
\|\tilde{f}_1(\mu,z') - \tilde{f}_1(\lambda,z)\| < \epsilon/2~ \text{whenever}~ \|(\lambda, z) - (\mu, z')\| < \delta.
\]
In particular, if $\|z-z'\| < \delta$, then $\|\tilde{f}_1(\lambda,z') - \tilde{f}_1(\lambda,z)\| < \epsilon/2$ for all $\lambda \in B$. By triangle inequality, we have
\[
    \|\tilde{f}_1(\lambda,z)\| \geq \|\tilde{f}_1(\lambda,z')\| - \|\tilde{f}_1(\lambda,z') - \tilde{f}_1(\lambda,z)\| >  \epsilon /2
\]
whenever $\|z-z'\| < \delta$ and $z \in \overline{V}$.
Define the punctured cone
\[
C_{\delta}'(z') := \{\lambda z: \|z-z'\| < \delta,~z \in \partial C_V' \cap \partial D ,~ \lambda \in \Delta^*\}. 
\]
Then note that $f_1$ is non-vanishing on $K' := C_{\delta}'(z') \setminus U'$ and hence on all of $C_{\delta}'$ as $Z_{f_1}$ does not intersect $U'$ within $C_{\delta}'(z')$. 
Since every vector in $C_{\delta}'$ is a growth vector of $f_1$ at $0$, it follows from \ref{Blaschke} now that $f_1(\lambda z) = \lambda^{d(z')}f_1(z)$ for all $\lambda \in \Delta$. 
This shows that $d(z) = d(z')$ for all $z$ near $z'$, i.e. $d(z)$ is constant on $C_{\delta}'$. By the identity principle, $f_1(\lambda z) = \lambda^d f_1(z)$ for all $z \in U$, $\lambda \in \Delta$.
Using this functional equation, we can extend  $f_1$  to all of $\mathbb{C}^N$ by defining $f_1(\lambda z) = \lambda^{d}f_1(z)$. It follows, therefore that $f_1$ is a homogeneous entire function, thereby a homogeneous polynomial. So, $f_j$ is a homogeneous polynomial for each $j =1,2,\ldots,m$.\\

We now consider the case that $D$ is not necessarily a generic analytic polyhedron.
As in the above case, we may assume that $f_j(0) =0$ and that the collection of defining functions $(f_j)$ is minimal in the sense that no proper subset of  $(f_j)$ suffices to define $D$. The proof for this case is similar to the proof for the above case except for Step-2. Note that in this case, $\tilde{\sigma}_1$ need not be a manifold. So, we consider the analytic variety $\hat{\sigma}_1  = Z(|f_1|^2-1)$.
This analytic variety $\hat{\sigma}_1$ has a well-defined dimension $d$, which can at most be $2N-1$, where $d$ is maximal among the dimensions of smooth points, as laid down precisely by definition \ref{Def1} in the preliminaries section. 
Note that $\tilde{\sigma}_1^{reg} := \hat{\sigma}_1^{reg} \cap \tilde{\sigma}_1$ --- recall the notation and basic facts about such points in analytic variety from the preliminaries section \ref{Semi-anal} --- is a smooth submanifold of real dimension $d$ (for further details regarding the structure of analytic varieties and semi-varieties, we refer the reader to \cite{Strat}).
By the above lemma \ref{smooth=regular},  $\text{dim}(\tilde{\sigma}_1^{reg}) = 2N-1$. Now, we apply a similar argument in Step-2 to $\tilde{\sigma}_1^{reg}$ replacing $\tilde{\sigma}_1$ therein. Hence, we get the desired result of Step-2. As noted earlier, this was all that was required to finish the proof in the general case.
\end{proof}
\begin{rem}\label{cont_minkwski}
Let $D$ be any balanced analytic polyhedron. So, by the above proposition, $D := \{z \in \mathbb{C}^N: |f_j(z)| < 1~ \text{for}~ j= 1,2,\ldots,m\}$, where each $f_j$ is a homogeneous polynomial of degree $d_j := {\rm deg}(f_j)$. Define $h : \mathbb{C}^N \to \mathbb{R}$ by $h(z) := \max\{|f_j(z)|^{1/d_j}: 1 \leq j \leq m\}$. 
Note that $h$ is a continuous function on $\mathbb{C}^N$ and $h(\lambda z) = |\lambda| h(z)$.
Appealing to proposition \ref{Prop_Minkowski} (c), we conclude that $h$ is the Minkowski functional on $D$. Furthermore, by proposition  \ref{Prop_Minkowski} (d), we therefore deduce that the topological closure equals the algebraic closure: $\overline{D} = \{z \in \mathbb{C}^N: |f_j(z)| \leq 1~ \text{for}~ j= 1,2,\ldots,m\}$.
\end{rem}

\begin{lem}\label{power_linear} 
Let $f \in \mathbb{C}[z_1,\ldots,z_N]$ be a homogeneous polynomial. If $|f|$ 
is a non-zero constant on a complex hyperplane, then $f$ is a power of some linear polynomial.
\end{lem}
\begin{proof}
Assume that $|f|$ is a non-zero constant, say $c$, on some complex hyperplane $l$. Observe that this, in particular, means $l$ does not pass through the origin. Parametrize $l$ in the standard manner as $\varphi(\zeta_1,\ldots,\zeta_{N-1}) = p + \zeta_1 v_1 + \ldots + \zeta_{N-1}v_{N-1}$ for some
choice of a point $p$ on $l$ and  non-zero vectors
$v_j \in \mathbb{C}^N$. The
maximum modulus principle (for holomorphic functions of several complex variables) applied to the
holomorphic function $f \circ \varphi$ confirms that $f$ is constant ($=c$) on $l$.
This implies 
that $f \circ \varphi(\zeta)$ is a non-zero constant, where $\varphi(\zeta_1,\ldots,\zeta_{N-1}) = p + \zeta_1 v_1 + \ldots + \zeta_{N-1}v_{N-1}$ is the 
parametrization of $l$ passing through $p$ in the direction of vectors $v_j$. After multiplying by $1/c$, we may 
assume that $f$ is identically equal to $1$ on $l$. We can also express $l$ as the zero set $Z(g-1)$, where $g$ is a homogeneous linear polynomial in $\mathbb{C}[z_1,\ldots,z_N]$. Let $\tilde{f}(z_1,\ldots,z_N) := f(z_1,\ldots,z_N)-1$. Note that
$\tilde{f}$ vanishes on  $Z(g-1)$. By an application of the Hilbert Nullstellensatz, there exists $k \in \mathbb{N}$ such that $g-1$ divides $\tilde{f}^k$. 
Being a linear polynomial $g-1$ is 
irreducible in (the unique factorization domain) $\mathbb{C}[z_1,\ldots,z_N]$, thereby a prime element therein.
This implies that $g-1$ divides $\tilde{f}$. So, we write $\tilde{f} = (g-1)\tilde{h}$ for some non-zero polynomial $\tilde{h}$. Substituting $\tilde{f}$ by $(f-1)$, we obtain that
\[
f = (g-1)\tilde{h} +1 .
\]
Note that $\tilde{h}(0) = 1$ because $f(0) = 0$. So, we can write 
$\tilde{h}(z_1,\ldots,z_N) = h(z_1,\ldots,z_N) + 1$, where 
$h \in \mathbb{C}[z_1,\ldots,z_N]$ with $h(0) = 0$. 
Hence, the above equation reduces to $f = gh +g-h$. Write $h$ in its standard homogeneous decomposition: 
$h = h_1 + \ldots + h_r$, where $h_i$ is a homogeneous polynomial of degree $i$ (so the integer $r$ is one less than the degree of $f$). Note that $gh_i$ is of degree $i$ for each $i$. Thus, the above 
equation may be rewritten as
\[
    f = g(h_1 + \ldots + h_r) + g - (h_1 + \ldots + h_r) = (g-h_1) + (gh_1-h_2) + \ldots + 
    (gh_{r-1}-h_r) + gh_r
\]
As $f$ is a homogeneous polynomial of degree $r+1$, 
we infer that the first $r$ homogeneous components (i.e., all except the last) are identically zero. We 
therefore conclude
\[
h_1 = g, ~h_2 = gh_1, \ldots, \;h_r = gh_{r-1}, \; \; gh_r = f.
\]
This implies that $h = g+g^2+\ldots + g^r$ and $f = g^{r+1}$.
\end{proof}
\begin{rem}
Let us note a special case for future use. If $f$ is a homogeneous polynomial in $z$ and $w$ (i.e. $N=2$) then the level set of $|f|$ does not contain any complex line unless $f$ is a power of a linear polynomial. Consequently, a homogeneous analytic polyhedron in $\mathbb{C}^2$ is $\mathbb{C}$-extremal i.e. $\partial D$ does not contain any (affine) $\mathbb{C}$-line segments iff for each $j$, $f_j$ is not a power of a linear polynomial. 
\end{rem}

\subsection*{Proof of theorem \ref{max_noncnvx}}.
To prove this theorem by contradiction, assume that there exists a non-trivial retract $Z$ 
of $D$ passing through the origin in the direction of $p \in L := T_0Z$. Let $\rho$ 
be a retraction map from $D$ onto $Z$. By lemma \ref{Dalpha}, $T := D \rho_{|_0}$ is a 
linear projection map from $\mathbb{C}^N$ onto $L$ and $T$ maps $D$ onto $D_L := D \cap L$. Let 
$H_p := p + H_0$, where $H_0 := {\rm \ker}(T)$. Note that the affine $\mathbb{C}$-linear subspace 
$H_p$ does not intersect $D$ because if $q \in H_p \cap D$ then 
$T(q) = p$, which contradicts the fact that $T$ maps $D$ into $D$. Since $\text{dim}(\ker(T)) \geq 1$, there exist a non-zero vector $v \in \ker(T)$ such that the complex line $\ell_p := \{p+\zeta v: \zeta \in \mathbb{C}\}$ does not intersect $D$.\\

\noindent By the hypothesis, $p$ belongs to one of the open faces, i.e., $p \in \tilde{\sigma}_j$ for some $j$. This implies that $|f_j(p)| = 1$ and $|f_l(p)| < 1$ for $l \neq j$.  Consider the 
holomorphic function $\psi : \mathbb{C} \to \mathbb{C}$ defined by 
$\psi(\zeta) = f_j(p+\zeta v)$. Since $f_l$'s are continuous, there exists $r > 0$ such that $|f_l(p+\zeta v)| < 1$ for all $\zeta \in \Delta_{r}$ and all $l$ with $l \neq j$. Hence, $|f_j(p+\zeta v)| \geq 1$ for all $\zeta \in  \Delta_r$ because $\ell_p$ does not intersect $D$.  
 By the  
 minimum principle available for the non-vanishing holomorphic function $\psi$, it follows that $|\psi(\zeta)| =1$ for all $\zeta \in \Delta_r$.  This contradicts our assumption that $\partial D$ is $\mathbb{C}$-extremal.\qed
 \begin{lem}\label{homo_open_dense}
     \begin{enumerate}\rm
         \item Let $f$ be a non-constant homogeneous polynomial in $\mathbb{C}^N$. Consider the real algebraic variety $A := Z(r(z))$, where $r(z) = |f(z)|^2-c^2$ for some $c \in \mathbb{R}^+$. Then $\nabla r$ is nowhere vanishing on an open dense subset of $A$.
         \item Let $D$ be a balanced analytic polyhedron in $\mathbb{C}^N$ and let $(f_j)_{j=1}^m$ be minimal defining functions for $D$. 
         If $r_j(z) := |f_j(z)|^2-1$ then  $\nabla r_j$ is nowhere vanishing on an open dense subset of $\sigma_j$.
     \end{enumerate}
 \end{lem}
 \begin{proof}
 \begin{enumerate}
     \item Firstly note that $\nabla r(z) = |f(z)|\overline{\nabla f(z)}$. To prove (1), it suffices to show that $\nabla f$ never vanishes on an open dense subset of $A$, since $|f(z)| = 1$ for all $z \in A$. Fix $\theta \in [-\pi,\pi]$. Consider the complex analytic hypersurface 
     \[
     L_{\theta} := \{z \in \mathbb{C}^N: f(z) = ce^{i\theta}\}.
     \]
      Suppose $\nabla f(z)$ vanishes on some open subset $W := B \cap A$ of $A$.  In particular, $\nabla f$ vanishes on an open subset $V := W \cap L_{\theta}$ of $L_{\theta}$. Since $\nabla f$ is homogeneous,  $\nabla f$ vanishes on the complex cone $C_V$ generated by $V$. It follows from the Weierstrass preparation theorem that $V$ is biholomorphic to an open subset of $\mathbb{C}^{2N-2}$. Hence, $C_V$ is an open subset of $\mathbb{C}^N$. By the identity principle applied to $\nabla f$, we obtain that $\nabla f$ vanishes on $\mathbb{C}^N$. Hence, $f$ is a constant function, which is a contradiction.
     \item By the above proposition \ref{Bal_Anal_Poly}, for each $j$, $f_j$ is a homogenous polynomial in $\mathbb{C}^N$. Hence by (1), $\nabla f_j$ never vanishes on an open dense subset of $Z(|f_j|^2-1)$. This implies that $\nabla r_j$ never vanishes on an open dense subset of $\sigma_j$.
\end{enumerate}
\end{proof}
\begin{thm}\label{Weil_open_piece}
    Let $D$ be a (bounded) balanced $\mathbb{C}$-extremal analytic polyhedron in $\mathbb{C}^N$. Then $D$ does not admit any retract along any open piece of directions at the origin. 
\end{thm}
\begin{proof}
 First, we assume that $D$ is a generic analytic polyhedron. To prove this theorem by contradiction, assume that there exists a non-empty open subset $\Gamma$ of $\partial \mathbb{B}$ such that for each $v \in \Gamma$ there is a retract $Z$ of $D$ passing through the origin with $v \in T_0Z$. 
    Consider the map $\pi^r: \mathbb{C}^N \setminus \{0\} \to \partial \mathbb{B}$ defined by $\pi^r(z) = z/\|z\|$, where $\|\cdot\|$ denotes the Euclidean norm on $\mathbb{C}^N$. 
    Note that $\pi^r$ is a smooth map on $\mathbb{C}^N \setminus \{0\}$. As $D$ is bounded, it follows from the continuity of $\pi^r$ that $V := (\pi^r)^{-1}(\Gamma) \cap \partial D$ is a non-empty open subset of $\partial D$. 
 By the theorem \ref{max_noncnvx}, $V$ is contained in the union of all ribs $\sigma_{j,k}$.  
    Note that $\sigma_{j,k}$ is a smooth manifold of dimension $2N-2$ , due to the genericity condition of $D$. 
As ${\rm dim}(\sigma_{j,k}) < {\rm dim}(\partial \mathbb{B})$, it follows from Sards theorem (see specifically corollary 6.11 in \cite{J.Lee}) applied to the restriction $\pi^r_{|_{\sigma_{j,k}}}$, that $\pi^r(\sigma_{j,k})$ is a measure zero subset of $\partial \mathbb{B}$. Since $V$ is contained in the union of finitely many ribs $\sigma_{j,k}$, $\pi^r(V)$ is a measure zero subset of $\partial \mathbb{B}$.
Hence, $\pi^r(V)$ does not have an interior point in $\partial \mathbb{B}$.\\

\noindent Next note that $\pi^r(V) = \Gamma$. While $\pi^r(V) \subset \Gamma$ comes just from definition
of these sets, the reverse inclusion follows from
the hypothesis that $D$ is bounded -- indeed 
the fibers $(\pi^r)^{-1}(z)$ are rays (from the origin), every one of which have got 
to intersect $\partial D$ as the origin lies 
within $D$. Thus $\pi^r(V) = \Gamma$; but then, this contradicts the conclusion of the previous para that $\pi^r(V)$ does not have an interior point in $\partial \mathbb{B}$.\\

We now move to the case where $D$ is not necessarily a generic analytic polyhedron. By proposition \ref{Bal_Anal_Poly}, we may assume that $f_j$'s are homogeneous polynomials and form a minimal set of defining functions for the domain $D$. To prove this theorem, owing to theorem \ref{max_noncnvx} (as argued above), 
it suffices to show that all ribs $(\sigma_{j,k})$ form a nowhere dense subset of $\partial D$. For simplicity of presentation, we write the details for the case $j=1,k=2$, other cases being similar. To prove this by contradiction, assume that there exists $p \in \sigma_{1,2}$ and an open subset $B$ of $\mathbb{C}^{N}$ containing $p$ such that $V := B \cap \partial D \subset \sigma_{1,2}$. This means that for all $z \in V$, we have $|f_1(z)| = |f_2(z)| = 1$.  Note that the radial projection $\pi^r_{|_{\partial D}} : \partial D \to \partial \mathbb{B}$ is a homeomorphism. Indeed, $\pi^r_{|_{\partial D}}$ is a continuous bijection from  $\partial D$ to $\partial \mathbb{B}$ with inverse that can be explicitly written down with ease: $g(z) = z/h(z)$, whose  continuity is due to that of the Minkowski functional $h$ of our domain. As $V$ is an open subset of $\partial D$, it follows that $\pi^r(V)$ is an open subset of $\partial \mathbb{B}$. 
Though cones in general may have empty interior, this cannot happen for that spanned by $V$. To establish this, it suffices to prove that the
the punctured cone generated by $V$ namely,
$C_V' := \{tv: t \in \mathbb{R}^+,~v \in V\}$ 
is an open subset of $\mathbb{C}^N$. To do this, first observe that $C_V'$ is the same as the cone generated by $\pi^r(V)$, as follows by noting that the ray spanned by any $z \in V$ coincides with that spanned by $\pi^r(z)$. 
Now, note that the punctured cone  spanned by $\pi^r(V)$ is the image of the 
open set $\mathbb{R}^+ \times \pi^r(V) \subset \mathbb{R}^+ \times \partial \mathbb{B}$
under the (backward) polar coordinates homeomorphism
$\Psi: \mathbb{R}^+ \times \partial \mathbb{B} \to \mathbb{C}^N \setminus \{0\}$ given by 
$\Psi(r,\omega)=r\omega$. It follows thereby 
that the cone $C_V'$ is an open subset of 
$\mathbb{C}^N$, as was to be established.

Let $z$ be an arbitrary point in $C'_V$. Then there exists $z_0 \in V$ and $t \in \mathbb{R}^+$ such that $z = t z_0$. Note that $t = h(z)$ and $z_0 = z/h(z)$ because the Minkowski function $h$ is continuous. Let $d_1,d_2$ be the degrees of homogeneous polynomials $f_1,f_2$ respectively. As $|f_1(z_0)| = |f_2(z_0)| = 1$, we have 
\begin{equation*}\label{homo_Mink}
|f_2(z)| = |f_2(t z_0)| = t^{d_2}|f_2(z_0)| = t^{d_2}  ~\text{and} ~ |f_1(z)| = |f_1(t z_0)| = t^{d_1}|f_1(z_0)| = t^{d_1}.
\end{equation*}
Thereby $|f_2(z)|^{d_1} = |f_1(z)|^{d_2}$.
Since this equation holds for all $z \in V$,  by the identity principle applied to the real analytic functions $|f_1|^{d_2}, ~|f_2|^{d_1}$ gives $|f_2(z)|^{d_1} = |f_1(z)|^{d_2}$ for all
$z \in \mathbb{C}^N$. Thereby, the condition $|f_2(z)| < 1$ holds whenever $|f_1(z)| < 1$ holds i.e., we may as well drop one of these functions in defining the polyhedron. This contradicts the minimality of the defining functions for $D$, thereby completing the proof of this theorem.
\end{proof}


\subsection*{Proof of theorem \ref{prod_anal_poly} (by retracts)}
Following the proof of corollary \ref{gen-anal-poly} together with the above theorem and proposition \ref{Retrct_Dirctn}, we obtain theorem \ref{prod_anal_poly}.\\

\noindent As we have enunciated theorems involving balanced analytic polyhedra with $\mathbb{C}$-extremal boundaries, we now deal with the issue of substantiating such theorems by confirming existence of ample examples; indeed, we now discuss techniques for constructing plenty of examples of bounded balanced $\mathbb{C}$-extremal analytic polyhedra.
As noted in remark \ref{cont_minkwski}, for every such $D$, its Minkowski functional is given by $h(z) = \max \{|f_j(z)|^{1/d_j}: 1 \leq j \leq m\}$, where each $f_j$ is a homogeneous polynomial (whose degree we denote by $d_j$). Furthermore, according to proposition \ref{Prop_Minkowski} (f), $D$ is bounded if and only if $h^{-1}(0) = \{0\}$.
 Therefore, to construct such examples, it suffices to show the existence of homogeneous polynomials $f_j$ satisfying the following conditions:
\begin{enumerate}
    \item[(A)] for each $j$, $f_j$ does not reduce to a non-zero constant on any line $l$ not passing through the origin. This will give the $\mathbb{C}$-extremal condition of the theorem \ref{max_noncnvx}.
    \item[(B)]  $Z(f_1,\ldots,f_m) =\{0\}$, where $Z(f_1,\ldots,f_m)$ denotes the common zero set of $f_1,\ldots,f_m$. This corresponds to the boundedness of the domain.
\end{enumerate}

\noindent First, we provide concrete examples and methods for constructing  examples of homogeneous polynomials satisfying condition (A).\\

\textit{Example}-1: According to lemma \ref{power_linear}, every homogeneous polynomial in $\mathbb{C}[z,w]$ that is not a power of a linear polynomial satisfies condition (A).\\

\textit{Example}-2: $f(z_1,z_2,z_3) = (z_1^2 + z_2^2)(z_2^2+z_3^2)$ satisfies condition (A), which we now prove by contradiction below. 
More generally, $(az_1^2 + bz_2^2)(cz_2^2+dz_3^2)$ with $a,b,c,d \neq 0$ satisfies condition (A).\\

Suppose that $f$ does not satisfy condition (A). 
Then there exists a line $\ell$ that does not pass through the origin and also fails to satisfy condition (A). 
We can write $f(p+tv) = g_1(t)g_2(t)$, where $g_j(t) = (p_j+tv_j)^2+(p_{j+1}+tv_{j+1})^2$ for $j=1,2$.
If $f$ is a non-zero constant on $l$ then for each $j$, $g_j$ is a non-zero constant polynomial because if $g_j$ is non-constant for some $j$ then their product is also a non-constant polynomial of degree $>1$.
Now, we consider
\[
g_1(p+tv) = (p_1^2 + p_2^2) + 2(p_1v_1+p_2v_2)t + (v_1^2+v_2^2)t^2.
\]
If $g_1$ is constant on $\ell$ then $v_1^2 + v_2^2 =0$ and $p_1v_1 +p_2v_2 =0$. Multiplying $p_1^2$ on both sides of the equation $v_1^2 + v_2^2 = 0$, we obtain that $p_1^2v_1^2 + p_1^2v_2^2 = 0$. 
Since $p_1v_1 = -p_2v_2$, we have $p_2^2v_2^2 + p_1^2v_2^2 = (p_1^2 + p_2^2)v_2^2$. As $p_1^2 + p_2^2$ is not a non-zero constant, we conclude that $v_2 = 0$. 
Furthermore, as $v_1^2 = -v_2^2$, it follows that $v_1 = 0$. Next, we consider the function $g_2(t)$.
Using similar arguments, we can prove that $v_2 =v_3 =0$. This contradicts the assumption that $\ell$ is a one-dimensional line that does not pass through the origin. This shows that $f$ satisfies condition (A).\\

While the above example was a concrete one in three variables, we now formulate a general proposition to construct a fairly large collection of polynomials in any (finite) number of variables, satisfying condition (A).
\begin{prop} Let $g$ be homogeneous polynomial in $\mathbb{C}[z_1,\ldots,z_N]$.
    \begin{enumerate} 
        \rm \item[(i)] If $f$ is a homogeneous polynomial in $\mathbb{C}[z_1,\ldots,z_N]$ satisfies condition {\rm (A)}  then $h_1 = f \cdot g$ is a homogeneous polynomial satisfying condition {\rm (A)}.
        \item[(ii)] Let $f_1,\ldots,f_k$ be homogeneous polynomials in $\mathbb{C}[z_1,\ldots,z_N]$ such that its common zero set $Z(f_1,\ldots,f_k) = \{0\}$. Then $h_2 := f_1\cdots f_k\cdot g$  is a homogeneous polynomial satisfying condition {\rm (A)}.
        \end{enumerate}
\end{prop}
\begin{proof} 
To prove (i) by contradiction, assume that $h_1$ does not satisfy condition (A). 
Then there exists a line $\ell := \{p+tv: t \in \mathbb{R}\}$ such that $h_1$ is a non-zero constant on $\ell$. 
In particular, if $h_1(p+tv)$ is a non-zero constant function, then $f(p+tv)$ must also be a non-zero constant function. This contradicts our assumption that $f$ satisfies condition (A).\\

For proving (ii),consider the homogeneous polynomial $h_2(z) = f_1(z)\cdots f_k(z) \cdot g(z)$. If $h_2$ does not satisfy condition (A), then there exists a line $\ell := \{p+tv: t \in \mathbb{R}\}$, where  $v \neq 0$, such that $h_2$ is a non-zero constant on $\ell$. Then for all $t \in \mathbb{R}$, we can write
\[
h_2(p+tv) = f_1(p+tv)\cdots f_k(p+tv) \cdot g(p+tv).
\]
Since $h_2$ is a non-zero constant on $\ell$, $f_j(p+tv)$ and $g(p+tv)$ reduces to a non-zero constant function.
Note that the coefficient of the highest power of $t$ in each $f_j(p+tv)$ is $f_j(v)$ due to the homogeneity of the polynomials. Therefore, we must have $f_j(v) = 0$ for each $j = 1,2,\ldots,m$. This implies that $v \in Z(f_1,\ldots,f_m)$, contradicting our assumption that $Z(f_1,\ldots,f_m) = \{0\}$.
\end{proof}

\noindent We shall actually be constructing examples which shall satisfy the stronger condition of non-degeneracy of the defining functions at every point of all the pairwise intersections of faces,
briefly referred to as condition-(C).
\begin{lem}\label{homog_zero}
    Let $g_1,g_2,f$ be homogeneous polynomials in $\mathbb{C}[z_1,\ldots,z_N]$ with degrees $d_1,d_2,d$ respectively. 
    If $v$ is a common zero of $g_1(z),g_2(z),f(z)-e^{i\theta}$ for some $\theta \in (-\pi,\pi]$ then $e^{-i\theta/d}v$ is a common zero of $g_1(z),g_2(z),f(z)-1$.
\end{lem}
\begin{proof}
    Assume that $v$ is a common zero of $g_1(z),g_2(z),f(z)-e^{i\theta}$. This implies that $g_1(v) = g_2(v) =0$ and $f(v) = e^{i\theta}$ for some $\theta \in (-\pi,\pi]$. 
Note that $f(e^{-i\theta/d}v) = e^{-i \theta}f(v) =1$ because by the homogeneity of $f$. Since the zero set of a homogeneous polynomial is closed under scalar multiplication, $e^{-i\theta/d}v$ is a zero of $g_1$ and $g_2$. 
    Hence, we obtain that $e^{-i\theta/d}v$ is a common zero of $g_1(z),g_2(z),f(z)-1$.
\end{proof}
\noindent We shall now show that the functions $f_j$ in the foregoing examples satisfy the above mentioned condition-(C). 
Towards this, we prove that for each $j \neq k$, $\nabla f_j(z)$ and $\nabla f_k(z)$ are $\mathbb{C}$-linearly independent for each $z \in \sigma_{j,k}$.
Consider the homogeneous polynomials
\[
f_1(z_1,z_2,z_3) = z_1z_2z_3,~~~~~~~~ f_2(z_1,z_2,z_3) = (z_1^2+z_2^2)(z_2^2+z_3^2)(z_3^2+z_1^2)
\]
\[
    f_3(z_1,z_2,z_3) = (z_1+z_2-z_3)(z_1-z_2+z_3)(-z_1+z_2+z_3)
\]
Their gradients are given as follows:
\begin{equation*}
\nabla f_1(z) = (z_2z_3,z_1z_3,z_1z_2)
\end{equation*}
\begin{multline*}
    \nabla f_2(z) = \left(2z_1(z_2^2+z_3^2)(2z_1^2+z_2^2+z_3^2),2z_2(z_1^2+z_3^2)(z_1^2+2z_2^2+z_3^2),\right.\\\left.2z_3(z_1^2+z_2^2)(z_1^2+z_2^2+2z_3^2)\right)
\end{multline*}
\begin{multline*}
\nabla f_3(z) = \left(-3z_1^2 +z_2^2 +z_3^2+2z_1z_3-2z_2z_3+2z_1z_2,z_1^2-3z_2^2+z_3^2+\right.\\\left.2z_2z_3-2z_1z_3+2z_1z_2,z_1^2+z_2^2-3z_3^2-2z_1z_2+2z_1z_3+2z_2z_3\right)
\end{multline*}
First, we prove that $\nabla f_1(z)$ and $\nabla f_2(z)$ are linearly independent for each $z \in \sigma_{1,2}$. It suffices to show that for every $\theta_1,\theta_2 \in (-\pi,\pi]$, $\nabla f_1(z)$ and $\nabla f_2(z)$ are linearly independent for each $z \in \sigma_{1,2}$ with $f_1(z) = e^{i \theta_1}$ and $f_2(z) = e^{i\theta_2}$. Initially, we shall show this for $\theta_1= \theta_2 =0$. Let $z$ be a point in $\sigma_{1,2}$ such that $f_1(z) = 1$ and $f_2(z) =1$.
Suppose $\nabla f_1(z)$ is a scalar multiple of $\nabla f_2(z)$. Then 
\[
    \frac{\partial f_2(z)/\partial z_1}{\partial f_1(z)/\partial z_1} = \frac{\partial f_2(z)/\partial z_2}{\partial f_1(z)/\partial z_2} = \frac{\partial f_2(z)/\partial z_3}{\partial f_1(z)/\partial z_3}  
\]
The above equation reduces to the following equations: 
\[
    g_1(z_1,z_2,z_3) = 0,~~g_2(z_1,z_2,z_3) = 0
\]
where 
\begin{eqnarray*}
    g_1(z_1,z_2,z_3) &=& \frac{\partial f_2}{\partial z_1}\frac{\partial f_1}{\partial z_2} - \frac{\partial f_2}{\partial z_2}\frac{\partial f_1}{\partial z_1}\\ 
                     &=& z_3(z_1^4z_2^2 + 2z_1^4z_3^2 - z_1^2z_2^4 + z_1^2z_2^4 - 2z_2^4z_3^2 - z_2^2z_3^4),\\
    g_2(z_1,z_2,z_3) &=& \frac{\partial f_2}{\partial z_2}\frac{\partial f_1}{\partial z_3} - \frac{\partial f_2}{\partial z_3}\frac{\partial f_1}{\partial z_2}\\
                     &=& z_1(-z_1^4z_2^2 + z_1^4z_3^2 - 2z_1^2z_2^4 + 2z_1^2z_3^4 - z_2^4z_3^2 + z_2^2z_3^4).
\end{eqnarray*}
Now, we consider the common zeros of the equations $g_1 =0, g_2 =0, f_2 =1$. Note that the common zero set of these equations is exactly the same as the variety of the ideal $I = \langle g_1,g_2,f_2-1\rangle$. 
Note that $\sqrt{I} = \mathbb{C}[z_1,z_2,z_3]$\footnote{We can compute radical of $I$ using the Macaulay2 software.}. Therefore, the functions $g_1,g_2, f_2-1$ have no common zero in $\mathbb{C}^3$.
Fix $\theta \in (-\pi,\pi]$. We want to show that there is no common zero for the functions $g_1,g_2,f_2 - e^{i\theta}$. 
Let $v$ be a common zero of these functions. By the above lemma \ref{homog_zero}, $e^{-i\theta/6}v$ is a  common zero of $g_1,g_2,f_2-1$. This contradicts the above fact that there is no common zero for $g_1,g_2,f_2-1$.
This shows that $\nabla f_1(z)$ and $\nabla f_2(z)$ are linearly independent for each $z \in \sigma_{1,2}$.\\

Next, we prove that $\nabla f_2(z)$ and $\nabla f_3(z)$ are linearly independent for each $z \in \sigma_{2,3}$. Let $z$ be a point in $\sigma_{1,2}$ such that $f_2(z) = 1$ and $f_3(z) =1$.
If $\nabla f_2(z)$ is a scalar multiple of $\nabla f_3(z)$, then the ratios of each component of $\nabla f_2$ and $\nabla f_3$ are the same. Hence, we obtain the following equations.
\[
    h_1(z_1,z_2,z_3) = 0,~~h_2(z_1,z_2,z_3) = 0
\]
where 
\begin{multline*}
    h_1(z_1,z_2,z_3) = \frac{\partial f_2}{\partial z_1}\frac{\partial f_3}{\partial z_2} - \frac{\partial f_2}{\partial z_2}\frac{\partial f_3}{\partial z_1}\\ 
                     = 3z_1^6z_2 - 2z_1^5z_2z_3 + 2z_1^5z_3^2 + 9z_1^4z_2^3 - 2z_1^4z_2^2z_3 + 9z_1^4z_2z_3^2 - 4z_1^4z_3^3 - 9z_1^3z_2^4 - 6z_1^3z_2^2z_3^2 + 3z_1^3z_3^4 + \\2z_1^2z_2^4z_3 + 6z_1^2z_2^3z_3^2 + 3z_1^2z_2z_3^4 - 2z_1^2z_3^5 - 3z_1z_2^6 + 2z_1z_2^5z_3 - 9z_1z_2^4z_3^2 - 3z_1z_2^2z_3^4 + z_1z_3^6\\ - 2z_2^5z_3^2 + 4z_2^4z_3^3 - 3z_2^3z_3^4 + 2z_2^2z_3^5 - z_2z_3^6
\end{multline*}
\begin{multline*}
    h_2(z_1,z_2,z_3) = \frac{\partial f_2}{\partial z_2}\frac{\partial f_3}{\partial z_3} - \frac{\partial f_2}{\partial z_3}\frac{\partial f_3}{\partial z_2}\\
                     = z_1^6z_2 - z_1^6z_3 - 2z_1^5z_2^2 + 2z_1^5z_3^2 + 3z_1^4z_2^3 + 3z_1^4z_2^2z_3 - 3z_1^4z_2z_3^2 - 3z_1^4z_3^3 - 4z_1^3z_2^4+ 4z_1^3z_3^4\\ + 2z_1^2z_2^5 + 9z_1^2z_2^4z_3 - 6z_1^2z_2^3z_3^2 + 6z_1^2z_2^2z_3^3 - 9z_1^2z_2z_3^4 - 2z_1^2z_3^5 - 2z_1z_2^5z_3 - 2z_1z_2^4z_3^2\\ + 2z_1z_2^2z_3^4 + 2z_1z_2z_3^5 + 3z_2^6z_3 + 9z_2^4z_3^3 - 9z_2^3z_3^4 - 3z_2z_3^6
                 \end{multline*}
We want to find the common zeros of $h_1 =0, h_2 =0, f_2 =1$. Consider the ideal $J = \langle h_1,h_2,f_2-1\rangle$. 
Note that $\sqrt{J} = \mathbb{C}[z_1,z_2,z_3]$. So, we conclude that there is no common zero for the functions $h_1,h_2,f_2-1$.
Using similar arguments to those employed in the proof of the linear independence of $\nabla f_1(z), \nabla f_2(z)$ that there is no common zero for the functions $h_1,h_2,f_2 = e^{i\theta}$, where $\theta$ is some fixed real number in $(-\pi,\pi]$. 
This shows that $\nabla f_2(z)$ and $\nabla f_3(z)$ are linearly independent for each $z \in \sigma_{2,3}$.\\

Finally, we prove that $\nabla f_1(z)$ and $\nabla f_3(z)$ are linearly independent for each $z \in \sigma_{1,3}$. Let $z$ be a point in $\sigma_{1,2}$ such that $f_1(z) = 1$ and $f_3(z) =1$.
If $\nabla f_1(z)$ and $\nabla f_3(z)$ are linearly dependent, then the ratios of each component of $\nabla f_1$ and $\nabla f_3$ are the same. Hence, we obtain the following equations.
\[
    k_1(z_1,z_2,z_3) = 0,~~k_2(z_1,z_2,z_3) = 0
\]
where 
\begin{eqnarray*}
    k_1(z_1,z_2,z_3) &=& \frac{\partial f_1}{\partial z_1}\frac{\partial f_3}{\partial z_2} - \frac{\partial f_1}{\partial z_2}\frac{\partial f_3}{\partial z_1}\\
                     &=& z_3(-3z_1^3+z_1^2z_2+2z_1^2z_3-z_1z_2^2+z_1z_3^2+3z_2^3-2z_2^2z_3-z_2z_3^2)\\
    k_2(z_1,z_2,z_3) &=& \frac{\partial f_1}{\partial z_2}\frac{\partial f_3}{\partial z_3} - \frac{\partial f_1}{\partial z_3}\frac{\partial f_3}{\partial z_2}\\
                     &=& z_1(z_1^2z_2-z_1^2z_3+2z_1z_2^2-2z_1z_3^2-3z_2^3+z_2^2z_3-z_2z_3^2+3z_3^3)
\end{eqnarray*}
Now, we look at the common zeros of $k_1 =0, k_2 =0, f_1 -1$. Consider the ideal $K = \langle k_1,k_2,f_1-1\rangle$. 
Note that $\sqrt{K} = \mathbb{C}[z_1,z_2,z_3]$. So, we conclude that there is no common zero for the functions $k_1,k_2,f_1-1$.
Using similar arguments to those employed in the proof of the linear independence of $\nabla f_1(z), \nabla f_2(z)$ that there is no common zero for the functions $h_1,h_2,f_2 = e^{i\theta}$, where $\theta$ is some fixed real number in $(-\pi,\pi]$. 
This shows that $\nabla f_1(z)$ and $\nabla f_3(z)$ are linearly independent for each $z \in \sigma_{1,3}$.\\
 
\noindent We would like to conclude this section by laying down at least one concrete example of a
balanced analytic polyhedron whose boundary is not only $\mathbb{C}$-extremal but also 
has the feature of 
non-degeneracy of the defining functions {\it throughout} (unlike the example $D_h$ in $\mathbb{C}^2$ that we dealt with earlier) the various open faces and all pairwise intersections of the closed faces. Namely, the analytic polyhedron 
\[
D = \{(z_1,z_2,z_3) \in \mathbb{C}^3: |f_i(z_1,z_2,z_3)| < 1~\text{for}~i=1,2,3\}
\]
where $f_1(z) = z_1z_2z_3,f_2(z) = (z_1^2+z_2^2)(z_2^2+z_3^2)(z_3^2+z_1^2), f_3(z) = (z_1-z_2+z_3)(z_1+z_2-z_3)(-z_1+z_2-z_3)$.

\section{Retracts of Unbounded and topologically non-trivial domains -- 
Proofs of theorems \ref{Hartogs triangle} -- \ref{RiemSurf x HyperbolicMfld}} \label{Unbdd & Top non-trivial} \label{Products-unbdd-topnontrivial}

\noindent As mentioned in the introduction until this stage, we have for the most part,
restricted attention to 
retracts of bounded domains; though the domains in the previous section allow topological non-triviality,
as we considered `analytic complements' $D \setminus A$ at the beginning therein,
the boundedness assumption prevailed. To go beyond, to more general settings, we require 
the following lemma:
\begin{lem}\label{retrct_XtimesY}
Let $X$ and $Y$ be complex manifolds. If $F : X \times Y  \to X \times Y$ 
is a holomorphic retraction map 
from $X \times Y$ onto $Z \times \{c\}$ for some $c \in Y$, then $Z$ is a retract of $X$.
\end{lem}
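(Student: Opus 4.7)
The plan is to extract a retraction of $X$ onto $Z$ directly from $F$ by evaluating its first component on the slice $Y = \{c\}$. Concretely, I will write $F(x,y) = \bigl(F_1(x,y), F_2(x,y)\bigr)$ with $F_1 \colon X \times Y \to X$ and $F_2 \colon X \times Y \to Y$ holomorphic, and then define
\[
\rho \colon X \to X, \qquad \rho(x) := F_1(x, c).
\]
The goal is to verify that $\rho$ is a holomorphic retraction with image exactly $Z$.

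First I would record the consequences of the hypothesis that $F(X \times Y) = Z \times \{c\}$. This forces $F_2 \equiv c$ and $F_1(X \times Y) \subseteq Z$; in particular $\rho$ takes values in $Z$. Next, since $F$ fixes each point of its image, for every $z \in Z$ one has $F(z,c) = (z,c)$, so $F_1(z,c) = z$, i.e.\ $\rho(z) = z$. This simultaneously shows that $\rho$ fixes $Z$ pointwise and that $Z \subseteq \rho(X)$, giving $\rho(X) = Z$.

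The idempotence $\rho \circ \rho = \rho$ would then follow by specializing the identity $F \circ F = F$ to the slice $y = c$: from $F(F(x,y)) = F(x,y)$ and $F_2 \equiv c$ we read off
\[
F_1\bigl(F_1(x,y), c\bigr) = F_1(x,y) \qquad \text{for all } (x,y) \in X \times Y,
\]
and setting $y = c$ yields $\rho(\rho(x)) = F_1(F_1(x,c), c) = F_1(x,c) = \rho(x)$. Holomorphicity of $\rho$ is immediate because it is a composition of the holomorphic map $F_1$ with the holomorphic section $x \mapsto (x,c)$ of $X \times Y$.

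There is no real obstacle here; the only thing to be careful about is to keep the two components of $F$ separate and to use the idempotency of $F$ only after noting that $F_2$ is constantly $c$. Once the computation is organized around the slice $X \times \{c\}$, the verification that $\rho$ is a holomorphic retraction of $X$ onto $Z$ is essentially immediate, and the lemma follows.
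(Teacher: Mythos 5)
Your proof is correct and follows essentially the same route as the paper's: both define the candidate retraction $x \mapsto F_1(x,c)$ and deduce its idempotence from the first component of $F\circ F = F$ specialized to the slice $y=c$. Your additional check that $\rho$ fixes $Z$ pointwise (hence $\rho(X)=Z$) is a small point the paper leaves implicit, but the argument is the same.
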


\begin{proof}
Write $F(x,y) = (f(x,y),c)$. By comparing the first components of $F \circ F = F$ (the defining equation 
of a retraction map), we get 
\begin{equation}\label{Z}
f(f(x,y),c) = f(x,y) 
\end{equation}
Define $G : X \to Z$ by $G(x) = f(x,c)$. Note that
$G(G(x)) = G(f(x,c)) = f\left(f(x,c),c)\right)$. 
From (\ref{Z}), we get $ G(G(x))= f(x,c) = G(x)$. So, $G \circ G = G$ i.e., $G$ is a retraction map from $X$ onto
$Z$, thereby $Z$ is a retract of $X$.
\end{proof}

\noindent We are now ready to detail the proofs of various theorems dealing with unbounded and topologically 
non-trivial domains (and manifolds) as stated in the introduction, starting with that of 
theorem (\ref{1st-unbdd & top-nontrivial}).\\

\begin{proof}[Proof of theorem \ref{1st-unbdd & top-nontrivial}]
\noindent Let $Z$ be a retract of $\mathbb{C} \times D$ passing 
through $p := (a,c)$.  Let 
$F: \mathbb{C} \times D \to \mathbb{C} \times D$ be a holomorphic retraction map such 
that ${\rm image}(F) = Z$. Write $F(z,w) = \left(f_1(z,w),f_2(z,w) \right)$.
\smallskip\\
\noindent We start by considering the second component of $F$.
For each fixed $w \in D$, $f^2_w: \mathbb{C} \to D$ defined by $f^2_w(z) = f_2(z,w)$ 
is a bounded holomorphic map on $\mathbb{C}$. By Liouville's theorem, 
$f^2_w$ is a constant function and this implies that $f_2(z,w)$ is independent 
of $z$. So $f_2(z,w) = g(w)$ for some holomorphic map $g$ 
on $D$. Note that $g$ is a retraction map on $D$ such 
that $g(c) = c$. So either
$g \equiv c$, or $g \equiv I$, the identity map on $D$
and accordingly, we are led to breaking into $2$ cases as dealt with below.\\

\noindent \underline{Case-1:} $g \equiv c$.
In this case, $F(z,w) = (f_1(z,w),c)$.
By comparing the first component of $F \circ F = F$, we get $f_1(f_1(z,w),c) = f_1(z,w)$.
Note that $F$ is a holomorphic retraction map 
on $\mathbb{C} \times D$ onto $Z_1 \times \{c\}$, 
where $Z_1 := {\rm image}(f_1)$ on $\mathbb{C} \times D$. By lemma \ref{retrct_XtimesY}, $Z_1$ is a retract of $\mathbb{C}$ passing through $a$. Hence either $Z_1 = \mathbb{C}$, or $Z_1=\{a\}$. It follows that $Z = \mathbb{C} \times \{c\}$ or $Z = \{p\}$.\\

\noindent\underline{Case-2:} $g \equiv I$, the identity map on $D$. In this 
case, $F(z,w) = (f_1(z,w),w)$. For each fixed $w \in D$, we
then consider the holomorphic function $f^1_{w}: \mathbb{C} \to \mathbb{C}$ 
defined by $f^1_{w}(z) = f_1(z,w)$. The fact that $F$ is a retraction map i.e., $F \circ F = F$
implies in particular that $f_1\left(f_1(z,w),w\right) = f_1(z,w)$. This translates to the following
for the function $f^1_w$:
\[
f^1_{w} \circ f^1_{w}(z) = f^1_{w}(f_1(z,w)) = f_1(f_1(z,w),w) = f_1(z,w) = f^1_{w}(z).
\]
In short: $f^1_{w} \circ f^1_{w} = f^1_{w}$, thereby $f^1_{w}$ is a retraction map on $\mathbb{C}$.
Consequently, either $f^1_{w} \equiv I$, the identity map on $\mathbb{C}$, or 
$f^1_{w}$ is a constant 
function. This means that either $f_1(z,w) = z$ or  $f_1(z,w)$ is 
a function of $w$ alone, i.e. $f_1(z,w) = f(w)$ for 
some holomorphic function $f$ on $D$. So, we
conclude that $Z = \mathbb{C} \times D$, or $Z = \{(f(w),w)  :  w \in D\}$.
\end{proof}

\noindent Next we shall detail the complete determination of the retraction maps of $\mathbb{C} \times \Delta$.    
\begin{proof}[Proof of theorem \ref{C-minus-Delta}:] 
Let $F: \mathbb{C} \times \Delta \to \mathbb{C} \times \Delta$ be a retraction map 
such that ${\rm image}(F) = Z$ and $F(0) = 0$. Write $F(z,w) = (f(z,w),g(z,w))$. For 
each fixed $w$, $g_w: \mathbb{C} \to \Delta$ defined by $g_w(z) = g(z,w)$ is a 
bounded holomorphic map on $\mathbb{C}$. By Liouville's theorem, $g_w$ is a 
constant function and this implies that $g(z,w)$ is independent of $z$. 
So $g(z,w) = \tilde{g}(w)$, where $\tilde{g}$ is a holomorphic function on $\Delta$. 
Note that $\tilde{g}$ is a retraction map on $\Delta$ such 
that $\tilde{g}(0) = 0$. So $\tilde{g} \equiv 0$ or $\tilde{g} \equiv I$, the identity map on $\Delta$.\\

\underline{Case-1}: $\tilde{g} \equiv 0$. 
In this case, $F(z,w) = (f(z,w),0)$. The fact that $F$ is a retraction map  i.e. $F \circ F = F$ 
implies that $f(f(z,w),0) = f(z,w)$. Since $f$ has a single power series representation centred at origin, we can write 
\[
f(z,w) = P_1(z,w) + P_2(z,w) + P_3(z,w) +\ldots ,
\]
where $P_i(z,w)$ is a homogeneous polynomial in $z,w$ of degree $i$.\\
Since $f(f(z,w),0) = f(z,w)$, we get
\[
f(P_1(z,w) + P_2(z,w) + P_3(z,w) + \ldots ,0) = P_1(z,w) + P_2(z,w) + P_3(z,w) + \ldots.
\]
\begin{multline}\label{Q}
P_1(P_1 + P_2 + P_3 +\ldots, 0) + P_2(P_1 + P_2 + P_3 + \ldots,0) + P_3(P_1 + P_2 + P_3 + \ldots,0)\\
+ \ldots =  P_1(z,w) + P_2(z,w) + P_3(z,w) + \ldots.
\end{multline}
Equating degree one terms on both sides, we get $P_1(P_1,0) = P_1(z,w)$. 
Either $P_1 \equiv 0$ or $P_1(z,w) =z + bw$ for some $b \in \mathbb{C}$ and accordingly split into two subcases.\\

\noindent \underline{Subcase-(1a)}: $P_1 \equiv 0$. 
From (\ref{Q}), we get 
\[
P_2(P_2+P_3+ \ldots,0) + P_3(P_2 + P_3 + \ldots,0)
    + \ldots =  P_2(z,w) + P_3(z,w) + \ldots.
\]
The least degree term on left side is $P_2(P_2,0)$, whose degree is $4$ and the least degree term 
on right side is $P_2(z,w)$, whose degree is $2$. So we 
conclude that $P_2 \equiv 0$. Continuing in this manner, we get $P_i \equiv 0$ for all $i=1,2, \ldots$. Thereby
we conclude in this subcase that $F \equiv 0$. Hence $Z = \{(0,0)\}$.\\

\noindent \underline{Subcase-(1b):} $P_1(z,w) = z+bw$.
From (\ref{Q}), we get 
\[
P_2(P_1 + P_2 + P_3 + \ldots,0) + P_3(P_1 + P_2 + P_3 + \ldots,0)
    + \ldots =  0.
\]
Equating degree $2$ terms on both sides, we get $P_2(P_1,0) = 0$. This implies that  
$P_2(z,w) = w(Q_1(z,w))$, where $Q_1(z,w)$ is a polynomial in $z$ and $w$. Substituting 
$P_2$ in \ref{Q}, we get
\[
P_3(P_1 + P_2 + P_3 + \ldots,0) + P_4(P_1 + P_2 + P_3 + \ldots)
    + \ldots =  0
\]
Next, equating degree $3$ terms on both sides, we get $P_3(P_1,0) = 0$. This 
implies that $P_3(z,w) = w(Q_2(z,w))$, where $Q_2(z,w)$ is a polynomial in $z$ and $w$.
Continuing in this way, we get $P_i(z,w) = w(Q_{i-1}(z,w))$, where $Q_{i-1}$ is a polynomial in $z$ and $w$. 
Finally $f(z,w) = z + w(h(z,w))$, where $h$ is some holomorphic map on $\mathbb{C} \times \Delta$. In this subcase, 
$F(z,w) = (z+w(h(z,w)),0)$. Hence $Z = \mathbb{C} \times \{0\}$.\\

\underline{Case-2:} $\tilde{g} \equiv I$.
    In this case,  $F(z,w) = (f(z,w),w)$. Since $F \circ F = F$, we get $f(f(z,w),w) = f(z,w)$.\\
Since $f$ has a single power series expansion centred at origin, we can write
\[f(z,w) = P_1(z,w) + P_2(z,w) + P_3(z,w) +\ldots ,\]
where $P_i(z,w)$ is a homogeneous polynomial in $z,w$ of degree $i$\\
    Since $f(f(z,w),w) = f(z,w)$, we get
    \[f(P_1(z,w) + P_2(z,w) + P_3(z,w) + \ldots ,w) = P_1(z,w) + P_2(z,w) + P_3(z,w) + \ldots \]
    \begin{multline}\label{P}
    P_1(P_1 + P_2 + P_3 +\ldots, w) + P_2(P_1 + P_2 + P_3 + \ldots,w) + P_3(P_1 + P_2 + P_3 + \ldots,w)\\
    + \ldots =  P_1(z,w) + P_2(z,w) + P_3(z,w) + \ldots
\end{multline}
Comparing the smallest degree term on both sides, we get $P_1(P_1(z,w),w) =P_1(z,w) $. Write $P_1(z,w) = az + bw$ for some $a,b \in \mathbb{C}$. From the above relation, we get $P_1(az + bw,w) = az+bw$. This implies that $a(az+bw) + bw = az + bw$. After rearranging we get  $a^2z + b(a+1)w = az + bw$. Equating the coefficient of $z$ and $w$, we get $a^2 = a$ and $b(a+1) = b$. From this we can conclude either $a = 0$ or $1$. So we divide into two cases.\\

\noindent \underline{Subcase-(2a)}: $a =1$.
From the relation $b(a+1) = b$, we get $b = 0$. So $P_1(z,w) = z$.\\
From (\ref{P}), we get 
\[ (z + P_2 + P_3 + \ldots) + P_2(z + P_2 + P_3 + \ldots,w) + P_3(z + P_2 + P_3 + \ldots, w) + \ldots = z + P_2 + P_3 + \ldots \]
After simplifying, above equation reduces to
    \[ P_2(z + P_2 + P_3 + \ldots,w) + P_3(z + P_2 + P_3 + \ldots, w)+  \ldots = 0 .\]
Again equating smallest degree on both sides, we get $P_2 \equiv 0$.\\
Continuing in this manner, we get $P_i \equiv 0$ for $ i = 1,2,\ldots$. Hence $f_1(z,w) = z$.\\
In this subcase, $F(z,w) = (z,w)$. 
Hence we get $Z = \mathbb{C} \times \Delta$.\\

\noindent \underline{Subcase-(2b)}: $a = 0$.
In this subcase, we get $P_1(z,w) = bw$.\\
From (\ref{P}), we get
\[ P_2(bw + P_2 + P_3 +\ldots,w) + P_3(bw + P_2 + P_3 + \ldots,w) + \ldots = P_2 + P_3 + \ldots\]
Comparing the smallest degree terms, we get $P_2(bw,w) = P_2(z,w)$. This implies $P_2$ is a function of $w$ alone.\\
(\ref{P}) reduces to 
\[ P_3(bw + P_2 + P_3 + \ldots,w ) + P_4(bw + P_2 + P_3 + \ldots, w) \ldots = P_3 + P_4 + \ldots \] 
After equating the smallest degree, we get $P_3$ is a function of $w$ alone.\\
Continuing this manner, we get $P_i$ is a function of $w$ alone. So $f(z,w) = h(w)$, where $h(w) = P_1(w) + P_2(w) + P_3(w) + \ldots$\\
\smallskip
In this subcase,  $F(z,w) = \left(h(w),w\right)$. Hence we get $Z = \{(h(w),w): w \in \Delta \}$.\\
Since $\mathbb{C} \times \Delta$ is homogeneous, upto a conjugation of automorphism,  
every non-trivial retraction map is of the form $F(z,w) = \left(z+wh(z,w),0\right)$, where $h \in \mathcal{O}(\mathbb{C} \times \Delta)$ or $F(z,w) = (h(w),w)$, where $h \in \mathcal{O}(\Delta)$
\end{proof}

\noindent Next, we turn to the proofs of theorems dealing with domains or manifolds with much greater topological
complexity.

\begin{proof}[Proof of proposition \ref{discrete-removed}]
Let $\widetilde{Z}$ be a retract of $(\mathbb{C} \setminus A) \times D$ passing 
through an arbitrary point $(a,b) \in (\mathbb{C} \setminus A) \times D$. 
Let $F: (\mathbb{C} \setminus A) \times D \to (\mathbb{C} \setminus A) \times D$ 
be a retraction map such that ${\rm image}(F) = \widetilde{Z}$. Write $F(z,w) = (f(z,w),g(z,w))$.
For each fixed $w \in D$, $g_w: (\mathbb{C} \setminus A) \to D$ defined 
by $g_w(z) = g(z,w)$ is a bounded holomorphic map on $(\mathbb{C} \setminus A)$. Applying Riemann removable 
singularities theorem to each component 
of $g_w$, $g_w$ can extend holomorphically as a map $g_w': \mathbb{C} \to \overline{D}$. By 
Liouville's theorem, $g_w'$ is a constant function. In particular, $g_w$ is constant function 
and thus implies that $g(z,w)$ is independent of $z$. So $g(z,w) = \tilde{g}(w)$ for some 
holomorphic map $\tilde{g}$ on $D$. From the definition of retraction map, we 
get $F \circ F = F$. This implies in particular that 
$f\left(f(z,w),\tilde{g}(w)\right) = f(z,w)$ and $\tilde{g}(\tilde{g}(w)) = \tilde{g}$. 
Since $F(a,b) = (a,b)$, we get $\tilde{g}(b) = b$. Note that $\tilde{g}$ is a 
retraction map on $D$ such that $\tilde{g}(b) = b$. For each fixed  $w \in D$, we get a 
holomorphic map $f_{w}: (\mathbb{C} \setminus A) \to (\mathbb{C} \setminus A)$ 
defined by $f_{w}(z) = f(z,\tilde{g}(w))$. Note that
\[ 
f_{w} \circ f_{w}(z) = f_{w}\left(f(z,\tilde{g}(w))\right) = 
f\left(f(z,\tilde{g}(w)),\tilde{g}(\tilde{g}(w)\right) = f(z,\tilde{g}(w)) = f_{w}(z). 
\]
This shows that $f_{w}$ is a retraction map on $(\mathbb{C} \setminus A)$. Since every domain in $\mathbb{C}$, the only retraction maps are  identity and constant function, either $f_w \equiv I$, the identity map on $(\mathbb{C} \setminus A)$ or $f_w \equiv$ constant function (depends on $w$), hence implies that $f_w$ is independent of $z$. Hence for all $z,w \in (\mathbb{C} \setminus A) \times D$, we have 
\begin{equation}\label{H}
f(z,\tilde{g}(w)) = H(\tilde{g}(w)) \quad \text{for some holomorphic function}~ H ~ \text{on}~ D\end{equation}
\begin{center} or \end{center}
\begin{equation}\label{K}
    f(z,\tilde{g}(w)) = z.
\end{equation}

\noindent Define $Z := {\rm image}(\tilde{g})$. 
According to (\ref{H}) and (\ref{K}), we divide into $2$ cases.\\ 

\textit{Case-1:}  $f(z,\tilde{g}(w)) = H(\tilde{g}(w))$ for all $(z,w) \in (\mathbb{C} \setminus A) \times D$\\

We claim that $\widetilde{Z} = \{(H(w),w): w \in Z\}$. To prove this claim, 
observe that for each $w \in Z$, we have $\tilde{g}(w) = w$ and $f(H(w),w) = f(H(w),\tilde{g}(w)) = H(w)$ . This implies that $\{(H(w),w): w \in Z\} \subset \widetilde{Z}$. For 
attaining converse implication, let $(z_0,w_0)$ be an arbitrarily point in $\widetilde{Z}$. Since $\tilde{g}(w_0) = w_0$ and $f(z_0,w_0) = z_0$ . Hence we get $z_0 = f(z_0,w_0) = f(z_0,\tilde{g}(w_0)) = H(\tilde{g}(w_0)) = H(w_0)$. This implies that $(z_0,w_0) = \left(H(w_0),w_0\right)$. Hence $\widetilde{Z} \subset \{\left (H(w), w\right): w \in Z\}$, and this proves the claim.\\  

\textit{Case-2:}  $f(z,\tilde{g}(w)) = z$ for all $(z,w) \in (\mathbb{C} \setminus A) \times D.$\\

In this case, we claim that $\widetilde{Z} = (\mathbb{C} \setminus A) \times Z$. To prove this claim, 
note that each $(z_0,w_0) \in (\mathbb{C} \setminus A) \times Z$, $\tilde{g}(w_0) = w_0$ and $f(z_0,\tilde{g}(w_0)) = f(z_0,w_0) = z_0$. Hence $(\mathbb{C} \setminus A) \times Z \subset \widetilde{Z}$. Conversely assume that $(z_0,w_0)$ be an arbitrary point in $ \widetilde{Z}$. Note that $\widetilde{g}(w_0) = w_0 \in Z$ and $f(z_0,w_0) = z_0$. This implies that $(z_0,w_0) \in (\mathbb{C} \setminus A) \times Z$, which proves our claim.\\

\noindent Finally we get that every retract $\widetilde{Z}$ is of the form
\[ 
\widetilde{Z} = \{(f(w),w) : w \in Z\}
\] 
where $Z$ is a retract of $D$ and $f$ is a holomorphic map on $Z$
\textit{or}, of the form $(\mathbb{C} \setminus A) \times Z$.

\end{proof}

\noindent We shall next record the proof of theorem \ref{RiemSurf x HyperbolicMfld}, 
whose verifying details will be shortened as most of its arguments are contained in the prequel. 
To explicitly spell out the special case of domains contained in Euclidean space: taking 
$X$ to be the simplest non-compact Riemann surface 
namely $\mathbb{C}$, we have a description of all the retracts of $\mathbb{C} \times D$
where $D$ is any hyperbolic domain in $\mathbb{C}^N$. Indeed as mentioned earlier, this class contains domains with 
a wide variety of fundamental (or for that matter higher homotopy and homology) groups.

\begin{proof}[Proof of theorem \ref{RiemSurf x HyperbolicMfld}:]

(i)~   All arguments used in the proof of theorem \ref{1st-unbdd & top-nontrivial} holds 
for this case too except for proving $g_w$ is independent of $z$. So, we have only to prove 
that $g_w$ is independent of $z$.\\
Let $Z$ be a retract of $X \times Y$ passing through $p := (a,c)$, where $(a,c) \in X \times Y$. 
Let $F: X \times Y \to X \times Y$ be a holomorphic retraction map such that ${\rm image}(F) = Z$. 
Write $F(z,w) = (f(z,w),g(z,w))$ in standard component notations.\\
 \noindent For each fixed $w \in Y$, $g_w: X \to Y$ defined by $g_w(z) = g(z,w)$ is a 
holomorphic map from a compact Riemann surface $X$ to non-compact Riemann surface $Y$. 
If $g_w$ is non-constant map, then by open mapping theorem, $g_w(X)$ is open and 
by continuity of $g_w$, $g_w(X)$ is compact, hence closed. Since, $X$ is connected,
$g_w$ is surjective. This does not holds because $Y$ is non-compact. Hence we conclude
that $g_w$ is a constant function and this implies that $g(z,w)$ is independent of $z$.
So $g(z,w) = \tilde{g}(w)$ for some holomorphic map $\tilde{g}$ on $Y$. \\

(ii)~ The proof for this case follows similar line of arguments used in the proof of 
proposition \ref{discrete-removed}. So, we have only to prove that $g_w$ is independent of $z$.\\

\noindent Let $\widetilde{Z}$ be a retract of $X \times Y$ passing through $p := (a,c)$, 
where $(a,c) \in X \times Y$. Let $F: X \times Y \to X \times Y$ be a holomorphic retraction 
map such that ${\rm image}(F) = \widetilde{Z}$. Write $F(z,w) = (f(z,w),g(z,w))$. For each 
fixed $w \in Y$, consider then the map $g_w: X \to Y$ defined by $g_w(z) = g(z,w)$, which is a holomorphic map 
from a non-hyperbolic Riemann surface $X$ to hyperbolic complex manifold $Y$. Let $d_X, d_Y$ 
denotes Kobayashi hyperbolic pseudo-distance on $X$ and $Y$ respectively. By the contractive 
property of hyperbolic pseudodistance under holomorphic maps, we get 
$d_Y(g_w(x_1), g_w(x_2)) \leq d_X(x_1,x_2)$. Since $X$ is non-hyperbolic 
Riemann surface, $d_X \equiv 0$. By hyperbolicity of $Y$, we get $g_w(x_1) = g_w(x_2)$. Since this is true for every $x_1, x_2 \in X$, we get $g_w$ is a constant 
function for each fixed $w \in Y$. This implies that $g(z,w)$ is independent of $z$. 
\end{proof}

\noindent We shall now detail the proof of theorem \ref{Hartogs triangle} about retracts of the
Hartogs triangle and its analytic complements. Firstly, a very quick proof is as follows. Making the identification
$\Omega_H \simeq \Delta \times \Delta^*$, retracts of the Hartogs triangle 
can be deduced as an immediate corollary of lemma \ref{Retr-of-anal-complements} taking
$D$ to be just the bidisc $\Delta^2$.  An alternative direct albeit longer proof is as follows.

\begin{proof}[Proof of theorem \ref{Hartogs triangle}]
Let $F$ be a non-constant holomorphic retraction map on $(\Delta \times \Delta^*)\setminus A$ such that $Z = {\rm image}(F)$.  
Write $F(z,w) = \left(F_1(z,w),F_2(z,w)\right)$, where 
$F_1 : (\Delta \times \Delta^*) \setminus A \to \Delta$ and $F_2 : (\Delta \times \Delta^*) \setminus A \to \Delta^*$ 
are holomorphic functions. Since $A$ is a proper analytic subset of $\Delta \times \Delta^*$ and by Riemann removable 
singularities theorem, $F_1, F_2$ can be extended holomorphically as a map 
$\widetilde{F}_1, \widetilde{F}_2 : \Delta \times \Delta^* \to \overline{\Delta}$ respectively. 
Write $\Delta \times \Delta^* = \Delta^2 \setminus A_0$, where $A_0 = \{(z,w) \in \Delta^2: w = 0 \}$. Note that $A_0$ is a proper analytic subset of $\Delta^2$. Applying the same argument, $\widetilde{F}_1, \widetilde{F}_2$ can be extended to a holomorphic map from $\Delta^2$ to $\overline{\Delta}$.
By a little abuse of notation, we view $\widetilde{F}_1, \widetilde{F}_2$ are holomorphic maps from $\Delta^2$ to $\Delta$. 
We now observe that the images of these maps actually does not contain any point 
from $\partial \Delta$ i.e., they map into $\Delta$; this follows
by the open mapping theorem (non-constant holomorphic functions 
from domains in $\mathbb{C}^N$ are open maps into $\mathbb{C}$;). Note that atleast one of $\widetilde{F}_1$ or  $\widetilde{F}_2$
are non-constant! So, $\widetilde{F}_1, \widetilde{F}_2$ are holomorpic maps from $\Delta^2$  to $\Delta$. 
Thereby, $F$ extends holomorphically as a map $\widetilde{F} : \Delta^2 \to \Delta^2$;
elaborately for the sequel, 
$\widetilde{F}(z,w) = \left(\widetilde{F}_1(z,w), \widetilde{F}_2(z,w)\right)$ to $\Delta^2$.\\
Next we want to prove that $\widetilde{F}$ is a retraction map on $\Delta^2$. By hypothesis, $F \circ F \equiv F$ on $(\Delta \times \Delta^*)\setminus A$. 
Let $(z_0,w_0) \in A$ be an arbitrary point. Pick any sequence 
$(z_n,w_n) \in (\Delta \times \Delta^*)\setminus A$ converging to $(z_0,w_0)$. So,
$z_n \to z_0$ and $w_n \to w_0$. Since $(z_n,w_n) \in (\Delta \times \Delta^*)\setminus A$ 
and $\widetilde{F} \equiv F$ on $(\Delta \times \Delta^*)\setminus A$, we have $\widetilde{F} \circ \widetilde{F}(z_n,w_n) = \widetilde{F}(z_n,w_n)$. By continuity of $\widetilde{F}$, as $n \to \infty$ we get $\widetilde{F} \circ \widetilde{F}(z_0,w_0) = \widetilde{F}(z_0,w_0)$. Since $(z_0,w_0)$ is an arbitrary point, we get the relation $\widetilde{F} \circ \widetilde{F} \equiv \widetilde{F}$ on $\Delta^2$. This proves that $\widetilde{F}$ is  a holomorphic retraction on $\Delta^2$.\\

\noindent Let $\widetilde{Z} := {\rm image}(\widetilde{F})$. By theorem 3 in \cite{Heath_Suff}, $\widetilde{Z}$ is either  of the form $\{(z,f(z)): z \in \Delta\}$ for some holomorphic self map $f$ on $\Delta$ or $\{(g(w),w): w \in \Delta\}$ for some holomorphic self map $g$ on $\Delta$. Since $Z \subset (\Delta \times \Delta^*)\setminus A$ and $ Z \subset \widetilde{Z}$ ($\widetilde{F}$ is a holomorphic extension of $F$), we get $Z$ is either of the form $\{(z,f(z)): z \in \Delta\} \cap ((\Delta \times \Delta^*) \setminus A)$ for some holomorphic map $f: \Delta \to \Delta$ or $\{(g(w),w): w \in \Delta^*\} \cap ((\Delta \times \Delta^*) \setminus A)$ for some holomorphic map $g: \Delta \to \Delta$.
\end{proof}

\section{Retracts of $\mathbb{C}^2$; proof of theorem \ref{polyretract-of-C^2}} \label{sectn-polyretract-of-C^2}

\noindent To begin with a few words in general about the non-trivial (holomorphic) retracts of the complex Euclidean space $\mathbb{C}^N$ of any given dimension $N$, note that all its retracts are contractible complex submanifolds which are non-compact as $\mathbb{C}^N$
does not admit any non-trivial compact submanifold. Apart from being Stein submanifolds, they are non-hyperbolic; indeed, they
are `Liouville' submanifolds as their Kobayashi metric vanishes identically (due to lemma \ref{contr}). In-particular, we see as an immediate 
consequence of the uniformization theorem that any one-dimensional retract $Z$ of $\mathbb{C}^N$ is biholomorphic 
to $\mathbb{C}$; thereby, $Z \times \mathbb{C}^{N-1}$ is biholomorphic to
$\mathbb{C}^N$. Thus, we may decompose $\mathbb{C}^N$ as $Z \times \mathbb{C}^{N-1}$ as well, though it may be asked as to 
why we may want such a decomposition. More importantly, our purpose in proving that retracts of the various kinds
of {\it bounded} balanced domains earlier, are graphs of holomorphic maps, was to cast the retracts in as simple a 
`normal form' as possible. It is natural to first examine if this as {\it literally} true for $\mathbb{C}^N$. 
Towards this, we first give an example of a holomorphic retract 
of $\mathbb{C}^2=\mathbb{C} \times \mathbb{C}$ which can
neither be realized as the graph over the first factor nor 
the second; moreover, it is also not a product of retracts -- taking graphs and products of retracts are the two canonical 
ways in which retracts of product domains arise and this provides
an example that cannot be realized as arising directly from 
either of these ways. This is discussed in the following

\begin{prop}\label{not_graph}
The one-dimensional complex submanifold of $\mathbb{C}^2$ given 
by 
\[ Z:=  \{(e^z,ze^{-z}): z \in \mathbb{C} \} \] 
is a holomorphic retract of $\mathbb{C}^2$
that can neither be written as a graph of any holomorphic map on the first factor 
nor the second; indeed, 
it is the image of the holomorphic retraction map $f$ on $\mathbb{C}^2$ defined 
by $f(z,w) = (e^{zw},zw~ e^{-zw})$ and $Z$ can in-fact not be written as a graph of any 
holomorphic map on any one-dimensional linear subspace of $\mathbb{C}^2$.
\end{prop}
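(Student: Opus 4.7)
The plan is to establish the three assertions in turn, with the bulk of the effort concentrated on the last. First I will verify that $f(z,w) = (e^{zw}, zwe^{-zw})$ is a holomorphic retraction of $\mathbb{C}^2$ with image $Z$: setting $u := zw$, we have $f(z,w) = (e^u, u e^{-u})$, and the product of the two coordinates of $f(z,w)$ is again $u$, so $f \circ f = f$ by direct substitution; surjectivity onto $Z$ is immediate from restricting $f$ to the slice $w = 1$.

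For the assertion about the two coordinate factor projections, I will use the parametrization $\zeta \mapsto (e^{\zeta}, \zeta e^{-\zeta})$ of $Z$. The first projection $\zeta \mapsto e^{\zeta}$ omits the origin and, moreover, the periodicity $\zeta \mapsto \zeta + 2\pi i$ produces distinct points of $Z$ with identical first coordinate, ruling out a graph structure over the first factor. The second projection $\zeta \mapsto \zeta e^{-\zeta}$ has derivative $(1-\zeta)e^{-\zeta}$ vanishing at $\zeta = 1$, so it is not locally injective there and likewise fails to exhibit $Z$ as a graph over the second factor.

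For the third and main assertion, fix an arbitrary one-dimensional complex linear subspace $L \subset \mathbb{C}^2$, choose a linear complement $L'$, and take a linear functional $\ell \colon \mathbb{C}^2 \to \mathbb{C}$ with $\ker \ell = L'$. The condition that $Z$ be the graph of a holomorphic map in the decomposition $\mathbb{C}^2 = L \oplus L'$ is equivalent to the entire function
\[
u(\zeta) \;=\; \ell\bigl(e^{\zeta},\, \zeta e^{-\zeta}\bigr) \;=\; \alpha e^{\zeta} + \beta\,\zeta e^{-\zeta}, \qquad (\alpha,\beta)\neq (0,0),
\]
being a bijection of $\mathbb{C}$ onto itself, where $(\alpha,\beta)$ are the values of $\ell$ on the standard basis. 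I plan to rule this out in two cases: when $\beta = 0$, the map $u = \alpha e^{\zeta}$ omits the value $0$ and so is not surjective. When $\beta \neq 0$, a quick inspection of the power series $u(\zeta) = \alpha + \sum_{n\ge 1} \bigl[\alpha + (-1)^{n-1}\beta n\bigr]\zeta^{n}/n!$ shows that its coefficients cannot vanish eventually, so $u$ is transcendental; then the classical dichotomy that an injective entire function on $\mathbb{C}$ must be affine linear (no critical points together with Picard's great theorem rule out an essential singularity at infinity, forcing a polynomial, then injectivity forces degree one) yields that $u$ is not injective.

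In every case $\pi_L \colon Z \to L$ fails to be bijective, so $Z$ is not a graph over any one-dimensional linear subspace of $\mathbb{C}^2$. The main obstacle I anticipate is the clean invocation of the injective-entire-implies-affine classification in the generic case and the accompanying verification that the combination $\alpha e^\zeta + \beta \zeta e^{-\zeta}$ is transcendental whenever $\beta \neq 0$; the remainder of the argument is elementary.
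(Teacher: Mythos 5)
Your proposal is correct, and it diverges from the paper's argument in an instructive way. For the first coordinate factor you use the same observation as the paper (the projection $\zeta\mapsto e^{\zeta}$ omits $0$, so it cannot be a bijection onto $\mathbb{C}$); your additional remark about $2\pi i$-periodicity giving two distinct points of $Z$ over the same first coordinate is an equally valid alternative. For the second factor the paper argues quite differently: assuming $Z=\{(g(w),w)\}$ it derives the functional equation $e^{wg(w)}=g(w)$ and reaches a contradiction via a careful comparison of branches of the logarithm on overlapping slit planes, concluding that a branch of $\log$ would exist on all of $\mathbb{C}^*$. Your route — the derivative of $\zeta e^{-\zeta}$ vanishes at $\zeta=1$, so $\pi_2|_Z$ is not injective (the two preimages give distinct points of $Z$, since the parametrization $\zeta\mapsto(e^{\zeta},\zeta e^{-\zeta})$ is injective, the product of the coordinates recovering $\zeta$) — is substantially shorter and avoids the multivalued-function bookkeeping entirely. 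Finally, your treatment of an arbitrary one-dimensional linear subspace, reducing the graph condition to the bijectivity of $u(\zeta)=\alpha e^{\zeta}+\beta\zeta e^{-\zeta}$ and then invoking transcendence of $u$ for $\beta\neq 0$ together with the classification of injective entire functions, uniformly subsumes both coordinate cases and actually establishes the full strength of the stated proposition, whereas the paper's written proof only treats the two coordinate projections explicitly. The only points worth spelling out in a final write-up are the injectivity of the parametrization of $Z$ (used whenever you pass from non-injectivity of a composed map to non-injectivity of the projection restricted to $Z$) and the one-line verification that "graph over $L$ with complement $L'$" forces $\pi_L|_Z$ to be a bijection onto $L$; neither is a gap.
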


\begin{proof} It is easily first noted that $Z$ cannot be written as $\{(z,g(z)): z \in \mathbb{C}\}$ for 
some entire function $g$ on $\mathbb{C}$ because the first projection $\pi_1(z,g(z)) = z$ maps 
{\it onto} $\mathbb{C}$ whereas $\pi_1(e^z,ze^{-z}) = e^z$ is not onto. Next, we shall establish that 
$Z$ cannot be written in the form $\{(g(w),w): w \in \mathbb{C}\}$ either.
To prove this by contradiction, suppose $Z = \{(g(w),w): w \in \mathbb{C}\}$ for some entire function $g$. 
This implies that $g$ satisfies $e^{wg(w)} = g(w)$. Hence we get $(e^{g(w)})^w = g(w)$. Therefore, if we 
define $h: \mathbb{C} \to \mathbb{C}^*$ by $h(w) = e^{g(w)}$, it follows that: $h$ satisfies $(h(w))^w = g(w)$.
Note that $h ~\text{and} ~ g$ are surjective holomorphic functions from $\mathbb{C}$ to $\mathbb{C}^*$.\\

\noindent Let $L_1$ be an arbitrary ray emanating from origin  and $z$ an arbitrary point of 
$\mathbb{C} \setminus L_1$. Just by the closedness of $L_1$ in 
$\mathbb{C}$, we may pick an open disc $D \subset \mathbb{C} \setminus L_1$. As 
$\mathbb{C} \setminus L_1$ is simply connected, there exists atleast one (indeed infinitely many)  
branch of the (complex) logarithm on $\mathbb{C} \setminus L_1$; pick one of them and 
call it $\ell_1$. Choose a ray $L_2$ starting from origin 
such that $L_2$ intersects $D$ . Note that $L_2$ divides $D$ into two regions  which shall be 
referred to by
$D^+ ~ \text{and}~ D^-$. As with $\ell_1$, we likewise
pick and denote by $\ell_2$, a branch of logarithm on $\mathbb{C} \setminus L_2$. \\

\noindent Since $h(w)^w = g(w)$ for all $w \in \mathbb{C}$, we have  $w\ell_1(h(w)) = \ell_1(g(w))~ \text{for all}~ w \in D^+ \cup D^-$   which
implies that $h(w) = e^{\frac{\ell_1(g(w))}{w}}$. Similarly, we 
get $h(w) = e^{\frac{\ell_2(g(w))}{w}}~ \text{for all}~ w \in D^+ \cup D^-$.\\
It then follows that on $D^+ \cup D^-$, 
\[ 
e^{\frac{\ell_1(g(w))}{w}} = e^{\frac{\ell_2(g(w))}{w}}.
\]
This implies that for all $w \in D^+ \cup D^- $, we have 
\begin{equation} \label{log} 
\ell_1(g(w)) - \ell_2(g(w)) = 2n \pi iw.
\end{equation} 
But $\ell_1(g(w)) - \ell_2(g(w))$ is an integer multiple of $2 \pi i$ for 
any $w \in D^+ \cup D^-$. So, (\ref{log}) holds only for $n = 0$. Hence 
we get $\ell_1(g(w)) = \ell_2(g(w)) ~ \text{for all}~ w \in D^+ \cup D^-$. This 
implies that $\ell_2$ is a branch of logarithm on $\mathbb{C}^*$. This 
contradicts the fact that there  is no branch of 
logarithm on $\mathbb{C}^*$ and completes the proof.
\end{proof}

We note that the above retract is however,
straightenable and thereby 
conjugate to a graph -- we say that a
complex submanifold $M$ of $\mathbb{C}^N$
of dimension $k$
is straightenable or rectifiable
if we can apply an automorphism of
$\mathbb{C}^N$ which maps $M$ onto 
$\mathbb{C}^k$. Getting 
examples of retracts which are not conjugate
to graphs is more non-trivial but
it is not difficult to derive it 
from the 
work of Forstneric -- Globevnik -- Rosay
\cite{Forstn}.
To briefly recall their main result,
let us recall as they do in the 
introduction of \cite{Forstn},
that Abhyankar and Moh in \cite{Abynkr} and M.Suzuki in \cite{Suzuki} 
proved the following: if 
$P$ is a polynomial embedding of $\mathbb{C}$ into 
$\mathbb{C}^2$, then there exists a polynomial automorphism $\psi$ 
of $\mathbb{C}^2$ such that 
$\psi \circ P(\mathbb{C}) = \mathbb{C}
\times \{0\}$. This result is not true for holomorphic embeddings 
of $\mathbb{C}$ in $\mathbb{C}^2$ or for that
matter in $\mathbb{C}^N$ for any $N \geq 2$. By the first proposition in \cite{Forstn}, 
there exists a
 proper holomorphic embedding $H:\mathbb{C} \to \mathbb{C}^N$ such
 that for no automorphism $\psi$ of $\mathbb{C}^N$, 
 we can attain: 
 $\psi \circ H(\mathbb{C}) = \mathbb{C}\times\{0\} \subset \mathbb{C}^N)$. For the embedding provided 
 by that proposition, we prove here 
 that ${\rm image}(H)$ is indeed a retract
  of $\mathbb{C}^N$.
Towards this, 
the manner in which this proper
holomorphic embedding $H$ 
is defined and attained in the proof of proposition $2$ of their paper \cite{Forstn},
is now very briefly recalled.
Start with the standard 
embedding $H_0:\mathbb{C} \to \mathbb{C}^N$ defined
 by $H_0(\zeta) = (\zeta,0)$. 
 For $j \geq 1$, let $H_j := \psi_j \circ
  H_{j-1}$, where $\psi_j$ 
  is some automorphism of
  $\mathbb{C}^N$. 
  These
 $(H_j)$'s constitute 
 a sequence of proper embeddings
 of $\mathbb{C}$ satisfying 
some properties which we do not need to repeat here. 
Taking their uniform
limit on compacts $H :=  \lim H_j$ 
yields the desired embedding; for further details,
we refer the reader to \cite{Forstn} but this recap
will suffice for our aforementioned purpose which we finish off in the next paragraph.\\

\noindent Denote by $\rho_0: \mathbb{C}^N \to 
\mathbb{C}^N$, {\it the simplest} of standard one-dimensional
retraction maps namely, $\rho_0(z,w) = (z,0)$, where $z \in 
\mathbb{C}$ and $w \in \mathbb{C}^{N-1}$. Note that 
${\rm image} (H_0) = {\rm image}(\rho_0)$.
For $j \geq 1$, let
$\rho_j: \mathbb{C}^N \to 
\mathbb{C}^N$ be defined by $\rho_j := \psi_j \circ H_j \circ \psi_j^
{-1}$, where $\psi_j$ is the automorphism mentioned in the above 
construction. Observe that each $\rho_j$ is a 
holomorphic retraction 
with ${\rm image}(\rho_j) = {\rm image}(H_j)$.
Note that the $\rho_j$'s converge 
uniformly on compacts. If we
denote the limit
map by $\rho$ then, $\rho$ is also 
a retraction -- this follows
by passing to the limit
in the defining equation of a retraction
for our specific map $\rho_j$ 
(i.e., writing $\rho_j \circ \rho_j=\rho_j$ elaborately),
wherein taking a `triple limit' is
justified owing to the uniform convergence.
Finally, we note that the limit
of this retraction map $\rho$ satisfies
 ${\rm image}(H) = {\rm image}(\rho)$. 
 Hence we have verfied the 
 existence of a one-dimensional 
 holomorphic retract $Z$ of $\mathbb{C}^N$ which is not 
 straightenable, thereby not conjugate to a 
 graph as well.\\

\noindent Owing to the foregoing impossibility of
attaining a rectifiability result even for one-dimensional
holomorphic retracts of $\mathbb{C}^N$, 
it seems too difficult to obtain a complete characterization of holomorphic retracts, indeed even for $\mathbb{C}^2$ which is unknown. However it is possible to deduce a complete characterization of 
polynomial retracts on $\mathbb{C}^2$ from known results about the polynomial 
ring in two complex variables and 
this yields in-particular, that all 
polynomial retracts are rectifiable -- this result is not at all new. However, even this deduction is not entirely trivial and 
needs some work which was done
by Shpilrain and Yu 
in \cite{Shpilrain -- Yu} with the key ingredients being the 
theorem 3.5 in \cite{Cost} and the Abhyankar -- Moh theorem (main theorem of \cite{Abynkr}). We shall show an alternative route
without these ingredients as discussed below. \\

\noindent An important point to note here is that though the Abhyankar -- Moh 
theorem is a result that renders the rectifiability of polynomial
embeddings of $\mathbb{C}$, it does not follow as an immediate 
corollary that polynomial retracts can be straightened. Firstly, 
though retracts are embedded complex
submanifolds wherein `embedded' here is in the sense
of differential geometry, it is far from clear that a (non-trivial) holomorphic 
retract of $\mathbb{C}^2$ is the image of an embedding of $\mathbb{C}$ -- 
this is true but seems to require the uniformization theorem. Furthermore,
if the retract is a polynomial retract, it is not at all immediate that 
the embedding given by the uniformization theorem can be taken to be
a polynomial one. In-fact, the converse namely, that the image 
of an embedding in the sense of the Abhyankar -- Moh theorem is indeed
a (polynomial) retract is not clear without invoking that theorem.
Notwithstanding all this, it is possible to give 
a direct, self-contained and elementary proof for this (known) result
concerning the rectifiability of polynomial retracts of $\mathbb{C}^2$; 
in-particular, neither using the result of Costa and nor
the Abhyankar -- Moh theorem, as we do in the 
next subsection.\\

\subsection{Proof of theorem \ref{polyretract-of-C^2}}
Since $Z$ is a polynomial retract of $\mathbb{C}^2$,  
there exists an idempotent map $F: \mathbb{C}^2 \to \mathbb{C}^2$ whose 
(both) components are (holomorphic) polynomials and 
${\rm image}(F) = Z$; we may assume from the
outset via a translation, that $Z$ passes through the
origin, so that $F(0)=0$ in-particular.
Again, by a linear change of variables, we 
may further assume 
$DF_{\vert_0}(z,w) = (z,0)$. So, we can write  
$F(z,w) = (z,0) + (P_1(z,w),P_2(z,w))$ for some 
$P_1,P_2 \in \mathbb{C}[z,w]$ each of which is either 
zero or have the property that
all its monomials are of degree at-least two.\\

\noindent We shall now
proceed to proving that $P_1 \equiv 0$ or $P_2 \equiv 0$.
To prove this by contradiction, suppose that neither $P_1$ nor $P_2$ 
are identically zero. Let $P_1^{(r)},P_2^{(k)}$ be the homogeneous constituents 
of highest degree in the homogeneous decomposition of $P_1$ and 
$P_2$ respectively. 
Indeed, let us 
introduce the notations for the homogeneous
decompositions that we will need
to work with in the 
sequel:
\begin{align*}
P_1(z,w) &= P_1^{(2)}(z,w) + \ldots + P_1^{(r)}(z,w)\\ 
P_2(z,w) &= P_2^{(2)}(z,w) + \ldots + P_2^{(k)}(z,w),
\end{align*}
where $P_1^{(i)},P_2^{(j)}$ 
satisfy the following conditions:
\begin{itemize}
\item Either $P_1^{(i)} \equiv 0$ or $P_1^{(i)}$ is a homogeneous 
polynomial of degree $i$ 
for $i = 2,\ldots,r$.  
\item Either $P_2^{(j)} \equiv 0$ or $P_2^{(j)}$ is a homogeneous polynomial of degree $j$ for $j = 2,\ldots,k$.
\end{itemize}

\noindent To prove that $Z$ can be straightened, it suffices 
to show that by a change of variables,
we can reduce the arbitrary polynomial retraction map 
$F$ to a corresponding `normal form'.
To do so, we need to solve the equation of idempotence:
$F(F(z,w)) = F(z,w)$; indeed, in the notations for the components
of $F$ already introduced, this leads to the following identities 
for the afore-mentioned
polynomials:
\begin{multline}\label{P'} 
P_1^{(2)} \left(z+P_1^{(2)} +\ldots+P_1^{(r)}, \; 
P_2^{(2)} +\ldots+P_2^{(k)} \right)
\\
\; \Large\text{+} \; 
P_1^{(3)} \left( z+P_1^{(2)} +\ldots+P_1^{(r)}, P_2^{(2)}+\ldots+P_2^{(k)} \right)\\ 
\hspace{-2in}\Large\Large\text{+} \; \cdots \cdots  \; \\
        \Large\text{+}\;
        P_1^{(r)}\left(z+P_1^{(2)} +\ldots+P_1^{(r)},\; P_2^{(2)}+\ldots+P_2^{(k)}\right)
       \; \; \Large\text{=} \;\; \Large\text{0}
\end{multline}
and
\begin{multline}\label{P''} 
P_2^{(2)} \left(z+P_1^{(2)} +\ldots+P_1^{(r)}, \; 
P_2^{(2)} +\ldots+P_2^{(k)} \right)
\\
\; \Large\text{+} \; 
P_2^{(3)} \left( z+P_1^{(2)} +\ldots+P_1^{(r)}, P_2^{(2)}+\ldots+P_2^{(k)} \right)\\ 
\hspace{-2in}\Large\Large\text{+} \; \cdots \cdots  \; \\
 \hspace{2in}       \Large\text{+}\;
  P_2^{(k)}\left(z+P_1^{(2)} +\ldots+P_1^{(r)},\; P_2^{(2)}+\ldots+P_2^{(k)}\right)\\
  \Large{\Large\text{=}}\;\;
  P_2^{(2)}(z,w) + \ldots + P_2^{(k)}(z,w)
\end{multline}

\noindent We now divide the arguments into two cases depending 
on whether $r = k$ or not.\\

\textbf{Case-1:} $r \neq k$.  
In this case, note that all the highest degree terms of 
left hand side of (\ref{P'}) are contained within 
$P_1^{(r)}\left( P_1^{(r)},P_2^{(k)} \right)$. 
Write $P_1^{(r)}(z,w) = \sum_{i+j = r} a_{ij}z^iw^j$.
Hence 
\[
P_1^{(r)} \left(P_1^{(r)},P_2^{(k)}\right) = 
\sum_{i+j=r}a_{ij}(P_1^{(r)})^i(P_2^{(k)})^j.
\]
Each summand in the above sum is a 
homogeneous polynomial of degree 
$ri+jk = ri+(r-i)k$, 
which we may write as: $rk + i(r-k)$. 
Let $N(i) := rk +i(r-k)$. 
Note that on the ordered finite set of 
integers:
$\left( 0,1,\ldots,r \right)$,
the function $N(i)$ is strictly monotonic, as $r \neq k$. 
Thus $N(i)$ attains maximum at a unique 
point $i_0$. 
So, the highest degree monomials 
of $P_1^{(r)}\left(P_1^{(r)},P_2^{(k)} \right)$ are 
all contained within the homogeneous polynomial 
$a_{i_0j_0}\left(P_1^{(r)}\right)^{i_0} \left(P_2^{(k)}\right)^{j_0}$.
Equating the highest degree terms of 
the left hand and right hand sides 
of (\ref{P'}), 
we obtain: 
$a_{i_0j_0}\left(P_1^{(r)}\right)^{i_0}\left(P_2^{(k)}\right)^{j_0} \equiv 0$. 
Since $a_{i_0j_0} \neq 0$, we have $\left(P_1^{(r)}\right)^{i_0}\left(P_2^{(k)}\right)^{j_0} 
\equiv 0$. This implies by the identity principle 
that $P_1^{(r)} \equiv 0$ or $P_2^{(k)} \equiv 0$,  
which contradicts our assumption that $P_1\cdot P_2 \not\equiv 0$. 
Thus, either $P_1 \equiv 0$ or $P_2 \equiv 0$, contradicting the 
assumption for the case being dealt with here.\\

\textbf{Case-2:} $r = k$.  
In this case, note that $P_1^{(k)}\left(P_1^{(k)},P_2^{(k)}\right) \equiv 0$ 
as right hand side of 
(\ref{P'}) is identically zero. 
We also have $P_2^{(k)}\left(P_1^{(k)},P_2^{(k)}\right) \equiv 0$,
this time owing to the fact that
the highest degree of right 
hand side of 
(\ref{P''}) is at-most $k$ whereas
the degree of every monomial in 
the polynomial $P_2^{(k)}\left(P_1^{(k)},P_2^{(k)}\right)$ is $k^2$ 
unless this polynomial vanishes identically. Consider then the holomorphic map $F_h: 
\mathbb{C}^2 \to \mathbb{C}^2$ defined by 
$F_h(z,w) = \left(P(z,w),Q(z,w)\right)$, 
where $P(z,w) = P_1^{(k)}(z,w)$ and 
$Q(z,w) = P_2^{(k)}(z,w)$. 
First we consider the sub-case that 
${\rm image}(F_h)$ has non-empty 
interior. Since 
$P(P,Q) \equiv 0, Q(P,Q) \equiv 0$, it 
follows by 
the identity principle that $P$ and $Q$ are identically zero, 
which contradicts our assumption that 
$P_1,P_2 \not\equiv 0$.\\

\noindent Next we deal with the other sub-case that 
${\rm image}(F_h)$ has empty interior. First we 
want to prove that there exists $\lambda_0 \in \mathbb{C}$ such 
that $Q(z,w) = \lambda_0 P(z,w)$. Let $S$ denote the zero set of 
$F_h$. If $(\alpha,\beta) \in S$, then for each $\lambda \in 
\mathbb{C}$, $\lambda(\alpha,\beta) \in S$ -- this follows from the fact that $S$ is the zero set of a map whose components are both homogeneous polynomials
of the {\it same} degree and here is where the 
hypothesis of `case-2' is used.
Thus,
$S$ is an union of  complex lines in $\mathbb{C}^2$. 
Let $S':= \pi(S)$, where 
$\pi: (\mathbb{C}^2 \setminus \{0\}) \to 
\mathbb{CP}^1$ is the canonical quotient map.
We observe that $S'$ is 
a finite set in $\mathbb{CP}^1$. 
To verify this, let $[z : w] \in S'$. Without loss of 
generality, assume that $z \neq 0$. Dividing by $z^k$ throughout 
each of the equations $P(z,w) = 0$ and $Q(z,w) = 0$, we 
get one variable polynomials $f,g$ both of degree at-most 
$k$, such that:
$f(w/z) = 0$ and $g(w/z) = 0$.
It follows that
the values of the ratio $w/z$ such that it is a root of $f$ 
(likewise of $g$) are at-most $k$ many; consequently, that 
$S'$ is a finite set in 
$\mathbb{CP}^1$ and in-particular that
$S'$ is a closed subset of $\mathbb{CP}^1$.\\ 

\noindent As $F_h(\lambda z, 
\lambda w) = \lambda^k F_h(z,w)$, the map  
 $F_h$ on $\mathbb{C}^2$ induces a holomorphic 
map $\tilde{F_h} : \mathbb{CP}^1 \setminus S' \to \mathbb{CP}^1$ 
defined by $\tilde{F_h}([z : w]) = [P(z,w) : Q(z,w)]$. Note that 
$\tilde{F_h}$ is a rational map from $\mathbb{CP}^1$ into 
$\mathbb{CP}^1$. Hence $\tilde{F_h}$ induces a regular map from 
$\mathbb{CP}^1$ into $\mathbb{CP}^1$. Since $\tilde{F_h}$ is not 
surjective, $\tilde{F_h}$ is a constant map. This implies that 
$\rm{image}(F_h)$ is a one-dimensional complex line passing 
through origin. This shows that there exists $\lambda_0 \in 
\mathbb{C}$ such that $Q(z,w) = \lambda_0 P(z,w)$. We shall now
use this to reduce the present case to the former. To this end,
we apply an affine-linear transformation of coordinates given 
by $\psi(z,w) = (z,w-\lambda_0z)$. Its inverse has a similar form; indeed, 
$\psi^{-1}(z,w) = (z,w+\lambda_0z)$. Under this 
change of variables, our retraction map $F$ is 
transformed to its conjugate by $\psi$, namely
$G:=\psi \circ F \circ \psi^{-1}$, again a polynomial 
retraction map on $\mathbb{C}^2$. As we shall need it below to make a
careful observation, we compute its
explicit expression:\\
$\psi \circ F \circ \psi^{-1}(z,w) = \psi \circ F(z,w+\lambda_0z)$
\begin{equation*}
    \begin{split}
= \psi\left(z+P_1^{(2)}(z,w+\lambda_0z)+\ldots+P_1^{(k)}(z,w+\lambda_0z), P_2^{(2)}(z,w+\lambda_0z) +\ldots  \right.\\ 
\left. +P_2^{(k)}(z,w+\lambda_0z)\right)
\end{split}
\end{equation*}
\begin{equation*}
\begin{split}
= \left(z+P_1^{(2)}(z,w+\lambda_0z)+\ldots+P_1^{(k)}(z,w+\lambda_0z), ( P_2^{(2)}(z,w+\lambda_0z)+\ldots \right.\\
\left. +P_2^{(k)}(z,w+\lambda_0z)) - \lambda_0( z+P_1^{(2)}(z,w+\lambda_0z)+\ldots+P_1^{(k)}(z,w+\lambda_0z))\right)
\end{split}
\end{equation*}
Denote the {\it analogues} of the 
component polynomials (as with the case of $F$) in the above by 
$P_1'$ and $P_2'$; or in other words, precisely so that
\[
G(z,w) \;=\; \left( z + P'_1(z,w),\; P'_2(z,w) \right).
\]
Then notice that unlike $F$, the 
degrees of the components of $G$ are not both equal.
Indeed, observe that
$\deg(P'_1) = \deg(P_1)$ but $\deg(P'_2) < \deg(P_2)$, as 
follows by observing the highest degree homogeneous constituents
in the expressions occurring in the last equation 
(particularly the second component of $G$), and 
recalling that $P_2^{(k)} = \lambda_0P_1^{(k)}$. 
It follows therefore that all the
hypotheses required for case-1 above, is 
satisfied by the components of  
$G$ allowing us to appeal to the arguments therein
to conclude that either $P'_1 \equiv 0$ or $P'_2 \equiv 0$.
As $P'_1 = P_1 \circ \psi^{-1}$ and $\psi$ is 
an automorphism of $\mathbb{C}^2$, we conclude that the 
former case $P'_1 \equiv 0$ implies that $P_1 \equiv 0$
as well. 
The latter case $P'_2 \equiv 0$, however does not 
lead to a similar conclusion; in this case, recall that
relation between $P_2'$ and $P_1,P_2$ is given by
\[
P_2'(z,w) \; = \; P_2 \circ \psi^{-1}(z,w) \; - \; 
\lambda_0\left( z + P'_1(z,w) \right)
\]
Thus, the latter case $P'_2 \equiv 0$ means:
\[
P_2 \left( \psi^{-1}(z,w) \right) \; - \;
\lambda_0 P_1 \left( \psi^{-1}(z,w) \right) 
\; = \; \lambda_0 z
\]
It is then clear that this equation cannot hold thereby
giving the required contradiction to conclude 
that: the case $P_2' \equiv 0$ cannot happen; indeed, 
the right side of the above equation has no
non-linear terms whereas
the left hand side of the above has no linear terms, 
so that such an equation can at-best hold only 
when both sides are zero which in-turn can 
happen only when $\lambda_0=0$,
a contradiction (recall that $\lambda_0\neq0$).
Thus, the only conclusion of case-2 is that $P_1 \equiv 0$.\\

\noindent The common conclusion of  the foregoing cases $1$ and $2$, is that either $P_1 \equiv 0$ or $P_2 \equiv 0$. Now, we
unravel what each of these
possibilities means for our retraction map $F$.\\

\noindent If $P_1 \equiv 0$, then the equation 
(\ref{P''}) reduces to 
\begin{equation}\label{P'=0}    
P_2^{(2)}\left(z, P_2^{(2)} + \ldots +P_2^{(k)}\right) + \cdots + 
P_2^{(k)}\left(z,P_2^{(2)} + \ldots +P_2^{(k)}\right) 
\;=\; P_2^{(2)}(z,w) + \cdots + P_2^{(k)}(z,w)
\end{equation}
Examine the left hand side for {\it all} monomials of the lowest degree and
observe that it may altogether expressed as: $P_2(z,0)$,
provided of-course that this is non-zero, else the 
aforementioned lowest degree is higher than two.
Therefore, equating the lowest degree term on both sides of the above equation 
(\ref{P'=0}), we get that $P_2^{(2)}(z,0) = P_2^{(2)}(z,w)$, 
unless it is the case that $P_2 \equiv 0$. In both cases,  
we conclude that $P_2^{(2)}(z,w)$ is
independent of $w$ i.e., a polynomial
function of the single variable $z$ (possibly zero). 
Further in any case,
by canceling-off these entities from both sides,
equation (\ref{P'=0}) reduces  to
\begin{equation}
P_2^{(3)}\left(z, P_2^{(2)}+\ldots+P_2^{(k)}\right) + \cdots + 
P_2^{(k)}\left(z,P_2^{(2)}+ \ldots +P_2^{(k)}\right) 
\;=\;
P_2^{(3)}(z,w) + \cdots+ P_2^{(k)}(z,w).
\end{equation}
Again comparing the lowest degree terms on both sides of the above 
equation, we obtain that $P_2^{(3)}$ is at-most a function of $z$ alone i.e., either a cubic polynomial in $z$ or the 
zero polynomial. Continuing 
this process, we finally get that 
$P_2^{(2)},P_2^{(3)},\ldots,P_2^{(k)}$ are at-most
functions of $z$ alone. This shows 
that $P_2(z,w)$ is a function of $z$ alone. 
Hence, we have proven that $F$ is
of the form $F(z,w) = (z,\varphi(z))$ for some 
$\varphi \in \mathbb{C}[z]$; specifically,
$\varphi(z) = P_2(z,0)$. Finally here, if we conjugate our
retraction map $F$ by 
another automorphism -- which we call $\psi$ again -- 
$\psi(z,w) = (z,w-\varphi(z))$ and 
denote the resulting retraction again by $F$,
we obtain that $F$ is the 
standard orthogonal projection onto 
the $z$-axis with ${\rm image}(F) = \{(\zeta,0) 
: \zeta \in \mathbb{C}\}$, rendering the 
required rectification result for both the 
retraction map as well as the retract.\\

\noindent If $P_2 \equiv 0$, then the equation 
(\ref{P'}) becomes 
\begin{equation}\label{P''=0}   
P_1^{(2)}\left(z+P_1^{(2)} +\ldots+P_1^{(r)}, 0 \right)  
        + \cdots + 
P_1^{(r)}\left(z+P_1^{(2)} +\ldots+P_1^{(r)},0 \right) 
\;= \; 0.
\end{equation}
Equating the smallest degree terms of both sides 
of the above equation (\ref{P''=0}), we get that $P_1^{(2)}(z,0) = 0$.
This means that all the monomials in $P_1^{(2)}$ are divisible
by $w$, thereby
$P_1^{(2)}(z,w) = wQ^{(1)}(z,w)$, for some polynomial 
$Q^{(1)}(z,w)$ of degree one. 
Using this back in the above equation (\ref{P''=0}), we 
may reduce it to: 
\begin{equation*}
P_1^{(3)} \left(z+P_1^{(2)} + \ldots +P_1^{(r)}, 0 \right)
+ \cdots + 
P_1^{(r)}\left(z+P_1^{(2)} +\ldots+P_1^{(r)},0 \right) \; = \; 0.
\end{equation*}
Again comparing lowest degree terms on both sides, we 
get that 
$P_1^{(3)}(z,w) \equiv wQ^{(2)}(z,w)$ 
for some polynomial $Q^{(2)}(z,w)$ 
of degree two. 
Continuing this process, finally we obtain that for each 
$i = 2,\ldots,r$, there exists a polynomial  
$Q^{(i-1)}(z,w)$ of degree $(i-1)$ such that
$P_1^{(i)}(z,w) = wQ^{(i-1)}(z,w)$. Therefore, we obtain this time
that without any further change of variables, our
retraction map is precisely of the form
\[ 
F(z,w) = (z+wQ(z,w),0).
\]
for some $Q \in \mathbb{C}[z,w]$ which vanishes at the origin; 
specifically in the above notations,
$Q(z,w) = Q^{(1)}(z,w) + \ldots + Q^{(r-1)}(z,w)$. 
Hence, ${\rm image}(F) = \{(\zeta,0) : \zeta \in \mathbb{C}\}$.
\qed\\

\noindent We can also derive the precise form of all 
polynomial retraction mappings of $\mathbb{C}^2$, as a corollary of proof of the 
foregoing theorem \ref{polyretract-of-C^2}.
\begin{cor}
If $F: \mathbb{C}^2 \to \mathbb{C}^2$ be a polynomial retraction 
map, then after a translation and a
(isotropic) scaling, $F$ has one of the following forms:
\begin{enumerate}
\rm \item $F(z,w) = \left(d(az+bw)-b\varphi(az+bw),-c(az+bw) +a\varphi(az+bw)\right)$,\\ 
where $a,b,c,d \in \mathbb{C}$ with $ad-bc =1$ and $\varphi$ is a polynomial of one variable with $\varphi(0) = \varphi'(0)=0$.
\rm \item $F(z,w) = \left(dL(z,w),-cL(z,w)\right)$,\\
where $a,b,c,d \in \mathbb{C}$ with $ad-bc =1$,  $L(z,w) = az+bw+(cz+dw)Q(az+bw,cz+dw)$ and $Q(z,w)$ is a polynomial of degree greater than one.
\end{enumerate}
\end{cor}

\noindent Finally, as mentioned above, we may use the above 
analysis to give an elementary proof of Costa's result 
on the form of the polynomial ring/algebra in two variables. 
To this end, we begin by recalling some definitions; 
a good exposition of associated matter 
can be found in [\cite{Shplrn}].

\begin{defn}
Let $K[x, y]$ be the polynomial algebra in two variables over a field $K$ of 
characteristic $0$. 
An idempotent $K$-algebra homomorphism $\varphi$ from
$K[x,y]$ into itself is called a retraction (or a projection) map on $K[x,y]$. The image of 
such a homomorphism is called a retract of (the polynomial ring/algebra) $K[x,y]$. Thus,
a subalgebra $R$ of $K[x, y]$ is called a \textit{retract} if there 
is an idempotent homomorphism 
$\varphi: K[x, y] \rightarrow$ $K[x, y]$ such that $\varphi(K[x, y])=R$.
\end{defn}
\noindent To provide just one concrete example to illustrate the above definition, 
take $p = x + x^2y$; then $K[p]$ is a retract of $K[x,y]$ 
(the retraction map here, is given by $x \mapsto p$ , $y \mapsto 0$). Of-course as always,
before proceeding to such examples, one can cite the trivial ones, namely $K[x,y]$ or $(0)$; a retract 
of $K[x,y]$ which is neither of these two extremes, is referred to as a proper retract in this context.
In \cite{Cost}, Costa proved that every proper retract of $K[x,y]$ has the 
form $K[p]$ for some polynomial $p \in K[x,y]$. We shall be using this result in the sequel.\\

\noindent It is helpful in all of what follows, to first record the elementary fact as to
how polynomial retraction maps on $\mathbb{C}^2$ transform
to (or induce) a retraction map on the polynomial ring $\mathbb{C}[z,w]$, as discussed next. 
\begin{lem}\label{Poly_Retrctn}
A holomorphic map $\rho: \mathbb{C}^2 \to \mathbb{C}^2$ given by 
$\rho(z,w) = (p(z,w),q(z,w))$ where $p,q \in \mathbb{C}[z,w]$ is a polynomial retraction 
if and only if there 
exists a retraction map 
$\hat{\rho} : \mathbb{C}[z,w] \to \mathbb{C}[z,w]$  which take the 
standard generators $z,w$ to the polynomials $p(z,w)$ and $q(z,w)$.
\end{lem}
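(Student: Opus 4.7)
The plan is to exploit the standard contravariant equivalence between polynomial self-maps of $\mathbb{C}^2$ and $\mathbb{C}$-algebra endomorphisms of $\mathbb{C}[z,w]$, via the pullback construction, and simply check that idempotence is preserved under this correspondence. Concretely, given $\rho(z,w)=(p(z,w),q(z,w))$ with $p,q \in \mathbb{C}[z,w]$, I would define $\hat{\rho}:\mathbb{C}[z,w]\to\mathbb{C}[z,w]$ by $\hat{\rho}(f)=f\circ\rho$, i.e.\ $\hat{\rho}(f)(z,w)=f(p(z,w),q(z,w))$. It is immediate that $\hat{\rho}$ is a $\mathbb{C}$-algebra homomorphism and sends the coordinate functions $z,w$ to $p$ and $q$ respectively.

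For the forward direction, assuming $\rho\circ\rho=\rho$, I would compute
\[
\hat{\rho}\circ\hat{\rho}(f)=\hat{\rho}(f\circ\rho)=(f\circ\rho)\circ\rho=f\circ(\rho\circ\rho)=f\circ\rho=\hat{\rho}(f)
\]
for every $f\in\mathbb{C}[z,w]$, so $\hat{\rho}$ is idempotent, i.e.\ a retraction map on $\mathbb{C}[z,w]$. For the reverse direction, suppose $\hat{\rho}$ is an idempotent $\mathbb{C}$-algebra endomorphism of $\mathbb{C}[z,w]$ with $\hat{\rho}(z)=p$ and $\hat{\rho}(w)=q$. Since any $\mathbb{C}$-algebra endomorphism of $\mathbb{C}[z,w]$ is determined by its values on the generators $z,w$, one has $\hat{\rho}(f)=f(p,q)$ for every $f\in\mathbb{C}[z,w]$. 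Defining $\rho:=(p,q)$, idempotence $\hat{\rho}\circ\hat{\rho}=\hat{\rho}$ applied to the generators gives $p(p,q)=\hat{\rho}(p)=\hat{\rho}(\hat{\rho}(z))=\hat{\rho}(z)=p$ and similarly $q(p,q)=q$, which is exactly the componentwise statement that $\rho\circ\rho=\rho$.

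There is no substantive obstacle here; the argument is essentially the functorial dictionary between affine schemes and their coordinate rings, specialized to $\mathbb{C}^2=\mathrm{Spec}\,\mathbb{C}[z,w]$. The only point worth making carefully is that a $\mathbb{C}$-algebra endomorphism of $\mathbb{C}[z,w]$ is freely and uniquely determined by the choice of images of $z$ and $w$, so specifying $\hat{\rho}(z)=p$ and $\hat{\rho}(w)=q$ determines $\hat{\rho}$ completely; this is what makes the two idempotence conditions equivalent rather than merely one-sided. No further machinery is needed.
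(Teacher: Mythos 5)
Your proposal is correct and follows essentially the same route as the paper: both directions reduce to the observation that a $\mathbb{C}$-algebra endomorphism of $\mathbb{C}[z,w]$ is determined by the images of the generators, so the idempotence of $\hat{\rho}$ is equivalent to the componentwise identities $p(p,q)=p$ and $q(p,q)=q$, which is exactly $\rho\circ\rho=\rho$. Your explicit pullback formulation $\hat{\rho}(f)=f\circ\rho$ is just a slightly more functorial phrasing of the same argument.
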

\begin{proof}
    Assume that $\rho: \mathbb{C}^2 \to \mathbb{C}^2$ is a (holomorphic) retraction map 
    on $\mathbb{C}^2$ such that  its components  
    namely $p(z,w)$, $q(z,w)$ are polynomials. Define $\hat{\rho}: \mathbb{C}[z,w] \to \mathbb{C}[z,w]$ by $z \mapsto p(z,w),~ w \mapsto q(z,w)$. Since $\rho \circ \rho = \rho$, we 
 have $p(p(z,w),q(z,w)) = p(z,w)$ and 
$q(p(z,w),q(z,w)) = q(z,w)$. This implies that  $\hat{\rho} \circ \hat{\rho} = \hat{\rho}$ i.e., $\hat{\rho}$ is a retraction map.\\ 

\noindent Conversely, assume $\hat{\rho}: \mathbb{C}[z,w] \to \mathbb{C}[z,w]$ 
is a retraction map. Then there exists  $p(z,w),\\ q(z,w) \in \mathbb{C}[z,w]$ such 
that  $\hat{\rho}(z) = p(z,w)$ and  $\hat{\rho}(w) =q(z,w)$.  Let the 
subalgebra generated by $p(z,w)$ and $q(z,w)$, which is a retract of the polynomial algebra
$\mathbb{C}[z,w]$ be denoted by $R$.
Define $\rho: \mathbb{C}^2 \to \mathbb{C}^2$  by $\rho(z,w) = (p(z,w),q(z,w))$. 
Since $p,q \in R$,  $\hat{\rho}(p) = p$ and  $\hat{\rho}(q) = q$. This implies 
that $p(p(z,w),q(z,w)) =p(z,w)$ and $q(p(z,w),q(z,w)) = q(z,w)$.  But this just means that
$\rho \circ \rho =\rho$ i.e., $\rho$ is a polynomial retraction mapping on $\mathbb{C}^2$.\\
\end{proof}

Now we can prove theorem 3.5 in \cite{Cost} as a corollary of the above theorem \ref{polyretract-of-C^2}.
\begin{cor}
    If $R$ is a non-trivial retract of $\mathbb{C}[z,w]$, then 
    $R = \mathbb{C}[r]$ for some $r \in \mathbb{C}[z,w]$.
\end{cor}
\begin{proof}
Since $R$ is a non-trivial retract of $\mathbb{C}[z,w]$, there 
exists an idempotent homomorphism $\tilde{\rho} : \mathbb{C}[z,w] 
\to \mathbb{C}[z,w]$ such that $\text{image}(\tilde{\rho}) = R$. 
Let $p(z,w) := \tilde{\rho}(z)$ and $q(z,w) := \tilde{\rho}(w)$.
By lemma \ref{Poly_Retrctn}, $\tilde{\rho}$ induces a polynomial 
retraction map $\rho: \mathbb{C}^2 \to \mathbb{C}^2$ defined by 
$\rho(z,w) = (p(z,w),q(z,w))$. If we denote $\text{image}(\rho)$  by $Z$ then by above theorem \ref{polyretract-of-C^2}, there exists a polynomial 
automorphism $\psi$ on $\mathbb{C}^2$ such that $\psi(Z) = 
\{(\zeta,0): \zeta \in \mathbb{C}\}$. Let $\hat{\rho} := 
\psi \circ \rho \circ \psi^{-1}$ and $\hat{Z} := \{(\zeta,0): \zeta 
\in \mathbb{C}\}$. Note that $\hat{\rho}$ is a polynomial 
retraction map on $\mathbb{C}^2$ and $\text{image}(\hat{\rho}) = 
\hat{Z}$. Consider the induced homomorphism $\psi^*$ on 
$\mathbb{C}[z,w]$ defined by $\psi^*(z) = \psi_1(z,w)$ and $\psi^*(w) = 
\psi_2(z,w)$, where $\psi_1, \psi_2$ are the components of $\psi$.
It is easy to see that the surjectivity of $\psi$ implies the injectivity of $\psi^*$. While
it is true that the injectivity of $\psi$
implies the surjectivity of $\psi$ is true, it is
far more non-trivial. As our purpose here 
is to give a self-contained proof, 
we proceed without splitting it to directly 
verify that $\psi^*$ is an automorphism on $\mathbb{C}[z,w]$. 
Let $\phi(z,w) = (\phi_1(z,w),\phi_2(z,w))$ denote the inverse of 
$\psi(z,w)$. Consider its induced homomorphism,
$\phi^*: \mathbb{C}[z,w] \to \mathbb{C}[z,w]$
defined by $\phi^*(z) = \phi_1(z,w)$ and $\phi^*(w) = \phi_2(z,w)$. 
To prove $\psi^*$ is an automorphism, it suffices to show that 
$\psi^*$ and $\phi^*$ are inverses to each other. Note that 
\[
\phi^*\circ \psi^*(z) = \phi^*(\psi_1(z,w)) = \psi_1(\phi_1(z,w),\phi_2(z,w)) 
\]
and 
\[
\phi^*\circ \psi^*(w) = \phi^*(\psi_2(z,w)) = \psi_2(\phi_1(z,w),\phi_2(z,w)) 
\]
Since $\psi \circ \phi(z,w) = (z,w)$, we obtain that 
$\phi^*\circ \psi^*(z) = z$ and 
$\phi^*\circ \psi^*(w) = w$. Similarly, we get that $\psi^*\circ \phi^*(z) 
=z$ and  $\psi^*\circ \phi^*(w) =w$. 
 This shows that $\psi^*\circ \phi^*$ and  $\psi^*\circ \phi^*$
fixes the generators $z,w$. As $\psi^*$ and $\phi^*$ are ring homomorphisms, 
we obtain that $\phi^* \circ \psi^*$ and $\psi^* \circ \phi^*$ are the 
identity maps on $\mathbb{C}[z,w]$, finishing 
the verification that $\psi^*$ is an automorphism.
Getting back to the proof proper (of this corollary), since $\text{image}(\hat{\rho}) = \hat{Z}$, 
there exists $s(z,w) \in \mathbb{C}[z,w]$ such that 
$\hat{\rho}(z,w) = (s(z,w),0)$. To conclude, we only need to consider the idempotent 
homomorphism $\rho^*:\mathbb{C}[z,w] \to \mathbb{C}[z,w]$ defined 
by $\rho^*(z,w) := \psi^* \circ \tilde{\rho} \circ \phi^*$; and, Note that $\text{image}(\rho^*) = \mathbb{C}[s]$. This implies that 
$\psi^*(R) = \mathbb{C}[s]$ and hence we obtain that $R = 
\mathbb{C}[r]$, where $r(z,w) = \phi^*(s(z,w))$.
\end{proof}

\noindent It is not clear if the above results can be extended
to the case when the number of variables is greater than two, even 
if we continue restricting to the polynomial case. 
We hope to be able to take this up in a future article soon.

\subsection{Proof of theorem \ref{polyretract-of-C^2}} We now deal with the other already known proof. This is essentially an exposition of
the proof of theorem $1.1$  of 
\cite{Shpilrain -- Yu} with greater details.
Let $Z$ be a non-trivial polynomial retract of $\mathbb{C}^2$. There exists a retraction 
map $\rho: \mathbb{C}^2 \to \mathbb{C}^2$ of the form 
$\rho(z,w) = (p(z,w),q(z,w))$, where $p,q \in \mathbb{C}[z,w]$ such 
that $\rho(\mathbb{C}^2) = Z$. By lemma \ref{Poly_Retrctn}, $\rho$ induces a retraction 
map  $\hat{\rho}: \mathbb{C}[z,w] \to \mathbb{C}[z,w]$ such that 
$\hat{\rho}(z) = p(z,w)$ and  $\hat{\rho}(w) = q(z,w)$,  the components of the retraction mapping
$\rho$ on $\mathbb{C}^2$. Note that  ${\rm image}(\hat{\rho}) = \mathbb{C}[p,q]$, the 
subalgebra generated by $p,q$ which we shall denote by $R$.\\

\noindent By  theorem 3.5 in \cite{Cost}, $\mathbb{C}[p,q] = \mathbb{C}[r]$, 
 for some $r = r(z,w) \in R \subset \mathbb{C}[z,w]$. Hence we may write $p = q_1(r),~ q= q_2(r)$, 
 for some $q_1,q_2 \in \mathbb{C}[z]$ i.e., $q_1,q_2$ are polynomials of a single variable.
Let us write out this factorization for our retraction mapping $\rho$ on $\mathbb{C}^2$, a bit 
elaborately for convenience in the computations that follow, as
\[
\rho(z,w) = \left( q_1(r(z,w)),q_2(r(z,w)) \right).
\]

Suppose $q_1$ and $q_2$ have degree $n \geq 1$ and $m \geq 1$ respectively (the case where atleast
one of them has degree zero i.e., is constant, is simpler and dealt with during the course of concluding the
proof). 
It follows by the Abhyankar -- Moh theorem  (main theorem of \cite{Abynkr}) that: either $n$ divides $m$ or $m$ 
divides $n$. Suppose without loss of generality that $\deg(q_1) = k\cdot \deg(q_2)$ for some integer $k \geq 1$.\\

\noindent Consider  then the automorphism $\psi: \mathbb{C}^2 \to \mathbb{C}^2$ defined 
by $\psi(z,w) = (z-cw^k,w)$, where $c$ is chosen so that leading term of the polynomials $q_1$
and $cq_2^k$ in $q_1 -cq_2^k$ cancel out. The point of considering this polynomial automorphism
$\psi$ is due to its effect in rendering a strict reduction of degree of the polynomial $q_1$ when 
we pass from our retraction map $\rho$ to its conjugate by $\psi$, explained as follows: 
\begin{eqnarray*}
\psi \circ \rho \circ \psi^{-1}(z,w) &=& \psi \circ \rho (z+cw^k,w) = \psi(q_1(r(z+cw^k,w)),q_2(r(z+cw^k,w)))\\
&=& (q_1(r(z+cw^k,w)) -cq_2^k(r(z+cw^k,w)),q_2(r(z+cw^k.w)))\\
&=& (\tilde{q_1}(\tilde{r}(z,w)), \tilde{q_2}(\tilde{r}(z,w)))
\end{eqnarray*}
where $\tilde{q_1} = q_1 -cq_2^k,~ \tilde{q_2} = q_2,~ \tilde{r}(z,w) = r(z+cw^k,w)$. Note 
that $\tilde{q_1}(\tilde{r}),~ \tilde{q_2}(\tilde{r})$ generates $\mathbb{C}[r]$.\\

\noindent Observe that the degree of the single variable polynomial $\tilde{q}_1$ is 
strictly less than that of $q_1$; we shall call the polynomials 
$\tilde{q}_1, \tilde{q}_2$ now simply by $q_1,q_2$ and repeat the above process of passing from 
one retraction to another through conjugation by the (new and appropriate) automorphism 
$\psi$. So, continuing this process , we eventually arrive within finitely many steps, at a pair of polynomials
such that the following happens. One of  them is a 
constant 
polynomial $c$, and the other polynomial, denoted by $h$,  is such that
$\mathbb{C}[h] = \mathbb{C}[r]$ (i.e. $h = c_1r + c_2$). 
 The composition of the various (finitely many) automorphisms 
 involved in the aforementioned process, gives a polynomial automorphism $\phi$ of $\mathbb{C}^2$ such that  
\[ 
\left(\phi \circ \rho \circ \phi^{-1}\right)(z,w) \; = \; (c_1r(z,w) + c_2, c).
\]
Let $\tilde{\rho} := \phi \circ \rho \circ \phi^{-1}$, which of-course is a 
retraction map on $\mathbb{C}^2$. After conjugating it by an affine automorphism 
namely $\tilde{\psi}(z,w) = (z,w-c)$, we can ensure that the retraction map -- which 
we shall call $\tilde{\rho}$ again -- has a particularly simple
form wherein the second component is just zero i.e.,
$\tilde{\rho}(z,w)$ is of the form $(s(z,w),0)$ for some $s(z,w) \in \mathbb{C}[z,w]$.
Since $\tilde{\psi} \circ \tilde{\rho} \circ \tilde{\psi}^{-1}$ is a retraction map, 
it follows that $s(s(z,w),0) = s(z,w)$.
Writing $s(z,0) = a_0 + a_1z + a_2z^2+ \cdots +a_kz^k$, we then have:
\[ 
s(s(z,w),0) = a_0 + a_1s(z,w) + \cdots +a_ks(z,w)^k = s(z,w). 
\]
 Comparing the highest degree monomials on both sides (of the second equality above), we first deduce 
that: $a_2 = \cdots =a_k = 0$ and $a_1=0$; it is then immediate that $a_0$ must also be zero.
Thus, $s(z,0)=z$ meaning that among the monomials not divisible by $w$, there are no monomials other than $z$. 
Hence, we conclude that $s$ must be of the form: $s(z,w) = z + w\cdot l(z,w)$, 
for some $l(z,w) \in \mathbb{C}[z,w]$. Therefore in all, it follows from the foregoing arguments that
there exists a polynomial automorphism  
$\Phi$ on $\mathbb{C}^2$ such that 
\[
\tilde{\rho}(z,w) \; = \; \left(\Phi \circ \rho \circ \Phi^{-1}\right)(z,w) \; = \; \left( s(z,w),0\right).
\]
Thus, after applying the polynomial change of coordinates given by $\Phi$, the retract $Z$ is transformed 
into
\[
\Phi(Z) = \{(\zeta,0): \zeta \in \mathbb{C}\},
\]
finishing the proof of the theorem that all non-trivial polynomial retracts of $\mathbb{C}^2$ can be 
rectified.
\qed\\
\\

\begin{bibdiv}
\begin{biblist}

\bib{Baronti-Pappini}{article}{
    AUTHOR = { Marco Baronti and Papini Pier Luigi},
     TITLE = {Norm-one projections onto subspaces of {$l_p$}},
   JOURNAL = {Ann. Mat. Pura Appl. (4)},
  FJOURNAL = {Annali di Matematica Pura ed Applicata. Serie Quarta},
    VOLUME = {152},
      YEAR = {1988},
     PAGES = {53--61},
      ISSN = {0003-4622},
   MRCLASS = {46B25},
  MRNUMBER = {980971},
MRREVIEWER = {H.\ E.\ Lacey},
       DOI = {10.1007/BF01766140},
       URL = {https://doi.org/10.1007/BF01766140},
}

\bib{Beata-survey}{article}{
    AUTHOR = {Randrianantoanina, Beata},
     TITLE = {Norm-one projections in {B}anach spaces},
      NOTE = {International Conference on Mathematical Analysis and its
              Applications (Kaohsiung, 2000)},
   JOURNAL = {Taiwanese J. Math.},
  FJOURNAL = {Taiwanese Journal of Mathematics},
    VOLUME = {5},
      YEAR = {2001},
    NUMBER = {1},
     PAGES = {35--95},
      ISSN = {1027-5487,2224-6851},
   MRCLASS = {46-03 (01A60 46B45 46E30)},
  MRNUMBER = {1816130},
       DOI = {10.11650/twjm/1500574888},
       URL = {https://doi.org/10.11650/twjm/1500574888},
}

\bib{BBMV-union}{article}{
    AUTHOR = {G. P. Balakumar{,} Diganta Borah{,} Prachi Mahajan and
    Kaushal Verma} 
     TITLE = {Limits of an increasing sequence of complex manifolds},
   JOURNAL = {Ann. Mat. Pura Appl. (4)},
  FJOURNAL = {Annali di Matematica Pura ed Applicata. Series IV},
    VOLUME = {202},
      YEAR = {2023},
    NUMBER = {3},
     PAGES = {1381--1410},
      ISSN = {0373-3114,1618-1891},
   MRCLASS = {32F45},
  MRNUMBER = {4576945},
       DOI = {10.1007/s10231-022-01285-9},
       URL = {https://doi.org/10.1007/s10231-022-01285-9},
}

\bib{Bracci-Saracco}{article}{
    AUTHOR = {F. Bracci and A. Saracco},
     TITLE = {Hyperbolicity in unbounded convex domains},
   JOURNAL = {Forum Math.},
  FJOURNAL = {Forum Mathematicum},
    VOLUME = {21},
      YEAR = {2009},
    NUMBER = {5},
     PAGES = {815--825},
      ISSN = {0933-7741,1435-5337},
   MRCLASS = {32Q45 (32F17 32F45)},
  MRNUMBER = {2560392},
MRREVIEWER = {Simone\ Diverio},
       DOI = {10.1515/FORUM.2009.039},
       URL = {https://doi.org/10.1515/FORUM.2009.039},
}

\bib{Encyclo-Vol6}{book}{
     TITLE = {Several complex variables. {VI}},
    SERIES = {Encyclopaedia of Mathematical Sciences},
    VOLUME = {69},
    EDITOR = {W. Barth and R. Narasimhan},
      NOTE = {Complex manifolds,
              A translation of {\cyr Sovremennye problemy matematiki.
              Fundamental\cprime nye napravleniya, Tom} 69, Akad. Nauk SSSR,
              Vsesoyuz. Inst. Nauchn. i Tekhn. Inform., Moscow},
 PUBLISHER = {Springer-Verlag, Berlin},
      YEAR = {1990},
     PAGES = {viii+310},
      ISBN = {3-540-52788-5},
   MRCLASS = {32-06},
  MRNUMBER = {1095088},
}

\bib{Brocker-Janich-difftop-bk}{book}{
    AUTHOR = {Br\"{o}cker Theodor  and Klaus J\"{a}nich},
     TITLE = {Introduction to differential topology},
      NOTE = {Translated from the German by C. B. Thomas and M. J. Thomas.},
 PUBLISHER = {Cambridge University Press, Cambridge-New York},
      YEAR = {1982},
     PAGES = {vii+160},
      ISBN = {0-521-24135-9;0-521-28470-8},
   MRCLASS = {58-01 (57Rxx)},
  MRNUMBER = {674117},
}

\bib{Cartan-on-retracts}{article}{
    AUTHOR = {Cartan Henri},
     TITLE = {Sur les r\'{e}tractions d'une vari\'{e}t\'{e}},
   JOURNAL = {C. R. Acad. Sci. Paris S\'{e}r. I Math.},
  FJOURNAL = {Comptes Rendus des S\'{e}ances de l'Acad\'{e}mie des Sciences.
              S\'{e}rie I. Math\'{e}matique},
    VOLUME = {303},
      YEAR = {1986},
    NUMBER = {14},
     PAGES = {715},
      ISSN = {0249-6291},
   MRCLASS = {58B10 (58C30)},
  MRNUMBER = {870703},
MRREVIEWER = {Efstathios\ Vassiliou},
}

\bib{Rud_book_Fn_theory_unitball}{book}{
AUTHOR = {Rudin, Walter},
TITLE = {Function theory in the unit ball of {$\Bbb C^n$}},
SERIES = {Classics in Mathematics},
NOTE = {Reprint of the 1980 edition},
PUBLISHER = {Springer-Verlag, Berlin},
YEAR = {2008},
PAGES = {xiv+436},
ISBN = {978-3-540-68272-1},
}

\bib{Lindstraus}{book} {
    AUTHOR = {J. Lindenstrauss and L. Tzafriri},
     TITLE = {Classical {B}anach spaces. {I}},
    SERIES = {Ergebnisse der Mathematik und ihrer Grenzgebiete, Band 92},
      NOTE = {Sequence spaces},
 PUBLISHER = {Springer-Verlag, Berlin-New York},
      YEAR = {1977},
     PAGES = {xiii+188},
      ISBN = {3-540-08072-4},
}

\bib{Heath_Suff}{article} {
    AUTHOR = {L. F. Heath and T. J. Suffridge},
     TITLE = {Holomorphic retracts in complex {$n$}-space},
   JOURNAL = {Illinois J. Math.},
  FJOURNAL = {Illinois Journal of Mathematics},
    VOLUME = {25},
      YEAR = {1981},
    NUMBER = {1},
     PAGES = {125--135},
      ISSN = {0019-2082},
}

\bib{Kaup-Upm}{article}{
    AUTHOR = {W. Kaup and H. Upmeier},
     TITLE = {Banach spaces with biholomorphically equivalent unit balls are
              isomorphic},
   JOURNAL = {Proc. Amer. Math. Soc.},
  FJOURNAL = {Proceedings of the American Mathematical Society},
    VOLUME = {58},
      YEAR = {1976},
     PAGES = {129--133},
      ISSN = {0002-9939,1088-6826},
   MRCLASS = {32M05 (46B05 58C10)},
  MRNUMBER = {422704},
MRREVIEWER = {Ng\^{o} Van Qu\^{e}},
       DOI = {10.2307/2041372},
       URL = {https://doi.org/10.2307/2041372},
}

\bib{Metrc_dynmcl_aspct}{book} {
     TITLE = {Metrical and dynamical aspects in complex analysis},
    SERIES = {Lecture Notes in Mathematics},
    VOLUME = {2195},
    EDITOR = {Blanc-Centi{,} L\'{e}a},
      NOTE = {Papers based on lectures from the CNRS's Thematic School held
              in Lille, 2015,
              CEMPI Series},
 PUBLISHER = {Springer, Cham},
      YEAR = {2017},
     PAGES = {xiv+171},
      ISBN = {978-3-319-65836-0; 978-3-319-65837-7},
MRREVIEWER = {Kyle Edward Kinneberg},
}    

\bib{Mazet_Rextrm}{article} {
    AUTHOR = {P. Maz\'{e}t},
     TITLE = {Principe du maximum et lemme de {S}chwarz \`a valeurs
              vectorielles},
   JOURNAL = {Canad. Math. Bull.},
  FJOURNAL = {Canadian Mathematical Bulletin. Bulletin Canadien de
              Math\'{e}matiques},
    VOLUME = {40},
      YEAR = {1997},
    NUMBER = {3},
     PAGES = {356--363},
      ISSN = {0008-4395},
       DOI = {10.4153/CMB-1997-042-9},
       URL = {https://doi.org/10.4153/CMB-1997-042-9},
}
		
\bib{Vigue}{incollection} {
    AUTHOR = {J. P. Vigu\'{e}},
     TITLE = {Fixed points of holomorphic mappings},
 BOOKTITLE = {Complex geometry and analysis ({P}isa, 1988)},
    SERIES = {Lecture Notes in Math.},
    VOLUME = {1422},
     PAGES = {101--106},
 PUBLISHER = {Springer, Berlin},
      YEAR = {1990},
       DOI = {10.1007/BFb0089409},
       URL = {https://doi.org/10.1007/BFb0089409},
}
\bib{Meggn}{book} {
    AUTHOR = {R. E. Megginson},
     TITLE = {An introduction to {B}anach space theory},
    SERIES = {Graduate Texts in Mathematics},
    VOLUME = {183},
 PUBLISHER = {Springer-Verlag, New York},
      YEAR = {1998},
     PAGES = {xx+596},
      ISBN = {0-387-98431-3},
       DOI = {10.1007/978-1-4612-0603-3},
       URL = {https://doi.org/10.1007/978-1-4612-0603-3},
}
\bib{Vesntn}{incollection} {
    AUTHOR = {Edoardo Vesentini},
     TITLE = {Complex geodesics and holomorphic maps},
 BOOKTITLE = {Symposia {M}athematica, {V}ol. {XXVI} ({R}ome, 1980)},
     PAGES = {211--230},
 PUBLISHER = {Academic Press, London-New York},
      YEAR = {1982},
}
				
\bib{Pranav_JJ}{article} {
    AUTHOR = {Pranav Haridas{} and Jaikrishnan Janardhanan },
     TITLE = {A note on the smoothness of the {M}inkowski function},
   JOURNAL = {Complex Var. Elliptic Equ.},
  FJOURNAL = {Complex Variables and Elliptic Equations. An International
              Journal},
    VOLUME = {66},
      YEAR = {2021},
    NUMBER = {4},
     PAGES = {541--545},
      ISSN = {1747-6933},
       DOI = {10.1080/17476933.2020.1727898},
       URL = {https://doi.org/10.1080/17476933.2020.1727898},
}
\bib{Chirka}{book} {
    AUTHOR = {E. M. Chirka},
     TITLE = {Complex analytic sets},
    SERIES = {Mathematics and its Applications (Soviet Series)},
    VOLUME = {46},
      NOTE = {Translated from the Russian by R. A. M. Hoksbergen},
 PUBLISHER = {Kluwer Academic Publishers Group, Dordrecht},
      YEAR = {1989},
     PAGES = {xx+372},
      ISBN = {0-7923-0234-6},
       DOI = {10.1007/978-94-009-2366-9},
       URL = {https://doi.org/10.1007/978-94-009-2366-9},
}
\bib{Abate_Isometry}{article} {
    AUTHOR = {Abate,Marco,and  Jean-Pierre Vigu\'{e}},
     TITLE = {Isometries for the {C}arath\'{e}odory metric},
   JOURNAL = {Proc. Amer. Math. Soc.},
  FJOURNAL = {Proceedings of the American Mathematical Society},
    VOLUME = {136},
      YEAR = {2008},
    NUMBER = {11},
     PAGES = {3905--3909},
      ISSN = {0002-9939},
MRREVIEWER = {Marek Jarnicki},
       DOI = {10.1090/S0002-9939-08-09391-X},
       URL = {https://doi.org/10.1090/S0002-9939-08-09391-X},
}
\bib{Simh}{article} {
    AUTHOR = {R. R. Simha},
     TITLE = {Holomorphic mappings between balls and polydiscs},
   JOURNAL = {Proc. Amer. Math. Soc.},
  FJOURNAL = {Proceedings of the American Mathematical Society},
    VOLUME = {54},
      YEAR = {1976},
     PAGES = {241--242},
      ISSN = {0002-9939},
       DOI = {10.2307/2040793},
       URL = {https://doi.org/10.2307/2040793},
}
\bib{Abate_Horosphre}{article} {
    AUTHOR = {Marco Abate},
     TITLE = {Horospheres and iterates of holomorphic maps},
   JOURNAL = {Math. Z.},
  FJOURNAL = {Mathematische Zeitschrift},
    VOLUME = {198},
      YEAR = {1988},
    NUMBER = {2},
     PAGES = {225--238},
      ISSN = {0025-5874},
MRREVIEWER = {M. Herv\'{e}},
       DOI = {10.1007/BF01163293},
       URL = {https://doi.org/10.1007/BF01163293},
}

\bib{Barth}{article}{
    AUTHOR = {T. J. Barth},
     TITLE = {The {K}obayashi indicatrix at the center of a circular domain},
   JOURNAL = {Proc. Amer. Math. Soc.},
  FJOURNAL = {Proceedings of the American Mathematical Society},
    VOLUME = {88},
      YEAR = {1983},
    NUMBER = {3},
     PAGES = {527--530},
      ISSN = {0002-9939,1088-6826},
   MRCLASS = {32F15 (32A07 32H15)},
  MRNUMBER = {699426},
MRREVIEWER = {Peter\ Pflug},
       DOI = {10.2307/2045007},
       URL = {https://doi.org/10.2307/2045007},
}

\bib{Bdfrd}{article} {
    AUTHOR = {E. Bedford},
     TITLE = {On the automorphism group of a {S}tein manifold},
   JOURNAL = {Math. Ann.},
  FJOURNAL = {Mathematische Annalen},
    VOLUME = {266},
      YEAR = {1983},
    NUMBER = {2},
     PAGES = {215--227},
      ISSN = {0025-5831},
MRREVIEWER = {B. Gilligan},
       DOI = {10.1007/BF01458443},
       URL = {https://doi.org/10.1007/BF01458443},
}

\bib{Bh-Bo-Su}{article}{
    AUTHOR = {{,}G. Bharali, D. Borah, and S. Gorai},
     TITLE = {The {S}queezing {F}unction: {E}xact {C}omputations, {O}ptimal
              {E}stimates, and a {N}ew {A}pplication},
   JOURNAL = {J. Geom. Anal.},
  FJOURNAL = {Journal of Geometric Analysis},
    VOLUME = {33},
      YEAR = {2023},
    NUMBER = {12},
     PAGES = {383},
      ISSN = {1050-6926,1559-002X},
   MRCLASS = {32F27 (32F45 32H35 53C35)},
  MRNUMBER = {4655067},
       DOI = {10.1007/s12220-023-01439-y},
       URL = {https://doi.org/10.1007/s12220-023-01439-y},
}

\bib{Mazet_Vigue_fxdpnt}{article} {
    AUTHOR = {Mazet Pierre and Jean-Pierre Vigu\'{e} },
     TITLE = {Points fixes d'une application holomorphe d'un domaine born\'{e}
              dans lui-m\^{e}me},
   JOURNAL = {Acta Math.},
  FJOURNAL = {Acta Mathematica},
    VOLUME = {166},
      YEAR = {1991},
    NUMBER = {1-2},
     PAGES = {1--26},
      ISSN = {0001-5962},
MRREVIEWER = {Marco Abate},
       DOI = {10.1007/BF02398882},
       URL = {https://doi.org/10.1007/BF02398882},
}   
\bib{Thai_Fxdpnt_cnvx}{article} {
    AUTHOR = {Do Duc Thai},
     TITLE = {The fixed points of holomorphic maps on a convex domain},
   JOURNAL = {Ann. Polon. Math.},
  FJOURNAL = {Annales Polonici Mathematici},
    VOLUME = {56},
      YEAR = {1992},
    NUMBER = {2},
     PAGES = {143--148},
      ISSN = {0066-2216},
MRREVIEWER = {Marco Abate},
       DOI = {10.4064/ap-56-2-143-148},
       URL = {https://doi.org/10.4064/ap-56-2-143-148},
}

\bib{GZ}{article}{
      title={Holomorphic retracts in the Lie ball and the tetrablock}, 
      author={Gargi Ghosh and Włodzimierz Zwonek},
      year={2024},
      eprint={arXiv:2406.18396},
      archivePrefix={arXiv},
      primaryClass={math.CV},
      url={https://arxiv.org/abs/2406.18396}, 
}

\bib{Kosnski}{article} {
    AUTHOR = {Kosi\'{n}ski, \L ukasz ,and John E. McCarthy },
     TITLE = {Norm preserving extensions of bounded holomorphic functions},
   JOURNAL = {Trans. Amer. Math. Soc.},
  FJOURNAL = {Transactions of the American Mathematical Society},
    VOLUME = {371},
      YEAR = {2019},
    NUMBER = {10},
     PAGES = {7243--7257},
      ISSN = {0002-9947},
MRREVIEWER = {J. E. Pascoe},
       DOI = {10.1090/tran/7597},
       URL = {https://doi.org/10.1090/tran/7597},
}
   
\bib{Jrncki_invrnt_dst}{book} {
    AUTHOR = {M. Jarnicki and P. Pflug},
     TITLE = {Invariant distances and metrics in complex analysis},
    SERIES = {De Gruyter Expositions in Mathematics},
    VOLUME = {9},
   EDITION = {extended},
 PUBLISHER = {Walter de Gruyter GmbH \& Co. KG, Berlin},
      YEAR = {2013},
     PAGES = {xviii+861},
      ISBN = {978-3-11-025043-5; 978-3-11-025386-3},
MRREVIEWER = {Viorel V\^{a}j\^{a}itu},
       DOI = {10.1515/9783110253863},
       URL = {https://doi.org/10.1515/9783110253863},
}
\bib{Bhnblst}{article} {
    AUTHOR = {Frederic Bohnenblust},
     TITLE = {A characterization of complex {H}ilbert spaces},
   JOURNAL = {Portugal. Math.},
  FJOURNAL = {Portugaliae Mathematica},
    VOLUME = {3},
      YEAR = {1942},
     PAGES = {103--109},
      ISSN = {0032-5155},
MRREVIEWER = {N. Dunford},
}    
\bib{Vigue_fxdpnt_cnvx}{incollection} {
    AUTHOR = {Vigu\'{e}, Jean-Pierre},
     TITLE = {Fixed points of holomorphic mappings in a bounded convex
              domain in {${\bf C}^n$}},
 BOOKTITLE = {Several complex variables and complex geometry, {P}art 2
              ({S}anta {C}ruz, {CA}, 1989)},
    SERIES = {Proc. Sympos. Pure Math.},
    VOLUME = {52},
     PAGES = {579--582},
 PUBLISHER = {Amer. Math. Soc., Providence, RI},
      YEAR = {1991},
MRREVIEWER = {Marco Abate},
}
\bib{Kakutni}{article} {
    AUTHOR = {S. Kakutani},
     TITLE = {Some characterizations of {E}uclidean space},
   JOURNAL = {Jpn. J. Math.},
  FJOURNAL = {Japanese Journal of Mathematics},
    VOLUME = {16},
      YEAR = {1939},
     PAGES = {93--97},
      ISSN = {0075-3432},
MRREVIEWER = {F. J. Murray},
       DOI = {10.4099/jjm1924.16.0\_93},
       URL = {https://doi.org/10.4099/jjm1924.16.0_93},
}
\bib{Kuczm_Rtrct_polybll}{article} {
    AUTHOR = {Kuczumow, Tadeusz},
     TITLE = {Holomorphic retracts of polyballs},
   JOURNAL = {Proc. Amer. Math. Soc.},
  FJOURNAL = {Proceedings of the American Mathematical Society},
    VOLUME = {98},
      YEAR = {1986},
    NUMBER = {2},
     PAGES = {374--375},
      ISSN = {0002-9939},
MRREVIEWER = {S. Swaminathan},
       DOI = {10.2307/2045715},
       URL = {https://doi.org/10.2307/2045715},
}

\bib{Ian_Grhm}{article} {
    AUTHOR = {Graham, Ian},
     TITLE = {Intrinsic measures and holomorphic retracts},
   JOURNAL = {Pacific J. Math.},
  FJOURNAL = {Pacific Journal of Mathematics},
    VOLUME = {130},
      YEAR = {1987},
    NUMBER = {2},
     PAGES = {299--311},
      ISSN = {0030-8730},
MRREVIEWER = {Valentin Zdravkov Hristov},
       URL = {http://projecteuclid.org/euclid.pjm/1102690179},
}

\bib{Kodama}{article} {
    AUTHOR = {Kodama, Akio},
     TITLE = {On boundedness of circular domains},
   JOURNAL = {Proc. Japan Acad. Ser. A Math. Sci.},
  FJOURNAL = {Japan Academy. Proceedings. Series A. Mathematical Sciences},
    VOLUME = {58},
      YEAR = {1982},
    NUMBER = {6},
     PAGES = {227--230},
      ISSN = {0386-2194},
MRREVIEWER = {Alan T. Huckleberry},
       URL = {http://projecteuclid.org/euclid.pja/1195515968},
}
	
\bib{Lempert-FundArt}{article}{
    AUTHOR = {L. Lempert},
     TITLE = {Holomorphic retracts and intrinsic metrics in convex domains},
   JOURNAL = {Anal. Math.},
  FJOURNAL = {Analysis Mathematica},
    VOLUME = {8},
      YEAR = {1982},
    NUMBER = {4},
     PAGES = {257--261},
      ISSN = {0133-3852,1588-273X},
   MRCLASS = {32H15 (32H20)},
  MRNUMBER = {690838},
MRREVIEWER = {Jacob\ Burbea},
       DOI = {10.1007/BF02201775},
       URL = {https://doi.org/10.1007/BF02201775},
}

\bib{Shplrn}{incollection} {
    AUTHOR = {Shpilrain, Vladimir},
     TITLE = {Combinatorial methods: from groups to polynomial algebras},
 BOOKTITLE = {Groups {S}t. {A}ndrews 1997 in {B}ath, {II}},
    SERIES = {London Math. Soc. Lecture Note Ser.},
    VOLUME = {261},
     PAGES = {679--688},
 PUBLISHER = {Cambridge Univ. Press, Cambridge},
      YEAR = {1999},
       DOI = {10.1017/CBO9780511666148.029},
       URL = {https://doi.org/10.1017/CBO9780511666148.029},
}
		
\bib{Cost}{article} {
    AUTHOR = {Costa, Douglas L.},
     TITLE = {Retracts of polynomial rings},
   JOURNAL = {J. Algebra},
  FJOURNAL = {Journal of Algebra},
    VOLUME = {44},
      YEAR = {1977},
    NUMBER = {2},
     PAGES = {492--502},
      ISSN = {0021-8693},
MRREVIEWER = {J. W. Brewer},
       DOI = {10.1016/0021-8693(77)90197-1},
       URL = {https://doi.org/10.1016/0021-8693(77)90197-1},
}

\bib{Rossi}{article}{
    AUTHOR = {Rossi, Hugo},
     TITLE = {Vector fields on analytic spaces},
   JOURNAL = {Ann. of Math. (2)},
  FJOURNAL = {Annals of Mathematics. Second Series},
    VOLUME = {78},
      YEAR = {1963},
     PAGES = {455--467},
      ISSN = {0003-486X},
   MRCLASS = {32.49 (57.34)},
  MRNUMBER = {162973},
MRREVIEWER = {R.\ C.\ Gunning},
       DOI = {10.2307/1970536},
       URL = {https://doi.org/10.2307/1970536},
}

\bib{Abynkr}{article} {
    AUTHOR = {Abhyankar, Shreeram S. ,and Tzuong Tsieng Moh },
     TITLE = {Embeddings of the line in the plane},
   JOURNAL = {J. Reine Angew. Math.},
  FJOURNAL = {Journal f\"{u}r die Reine und Angewandte Mathematik. [Crelle's
              Journal]},
    VOLUME = {276},
      YEAR = {1975},
     PAGES = {148--166},
      ISSN = {0075-4102},
MRREVIEWER = {O.-H. Keller},
}


\bib{Fost}{article} {
    AUTHOR = {John Erik Forn\ae ss and Nessim Sibony.},
     TITLE = {Increasing sequence of complex manifolds},
 JOURNAL = {Math. Ann.},
  FJOURNAL = {Mathematische Annalen},
    VOLUME = {255},
      YEAR = {1981},
    NUMBER = {3},
     PAGES = {351-360},
 }

 \bib{Shpilrain -- Yu}{article} {
    AUTHOR = {Vladimir Shpilrain  and Jie-Tai Yu},
     TITLE = {Polynomial retracts and the {J}acobian conjecture},
   JOURNAL = {Trans. Amer. Math. Soc.},
  FJOURNAL = {Transactions of the American Mathematical Society},
    VOLUME = {352},
      YEAR = {2000},
    NUMBER = {1},
     PAGES = {477--484},
      ISSN = {0002-9947,1088-6850},
   MRCLASS = {14R15 (13F20)},
  MRNUMBER = {1487631},
MRREVIEWER = {Amartya\ K.\ Dutta},
       DOI = {10.1090/S0002-9947-99-02251-5},
       URL = {https://doi.org/10.1090/S0002-9947-99-02251-5},
}
 
\bib{Suffridge-Ball}{article}{
    AUTHOR = {T. J. Suffridge},
     TITLE = {Common fixed points of commuting holomorphic maps of the
              hyperball},
   JOURNAL = {Michigan Math. J.},
  FJOURNAL = {Michigan Mathematical Journal},
    VOLUME = {21},
      YEAR = {1974},
     PAGES = {309--314},
      ISSN = {0026-2285,1945-2365},
   MRCLASS = {46G20},
  MRNUMBER = {367661},
MRREVIEWER = {E.\ L.\ Stout},
       URL = {http://projecteuclid.org/euclid.mmj/1029001354},
}


\bib{Hua}{book}{
  title={Harmonic analysis of functions of several complex variables in the classical domains},
  author={L. K. Hua},
  number={6},
  year={1963},
  publisher={American Mathematical Soc.}
}

\bib{Mok}{article}{
  title={Holomorphic retractions of bounded symmetric domains onto totally geodesic complex submanifolds},
  author={Mok, Ngaiming},
  journal={Chinese Annals of Mathematics, Series B},
  volume={43},
  number={6},
  pages={1125--1142},
  year={2022},
  publisher={Springer}
}

\bib{oka_0}{book}{
 AUTHOR = {Nishino, Toshio},
     TITLE = {Function theory in several complex variables},
    SERIES = {Translations of Mathematical Monographs},
    VOLUME = {193},
      NOTE = {Translated from the 1996 Japanese original by Norman Levenberg
              and Hiroshi Yamaguchi},
 PUBLISHER = {American Mathematical Society, Providence, RI},
      YEAR = {2001},
     PAGES = {xiv+366},
      ISBN = {0-8218-0816-8},
   MRCLASS = {32-02 (32-01)},
  MRNUMBER = {1818167},
MRREVIEWER = {Steven\ George\ Krantz},
       DOI = {10.1090/mmono/193},
       URL = {https://doi.org/10.1090/mmono/193},
}
\bib{Whitney}{book}{
    AUTHOR = {Whitney, Hassler},
     TITLE = {Complex analytic varieties},
 PUBLISHER = {Addison-Wesley Publishing Co., Reading, Mass.-London-Don
              Mills, Ont.},
      YEAR = {1972},
     PAGES = {xii+399},
   MRCLASS = {32-XX},
  MRNUMBER = {387634},
MRREVIEWER = {H.\ Cartan},
}
\bib{JJ}{article} {
    title={Proper holomorphic mappings of balanced domains in $\mathbb{C}^N$ },
    author={Jaikrishnan Janardhanan },
  journal={Mathematische Zeitschrift},
  volume={280},
  pages={257--268},
  year={2015},
  publisher={Springer}
}
\bib{HanPeters-Zeager}{article}{
    AUTHOR = {Peters, Han,and Crystal Zeager},
     TITLE = {Tautness and {F}atou components in {${\Bbb P}^2$}},
   JOURNAL = {J. Geom. Anal.},
  FJOURNAL = {Journal of Geometric Analysis},
    VOLUME = {22},
      YEAR = {2012},
    NUMBER = {4},
     PAGES = {934--941},
      ISSN = {1050-6926,1559-002X},
   MRCLASS = {37F10 (32F45 32H50)},
  MRNUMBER = {2965356},
MRREVIEWER = {Marco\ Abate},
       DOI = {10.1007/s12220-011-9221-0},
       URL = {https://doi.org/10.1007/s12220-011-9221-0},
}
\bib{Dineen}{article} {
    AUTHOR = {Se\'{a}n Dineen{,} and Richard M. Timoney },
     TITLE = {Complex geodesics on convex domains},
 BOOKTITLE = {Progress in functional analysis ({P}e\~{n}\'{\i}scola, 1990)},
    SERIES = {North-Holland Math. Stud.},
    VOLUME = {170},
     PAGES = {333--365},
 PUBLISHER = {North-Holland, Amsterdam},
      YEAR = {1992},
      ISBN = {0-444-89378-4},
   MRCLASS = {46G20 (32H15 46B45 53C22)},
  MRNUMBER = {1150757},
MRREVIEWER = {M.\ Nishihara},
       DOI = {10.1016/S0304-0208(08)70330-X},
       URL = {https://doi.org/10.1016/S0304-0208(08)70330-X},
}
\bib{Lemp_geodesc}{article}{,
    AUTHOR = {Lempert, L.},
     TITLE = {La m\'etrique de {K}obayashi et la repr\'esentation des
              domaines sur la boule},
   JOURNAL = {Bull. Soc. Math. France},
  FJOURNAL = {Bulletin de la Soci\'et\'e{} Math\'ematique de France},
    VOLUME = {109},
      YEAR = {1981},
    NUMBER = {4},
     PAGES = {427--474},
      ISSN = {0037-9484},
   MRCLASS = {32H15},
  MRNUMBER = {660145},
MRREVIEWER = {M.\ Skwarczy\'nski},
       URL = {http://www.numdam.org/item?id=BSMF_1981__109__427_0},
}
\bib{Vesentini_1}{article} {
    AUTHOR = {Vesentini, Edoardo},
     TITLE = {Complex geodesics},
   JOURNAL = {Compositio Math.},
  FJOURNAL = {Compositio Mathematica},
    VOLUME = {44},
      YEAR = {1981},
    NUMBER = {1-3},
     PAGES = {375--394},
      ISSN = {0010-437X,1570-5846},
   MRCLASS = {32H15 (46G20)},
  MRNUMBER = {662466},
MRREVIEWER = {Lawrence\ Gruman},
       URL = {http://www.numdam.org/item?id=CM_1981__44_1-3_375_0},
}
\bib{Abate_Itrtn}{book} {
    AUTHOR = {Abate, Marco},
     TITLE = {Iteration theory of holomorphic maps on taut manifolds},
    SERIES = {Research and Lecture Notes in Mathematics. Complex Analysis
              and Geometry},
 PUBLISHER = {Mediterranean Press, Rende},
      YEAR = {1989},
     PAGES = {xvii+417},
   MRCLASS = {32H50 (30D05 32-02 32H15)},
  MRNUMBER = {1098711},
MRREVIEWER = {Adib\ A.\ Fadlalla},
}

\bib{Jrnck_Frst_stps}{book}{
  title={First steps in several complex variables: Reinhardt domains},
  author={M. Jarnicki and P. Pflug},
  volume={7},
  year={2008},
  publisher={European Mathematical Society}
}

\bib{Zim}{article}{,
    AUTHOR = {Zimmer, Andrew},
     TITLE = {Kobayashi hyperbolic convex domains not biholomorphic to
              bounded convex domains},
   JOURNAL = {Math. Z.},
  FJOURNAL = {Mathematische Zeitschrift},
    VOLUME = {300},
      YEAR = {2022},
    NUMBER = {2},
     PAGES = {1905--1916},
      ISSN = {0025-5874,1432-1823},
   MRCLASS = {32F45},
  MRNUMBER = {4363801},
MRREVIEWER = {Armen\ Edigarian},
       DOI = {10.1007/s00209-021-02858-9},
       URL = {https://doi.org/10.1007/s00209-021-02858-9},
}
\bib{Abe}{article}{,
    AUTHOR = {Abe, Makoto},
     TITLE = {On the {N}ebenh\"ulle},
   JOURNAL = {Mem. Fac. Sci. Kyushu Univ. Ser. A},
  FJOURNAL = {Memoirs of the Faculty of Science. Kyushu University. Series
              A. Mathematics},
    VOLUME = {36},
      YEAR = {1982},
    NUMBER = {2},
     PAGES = {181--184},
      ISSN = {0373-6385,1883-2172},
   MRCLASS = {32D05},
  MRNUMBER = {676798},
MRREVIEWER = {J.\ Kajiwara},
       DOI = {10.2206/kyushumfs.36.181},
       URL = {https://doi.org/10.2206/kyushumfs.36.181},
}

\bib{Persson}{article}{,
    AUTHOR = {Persson, H\r{a}kan},
    TITLE = {On Stein Neighborhood Bases and the Nebenh$\ddot{u}$lle},
  JOURNAL = {M.S. Thesis (2010), Ume\r{a} University, Sweden},
}
\bib{Forstn}{article}{,
    AUTHOR = { Franc Forstneric{,} Josip Globevnik  and Jean-Pierre Rosay},
     TITLE = {Nonstraightenable complex lines in {$\mathbb{C}^2$}},
   JOURNAL = {Ark. Mat.},
  FJOURNAL = {Arkiv f\"or Matematik},
    VOLUME = {34},
      YEAR = {1996},
    NUMBER = {1},
     PAGES = {97--101},
      ISSN = {0004-2080,1871-2487},
   MRCLASS = {32H35 (32E30 32H02)},
  MRNUMBER = {1396625},
MRREVIEWER = {Eric\ Bedford},
       DOI = {10.1007/BF02559509},
       URL = {https://doi.org/10.1007/BF02559509},
}
\bib{Suzuki}{article}{,
author={Suzuki, Masakazu},
  title={Propri{\'e}t{\'e}s topologiques des polyn{\^o}mes de deux variables complexes, et automorphismes alg{\'e}briques de l'espace {$\bold C^2$}},
  journal={Journal of the Mathematical Society of Japan},
  volume={26},
  number={2},
  pages={241--257},
  year={1974},
  publisher={The Mathematical Society of Japan}
}
\bib{Giles}{article}{,
    AUTHOR = {Giles, J. R.},
     TITLE = {Classes of semi-inner-product spaces},
   JOURNAL = {Trans. Amer. Math. Soc.},
  FJOURNAL = {Transactions of the American Mathematical Society},
    VOLUME = {129},
      YEAR = {1967},
     PAGES = {436--446},
      ISSN = {0002-9947,1088-6850},
   MRCLASS = {46.15},
  MRNUMBER = {217574},
MRREVIEWER = {N.\ T.\ Peck},
       DOI = {10.2307/1994599},
       URL = {https://doi.org/10.2307/1994599},
}
\bib{Faulk}{article}{,
    AUTHOR = {G.D. Faulkner and J. E. Huneycutt },
     TITLE = {Orthogonal decomposition of isometries in a {B}anach space},
   JOURNAL = {Proc. Amer. Math. Soc.},
  FJOURNAL = {Proceedings of the American Mathematical Society},
    VOLUME = {69},
      YEAR = {1978},
    NUMBER = {1},
     PAGES = {125--128},
      ISSN = {0002-9939,1088-6826},
   MRCLASS = {47A65},
  MRNUMBER = {463954},
MRREVIEWER = {T.\ Ando},
       DOI = {10.2307/2043205},
       URL = {https://doi.org/10.2307/2043205},
}
\bib{Lamp}{article}{,
    AUTHOR = {John Lamperti},
     TITLE = {On the isometries of certain function-spaces},
   JOURNAL = {Pacific J. Math.},
  FJOURNAL = {Pacific Journal of Mathematics},
    VOLUME = {8},
      YEAR = {1958},
     PAGES = {459--466},
      ISSN = {0030-8730,1945-5844},
   MRCLASS = {46.00},
  MRNUMBER = {105017},
MRREVIEWER = {M.\ G.\ Arsove},
       URL = {http://projecteuclid.org/euclid.pjm/1103039892},
}
\bib{Lacey}{book}{,
    AUTHOR = {Lacey, H. Elton},
     TITLE = {The isometric theory of classical {B}anach spaces},
    SERIES = {Die Grundlehren der mathematischen Wissenschaften},
    VOLUME = {Band 208},
 PUBLISHER = {Springer-Verlag, New York-Heidelberg},
      YEAR = {1974},
     PAGES = {x+270},
   MRCLASS = {46EXX (46BXX)},
  MRNUMBER = {493279},
MRREVIEWER = {James\ Hagler},
}
\bib{J.Lee}{book}{,
AUTHOR = {Lee, John M.},
     TITLE = {Introduction to smooth manifolds},
    SERIES = {Graduate Texts in Mathematics},
    VOLUME = {218},
   EDITION = {Second},
 PUBLISHER = {Springer, New York},
      YEAR = {2013},
     PAGES = {xvi+708},
      ISBN = {978-1-4419-9981-8},
   MRCLASS = {58-01 (53-01 57-01)},
  MRNUMBER = {2954043},
}

\bib{Zim_generic}{article}{,
  title={Generic analytic polyhedron with a non-compact automorphism group},
  author={Zimmer, Andrew},
  journal={Indiana University Mathematics Journal},
  pages={1299--1326},
  year={2018},
  publisher={JSTOR}
}
\bib{barrier}{article}{,
  title={Local vs. global hyperconvexity, tautness or $ k $-completeness for unbounded open sets in $\mathbb{C}^n$},
  author={Nikolai Nikolov and Peter Pflug},
  journal={Annali della Scuola Normale Superiore di Pisa-Classe di Scienze},
  volume={4},
  number={4},
  pages={601--618},
  year={2005}
}
\bib{Kobyshi}{book}{,
    AUTHOR = {Kobayashi, Shoshichi},
     TITLE = {Hyperbolic complex spaces},
    SERIES = {Grundlehren der mathematischen Wissenschaften [Fundamental
              Principles of Mathematical Sciences]},
    VOLUME = {318},
 PUBLISHER = {Springer-Verlag, Berlin},
      YEAR = {1998},
     PAGES = {xiv+471},
      ISBN = {3-540-63534-3},
   MRCLASS = {32H20 (32H15 32H25 32H30)},
  MRNUMBER = {1635983},
MRREVIEWER = {William\ A.\ Cherry},
       DOI = {10.1007/978-3-662-03582-5},
       URL = {https://doi.org/10.1007/978-3-662-03582-5},
}

\bib{Forst_Stein}{book}{,
   AUTHOR = {Forstneri\v c, Franc},
     TITLE = {Stein manifolds and holomorphic mappings},
    VOLUME = {56},
 PUBLISHER = {Springer, Heidelberg},
      YEAR = {2011},
     PAGES = {xii+489},
      ISBN = {978-3-642-22249-8; 978-3-642-22250-4},
   MRCLASS = {32-02 (32E10 32E30 32H02 32Q28 32Q55)},
  MRNUMBER = {2975791},
MRREVIEWER = {Finnur\ L\'arusson},
       DOI = {10.1007/978-3-642-22250-4},
       URL = {https://doi.org/10.1007/978-3-642-22250-4},
}

\bib{Hyp-cnvx}{book}{,
 AUTHOR = {Adachi, Masanori},
     TITLE = {On a hyperconvex manifold without non-constant bounded
              holomorphic functions},
 BOOKTITLE = {Geometric complex analysis},
    SERIES = {Springer Proc. Math. Stat.},
    VOLUME = {246},
     PAGES = {1--10},
 PUBLISHER = {Springer, Singapore},
      YEAR = {2018},
      ISBN = {978-981-13-1672-2; 978-981-13-1671-5},
   MRCLASS = {32U10 (32A99)},
  MRNUMBER = {3923213},
MRREVIEWER = {Guokuan\ Shao},
       DOI = {10.1007/978-981-13-1672-2\_1},
       URL = {https://doi.org/10.1007/978-981-13-1672-2_1},
}
\bib{petrosyan}{article}{,
  title={Uniform approximation by polynomials on real non-degenerate Weil polyhedron.},
  author={Petrosyan, A.I.},
  journal={Acta Mathematica Universitatis Comenianae. New Series},
  volume={76},
  number={2},
  pages={173--178},
  year={2007},
  publisher={Comenius University Press}
}
\bib{closure_anal_poly}{article}{,
    AUTHOR = {de Paepe, P. J.},
     TITLE = {Closure of open analytic polyhedra},
   JOURNAL = {Compositio Math.},
  FJOURNAL = {Compositio Mathematica},
    VOLUME = {28},
      YEAR = {1974},
     PAGES = {333--341},
      ISSN = {0010-437X,1570-5846},
   MRCLASS = {32E05},
  MRNUMBER = {352533},
MRREVIEWER = {Andrew\ Markoe},
}
\bib{henkin}{article}{,
    AUTHOR = {Henkin, G. M.},
     TITLE = {An analytic polyhedron is not holomorphically equivalent to a
              strictly pseudoconvex domain},
   JOURNAL = {Dokl. Akad. Nauk SSSR},
  FJOURNAL = {Doklady Akademii Nauk SSSR},
    VOLUME = {210},
      YEAR = {1973},
     PAGES = {1026--1029},
      ISSN = {0002-3264},
   MRCLASS = {32F15},
  MRNUMBER = {328125},
MRREVIEWER = {Raymond\ O.\ Wells, Jr.},
}
\bib{Strat}{article}{,
  title={A geometric proof of the existence of Whitney stratifications},
  author={Vadim Yur'evich{,} Kaloshin},
  journal={Moscow Mathematical Journal},
  volume={5},
  number={1},
  pages={125--133},
  year={2005},
  publisher={Независимый Московский университет--МЦНМО}
}
\bib{Noguchi}{book}{,
    AUTHOR = {Noguchi, Junjiro},
     TITLE = {Analytic function theory of several variables},
      NOTE = {Elements of Oka's coherence},
 PUBLISHER = {Springer, Singapore},
      YEAR = {2016},
     PAGES = {xvi+397},
      ISBN = {978-981-10-0289-2; 978-981-10-0291-5},
   MRCLASS = {32-02 (14F05 32Axx 32Cxx 32Exx 32Txx)},
  MRNUMBER = {3526579},
MRREVIEWER = {Eduardo\ S.\ Zeron},
       DOI = {10.1007/978-981-10-0291-5},
       URL = {https://doi.org/10.1007/978-981-10-0291-5},
}
\bib{Real_Var}{book}{,
     TITLE = {Ordered fields and real algebraic geometry},
    EDITOR = {Dubois, D. W.},
      NOTE = {Papers from the conference held at the University of Colorado,
              Boulder, Colo., July 4--8, 1983,
              Rocky Mountain J. Math. {\bf 14} (1984), no. 4},
 PUBLISHER = {Rocky Mountain Mathematics Consortium, Phoenix, AZ},
      YEAR = {1984},
     PAGES = {729--992},
      ISSN = {0035-7596,1945-3795},
   MRCLASS = {12-06 (14-06)},
  MRNUMBER = {773111},
}
\bib{Singularity}{incollection}{,
    AUTHOR = {Massey, David B. and L\^e, D\~ung Tr\'ang},
     TITLE = {Notes on real and complex analytic and semianalytic
              singularities},
 BOOKTITLE = {Singularities in geometry and topology},
     PAGES = {81--126},
 PUBLISHER = {World Sci. Publ., Hackensack, NJ},
      YEAR = {2007},
      ISBN = {978-981-270-022-3; 981-270-022-6},
   MRCLASS = {32S60 (32B20)},
  MRNUMBER = {2311485},
MRREVIEWER = {Jean-Paul\ Brasselet},
       DOI = {10.1142/9789812706812\_0003},
       URL = {https://doi.org/10.1142/9789812706812_0003},
}
\end{biblist}
\end{bibdiv}

\end{document}